\title{Biexact von Neumann algebras}
\author{Changying Ding}
\author{Jesse Peterson}
\address{Department of Mathematics, University of California, Los Angeles, Los Angeles, CA 90095, USA}
\email{cding@math.ucla.edu}
\address{Department of Mathematics, Vanderbilt University, 1326 Stevenson Center, Nashville, TN 37240, USA}
\email{jesse.d.peterson@vanderbilt.edu}
\newtheorem{thm}{Theorem}[section]
\newtheorem{prop}[thm]{Proposition}
\newtheorem{cor}[thm]{Corollary}
\newtheorem{lem}[thm]{Lemma}
\theoremstyle{definition}
\newtheorem{defn}[thm]{Definition}
\newtheorem{defn/lem}[thm]{Definition/Lemma}
\newtheorem{rem}[thm]{Remark}
\newtheorem{examp}[thm]{Example}
\newcommand{\B}{{\mathbb B}}
\newcommand{\C}{{\mathbb C}}
\newcommand{\K}{{\mathbb K}}
\newcommand{\M}{{\mathbb M}}
\newcommand{\N}{{\mathbb N}}
\newcommand{\bS}{{\mathbb S}}
\newcommand{\X}{{\mathbb X}}
\newcommand{\Y}{{\mathbb Y}}
\newcommand{\HH}{{\mathcal H}}
\newcommand{\cP}{{\mathcal P}}
\newcommand{\cC}{{\mathcal C}}
\newcommand{\cH}{{\mathcal H}}
\newcommand{\cK}{{\mathcal K}}
\newcommand{\cU}{{\mathcal U}}
\newcommand{\Ad}{\operatorname{Ad}}
\newcommand{\id}{\operatorname{id}}
\newcommand{\diag}{\operatorname{diag}}
\newcommand{\ovt}{\, \overline{\otimes}\,}
\newcommand{\oovt}[1]{\, \overline{\otimes}_{#1}\,}
\newcommand{\alg}{{\operatorname{alg}}}
\newcommand{\ds}{{\sharp\kern-.5pt\sharp}}
\newcommand{\actson}{{\, \curvearrowright \,}}
\DeclareRobustCommand\frownotimes{\mathbin{\mathpalette\frown@otimes\relax}}
\newcommand{\frown@otimes}[2]{%
  \vbox{
    \ialign{##\cr
      \hidewidth$\m@th#1{}_\frown$\kern-\scriptspace\hidewidth\cr
      \noalign{\nointerlineskip\kern-1pt}
      $\m@th#1\otimes$\cr
    }%
  }%
}
\begin{document}

\maketitle
\begin{abstract}
We introduce the notion of biexactness for general von Neumann algebras, naturally extending the notion from group theory. We show that biexactness implies solidity for von Neumann algebras, and that many of the examples of solid von Neumann algebras contained in the literature are, in fact, biexact. We also give examples of certain crossed products arising from Gaussian actions that are solid but not biexact, and we give examples of certain $q$-Gaussian von Neumann algebras that are strongly solid but not biexact. The techniques developed involve studying a certain weak form of nuclear embeddings, and we use this setting to give a new description of weak exactness for von Neumann algebras, which allows us to answer several open problems in the literature about weakly exact von Neumann algebras. 
\end{abstract}

\section{Introduction}

Recall that a finite von Neumann algebra $M$ is solid if, for any diffuse von Neumann subalgebra $A \subset M$, the relative commutant $A' \cap M$ is amenable. A seminal result of Ozawa \cite{Oza04} is that free group factors are solid and, as a consequence, every nonamenable subfactor of a free group factor is prime. Ozawa's proof, which is based on ${\rm C}^*$-algebra theory, holds more generally for the class of biexact groups \cite[Definition 15.1.2]{BO08}, which are groups that admit a topologically amenable action on a certain small-at-infinity boundary. Biexact groups have since been shown to have many applications to the theory of von Neumann algebras \cite{Oza04, Oza06, Oza06b}. Combining the techniques of biexact groups with Popa's deformation/rigidity theory has produced a number of striking results where certain structural properties of a von Neumann algebra can be recovered as in \cite{OzPo04, CS13, PoVa14b, CdSS16, CaIsWa21, HaIsKa22, Ma17}.

In this article we introduce the notion of biexactness in the setting of von Neumann algebras, thereby allowing for the previous results to be put in a systematic framework. This allows for a more integrated use of ${\rm C}^*$ and von Neumann algebraic techniques, and allows us to find a common von Neumann algebraic setting for many solidity results such as those obtained in \cite{Oza04, po07b, Pet09, ChIo18, Bo13, HoRa15, Is15, Shl04}. This then leads to natural extensions of these results to a larger class of von Neumann algebras. 

To describe biexactness for von Neumann algebras, we first recall in greater detail the corresponding property for groups (see \cite[Chapter 15]{BO08} for full details). Given a countable group $\Gamma$, we have commuting actions of $\Gamma$ on $\ell^\infty \Gamma$ given by left and right multiplication, respectively, i.e., $L_t(f) (x) = f(t^{-1} x)$ and $R_t(f)(x) = f( xt)$ for $f \in \ell^\infty \Gamma$ and $x, t \in \Gamma$. We denote 
\[
S(\Gamma) = \{ f \in \ell^\infty \Gamma \mid f - R_t(f) \in c_0\Gamma, \ \forall\  t \in \Gamma \},
\]
which is a left-invariant ${\rm C}^*$-subalgebra of $\ell^\infty \Gamma$. By Gelfand duality, we may identify $S(\Gamma)$ with continuous functions on its spectrum $\overline \Gamma$, and we then have an action of $\Gamma$ on $\overline \Gamma$ by homeomorphisms. The space $\overline \Gamma$ is called the small-at-infinity compactification of $\Gamma$. The group $\Gamma$ is biexact if the action $\Gamma \actson \overline \Gamma$ is topologically amenable in the sense of \cite{AD87}, which is equivalent to the inclusion $C^*_\lambda \Gamma \subset C(\overline \Gamma) \rtimes_r \Gamma$ being nuclear. Topological amenability is inherited through factor maps, and so in practice one usually checks biexactness of a group via natural boundary type actions, e.g., hyperbolic groups acting on their Gromov boundary. 

Let now $M$ be a von Neumann algebra and $X \subset \B(\mathcal H)$ an operator $M$-bimodule. Magajna introduced in \cite{Ma98, Ma00} the $M$-topology on $X$, which is a locally convex topology intermediate between the uniform and ultraweak topologies (see Section~\ref{sec:Mtopology} for the precise definition). As $\B(L^2M)$ is both an operator $M$-bimodule and an $M'$-bimodule, we may let $\K^{\infty, 1}(M) \subset \B(L^2M)$ denote the closure of $\K(L^2M)$ under both the $M$-topology and the $M'$-topology. In the case when $M$ has a normal faithful trace $\tau$, we may view $M \subset L^2(M, \tau) \subset L^1(M, \tau)$, and then by \cite[Proposition 3.1]{DKEP22} we have that $\K^{\infty, 1}(M)$ is the closure of $\K(L^2M)$ in the topology given by the norm $\| T \|_{\infty, 1} = \sup_{x \in M, \| x \| \leq 1} \| T \hat{x} \|_1$. 

In \cite{DKEP22} we introduced a von Neumann analog of the small-at-infinity compactification as
\[
\bS(M)= \{ T \in \B(L^2M) \mid [T, a] \in \K^{\infty, 1}(M),  \forall\ a \in M' \}. 
\]
The advantage of working with the generalized compact operators from $\K^{\infty, 1}(M)$ is that to check that an operator $T$ is contained in $\bS(M)$, it suffices to check that $[T, a] \in \K^{\infty, 1}(M)$ for $a$ in some generating set of $M$ (generated as a von Neumann algebra). In particular, if $[T, a]$ is compact for each element $a$ in some generating set, then $T \in \bS(M)$. This shows that the space $\bS(M)$ is robust enough so that the inclusion $M \subset \bS(M)$ may have interesting properties. For example, if $M = L\Gamma$ for a group $\Gamma$, then we have a natural inclusion $S(\Gamma) \rtimes_r \Gamma \subset \bS(L\Gamma)$. Also, the space $\bS(M)$ is large enough so that it admits non-trivial 1-cohomology \cite[Theorem 7.3]{HiPePo23}. A consequence of this robustness, though, is that the the space $\bS(M)$ is not a ${\rm C}^*$-algebra, but is rather an $M$-system, i.e., an operator system that is also an operator $M$-bimodule. 

Since $\bS(M)$ is an operator $M$-bimodule, we may again consider the $M$-topology on $\bS(M)$. We say that the von Neumann algebra $M$ is biexact if the inclusion $M \subset \bS(M)$ is $M$-nuclear in the sense that there exists nets of u.c.p.\ maps $\phi_i: M \to \mathbb M_{n(i)}(\mathbb C)$ and $\psi_i: \mathbb M_{n(i)}(\mathbb C) \to \bS(M)$ so that for each $x \in M$ we have $\psi_i \circ \phi_i(x) \to x$ in the $M$-topology. 

This notion is flexible enough to show that many von Neumann algebras already studied in the literature are biexact. In particular, we show in Theorem~\ref{thm:condition AO+} that all von Neumann algebras satisfying condition $(AO)^+$ or strong property $(AO)$ are biexact. We therefore have that each of the following classes of von Neumann algebras is contained in the class of biexact von Neumann algebras:

\begin{enumerate}
\item Amenable von Neumann algebras.
\item Group von Neumann algebras associated to locally compact second countable biexact groups (Proposition 6.2 in \cite{Dep20}).
\item Von Neumann algebras of universal orthogonal and unitary discrete quantum groups (Proposition 3.1.2 in  \cite{Iso15}).
\item Free Araki-Woods von Neumann algebras (The proof of Theorem 4.4 in \cite{Ho07}).
\item $q$-Gaussian von Neumann algebras associated to finite-dimensional real Hilbert space (Section 4 in \cite{Shl04} for some $q$, and \cite{Ku22} in general). 
\item Free products of biexact von Neumann algebras with respect to normal faithful states (Proposition~\ref{prop:free product biexact}).
\item Von Neumann subalgebras with expectation of biexact von Neumann algebras (Proposition~\ref{prop:subalgebra}). 
\item Amplifications and commutants of biexact von Neumann algebras (Proposition~\ref{prop:amplificiationstable}).
\end{enumerate}

In the case of a group von Neumann algebra associated to a discrete group $M = L\Gamma$, we have a canonical diagonal embedding of $\ell^\infty \Gamma$ into $\B(\ell^2 \Gamma)$, as well as a normal conditional expectation $E_{\ell^\infty \Gamma}: \B(\ell^2 \Gamma) \to \ell^\infty \Gamma$. We show that $E_{\ell^\infty \Gamma}( \bS(L\Gamma) ) \subset S(\Gamma)$, and so if $L\Gamma$ is biexact, then by considering the positive type functions $\Gamma \ni t \mapsto E_{\ell^\infty}( \psi_i \circ \phi_i( \lambda_t) \lambda_t^* ) \in S(\Gamma)$ we deduce that the action $\Gamma \actson \overline \Gamma$ is amenable. The converse implication also holds by using local reflexivity of $C^*_\lambda \Gamma$ to show that nuclearity of the inclusion $C^*_\lambda \Gamma \subset C(\overline \Gamma) \rtimes_r \Gamma$ can be upgraded to give $L\Gamma$-nuclearity for the inclusion $L\Gamma \subset \bS(L\Gamma)$. A consequence, therefore, of this new framework is that we deduce that biexactness for groups is stable under $W^*$-equivalence (see Corollary~\ref{cor:w*-equivalence} for the complete details), i.e.\ if $\Gamma$ and $\Lambda$ are countable groups such that $L\Gamma \cong L\Lambda$, then $\Gamma$ is biexact if and only if $\Lambda$ is biexact. This should be compared with Sako's result \cite{Sa09} where he shows that biexactness for groups is stable under measure equivalence.

One of the technical achievements of our approach (carried out in Section~\ref{sec:nuclearity}) is that, in fact, we do not need to use local reflexivity of $C^*_\lambda \Gamma$ directly.  If we have a von Neumann algebra $M$ and an ultraweakly dense ${\rm C}^*$-subalgebra $M_0$, we consider a version of the $M$-topology called the $(M_0 \subset M)$-topology that takes the dense ${\rm C}^*$-algebra $M_0$ into account. If the inclusion $M_0 \subset \bS(M)$ is $(M_0 \subset M)$-nuclear, we then have a general upgrading result showing that $M$ is biexact. While this general result is not needed to show that biexactness is stable under $W^*$-equivalence, it is crucial in showing how biexactness interacts with general constructions. For instance, this allows us to show that the free product of biexact von Neumann algebras (with respect to arbitrary normal faithful states) is again biexact. This also allows us to show that tensor products of biexact von Neumann algebras satisfy a relative biexactness property from which one deduces the unique prime decomposition results of Ozawa and Popa \cite{OzPo04}. This holds, in fact, for general biexact von Neumann algebras, and puts this into the framework of Houdayer and Isono \cite{HoIs17} so that unique prime decomposition results can be deduced also in the type III setting as well (see Corollary~\ref{cor:primedecomposition}). 

If we consider the larger $M$-bimodule $\B(L^2M)$ instead of $\mathbb S(M)$, then $(M_0 \subset M)$-nuclearity for the inclusion $M \subset \B(L^2M)$ is related to Isono's notion of $M_0$ being weakly exact in $M$ \cite{Iso13}. In Section~\ref{sec:wkexact} we make this connection explicit, and as an application we show in Corollary~\ref{cor:free product weak exact} that the free product (with respect to arbitrary normal faithful states) of weakly exact von Neumann algebras is again weakly exact, which was previously shown by Isono to hold in many cases. We also use this perspective to provide a new characterization of weak exactness for von Neumann algebras, answering Problem 10.4.3 from \cite{BO08} (see Theorem~\ref{thm:brownozawa}).

Biexact von Neumann algebras in general also have many of the indecomposability type properties that are known to hold for the group von Neumann algebras associated to biexact groups. We show a biexact von Neumann algebra $M$ is always full, solid, and properly proximal. Moreover, if $M$ is finite and weakly amenable, then $M$ is also strongly solid in the sense of Ozawa and Popa \cite{OzPo10a}. 

It was shown in \cite{DKEP22} that if $\Gamma$ is a nonamenable group and we have an embedding $L\Gamma \subset L\mathbb F_2$, then it follows that $\Gamma$ is properly proximal and consequently is not inner amenable (an alternate proof of this fact was also found recently in \cite{Dr22}). Since biexactness is inherited to subalgebras it follows that $\Gamma$ is even biexact. In fact, what we establish is a general principle that higher-rank lattices cannot ``$W^*$-embed'' into rank one lattices. Specifically, if $G$ is a connected real semisimple Lie group with finite center and $\mathbb R{\rm -rank}(G) \geq 2$, and if $\Gamma < G$ is a lattice, then by \cite[Exercise 8.2.7]{Mo15}, together with \cite{Sa09, Dep20}, it follows that $\Gamma$ is not biexact and hence $L\Gamma$ cannot embed into $L\Lambda$ for any lattice $\Lambda$ in a rank one connected real simple Lie group with finite center.

It was shown recently by Caspers \cite{Cas22} that if $\mathcal H$ is an infinite dimensional real Hilbert space and $-1 < q < 1$ with $q \not= 0$, then the $q$-Gaussian von Neumann algebra $M_q(\mathcal H)$ is not isomorphic to a free group factor. Casper's result shows, in fact, that $M_q(\mathcal H)$ is not biexact, and hence it then follows that $M_q(\mathcal H)$ cannot even embed into a free group factor. This last fact can also be deduced directly from \cite{Oza10}, together with Theorem~\ref{thm:weakcontainultrapower} below, which is independent of the results in the earlier sections. We remark that this can also be deduced by combining Caspers work with Corollary 3.5 in \cite{Mar23}. 

Despite the fact that $M_q(\mathcal H)$ is not biexact when $-1 < q < 1$, $q\not= 0$, and $\mathcal H$ is infinite dimensional, we show in Section~\ref{sec:solidnotbiexact}, that $M_q(\mathcal H)$ shares many of the same indecomposability results that biexact von Neumann algebras have. In particular, it is strongly solid and every von Neumann subalgebra not having an amenable summand is properly proximal. Strong solidity for $M_q(\mathcal H)$ was shown previously by Avsec \cite{Av11} in the case when $\mathcal H$ is finite-dimensional. Our proof in the infinite dimensional case uses deformation/rigidity techniques, together with techniques developed in \cite{DKEP22}, to reduce the problem to the finite-dimensional case where Avsec's results may then be applied. 

In Section~\ref{sec:solidnotbiexact} we also give another source of solid von Neumann algebras that are not biexact through the use of Gaussian actions. Specifically, we show that if $\Gamma$ is a biexact group and $\pi: \Gamma \to \mathcal O(\mathcal H)$ is an orthogonal representation satisfying $\pi \not\prec \lambda$, but $\pi^{\otimes k} \prec \lambda$ for some $k > 1$, then the crossed product of the Gaussian action $A_{\mathcal H \ovt \ell^2 \mathbb N} \rtimes^{\sigma_{\pi \otimes 1}} \Gamma$ is solid, but is not biexact. Solidity of this von Neumann algebra is a consequence of Boutonnet's solid ergodicity result from \cite{Bo12}, and in Section~\ref{sec:solidnotbiexact} we also give a relatively direct proof of this result. 

\subsection*{Acknowledgements} We thank Cyril Houdayer and Amine Marrakchi for useful comments.

\section{Preliminaries and notation}

For ${\rm C}^*$-algebras or operator systems $A$ and $B$, we denote the algebraic tensor product by $A \odot B$ and the minimal tensor product by $A \otimes B$. We will use the abbreviation c.c.\ for completely contractive, c.p.\ for completely positive, c.c.p.\ for contractive completely positive, and u.c.p.\ for unital completely positive.

For a von Neumann algebra $M$, we denote by $(M, L^2M, J, \mathfrak B)$ its standard form \cite{Ha75}. If $M$ is $\sigma$-finite and we have a normal faithful state $\mu$ on $M$, then the standard form may be realized as $(M, L^2(M, \mu), J_\mu, \mathfrak B_\mu)$, where the representation of $M$ on $L^2(M, \mu)$ is given by the GNS-construction, $J_\mu$ is modular conjugation on $L^2(M, \mu)$, and $\mathfrak B_\mu$ is the positive part of $L^2(M, \mu)$. We denote by $\mu^{1/2} \in L^2(M, \mu)$ the canonical cyclic vector so that $\mu( x) = \langle x \mu^{1/2}, \mu^{1/2} \rangle$ for $x \in M$. The centralizer algebra with respect to $\mu$ is denoted by $M^\mu$.

For von Neumann algebras $M$ and $N$, a normal Hilbert $M$-$N$ bimodule is a Hilbert space $\mathcal H$, together with a unital $*$-representation $\pi_{\mathcal H}$ of the algebraic tensor product $M \odot N^{\rm op}$ such that $\pi_{\mathcal H}$ is normal and faithful when restricted to $M$ and $N^{\rm op}$ separately. We will write simply $x \xi y = \pi_{\mathcal H}(x \otimes y^{\rm op}) \xi$, for $x \in M$, $y \in N$, and $\xi \in \mathcal H$. A normal Hilbert $M$-$N$ bimodule $\mathcal H$ is weakly contained in another normal Hilbert $M$-$N$ bimodule $\mathcal K$ (written $\mathcal H \prec \mathcal K$) if the identity map on $M \odot N^{\rm op}$ extends to a $*$-homomorphism from $C^*( \pi_{\mathcal K}(M \odot N^{\rm op}))$ to $C^*(\pi_{\mathcal H}(M \odot N^{\rm op}))$. The trivial bimodule for $M$ is given by $L^2M$ with the bimodule structure $x \xi y = x Jy^*J \xi$, and the coarse $M$-$N$ bimodule is given by $L^2 M \ovt L^2 N$ with the bimodule structure given by the canonical representation of $M \otimes N^{\rm op}$ in $\B(L^2M \ovt L^2 N)$. 

We will say that a von Neumann subalgebra $N \subset M$ is with expectation if there exists a normal faithful conditional expectation $E: M \to N$. Given a normal faithful conditional expectation $E: M \to N$, and a normal faithful semifinite weight $\psi_N$ on $N$, we obtain a normal faithful semifinite weight on $M$ by $\psi = \psi_N \circ E$, and this gives rise to an inclusion $L^2(N, \psi_N) \subset L^2(M, \psi)$. The Jones projection $e_N: L^2(M, \psi) \to L^2(N, \psi_N)$ is the corresponding orthogonal projection. The Jones projection does not depend on the normal faithful semifinite weight $\psi_N$ \cite[Appendix A]{HoIs17} and so we may consider it abstractly as a coisometry $e_N: L^2M \to L^2N$. The Jones projection $e_N$ satisfies 
\begin{enumerate}
\item $e_N^* x e_N = E(x)$, for $x \in M$. 
\item $e_N J_M = J_N e_N$. 
\end{enumerate}

If we have a group action $\Gamma \actson^\sigma M$ by automorphisms, then by \cite[Theorem 3.2]{Ha75} there exists a unique unitary representation $\sigma^0: \Gamma \to \mathcal U(L^2M)$ so that for each $t \in \Gamma$ we have
\begin{enumerate}
\item $\sigma_t(x) = \sigma_t^0 x (\sigma_t^0)^*$, for $x \in M$. 
\item\label{item:koopman2} $J \sigma_t^0 = \sigma_t^0 J$.
\item $\sigma_t^0 ( \mathfrak B) = \mathfrak B$. 
\end{enumerate}
We call the representation $\sigma^0: \Gamma \to \mathcal U(L^2M)$ the Koopman representation associated to the action $\sigma$. 

If we have an action $\Gamma \actson A$ of a group on a ${\rm C}^*$-algebra, then there exists a faithful nondegenerate representation $A \subset \B(\mathcal H)$ and a covariant representation of $\pi: \Gamma \to \mathcal U(\mathcal H)$. The reduced crossed-product $A \rtimes_r \Gamma$ is the ${\rm C}^*$-subalgebra generated by $\{ (a \otimes 1) \pi_t \otimes \lambda_t \mid a \in A, t \in \Gamma \}$, where $\lambda: \Gamma \to \mathcal U(\ell^2 \Gamma)$ denotes the left-regular representation. This ${\rm C}^*$-algebra is independent of the covariant representation, and we identify $A$ with $A \otimes \mathbb C$ so that $A \subset A \rtimes_r \Gamma$.

If we have an $\Gamma \actson M$ of a group on a von Neumann algebra, then we denote by $M \rtimes \Gamma \subset \B(L^2 M \ovt \ell^2 \Gamma)$ the von Neumann crossed-product, which is the von Neumann completion of $M \rtimes_r \Gamma \subset \B(L^2 M \ovt \ell^2 \Gamma)$. We denote by $u_t$ the unitary $\sigma^0_t \otimes \lambda_t \in M \rtimes_r \Gamma$, although if $M = \mathbb C$, then in this case we have $u_t = \lambda_t$ and we will use both notations interchangeably. The von Neumann algebra $M \rtimes \Gamma$ is standardly represented on $L^2 M \ovt \ell^2 \Gamma$, and we may explicitly compute the modular conjugation operator to see that it satisfies 
\[
J_{M \rtimes \Gamma} ( \xi \otimes \delta_t ) 
= \sigma_{t^{-1}}^0 J_M \xi \otimes \delta_{t^{-1}}
\]
for $\xi \in L^2M$ and $t \in \Gamma$. If we use the canonical identification $L^2 M \ovt \ell^2 \Gamma \cong \oplus_{t \in \Gamma} L^2 M$, then for $x \in M$ and $t \in \Gamma$ we have
\begin{equation}\label{eq:modulargp}
J_{M \rtimes \Gamma} \, x  \, J_{M \rtimes \Gamma} 
= \oplus_{t \in \Gamma} J_M \sigma_t(x) J_M, \ \ \ \ \ \ \ \ \ \ 
J_{M \rtimes \Gamma} \, u_t \, J_{M \rtimes \Gamma} 
= 1 \otimes \rho_t,
\end{equation}
where $\rho: \Gamma \to \mathcal U(\ell^2 \Gamma)$ denotes the right-regular representation. 

If $M$ is a von Neumann algebra, $p, q \in \mathcal P(M)$ are nonzero projections and $B \subset pMp$, $Q \subset qMq$ are von Neumann subalgebras, then we say the $B$ embeds with expectation into $Q$ inside of $M$ and write $B \preceq M$ if there exist projections $e \in \mathcal P(B)$, $f \in \mathcal P(Q)$, a nonzero partial isometry $v \in e M f$ and a unital normal $*$-homomorphism $\phi: e B e \to f Q f$ such that the inclusion $\phi( e B e) \subset f Q f$ is with expectation and $b v = v \phi(b)$ for all $b \in B$.  By a fundamental theorem of Popa \cite[Theorem 2.1]{Po06B}, for a tracial von Neumann algebra $M$, we have $B \not\preceq_M Q$ if and only if there exists a net $\{ u_i \}_i \subset \mathcal U(B)$ so that $E_Q( a u_i b) \to 0$ in the ultrastrong-$^*$ topology for all $a, b \in M$. Popa's theorem has been extended by Houdayer and Isono \cite[Theorem 4.3]{HoIs17} to the case when $B$ is finite and both $B$ and $Q$ are with expectation.

\section{A relative topology on ${\rm C}^*$-bimodules}\label{sec:Mtopology}

In this section we collect some needed background information, and we introduce a generalization of Magajna's $M$-$N$-topology on operator bimodules \cite{Ma00}. Many of the results here are adaptations of current techniques in the literature, although a few results (e.g., Lemma~\ref{lem:weakcontinuity}) use new techniques. 

Let $M_0$ and $N_0$ be ${\rm C}^*$-algebras. 
An (concrete) operator $M_0$-$N_0$-bimodule consists of a concrete operator space $X \subset \B(\mathcal H)$, together with two faithful nondegenerate representations $\pi: M_0 \to \B(\HH)$ and $\rho: N_0 \to \B(\HH)$ so that $X$ is an $\pi(M_0)$-$\rho(N_0)$-bimodule, 
	whose bimodule structure is given by composition of operators. 
An (concrete) operator $M_0$-system consists of concrete operator system $X \subset \B(\HH)$, together with a faithful nondegenerate representation $\pi: M_0 \to \B(\HH)$ so that $X$ is also an $\pi(M_0)$-bimodule. Note that we then have $M_0 \cong \pi(M_0) \subset X$. We say that $X$ is a dual operator $M_0$-$N_0$-bimodule (resp.\ dual operator $M_0$-system) if $X$ may be taken as above to be ultraweakly closed. 
	
If $M_0 = M$ and $N_0 = N$ are von Neumann algebras, then the operator $M$-$N$-bimodule (resp.\ operator $M$-system) is normal if this concrete realization can be made so that $\pi$ and $\rho$ (resp.\ $\pi$) is a normal representation. More generally, we will say that an operator $M_0$-$N_0$-bimodule (resp.\ operator $M_0$-system) is $(M_0 \subset M)$-$(N_0 \subset N)$-normal (resp.\ $(M_0 \subset M)$-normal) if there the concrete realization can be made so that $\pi$ and $\rho$ (resp.\ $\pi$) extend to normal representations of $M$ and $N$ (resp.\ $M$).

Let $X$ be a normal operator $M$-$N$-bimodule for von Neumann algebras $M$ and $N$. 
For each pair of positive functionals $\omega \in M_{*}$ and $\rho \in N_*$ we have an associated seminorm on $X$ given by 
$$s^\rho_\omega( x) = \inf \{ \rho(a^* a)^{1/2} \| y \| \omega(b^*b)^{1/2} \mid x = a^*  yb, a, b \in M, y \in X \}.$$ 
Following Magajna \cite{Ma00}, the topology induced by these seminorms will be called the $M$-$N$-topology, or simply the $M$-topology if $M = N$. If $S \subset X$ is a subset, then we denote the closure of $S$ in the $M$-$N$-topology by $\overline{S}^{_{M-N}}$.

A linear functional $\varphi \in X^*$ is $M$-$N$-normal (or simply $M$-normal if $M = N$) 
	if for each $a \in X$ the linear functionals $M \ni x \mapsto \varphi(xa)$ 
	and $N \ni y \mapsto \varphi(ay)$ are normal. 
We let $X^{M\sharp N}$ denote the space of $M$-$N$-normal linear functionals 
	and we call the resulting weak topology on $X$ the weak $M$-$N$-topology (or simply the weak $M$-topology if $M = N$).  
By \cite[Theorem 3.7]{Ma00}, $X^{M\sharp N}$ coincides with the space of $M$-$N$-topology continuous linear functionals on $X$.

More generally, we may consider a similar topology that takes ultraweakly dense ${\rm C}^*$-subalgebras into consideration.
Let $M$ and $N$ be von Neumann algebras, let $M_0 \subset M$ and $N_0 \subset N$ be ultraweakly dense ${\rm C}^*$-subalgebras with $1_M \in M_0$ and $1_N \in N_0$, and let $X$ be an operator $M_0$-$N_0$-bimodule. We may equip $M_0$ with the ultraweak topology from $M$ and we denote by $M_0^\sharp \subset M_0^*$ the space of normal functionals $M_*$, restricted to $M_0$. We similarly define $N_0^\sharp$. 

Given positive functionals $\omega \in M_0^\sharp$ and $\rho \in N_0^\sharp$, we may consider a seminorm on $X$ given by
\[
s_\omega^\rho(x)=\inf\{\rho(a^* a)^{1/2} \|y\| \omega(b^*b)^{1/2}\mid x= a^* y b, a\in C_n(M_0), b \in C_n(N_0), y\in \M_n(X), n\in \N\}.
\]
The triangle inequality for $s_\omega^\rho$ follows from the standard argument showing that the Haagerup norm on tensor products satisfies the triangle inequality.  

In general, if $M_0$ and $N_0$ do not necessarily contain units, we may use the same formula above to define $s_\omega^\rho$ on the subspace $M_0 X N_0$. Note that if we set $\tilde M_0 = M_0 + \mathbb C 1_M$ and $\tilde N_0 = N_0 + \mathbb C 1_N$, then the seminorm $s_{\omega, N_0}^{\rho, M_0}$ on $M_0 X N_0$ obtained by viewing this as an operator $M_0$-$N_0$ bimodule agrees with the corresponding seminorm $s_{\omega, \tilde N_0}^{\rho, \tilde M_0}$ obtained by viewing $M_0 X N_0$ as a operator $\tilde M_0$-$\tilde N_0$ bimodule. Indeed, we clearly have $s_{\omega, \tilde N_0}^{\rho, \tilde M_0} \leq s_{\omega, N_0}^{\rho, M_0}$, and if $x = a^* y b \in M_0 X N_0$ with $a \in C_n(\tilde M_0)$ and $b \in R_N(\tilde N_0)$, then letting $\{ e_i \}_i$ and $\{ f_j \}_j$ denote approximate identities for $M_0$ and $N_0$ respectively we have $s_{\omega, N_0}^{\rho, M_0}( e_i a^* y b f_j ) \leq \rho(e_i a^* a e_i)^{1/2} \| y \| \omega(f_j b^*b f_j)^{1/2}$. Since $\lim_{i \to \infty} \rho( e_i a^* a e_i ) = \rho(a^*a), \lim_{j \to \infty} \omega(f_j b^*b f_j) = \omega(b^*b)$, and $\lim_{i \to \infty, j \to \infty} \| x - e_i x f_j \| = 0$, it then follows that $s_{\omega, N_0}^{\rho, M_0} \leq  s_{\omega, \tilde N_0}^{\rho, \tilde M_0}$. 

For a general operator $M_0$-$N_0$ bimodule we may then define $s_\omega^\rho$ on all of $X$ by the formula
\[
s_\omega^\rho(x) = \max \left\{ \inf_{z \in M_0 X N_0}  \| x - z \|,  s_{\omega, \tilde N_0}^{\rho, \tilde M_0}(x) \right\}.
\]

We call the topology on $X$ induced by the family of seminorms $\{s^\rho_\omega\mid \omega\in (M_0)^\sharp_+, \rho \in (N_0)^\sharp_+ \}$ the $(M_0 \subset M)$-$(N_0 \subset N)$-topology (or simply the $(M_0 \subset M)$-topology when $M_0 = N_0$ and $M = N$). In the case when $M_0 = M$ and $N_0 = N$, the following lemma is contained in \cite[Lemma 3.1]{Ma98}.

\begin{lem}
Let $M$ and $N$ be von Neumann algebras, let $M_0 \subset M$ and $N_0 \subset N$ be ultraweakly dense ${\rm C}^*$-subalgebras, let $X$ be an operator $M_0$-$N_0$-bimodule, and $Y\subset X$ an operator $M_0$-$N_0$-subbimodule.
Then the restriction of the $(M_0 \subset M)$-$(N_0 \subset N)$-topology on $X$ to $Y$ coincides with the $(M_0 \subset M)$-$(N_0 \subset N)$-topology on $Y$.
\end{lem}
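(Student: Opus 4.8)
The plan is to prove the two inclusions between the relevant topologies on $Y$. The subtle direction is showing that the $(M_0 \subset M)$-$(N_0 \subset N)$-topology on $X$, restricted to $Y$, is no coarser than the intrinsic topology on $Y$; equivalently, that the seminorms defined via decompositions $x = a^* y b$ with the ``middle term'' $y$ allowed to lie in $\M_n(X)$ give the same values (up to the topology) as those where $y$ is forced into $\M_n(Y)$. One inclusion is automatic: since any decomposition of $z \in Y$ through $\M_n(Y)$ is in particular a decomposition through $\M_n(X)$, the seminorm $s_\omega^\rho$ computed in $X$ is dominated by the one computed in $Y$, so the restricted topology is coarser than or equal to the intrinsic one.

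For the reverse, the key step is to reduce to the unital case using the reduction already carried out in the excerpt for $s_{\omega,N_0}^{\rho,M_0}$ versus $s_{\omega,\tilde N_0}^{\rho,\tilde M_0}$, so that I may assume $M_0, N_0$ are unital and work with the honest infimum over $\M_n(X)$. I would then fix $z \in Y$ and a decomposition $z = a^* y b$ with $a \in C_n(M_0)$, $b \in C_n(N_0)$, and $y \in \M_n(X)$ realizing $s_\omega^\rho(z)$ up to $\eps$. The goal is to replace $y$ by an element of $\M_n(Y)$ without increasing the three factors. The natural move is to ``compress $y$ back into $Y$'' by observing that $a^* y b = a^* (p y q) b$ for suitable module actions, where $p,q$ are built from $a,b$ themselves; concretely, writing $a = (a_1, \dots, a_n)^t$ and $b = (b_1,\dots,b_n)^t$, the element $z = \sum_{i,j} a_i^* y_{ij} b_j$ lies in $Y$, and each summand $a_i^* y_{ij} b_j = a_i^* y_{ij} b_j$ already sits in $M_0 X N_0$. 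The point is that $M_0 X N_0 \cap$ (the linear span) need not be in $Y$ termwise, so one cannot naively conclude; the fix is to use that $Y$ is an $M_0$-$N_0$-subbimodule so that the relevant compressions $a_i^* (\,\cdot\,) b_j$ land in $Y$ once the middle is adjusted.

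The main obstacle is exactly this middle-term replacement: a single $y \in \M_n(X)$ decomposing an element of $Y$ need not itself have entries in $Y$, so I cannot simply read off a $\M_n(Y)$-decomposition. The correct technique, following Magajna's argument in \cite[Lemma 3.1]{Ma98} adapted to the $C_n/R_n$ setting, is to enlarge the matrix size and use a standard Haagerup-norm factorization trick: replace the middle $y$ by $v^* y w$ where $v,w$ are appropriate row/column contractions over $M_0, N_0$, chosen so that $a^* v^* y w b = a^* y b = z$ while $v^* y w$ now lies in $\M_m(Y)$ because the outer compressions pull the $X$-element into the subbimodule $Y$. This uses only that $Y$ is closed under the $M_0$-$N_0$-action and that the approximate-identity estimates from the excerpt control the norm factors, so that the factors $\rho(\,\cdot\,)^{1/2}$, $\|\cdot\|$, and $\omega(\,\cdot\,)^{1/2}$ are preserved up to $\eps$. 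Once the middle term is relocated into $\M_m(Y)$, the intrinsic seminorm on $Y$ is bounded by $s_\omega^\rho(z) + \eps$, giving the reverse domination and hence equality of topologies.

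Finally I would assemble the two dominations to conclude that the two families of seminorms induce the same topology on $Y$, noting that the general (non-unital) case follows from the unital case by the reduction to $\tilde M_0, \tilde N_0$ together with the approximate-identity argument already spelled out in the excerpt, which shows the two seminorm families agree on $M_0 Y N_0$ and that the defect term $\inf_{z} \|x - z\|$ is computed identically in $X$ and in $Y$ since $M_0 Y N_0 \subset M_0 X N_0$ while the norm itself is inherited. I expect the bookkeeping for the enlarged matrix sizes and the verification that the outer compressions genuinely land in $Y$ to be the only delicate points; everything else is a routine transcription of the single-algebra case.
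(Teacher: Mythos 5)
Your easy direction is fine, the non-unital reduction via unitization and approximate identities is correctly delegated to the argument already in the paper, and you have correctly located the source of the hard direction in Magajna. But the mechanism you propose for the key step does not work. You want to replace the middle term $y \in \M_n(X)$ of a decomposition $z = a^* y b$ by $v^* y w$ with $v, w$ row/column contractions over $M_0$, $N_0$, and you assert that ``$v^* y w$ now lies in $\M_m(Y)$ because the outer compressions pull the $X$-element into the subbimodule $Y$.'' This is false as stated: the entries of $v^* y w$ are sums $\sum_{i,j} v_{ki}^* \, y_{ij} \, w_{jl} \in M_0 X N_0 \subset X$, and nothing forces them into $Y$ --- the subbimodule property only helps once it is applied to an element already known to lie in $Y$. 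The only such element available is the full product $z = a^* y b$ itself, so any successful relocation of the middle term must reconstitute that entire product rather than compress $y$ entrywise; your sketch never supplies the device that accomplishes this.

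The paper's device, which your outline is missing, is a regularized absorption that shrinks the middle to a $1\times 1$ element of $Y$ rather than enlarging the matrix size. In the unital case set $f_k(t) = \max(t, 1/k)$; since $a \in C_n(M_0)$ gives $a^*a \in M_0$, functional calculus yields $|a| = (a^*a)^{1/2} \in M_0$, and unitality together with $f_k \geq 1/k$ gives the invertible element $f_k(|a|)^{-1} \in M_0$, similarly $f_k(|b|)^{-1} \in N_0$. Then
\[
y_k := f_k(|a|)^{-1} \, a^* y b \, f_k(|b|)^{-1} = f_k(|a|)^{-1} \, z \, f_k(|b|)^{-1} \in M_0 Y N_0 \subset Y,
\]
and it lies in $Y$ precisely because it is an $M_0$-$N_0$ multiple of $z$. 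Since $\| f_k(|a|)^{-1} a^* \| \leq 1$ and $\| b f_k(|b|)^{-1} \| \leq 1$, one has $\| y_k \| \leq \| y \|$, and $z = f_k(|a|) \, y_k \, f_k(|b|)$ is now a decomposition intrinsic to $Y$ with outer factors in $M_0$ and $N_0$. Letting $k \to \infty$, the norm convergence $f_k(|a|)^2 \to |a|^2$ and $f_k(|b|)^2 \to |b|^2$ gives $\rho(f_k(|a|)^2) \to \rho(a^*a)$ and $\omega(f_k(|b|)^2) \to \omega(b^*b)$, whence $s^\rho_{\omega, Y}(z) \leq s^\rho_{\omega, X}(z)$. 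Note that the one quantitative tool your sketch invokes at this point, approximate identities, is relevant only to the non-unital reduction and cannot substitute for the functional-calculus regularization here; as written, the central step of your proof has a genuine gap.
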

\begin{proof}
Given $\omega \in (M_0)^\sharp_+$ and $\rho \in (N_0)^\sharp_+$, we denote by $s^\rho_{\omega, X}$ and $s^\rho_{\omega, Y}$ the above seminorms on $X$ and $Y$, respectively.
It is clear that $s^\rho_{\omega, X}(y)\leq s^\rho_{\omega, Y}(y)$ for any $y\in Y$.
To see the reverse, suppose first that $1_M \in M_0$ and $1_N \in N_0$ and $y\in Y$ is such that we have a decomposition $y= a^* x b$ with $x \in \M_n(X)$, $a \in C_n(M_0)$, and $b \in C_n(N_0)$.

For $k \geq 1$ we let $f_k: [0, \infty) \to [0, \infty)$ denote the function given by $f_k(t) = \begin{cases}
1/k & 0 \leq t \leq 1/k \\
t & t > 1/k
\end{cases}$, then $y_k = f_k( | a | )^{-1}a^* x b f_k( | b | )^{-1} \in Y$ with $\| y_k \| \leq \| y \|$, and so 
\[
s^\rho_{\omega, Y}( y ) \leq \rho( f_k( | a | )^2 )^{1/2} \| y \| \omega( f_k( | b | )^2 )^{1/2}.
\]
Since $f_k( | a | )^2 \to |a|^2$ and $f_k( | b |)^2 \to | b |^2$ in norm, we then have $s^\rho_{\omega, Y}( y ) \leq s^\rho_{\omega, X}( y )$.

In the non-unital case we notice that if $\{ e_i \}_i$ and $\{ f_j \}_j$ are approximate units for $N_0$ and $M_0$, respectively, then we have $\inf_{z \in M_0 X N_0 } \| x - z \| = \lim_{i \to \infty, j \to \infty} \| x - e_i x f_j \|$ for $x\in X$, and so if $y \in Y$, then we have $\inf_{z \in M_0 X N_0} \| y - z \| = \inf_{z \in M_0 Y N_0} \| y - z \|$. The general result then follows from the unital case. 
\end{proof}

\begin{rem}
If $1_M \in M_0$, $1_N \in N_0$ and we are given an operator $M_0$-$N_0$-bimodule $X$, we may also consider the function 
\[
\tilde s^\rho_\omega(x)=\inf\{\rho(a^2)^{1/2} \|y\| \omega(b^2)^{1/2}\mid x= a y b, a, b\in (M_0)_+, y\in X\}. 
\]
The same argument above shows that if $Y \subset X$ is an $M_0$-$N_0$-subbimodule and $y \in Y$, then the value of $\tilde s^\rho_\omega(y)$ does not depend on whether we consider $y$ as an element in $Y$ or $X$. 

In the case when $X = \B(\mathcal H)$, we may use polar decomposition in $\B(\mathcal H)$ to see that $\tilde s^\rho_\omega = s^\rho_\omega$. Since every operator $M_0$-$N_0$-bimodule is a subbimodule of $\B(\mathcal H)$ for some Hilbert space, it then follows that $s^\rho_\omega = \tilde s^\rho_\omega$ in general. When $M_0 = N_0$, $M = N$, and $\omega = \rho$, we use the notation $s_\omega$ in place of $s_\omega^\omega$.
\end{rem}

Continuing as above with $M$ and $N$ von Neumann algebras, $M_0 \subset M$, $N_0 \subset N$ ultraweakly dense ${\rm C}^*$-subalgebras, and $X$ an operator $M_0$-$N_0$-bimodule, we denote by $X^{M_0\sharp N_0}$ (or simply $X^\sharp$ if no confusion will arise) 
the space of functionals $\varphi\in X^*$ such that for any $x\in X$ the map
	$M_0\times N_0\ni (a,b)\mapsto \varphi(axb)$ is separately ultraweak continuous.
We call the $\sigma(X, X^{M_0\sharp N_0})$-topology on $X$ the weak $(M_0 \subset M)$-$(N_0 \subset N)$-topology (or simply the weak $(M_0 \subset M)$-topology if $M_0 = N_0$ and $M = N$). The following proposition is from \cite[Theorem 3.7]{Ma00}, and we include a proof for convenience.

\begin{prop}\label{prop:A-continuous and topology}
In the above setting, a functional $\varphi\in X^*$ is continuous in the $(M_0 \subset M)$-$(N_0 \subset N)$-topology if and only if $\varphi\in X^{M_0 \sharp N_0}$.
\end{prop}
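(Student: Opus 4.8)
My plan is to establish the equivalence by proving the two implications separately. The implication ``$(M_0\subset M)$-$(N_0\subset N)$-continuous $\implies\varphi\in X^{M_0\sharp N_0}$'' is the routine one, so I would dispatch it first; the converse is where the analytic content lies.

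For the forward implication, I would start from the definition of the topology: if $\varphi$ is continuous, then there are finitely many $\omega_1,\dots,\omega_k\in(M_0)^\sharp_+$, $\rho_1,\dots,\rho_k\in(N_0)^\sharp_+$ and a constant $C>0$ with $|\varphi(x)|\le C\max_i s_{\omega_i}^{\rho_i}(x)$ for all $x\in X$. Since $s_\omega^\rho$ is monotone nondecreasing in each of $\omega$ and $\rho$, putting $\omega=\sum_i\omega_i$ and $\rho=\sum_i\rho_i$ collapses this to a single estimate
\[
|\varphi(x)|\le C\,s_\omega^\rho(x)\qquad(x\in X).
\]
Then, fixing $x\in X$ and $b\in N_0$, I would test this on the length-one decomposition $a x b=(a^*)^*\,x\,b$ (left coefficient $a^*\in M_0$, middle term $x$, right coefficient $b\in N_0$) to get $|\varphi(axb)|\le C\,\omega(aa^*)^{1/2}\|x\|\,\rho(b^*b)^{1/2}$, so that for this fixed data $|\varphi(axb)|\le C'\,\omega(aa^*)^{1/2}$. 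Applying the bound to $a\mapsto\overline{\varphi(a^*xb)}$ produces a functional dominated by $C'\,\omega(a^*a)^{1/2}=C'\,\|\pi_\omega(a)\xi_\omega\|$ on the GNS space of $\omega$, whence by the Riesz representation theorem it is a vector functional in the normal GNS representation of $M$ and extends to an element of $M_*$. Thus $a\mapsto\varphi(axb)$ is ultraweakly continuous on $M_0$; the symmetric computation handles $b\mapsto\varphi(axb)$, and so $\varphi\in X^{M_0\sharp N_0}$.

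For the converse, suppose $\varphi\in X^{M_0\sharp N_0}$. The plan is to reduce everything to producing positive $\omega\in(M_0)^\sharp_+$ and $\rho\in(N_0)^\sharp_+$ realizing the complete factorization estimate
\[
|\varphi(a^* y b)|\le\omega(a^*a)^{1/2}\,\|y\|\,\rho(b^*b)^{1/2}
\]
for all $n\in\N$, $a\in C_n(M_0)$, $y\in\M_n(X)$, $b\in C_n(N_0)$; this gives $|\varphi(z)|\le s_\omega^\rho(z)$ for $z\in M_0XN_0$, and the uniform summand in the definition of $s_\omega^\rho$ then extends the bound to all of $X$, establishing continuity. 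To build $\omega,\rho$, I would realize $\varphi$ through a GNS/Stinespring-type construction: using separate normality of the pairing, produce a Hilbert space $\mathcal K$, a completely contractive $M_0$-$N_0$-bimodule map $\Lambda\colon X\to\B(\mathcal K)$, and vectors $\xi,\eta\in\mathcal K$ with
\[
\varphi(x)=\langle\Lambda(x)\xi,\eta\rangle\qquad(x\in X),
\]
arranged so that the representations of $M$ and of $N$ on $\mathcal K$ are normal. Setting $\omega(m)=\langle m\,\eta,\eta\rangle$ and $\rho(m)=\langle m\,\xi,\xi\rangle$, normality of these functionals is immediate from normality of the representations, and since $\Lambda$ is bimodular and completely contractive, Cauchy--Schwarz applied to $\varphi(a^*yb)=\big\langle[\Lambda(y_{ij})]\,(b_j\xi)_j,\,(a_i\eta)_i\big\rangle$ on $\mathcal K^n$ yields the displayed estimate, with $\sum_j\|b_j\xi\|^2=\rho(b^*b)$ and $\sum_i\|a_i\eta\|^2=\omega(a^*a)$.

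The hard part will be exactly the construction of this dilation with \emph{normal} $M$- and $N$-actions. The hypothesis supplies only that each of $a\mapsto\varphi(axb)$ and $b\mapsto\varphi(axb)$ is ultraweakly continuous; I must upgrade this separate normality into one normal representation $\Lambda$ carrying both module structures at once, and verify that the vector functionals $m\mapsto\langle m\eta,\eta\rangle$ and $m\mapsto\langle m\xi,\xi\rangle$ it produces are genuinely normal on $M$ and $N$ (rather than merely continuous for the topology of the ambient $\mathcal H$, which need not realize $X$ as a normal bimodule). This is where the separate-normality hypothesis is fully consumed, and it is the heart of the argument, following Magajna \cite[Theorem 3.7]{Ma00}. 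The remaining ingredients---monotonicity and additivity of the seminorms, and the passage between the unital and non-unital definitions via $\tilde M_0,\tilde N_0$ and the uniform summand---I expect to be routine.
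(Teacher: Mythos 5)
Your overall architecture coincides with the paper's: the routine direction (continuity implies separate normality) via a single dominating seminorm obtained by summing, testing on the decomposition $axb$, and a GNS/Riesz argument is fine, and this is exactly the direction the paper dismisses as ``clear.'' Your endgame for the converse --- a factorization $\varphi(x)=\langle\Lambda(x)\xi,\eta\rangle$ producing normal positive functionals $\omega,\rho$ with $|\varphi(a^*yb)|\leq \omega(a^*a)^{1/2}\|y\|\rho(b^*b)^{1/2}$ via Cauchy--Schwarz on the amplification --- is also exactly where the paper's proof ends up.

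The genuine gap is at the point you flag yourself: you never construct the dilation, and the citation you propose cannot fill the hole. Magajna's Theorem 3.7 in \cite{Ma00} is the case $M_0=M$, $N_0=N$; the present proposition is precisely its extension to ultraweakly dense ${\rm C}^*$-subalgebras, so invoking it for the key step is circular in the relative setting. Moreover, aiming to arrange the dilation ``so that the representations of $M$ and of $N$ on $\mathcal K$ are normal'' is the wrong target: a dilation of $\varphi$ need not carry globally normal actions, and separate normality of the pairings is too weak to force this. The paper sidesteps the issue in two steps. First, for \emph{any} bounded $\varphi\in X^*$ --- no normality used --- the trilinear map $M_0\times X\times N_0\ni(a,x,b)\mapsto\varphi(axb)$ is completely bounded, because the bimodule action is composition of operators in $\B(\cH)$ and hence completely contractive in the Haagerup sense; the Christensen--Paulsen--Sinclair representation theorem then yields ordinary, generally \emph{non-normal} $*$-representations $\pi_1:M_0\to\B(\cK_1)$, $\pi_2:N_0\to\B(\cK_2)$, a complete contraction $\phi:X\to\B(\cH)$, operators $T_1,T_2$, and vectors $\xi_i\in\cK_i$ with $\varphi(axb)=\langle \pi_1(a)T_1\phi(x)T_2\pi_2(b)\xi_2,\xi_1\rangle$. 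Second --- and this is where the separate-normality hypothesis is actually consumed --- letting $p_1\in M_0^{**}$ and $p_2\in N_0^{**}$ be the central support projections of the inclusions $M_0\to M$ and $N_0\to N$, the ultraweak continuity of $a\mapsto\varphi(axb)$ and $b\mapsto\varphi(axb)$ means these functionals live under $p_1,p_2$, so the projections can be inserted without changing any value: $\varphi(axb)=\langle \pi_1(a)T_1\phi(x)T_2\pi_2(b)p_2\xi_2,p_1\xi_1\rangle$. The compressed vector states $\eta_i(\cdot)=\langle\pi_i(\cdot)p_i\xi_i,p_i\xi_i\rangle$ are then automatically in $M_0^\sharp$ and $N_0^\sharp$, and Cauchy--Schwarz gives $|\varphi(axb)|\leq\|T_1\|\|T_2\|\,\eta_1(aa^*)^{1/2}\|x\|\,\eta_2(b^*b)^{1/2}$, which is your desired estimate. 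So the missing idea is concrete: do not try to normalize the representation; take an arbitrary CPS dilation and project the \emph{vectors} onto the normal part in the bidual.
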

\begin{proof}
The backward direction is clear. To see the forward direction, note that 
\[
M_0\times X\times N_0\ni (a, x, b)\mapsto \varphi(axb)\in \C
\]
is a completely bounded multilinear map.
It follows from the Christensen-Paulsen-Sinclair representation theorem (see, e.g., \cite[Theorem 1.5.4]{SinSmi95})
	that there exist $*$-representations $\pi_1: M_0\to \B(\cK_1)$, $\pi_2: N_0 \to \B(\cK_2)$, a complete contraction $\phi: X\to \B(\cH)$,
	 bounded operators $T_1:\cH\to \cK_1$, $T_2: \cK_2\to \cH$, and vectors $\xi_i\in \cK_i$ such that 
	$\varphi(axb)=\langle \pi_1(a) T_1 \phi(x) T_2 \pi_2(b) \xi_2, \xi_1\rangle$.
We let $p_1 \in M_0^{**}$ (resp.\ $p_2 \in N_0^{**}$) denote the support projection for the inclusion $M_0 \to M$ (resp.\ $N_0 \to N$). Since for any $x \in X$ the map $M_0\times N_0\ni (a,b)\mapsto \varphi(axb)$ is separately ultraweak continuous, it then follows that we have $\varphi(axb)=\langle \pi_1(a) T_1 \phi(x) T_2 \pi_2(b) p_2 \xi_2, p_1\xi_1\rangle$ for $x \in X$, $a \in M_0$, and $b \in N_0$.

Then we have $|\varphi(axb)|\leq \|T_1\|\|T_2\| \eta_1(a a^*)^{1/2} \|x\| \eta_2(b^*b)^{1/2}$,
	where $\eta_i(\cdot)=\langle \pi_i(\cdot) p\xi_i, p\xi_i\rangle$.
Since $\eta_1 \in M_0^\sharp$ and $\eta_2 \in N_0^\sharp$, it is then easy to conclude that $\varphi$ is continuous in the $(M_0 \subset M)$-$(N_0 \subset N)$-topology.
\end{proof}

The following result is in the same spirit as the Noncommutative Egorov Theorem.

\begin{lem}\label{lem:compressnorm}
Let $M$ be a von Neumann algebra, $M_0\subset M$ an ultraweakly dense ${\rm C}^*$-subalgebra and $X$ an operator $M_0$-bimodule.
Suppose for each $1\leq k\leq n$, $\{x_i^k\}_{i\in I}\subset X$ is a net converging to $0$ in the $(M_0\subset M)$-topology.
Then, for any finite subset $F=\{\varphi_j\mid 1\leq j\leq m\}\subset (M_0^\sharp)_+$, we may find a net of positive contractions $1 - z_i \in M_0$, 
	such that $\| z_i x_i^k z_i \|\to 0$ and $\varphi_j(1 - z_i)\to 0$ for any $1\leq j\leq m$.
When $M_0=M$, we may choose $\{z_i\}$ to be projections.
\end{lem}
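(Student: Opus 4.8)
The plan is to treat the $(M_0 \subset M)$-topology as a measure-theoretic notion of smallness and to perform a noncommutative Egorov-type upgrade to norm smallness after compressing by an element close to $1$. First I would simplify the data. Replacing $F$ by the single functional $\varphi := \sum_{j=1}^m \varphi_j \in (M_0^\sharp)_+$, it suffices to arrange $\varphi(1 - z_i) \to 0$, since $\varphi_j \le \varphi$ then forces $\varphi_j(1-z_i)\to 0$. Next I reduce to the case $x_i^k \in M_0 X M_0$: one of the two pieces defining $s_\varphi$ is $\inf_{z \in M_0 X M_0}\| \cdot - z\|$, so $s_\varphi(x_i^k) \to 0$ lets me choose $\zeta_i^k \in M_0 X M_0$ with $\|x_i^k - \zeta_i^k\| \to 0$; as $s_\varphi$ is norm continuous (bound the defining infimum by taking $z = 0$ and the unital decomposition) the $\zeta_i^k$ are still null in the topology, and since every candidate $z_i$ is a contraction we have $\|z_i x_i^k z_i - z_i \zeta_i^k z_i\| \le \|x_i^k - \zeta_i^k\| \to 0$, so I may replace $x_i^k$ by $\zeta_i^k$. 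For $x_i^k \in M_0 X M_0$ the equality $s_\varphi = s^{M_0}_\varphi$ established above identifies $s_\varphi$ with the seminorm computed from genuine $M_0$-column decompositions, so for each $i,k$ I may pick $x_i^k = (a_i^k)^* y_i^k b_i^k$ with $a_i^k, b_i^k \in C_{n}(M_0)$, $\|y_i^k\| \le 1$, and, after the standard rescaling $a \mapsto sa,\ b \mapsto s^{-1}b$, with $\varphi((a_i^k)^* a_i^k),\ \varphi((b_i^k)^* b_i^k) \le \delta_i^k$ where $\delta_i^k \to 0$.

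The core of the argument handles all $n$ nets simultaneously through one positive element. I set $k_i := \sum_{k=1}^n \big( (a_i^k)^* a_i^k + (b_i^k)^* b_i^k \big) \in (M_0)_+$, so that $(a_i^k)^* a_i^k, (b_i^k)^* b_i^k \le k_i$ and $\varphi(k_i) = 2\Delta_i$ with $\Delta_i := \sum_k \delta_i^k \to 0$. For a threshold $t>0$ put $g_t(s) := \min(1, t/s)$ and $g_t(0):=1$; then $g_t$ is continuous, $0 \le g_t \le 1$, and $1 - g_t$ vanishes at $0$, so that $z_i := g_{t_i}(k_i)$ is a positive contraction with $1 - z_i = (1 - g_{t_i})(k_i) \in M_0$ by non-unital continuous functional calculus, since $k_i \in M_0$. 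Everything then follows from two elementary scalar inequalities. From $\sup_{s \ge 0} g_t(s)^2 s \le t$ one gets $\|z_i k_i z_i\| \le t_i$, whence $\|z_i (a_i^k)^*\|^2 = \|z_i (a_i^k)^* a_i^k z_i\| \le \|z_i k_i z_i\| \le t_i$ and likewise for $b_i^k$, giving $\|z_i x_i^k z_i\| \le \|z_i (a_i^k)^*\|\,\|y_i^k\|\,\|b_i^k z_i\| \le t_i$; and from $1 - g_t(s) \le s/t$ one gets $\varphi(1 - z_i) \le \varphi(k_i)/t_i = 2\Delta_i/t_i$. The single choice $t_i := \Delta_i^{1/2} \to 0$ balances these into $\|z_i x_i^k z_i\| \le \Delta_i^{1/2} \to 0$ and $\varphi(1-z_i) \le 2\Delta_i^{1/2} \to 0$, completing the construction. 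When $M_0 = M$ I would instead take the spectral projection $z_i := 1_{[0,t_i]}(k_i) \in M$, in which case $1 - z_i = 1_{(t_i,\infty)}(k_i)$ is a projection, $\|z_i k_i z_i\| \le t_i$, and $1_{(t_i,\infty)}(k_i) \le k_i/t_i$ yields the same two estimates.

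The main obstacle is the Egorov-type upgrade itself: converting the weak estimates $s_\varphi(x_i^k) \to 0$ into honest norm convergence of the compressions, simultaneously in $k$, without leaving $M_0$. The device resolving it is to pack all the decomposition data into the single positive element $k_i \in M_0$ and to cut off with a function that $(i)$ vanishes at $0$, keeping $1 - z_i$ inside the possibly non-unital $M_0$, and $(ii)$ obeys the two sharp inequalities above, which transfer the smallness of $\varphi(k_i)$ to both conclusions after one threshold choice. The enabling technical step is the reduction to $x_i^k \in M_0 X M_0$, where $s_\varphi$ is computed from genuine $M_0$-decompositions so that $k_i$ truly lies in $M_0$; this is routine but is precisely what makes the constraint $1 - z_i \in M_0$ hold.
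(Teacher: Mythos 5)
Your proof is correct, and its core is the same noncommutative Egorov device as the paper's: reduce to $x_i^k \in M_0 X M_0$, extract column decompositions $x_i^k = (a_i^k)^* y_i^k b_i^k$ with small $\varphi$-mass, pack all the terms $(a_i^k)^* a_i^k + (b_i^k)^* b_i^k$ into a single positive element $k_i \in M_0$, and cut off by a continuous function equal to $1$ at $0$ so that $1 - z_i$ lands in the possibly non-unital $M_0$, with the spectral projection $1_{[0,t_i]}(k_i)$ handling the case $M_0 = M$. You deviate in two mild but worthwhile ways. First, you aggregate $F$ into the single functional $\varphi = \sum_j \varphi_j$ before choosing decompositions, so that one decomposition per $(i,k)$ is automatically small against every $\varphi_j$ (since $s_{\varphi_j} \leq s_\varphi$); the paper instead chooses a separate decomposition for each $\varphi_j$ and sums over both $j$ and $k$ inside the cutoff, and its closing estimate $\varphi_j(1 - z_i) \leq N \sum_{j',k}(\cdots)$ then tacitly requires the terms indexed by $j' \neq j$ to be small against $\varphi_j$ as well -- your reduction makes this simultaneity automatic and is the cleaner bookkeeping. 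Second, you normalize $\| y_i^k \| \leq 1$ and let a moving threshold $t_i = \Delta_i^{1/2}$ balance the two estimates via $g_t = \min(1, t/\cdot)$, whereas the paper fixes a piecewise-linear cutoff $f_N$ and lets the balanced normalization $\varphi_j(a^*a)^{1/2} = \| y \| = \varphi_j(b^*b)^{1/2} \to 0$ carry both conclusions; these are interchangeable. One degenerate-case nit: if $\Delta_i = 0$ your threshold collapses and $g_0$ is discontinuous, so take $t_i = \max(\Delta_i^{1/2}, \varepsilon_i)$ for any positive net $\varepsilon_i \to 0$, which still gives $\| z_i x_i^k z_i \| \leq t_i \to 0$ and $\varphi(1 - z_i) \leq 2\Delta_i / t_i \leq 2\Delta_i^{1/2} \to 0$.
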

\begin{proof}
Note first that it suffices to consider the case when $\{ x_i^k \}_{i \in I} \subset M_0 X M_0$. For each $i$, $1\leq k\leq n$ and $1\leq j\leq m$, we may then find a decomposition $x_i^k= a_{i,j,k}^* y_{i,j,k} b_{i,j,k}$
	with $a_{i,j,k}, b_{i,j,k}\in M_0$, $y_{i,j,k}\in X$ 
	and $\varphi_j(a_{i,j,k}^*a_{i,j,k})^{1/2}=\|y_{i,j,k}\|=\varphi_j(b_{i,j,k}^* b_{i,j,k})^{1/2}\to 0$.

We fix $N > 1$ and let $f_N: [0, \infty) \to [0, 1]$ denote a piecewise linear continuous function such that $f_N(t) = 1$ if $t \leq 1/N$, and $f_N(t) = 0$ if $t \geq 2/N$. We then define
\[
z_i = f_N\left( \sum_{j = 1}^m \sum_{k = 1}^n ( a_{i, j, k}^* a_{i, j, k} + b_{i, j, k}^* b_{i, j, k} ) \right) \in M_0.
\]
Note that $0 \leq z_i \leq 1$, and 
\[	
\| z_i a_{i,j,k}^* \|^2 
\leq \| z_i \left( \sum_{j = 1}^m \sum_{k = 1}^n ( a_{i,j,k}^* a_{i,j,k} + b_{i,j,k}^* b_{i,j,k})  \right) z_i \| \leq 2/N.
\]	 
Similarly we have $\| b_{i, j, k}  z_i \|^2 \leq 2/N$,
	and it follows that $\lim_{i\to \infty}\| z_i x_i^k z_i \|=0$ for any $1\leq k\leq n$.
Also, note that we have $1-z_i \leq N  \sum_{j = 1}^m \sum_{k = 1}^n ( a_{i, j, k}^* a_{i, j, k} + b_{i, j, k}^* b_{i, j, k})$ and hence $\lim_{i \to \infty} \varphi_j(1 - z_i) = 0$. 

When $M_0=M$, replace $f_N$ with $1_{[0, 1/N]}$ so that $z_i$ is a projection in $M$.
\end{proof}

The next lemma will allow us to adapt the standard c.c.\ to u.c.p.\ perturbation result to the setting of $M$-topology.

\begin{lem}\label{lem:norm converge on unit}
Let $M$ be a von Neumann algebra, and let $M_0 \subset M$ be an ultraweakly dense ${\rm C}^*$-subalgebra. 
Let $E$ be an operator $M_0$-system, and for $i\in I$ 
	suppose $E_i$ is an operator system and $\phi_i: E_i \to E$ is a net of c.c.\ maps 
	such that $1_F - \phi_i(1_{E_i}) \to 0$ in the $(M_0 \subset M)$-topology. 
Then, for any finite subset of states $F\subset M_0^\sharp$, 
	there exist c.c.\ maps $\psi_i: E_i \to E$ such that $\| 1_E - \psi_i(1_{E_i}) \| \to 0$ and 
\[
\lim_{i} \sup_{x \in (E_i)_1} s_\omega^\rho( (\psi_i - \phi_i)(x) ) = 0,
\]
for any $\omega, \rho\in F$.
\end{lem}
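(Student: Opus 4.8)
The plan is to use Lemma~\ref{lem:compressnorm} to trade the convergence $1_E - \phi_i(1_{E_i}) \to 0$ in the $(M_0 \subset M)$-topology for honest norm convergence after compressing by elements of $M_0$ that are close to $1_E$ in the states of $F$, and then to repair unitality in norm by filling in the resulting deficiency with a unital correction. First I would apply Lemma~\ref{lem:compressnorm} to the single net $x_i := 1_E - \phi_i(1_{E_i})$ and to the finite set $F$, obtaining positive contractions $1 - z_i \in M_0$ (so that $z_i \in M_0$, as $1_M \in M_0$) with
\[
\| z_i(1_E - \phi_i(1_{E_i}))z_i \| \to 0 \qquad \text{and} \qquad \varphi(1_E - z_i) \to 0 \text{ for every } \varphi \in F.
\]

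Next, fixing any state $\eta_i$ on the operator system $E_i$, I would set
\[
\psi_i(x) = z_i \phi_i(x) z_i + \eta_i(x)(1_E - z_i^2).
\]
Since $(1_E - z_i^2)^{1/2} \in M_0 \subset E$, this is a well-defined map $\psi_i : E_i \to E$. It is completely contractive because it equals $\Ad(V_i) \circ \big(\phi_i(\cdot) \oplus \eta_i(\cdot)1_E\big)$, where $V_i = \big(z_i,\ (1_E - z_i^2)^{1/2}\big)$ is a row contraction with $V_i V_i^* = 1_E$ and $x \mapsto \phi_i(x) \oplus \eta_i(x)1_E$ is completely contractive into $\M_2(E)$ (up to the canonical shuffle $\M_n(\M_2(E)) \cong \M_2(\M_n(E))$ its $n$-th amplification is block diagonal of norm $\max(\|\phi_i^{(n)}(X)\|, \|\eta_i^{(n)}(X)\|) \le \|X\|$); conjugation by the contraction $V_i$ then preserves complete contractivity. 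Finally,
\[
\psi_i(1_{E_i}) = z_i \phi_i(1_{E_i}) z_i + (1_E - z_i^2) = 1_E - z_i(1_E - \phi_i(1_{E_i}))z_i,
\]
so $\|1_E - \psi_i(1_{E_i})\| = \|z_i(1_E - \phi_i(1_{E_i}))z_i\| \to 0$ by the choice of $z_i$.

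It remains to bound $s_\omega^\rho\big((\psi_i - \phi_i)(x)\big)$ uniformly over $x \in (E_i)_1$ for $\omega, \rho \in F$. I would split the difference as
\[
(\psi_i - \phi_i)(x) = \big( (z_i - 1_E)\phi_i(x)z_i + \phi_i(x)(z_i - 1_E) \big) + \eta_i(x)(1_E - z_i^2),
\]
and estimate each summand directly from the definition of $s_\omega^\rho$ using a decomposition $a^* y b$ with $\|y\| \le 1$. For the first group, the decompositions $(a,b) = (z_i - 1_E,\, z_i)$ and $(a,b) = (1_E,\, z_i - 1_E)$ give, via $(1_E - z_i)^2 \le 1_E - z_i$, the bound $s_\omega^\rho \le \rho(1_E - z_i)^{1/2} + \omega(1_E - z_i)^{1/2}$; for the last summand, the decomposition $a = b = (1_E - z_i^2)^{1/2}$ together with $|\eta_i(x)| \le 1$ and $1_E - z_i^2 \le 2(1_E - z_i)$ give $s_\omega^\rho \le 2\,\rho(1_E - z_i)^{1/2}\omega(1_E - z_i)^{1/2}$. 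Since $\rho(1_E - z_i), \omega(1_E - z_i) \to 0$ and all bounds are independent of $x$, the triangle inequality for $s_\omega^\rho$ yields $\sup_{x \in (E_i)_1} s_\omega^\rho\big((\psi_i - \phi_i)(x)\big) \to 0$.

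The main obstacle is precisely the coordination built into Lemma~\ref{lem:compressnorm}: restoring unitality in norm forces a perturbation supported on $1_E - z_i^2$, and this perturbation must simultaneously be negligible in every $s_\omega^\rho$ with $\omega, \rho \in F$. This is why one needs compressors $z_i$ satisfying both $\|z_i(1_E - \phi_i(1_{E_i}))z_i\| \to 0$ and $\varphi(1_E - z_i) \to 0$ for all $\varphi \in F$ at once: the first controls the norm defect of $\psi_i(1_{E_i})$, while the second makes both error groups above small in the relevant seminorms.
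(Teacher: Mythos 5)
Your proof is correct and takes essentially the same route as the paper: apply Lemma~\ref{lem:compressnorm} to the net $1_E-\phi_i(1_{E_i})$, define the identical perturbation $\psi_i(x)=z_i\phi_i(x)z_i+(1-z_i^2)\eta_i(x)$, verify complete contractivity by the same row/column-contraction factorization, and note $\psi_i(1_{E_i})=1_E-z_i(1_E-\phi_i(1_{E_i}))z_i$ to get the norm convergence of units. Your concluding seminorm estimate, via the splitting $(z_i-1)\phi_i(x)z_i+\phi_i(x)(z_i-1)+\eta_i(x)(1-z_i^2)$ with the bounds $\rho(1-z_i)^{1/2}+\omega(1-z_i)^{1/2}+2\rho(1-z_i)^{1/2}\omega(1-z_i)^{1/2}$, is a correct (indeed cleaner) variant of the paper's own estimate of $s_\omega^\rho(\phi_i(x)-z_i\phi_i(x)z_i)$ and yields the same uniform convergence.
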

\begin{proof}
Since $1_E - \phi_i(1_{E_i}) \to 0$ in the $(M_0 \subset M)$-topology, 
	it follows as in the proof of Lemma~\ref{lem:compressnorm} that 
	there exist $1 - z_i \in M_0$ with $0 \leq z_i \leq 1$ such that $\lim_i\omega(1 - z_i^2) = 0$ for any $\omega\in F$ 
	and such that $\| z_i (1_E - \phi_i(1_{E_i})) z_i \| \to 0$. 

For each $i$, fix a state $\eta_i \in E_i^*$ and define $\psi_i: E_i \to E$ by 
$\psi_i(x) = z_i \phi_i(x) z_i + (1 - z_i^2) \eta_i(x) =  \left( \begin{smallmatrix}
	 z_i  \\
	(1 - z_i^2)^{1/2}
	\end{smallmatrix}
	\right)^*  
	\left( \begin{smallmatrix}
	\phi_i(x) & 0 \\
	0 & \eta_i(x) 
	\end{smallmatrix} \right)
	 \left( \begin{smallmatrix}
	 z_i  \\
	(1 - z_i^2)^{1/2}
	\end{smallmatrix} \right)$.

Then, $\psi_i$ is c.c.\ and satisfies $\| 1_E - \psi_n( 1_{E_i} ) \| \to 0$. Moreover, for $x \in E_i$ and $\omega,\rho\in F$ we have
\begin{align}
s_\omega^\rho( \phi_i(x) - \psi_i(x) ) 
& \leq \omega( 1 - z_i^2) \| x \| + s_\omega^\rho( \phi_i(x) - z_i \phi_i(x) z_i ) \nonumber \\
& = \omega(1 - z_i^2) \| x \| + s_\omega ^\rho\left(
\left( \begin{smallmatrix}
 1  \\
-i z_i
\end{smallmatrix}
\right)^*  
\left( \begin{smallmatrix}
\phi_i(x) & 0 \\
0 & \phi_i(x) 
\end{smallmatrix} \right)
 \left( \begin{smallmatrix}
 1  \\ 
i z_i
\end{smallmatrix} \right)
\right) \nonumber \\
&\leq \omega(1 - z_i^2) \| x \| +\omega(1-z_i^2)^{1/2}\|x\|\rho(1-z_i^2)^{1/2}. \nonumber
\end{align}
\end{proof}

\begin{lem}\label{lem:perturb}
Let $M\subset \B(\cH)$ be a von Neumann algebra, $M_0\subset M$ an ultraweakly dense ${\rm C}^*$-subalgebra 
	and $\psi_i: E_i\to \B(\cH)$ a net c.c.\ maps from operator systems $E_i$ to $\B(\cH)$.
Suppose $\psi_i(1_{E_i})\to 1$ in the $(M_0\subset M)$-topology.
Then, for any finite set of states $F\subset M_0^\sharp$, 
	there exists a net of u.c.p.\ maps $\phi_i: E_i\to \B(\cH)$ such that 
	$\lim_i\sup_{x\in (E_{i})_1}s_\omega^\rho((\phi_i-\psi_i)(x))= 0$ for any $\omega, \rho\in F$.
\end{lem}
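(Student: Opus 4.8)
The plan is to reduce to the setting of the previous lemma, Lemma~\ref{lem:norm converge on unit}, and then perform a standard c.c.-to-u.c.p.\ perturbation carried out inside $\B(\cH)$, where there is enough room (dilations and isometries) to actually repair unitality. First I would apply Lemma~\ref{lem:norm converge on unit} with $E=\B(\cH)$, which is an operator $M_0$-system since $M_0\subset M\subset \B(\cH)$. The hypothesis $\psi_i(1_{E_i})\to 1$ in the $(M_0\subset M)$-topology says precisely that $1-\psi_i(1_{E_i})\to 0$ in that topology, so the lemma yields c.c.\ maps $\psi_i':E_i\to\B(\cH)$ with $\|1-\psi_i'(1_{E_i})\|\to 0$ and
\[
\lim_i \sup_{x\in (E_i)_1} s_\omega^\rho\big((\psi_i'-\psi_i)(x)\big)=0 \quad\text{for all } \omega,\rho\in F.
\]
By the triangle inequality for $s_\omega^\rho$ it then suffices to produce u.c.p.\ maps $\phi_i$ close to $\psi_i'$, and since $s_\omega^\rho(y)\le\|y\|$ for states $\omega,\rho$, it is enough to arrange $\|\phi_i-\psi_i'\|_{\rm cb}\to 0$.

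For the perturbation, fix an index $i$ with $\delta_i:=\|1-\psi_i'(1_{E_i})\|<1$. Extend $\psi_i'$ to a c.c.\ map on a unital ${\rm C}^*$-algebra $A_i\supset E_i$ by the Wittstock extension theorem, and apply the Christensen--Paulsen--Sinclair representation (as in \cite[Theorem 1.5.4]{SinSmi95}) to write the extension as $a\mapsto V_{1,i}^*\pi_i(a)V_{2,i}$, where $\pi_i$ is a unital $*$-representation on some $\cK_i$ and $V_{1,i},V_{2,i}:\cH\to\cK_i$ are contractions. Then $V_{1,i}^*V_{2,i}=\psi_i'(1_{E_i})$ lies within $\delta_i$ of $1$, and a Cauchy--Schwarz estimate on unit vectors gives $\|V_{1,i}-V_{2,i}\|^2\le 2\delta_i$, whence $\|V_{1,i}^*V_{1,i}-1\|\le \sqrt{2\delta_i}+\delta_i$. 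For small $\delta_i$ this makes $V_{1,i}$ bounded below, so its polar decomposition furnishes an isometry $U_i:\cH\to\cK_i$ with $\|U_i-V_{1,i}\|=\|1-|V_{1,i}|\|\to 0$. Setting $\phi_i(x)=U_i^*\pi_i(x)U_i$ gives a u.c.p.\ map into $\B(\cH)$, and expanding $\phi_i-\psi_i'$ and inserting $U_i$ bounds $\|\phi_i-\psi_i'\|_{\rm cb}$ by a constant times $\|U_i-V_{1,i}\|+\|V_{1,i}-V_{2,i}\|\to 0$. For the finitely many remaining indices I would simply take $\phi_i$ to be any u.c.p.\ map (say a state times the identity), which does not affect the limit.

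Combining the two steps, for $x\in(E_i)_1$ we have $s_\omega^\rho((\phi_i-\psi_i)(x))\le\|\phi_i-\psi_i'\|_{\rm cb}+s_\omega^\rho((\psi_i'-\psi_i)(x))$; taking the supremum over $(E_i)_1$ and then the limit in $i$ yields the claim. The one genuinely delicate point is the perturbation in the second step: the unit image $\psi_i'(1_{E_i})$ need not even be positive, so naive symmetrization would cost an uncontrolled error. What saves the argument is that the dilation absorbs this, since correcting the two contractions $V_{1,i},V_{2,i}$ to a \emph{single} isometry $U_i$ simultaneously repairs positivity and unitality while retaining c.b.-norm control. I expect this isometry-correction estimate to be the main, though essentially routine, obstacle.
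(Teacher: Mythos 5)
Your proof is correct, and while its skeleton matches the paper's — both begin by applying Lemma~\ref{lem:norm converge on unit} to replace $\psi_i$ by c.c.\ maps $\psi_i'$ with $\|1-\psi_i'(1_{E_i})\|\to 0$ at the cost of an $s_\omega^\rho$-small error — the second step is genuinely different. The paper symmetrizes, replacing $\psi_i'$ by $(\psi_i'(\cdot)+\psi_i'(\cdot^*)^*)/2$, and then invokes \cite[Corollary B.9]{BO08} as a black box to get a u.c.p.\ map with $\|\phi_i-\psi_i'\|_{cb}\leq 2\|\psi_i'(1_{E_i})-1\|$; you instead prove the perturbation by hand via the Wittstock--Paulsen representation $a\mapsto V_{1}^*\pi(a)V_{2}$ and the correction of both contractions to a single isometry, and your estimates ($\|V_1-V_2\|^2\leq 2\delta$ via Cauchy--Schwarz, $\|U-V_1\|\leq\|1-V_1^*V_1\|$ via polar decomposition and $1-t=(1-t^2)(1+t)^{-1}$) all check out. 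Your route buys two things. First, it is self-contained and requires no self-adjointness hypothesis at all. Second, it quietly supplies a point the paper's proof glosses over: after symmetrizing, the paper's displayed bound implicitly requires controlling $s_\omega^\rho(\psi_i'(x^*)^*-\psi_i(x))$, and the term $\psi_i'(x^*)^*-\psi_i'(x)$ is only small because a complete contraction with $\|\psi'(1)-1\|\leq\delta$ is automatically $2\sqrt{2\delta}$-close in cb-norm to its adjoint map $x\mapsto\psi'(x^*)^*$ — a fact whose cleanest proof is exactly your estimate $\|V_1-V_2\|\leq\sqrt{2\delta}$ in the dilation picture. (Your closing remark that ``naive symmetrization would cost an uncontrolled error'' is thus slightly overstated — the error is controlled, but by precisely the mechanism your argument makes explicit.) One small correction: the index set is a net, not a sequence, so there is no ``finitely many remaining indices''; simply define $\phi_i$ to be an arbitrary u.c.p.\ map (a state times $1$, as you say) on the set of indices where $\delta_i$ exceeds your threshold — since $\delta_i\to 0$ along the net, this set is not cofinal in the relevant sense and the limit is unaffected.
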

\begin{proof}
By Lemma~\ref{lem:norm converge on unit}, we have c.c.\ $\psi_i': E_i\to \B(\cH)$ such that $\psi_i(1_{E_i})\to 1$ in norm
	and $\lim_i \sup_{x\in (E_i)_1}s_\omega^\rho((\psi_i-\psi_i')(x))$ for any $\omega,\rho\in F$.
Replacing $\psi_i'(\cdot)$ with $(\psi_i'(\cdot)+\psi_i'(\cdot ^*)^*)/2$, we may further assume $\psi_i'$ is self-adjoint.
Apply \cite[Corollary B.9]{BO08} and we obtain a u.c.p.\ map $\phi_i: E_i\to \B(\cH)$ such that $\|\phi_i-\psi_i'\|_{cb}\leq 2\|\psi_i'(1_{E_i})-1\|$.
It then follows that for any $x\in E_i$,
\[
\begin{aligned}
s^\rho_\omega(\phi_i(x)-\psi_i(x))
	& \leq 2\|\psi_i'(1_{E_i})-1\|\|x\|+  s_\omega^\rho(\psi_i'(x^*)^*-\psi_i(x))/2+s^\rho_\omega(\psi_i'(x)-\psi_i(x))/2\\
 	& \leq 2\|\psi_i'(1_{E_i})-1\|\|x\|+ \omega(1 - z_i^2) \| x \| +\omega(1-z_i^2)^{1/2}\|x\|\rho(1-z_i^2)^{1/2}.
\end{aligned}
\]
\end{proof}

\subsection{$M$-${\rm C}^*$-algebras and normal biduals}

If $M_0$ is a ${\rm C}^*$-algebra, then an $M_0$-${\rm C}^*$-algebra consists of a ${\rm C}^*$-algebra $A$, together with a faithful nondegenerate $*$-homomorphism from $M_0$ into the multiplier algebra $M(A)$.  If $M_0 = M$ is a von Neumann algebra, then an $M$-${\rm C}^*$-algebra $A$ is normal if $M(A)$ is a normal operator $M$-system. More generally, if we have an ultraweakly dense ${\rm C}^*$-algebra $M_0\subset M$,
then an $M_0$-${\rm C}^*$-algebra $A$ is $(M_0 \subset M)$-normal if $M(A)$ is $(M_0 \subset M)$-normal as an operator $M_0$-system. 

If $M$ is a von Neumann algebra and $M_0 \subset M$ is an ultraweakly dense ${\rm C}^*$-subalgebra, and if we have a $(M_0 \subset M)$-normal $M_0$-${\rm C}^*$-algebra $A$, then we let $p_{\rm nor} \in M_0^{**} \subset A^{**}$ denote the projection corresponding to the support of the identity representation $M_0 \to M$. On the predual $( p_{\rm nor} A^{**} p_{\rm nor} )_*$ we may consider the restriction map to $A$, and this gives rise to an operator space isomorphism $( p_{\rm nor} A^{**} p_{\rm nor} )_* \cong A^{ \sharp }$ (see \cite[Section 2]{DKEP22}). The dual map then allows us to equip $A^{ \sharp *}$ with a von Neumann algebraic structure so that 
\[
A^{ \sharp *} \cong p_{\rm nor} A^{**} p_{\rm nor}.
\]
Note that $p_{\rm nor}$ commutes with $M_0 \subset M(A) \subset A^{**}$, and so this isomorphism preserves the natural $M_0$-bimodule structures on $A^{\sharp *}$ and $p_{\rm nor} A^{**} p_{\rm nor}$, so we will view $A^{\sharp *}$ as a von Neumann algebra that contains $M$ as a von Neumann subalgebra.

It is worth noting that the canonical mapping of $A$ into $A^{\sharp *}$ is only a complete order isomorphism, rather than a $*$-isomorphism, since $p_{\rm nor}$ need not to be central in $A^{**}$ if $A$ is not generated as a ${\rm C}^*$-algebra by $M_0$ and $M_0' \cap A$. The von Neumann algebra $A^{\sharp *}$ may be seen as the universal version of the construction considered in \cite[Proposition 3.1]{BoCa15}.

If $A$ is unital and $E \subset A$ a normal operator $M_0$-subsystem, then we may equip $E^{\sharp *}$ with a dual normal operator $M$-system structure, 
	by identifying it as an operator $M$-subsystem of $A^{\sharp *}$. We will denote by $i_E: E \to  E^{\sharp *}$ 
	the canonical complete order isomorphism.

If we have an inclusion of if $(M_0 \subset M)$-normal $M_0$-${\rm C}^*$-algebras $J \subset A$, then we have a canonical (not necessarily unital) embedding of von Neumann algebras $J^{**} \subset A^{**}$, and, by considering an $M_0$-quasi-central approximate unit in $J$, we see that the support projection $p_0$ of $J^{**}$ in $A^{**}$ commutes $p_{\rm nor} \in A^{**}$ and the corresponding projection $p_0 p_{\rm nor}$ gives the corresponding normal projection in $J^{**}$. 
Thus, we also have a canonical (not necessarily unital) embedding of von Neumann algebras $J^{\sharp *} \subset A^{\sharp *}$. In particular, if $J \subset A$ is an ideal, then the support projection $p$ for $J^{\sharp *}$ is central and gives rise to the decomposition
\[
A^{\sharp *} = J^{\sharp *} \oplus p^\perp A^{\sharp *}.
\]
If $J$ is not an ideal, but rather a hereditary ${\rm C}^*$-subalgebra, then $p$ is no longer central and so in this case we obtain the matrix decomposition
\[
A^{\sharp *} = 
\begin{pmatrix}
{J^{\sharp *}} & {p A^{\sharp *} p^\perp} \\
{p^\perp A^{\sharp *} p} & {p^\perp A^{\sharp *} p^\perp} 
\end{pmatrix}.
\]

If $M$ is a von Neumann algebra and $E$ is an operator $M$-system, then the Cauchy-Schwarz inequality easily shows that a positive linear functional $\varphi \in E^*$ is contained in $E^{M \sharp M}$ if and only if $\varphi_{| M}$ is normal. This observation can be used to give the following useful lemma, giving a criterion for when a c.p.\ map is continuous in the $M$-topology.

\begin{lem}\label{lem:weakcontinuity}
Let $M$ and $N$ be von Neumann algebras and let $E$ and $F$ be operator $M$ and $N$-systems, respectively. If $\phi: E \to F$ is completely positive such that the restriction of $\phi$ to $M$ defines a normal map from $M$ to $N$, then $\phi$ is a continuous map from $E$ with the weak $M$-topology, to $F$ with the weak $N$-topology. 
\end{lem}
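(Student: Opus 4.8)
The plan is to use the standard duality criterion for continuity between weak topologies. Since the weak $M$-topology on $E$ and the weak $N$-topology on $F$ are the $\sigma(E, E^{M\sharp M})$- and $\sigma(F, F^{N\sharp N})$-topologies, respectively, the map $\phi$ is continuous between them if and only if the transpose carries $F^{N\sharp N}$ into $E^{M\sharp M}$; that is, $\psi \circ \phi \in E^{M\sharp M}$ for every $\psi \in F^{N\sharp N}$. Thus the entire lemma reduces to verifying this single inclusion, and only the ``if'' direction of the criterion is needed.

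The difficulty is that membership in $E^{M\sharp M}$ requires, for each fixed $y \in E$, that $M \ni a \mapsto (\psi\circ\phi)(ay)$ be normal, and this involves $\phi(ay)$, which cannot be simplified because $\phi$ is not an $M$-bimodule map. To circumvent this I would first reduce to the case where $\psi$ is positive, and then invoke the positivity criterion recorded in the remark preceding the lemma, which for positive functionals tests normality only on the von Neumann subalgebra $M$ rather than on all of $E$.

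For the reduction to positive functionals I would use the dual operator system machinery of the previous subsection: identify $F^{N\sharp N}$ with the predual $(F^{\sharp *})_*$ of the dual operator system $F^{\sharp *} \subseteq A^{\sharp *}$ for a suitable unital ${\rm C}^*$-algebra $A \supseteq F$ on which $N$ acts, and observe that, exactly as for a von Neumann algebra predual, $F^{N\sharp N}$ is linearly spanned by its positive elements. Concretely, writing a given functional as a combination of its self-adjoint parts, a self-adjoint $\psi$ extends to a self-adjoint normal functional on the von Neumann algebra $A^{\sharp *}$, whose Jordan decomposition restricts to a difference of positive elements of $F^{N\sharp N}$; that these restrictions are genuinely positive on $F$ uses that $i_F$ is a complete order isomorphism. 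I expect this decomposition to be the main obstacle, since it is precisely where the structure of $F^{N\sharp N}$ as a predual --- rather than merely a subspace of $F^*$ --- is essential, and where the $F^{\sharp *}$-construction does its work.

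It then remains to treat a positive $\psi \in F^{N\sharp N}$. In this case $\psi \circ \phi$ is a positive functional on $E$, as $\phi$ is completely positive and $\psi \geq 0$, and I claim its restriction to $M$ is normal. Indeed, by hypothesis $\phi|_M$ is a normal map $M \to N$, and by the remark the positive functional $\psi$ restricts to a normal functional $\psi|_N$ on $N$, so that $(\psi\circ\phi)|_M = \psi|_N \circ \phi|_M$ is a composition of normal maps and hence normal. Applying the remark once more, now to the operator $M$-system $E$, the positivity of $\psi\circ\phi$ together with the normality of its restriction to $M$ yields $\psi\circ\phi \in E^{M\sharp M}$, which completes the argument.
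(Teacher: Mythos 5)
Your proposal is correct, and it follows the paper's skeleton exactly at the outer level: reduce continuity to the transpose inclusion $\psi \circ \phi \in E^{M\sharp M}$ for $\psi \in F^{N\sharp N}$, reduce to positive $\psi$, and then apply the Cauchy--Schwarz criterion twice (positivity of $\psi$ gives normality of $\psi|_N$, hence $(\psi\circ\phi)|_M = \psi|_N \circ \phi|_M$ is normal, hence $\psi \circ \phi \in E^{M\sharp M}$). Where you genuinely diverge is the reduction to positive functionals, which you correctly identified as the crux. The paper handles it intrinsically: viewing $F$ inside $(F^{N\sharp N})^*$, every $\eta \in F^{N\sharp N}$ is implemented by a vector functional in an $N$-bimodular c.p.\ representation of $F$ in which $N$ acts normally, and the polarization identity then writes $\eta$ as a span of states in $F^{N\sharp N}$ --- no extension off $F$ is needed. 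You instead extend a self-adjoint $\psi$ by Hahn--Banach to the ambient ${\rm C}^*$-algebra $A$ (this works because, by Proposition~\ref{prop:A-continuous and topology} and the restriction lemma for the $M$-$N$-topology, $F^{N\sharp N}$ consists exactly of the functionals continuous in the $N$-topology, which on $F$ is the restriction of the $N$-topology of $A$), pass to the von Neumann algebra $A^{\sharp *}$, apply the Jordan decomposition in its predual, and restrict back, using that $i_F$ is a complete order isomorphism to see the pieces are positive on $F$. Your route buys a structurally transparent argument that runs entirely on von Neumann algebra predual facts, at the cost of invoking the $A^{\sharp *}$ machinery and the surjectivity of the restriction map $A^{N\sharp N} \to F^{N\sharp N}$; the paper's polarization argument is shorter and stays inside $F$'s own dual, but is terser about the representation it invokes. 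Both are sound.
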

\begin{proof}
To prove that $\phi$ is continuous from the weak $M$-topology to the weak $N$-topology, we need to check that if $\eta \in F^{N\sharp N}$, then $\eta \circ \phi \in E^{M \sharp M}$. Moreover, by viewing $F$ as an operator subsystem of $(F^{N \sharp N})^*$, we see that every linear functional $\eta \in F^{N\sharp N}$ is implemented by a vector linear functional in some $N$-bimodular c.p.\ representation of $F$ where $N$ is normally represented. Hence, by the polarization identity, we may write $\eta$ as a span of states in $F^{N\sharp N}$. Thus, it suffices to check that $\eta \circ \phi \in E^{M\sharp M}$ whenever $\eta \in F^{N \sharp N}$ is a state. If $\eta \in F^{N \sharp N}$ is a state, then $\eta_{| N}$ is normal and hence $(\eta \circ \phi)_{| M}$ is normal, from which it follows that $\eta \circ \phi \in E^{M \sharp M}$. 
\end{proof}

\begin{cor}\label{cor:condexpcont}
Using the notation above, 
	the map ${\Ad (e_N)}: \B(L^2M) \to \B(L^2N)$ is continuous from the weak $M$-topology to the weak $N$-topology, and also from the weak $M'$-topology to the weak $N'$-topology. 
\end{cor}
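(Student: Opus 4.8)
The plan is to derive both assertions from Lemma~\ref{lem:weakcontinuity}, applied to the map $\Ad(e_N)\colon \B(L^2M)\to \B(L^2N)$, which is given concretely by $\Ad(e_N)(T)=e_N T e_N^*$. Since $e_N$ is a coisometry we have $e_N e_N^*=1_{L^2N}$, so $\Ad(e_N)$ is u.c.p.; in particular it is completely positive, which is one of the two hypotheses of Lemma~\ref{lem:weakcontinuity}. Thus in each case the only remaining point is to identify the restriction of $\Ad(e_N)$ to the relevant copy of the von Neumann algebra and to verify that this restriction is a normal map.

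For the first assertion I would regard $\B(L^2M)$ as an operator $M$-system, via the left action of $M\subset \B(L^2M)$, and $\B(L^2N)$ as an operator $N$-system. For $x\in M$ the defining property of the Jones projection gives $\Ad(e_N)(x)=e_N x e_N^*=E(x)\in N$, so the restriction of $\Ad(e_N)$ to $M$ is precisely the conditional expectation $E\colon M\to N$. As $E$ is normal, Lemma~\ref{lem:weakcontinuity} applies directly and shows that $\Ad(e_N)$ is continuous from the weak $M$-topology to the weak $N$-topology.

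For the second assertion I would instead view $\B(L^2M)$ as an operator $M'$-system and $\B(L^2N)$ as an operator $N'$-system, and apply Lemma~\ref{lem:weakcontinuity} with the roles of $M$ and $N$ there played by $M'$ and $N'$. Here I use $M'=J_M M J_M$ and $N'=J_N N J_N$. Writing a general element of $M'$ as $a=J_M x J_M$ with $x\in M$, the relation $e_N J_M=J_N e_N$ (and its adjoint $J_M e_N^*=e_N^* J_N$) yields
\[
\Ad(e_N)(a)=e_N J_M x J_M e_N^* = J_N\, e_N x e_N^*\, J_N = J_N E(x) J_N \in N'.
\]
Thus the restriction of $\Ad(e_N)$ to $M'$ is the composition $\Ad(J_N)\circ E\circ \Ad(J_M)$, where $\Ad(J_M)\colon M'\to M$ and $\Ad(J_N)\colon N\to N'$ are the conjugate-linear, ultraweakly continuous isomorphisms implemented by the modular conjugations. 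Since $E$ is normal and the two conjugations compose to a linear map, this restriction is again a normal map $M'\to N'$, so a second application of Lemma~\ref{lem:weakcontinuity} gives continuity from the weak $M'$-topology to the weak $N'$-topology.

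The computation in the commutant case is the only real content of the argument: once $\Ad(e_N)$ is recognized as $T\mapsto e_N T e_N^*$, complete positivity is automatic, and everything reduces to the two Jones-projection identities together with the bookkeeping that $\Ad(J_M)$ and $\Ad(J_N)$ are conjugate-linear but ultraweakly continuous, so their effect on normality cancels. I expect this modular-conjugation bookkeeping to be the step most prone to convention and sign errors, but I do not anticipate any genuine obstacle.
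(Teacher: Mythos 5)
Your proof is correct and is exactly the argument the paper intends: the corollary is stated without proof immediately after Lemma~\ref{lem:weakcontinuity}, and the evident route is the one you take, namely that $\Ad(e_N)$ is u.c.p.\ with $\Ad(e_N)|_M = E$ normal into $N$, and, via $e_N J_M = J_N e_N$ and its adjoint, $\Ad(e_N)|_{M'} = \Ad(J_N)\circ E\circ \Ad(J_M)$ normal into $N'$ (the two conjugate-linear conjugations composing to a linear normal map). Your modular-conjugation bookkeeping is accurate, so there is nothing to correct.
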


\subsection{The small-at-infinity boundary and boundary pieces}\label{subsec:smallboundary}

If $M \subset \B(\mathcal H)$ is a von Neumann algebra, then we will have occasion to use not only the $M$-topology, but also the $M'$-topology on $\B(\mathcal H)$. We therefore introduce the seminorms
$r_\omega$ on $\B(L^2M)$ given by 
\[
r_\omega(T) = \inf \{ ( \omega(J a^*a J) + \omega( b^*b ) )^{1/2} \| Z \| (\omega(J c^*c J) + \omega(d^*d) )^{1/2} \},
\] 
where $\omega$ is a normal state on $M$, $J$ is the modular conjugation operator, and the infimum is taken over all decompositions $T = \left( \begin{smallmatrix}
 a  \\
b
\end{smallmatrix}
\right)^*  Z \left( \begin{smallmatrix}
 c  \\ 
d
\end{smallmatrix} \right)$ where $a, c \in M'$, $b, d \in M$ and $Z \in \mathbb M_2(\B(L^2M))$. 
These seminorms on bounded sets describe the coarsest locally convex topological vector space topology that contains both the $M'$-topology and the $M$-topology.

 If $X$ is a left operator $M$-module (i.e., an operator $M$-$\mathbb C$-bimodule), then for a positive linear functional $\omega \in M^\sharp$ we denote by $s_\omega^\ell$ the seminorm $s_\omega^\rho$ where $\rho: \mathbb C \to \mathbb C$ is the identity map. Given a right operator $M_0$-module, we similarly denote by $s_\omega^r$ the seminorm $s_\rho^\omega$. We will also use similar notation $r_\omega^\ell$ and $r_\omega^r$ for the corresponding seminorms on $\B(L^2 M)$. For easy reference, we summarize these seminorms on an operator $T \in \B(L^2 M)$ here: 
 
 \[
 s_\omega(T) = \inf \{ \omega(a^*a) \| S \| \omega(b^*b) \mid T = a^* S b, a, b \in M, S \in \B(L^2M) \};
 \]
 
\[
s_\omega^\ell(T) = \inf \{ \omega(a^*a) \| S \| \mid T = a^* S, a \in M, S \in \B(L^2M) \};
\]

\[
s_\omega^r(T) = \inf \{ \| S \| \omega(b^*b) \mid T = Sb, b \in M, S \in \B(L^2M) \};
\]

\begin{align}
r_\omega(T) = \inf \{ ( \omega(J a^*a J) + & \omega( b^*b ) )^{1/2} \| Z \| (\omega(J c^*c J) + \omega(d^*d) )^{1/2} \nonumber \\
& \mid T = \left( \begin{smallmatrix}
 a  \\
b
\end{smallmatrix}
\right)^*  Z \left( \begin{smallmatrix}
 c  \\ 
d
\end{smallmatrix} \right), a, c \in M', b, d \in M, Z \in \mathbb M_2(\B(L^2M)) \}; \nonumber
\end{align}

\[
r_\omega^\ell(T) = \inf \{ ( \omega(J a^*a J) + \omega( b^*b ) )^{1/2} \| Z \| 
\mid T = \left( \begin{smallmatrix}
 a  \\
b
\end{smallmatrix}
\right)^*  Z, a \in M', b \in M, Z \in C_2(\B(L^2M)) \};
\]

\[
r_\omega^r(T) = \inf \{ \| Z \| (\omega(J c^*c J) + \omega(d^*d) )^{1/2} 
 \mid T =  Z \left( \begin{smallmatrix}
 c  \\ 
d
\end{smallmatrix} \right), c \in M', d \in M, Z \in R_2(\B(L^2M)) \}.
\]

In the case when $M$ is finite with a normal faithful trace $\tau$, the norm $r_\tau^r(T)$ is equivalent to the norm $\| T \|_{\infty, 2} = \sup_{a \in (M)_1} \| T \hat{a} \|$, which is nothing but the operator norm when thinking of $T$ as an operator from $M \subset L^2(M, \tau)$ with the uniform norm into $L^2(M, \tau)$ \cite{Oza10}. This result was generalized in \cite[Proposition 3.1]{DKEP22} to show that the norm $r_\tau(T)$ is equivalent to the norm $\| T \|_{\infty, 1} = \sup_{a, b \in (M)_1} | \langle T \hat{a}, \hat{b} \rangle |$ where we think of $T$ as an operator from $M \subset L^2(M, \tau)$ to $L^1(M, \tau) \supset L^2(M, \tau)$. 

The relationship between the seminorms is given by the following result.

\begin{prop}\label{prop:leftequality}
Let $M$ be a von Neumann algebra and $A$ an $M$-${\rm C}^*$-algebra. Then, for $\mu$ a normal positive linear functional and $x \in A$, we have $s_\mu^\ell(x^*) = s_\mu^r(x) = s_\mu(x^* x)^{1/2}$.  Also, if $T \in \B(L^2M)$, then we have $r_\mu^\ell(T^*) = r_\mu^r(T) = r_\mu(T^*T)^{1/2}$. 
\end{prop}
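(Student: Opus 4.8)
The plan is to prove the chain of equalities as two genuine identities each, after first disposing of the ``reflection'' equalities $s_\mu^\ell(x^*) = s_\mu^r(x)$ and $r_\mu^\ell(T^*) = r_\mu^r(T)$, which are purely formal. Applying the adjoint to a factorization $x = yb$ with $b \in M$, $y \in A$, produces $x^* = b^* y^*$ with $b \in M$, $y^* \in A$, and $\|y^*\| = \|y\|$; this is a bijection between the decompositions defining $s_\mu^r(x)$ and those defining $s_\mu^\ell(x^*)$ that preserves $\|y\|\,\mu(b^*b)^{1/2}$, giving the first equality. The same works for the row/column factorizations of $r_\mu^r$ and $r_\mu^\ell$, using that the $C_2$-norm of $(Z_1^*, Z_2^*)^{\mathrm t}$ equals the $R_2$-norm of $(Z_1, Z_2)$. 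I will also dispose of the easy halves $s_\mu(x^*x) \le s_\mu^r(x)^2$ and $r_\mu(T^*T) \le r_\mu^r(T)^2$ at once: substituting $x = yb$ gives $x^*x = b^*(y^*y)b$ with $y^*y \in A$ of norm $\|y\|^2$, and substituting $T = Z\left( \begin{smallmatrix} c \\ d \end{smallmatrix} \right)$ gives $T^*T = \left( \begin{smallmatrix} c \\ d \end{smallmatrix} \right)^* (Z^*Z) \left( \begin{smallmatrix} c \\ d \end{smallmatrix} \right)$; both are admissible decompositions of the correct cost.

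The crux is the reverse inequality $s_\mu^r(x) \le s_\mu(x^*x)^{1/2}$, and the idea is to convert a decomposition into an operator inequality and then factor. Starting from $x^*x = a^* S b$ with $a, b \in M$ and $S \in A$, an operator Cauchy--Schwarz estimate — obtained by sandwiching the positive matrix $\left( \begin{smallmatrix} \|S\| & S \\ S^* & \|S\| \end{smallmatrix} \right) \ge 0$ by $\left( \begin{smallmatrix} ta \\ -t^{-1}b \end{smallmatrix} \right)$, equivalently by expanding $(t\|S\|^{1/2}a - t^{-1}\|S\|^{-1/2}Sb)^*(t\|S\|^{1/2}a - t^{-1}\|S\|^{-1/2}Sb) \ge 0$ and using $S^*S \le \|S\|^2$ — yields, for every $t > 0$,
\[
x^*x \le \tfrac{\|S\|}{2}\big( t^2 a^*a + t^{-2} b^*b \big) =: h_t \in M_+ .
\]
Now I factor $x$ through $h_t^{1/2}$: for $\epsilon > 0$ the element $y_\epsilon := x (h_t + \epsilon)^{-1/2}$ lies in $A$ since $(h_t+\epsilon)^{-1/2} \in M(A)$, it satisfies $y_\epsilon (h_t+\epsilon)^{1/2} = x$ with $(h_t+\epsilon)^{1/2} \in M$, and the inequality $x^*x \le h_t$ forces $y_\epsilon^* y_\epsilon \le 1$, hence $\|y_\epsilon\| \le 1$. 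Therefore $s_\mu^r(x) \le \mu(h_t+\epsilon)^{1/2}$, and letting $\epsilon \to 0$ gives $s_\mu^r(x) \le \mu(h_t)^{1/2}$. Since $\mu(h_t) = \tfrac{\|S\|}{2}(t^2\mu(a^*a) + t^{-2}\mu(b^*b))$, optimising over $t$ replaces the arithmetic mean by the geometric mean $\|S\|\,\mu(a^*a)^{1/2}\mu(b^*b)^{1/2}$, and taking the infimum over all decompositions of $x^*x$ gives $s_\mu^r(x) \le s_\mu(x^*x)^{1/2}$.

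For the $r$-seminorms I would run the identical template one matrix level up, with the $M'$--$M$ twist. A decomposition $T^*T = \left( \begin{smallmatrix} a \\ b \end{smallmatrix} \right)^* Z \left( \begin{smallmatrix} c \\ d \end{smallmatrix} \right)$ (with $a, c \in M'$, $b, d \in M$, $Z \in \M_2(\B(L^2M))$), after the same Cauchy--Schwarz applied to $\left( \begin{smallmatrix} \|Z\| & Z \\ Z^* & \|Z\| \end{smallmatrix} \right) \ge 0$, gives
\[
T^*T \le \tfrac{\|Z\|}{2}\big( P_t + Q_t \big), \qquad P_t := t^2 a^*a + t^{-2} c^*c \in M'_+, \quad Q_t := t^2 b^*b + t^{-2} d^*d \in M_+ .
\]
The decisive point is that the $M'$-part and $M$-part separate cleanly, so with $c' := P_t^{1/2} \in M'$ and $d' := Q_t^{1/2} \in M$ the column $\left( \begin{smallmatrix} c' \\ d' \end{smallmatrix} \right)$ satisfies $(c')^*c' + (d')^*d' = P_t + Q_t$, and Douglas' factorization lemma applied to $T^*T \le \tfrac{\|Z\|}{2} \left( \begin{smallmatrix} c' \\ d' \end{smallmatrix} \right)^* \left( \begin{smallmatrix} c' \\ d' \end{smallmatrix} \right)$ produces $T = W \left( \begin{smallmatrix} c' \\ d' \end{smallmatrix} \right)$ with $W \in R_2(\B(L^2M))$ and $\|W\| \le \|Z\|^{1/2}/\sqrt{2}$ (no $\epsilon$-regularization is needed here, since the module is all of $\B(L^2M)$). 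Reading off the weight $\mu(J(c')^*c'J) + \mu((d')^*d') = t^2(\mu(Ja^*aJ)+\mu(b^*b)) + t^{-2}(\mu(Jc^*cJ)+\mu(d^*d))$ and optimising over $t$ again reproduces the geometric mean, giving $r_\mu^r(T) \le r_\mu(T^*T)^{1/2}$.

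The step I expect to be the main obstacle is exactly the reverse inequality's passage from the operator inequality $x^*x \le h$ to an honest \emph{module} factorization with the sharp constant: one must simultaneously stay inside the module $A$ (handled by the $\epsilon$-regularized Douglas factorization $y_\epsilon = x(h_t+\epsilon)^{-1/2} \in A$) and recover the geometric rather than the arithmetic mean (handled by the rescaling parameter $t$ and the final optimisation). Everything else is bookkeeping, and the two statements of the proposition differ only in that the $s$-case is the scalar version over the module $A$ while the $r$-case is its $2\times 2$, $J$-twisted amplification over $\B(L^2M)$.
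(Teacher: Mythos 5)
Your proof is correct, but it takes a genuinely different route from the paper's. For the key reverse inequality the paper symmetrizes $x^*x = \frac{1}{2}(a^*yb + b^*y^*a)$ as a sandwich of the off-diagonal matrix $\left(\begin{smallmatrix} 0 & y \\ y^* & 0 \end{smallmatrix}\right)$ by the column $\left(\begin{smallmatrix} a \\ b \end{smallmatrix}\right)$, takes the polar decomposition of that column over $M$ to rewrite $x^*x = a_0 z a_0$ with $a_0 = \frac{1}{\sqrt{2}}(a^*a+b^*b)^{1/2} \in M_+$, and then extracts the factorization $x = vw^*z^{1/2}a_0$ from the identity $|x| = |z^{1/2}a_0|$ via two further polar decompositions; the geometric mean is obtained there by pre-normalizing $\mu(a^*a) = \mu(b^*b)$, which plays exactly the role of your parameter $t$. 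You replace all of this with the operator AM--GM estimate $x^*x \le h_t$ followed by the $\epsilon$-regularized Douglas factorization $y_\epsilon = x(h_t+\epsilon)^{-1/2}$. What your version buys: the factor $y_\epsilon$ visibly lies in $A$ (since $(h_t+\epsilon)^{-1/2} \in M \subset M(A)$), whereas the paper's partial isometries $v, w$ live only in the enveloping von Neumann algebra, so its decomposition is a priori one in $\B(\HH)$ and implicitly leans on the earlier subbimodule lemma and remark (that the seminorm on $A$ may be computed in any ambient $\B(\HH)$); moreover your argument extends verbatim to column decompositions $x^*x = a^*yb$ with $a, b \in C_n(M)$ and $y \in \M_n(A)$, since then $a^*a, b^*b \in M$ still. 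What the paper's version buys is an exact factorization achieving the sharp constant in one shot, without the $t$-optimization or the $\epsilon$-limit, after which it dismisses the $r$-seminorms with ``follows similarly''; your treatment of the $r$-case --- in particular the observation that the $M'$-part $P_t$ and $M$-part $Q_t$ of the dominating element separate, so that $\left(\begin{smallmatrix} P_t^{1/2} \\ Q_t^{1/2} \end{smallmatrix}\right)$ is an admissible column for $r_\mu^r$ --- is a clean way of making that ``similarly'' explicit.
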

\begin{proof}
First note that the equality $s_\mu^\ell(x^*) = s_\mu^r(x)$ is obvious. Also, if $x = yb$ with $y \in A$ and $b \in M$, then $x^*x = b^* y^* y b$, and so $s_\mu(x^* x)^{1/2} \leq \mu(b^*b)^{1/2} \| y^* y \|^{1/2}$. Taking the infimum over all such decompositions gives $s_\mu(x^* x )^{1/2} \leq s_\mu^r(x)$. 

To see the reverse inequality, we suppose $x^*x = a^* y b$ for some $y \in A$, and $a, b \in M$ and set $\kappa = \omega(a^*a)^{1/2} \| y \| \omega(b^*b)^{1/2}$. By rescaling, we will assume that $\omega(a^*a) = \omega(b^*b)$. We then have 
\[
x^* x = \frac{1}{2} ( a^* y b + b^* y^* a) = \frac{1}{2} \left( \begin{smallmatrix}
 a  \\
b
\end{smallmatrix} \right)^*
\left( \begin{smallmatrix}
 0 &  y \\
y^* & 0
\end{smallmatrix} \right)
\left( \begin{smallmatrix}
 a  \\
b
\end{smallmatrix} \right),
\]
and note that $\frac{1}{2} \mu(a^*a + b^* b) = \mu(a^*a) = \mu(a^*a)^{1/2} \mu(b^*b)^{1/2}$. We consider the polar decomposition $\left( \begin{smallmatrix}
 a  \\
b
\end{smallmatrix} \right) = u (a^*a + b^*b)^{1/2}$, and we set $z = u^* \left( \begin{smallmatrix}
 0 &  y \\
y^* & 0
\end{smallmatrix} \right) u$ and $a_0 = \frac{1}{\sqrt{2}} (a^*a + b^*b)^{1/2}$ so that we have $x^*x = a_0 z a_0$ with $\omega(a_0^2) \| z \| \leq \kappa$. Note that by replacing $z$ with $p z p$ where $p$ is the support of $a_0 \in M$ we may assume also that $z \geq 0$. 

We then have $| x | = | z^{1/2} a_0 |$ and so if we consider the polar decompositions $x = v | x |$ and $z^{1/2} a_0 = w | z^{1/2} a_0 | = w | x |$, then we have $x = v | x | = v w^* z^{1/2} a_0$, and hence 
\[
s_\mu^r(x) \leq \| v w^* z^{1/2} \| \mu(a_0^2)^{1/2} \leq \kappa^{1/2}.
\] 
Taking the infimum over all such decompositions then gives $s_\mu^r(x) \leq s_\mu(x^* x)^{1/2}$. 

The result for the seminorms $r_\mu^\ell$, $r_\mu^r$, and $r_\mu$ follows similarly.
\end{proof}

\begin{cor}\label{cor:productcont}
Let $M$ be a von Neumann algebra, let $A$ be an $M$-${\rm C}^*$-algebra, and let $\mu$ be a normal positive linear functional on $M$. Then for $x, y, z \in A$ we have 
\[
s_\mu(x^* y z) \leq s_\mu(x^* x)^{1/2} \| y \| s_\mu(z^*z)^{1/2}.
\]
Also, if $x, y, z \in \B(L^2 M)$, then we have
\[
r_\mu(x^* y z) \leq r_\mu(x^* x)^{1/2} \| y \| r_\mu(z^*z)^{1/2}.
\]
\end{cor}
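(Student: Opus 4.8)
The plan is to deduce both inequalities directly from Proposition~\ref{prop:leftequality}, which identifies the symmetric quantities $s_\mu(x^*x)^{1/2}$ and $s_\mu(z^*z)^{1/2}$ with the one-sided seminorm values $s_\mu^\ell(x^*)$ and $s_\mu^r(z)$ (and analogously for $r_\mu$). Once these identifications are in hand, the corollary is nothing more than the statement that a near-optimal \emph{left} factorization of $x^*$ and a near-optimal \emph{right} factorization of $z$ can be concatenated around $y$, with $y$ absorbed into a bounded middle term whose norm is submultiplicative.

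For the first inequality I would fix $\eps > 0$ and, using the definitions of the one-sided module seminorms, choose factorizations $x^* = a^* S$ with $a \in M$, $S \in A$, and $\mu(a^*a)^{1/2}\|S\| < s_\mu^\ell(x^*) + \eps$, and $z = T b$ with $b \in M$, $T \in A$, and $\|T\|\,\mu(b^*b)^{1/2} < s_\mu^r(z) + \eps$. Then $x^* y z = a^* (S y T) b$ is an admissible two-sided factorization for $s_\mu$, the middle term $S y T$ lying in $A$ with $\|S y T\| \leq \|S\|\,\|y\|\,\|T\|$, so straight from the definition of $s_\mu$,
\[
s_\mu(x^* y z) \leq \mu(a^*a)^{1/2}\,\|S y T\|\,\mu(b^*b)^{1/2} \leq \bigl(\mu(a^*a)^{1/2}\|S\|\bigr)\,\|y\|\,\bigl(\|T\|\mu(b^*b)^{1/2}\bigr).
\]
Letting $\eps \to 0$ gives $s_\mu(x^* y z) \leq s_\mu^\ell(x^*)\,\|y\|\,s_\mu^r(z)$, and substituting the identities $s_\mu^\ell(x^*) = s_\mu(x^*x)^{1/2}$ and $s_\mu^r(z) = s_\mu(z^*z)^{1/2}$ from Proposition~\ref{prop:leftequality} yields the claim.

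For the statement on $\B(L^2M)$ I would run the identical argument with $s$ replaced by $r$, now invoking the $r$-part of Proposition~\ref{prop:leftequality}. The only bookkeeping difference is that the near-optimal factorizations are of column/row type: $x^* = \binom{a}{b}^* Z$ with $a \in M'$, $b \in M$, $Z \in C_2(\B(L^2M))$, and $z = W \binom{c}{d}$ with $c \in M'$, $d \in M$, $W \in R_2(\B(L^2M))$. Concatenating produces $x^* y z = \binom{a}{b}^* (Z y W) \binom{c}{d}$ with $Z y W \in \M_2(\B(L^2M))$ of norm at most $\|Z\|\,\|y\|\,\|W\|$, after which the same estimate and limit go through. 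I do not expect any genuine obstacle here: the substantive content is entirely contained in Proposition~\ref{prop:leftequality}, and the present corollary records only that the one-sided seminorms compose submultiplicatively around a bounded operator, the middle factor remaining in $A$ (resp.\ $\M_2(\B(L^2M))$) precisely because these are closed under the relevant multiplication.
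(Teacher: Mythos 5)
Your proof is correct and follows essentially the same route as the paper: the paper simply asserts the inequality $s_\mu(x^* y z) \leq s_\mu^\ell(x^*)\, \| y \| \, s_\mu^r(z)$ as clear and then invokes Proposition~\ref{prop:leftequality}, which is exactly your argument with the concatenation of near-optimal factorizations spelled out. Your explicit verification of that concatenation step (including the column/row bookkeeping for $r_\mu$) is precisely the content the paper leaves implicit, so there is nothing to correct.
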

\begin{proof}
We clearly have 
$s_\mu(x^* y z) \leq s_\mu^\ell(x^*) \| y \| s_\mu^r(z)$, 
and so the result follows directly from Proposition~\ref{prop:leftequality}. The case for $r_\mu$ follows similarly. 
\end{proof}

Let $M$ be a von Neumann algebra. Generalizing the group setting from \cite{BIP21}, an $M$-boundary piece was defined in \cite{DKEP22} to be a hereditary ${\rm C}^*$-subalgebra $\X\subset \B(L^2M)$ 
	such that $M\cap M(\X)\subset M$ and $M'\cap M(\X)\subset M'$ are weakly dense.
Denote by 
\[
\K_\X^L(M)=\overline{\overline{\B(L^2M) \X }^{_{\C-M}}}^{_{\C-M'}},
\] 
which is a left ideal containing $M$ and $M'$ in its space of right multipliers. If $\mathcal H$ is a Hilbert space, then we set
\[
\K_\X^L(M, \mathcal H) = \overline{\overline{\B(L^2M, \mathcal H) \X }^{_{\C-M}}}^{_{\C-M'}}.
\]
	 A consequence of \cite[Proposition 2.2]{Ma98} (see \cite[Proposition 2.3]{DKEP22}) is that an operator $T \in \B(L^2 M)$ is contained in $\K_\X^L(M)$ if and only if there exist orthogonal families of projections $\{ f_i \}_{i \in I}, \{ \tilde f_j \}_{j \in J} \subset M$ such that $T J \tilde f_i J f_j \in \X$ for each $i \in I$, $j \in J$. 
	
Let 
\[
\K_\X(M)=(\K_\X^L(M))^* \cap \K_\X^L(M) = (\K_\X^L(M))^* \K_\X^L(M) \subset \B(L^2M)
\] 
be the hereditary ${\rm C}^*$-subalgebra associated to $\K_\X^L(M)$, and note that both $M$ and $M'$
	are in its multiplier algebra.
We also define 
\[
\K^{\infty,1}_\X(M)=\overline{\overline{\K_\X(M)}^{_{M-M}}}^{_{M'-M'}},
\] 
which agrees with the closure in the topology given by the seminorms $r_\mu$ defined above. 

By Lemma~\ref{lem:compressnorm} if $T \in \K^{\infty, 1}_\X(M)_+$, then there exists a net of positive contractions $z_i \in M$ converging ultrastrongly to $1$ so that in uniform norm we have $d(z_i T z_i, \X) \to 0$, and hence also $d(( z_i T z_i )^{1/2}, \X) \to 0$. By considering the polar decomposition we then have $d( T^{1/2} z_i, \B(L^2 M) \X ) \to 0$, and hence $T^{1/2} \in \K_\X^L(M)$, which shows that $T \in ( \K_\X^L(M) )^* \cap \K_\X^L(M) = \K_\X(M)$. Thus, we have the equality of positive cones 
\[
\K^{\infty, 1}_\X(M)_+ = \K_\X(M)_+.
\]
In particular, for a Hilbert space $\mathcal H$, an operator $T \in \B(L^2 M, \mathcal H)$ is contained in $\K_\X^L(M, \mathcal H)$ if and only if $| T | \in \K_\X^{\infty, 1}(M)$. 

The small-at-infinity boundary of $M$ with respect to the $M$-boundary piece $\X$ is defined as
\[
\bS_\X(M)=\{T\in \B(L^2M)\mid [T,x]\in \K_\X^{\infty,1}(M), \forall x\in M'\}.
\]
Since the operator space $\K_\X^{\infty, 1}(M)$ is typically not a ${\rm C}^*$-algebra, the space $\bS_\X(M)$ is also typically not a ${\rm C}^*$-algebra, though it is a normal operator $M$-system. 

In the case when $\X = \K(L^2M)$, we will denote $\K_\X^L(M)$, $\K_\X(M)$, $\K_\X^{\infty, 1}(M)$, and $\bS_\X(M)$ by $\K^L(M)$, $\K(M)$, $\K^{\infty, 1}(M)$, and $\bS(M)$, respectively.

One reason for the utility of this definition for the small-at-infinity boundary is that if $T \in \B(L^2M)$ and we consider the set 
\[
P = \{ x \in M' \mid [T, x] \in \K_\X^{\infty, 1}(M) \},
\]
then this set is always a von Neumann subalgebra of $M'$, and hence to show that $T \in \bS_\X(M)$ it suffices to check that $[T, x] \in \K_\X^{\infty, 1}(M)$ for $x$ in a set of generators for $M'$. When $M$ is a tracial von Neumann algebra this is Lemma 6.1 in \cite{DKEP22}, but the proof works in general. Indeed, it is easy to see that $P$ is a ${\rm C}^*$-algebra containing the unit, and if $x$ is in the ultrastrong$^*$ closure of $P$ and $\omega \in M_*$ is a positive linear functional, then taking a net $x_i \in P$ so that $x_i$ converges to $x$ in the ultrastrong$^*$-topology, we have $\omega( (x - x_i)^*(x - x_i) ), \omega( (x - x_i)(x - x_i)^*) \to 0$ and hence
\[
d_{r_\omega}([T, x], \K_\X^{\infty, 1}(M) )
\leq r_\omega( T (x - x_i ) ) + r_\omega( (x - x_i) T ) \to 0.
\]

Just as we could associate to any $M$-${\rm C}^*$-algebra $A$ a universal von Neumann algebra $A^{\sharp *}$ that contains $M$ as a von Neumann subalgebra, we can also associate to $\B(L^2 M)$ a von Neumann algebra $\B(L^2 M)_J^{\sharp *}$ containing both $M$ and $JMJ$ as von Neumann subalgebras. Specifically, we let $\B(L^2 M)_J^{\sharp *}$ be the corner $p_{\rm nor} q_{\rm nor} \B(L^2M)^{**} q_{\rm nor} p_{\rm nor}$, where $p_{\rm nor} \in M^{**}$ denotes the projection corresponding to the support of the identity representation $M \to M$, and $q_{\rm nor} \in (JMJ)^{**}$ denotes the projection corresponding to the support of the identity representation $JMJ \to JMJ$. 

If $\X \subset \B(L^2M)$ is a boundary piece, then we will also let $\K_\X(M)_J^{\sharp *}$ denote $p_{\rm nor} q_{\rm nor} \K_X(M)^{**} q_{\rm nor} p_{\rm nor}$ so that we have a natural identification $\K_\X(M)_J^{\sharp *} = q_{\X} \B(L^2M)_J^{\sharp *} q_{\X}$ where $q_\X \in \B(L^2M)_J^{\sharp *}$ denotes the support projection of $\K_\X(M)_J^{\sharp *}$.

\section{$M$-nuclearity}\label{sec:nuclearity}
Let $M$ be a von Neumann algebra, $F$ an operator system or ${\rm C}^*$-algebra, and $E$ an operator $M$-system or an $M$-${\rm C}^*$-algebra. 
We say a c.c.p.\ map $\phi: F\to E$ is $M$-nuclear 
	if there exist nets of c.c.p.\ maps $\phi_i:F\to\mathbb M_{n(i)}(\mathbb C)$ 
	and $\psi_i:\mathbb M_{n(i)}(\mathbb C)\to E$ 
	such that $\psi_i\circ\phi_i(x)$ converges to $\phi(x)$ in the $M$-topology for any $x\in F$.
By \cite[Theorem 3.7]{Ma00} and a standard convexity argument, this is equivalent to the existence of such maps $\phi_i$ and $\psi_i$ so that 
	$\psi_i\circ\phi_i(x)$ converges to $\phi(x)$ in the weak $M$-topology for any $x\in F$.

More generally, if $M_0\subset M$ is an ultraweakly dense ${\rm C}^*$-subalgebra and $E$ is an $M_0$-system or $M_0$-${\rm C}^*$-algebra,
	we then say a c.c.p.\ map $\phi: F\to E$ is $(M_0 \subset M)$-nuclear 
	if there exist nets of c.c.p.\ maps $\phi_i:F\to\mathbb M_{n(i)}(\mathbb C)$ 
	and $\psi_i:\mathbb M_{n(i)}(\mathbb C)\to E$ 
	such that $\psi_i\circ\phi_i(x)$ converges to $\phi(x)$ in the $(M_0 \subset M)$-topology for any $x\in F$. Equivalently, by Proposition~\ref{prop:A-continuous and topology}, there exist a net of c.c.p.\ maps $\phi_i:F\to\mathbb M_{n(i)}(\mathbb C)$ 
	and $\psi_i:\mathbb M_{n(i)}(\mathbb C)\to E$ 
	such that $\psi_i\circ\phi_i(x)$ converges to $\phi(x)$ in the weak $(M_0 \subset M)$-topology.
	
We remark that if $F \subset \B(\mathcal H)$, then in the definition of $(M_0 \subset M)$-nuclearity we may always take the map $\phi_i$ to be a compression onto a finite-dimensional subspace $\mathcal K \subset \mathcal H$, i.e., we may take $\phi_i$ to be of the form $F \ni x \mapsto P_{\mathcal K} x P_{\mathcal K} \in \B(L^2 \mathcal K) \cong \mathbb M_{\dim(\mathcal K)}(\mathbb C)$. This can be seen easily from the following general lemma.

\begin{lem}\label{lem:compressionapprox}
Let $F \subset \B(\mathcal H)$ be a ${\rm C}^*$-algebra or operator system and $\phi: F \to \mathbb M_k(\mathbb C)$ a c.c.\ (resp.\ c.p., u.c.p.) map. There exists a net of finite-dimensional subspaces $\mathcal K_i \subset \mathcal H$ and c.c.\ (resp.\ c.p., u.c.p.) maps $\phi_i: \B(\mathcal K_i) \to \mathbb M_k(\mathbb C)$ so that $( \phi_i \circ {\rm Ad}(P_{\mathcal K_i}) )_{|F}$ converges pointwise in norm to $\phi$. 
\end{lem}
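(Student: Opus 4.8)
The plan is to reduce to the case $F = \B(\mathcal H)$ and then exploit the correspondence between completely positive maps into $\mathbb M_k(\mathbb C)$ and positive linear functionals, together with the weak$^*$-density of finite-rank normal functionals. First I would extend $\phi$ to a map $\tilde\phi: \B(\mathcal H) \to \mathbb M_k(\mathbb C)$ of the same type: Arveson's extension theorem handles the c.p.\ and u.c.p.\ cases (using injectivity of $\mathbb M_k(\mathbb C)$, with unitality automatic since $1_F = 1_{\B(\mathcal H)}$), while the Wittstock extension theorem handles the c.c.\ case. Because $\mathbb M_k(\mathbb C)$ is finite-dimensional, the pointwise-weak$^*$ and pointwise-norm topologies on maps into it coincide, and any approximation of $\tilde\phi$ converging pointwise on $\B(\mathcal H)$ restricts to one on $F$; hence it suffices to approximate $\tilde\phi$, in the pointwise-weak$^*$ topology, by maps that factor through a compression $\Ad(P_{\mathcal K})$ onto a finite-dimensional subspace.

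For the c.p.\ case I would use the canonical identification of c.p.\ maps $\B(\mathcal H) \to \mathbb M_k(\mathbb C)$ with positive functionals on $\B(\mathcal H) \ovt \mathbb M_k(\mathbb C) = \B(\mathcal H \otimes \mathbb C^k)$. Under this identification a vector functional $\omega_\xi$, for $\xi \in \mathcal H \otimes \mathbb C^k$, corresponds to a map of the form $x \mapsto T^* x T$ with $T \in \B(\mathbb C^k, \mathcal H)$; as $T$ automatically has finite rank, setting $\mathcal K = \operatorname{ran} T$ gives $T^* x T = T^* P_{\mathcal K} x P_{\mathcal K} T$, so that this map factors as $\phi_0 \circ \Ad(P_{\mathcal K})$ through $\B(\mathcal K)$. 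Finite sums of vector functionals are exactly the finite-rank normal positive functionals, and correspond exactly to the compression-factoring c.p.\ maps; since the finite-rank normal states are weak$^*$-dense in the state space of $\B(\mathcal H \otimes \mathbb C^k)$, I may choose a net of such functionals converging weak$^*$ to the functional representing $\tilde\phi$. Translating back gives compression-factoring c.p.\ maps converging to $\tilde\phi$ in the pointwise-weak$^*$ topology.

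The u.c.p.\ case follows by normalizing these c.p.\ approximants: since their values at $1$ converge in norm to $\tilde\phi(1) = 1$, they are eventually invertible, and conjugating each $\phi_0$ by the corresponding inverse square root produces u.c.p.\ maps that still factor through the same $\Ad(P_{\mathcal K})$ and still converge to $\tilde\phi$. For the c.c.\ case I would pass through Paulsen's suspension: $\phi$ is completely contractive if and only if the associated map from the operator system $\left(\begin{smallmatrix} \mathbb C 1 & F \\ F^* & \mathbb C 1 \end{smallmatrix}\right) \subset \B(\mathcal H \oplus \mathcal H)$ into $\mathbb M_{2k}(\mathbb C)$ is u.c.p. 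Applying the u.c.p.\ case, enlarging the approximating subspaces to the diagonal form $\mathcal K_i \oplus \mathcal K_i$ (so that the compression respects the $2 \times 2$ block structure), and then extracting the $(1,2)$-corner returns compression-factoring c.c.\ maps on $\B(\mathcal K_i)$ converging to $\phi$.

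The main obstacle is to preserve the normalization type (c.c.\slash c.p.\slash u.c.p.) under the approximation while simultaneously forcing the approximants to factor through a finite-dimensional compression. The c.p.\ case is essentially automatic once one observes that the finite-rank normal positive functionals are precisely the compression-factoring maps and are weak$^*$-dense. The genuine care is needed in the other two reductions: the perturbation to unitality must not destroy the compression factorization, which works only because conjugating by $\phi_0(1)^{-1/2}$ merely post-composes the finite-dimensional piece; and in the Paulsen step the compression lives a priori on $\mathcal H \oplus \mathcal H$, so it must be arranged in diagonal product form before restricting to the off-diagonal corner genuinely yields a compression of $\mathcal H$.
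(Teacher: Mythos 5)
Your proof is correct, and although it rests on the same two pillars as the paper's argument --- extending $\phi$ to all of $\B(\mathcal H)$ and using finite-dimensionality of $\mathbb M_k(\mathbb C)$ to collapse pointwise-weak$^*$ convergence to pointwise-norm convergence --- the execution is genuinely different. The paper runs a two-step approximation: it invokes the point-norm density of \emph{normal} c.c.\ maps $\B(\mathcal H)\to\mathbb M_k(\mathbb C)$ as a known fact, then uses normality of each approximant to commute with the net of compressions $\Ad(P_{\mathcal K})$ ordered by inclusion of finite-dimensional subspaces, and finishes with an iterated limit; all three normalization types are treated uniformly by the same sentence. You instead manufacture the compression-factoring approximants in a single step by effectively \emph{proving} the density fact the paper cites: under the correspondence between c.p.\ maps into $\mathbb M_k(\mathbb C)$ and positive functionals on $\B(\mathcal H\otimes\mathbb C^k)$, finite sums of vector functionals are exactly the maps $x\mapsto\sum_m T_m^*xT_m$, which visibly factor through $\Ad(P_{\mathcal K})$ with $\mathcal K=\sum_m \operatorname{ran} T_m$, so weak$^*$-density of convex combinations of vector states does everything at once. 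The u.c.p.\ and c.c.\ cases you then recover by reduction rather than uniformly: post-composing with $\Ad(\phi_i(1)^{-1/2})$ (which, as you note, leaves the factorization intact since it sits after the finite-dimensional piece, and correctly normalizes because $\phi_i(1_{\B(\mathcal H)})$ coincides with the value of the factored map at $1_{\B(\mathcal K_i)}$), and Paulsen's off-diagonal trick, where your insistence on enlarging the subspace of $\mathcal H\oplus\mathcal H$ to the diagonal form $\mathcal K_i\oplus\mathcal K_i$ before extracting the $(1,2)$-corner is precisely the point that needs checking; the corner map is c.c.\ since $y\mapsto\left(\begin{smallmatrix}0&y\\0&0\end{smallmatrix}\right)$ is a complete isometry and block-compression is completely contractive. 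What the paper's route buys is brevity and case-uniformity; what yours buys is self-containment, plus it makes explicit the small normalization that the paper's remark ``the proof in the u.c.p.\ case is the same'' glosses over, since the restriction of a normal unital map to $\B(\mathcal K)\subset\B(\mathcal H)$ is only approximately unital.
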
 
\begin{proof}
Let $\tilde \phi: \B(\mathcal H) \to \mathbb M_k(\mathbb C)$ be a c.c.\ extension of $\phi$, and let $\phi_i: \B(\mathcal H) \to \mathbb M_k(\mathbb C)$ be normal c.c.\ maps so that $\phi_i \to \tilde \phi$ in the point-norm topology. Since each $\phi_i$ is normal, if we consider the family of finite-dimensional subspaces of $\mathcal H$ to be a net ordered by inclusion, then for each $x \in F$ we have norm convergence $\phi_i(x) = \lim_{\mathcal K \to \infty} \phi_i \circ {\rm Ad}(P_{\mathcal K})(x)$, and hence $\phi(x) = \lim_{\mathcal K \to \infty} \lim_{i \to \infty} \phi_i \circ {\rm Ad}(P_{\mathcal K})(x)$. The proof in the c.p.\ case or the u.c.p.\ case, when $F$ is an operator system, is the same. 
\end{proof}

Similar to the usual notion of nuclear maps, in the unital case we may consider u.c.p.\ maps instead of c.c.p.\ maps.

\begin{lem}
Let $M$ be a von Neumann algebra with $M_0 \subset M$ an ultraweakly dense ${\rm C}^*$-subalgebra. Suppose $F$ is an operator system, $E$ is an $M_0$-system, and $\phi: F \to E$ is a u.c.p.\ $(M_0 \subset M)$-nuclear map. There exists a net of u.c.p.\ maps $\phi_i:F\to\mathbb M_{n(i)}(\mathbb C)$ 
	and $\psi_i:\mathbb M_{n(i)}(\mathbb C)\to E$ 
	such that $\psi_i\circ\phi_i(x)$ converges to $\phi(x)$ in the $(M_0 \subset M)$-topology for any $x\in F$	
\end{lem}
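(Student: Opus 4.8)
The plan is to start from c.c.p.\ maps $\phi_i \colon F \to \mathbb M_{n(i)}(\mathbb C)$ and $\psi_i \colon \mathbb M_{n(i)}(\mathbb C) \to E$ witnessing $(M_0 \subset M)$-nuclearity, and to upgrade both to u.c.p.\ maps by two applications of an elementary rank-one perturbation, working throughout in the weak $(M_0 \subset M)$-topology and passing to the $(M_0\subset M)$-topology only at the end via the convexity argument from the definition of nuclearity. The perturbation is the following: if $\theta \colon G \to D$ is c.c.p.\ into a unital operator system $D$ and $\nu$ is a state on the operator system $G$, then $\theta'(\cdot) = \theta(\cdot) + \nu(\cdot)(1_D - \theta(1_G))$ is u.c.p., being the sum of $\theta$ and the c.p.\ map $x \mapsto \nu(x)(1_D - \theta(1_G))$ with total value $1_D$. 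Since convergence in the $(M_0\subset M)$-topology implies convergence in the weak $(M_0\subset M)$-topology, I may assume $\psi_i \circ \phi_i(x) \to \phi(x)$ weakly for each $x \in F$.

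First I would make the inner maps unital. Fixing states $\eta_i$ on $F$, set $\phi_i'(x) = \phi_i(x) + \eta_i(x)(1_{\mathbb M_{n(i)}(\mathbb C)} - \phi_i(1_F))$, which is u.c.p.\ with $\psi_i \circ \phi_i'(x) = \psi_i \circ \phi_i(x) + \eta_i(x)\,\psi_i(1_{\mathbb M_{n(i)}(\mathbb C)} - \phi_i(1_F))$; as $|\eta_i(x)| \le \|x\|$, it remains to see that the error $\psi_i(1_{\mathbb M_{n(i)}(\mathbb C)}) - \psi_i\circ\phi_i(1_F)$ tends to $0$ weakly. This follows from the squeeze
\[
0 \le \psi_i(1_{\mathbb M_{n(i)}(\mathbb C)}) - \psi_i\circ\phi_i(1_F) \le 1_E - \psi_i\circ\phi_i(1_F),
\]
valid since $\phi_i(1_F) \le 1_{\mathbb M_{n(i)}(\mathbb C)}$ and $\psi_i(1_{\mathbb M_{n(i)}(\mathbb C)}) \le 1_E$, together with the weak convergence of the right-hand side to $0$: testing against a \emph{positive} $\varphi \in E^{M_0 \sharp M_0}$ preserves the inequality, and, as in the proof of Lemma~\ref{lem:weakcontinuity}, every functional in $E^{M_0\sharp M_0}$ is a linear combination of such states, so $\varphi$ applied to the middle term tends to $0$ for every $\varphi \in E^{M_0\sharp M_0}$. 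Next I make the outer maps unital: now $\psi_i \circ \phi_i'(1_F) = \psi_i(1_{\mathbb M_{n(i)}(\mathbb C)})$ converges weakly to $\phi(1_F) = 1_E$, so with states $\kappa_i$ on $\mathbb M_{n(i)}(\mathbb C)$ the maps $\psi_i'(y) = \psi_i(y) + \kappa_i(y)(1_E - \psi_i(1_{\mathbb M_{n(i)}(\mathbb C)}))$ are u.c.p., and $\psi_i' \circ \phi_i'(x)$ differs from $\psi_i\circ\phi_i'(x)$ by the weakly null term $\kappa_i(\phi_i'(x))(1_E - \psi_i(1_{\mathbb M_{n(i)}(\mathbb C)}))$. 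Thus both $\phi_i'$ and $\psi_i'$ are u.c.p.\ and $\psi_i' \circ \phi_i'(x) \to \phi(x)$ in the weak $(M_0\subset M)$-topology.

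Finally, the collection $\mathcal C$ of maps $F \to E$ that factor as a u.c.p.\ map into a matrix algebra followed by a u.c.p.\ map is convex: a convex combination $\sum_k \lambda_k \Psi_k \circ \Phi_k$ of such maps factors as $\Psi \circ \bigl(\iota \circ \bigoplus_k \Phi_k\bigr)$, where $\iota \colon \bigoplus_k \mathbb M_{m_k}(\mathbb C) \hookrightarrow \mathbb M_N(\mathbb C)$ is the block-diagonal inclusion and $\Psi(y) = \sum_k \lambda_k \Psi_k(\Theta(y)_k)$ for the block-diagonal expectation $\Theta \colon \mathbb M_N(\mathbb C) \to \bigoplus_k \mathbb M_{m_k}(\mathbb C)$, both maps being u.c.p. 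Since the weak $(M_0\subset M)$-topology and the $(M_0\subset M)$-topology on $E$ have the same continuous linear functionals (Proposition~\ref{prop:A-continuous and topology}), the induced topologies of pointwise convergence on maps $F \to E$ share the same dual, hence the same closed convex sets; as $\phi$ lies in the pointwise weak-$(M_0\subset M)$-closure of $\mathcal C$, it also lies in its pointwise $(M_0\subset M)$-closure, giving the desired net of u.c.p.\ maps converging in the $(M_0\subset M)$-topology. I expect the main obstacle to be organizing the two perturbations so that the composition errors are genuinely controlled: the monotone squeeze that annihilates them is available only weakly—it uses positivity of the functionals rather than the seminorms $s^\rho_\omega$ directly—which is exactly why the argument is run in the weak topology with the convexity step deferred to the end.
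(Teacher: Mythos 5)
Your architecture — additive rank-one unitalizations of both maps, run in the weak $(M_0\subset M)$-topology, with a block-diagonal convexity argument and Mazur's theorem (via Proposition~\ref{prop:A-continuous and topology}) deferred to the end — is sound in outline, and the unitalization formulas, the operator squeeze $0 \le \psi_i(1_{\mathbb M_{n(i)}(\mathbb C)}) - \psi_i\circ\phi_i(1_F) \le 1_E - \psi_i\circ\phi_i(1_F)$, and the convexity of the set of u.c.p.-factorable maps are all correct. The genuine gap is the step you flag yourself as the crux: to annihilate the positive error terms weakly you need that every functional in $E^{M_0\sharp M_0}$ is a linear combination of \emph{states} in $E^{M_0\sharp M_0}$, and you justify this by pointing to the proof of Lemma~\ref{lem:weakcontinuity}. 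But that argument is carried out for operator systems over a von Neumann algebra: it represents the system inside $(E^{M\sharp M})^*$ via an $M$-bimodular c.p.\ representation in which $M$ is \emph{normally} represented, and then polarizes vector functionals. The lemma you are proving assumes only that $E$ is an $M_0$-system — not that it is $(M_0\subset M)$-normal — so no such representation is available; the paper's normal-bidual machinery ($E^{\sharp*}$ inside $A^{\sharp*}$) is developed only for $(M_0\subset M)$-normal algebras and their subsystems. For a general $M_0$-system the Christensen--Paulsen--Sinclair representation used in Proposition~\ref{prop:A-continuous and topology} produces only a completely \emph{contractive} middle map, which is not enough to polarize into positive functionals, and nowhere does the paper establish that the positive cone of $E^{M_0\sharp M_0}$ is generating in this generality. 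So as written, the squeeze step is unjustified exactly in the generality the lemma requires.

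The repair is to not discard the strong convergence you are handed: the hypothesis of $(M_0\subset M)$-nuclearity gives $1_E - \psi_i\circ\phi_i(1_F) \to 0$ in the $(M_0\subset M)$-topology itself, and then Lemma~\ref{lem:compressnorm} (the noncommutative Egorov lemma) produces positive contractions $1 - z_i \in M_0$ with $z_i \to 1$ and $\| z_i (1_E - \psi_i\circ\phi_i(1_F)) z_i \| \to 0$; the operator inequality then gives $\| z_i \psi_i(1 - \phi_i(1_F)) z_i \| \to 0$, and writing the error as $z_i(\cdot)z_i + (1-z_i)(\cdot) + z_i(\cdot)(1-z_i)$ and using $s_\omega((1-z_i)y) \le \omega((1-z_i)^2)^{1/2}\|y\|\,\omega(1)^{1/2}$ kills it in the seminorms directly — no decomposition of functionals, and no weak topology, is ever needed. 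This is in fact how the paper's own proof runs: it unitalizes the inner maps multiplicatively via \cite[Lemma 2.2.5]{BO08} (writing $\tilde\phi_i(x) = \tilde\phi_i(1)^{1/2}\phi_i(x)\tilde\phi_i(1)^{1/2}$ with $\phi_i$ u.c.p.), and corrects the outer maps by the compressed formula $\psi_i(T) = z_i\tilde\psi_i(\tilde\phi_i(1)^{1/2}T\tilde\phi_i(1)^{1/2})z_i + (1 - z_i\tilde\psi_i(\tilde\phi_i(1))z_i)\eta_i(T)$, staying in the strong $(M_0\subset M)$-topology throughout, so that neither your span-of-states claim nor the final convexity step is required.
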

\begin{proof}
Let $\tilde \phi_i: F \to \mathbb M_{n(i)}(\mathbb C)$ and $\tilde \psi_i: \mathbb M_{n(i)}(\mathbb C) \to E$ be c.c.p.\ maps such that $\tilde \psi_i \circ \tilde \phi_i(x) - \phi(x) \to 0$ in the $(M_0 \subset M)$-topology for any $x \in F$. By Lemma 2.2.5 in \cite{BO08} there exist u.c.p.\ maps $\phi_i: F \to \mathbb M_{n(i)}(\mathbb C)$ so that $\tilde \phi_i(x) = \tilde \phi_i( 1)^{1/2} \phi_i(x) \tilde \phi_i(1)^{1/2}$. 

Note that $\tilde \psi_i (\tilde \phi_i(1))$ is a net of positive contractions that converge to $1$ in the $(M_0 \subset M)$-topology. By Lemma~\ref{lem:compressnorm} we may find a net of positive contractions $1 - z_i \in M_0$, so that $\| z_i (1 - \tilde \psi_i(\tilde \phi_i(1) ) )z_i \| \to 0$, and $z_i \to 1$ ultrastrongly in $M$. It therefore follows that $1 - z_i  \tilde \psi_i(\tilde \phi_i(1) ) z_i $ converges to $0$ in the $(M_0 \subset M)$-topology. Hence, if we fix states $\eta_i \in \mathbb M_{n(i)}(\mathbb C)$ and set \[
\psi_i(T) = z_i \tilde \psi_i( \tilde \phi_i(1)^{1/2} T \tilde \phi_i(1)^{1/2}) z_i +  (1 - z_i  \tilde \psi_i(\tilde \phi_i(1) ) z_i )\eta_i(T),
\] 
then $\psi_i$ are u.c.p.\ and we have $\psi_i \circ \phi_i$ converges to the $phi$ in the point-$(M_0 \subset M)$-topology. 
\end{proof}

We also remark that for a bimodular map between $M_0$ systems $E$ and $F$, the convergence in the definition of $(M_0 \subset M)$-nuclearity can be strengthened in the following sense.

\begin{lem}\label{lem:approxunifconv}
Let $M$ be a von Neumann algebra, $M_0 \subset M$ an ultraweakly dense ${\rm C}^*$-subalgebra. Suppose $M_0 \subset E$ and $M_0 \subset F$ are two $M_0$-systems, and $\phi: E \to F$ is $M_0$-bimodular and u.c.p. Then $\phi$ is $(M_0 \subset M)$-nuclear if and only if there exist nets of u.c.p.\ maps $\phi_i: E \to \mathbb M_{n(i)}(\mathbb C)$ and $\psi_i: \mathbb M_{n(i)}(\mathbb C) \to F$ so that $\psi_i \circ \phi_i$ converges in the point-$(M_0 \subset M)$-topology to $\phi$, and such that for each $x \in M_0$ there exist $T_i \in F$ and $a_i \in M_0$ so that $\psi_i \circ \phi_i(x) - x = T_i - a_i$ and we have
\begin{enumerate}
\item $\| T_i \| \to 0$. 
\item $\sup_i \| a_i \| <\infty$.
\item $a_i \to 0$ ultrastrongly. 
\end{enumerate}
\end{lem}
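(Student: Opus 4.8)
The plan is to prove the two directions separately. The backward direction is immediate: if such nets $\phi_i,\psi_i$ exist, then in particular $\psi_i\circ\phi_i$ converges to $\phi$ in the point-$(M_0\subset M)$-topology, which is exactly the assertion that $\phi$ is $(M_0\subset M)$-nuclear. For the forward direction, I would first record that $M_0$-bimodularity together with unitality forces $\phi(x)=x\phi(1_E)=x$ for every $x\in M_0$, so that the whole difficulty is to upgrade an arbitrary $(M_0\subset M)$-nuclear approximation into one whose error on $M_0$ splits as claimed. By the preceding lemma I may start from \emph{u.c.p.}\ maps $\phi_i\colon E\to\mathbb{M}_{n(i)}(\mathbb C)$ and $\psi_i\colon\mathbb{M}_{n(i)}(\mathbb C)\to F$ with $\psi_i\circ\phi_i(y)\to\phi(y)$ in the $(M_0\subset M)$-topology for every $y\in E$. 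I would also reduce throughout to the symmetric seminorms $s_\mu$: since $\omega,\rho\le\omega+\rho$ gives $s_\omega^\rho\le s_{\omega+\rho}$, it suffices in every convergence statement below to control $s_\mu$ for normal positive $\mu$, which lets me invoke the product inequality of Corollary~\ref{cor:productcont}.

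The core construction is a compression. Fixing a finite set $S\subset M_0$, the net $x_i^k:=\psi_i\circ\phi_i(s_k)-s_k$ (for $s_k\in S$) converges to $0$ in the $(M_0\subset M)$-topology precisely because $\phi|_{M_0}=\operatorname{id}$. Given in addition a finite set $\mathcal F$ of normal states, Lemma~\ref{lem:compressnorm} produces positive contractions $1-z_i\in M_0$ with $\|z_i x_i^k z_i\|\to0$ and $\mu(1-z_i)\to0$ for all $\mu\in\mathcal F$. Fixing states $\eta_i$ on $\mathbb{M}_{n(i)}(\mathbb C)$, I set $\tilde\psi_i(T)=z_i\psi_i(T)z_i+\eta_i(T)(1-z_i^2)$, which is u.c.p. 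For $x\in S$ I then decompose
\[
\tilde\psi_i\circ\phi_i(x)-x=z_i\big(\psi_i\circ\phi_i(x)-x\big)z_i+\big(z_ixz_i-x\big)+\eta_i(\phi_i(x))(1-z_i^2),
\]
where the first term has norm tending to $0$ (this is the output of Lemma~\ref{lem:compressnorm}) and becomes $T_i$, while the last two terms lie in $M_0$, are bounded by a multiple of $\|x\|$, and tend to $0$ ultrastrongly (since $1-z_i$ and $1-z_i^2$ do), and together become $-a_i$. This yields exactly the splitting (1)--(3) on $S$.

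The step I expect to be the main obstacle is checking that the \emph{same} compressed net still converges to $\phi$ on all of $E$, i.e.\ that compressing does not destroy $(M_0\subset M)$-convergence; a naive estimate fails because $T\mapsto z_iTz_i$ is not uniformly $(M_0\subset M)$-continuous, and the middle-slot norm estimate of Corollary~\ref{cor:productcont} alone loses the smallness of $w_i:=\psi_i\circ\phi_i(y)-\phi(y)$. The resolution I would use is the identity
\[
z_i w_i z_i=w_i-(1-z_i)w_i z_i-w_i(1-z_i),
\]
which keeps the \emph{bounded} factor $w_i$ in the norm slot while exposing the \emph{small} factors $1-z_i$. Applying Corollary~\ref{cor:productcont} gives $s_\mu\big((1-z_i)w_iz_i\big)\le\mu(1-z_i)^{1/2}\|w_i\|\,\mu(1)^{1/2}$, and symmetrically for the third term, both tending to $0$ since $\mu(1-z_i)\to0$, while $s_\mu(w_i)\to0$ by hypothesis; the extra terms $z_i\phi(y)z_i-\phi(y)$ and $\eta_i(\phi_i(y))(1-z_i^2)$ produced by $\tilde\psi_i$ are handled the same way. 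Hence $s_\mu\big(\tilde\psi_i\circ\phi_i(y)-\phi(y)\big)\to0$ for each $y\in E$ and $\mu\in\mathcal F$.

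Finally I would assemble the local data into a single net indexed by quadruples $(S,G,\mathcal F,\epsilon)$, with $S\subset M_0$, $G\subset E$, and $\mathcal F$ finite and $\epsilon>0$, ordered by inclusion and reverse inclusion in $\epsilon$. For each index I choose $i$ large enough that $\|T_i\|$, the quantities $\mu(a_i^*a_i)$ for $\mu\in\mathcal F$, and the seminorms $s_\mu\big(\tilde\psi_i\circ\phi_i(y)-\phi(y)\big)$ for $y\in G$, $\mu\in\mathcal F$, are all below $\epsilon$, and I pass to the resulting net $(\phi_\lambda,\psi_\lambda)$. Along it, for a fixed $x\in M_0$ one eventually has $x\in S$, so $\|T_\lambda\|\to0$, $\sup_\lambda\|a_\lambda\|<\infty$, and $\mu(a_\lambda^*a_\lambda)\to0$ for every normal positive $\mu$, i.e.\ $a_\lambda\to0$ ultrastrongly; and for a fixed $y\in E$ the $s_\mu$-estimates together with $s_\omega^\rho\le s_{\omega+\rho}$ give point-$(M_0\subset M)$-convergence, completing the forward direction.
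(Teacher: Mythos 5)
Your proof is correct and takes essentially the same route as the paper's: apply Lemma~\ref{lem:compressnorm} to $\psi_i\circ\phi_i(x)-x$ for $x$ in a finite subset of $M_0$ (using that bimodularity forces $\phi|_{M_0}=\operatorname{id}$), compress by the resulting contractions $z_i$ with a state correction term $(1-z_i^2)\eta_i(\cdot)$, read off $T_i$ and $a_i$ from the identical three-term decomposition, and diagonalize over finite sets of elements and normal states. The one point where you go beyond the paper's write-up is the explicit check, via $z_iwz_i=w-(1-z_i)wz_i-w(1-z_i)$ and Corollary~\ref{cor:productcont}, that the perturbed maps still converge to $\phi$ on all of $E$ --- the paper leaves this implicit, it being the same kind of estimate as in Lemma~\ref{lem:norm converge on unit}.
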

\begin{proof}
We let $\phi^0_i: E \to \mathbb M_{n(i)}(\mathbb C)$ and $\psi^0_i: \mathbb M_{n(i)}(\mathbb C) \to F$ denote u.c.p.\ maps so that $\psi^0_i \circ \phi_i^0$ converges to the identity in the point-$(M_0 \subset M)$-topology. Fix a finite set $E = \{ x_1, \ldots, x_m \} \subset M_0$ and a finite set of states $S = \{ \omega_1, \ldots, \omega_n \} \subset M_*$. 
Then, we may apply Lemma~\ref{lem:compressnorm} to $\{\psi_i^0\circ\phi_i^0(x_j)-x_j\}_i$ and $S$,	
	and we obtain positive contractions $\{1 - z_i\}_i\subset M_0$	
	such that $\lim_i\max_{1\leq k\leq n}\omega_k(1-z_i)= 0$ and $\lim_i \max_{1\leq j\leq m}\|z_i (\psi_i^0\circ\phi_i^0(x_j)-x_j)z_i\|=0$.

We now fix a state $\eta \in F^{ M_0 \sharp M_0 }$ and define the u.c.p.\ map $\psi_i: \mathbb M_{n(i)}(\mathbb C) \to F$ by $\psi_i(T) = z_i \psi_i^0(T) z_i + (1 - z_i^2) \eta(T)$. We then have $\psi_i \circ \phi_i^0(x_j) - x_j = z_i (\psi_i^0\circ\phi_i^0(x_j)-x_j ) z_i + (1 - z_i^2) \eta(\phi_i^0(x_j)) - ( x_j - z_i x_j z_i )$.

Note that $c_{i, j} := (1 - z_i^2) \eta(\phi_i^0(x_j)) + ( x_j - z_i x_j z_i ) \in M_0$ satisfies $\| c_{i, j} \| \leq 1 + 2 \| x_j \|$, and $\lim_{i \to \infty} \omega_k( c_{i, j}^* c_{i, j} ) = 0$ for each $1 \leq j \leq m$ and $1 \leq k \leq n$. Since $F$ and $S$ were arbitrary finite sets, the result then follows. 
\end{proof}

Nuclearity of a map with respect to the $(M_0 \subset M)$-topology can be reformulated as a weak nuclearity property by adapting the argument from \cite[Lemma 2.8(i)]{Kir95B} and using the appropriate normal bidual.

\begin{lem}\label{lem:weakly nuclear bidual}
Let $M_0$ be an ultraweakly dense ${\rm C}^*$-subalgebra of a von Neumann algebra $M$, and let $F$ be a normal $M_0$-system, or a normal $M_0$-${\rm C}^*$-algebra. If $E$ is an operator system or ${\rm C}^*$-algebra and $\theta: E \to F$ is a c.c.p.\ map, then the following conditions are equivalent:
\begin{enumerate}
\item\label{item:wknuc1} The c.c.p.\ map $\theta$ is $(M_0 \subset M)$-nuclear.
\item\label{item:wknuc2} The c.c.p.\ map $i_F \circ \theta: E \to (F^{M_0 \sharp M_0})^*$ is weakly nuclear.
\item\label{item:wknuc3} The c.c.p.\ map $i_F \circ \theta: E  \to (F^{M_0 \sharp M_0})^*$ is $(M_0 \subset M)$-nuclear.
\end{enumerate}
\end{lem}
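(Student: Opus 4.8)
The plan is to establish the cycle $(\ref{item:wknuc1})\Rightarrow(\ref{item:wknuc3})\Rightarrow(\ref{item:wknuc2})\Rightarrow(\ref{item:wknuc1})$, treating $N:=(F^{M_0\sharp M_0})^{*}\cong F^{\sharp *}$ as the ``normal bidual'' in place of $F^{**}$ and the $(M_0\subset M)$-topology in place of the norm topology, following the strategy of \cite[Lemma 2.8(i)]{Kir95B}. The conceptual engine is a comparison of three topologies on the von Neumann algebra $N$. Since $N$ is a dual operator $M$-system with predual $N_{*}=F^{M_0\sharp M_0}$, its ultraweak topology is precisely $\sigma(N,F^{M_0\sharp M_0})$; moreover every normal functional on $N$ is $(M_0\subset M)$-normal, so $N_{*}\subseteq N^{M_0\sharp M_0}$. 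Hence on $N$ the $(M_0\subset M)$-topology is finer than the weak $(M_0\subset M)$-topology, which in turn is finer than the ultraweak topology. Finally, when restricted to the weak-$*$ dense subsystem $i_F(F)$, the ultraweak topology of $N$ is exactly the weak $(M_0\subset M)$-topology $\sigma(F,F^{M_0\sharp M_0})$ of $F$.

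For $(\ref{item:wknuc1})\Rightarrow(\ref{item:wknuc3})$ I compose a witnessing factorization $\psi_i\circ\phi_i\to\theta$, convergent in the $(M_0\subset M)$-topology of $F$, with the $M_0$-bimodular complete order isomorphism $i_F$ onto the operator $M_0$-subsystem $i_F(F)\subseteq N$; since the $(M_0\subset M)$-topology of a subbimodule agrees with its restriction from the ambient bimodule (the first Lemma of Section~\ref{sec:Mtopology}), the maps $i_F\circ\psi_i$ witness $(M_0\subset M)$-nuclearity of $i_F\circ\theta$. For $(\ref{item:wknuc3})\Rightarrow(\ref{item:wknuc2})$, convergence of $\psi_i\circ\phi_i$ to $i_F\circ\theta$ in the $(M_0\subset M)$-topology of $N$ forces convergence in the coarser ultraweak topology by the comparison above, which is precisely weak nuclearity of $i_F\circ\theta$.

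The content lies in $(\ref{item:wknuc2})\Rightarrow(\ref{item:wknuc1})$, carried out in two steps. Suppose $\phi_i:E\to\M_{n(i)}(\C)$ and $\psi_i:\M_{n(i)}(\C)\to N$ are c.c.p.\ with $\psi_i\circ\phi_i\to i_F\circ\theta$ ultraweakly. First I push the $\psi_i$ into $F$: the image $i_F(F)$ is weak-$*$ dense in $N$ by the bipolar theorem (as $F^{M_0\sharp M_0}$ separates points of $F$), and via the Choi correspondence between c.p.\ maps $\M_{n}(\C)\to N$ and positive elements of $\M_n(N)$, together with the identification $\M_n(N)=\M_n(F)^{\sharp *}$, the weak-$*$ density of the positive cone $i_F(\M_n(F)_{+})$ in $\M_n(N)_{+}$ lets me replace each $\psi_i$ by $i_F\circ\tilde\psi_i$ for c.c.p.\ maps $\tilde\psi_i:\M_{n(i)}(\C)\to F$, retaining $\tilde\psi_i\circ\phi_i\to\theta$ in $\sigma(F,F^{M_0\sharp M_0})$. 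Second, the set of finite-rank-factorable c.c.p.\ maps $E\to F$ is convex, and by Proposition~\ref{prop:A-continuous and topology} the $(M_0\subset M)$-topology and the weak $(M_0\subset M)$-topology on $F$ share the continuous functionals $F^{M_0\sharp M_0}$, and hence the same closed convex sets; thus the point-weak $(M_0\subset M)$ limit $\theta$ is also a point-$(M_0\subset M)$-topology limit of such maps, which is exactly $(\ref{item:wknuc1})$.

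I expect the main obstacle to be the first step of $(\ref{item:wknuc2})\Rightarrow(\ref{item:wknuc1})$: upgrading weak-$*$ density of $i_F(F)$ in $N$ to weak-$*$ density of the c.c.p.\ maps $\M_n(\C)\to F$ among c.c.p.\ maps $\M_n(\C)\to N$. The delicate point is density of the positive cones rather than of $i_F(F)$ alone, which I would obtain by compressing $A^{**}_{+}$ by $p_{\rm nor}$ (so that $i_F(F_{+})$ is weak-$*$ dense in $N_{+}$) and amplifying via $\M_n(F)^{\sharp *}=\M_n(F^{\sharp *})$; the normalization required to keep the approximants contractive, together with the reduction from the c.c.p.\ to the u.c.p.\ case, can be handled with the perturbation results of Section~\ref{sec:Mtopology}, in particular Lemma~\ref{lem:perturb} and \cite[Corollary B.9]{BO08}.
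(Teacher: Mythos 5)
Your proposal is correct and follows essentially the same route as the paper: the only substantive implication is (\ref{item:wknuc2}) $\implies$ (\ref{item:wknuc1}), which the paper also handles by the Choi correspondence together with the identification $(\M_n(\C)\otimes F)^{\sharp *}\cong \M_n(\C)\otimes F^{\sharp *}$ and weak$^*$ density of $\M_n(F)$ in $\M_n(F)^{\sharp *}$ to pull the maps $\psi_i$ back into $F$, followed by the observation that the weak$^*$ topology on $i_F(F)$ agrees with the weak $(M_0\subset M)$-topology on $F$ and the standard convexity upgrade to $(M_0\subset M)$-topology convergence. Your explicit treatment of the positive-cone density, the contractivity normalization via Lemma~\ref{lem:perturb}, and the Mazur/Hahn--Banach step are points the paper leaves implicit, but they are refinements of, not departures from, its argument.
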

\begin{proof}
Clearly (\ref{item:wknuc1}) $\implies$ (\ref{item:wknuc3}) $\implies$ (\ref{item:wknuc2}), so we need only show (\ref{item:wknuc2}) $\implies$ (\ref{item:wknuc1}). Notice that for each c.c.p.\ map $\phi: \mathbb M_n(\mathbb C)\to E^{\sharp *}$, there exists a net of c.c.p. maps $\phi_i: \mathbb M_n(\mathbb C)\to E$ such that $\phi_i\to \phi$ in the point-weak$^*$ topology.
Indeed, since $(\M_n(\C)\otimes E)^{* *}$ is completely isometrically isomorphic to $\M_n (\C)\otimes E^{**}$ \cite[Proposition B.16]{BO08},
	we have a completely isometrically isomorphism $(\M_n(\C)\otimes E)^{\sharp *}\cong\M_n(\C)\otimes E^{\sharp *}$.
Moreover, since c.p.\ maps from $\M_n(\C)$ to $E$ (resp.\ $E^{\sharp*}$) are one-to-one correspondent to elements in the positive cone of $\M_n(E)$ (resp.\ $\M_n( E^{\sharp*})$), 	
	it then follows from the density of $\M_n(E)\subset \M_n( E)^{\sharp*}$
	that we may approximate $\phi$ with c.c.p.\ maps $\phi_i':\M_n(\C)\to E$ in the point-weak$^*$ topology.
	Since the weak$^*$ topology on $i_E(E) \subset E^{\sharp *}$ agrees with the weak $(M_0 \subset M)$-topology on $E$, the result then follows. 
\end{proof}

\begin{lem}\label{lem:almostbimod}
Let $M$ be a von Neumann algebra and $M_0\subset M$ a weakly dense ${\rm C}^*$-subalgebra.
Let $E$ and $F$ be operator $M_0$-systems, and suppose $\theta_i: E \to F$ are u.c.p.\ maps such that ${\theta_i}_{|M_0}$ converges to the identity in the point-weak $(M_0 \subset M)$-topology, then for each $a, b \in M_0$ and $x \in E$ we have $\theta_i(axb) - a\theta_i(x) b \to 0$ in the weak $(M_0 \subset M)$-topology.
\end{lem}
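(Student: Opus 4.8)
The plan is to reduce everything to the behaviour of $\theta_i$ against states, and to upgrade the hypothesised weak convergence on $M_0$ to an $L^2$-type convergence that is stable under multiplication by bounded elements. Since, by (the proof of) Lemma~\ref{lem:weakcontinuity}, every functional in $F^{M_0\sharp M_0}$ is a linear combination of states lying in $F^{M_0\sharp M_0}$, by polarization it suffices to prove $\eta(\theta_i(axb) - a\theta_i(x)b) \to 0$ for every such state $\eta$ and all $a, b \in M_0$, $x \in E$. Fixing $\eta$ and realizing $F \subset \B(\mathcal K)$, I would extend $\eta$ to a state on $\B(\mathcal K)$ and pass to its GNS representation, so that the products below make sense and $\eta$ appears as a vector state of a u.c.p.\ map. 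This is harmless, since the only properties of $\eta$ I use beyond positivity are that $\eta_a := \eta(a\,\cdot\,)$ and $\eta^a := \eta(\,\cdot\,a)$ again lie in $F^{M_0\sharp M_0}$ for $a \in M_0$, which is immediate from the definition together with ultraweak continuity of multiplication by $a$. Writing $r_i := \theta_i(a) - a$, the hypothesis ${\theta_i}_{|M_0} \to \id$ in the weak $(M_0\subset M)$-topology then gives $\eta(ar_i^*), \eta(r_ia^*) \to 0$ and $\eta(a^*\theta_i(a)), \eta(\theta_i(a)^*a) \to \eta(a^*a)$.

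The key step, and what I expect to be the main obstacle, is the $L^2$-estimate
\[
\eta\big((\theta_i(a)-a)^*(\theta_i(a)-a)\big) \to 0, \qquad \eta\big((\theta_i(a)-a)(\theta_i(a)-a)^*\big) \to 0,
\]
for each $a \in M_0$. Expanding the left-hand side, I would use the Kadison--Schwarz inequality $\theta_i(a)^*\theta_i(a) \le \theta_i(a^*a)$ and positivity of $\eta$ to bound $\limsup_i \eta(\theta_i(a)^*\theta_i(a)) \le \lim_i \eta(\theta_i(a^*a)) = \eta(a^*a)$, while the cross terms converge to $\eta(a^*a)$ by the weak-continuity observations above; the four terms cancel and positivity forces the limit to be $0$. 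The difficulty is precisely that the products $\theta_i(a)\theta_i(x)\theta_i(b)$ and $a\theta_i(x)b$ need not lie in $F$ and the nets $\theta_i(a), \theta_i(b)$ converge only weakly, so one cannot pass products to the limit naively; this $L^2$-upgrade is exactly what makes weak convergence survive multiplication by bounded factors through Cauchy--Schwarz. Expanding $\theta_i(a)\theta_i(a^*)$ and $\theta_i(b)^*\theta_i(b)$ then shows the Schwarz defects $d_i(a) := \theta_i(aa^*) - \theta_i(a)\theta_i(a^*) \ge 0$ and $d_i'(b) := \theta_i(b^*b) - \theta_i(b)^*\theta_i(b) \ge 0$ satisfy $\eta(d_i(a)) \to 0$ and $\eta(d_i'(b)) \to 0$.

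With the defects controlled, I would invoke the standard near-multiplicativity estimates from the Stinespring dilation of $\theta_i$ (composed with the GNS representation of $\eta$): for $y \in E$,
\[
|\eta(\theta_i(ay) - \theta_i(a)\theta_i(y))| \le \|y\|\,\eta(d_i(a))^{1/2}, \qquad \eta\big((\theta_i(yb) - \theta_i(y)\theta_i(b))^*(\theta_i(yb) - \theta_i(y)\theta_i(b))\big) \le \|y\|^2\,\eta(d_i'(b)).
\]
I then decompose $\theta_i(axb) - a\theta_i(x)b$ as $\big(\theta_i(axb) - \theta_i(a)\theta_i(x)\theta_i(b)\big) + \big(\theta_i(a)\theta_i(x)\theta_i(b) - a\theta_i(x)b\big)$. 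For the first bracket I apply the left estimate with $y = xb$ to handle $\theta_i(a(xb)) - \theta_i(a)\theta_i(xb)$, and after factoring out $\theta_i(a)$ I bound $\theta_i(a)\big(\theta_i(xb)-\theta_i(x)\theta_i(b)\big)$ by Cauchy--Schwarz using the second-order right estimate for $x, b$; both tend to $0$ since $\eta(d_i(a)), \eta(d_i'(b)) \to 0$. For the second bracket I write it as $(\theta_i(a)-a)\theta_i(x)\theta_i(b) + a\theta_i(x)(\theta_i(b)-b)$ and bound each summand by Cauchy--Schwarz, where the $L^2$-estimate of the previous paragraph makes the factors $\theta_i(a)-a$ and $\theta_i(b)-b$ vanish in the limit while the remaining factors stay bounded by $\|a\|, \|b\|, \|x\|$. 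Summing these estimates yields $\eta(\theta_i(axb) - a\theta_i(x)b) \to 0$, and by the reduction of the first paragraph this completes the proof.
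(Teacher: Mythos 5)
Your proof is correct, but it takes a genuinely different route from the paper's. The paper's argument is a three-line soft one: take any point-weak$^*$ cluster point $\theta: E \to (F^{M_0 \sharp M_0})^*$ of the net $\{\theta_i\}$ in the normal bidual (Banach--Alaoglu), observe that the hypothesis forces $\theta_{|M_0} = \id$, invoke Choi's multiplicative domain theorem to conclude that $\theta$ is $M_0$-bimodular, and finish by arbitrariness of the cluster point; the key inputs are the bidual machinery of Section~\ref{sec:Mtopology} and the fact that the weak$^*$ topology on the image of $F$ in $(F^{M_0 \sharp M_0})^*$ restricts to the weak $(M_0 \subset M)$-topology. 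You instead run the multiplicative-domain mechanism \emph{quantitatively}, term by term: reduction to states $\eta \in F^{M_0\sharp M_0}$ via the polarization observation in Lemma~\ref{lem:weakcontinuity}, an $L^2$-upgrade $\eta(|\theta_i(a)-a|^2) \to 0$ via Kadison--Schwarz, control of the Schwarz defects $\eta(d_i(a)), \eta(d_i'(b)) \to 0$, and the standard Stinespring near-multiplicativity bounds combined with Cauchy--Schwarz. Your steps all check out, including the two places where care is genuinely needed and which you correctly flag: (i) the Hahn--Banach extension $\tilde\eta$ of $\eta$ to $\B(\cK)$ is generally \emph{not} $M_0$-normal, but this is harmless because weak convergence is only ever applied to elements of $F$ (such as $\theta_i(a^*a)$, $a\theta_i(a)^*$, $r_ia^*$), where $\tilde\eta = \eta$; and (ii) the Stinespring estimates require an Arveson extension of $\theta_i$ beyond the operator system $E$, but the defects $d_i(a) = \theta_i(aa^*) - \theta_i(a)\theta_i(a)^*$ and $d_i'(b)$ and all terms in the final decomposition are intrinsic to $\theta_i$ since $a, b, aa^*, b^*b \in M_0 \subset E$ and $axb, xb \in E$. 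What each approach buys: the paper's proof is shorter and transfers instantly to any setting where the bidual construction is available, while yours avoids biduals and subnets entirely, stays inside $F \subset \B(\cK)$, and yields explicit rates (in terms of $\eta(d_i(a))^{1/2}$ and $\eta(r_ir_i^*)^{1/2}$) depending only on finitely many functionals and elements --- the kind of uniform control that is exactly what is exploited in Lemma~\ref{lem:approxunifconv} and in the inductive choice of the $\theta_n$ in the proof of Theorem~\ref{thm:nuclear inclusion density}, where the paper must pass to convex combinations to convert the weak conclusion of this lemma back into a seminorm estimate.
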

\begin{proof}
Let $\theta: E \to (F^{_{M_0}\sharp_{M_0}})^{*}$ be any point-weak$^*$-limit point of $\{ \theta_i \}$. Since ${\theta_i}_{|M_0}$ converges to the identity in the point-weak $(M_0 \subset M)$-topology we have $\theta_{|M_0} = {\rm id}$, and since $\theta$ is u.c.p., we then have that $\theta$ is $M_0$-bimodular. Thus, for all $x \in F$ and $a, b \in M_0$ we have $\theta(axb) = a \theta(x) b$. Since $\theta$ was an arbitrary point-weak$^*$-limit point, the result follows. 
\end{proof}

The following theorem uses ideas as in the same spirit as in Section 2 of \cite{Kir95B}, Theorem 4.7 in \cite{EfOzRu01}, and Theorem 1 in \cite{Oza07}.

\begin{thm}\label{thm:nuclear inclusion density}
Let $M$ be a separable von Neumann algebra, $M_0\subset M$ a unital weakly dense ${\rm C}^*$-subalgebra, and $E$ a normal $M_0$-system. The following conditions are equivalent:
\begin{enumerate}
\item\label{item:nucinclusion1} The inclusion $M_0 \subset E$ is $(M_0 \subset M)$-nuclear.
\item\label{item:nucinclusion2}  There exists an amenable von Neumann algebra $R$, and normal u.c.p.\ maps $\Phi: M \to R$ and $\Psi: R \to (E^{M_0 \sharp M_0})^*$ so that $\Psi \circ \Phi(x) = i_E(x)$ for all $x \in M_0$.  
\end{enumerate}
\end{thm}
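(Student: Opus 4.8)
The plan is to prove the two implications separately, in each case reducing to the weak-nuclearity criterion of Lemma~\ref{lem:weakly nuclear bidual}. Throughout I write $\widetilde E := (E^{M_0\sharp M_0})^*$, which is a von Neumann algebra containing $M$ as a normal von Neumann subalgebra, and I recall that $i_E\colon M_0 \to \widetilde E$ is the restriction to $M_0$ of the normal inclusion $M \hookrightarrow \widetilde E$.

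For (\ref{item:nucinclusion2}) $\Rightarrow$ (\ref{item:nucinclusion1}): since $R$ is amenable it is semidiscrete, so there are nets of u.c.p.\ maps $\alpha_j\colon R \to \mathbb M_{k(j)}(\mathbb C)$ and $\beta_j\colon \mathbb M_{k(j)}(\mathbb C) \to R$ with $\beta_j\circ\alpha_j \to \operatorname{id}_R$ in the point-ultraweak topology. Setting $\phi_j = \alpha_j\circ\Phi_{|M_0}$ and $\psi_j = \Psi\circ\beta_j$, the normality of $\Phi$ and $\Psi$ gives $\psi_j\circ\phi_j = \Psi\circ(\beta_j\circ\alpha_j)\circ\Phi_{|M_0} \to \Psi\circ\Phi_{|M_0} = i_E$ in the point-weak$^*$ topology of $\widetilde E$. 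Thus $i_E$ is weakly nuclear and Lemma~\ref{lem:weakly nuclear bidual} yields (\ref{item:nucinclusion1}).

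For the converse, by Lemma~\ref{lem:weakly nuclear bidual} I first obtain u.c.p.\ maps $\phi_i\colon M_0 \to \mathbb M_{n(i)}(\mathbb C)$ and $\psi_i\colon \mathbb M_{n(i)}(\mathbb C) \to \widetilde E$ with $\psi_i\circ\phi_i \to i_E$ in the point-weak$^*$ topology. I set $B = \prod_i \mathbb M_{n(i)}(\mathbb C)$, which is an injective (hence amenable) von Neumann algebra, and take $R = B^{**}$, which is again injective. Assembling the first legs into the u.c.p.\ map $\Phi_0 = (\phi_i)_i\colon M_0 \to B$, I define
\[
\Phi = \Phi_0^{**}\circ\nu \colon M \to B^{**} = R,
\]
where $\nu\colon M \xrightarrow{\ \sim\ } p_{\rm nor}M_0^{**} \hookrightarrow M_0^{**}$ is the normal embedding given by the inverse of the canonical isomorphism $p_{\rm nor}M_0^{**}\cong M$; this $\Phi$ is normal and u.c.p. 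For the second legs I fix an ultrafilter $\mathcal U$ on the index set refining the order filter, let $\Psi_0\colon B \to \widetilde E$ be the u.c.p.\ map $(T_i)_i \mapsto \mathrm{wk}^*\text{-}\lim_{i\to\mathcal U}\psi_i(T_i)$, and set
\[
\Psi = \pi\circ\Psi_0^{**} \colon B^{**} \to \widetilde E,
\]
where $\pi\colon \widetilde E^{**}\to\widetilde E$ is the canonical normal surjection; this $\Psi$ is normal and u.c.p.

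It remains to verify $\Psi\circ\Phi_{|M_0} = i_E$, and this is the crux. By functoriality of the second adjoint, $\Psi\circ\Phi = \pi\circ(\Psi_0\circ\Phi_0)^{**}\circ\nu$, while the defining convergence together with the choice of $\mathcal U$ gives $\Psi_0\circ\Phi_0 = i_E$ on $M_0$; hence for $a\in M_0$ one has $\Psi\circ\Phi(a) = \pi\big(i_E^{**}(\hat a\, p_{\rm nor})\big)$, where $\hat a$ is the image of $a$ under $M_0 \hookrightarrow M_0^{**}$. Since $\pi\big(i_E^{**}(\hat a)\big) = i_E(a)$, the identity reduces to showing that the correction $\pi\big(i_E^{**}(\hat a\,(1 - p_{\rm nor}))\big)$ vanishes. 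This is where I expect the main difficulty to lie: it is a bookkeeping statement about how the singular part $1 - p_{\rm nor}$ interacts with the second adjoint and with $\pi$. The observation that resolves it is that every normal functional on $\widetilde E$, i.e.\ every $\varphi\in E^{M_0\sharp M_0}$, restricts to a normal functional on $M_0$ (apply the defining separate normality of $\varphi$ with the unit); consequently $\varphi\circ i_E$ annihilates $\hat a\,(1-p_{\rm nor})$, since normal functionals on $M_0$ are exactly those supported under $p_{\rm nor}$. Testing against all such $\varphi$ then gives $\pi\big(i_E^{**}(\hat a\,(1-p_{\rm nor}))\big) = 0$, completing the verification. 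I note that separability of $M$ is not essential for this scheme, the product $B$ and the ultrafilter limit being formed over the original directed index set; it serves only to pass to sequences if a sequential formulation is preferred.
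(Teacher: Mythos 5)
Your implication (\ref{item:nucinclusion2}) $\Rightarrow$ (\ref{item:nucinclusion1}) is correct and is the same argument as the paper's (semidiscreteness of $R$ plus Lemma~\ref{lem:weakly nuclear bidual}), and your $p_{\rm nor}$ bookkeeping at the end of the converse is also sound as far as it goes. But the converse has a fatal gap at the step you treat as routine: the von Neumann algebra $R = B^{**}$ with $B = \prod_i \mathbb M_{n(i)}(\mathbb C)$ is \emph{not} amenable in general. Injectivity does not pass to biduals: by the Choi--Effros/Connes theorem, $B^{**}$ is injective if and only if $B$ is nuclear as a ${\rm C}^*$-algebra, and a product of matrix algebras of unbounded sizes is never nuclear. (If it were, its quotient $\prod_j \mathbb M_{n(i_j)}(\mathbb C)$ along a subsequence with $n(i_j) \to \infty$ would be nuclear, hence exact; but by residual finite-dimensionality $C^*(\mathbb F_2)$ embeds into such a product, and Wassermann showed $C^*(\mathbb F_2)$ is not exact.) Unbounded matrix sizes are unavoidable for any $M$ that is not essentially of bounded type I, so your $R$ fails condition (\ref{item:nucinclusion2}). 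The genuine crux of the theorem is thus not the singular-part correction you focus on, but arranging \emph{simultaneously} that $\Phi$ be normal on $M$ and that $R$ be amenable: passing to the bidual of the full product buys the former at the cost of the latter.

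This is exactly what the paper's Kirchberg-style inductive construction is for. Using separability, it fixes an increasing sequence of finite-dimensional operator systems $E_n \subset M_0$ with ultraweakly dense union and inductively chooses $\phi_n, \psi_n$ satisfying the summable error estimates (\ref{eq:condition 1}) and (\ref{eq:condition 2}) (obtained via Lemma~\ref{lem:almostbimod} and convex combinations). This makes the connecting maps $\sigma_n$ meaningful, so that one can replace your full product by the operator system $\tilde S \subset \prod_n \mathbb M_{k(n)}(\mathbb C)$ of eventually coherent sequences, which \emph{is} nuclear; then $S^{**} = p^\perp \tilde S^{**}$ is unitally completely order isomorphic to an amenable von Neumann algebra by the Effros--Ozawa--Ruan and Han--Paulsen results, and $\Phi$ is taken into $S^{**}$. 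The telescoping estimates give $\Psi \circ \Phi = i_E$ exactly on the dense union $M_{00}$, after which a support-projection argument (the analogue of your $p_{\rm nor}$ step, carried out with the net $e_i \to p$) produces the normal map on all of $M$. Your closing remark that separability is inessential is symptomatic of the gap: separability enters precisely at the inductive step your scheme skips, and the paper needs the separate technical variant Theorem~\ref{thm:nonsepversion} to handle the nonseparable case.
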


\begin{proof}
Since injectivity for a von Neumann algebra is equivalent to semi-discreteness, the implication (\ref{item:nucinclusion2}) $\implies$ (\ref{item:nucinclusion1}) then follows from Lemma~\ref{lem:weakly nuclear bidual}. We now suppose that (\ref{item:nucinclusion1}) holds. 
Since $M$ is separable, we may choose an increasing sequence of finite-dimensional operator systems $\{E_n\}_n$ inside $M_0$ so that $M_{00} := \cup E_n\subset M_0$ is ultraweakly dense.
By $(M_0 \subset M)$-nuclearity of the inclusion map $j_{M_0}:M_0\to E$, we have nets of u.c.p.\ maps $\phi_i: M_0\to \M_{k(i)}(\C)$
	and $\psi_i: \M_{k(i)}(\C)\to E$ so that $\psi_i\circ \phi_i \to j_{M_0}$ in the point-$(M_0 \subset M)$-topology.
By Arveson's Extension Theorem we may assume that each $\phi_i$ is defined on $E$. 

Denote by $\mu$ a normal faithful state on $M$ and $\mu_0$ the restriction of $\mu$ to $M_0$.
For each $n\geq 1$, we inductively choose u.c.p.\ maps $\phi_n: E\to \M_{k(n)} (\C)$ and $\psi_n: \M_{k(n)}(\C)\to E$ so that 
	setting $\theta_n=\psi_n\circ \phi_n$,
	we have
\begin{equation}\label{eq:condition 1}
s_{\mu_0}(\theta_n(a)-a)< 2^{-n+3} \|a\|, a\in E_n,
\end{equation}
and 
\begin{equation}\label{eq:condition 2}
s_{\mu_0}(\theta_n\circ \cdots \circ \theta_{n-j}(\theta_{n-j-1}(a)-a))<2^{-n-j} \|a\|, a\in E_{n-j-1}, 0\leq j \leq n-2.
\end{equation}
Indeed, (\ref{eq:condition 1}) follows directly from $(M_0 \subset M)$-nuclearity. 
We now verify (\ref{eq:condition 2}) in the case of $j=0$, the general case follows similarly.
Notice that since $s_{\mu_0}(\theta_{n-1}(a)-a)<2^{-n+2}\|a\|$ for $a\in E_{n-1}$,
	we may find $a_1, a_2\in A$ and $b\in E$ such that $a=a_1^* b a_2$ and 
	$\mu(a_1^*a_1)^{1/2}\| b \| \mu(a_2^* a_2)^{1/2}< 2^{-n+1} \|a\|$.
Thus by Lemma~\ref{lem:almostbimod} and Proposition~\ref{prop:A-continuous and topology},
	we may, after passing to convex combinations, choose $\theta_n$ that satisfies $s_{\mu_0}(\theta_n( a_1^* b a_2)-a_1^*\theta_n(b)a_2)< 2^{-n-1} \|a\|$ for all $a\in E_{n-1}$,
	and it follows that 
\[
\begin{aligned}
s_{\mu_0}(\theta_n(\theta_{n-1}(a)-a))&\leq s_{\mu_0}(\theta_n( a_1^* b a_2)-a_1^*\theta_n(b)a_2)+ s_{\mu_0}(a_1^*\theta_n(b)a_2)\\
&\leq 2^{-n-1}\|a\|+ \mu(a_1^*a_1)^{1/2} \| b \| \mu(a_2^* a_2)^{1/2} \leq 2^{-n}\|a\|.
\end{aligned}
\]

Therefore, we have the following commutative diagram:
\[
\begin{tikzcd}
M_0 \arrow[dr, "\phi_1"] \arrow[rr, "\theta_1"]  & & 
E \arrow[dr, "\phi_2"] \arrow[rr, "\theta_2"]  & & 
E \arrow[rr, "\theta_3"] \arrow[dr, "\phi_3"]  & & 
\cdots & & \\
& \mathbb M_{k(1)}(\mathbb C) \arrow[ur, "\psi_1"] \arrow[rr, "\sigma_1"]& 
& \mathbb M_{k(2)}(\mathbb C) \arrow[ur, "\psi_2"]  \arrow[rr, "\sigma_2"]& 
& \mathbb M_{k(3)}(\mathbb C) \arrow[rr, "\sigma_3"] \arrow[ur, "\psi_3"]&
& \cdots &
\end{tikzcd}
\]
Let $S$ be the operator system inductive limit of $(\mathbb M_{k(n)}(\mathbb C), \sigma_n)$ in the sense of \cite[Section 2]{Kir95B}.
More precisely, if $\tilde S \subset \prod_{n\geq 1} \M_{k(n)}(\C)$ is the norm closure of 
\[
\left\{(x_n)_n\in \prod_{n\geq 1} \M_{k(n)}(\C)\mid{\rm there\ exists\ } m \geq 1 {\rm\ s.t.\ } x_{n+1} = \sigma_n(x_n){\rm \ for\ all\ }n\geq m \right\},
\]
	then $S=Q(\tilde S)$, where 
	$$Q: \prod_{n\geq 1} \M_{k(n)}(\C)\to \prod_{n\geq 1} \M_{k(n)}(\C)/ \sum_{n\geq 1}\M_{k(n)}(\C)$$ 
	is the quotient map.

It is easy to see that $\tilde S$ is nuclear.
As operator spaces, we may identify $S^{**} = p^\perp \tilde S^{**}$ where $p$ is the central projection in the bidual of $\prod_{n\geq 1} \M_{k(n)}(\C)$ given by the support of the ideal $\sum_{n\geq 1} \M_{k(n)}(\C)$.
We therefore have that $S^{**}$ is unitally completely order isomorphic to an amenable von Neumann algebra \cite[Theorem 4.5]{EfOzRu01}, \cite[Theorem 3.5]{HaPa11}.

Define $\Phi: E\to S^{**}$ to be a point-weak$^*$ limit point of the sequence
\[
\phi_m: E\ni x\mapsto \phi_m(x)\in S\subset S^{**},
\]
where we view $\phi_m(x)\in \M_{k(m)}(\C) \to S$ is the canonical map.

Let $\tilde \Psi: \tilde S\to (E^{M_0\sharp M_0})^*$ be the point-weak$^*$ limit point of $\tilde \psi_m:\tilde S\ni (x_n)_n \to \psi_m(x_m)\in E\subset (E^{M_0\sharp M_0})^*$.
	It is not hard to see that $\tilde \Psi = \Psi\circ Q$ for some u.c.p.\ map $\Psi: S\to (E^{M_0\sharp M_0})^*$.
We still denote by $\Psi$ the unique weak$^*$ continuous extension $\Psi: S^{**}\to (E^{M_0\sharp M_0})^*$.

Then for any $a\in M_{00}$ with norm $1$ and $\varphi\in (E^{M_0\sharp M_0})^*$, we have that
\[
\begin{aligned}
|\varphi(a-\Psi\circ \Phi(a))| & = |\varphi(a-\lim_m \lim_n\tilde \psi_n\circ\phi_m(a))|\\
&=\lim_m\lim_n|\varphi(a-(\theta_n\circ\cdots\theta_m)(a))| \\
& \leq \lim_m\lim_n\sum_{j=1}^{n-m} |\varphi((\theta_{n}\circ\cdots\circ\theta_{n-j+1} )(a-\theta_{n-j}(a)))|+ |\varphi(a-\theta_n(a))|.\\
\end{aligned}
\]
Since $\varphi\in E^{M_0\sharp M_0}$, there exists some $\kappa >0$ such that $|\varphi(x)|\leq\kappa s_{\mu_0}(x)$ for $x\in (E)_2$.
It then follows that for $m$ and $n$ large enough, we have
\[
|\varphi((\theta_{n}\circ\cdots\circ\theta_{n-j+1} )(a-\theta_{n-j}(a)))|\leq 2^{-n-j} \kappa,
\]
and hence 
\[
|\varphi(a-\Psi\circ \Phi(a))|\leq \lim_m \lim_n \sum_{j=0}^{n-m} 2^{-n-j}\kappa \leq \lim_m  2^{-m+1}\kappa =0.
\]
From the above argument, we obtain a u.c.p\ map 
	$\Phi_{M_{00}}: M_{00} \to S^{**}$
	and a weak$^*$-continuous u.c.p.\ map 
	$\Psi: S^{**}\to (E^{M_0\sharp M_0})^*$
	such that $\Psi\circ\Phi_{\mid{M_{00}}}=i_{M_{00}}$, where $i_{M_{00}}: M_{00} \to E^{\sharp *}$ is the canonical embedding.

Denote by $\bar \Phi: (M_{00})^{**}\to S^{**}$ the unique weak$^*$ continuous extension of $\Phi_{\mid M_{00}}$, and
 realize $M = p (M_{00})^{**}$, where $p\in (M_{00})^{**}$ is the support projection of the identity representation from $M_{00}$ into $M$. Let $\{e_i\}_i\subset M_{00}$ be a net that converges to $p$ in the weak$^*$ topology.
Then for any linear functional $\varphi\in E^{M_0\sharp M_0}$,
	we have $\varphi(\Psi\circ \bar\Phi(p))=\varphi(\Psi(\lim_i \Phi(e_i)))=\lim_i \varphi(e_i)=\varphi(1)$, and hence $\Psi \circ \bar\Phi(p) = 1$. Thus, ${\bar\Phi_{|M}}: M \to S^{**}$ is a normal u.c.p.\ map such that $\Psi \circ \bar\Phi(x) = i_E(x)$ for all $x \in M_0$. 
\end{proof}

To move beyond the separable case, we will use the following technical version of the previous theorem. The proof is essentially the same and so we will leave it to the reader.

\begin{thm}\label{thm:nonsepversion}
Let $M$ be a von Neumann algebra, $M_{0} \subset M$ an ultraweakly dense ${\rm C}^*$-subalgebra, and $E$ a normal $M_0$-system. If the inclusion $M_0 \subset E$ is $(M_0 \subset M)$-nuclear, then for every countably generated ${\rm C}^*$-subalgebra $M_{00} \subset M_0$, and $\sigma$-finite projection $p \in \mathcal P(M)$, there exists an amenable von Neumann algebra $R$, and normal c.c.p.\ maps $\Phi: M \to R$ and $\Psi: R \to (E^{M_0 \sharp M_0})^*$ so that $\Psi \circ \Phi(p) = p$ and $p \Psi \circ \Phi(x) = p i_{E}(x) p$ for all $x \in M_0$. 
\end{thm}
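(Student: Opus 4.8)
The plan is to repeat the construction in the proof of Theorem~\ref{thm:nuclear inclusion density} essentially verbatim, substituting for the two places where separability of $M$ was used. Since the conclusion imposes no constraint on $M_{00}$ beyond countable generation, I first enlarge it: choosing a sequence in $M_0$ that converges ultraweakly to $p$ and adjoining it to $M_{00}$, I may assume $p\in N:=W^*(M_{00})$ while keeping $M_{00}\subset M_0$ countably generated. The normal faithful state on $M$ that drove the separable argument is no longer available, but $\sigma$-finiteness of $p$ furnishes a normal positive functional $\mu$ on $M$ whose support projection is exactly $p$; I work throughout with $\mu_0=\mu_{|M_0}$ and the seminorm $s_{\mu_0}$. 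The key point of this choice is that, because $\mu(p^\perp\,\cdot\,)=0$, the seminorm $s_{\mu_0}$ and every functional dominated by it are localized to the corner $pXp$, which is precisely the localization reflected in the $p$-compressed form of the conclusion.

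With these replacements, I choose an increasing sequence of finite-dimensional operator systems $E_n\subset M_{00}$ with norm-dense union, and carry out the identical inductive selection of u.c.p.\ maps $\phi_n\colon E\to\M_{k(n)}(\C)$ and $\psi_n\colon\M_{k(n)}(\C)\to E$ obeying \eqref{eq:condition 1} and \eqref{eq:condition 2} measured in $s_{\mu_0}$. This yields the operator-system inductive limit $S$, the amenable von Neumann algebra $R=S^{**}$, the point-weak$^*$ limit $\Phi_0\colon E\to R$, and the normal u.c.p.\ map $\Psi\colon R\to(E^{M_0\sharp M_0})^*$, together with $\Psi\circ\Phi_0(a)=i_E(a)$ for all $a\in M_{00}$. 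Extending $\Phi_0{}_{|M_{00}}$ weak$^*$-continuously to $(M_{00})^{**}$ and restricting to the corner identified with $N$ produces a normal u.c.p.\ map $\Phi_N\colon N\to R$; since $M_{00}$ is ultraweakly dense in $N$ and all three maps are normal there, we obtain $\Psi\circ\Phi_N=i_E$ on all of $N$, and in particular $\Psi\circ\Phi_N(p)=p$.

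The step I expect to be the main obstacle is promoting $\Phi_N$ from the separable subalgebra $N$ to a normal c.c.p.\ map $\Phi\colon M\to R$ and extracting the corner reconstruction for \emph{all} $x\in M_0$, since $M_{00}$ is no longer ultraweakly dense in $M$ and the bare density argument of the separable case is unavailable. I would obtain $\Phi$ by taking a c.c.p.\ extension of $\Phi_N$ to $M$ (using injectivity of $R$) and then discarding its singular part in the normal/singular decomposition of completely positive maps into a von Neumann algebra; because $\Phi_N$ is already normal this does not disturb it on $N$. The reconstruction is then read off from the multiplicative domain of $\Psi\circ\Phi$, which contains the $*$-homomorphically reproduced subalgebra $N\ni p$, combined with the corner-localization of $s_{\mu_0}$ recorded above: pulling $p$ through the multiplicative domain and testing against the $s_{\mu_0}$-dominated functionals of $E^{M_0\sharp M_0}$ leaves only the $p$-corner, yielding $\Psi\circ\Phi(p)=p$ and $p\,\Psi\circ\Phi(x)=p\,i_E(x)\,p$ for every $x\in M_0$. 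The delicate point, and the place where $\sigma$-finiteness of $p$ is genuinely used in place of separability, is that this final upgrade must be effected through the single supporting functional $\mu$ rather than through ultraweak density of a countable set in all of $M$.
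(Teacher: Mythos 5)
Your skeleton (replace the faithful state by a normal state $\mu$ with support $p$, run the same induction in $s_{\mu_0}$, form $S$, $R=S^{**}$, $\Phi_0$, $\Psi$) is the right adaptation, but three steps fail, and the first is the decisive one. Since $\mu$ is supported on $p$ and is no longer faithful, the telescoping estimate controls only those $\varphi\in E^{M_0\sharp M_0}$ dominated by a multiple of $s_{\mu_0}$, and these are (up to norm approximation, using that a positive normal functional supported under $p$ is a norm limit of functionals $\leq C\mu$) exactly the compressed functionals $x\mapsto\varphi(p\,i_E(x)\,p)$. A functional concentrated under $p^\perp$ is not $s_{\mu_0}$-dominated: if $a\in M_0$ has $\mu(a^*a)$ small but $p^\perp a$ not small, then $x=a^*ya$ has small $s_{\mu_0}(x)$ while $\varphi(x)$ stays large. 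So the construction yields only $p\,\Psi\circ\Phi_0(a)\,p=p\,i_E(a)\,p$ on $M_{00}$, not $\Psi\circ\Phi_0(a)=i_E(a)$; your assertion that ``$\Psi\circ\Phi_N=i_E$ on all of $N$,'' and with it $\Psi\circ\Phi_N(p)=p$, does not follow. This localization is precisely why the theorem is stated in compressed form, matching how it is consumed in the proof of Proposition~\ref{prop:densenuclear}. Your opening reduction is also unjustified: $\sigma$-finiteness of $p$ (Kaplansky density plus the fact that $\mu$ metrizes the bounded strong topology of the corner $pMp$) produces a sequence $a_k\in M_0$ with $p\,a_k\,p\to p$, but not $a_k\to p$ ultraweakly in a possibly non-$\sigma$-finite $M$, so you cannot conclude $p\in W^*(M_{00})$. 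The correct way to feed $p$ into the scheme is through a weak$^*$ cluster point $b\in N$ of positive contractions $a_k\in M_{00}$ with $p\,a_k\,p\to p$: then $p\,b\,p=p$ with $0\leq b\leq 1$ forces $bp=pb=p$; and the value at $p$ should be repaired by compressing the other map, replacing $\Psi$ by the normal c.c.p.\ map $p\,\Psi(\cdot)\,p$, since $0\leq T\leq 1$ and $pTp=p$ force $Tp=pT=p$.

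The extension step is wrong as stated. The Tomiyama normal/singular decomposition is taken relative to $M$, and singularity is not preserved under restriction to a von Neumann subalgebra, so discarding the singular part of a c.c.p.\ extension of $\Phi_N$ can very well disturb its values on $N$: take $N=\C 1\subset\ell^\infty(\mathbb N)=M$ and $\Phi_N=\mathrm{id}$ into $R=\C$; a singular state on $\ell^\infty(\mathbb N)$ extends $\Phi_N$ and its normal part is $0$, which does not restrict to $\Phi_N$. Thus your proposal never actually produces a normal $\Phi$ on all of $M$ whose composition with $\Psi$ is controlled at $p$ and on $M_{00}$; this is the substantive point the sketch leaves open, not a routine formality. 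Finally, note that no argument of this type can deliver the identity ``for every $x\in M_0$'' literally: the inductive conditions constrain the maps $\theta_n$ only on $\bigcup_n E_n\subset M_{00}$, so the compressed identity is obtained only on the norm (and then, via normality, ultraweak) closure of $M_{00}$ --- which is all that the application in Proposition~\ref{prop:densenuclear} uses, where only the elements $a_{i,k}\in M_{00}$ are tested.
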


\begin{prop}\label{prop:densenuclear}
Let $M$ be a von Neumann algebra, $M_0, M_1 \subset M$ weakly dense ${\rm C}^*$-subalgebras with $M_0 \subset M_1$, and let $E$ be a normal $M_1$-system. Consider the following conditions:
\begin{enumerate}
\item\label{item:ninclusion1} The inclusion $M_0 \subset E$ is $(M_0 \subset M)$-nuclear.
\item\label{item:ninclusion2} The inclusion $M_1 \subset E$ is $(M_1 \subset M)$-nuclear. 
\item\label{item:ninclusion3} The inclusion $M_0 \subset E$ is $(M_1 \subset M)$-nuclear. 
\end{enumerate}
Then the implications (\ref{item:ninclusion1}) $\implies$ (\ref{item:ninclusion2}) $\implies$ (\ref{item:ninclusion3}) hold. 

Moreover, if $M_0$ is locally reflexive, then conditions (\ref{item:ninclusion2}) and (\ref{item:ninclusion3}) are equivalent. 
\end{prop}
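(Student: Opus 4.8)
The plan is to recast each of the three conditions, via Lemma~\ref{lem:weakly nuclear bidual}, as the weak nuclearity of a canonical embedding into an appropriate normal bidual, and then to pass between these statements. Write $Q=(E^{M_0\sharp M_0})^*$ and $P=(E^{M_1\sharp M_1})^*$, each a von Neumann algebra containing $M$. Since $M_0\subset M_1$, every $M_1$-normal functional on $E$ is automatically $M_0$-normal, so $E^{M_1\sharp M_1}\subseteq E^{M_0\sharp M_0}$; dualizing the inclusion of preduals produces a normal u.c.p.\ map $q\colon Q\to P$ (concretely the compression $\Ad(p^{(1)})$, where $p^{(1)}\le p^{(0)}$ are the relevant support projections in $E^{**}$) that intertwines the two canonical embeddings of $E$ and restricts to the identity on the common subalgebra $M$. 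By Lemma~\ref{lem:weakly nuclear bidual}, condition (\ref{item:ninclusion2}) is equivalent to weak nuclearity of the embedding $M_1\to P$, and condition (\ref{item:ninclusion3}) to weak nuclearity of the embedding $M_0\to P$; hence (\ref{item:ninclusion2})$\implies$(\ref{item:ninclusion3}) is immediate upon restricting the approximating maps along $M_0\subset M_1$, and requires no hypotheses (it can equally be seen directly at the level of the defining matrix factorizations).

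For (\ref{item:ninclusion1})$\implies$(\ref{item:ninclusion2}) I would exploit that the finer $(M_0\subset M)$-topology, matched to the domain $M_0$, yields a genuine factorization through an amenable algebra. By Theorem~\ref{thm:nuclear inclusion density} in the separable case, the $(M_0\subset M)$-nuclear inclusion $M_0\subset E$ gives an amenable von Neumann algebra $R$ and normal u.c.p.\ maps $\Phi\colon M\to R$, $\Psi\colon R\to Q$ with $\Psi\circ\Phi=i_E$ on $M_0$; since $\Psi\circ\Phi$ and the inclusion $M\hookrightarrow Q$ are both normal and agree on the ultraweakly dense $M_0$, in fact $\Psi\circ\Phi$ equals the inclusion on all of $M$. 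Composing with $q$ and restricting to $M_1\subset M$ gives normal u.c.p.\ maps $\Phi|_{M_1}\colon M_1\to R$ and $q\circ\Psi\colon R\to P$ whose composition is the embedding $M_1\to P$. As $R$ is amenable, hence semidiscrete, $\mathrm{id}_R$ factors approximately through matrix algebras in the point-ultraweak topology, and inserting these factorizations exhibits $M_1\to P$ as weakly nuclear, giving (\ref{item:ninclusion2}). The non-separable case is handled identically using Theorem~\ref{thm:nonsepversion}, working with a fixed $\sigma$-finite projection $p$ that dominates the finitely many normal functionals at hand and a countably generated subalgebra of $M_0$: one notes that the compressed identity $p\,\Psi\Phi(\cdot)\,p=p\,i_E(\cdot)\,p$, valid a priori only on $M_0$, extends to $M_1$ by normality and density, and that weak nuclearity may be tested against finite sets of elements and normal functionals.

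The substantive content is the \emph{Moreover} part, (\ref{item:ninclusion3})$\implies$(\ref{item:ninclusion2}) under local reflexivity of $M_0$, and this is where I expect the main obstacle to lie. By the first paragraph, (\ref{item:ninclusion3}) supplies only weak nuclearity of the embedding $M_0\to P$, and the coarser $(M_1\subset M)$-topology does not by itself produce the amenable factorization of all of $M$ used above; the difficulty is precisely upgrading the domain from $M_0$ to $M_1$. The plan is to show that local reflexivity of $M_0$ forces weak nuclearity of the embedding into $P$ to pass from $M_0$ to its weak closure $M=M_0''\subseteq P$, after which one simply restricts the resulting weakly nuclear factorization of $M\hookrightarrow P$ to $M_1\subset M$ to obtain (\ref{item:ninclusion2}). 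Concretely, I would extend the matrix-valued maps $\phi_i\colon M_0\to\M_{n(i)}(\C)$ to $M$ by Arveson's theorem and pass to a point-weak$^*$ limit $\Theta\colon M\to P$ of $\psi_i\circ\phi_i$; then $\Theta$ is u.c.p., agrees with the inclusion on $M_0$, and has $M_0$ in its multiplicative domain, so $\Theta$ is $M_0$-bimodular. The naive multiplicative-domain argument fails to identify $\Theta$ with the inclusion on all of $M$ exactly because the weak$^*$-closure of $M_0$ in $M^{**}$ does not contain the universal copy of $M$; the role of local reflexivity, following \cite[Lemma 2.8]{Kir95B} and \cite[Theorem 4.7]{EfOzRu01}, is to permit the finite-dimensional pieces to be approximated by c.c.p.\ maps back into $M_0$, which is what makes the limit coincide with the inclusion on $M$ and thereby produces the desired weakly nuclear factorization.
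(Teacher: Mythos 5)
Your proposal is correct and follows essentially the same route as the paper's proof: (\ref{item:ninclusion1}) $\implies$ (\ref{item:ninclusion2}) via Theorem~\ref{thm:nuclear inclusion density} (and Theorem~\ref{thm:nonsepversion} with a $\sigma$-finite projection, a countably generated subalgebra, and testing against finite sets of normal functionals in the non-separable case) combined with the normal c.p.\ map $(E^{M_0\sharp M_0})^* \to (E^{M_1\sharp M_1})^*$ induced by the inclusion $E^{M_1 \sharp M_1} \subset E^{M_0 \sharp M_0}$, the trivial restriction along $M_0 \subset M_1$ for (\ref{item:ninclusion2}) $\implies$ (\ref{item:ninclusion3}), and Lemma~\ref{lem:weakly nuclear bidual} throughout. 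Your handling of the \emph{Moreover} part --- using local reflexivity of $M_0$ in the style of \cite[Lemma 2.8]{Kir95B} and \cite{EfOzRu01} to upgrade weak nuclearity of $M_0 \to (E^{M_1\sharp M_1})^*$ to weak nuclearity of the normal embedding $i_1$ of all of $M$ --- is precisely the locality argument the paper invokes.
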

\begin{proof}
If $M$ is separable, and if condition (\ref{item:ninclusion1}) holds, then by Theorem~\ref{thm:nuclear inclusion density} the canonical map $i_0: M \to (E^{M_0 \sharp M_0})^*$ is weakly nuclear. Since $M_0 \subset M_1$, the identity map ${\rm id}: E \to E$ induces a normal completely positive contraction $i_{1, 0}: (E^{M_0 \sharp M_0})^* \to (E^{M_1 \sharp M_1})^*$ so that the canonical embedding $i_1:M \to (E^{M_1 \sharp M_1})^*$ is given by the composition $i_1 = i_{1, 0} \circ i_0$. Hence, the map $i_1$ is weakly nuclear and it follows from Lemma~\ref{lem:weakly nuclear bidual} that condition (\ref{item:ninclusion2}) holds. 

For the general case, fix $x_1, \ldots, x_n \in M_1$, $\varepsilon > 0$, and $\eta_1, \ldots, \eta_m \in E^{M_1 \sharp M_1} \subset E^{M_0 \sharp M_0}$. Since each $\eta_i$ may be implemented by vector linear functional in some representation of $E$ where $M$ is normally represented, we may use the polarization identity to assume that each $\eta_i$ is a state. Let $p$ denote the supremum of the supports of ${\eta_i}_{|M}$, so that $p$ is a $\sigma$-finite. Since $M_0$ is ultraweakly dense in $M$ and since $p$ is $\sigma$-finite, we may choose for each $1 \leq i \leq n$ a sequence $a_{i, k} \in M_0$ so that $p a_{i, k} p \to p x_i p$ in the weak operator topology. 

We let $M_{00}$ denote the ${\rm C}^*$-algebra generated by $\{ a_{i, k} \}_{1 \leq i \leq n, k \geq 1}$, and by Theorem~\ref{thm:nonsepversion} there exists an amenable von Neumann algebra $R$, and normal c.c.p.\ maps $\Phi: M \to R$ and $\Psi: R \to (E^{M_0 \sharp M_0})^*$ so that $\Psi \circ \Phi(p) = p$ and $p \Psi \circ \Phi(x) = p i_{E}(x) p$ for all $x \in M_0$. Since $p$ is in the multiplicative domain of the normal c.c.p.\ map $\Psi \circ \Phi$, we then have, after compressing down to $(E^{M_1\sharp M_1})^*$, a weak operator topology limit 
\[
p \Psi \circ \Phi( x_i ) p = p \Psi \circ \Phi( p x_i p ) p = \lim_{k \to \infty} p \Psi \circ \Phi(  a_{i, k} ) p = p x_i p \in (E^{M_1\sharp M_1})^*.
\]
In particular, for each $1 \leq i \leq n$ and $1 \leq j \leq m$ we have $\eta_i( \Psi \circ \Phi(x_i)  - x_i ) = \eta_i( p ( \Psi \circ \Phi(x_i)  - x_i ) p) = 0$. Since $R$ is semi-discrete, there then exist c.c.p.\ maps $\phi_0: R \to \mathbb M_{d}(\mathbb C)$ and $\psi_0: \mathbb M_d(\mathbb C) \to R$ so that for each $1 \leq i \leq n$ and $1 \leq j \leq m$ we have $\eta_j( \Psi \circ \psi_0 \circ \phi_0 \circ \Phi(x_i) - x_i ) < \varepsilon$. Hence $M_1 \subset E$ is $(M_1 \subset M)$-nuclear.

The implication (\ref{item:ninclusion2}) $\implies$ (\ref{item:ninclusion3}) is trivial. Since weak nuclearity of the inclusion $i_1: M \to (E^{M_1 \sharp M_1})^*$ is a local property and since $i_1$ is normal, if $M_0$ is locally reflexive then we see that weak nuclearity of the inclusion $M_0$ into $(E^{M_1 \sharp M_1})^*$ implies that $i_1$ is weakly nuclear and hence another application of Lemma~\ref{lem:weakly nuclear bidual} shows that (\ref{item:ninclusion2}) and (\ref{item:ninclusion3}) are equivalent in this case.
\end{proof}

The following special case of Proposition~\ref{prop:densenuclear} will be used a number of times in the sequel, and so we explicitly state it as a corollary. 

\begin{cor}\label{cor:densenuclear}
Let $M$ be a von Neumann algebra with a dense ${\rm C}^*$-algebra $M_0 \subset M$. If $E$ is a normal $M$-system and if the inclusion $M_0 \subset E$ is $(M_0 \subset M)$-nuclear, then the inclusion $M \subset E$ is $M$-nuclear. 
\end{cor}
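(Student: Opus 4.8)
The plan is to obtain this as the special case of Proposition~\ref{prop:densenuclear} in which the intermediate ${\rm C}^*$-algebra is taken to be all of $M$. Concretely, I would set $M_1 = M$ in that proposition. Since $M$ is trivially a weakly dense ${\rm C}^*$-subalgebra of itself, since $M_0 \subset M = M_1$, and since a normal $M$-system is in particular a normal $M_1$-system, the standing hypotheses of Proposition~\ref{prop:densenuclear} are satisfied. The hypothesis of the corollary, namely that $M_0 \subset E$ is $(M_0 \subset M)$-nuclear, is then exactly condition (\ref{item:ninclusion1}) of the proposition.

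Applying the implication (\ref{item:ninclusion1}) $\implies$ (\ref{item:ninclusion2}) immediately yields that the inclusion $M_1 \subset E$, i.e.\ $M \subset E$, is $(M_1 \subset M)$-nuclear, that is, $(M \subset M)$-nuclear. It then remains only to match this with the desired conclusion of $M$-nuclearity, so the real content of the argument is packaged entirely inside Proposition~\ref{prop:densenuclear}.

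The single point that must be verified is that $(M \subset M)$-nuclearity coincides with $M$-nuclearity, which in turn reduces to checking that when $M_0 = M$ the $(M \subset M)$-topology agrees with Magajna's $M$-topology. This is a routine manipulation available because $M$ is a von Neumann algebra: given a matricial decomposition $x = a^* y b$ with $a, b \in C_n(M)$ and $y \in \M_n(X)$, I would set $\alpha = (a^*a)^{1/2}$ and $\beta = (b^*b)^{1/2}$ in $M$ and use polar decomposition to write $a = v \alpha$, $b = w \beta$ with column contractions $v, w \in C_n(M)$; then $x = \alpha^* (v^* y w) \beta$ is a scalar decomposition over $M$ with $\|v^* y w\| \le \|y\|$, $\rho(\alpha^*\alpha) = \rho(a^*a)$, and $\omega(\beta^*\beta) = \omega(b^*b)$, so the matricial and scalar families of seminorms coincide. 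Hence the two topologies are identical, the two notions of nuclearity agree, and the conclusion follows. Since the whole argument is a direct specialization of Proposition~\ref{prop:densenuclear}, I do not expect any substantive obstacle; the only thing requiring care is this bookkeeping identification of the $(M \subset M)$-topology with the $M$-topology.
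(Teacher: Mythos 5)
Your proposal is correct and matches the paper exactly: the paper states this corollary as the special case $M_1 = M$ of Proposition~\ref{prop:densenuclear}, invoking the implication (\ref{item:ninclusion1}) $\implies$ (\ref{item:ninclusion2}) just as you do. Your polar-decomposition check that the matricial seminorms defining the $(M \subset M)$-topology reduce to Magajna's scalar seminorms is the same routine identification the paper relies on implicitly (cf.\ the remark following the first lemma of Section~\ref{sec:Mtopology}), so there is no gap.
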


\begin{examp}\label{examp:exandwkex}
Let $A$ be a ${\rm C}^*$-algebra, and let $E$ be an $A$-system. Then the weak $(A \subset A^{**})$-topology on $E$ is simply the weak topology on $E$, and hence the inclusion $A \subset E$ is $(A \subset A^{**})$-nuclear if and only if the inclusion $A \subset E$ is nuclear. 

In particular, if we consider a universal representation $\pi: A \to \B(\mathcal H)$, then the inclusion $\pi: A \to \B(\mathcal H)$ is $(A \subset A^{**})$-nuclear if and only if $A$ is exact. However, we will see from Proposition~\ref{prop:wk exact unital} below that $A^{**}$ is weakly exact if and only if the identity map from $A^{**} = \pi(A)''$ to $\B(\mathcal H)$ is $A^{**}$-nuclear. Hence, the converse of Corollary~\ref{cor:densenuclear} does not hold in general (see, e.g., Remark 14.1.3 in \cite{BO08}).
\end{examp}

While the converse of Corollary~\ref{cor:densenuclear} does not hold in general, we do have the following partial converse, which should be compared with Corollary 3.1.6 in \cite{Iso13}. 

\begin{prop}\label{prop:sepnuc}
Let $M$ be a separable von Neumann algebra and $E$ a normal $M$-system.
If $M\subset E$ is $M$-nuclear, then for any separable ${\rm C}^*$-subalgebra $M_0\subset M$ 
	there exists a separable ${\rm C}^*$-subalgebra $M_1\subset M$ containing $M_0$
	such that $M_1\subset E$ is $(M_1\subset M)$-nuclear.
\end{prop}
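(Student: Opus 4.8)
The plan is to build $M_1$ by a countable inductive procedure so that the given $M$-nuclear approximation, restricted to $M_1$, automatically converges in the finer $(M_1\subset M)$-topology. The key input is the refined approximation of Lemma~\ref{lem:approxunifconv}, applied with $M_0=M$ to the inclusion $\phi\colon M\hookrightarrow E$, which is $M$-bimodular and u.c.p.: since $M\subset E$ is $M$-nuclear, we obtain nets of u.c.p.\ maps $\phi_i\colon M\to\M_{k(i)}(\C)$ and $\psi_i\colon\M_{k(i)}(\C)\to E$ such that for each $x\in M$ there are $T_i^{(x)}\in E$ and $a_i^{(x)}\in M$ with $\psi_i\circ\phi_i(x)-x=T_i^{(x)}-a_i^{(x)}$, where $\|T_i^{(x)}\|\to0$, $\sup_i\|a_i^{(x)}\|<\infty$, and $a_i^{(x)}\to0$ ultrastrongly. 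The crucial observation is the following upgrade: if $M_1\subset M$ is an ultraweakly dense ${\rm C}^*$-subalgebra with $a_i^{(x)}\in M_1$, then using the decompositions $T_i^{(x)}=1^*T_i^{(x)}1$ and $a_i^{(x)}=1^*\,1\,a_i^{(x)}$ inside the $M_1$-system $E$ together with the triangle inequality for $s_\omega$, we get $s_\omega(\psi_i\circ\phi_i(x)-x)\le \omega(1)\|T_i^{(x)}\|+\omega(1)^{1/2}\,\omega((a_i^{(x)})^*a_i^{(x)})^{1/2}$, while the norm-truncation part of the $(M_1\subset M)$-seminorm is $\le\|T_i^{(x)}\|$ since $a_i^{(x)}\in M_1\subset M_1EM_1$. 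Thus once the correction terms lie in $M_1$, the $M$-topology convergence upgrades to $(M_1\subset M)$-topology convergence.

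Since $M$ is separable, $M_*$ is separable; fix a countable $\|\cdot\|$-dense set $\{\omega_l\}\subset (M_*)_+$ and a countable ultraweakly dense set $\{d_m\}\subset M$. As the $a_i^{(x)}$ are uniformly bounded in $i$, ultrastrong convergence $a_i^{(x)}\to0$ is equivalent to $\omega_l((a_i^{(x)})^*a_i^{(x)})\to0$ for every $l$. I would then construct increasing separable ${\rm C}^*$-subalgebras $M_0=B_0\subset B_1\subset\cdots$ as follows. At stage $n$, choose a finite set $F_n$ drawn from fixed countable dense subsets of $B_0,\dots,B_n$, arranged by standard bookkeeping so that $\bigcup_n F_n$ is dense in $\bigcup_n B_n$ and each element of each dense subset eventually appears. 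Using directedness of the net, select a single index yielding a pair $(\phi_n,\psi_n)$ and correction terms $a_n^{(x)}$ for $x\in F_n$ with $\|T_n^{(x)}\|<1/n$ and $\omega_l((a_n^{(x)})^*a_n^{(x)})<1/n$ for all $x\in F_n$, $l\le n$. Set $B_{n+1}=C^*\bigl(B_n\cup\{a_n^{(x)}:x\in F_n\}\cup\{d_n\}\bigr)$ and $M_1=\overline{\bigcup_n B_n}$.

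For the verification, $M_1$ is separable, contains $M_0$, and is ultraweakly dense since each $d_m\in M_1$. I claim $(\phi_n|_{M_1},\psi_n)$ witnesses $(M_1\subset M)$-nuclearity of $M_1\subset E$. For $x\in X:=\bigcup_n F_n$ we have $x\in F_n$ for all large $n$, so $a_n^{(x)}\in B_{n+1}\subset M_1$, $\|T_n^{(x)}\|\to0$, and $\omega_l((a_n^{(x)})^*a_n^{(x)})\to0$ for every $l$; by density of $\{\omega_l\}$ and uniform boundedness of $\{a_n^{(x)}\}_n$ this gives $a_n^{(x)}\to0$ ultrastrongly, and hence $\omega((a_n^{(x)})^*a_n^{(x)})\to 0$ for \emph{every} normal state $\omega$. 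The upgrade from the first paragraph then yields $s_\omega(\psi_n\circ\phi_n(x)-x)\to0$ for all normal $\omega$, i.e.\ convergence in the $(M_1\subset M)$-topology. Finally, since $X$ is $\|\cdot\|$-dense in $M_1$ and the maps $\psi_n\circ\phi_n$ are contractive, a standard approximation extends the convergence from $X$ to all of $M_1$ (writing $\psi_n\circ\phi_n(x)-x$ as the sum of the contribution of a nearby $x'\in X$ and a norm-small remainder).

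The main obstacle is precisely the passage from the coarse $M$-topology, where all of $M$ is available in the defining decompositions, to the finer $(M_1\subset M)$-topology, where only $M_1$ is available, carried out by a \emph{single} sequence of maps valid uniformly over the separable algebra $M_1$. This is what forces the two uses of separability (countably many determining functionals $\{\omega_l\}$, and the extraction of one sequence from the net) and, more importantly, what makes Lemma~\ref{lem:approxunifconv} essential: it is the ultrastrong decay of the correction terms $a_i^{(x)}$ that allows absorbing them into $M_1$ and thereby converting $M$-topology convergence into $(M_1\subset M)$-topology convergence. The bookkeeping guaranteeing that $\bigcup_n F_n$ is dense in $M_1$ while the stage-$n$ corrections feed forward into $B_{n+1}$ is routine but is the part requiring care.
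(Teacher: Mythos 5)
Your proof is correct, and its overall architecture --- adjoin countably many witness elements to $M_0$, iterate with careful bookkeeping, and take the norm closure of the increasing union --- is exactly the paper's. Where you genuinely differ is in which witnesses you adjoin and which lemma certifies the upgrade to the finer topology. The paper's proof takes a countable norm-dense set $\{a_n\}\subset M_0$ and explicit seminorm decompositions $\psi_k\circ\phi_k(a_n)-a_n=x_{n,k}T_{n,k}y_{n,k}$ with $x_{n,k},y_{n,k}\in M_+$, sets $A_1=C^*(M_0,\{x_{n,k},y_{n,k}\})$, observes that $M_0\subset E$ is then $(A_1\subset M)$-nuclear, and iterates, taking $M_1=\overline{\cup_n A_n}^{\|\cdot\|}$. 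You instead route through Lemma~\ref{lem:approxunifconv} (applied with $M_0=M$) to get corrections of the special form $\psi_i\circ\phi_i(x)-x=T_i-a_i$ with $\|T_i\|\to 0$ in norm and $a_i\in M$ uniformly bounded and ultrastrongly null, and adjoin only the $a_i$'s. This buys real robustness: a fixed decomposition $x\,T\,y$ certifies smallness of the $(M_1\subset M)$-seminorm only against the functionals it was chosen for, so the paper's one-line claim that $\phi_k,\psi_k$ ``implement'' $(A_1\subset M)$-nuclearity implicitly requires selecting decompositions against a countable dense family of normal functionals (possible since $M_*$ is separable, after normalizing $x_{n,k},y_{n,k}$), whereas your form gives $s_\omega(\psi_i\circ\phi_i(x)-x)\le\|T_i\|+\omega(a_i^*a_i)^{1/2}$ for \emph{every} normal state $\omega$ simultaneously once $a_i\in M_1$, so the quantifier over functionals is absorbed into ultrastrong convergence, checkable on your countable dense set $\{\omega_l\}$ thanks to the uniform bound on $\{a_i\}$. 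Your single-pass bookkeeping with the sets $F_n$ merges the paper's two-stage pattern ($A_n\subset E$ being $(A_{n+1}\subset M)$-nuclear, then closing the union) and the diagonalization into one induction, and adjoining $\{d_n\}$ to guarantee ultraweak density of $M_1$ --- needed for the $(M_1\subset M)$-topology to be defined as in Section~\ref{sec:Mtopology} --- is a detail the paper leaves implicit. In short: same skeleton, but a different key lemma and a different decomposition, and your variant makes the passage from the $M$-topology to the $(M_1\subset M)$-topology more transparent than the paper's terse original.
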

\begin{proof}
Choose $\{a_n\}_n\subset M_0$ a countable norm dense subset. 
Since $M\subset E$ is $M$-nuclear, there exist sequences of u.c.p.\ maps $\phi_k: M_0\to \M_{n(k)}(\C)$ and $\psi_k: \M_{n(k)}(\C)\to E$
	such that $\psi_k\circ \phi_k(a_n)-a_n= x_{n,k} T_{n,k} y_{n,k}$
	with $x_{n,k}, y_{n,k}\in M_+$ and $T_{n,k}\in E$,
	such that $\lim_k s_{\rho}^\omega(x_{n,k} T_{n,k} y_{n,k})=0$ for any $\rho, \omega\in M_{*,+}$.

Set $A_1= C^*(M_0, \{x_{n,k}, y_{n,k}\mid n\in \N, k\in \N\})$ and note that $M_0\subset E$ is $(A_1\subset M)$-nuclear,
	implemented by $\phi_k$ and $\psi_k$.
Repeating this process inductively, we then have a sequence of separable ${\rm C}^*$-subalgebras $M_0\subset A_1\subset A_2\cdots$,
	such that $A_n\subset E$ is $(A_{n+1}\subset M)$-nuclear.
Let $M_1=\overline{\cup_n A_n}^{\|\cdot\|}$ and one checks that $M_1\subset E$ is $(M_1\subset M)$-nuclear.
\end{proof}

We end this section with a result showing the relationship between $M$-nuclearity and topologically amenable actions. To describe this we recall that if $E \subset \B(\mathcal H)$ is an operator space and $I$ is a set, then the operator space $\mathbb M_I(E) \subset \mathcal B(\mathcal H \ovt \ell^2 I)$ is the operator system that we may identify as those $I \times I$ matrices with entries in $E$ that correspond to bounded operators. This operator space is independent of the representation $E \subset \B(\mathcal H)$. Moreover, if $\Gamma \actson^\sigma E$ by completely isometric isomorphisms, then we may choose a covariant representation $E \subset \B(\mathcal H)$ so that the equation $\sigma_t(x) = \pi_t  x \pi_t^*$ holds for $x \in E$ and $t \in \Gamma$ for some unitary representation $\pi$. Then conjugation by $\pi_t \otimes \rho_t$ gives an action of $\Gamma$ on $\mathbb M_\Gamma(E)$, and we let $\mathbb M_\Gamma(E)^\Gamma$ denote the fixed points under this action. This is then a normal operator $L\Gamma$-bimodule, and if $E$ is an operator system, then $\mathbb M_\Gamma(E)^\Gamma$ will be a normal operator $L\Gamma$-system. Note however that $\mathbb M_\Gamma(E)^\Gamma$ need not be a ${\rm C}^*$-algebra, even when $E$ is a ${\rm C}^*$-algebra. 

\begin{thm}\label{thm:amenaction}
Let $\Gamma$ be a group and suppose $\Gamma \actson K$ is an action by homeomorphisms on a compact Hausdorff space. The action $\Gamma \actson K$ is topologically amenable if and only if the inclusion $L\Gamma \subset \mathbb M_{\Gamma}(C(K))^\Gamma$ is $L\Gamma$-nuclear. 
\end{thm}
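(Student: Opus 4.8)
The plan is to prove both implications by relating the fixed-point operator system $E := \mathbb M_\Gamma(C(K))^\Gamma$ to the reduced crossed product $C(K)\rtimes_r\Gamma$ on one side, and to the classical asymptotically-invariant-measure data on the other. First I record the concrete model. Fixing a covariant representation $C(K)\subset\B(\cH)$ with $\sigma_g=\Ad(\pi_g)$, I identify $T\in\mathbb M_\Gamma(C(K))$ with its matrix $[T_{s,t}]_{s,t\in\Gamma}$, $T_{s,t}\in C(K)$, and check that invariance under $\Ad(\pi_g\otimes\rho_g)$ reads $\sigma_g(T_{sg,tg})=T_{s,t}$. Scalar kernels $T_{s,t}=c(st^{-1})1$ recover $\mathbb M_\Gamma(\mathbb C)^\Gamma=\rho(\Gamma)'=L\Gamma$, consistent with $L\Gamma\subset E$; and $f\mapsto\diag(\sigma_{t^{-1}}(f))_t$ together with $\lambda_s\mapsto 1\otimes\lambda_s$ gives a $*$-embedding $C(K)\rtimes_r\Gamma\hookrightarrow E$ carrying $C^*_\lambda\Gamma$ into $L\Gamma$. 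I also isolate two u.c.p.\ maps: the point-evaluation slice $T\mapsto S^x:=[T_{s,t}(x)]\in\B(\ell^2\Gamma)$, which is $\rho$-equivariant ($S^{gx}=\rho_g S^x\rho_g^*$), restricts to the identity on $L\Gamma$, and is weak-$*$ continuous in $x$; and the corner map $\mathcal E\colon E\to C(K)$, $\mathcal E(T)=T_{e,e}$.

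For the forward direction I use that topological amenability of $\Gamma\actson K$ implies $C(K)\rtimes_r\Gamma$ is nuclear (Anantharaman-Delaroche; see \cite{BO08}), as $C(K)$ is nuclear, so the inclusion $C^*_\lambda\Gamma\subset C(K)\rtimes_r\Gamma$ is nuclear: there are u.c.p.\ $\phi_i\colon C^*_\lambda\Gamma\to\mathbb M_{n_i}(\mathbb C)$ and $\psi_i\colon\mathbb M_{n_i}(\mathbb C)\to C(K)\rtimes_r\Gamma$ with $\psi_i\circ\phi_i\to\id$ in norm. Composing $\psi_i$ with $C(K)\rtimes_r\Gamma\hookrightarrow E$ and recalling that the $(C^*_\lambda\Gamma\subset L\Gamma)$-topology on $E$ is coarser than the norm topology (since $s_\omega^\rho(\cdot)\le\|\cdot\|$ for states), the inclusion $C^*_\lambda\Gamma\subset E$ is $(C^*_\lambda\Gamma\subset L\Gamma)$-nuclear. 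Since $E$ is a normal $L\Gamma$-system and $C^*_\lambda\Gamma\subset L\Gamma$ is ultraweakly dense, Corollary~\ref{cor:densenuclear} upgrades this to $L\Gamma$-nuclearity of $L\Gamma\subset E$.

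For the reverse direction I begin with an $L\Gamma$-nuclear factorization $\phi_i\colon L\Gamma\to\mathbb M_{n_i}(\mathbb C)$, $\psi_i\colon\mathbb M_{n_i}(\mathbb C)\to E$ with $\Theta_i:=\psi_i\circ\phi_i$ satisfying $\Theta_i(\lambda_s)\to\lambda_s$ in the weak $L\Gamma$-topology, and I aim to produce weak-$*$ continuous $\mu_i\colon K\to\Prob(\Gamma)$ with $\sup_{x}\|s\cdot\mu_i^x-\mu_i^{sx}\|_1\to0$ for each $s$. By Lemma~\ref{lem:compressionapprox} I may take $\phi_i=\Ad(P_{\mathcal K_i})$ for finite-dimensional $\mathcal K_i\subset\ell^2\Gamma$. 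Because $\psi_i$ takes values in the fixed-point system $E$, slicing its Choi matrix by the point evaluations $\mathrm{ev}_x$ yields a positive, weak-$*$ continuous, $\rho$-equivariant field of positive-definite $\mathbb M_{n_i}$-valued kernels on $K$; combined with $\phi_i$ and normalized via unitality, this produces continuous nonnegative unit vector fields $x\mapsto\eta_i^x\in\ell^2(\Gamma)$ whose measures $\mu_i^x=|\eta_i^x|^2$ are the candidates, the slice equivariance giving the relation between $\mu_i^{sx}$ and $s\cdot\mu_i^x$. I then translate $\Theta_i(\lambda_s)\to\lambda_s$—applied through $\mathcal E$ and the limiting $L\Gamma$-bimodularity of $\Theta_i$ (Lemma~\ref{lem:almostbimod})—into the asymptotic invariance of the $\mu_i^x$.

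The main obstacle is the reverse direction, precisely the passage from the $L\Gamma$-topology (genuinely weaker than the uniform norm) to the uniform-in-$x$ estimate $\sup_{x\in K}\|s\cdot\mu_i^x-\mu_i^{sx}\|_1\to0$ demanded by topological amenability. The point-evaluation slices are only individually controlled, whereas the defect quantities live in $C(K)$ and their relevant norms are suprema over $K$. Reconciling these should use continuity of the $C(K)$-entries together with compactness of $K$, and the fact that $\Theta_i$ becomes exactly $L\Gamma$-bimodular in the limit so that the corner defects $\mathcal E(\Theta_i(\lambda_s)\lambda_s^*-1)$ are controlled in a topology fine enough to dominate the $C(K)$-norm. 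Making this uniformity rigorous, rather than the algebra of the construction, is where the real work lies.
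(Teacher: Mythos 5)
Your forward direction is correct and is exactly the paper's argument: amenability gives nuclearity of the inclusion $C^*_\lambda\Gamma \subset C(K)\rtimes_r\Gamma$, the embedding $C(K)\rtimes_r\Gamma \hookrightarrow \mathbb M_\Gamma(C(K))^\Gamma$ transports this to $(C^*_\lambda\Gamma\subset L\Gamma)$-nuclearity of $C^*_\lambda\Gamma \subset \mathbb M_\Gamma(C(K))^\Gamma$, and Corollary~\ref{cor:densenuclear} upgrades it. The reverse direction, however, has a genuine gap, and it is the one you yourself flag at the end: the passage from convergence of $\psi_i\circ\phi_i(\lambda_t)$ to $\lambda_t$ in the (weak) $L\Gamma$-topology to an estimate that is \emph{uniform over} $x\in K$. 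You defer this as ``where the real work lies,'' but it is precisely the content of the theorem; continuity of the entries plus compactness of $K$ will not produce it, since the $L\Gamma$-topology gives no pointwise-in-$x$ control to begin with. The paper closes the gap with one observation about the map you already have in hand (your corner map $\mathcal E(T)=T_{e,e}$ coincides, after the identification $\ell^\infty(\Gamma;C(K))^\Gamma\cong C(K)$, with the diagonal conditional expectation $E_{\ell^\infty\Gamma}$): the map $E_{\ell^\infty\Gamma}\colon \mathbb M_\Gamma(C(K))^\Gamma\to C(K)$ is continuous from the $L\Gamma$-topology into the \emph{norm} topology of $C(K)$. Indeed, $E_{\ell^\infty\Gamma}$ restricts on $L\Gamma$ to the trace, so $E_{\ell^\infty\Gamma}(a^*a)=\tau(a^*a)1$ is a \emph{constant} function for $a\in L\Gamma$; hence for any decomposition $T=a^*Sb$ with $a,b\in L\Gamma$ and \emph{any} state $\varphi$ on $C(K)$, Cauchy--Schwarz gives $|\varphi\circ E_{\ell^\infty\Gamma}(a^*Sb)|\le \|S\|\,\tau(a^*a)^{1/2}\tau(b^*b)^{1/2}$, a bound independent of $\varphi$, whence $\|E_{\ell^\infty\Gamma}(T)\|_{C(K)}\le s_\tau(T)$. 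This is exactly the ``topology fine enough to dominate the $C(K)$-norm'' you were hoping for, and it comes for free because the outer slots of the seminorm range over $L\Gamma$, where the diagonal expectation collapses to the scalar trace.

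With that lemma the endgame is also shorter than your plan: keeping your reduction $\phi=\Ad(P_{\ell^2 E})$ via Lemma~\ref{lem:compressionapprox}, one sets $h(t)=E_{\ell^\infty\Gamma}\bigl(\psi\circ\phi(\lambda_t)\lambda_t^*\bigr)\in C(K)$, which is finitely supported, of positive type (a matrix positivity computation, as in the proof of Theorem~\ref{thm:iff}), and satisfies $\|1-h(t)\|_\infty<\varepsilon$ on any prescribed finite set by the norm continuity above; amenability then follows from \cite[Theorem 4.4.3]{BO08}. In particular you should abandon the construction of continuous unit vector fields $\eta_i^x$ and measures $\mu_i^x=|\eta_i^x|^2$: besides resting on the unresolved uniformity, that route carries avoidable technical debt (normalizing where the field degenerates, continuity of square roots of matrix-valued positive fields, and a Powers--St{\o}rmer step to convert $\ell^2$-estimates into the $\ell^1$-invariance $\sup_x\|s\cdot\mu_i^x-\mu_i^{sx}\|_1\to 0$), all of which the positive-type-function formulation of amenability sidesteps.
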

\begin{proof}
Note that by \cite[Section 4.1]{BO08} we have an operator space embedding $C(K) \rtimes_r \Gamma \subset \mathbb M_\Gamma(C(K))^\Gamma$ that takes $C^*_\lambda \Gamma$ canonically onto $C^*_\lambda \Gamma \subset L\Gamma$ and takes $C(K)$ to diagonal matrices $C(K) \ni f \mapsto \oplus_{t \in \Gamma} \sigma_{t^{-1}}(f) \in \mathbb M_\Gamma(C(K))^\Gamma$, where $\sigma: \Gamma \to {\rm Aut}(C(K))$ is the corresponding action $\sigma_s(f) = f \circ s^{-1}$. If the action $\Gamma \actson K$ is amenable, then the inclusion $C^*_\lambda \Gamma \subset C(K) \rtimes_r \Gamma$ is nuclear \cite[Section 4.3]{BO08}, and hence by Corollary~\ref{cor:densenuclear} the inclusion $L\Gamma \subset  \mathbb M_{\Gamma}(C(K))^\Gamma$ is $L\Gamma$-nuclear.

For the converse, note that we have a $\Gamma$-equivariant conditional expectation onto the diagonal matrices $E_{\ell^\infty \Gamma}: \mathbb M_\Gamma( C(K) )^\Gamma \to \ell^\infty(\Gamma; C(K) )^\Gamma$ where the later space we may identify, in a $\Gamma$-equivariant way, with $C(K)$ by evaluating at the identity. Also note that $E_{\ell^\infty \Gamma}$ restricts to the trace on $L\Gamma$, and so if $a, b \in L\Gamma$, $T \in \mathbb M_\Gamma( C(K) )^\Gamma$, and $\varphi \in C(K)^*$ is a state, then we have $| \varphi \circ E_{\ell^\infty \Gamma}( a^* T b ) | \leq \| T \|  (\varphi \circ E_{\ell^\infty \Gamma}(a^*a) )^{1/2} ( \varphi \circ E_{\ell^\infty \Gamma}(b^*b) )^{1/2} = \| T \| \tau( a^* a)^{1/2} \tau(b^* b)^{1/2}$. It therefore follows that $E_{\ell^\infty \Gamma}: \mathbb M_\Gamma( C(K) )^\Gamma \to C(K)$ is continuous from the $L\Gamma$-topology into the norm topology. 

If $L\Gamma \subset \mathbb M_\Gamma( C(K) )^\Gamma$ is $L\Gamma$-nuclear, $F \subset \Gamma$ is finite, and $\varepsilon > 0$, then using Lemma~\ref{lem:compressionapprox} and the observations above there exists a finite subset $E \subset \Gamma$ and a u.c.p.\ map $\psi: \mathbb M_E(\mathbb C) \to \mathbb M_\Gamma( C(K) )^\Gamma$ so that setting $h(t) = E_{\ell^\infty \Gamma}( \psi \circ {\rm Ad}(P_{\ell^2 E}) (\lambda_t) \lambda_t^* ) \in C(K)$ we have $\| 1 - h(t) \|_\infty < \varepsilon$ for each $t \in F$. The function $\Gamma \ni t \mapsto h(t) \in C(K)$ is easily seen to be of positive type (see the implication (4) $\implies$ (2) in \cite[Theorem 4.4.3]{BO08}), and hence it follows that the action $\Gamma \actson K$ is amenable. 
\end{proof}

\section{Weakly exact von Neumann algebras}\label{sec:wkexact}

Recall from \cite[Definition 3.1.1]{Iso13} that for a von Neumann algebra $M$ and an ultraweakly dense ${\rm C}^*$-algebra $M_0 \subset M$, the ${\rm C}^*$-algebra $M_0$ is said to be weakly exact in $M$ if for any unital ${\rm C}^*$-algebra $B$ with a closed two-sided ideal $J$ and 
	any representation $\pi: M_0\otimes B\to \B(\cK)$ with $M_0\otimes J\subset \ker \pi$ that is ultraweakly continuous on $M\otimes \C$,
	the induced representation $\tilde\pi:M_0\odot B/J\to\mathbb B(\cH)$ is min-continuous.
This notion generalizes weak exactness for von Neumann algebras and exactness for ${\rm C}^*$-algebras simultaneously: 
	if $M_0=M$ is a von Neumann algebra, 
	then $M$ is weakly exact in $M$ if and only if $M$ is weakly exact in the sense of Kirchberg (\cite{Kir95}, see also \cite[Chapter 14]{BO08});
	at the other extreme, if $M = (M_0)^{**}$, then $M_0$ is weakly exact in $(M_0)^{**}$ if and only if $M_0$ is exact.

In this section, we characterize weak exactness via $(M_0 \subset M)$-nuclearity, 
	analogous to the case of exactness for ${\rm C}^*$-algebras.

\begin{thm}\label{thm:wk exact}
Let $M \subset \B(\mathcal H)$ be a von Neumann algebra and $M_0 \subset M$ an ultraweakly dense ${\rm C}^*$-subalgebra.  
The following conditions are equivalent.
\begin{enumerate}
\item The ${\rm C}^*$-algebra $M_0$ is weakly exact in $M$. \label{item:wk-exact}
\item There exist nets of c.c.\ maps $\phi_i: M_0\to \M_{n(i)}(\C)$ and
	$\psi_i: \M_{n(i)}(\C)\to \B(\cH)$ such that $\psi_i\circ \phi_i(a)\to a$ 
	in the $(M_0\subset M)$-topology for any $a\in M_0$.\label{item:cb-nuclear inclusion}
\item The inclusion $M_0\subset \B(\cH)$ is $(M_0\subset M)$-nuclear, i.e., there exist nets of c.c.p.\ maps $\phi_i: M_0\to \M_{n(i)}(\C)$ and
	$\psi_i: \M_{n(i)}(\C)\to \B(\cH)$ such that $\psi_i\circ \phi_i(a)\to a$ 
	in the $(M_0\subset M)$-topology for any $a\in M_0$.\label{item:nuclear inclusion}
\end{enumerate}
\end{thm}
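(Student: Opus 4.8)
The plan is to establish the cycle $(\ref{item:nuclear inclusion}) \Rightarrow (\ref{item:wk-exact}) \Rightarrow (\ref{item:cb-nuclear inclusion}) \Rightarrow (\ref{item:nuclear inclusion})$. One link is comparatively soft: for $(\ref{item:cb-nuclear inclusion}) \Rightarrow (\ref{item:nuclear inclusion})$ I would upgrade the completely contractive maps to completely positive ones. Since $1 \in M_0$ we have $\psi_i \circ \phi_i(1) \to 1$ in the $(M_0 \subset M)$-topology, so first passing $\phi_i$ to a u.c.p.\ map via the factorization $\phi_i = \phi_i(1)^{1/2} \phi_i'(\cdot) \phi_i(1)^{1/2}$ (Lemma 2.2.5 in \cite{BO08}) and then applying Lemma~\ref{lem:perturb} to the resulting maps into $\B(\cH)$ should produce the desired u.c.p.\ factorization still converging in the $(M_0 \subset M)$-topology. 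This is the exact analogue of the standard c.c.-to-c.c.p.\ perturbation for nuclear embeddings, now carried out in the $M$-topology.

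The substantive content is the equivalence of weak exactness with the nuclear embedding, which is the weak/normal analogue of Kirchberg's theorem that a ${\rm C}^*$-algebra $A$ is exact if and only if some (equivalently every) embedding $A \subset \B(\cH)$ is nuclear; compare Example~\ref{examp:exandwkex}, where the case $M = A^{**}$ recovers exactness. For $(\ref{item:nuclear inclusion}) \Rightarrow (\ref{item:wk-exact})$ I would start from a representation $\pi \colon M_0 \otimes B \to \B(\cK)$ that is ultraweakly continuous on $M$ and kills $M_0 \otimes J$, so that $\pi$ decomposes into a normal representation $\bar\pi_0 \colon M \to \B(\cK)$ and a representation $\bar\pi_1 \colon B/J \to \B(\cK)$ with commuting ranges, and the task is to bound $\| \tilde\pi(x) \| = \| \sum_k \bar\pi_0(a_k) \bar\pi_1(q(b_k)) \|$ by $\| x \|_{\min}$ for $x = \sum_k a_k \otimes q(b_k)$. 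Writing $\bar\pi_0$ via its quasi-containment in an amplification of the identity representation $M \subset \B(\cH)$, I would transport the data and then feed the $(M_0 \subset M)$-nuclearity into Theorem~\ref{thm:nuclear inclusion density} (in general Theorem~\ref{thm:nonsepversion}) to obtain an amenable, hence semidiscrete and nuclear, von Neumann algebra $R$ together with normal u.c.p.\ maps $\Phi \colon M \to R$ and $\Psi \colon R \to (\B(\cH)^{M_0 \sharp M_0})^*$ with $\Psi \circ \Phi = i_{\B(\cH)}$ on $M_0$. The point of routing through $R$ is that nuclearity of $R$ forces $R \otimes_{\min} (B/J) = R \otimes_{\max} (B/J)$, so that after inserting a matricial approximation $\beta_j \alpha_j \to \mathrm{id}_R$ of $R$ the commuting product with $\bar\pi_1$ becomes controlled by $\| x \|_{\min}$; the convergence $\beta_j \alpha_j \Phi(a_k) \to \bar\pi_0(a_k)$ is then pushed to the limit using that maps which are normal on $M$ are continuous for the weak topology (Lemma~\ref{lem:weakcontinuity}).

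The main obstacle, and the crux of $(\ref{item:nuclear inclusion}) \Rightarrow (\ref{item:wk-exact})$, is precisely the \emph{min-versus-commuting} obstruction: by the Effros--Lance description of semidiscreteness, the product map $M \odot M' \to \B(L^2 M)$ coming from a pair of commuting representations is bounded for the minimal tensor norm \emph{only} when $M$ is semidiscrete, so for a general (non-injective) $M$ one cannot simply fold $\bar\pi_0$ against $\bar\pi_1$ and hope for a $\| \cdot \|_{\min}$ bound, nor can one run the classical kernel argument directly, since the maps $\psi_i$ land in $\B(\cH)$ rather than in $M$ and $\bar\pi_0$ does not extend to $\B(\cH)$ compatibly with $\bar\pi_1(B/J)'$. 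The role of the restriction to \emph{normal} $\pi$ (as opposed to arbitrary $\pi$, which would force full exactness) is to make the semidiscrete factorization of Theorem~\ref{thm:nuclear inclusion density} available and compatible with the commutant $\bar\pi_1(B/J)'$, and I expect that reconciling the matricial reconstruction with this commutant---equivalently, arranging that the approximants to $\bar\pi_0$ remain asymptotically inside $\bar\pi_1(B/J)'$---will be the delicate step, requiring the bidual formalism of Section~\ref{sec:Mtopology} and Lemma~\ref{lem:weakly nuclear bidual}.

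Finally, for $(\ref{item:wk-exact}) \Rightarrow (\ref{item:cb-nuclear inclusion})$ I would produce the matricial factorization from weak exactness by choosing a test pair $(B, J)$ that exposes the embedding $M_0 \subset \B(\cH)$---for instance taking $B$ built from $\B(\ell^2)$ with $J$ its compacts and the normal amplified representation of $M$ on $\cH \ovt \ell^2$---so that the guaranteed minimal-tensor continuity of the induced map $\tilde\pi$ translates, via the correspondence between nuclear maps and the Calkin algebra used in the classical converse (see \cite[Chapter 9]{BO08} and \cite[Definition 3.1.1]{Iso13}), into a net of c.c.\ maps $M_0 \to \mathbb M_{n(i)}(\mathbb C) \to \B(\cH)$ converging to the inclusion. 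Here the normality of the chosen $\pi$ on $M$ is exactly what forces the convergence to take place in the $(M_0 \subset M)$-topology rather than in norm, matching the weakening of exactness to weak exactness.
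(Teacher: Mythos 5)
Your cycle $(\ref{item:nuclear inclusion}) \Rightarrow (\ref{item:wk-exact}) \Rightarrow (\ref{item:cb-nuclear inclusion}) \Rightarrow (\ref{item:nuclear inclusion})$ differs from the paper's $(\ref{item:nuclear inclusion}) \Rightarrow (\ref{item:cb-nuclear inclusion}) \Rightarrow (\ref{item:wk-exact}) \Rightarrow (\ref{item:nuclear inclusion})$, and the difference matters: your implication $(\ref{item:nuclear inclusion}) \Rightarrow (\ref{item:wk-exact})$ has a genuine unfilled gap, which you yourself flag as the ``delicate step'' but do not resolve. Routing $\bar\pi_0$ through an amenable $R$ via Theorem~\ref{thm:nuclear inclusion density} gives normal u.c.p.\ maps $\Phi: M \to R$ and $\Psi: R \to (\B(\cH)^{M_0\sharp M_0})^*$, but to run ``the trick'' (commuting ranges plus $R \otimes_{\min} B/J = R \otimes_{\max} B/J$) you need a u.c.p.\ map $\Psi': R \to \bar\pi_1(B/J)' \subset \B(\cK)$ with $\Psi' \circ \Phi = \bar\pi_0$ on $M_0$, and nothing transports $\Psi$ into that commutant: realizing $\bar\pi_0$ via an isometry $V: \cK \to \cH \ovt \ell^2$ intertwining $M$ does not make $\Ad(V^*)$ carry $\B(\cH \ovt \ell^2)$ into $\bar\pi_1(B/J)'$. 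Producing commutant-compatible weakly nuclear factorizations from weak exactness is essentially the content of Theorem~\ref{thm:brownozawa}, which in the paper is deduced \emph{from} the present theorem, so filling your gap that way would be circular. The paper sidesteps the min-versus-commuting obstruction entirely by proving the stronger implication $(\ref{item:cb-nuclear inclusion}) \Rightarrow (\ref{item:wk-exact})$ (Proposition~\ref{prop:approx and exact}, following \cite[Proposition 3.7.8]{BO08}): for $x \in \ker q$ contractive, exactness of $\M_{n(i)}(\C)$ forces $x_i = ((\psi_i \circ \phi_i)\otimes \id)(x) \in \B(\cH)\otimes J$, then Lemma~\ref{lem:compressnorm} (the noncommutative Egorov lemma) supplies positive contractions $z_j \in M_0$ with $\|(z_j \otimes 1)(x_j - x)(z_j \otimes 1)\|$ small and $\pi((1-z_j)\otimes 1)$ small against a fixed vector state, so that $(1 \otimes e_n)(z_j \otimes 1) x (z_j \otimes 1) \in M_0 \otimes J \subset \ker\pi$ approximates $(z_j\otimes 1)x(z_j\otimes 1)$ in norm and polarization gives $\pi(x) = 0$. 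No semidiscreteness, commutant, or bidual is needed for this direction, and only c.c.\ maps are used.

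There are two secondary problems. First, the theorem does not assume $1 \in M_0$, but your upgrade $(\ref{item:cb-nuclear inclusion}) \Rightarrow (\ref{item:nuclear inclusion})$ begins ``since $1 \in M_0$''; the non-unital case is real work in the paper, handled by Lemma~\ref{lem:unitization exact} (a $3\times 3$-lemma diagram chase showing $\tilde M_0 = M_0 + \C 1$ inherits weak exactness) together with the refined convergence of Lemma~\ref{lem:approxunifconv} and a final compression $\Ad(e_n) \circ \psi_i$ by an approximate unit $\{e_n\} \subset M_0$ to return to the $(M_0 \subset M)$-topology. Second, the factorization $\phi_i = \phi_i(1)^{1/2}\phi_i'(\cdot)\phi_i(1)^{1/2}$ from \cite[Lemma 2.2.5]{BO08} applies to c.c.p.\ maps, whereas condition (\ref{item:cb-nuclear inclusion}) only provides c.c.\ maps $\phi_i$; the paper instead replaces $\phi_i$ by compressions $\Ad(P_{\cH_i})$ via Lemma~\ref{lem:compressionapprox} and then perturbs the $\psi_i$ side with Lemma~\ref{lem:perturb} (Proposition~\ref{prop:wk exact unital}). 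That portion of your sketch is repairable with the paper's own lemmas, and your $(\ref{item:wk-exact}) \Rightarrow (\ref{item:cb-nuclear inclusion})$ via Isono's characterization \cite[Theorem 3.1.3]{Iso13} plus Wittstock and convexity does match the paper's backward direction of Proposition~\ref{prop:approx and exact}; but as it stands the proposal does not constitute a proof because the crux of $(\ref{item:nuclear inclusion}) \Rightarrow (\ref{item:wk-exact})$ is missing.
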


Before proceeding to the proof, we collect a few lemmas.

\begin{lem}\label{lem:exact kernel}
Let $M$ be a von Neumann algebra and $M_0 \subset M$ an ultraweakly dense ${\rm C}^*$-subalgebra. 
Then $M_0$ is weakly exact in $M$ if and only if for any closed two-sided ideal $J\lhd B$ and any $\pi: M_0\otimes B\to \B(\cK)$ such that $\pi_{\mid M_0\otimes \C}$ is ultraweakly continuous and $M_0\otimes J\subset \ker \pi$,
	we have $\ker q\subset \ker \pi$, where $q$ is the quotient map $q:M_0\otimes B\to M_0\otimes B/J$.
\end{lem}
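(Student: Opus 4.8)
The plan is to notice that both sides of the asserted equivalence are quantified over exactly the same data, namely a unital ${\rm C}^*$-algebra $B$, a closed two-sided ideal $J \lhd B$, and a $*$-representation $\pi \colon M_0 \otimes B \to \B(\cK)$ with $\pi_{\mid M_0 \otimes \C}$ ultraweakly continuous and $M_0 \otimes J \subset \ker\pi$ (any non-unital $B$ being handled by passing to its unitization). Thus it suffices to fix one such triple and prove that min-continuity of the induced map $\tilde\pi \colon M_0 \odot (B/J) \to \B(\cK)$ is equivalent to the containment $\ker q \subset \ker\pi$. Here $q = \id_{M_0} \otimes \pi_{B/J} \colon M_0 \otimes B \to M_0 \otimes (B/J)$ is the surjective $*$-homomorphism onto the minimal tensor product induced by the quotient $\pi_{B/J} \colon B \to B/J$. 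Since $M_0 \odot J \subset \ker\pi$, the algebraic identification $M_0 \odot (B/J) \cong (M_0 \odot B)/(M_0 \odot J)$ shows that $\tilde\pi$ is well defined, with $\tilde\pi(q(\xi)) = \pi(\xi)$ for every $\xi \in M_0 \odot B$.

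First I would treat the direction ($\Leftarrow$): assuming $\ker q \subset \ker\pi$, the fact that $q$ is a surjective $*$-homomorphism gives $M_0 \otimes (B/J) \cong (M_0 \otimes B)/\ker q$, and the universal property of the quotient yields a ($*$-homomorphic, hence contractive) map $\bar\pi \colon M_0 \otimes (B/J) \to \B(\cK)$ with $\pi = \bar\pi \circ q$; restricting $\bar\pi$ to $M_0 \odot (B/J)$ recovers $\tilde\pi$, which is therefore min-continuous. For ($\Rightarrow$), if $\tilde\pi$ is min-continuous it extends to a $*$-homomorphism $\bar\pi \colon M_0 \otimes (B/J) \to \B(\cK)$, and then $\bar\pi \circ q$ and $\pi$ are two $*$-homomorphisms on $M_0 \otimes B$ agreeing on the dense subalgebra $M_0 \odot B$, hence equal; consequently $\ker q \subset \ker(\bar\pi \circ q) = \ker\pi$. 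Combined with the definition of weak exactness, this pointwise equivalence gives the lemma.

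The one point that requires genuine care --- and the reason the statement is not vacuous --- is the identification of $\ker q$. One always has $M_0 \otimes J \subset \ker q$, and this inclusion is strict precisely when the sequence $0 \to M_0 \otimes J \to M_0 \otimes B \to M_0 \otimes (B/J) \to 0$ fails to be exact, i.e.\ exactly when $M_0$ is not exact. Since the hypothesis only guarantees $M_0 \otimes J \subset \ker\pi$, upgrading this to $\ker q \subset \ker\pi$ is precisely the substance of weak exactness; the main thing to avoid is conflating $\ker q$ with the automatically-contained closed ideal $M_0 \otimes J$. Apart from this, the argument is the routine factorization of a $*$-homomorphism through a quotient.
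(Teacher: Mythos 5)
Your proof is correct and takes essentially the same route as the paper: the paper also obtains ($\Leftarrow$) by noting $\pi$ factors through the quotient, and its ($\Rightarrow$) is your continuous-extension argument phrased as a contradiction --- approximate $x \in \ker q \setminus \ker \pi$ by $x_n \in M_0 \odot B$, so $\|q(x_n)\| \to 0$ while $\tilde\pi(q(x_n)) = \pi(x_n) \to \pi(x) \neq 0$, contradicting min-continuity of $\tilde\pi$. One immaterial nit: your closing remark conflates things slightly, since for a \emph{fixed} pair $(B, J)$ strictness of $M_0 \otimes J \subset \ker q$ is not equivalent to non-exactness of $M_0$ (non-exactness only asserts that \emph{some} such pair is strict).
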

\begin{proof}
It is clear that $\ker q\subset \ker \pi$ implies that $\tilde \pi: A\odot B/J\to \B(\cK)$ is min-continuous.
Conversely, suppose there is some $x\in \ker q\setminus \ker \pi$.
	Then, we may find a sequence $x_n\in A\odot B$ such that $x_n\to x$ in norm.
Note that $\|q(x_n)\|\to 0$ while $\liminf_n\|\pi(x_n)\|\geq \delta$ for some $\delta>0$.
However, since $x_n\in A\odot B$ and $A\otimes J\subset \ker \pi$, 
	we have $\tilde \pi(q(x_n))=\pi(x_n)$, which is a contradiction.
\end{proof}

\begin{lem}\label{lem:unitization exact}
Let $M$ be a von Neumann algebra and $M_0 \subset M$ an ultraweakly dense ${\rm C}^*$-subalgebra with $1 \not\in M_0$.  Let $\tilde M_0=M_0+\C 1 \subset M$.
If $M_0$ is weakly exact in $M$, then $\tilde M_0$ is also weakly exact in $M$.
\end{lem}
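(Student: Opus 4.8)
The plan is to translate weak exactness into the kernel criterion of Lemma~\ref{lem:exact kernel} and then exploit that $M_0$ is a codimension-one two-sided ideal in $\tilde M_0$ with quotient $\C$. So fix a unital ${\rm C}^*$-algebra $B$, a closed two-sided ideal $J \lhd B$, and a representation $\pi\colon \tilde M_0 \otimes B \to \B(\cK)$ whose restriction to $\tilde M_0 \otimes \C$ is ultraweakly continuous and with $\tilde M_0 \otimes J \subset \ker\pi$. Writing $q_B\colon B \to B/J$ for the quotient map, set $\tilde q = \id_{\tilde M_0}\otimes q_B\colon \tilde M_0 \otimes B \to \tilde M_0 \otimes (B/J)$ and $q_0 = \id_{M_0}\otimes q_B\colon M_0 \otimes B \to M_0 \otimes (B/J)$. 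Since $\tilde M_0$ is again ultraweakly dense in $M$, Lemma~\ref{lem:exact kernel} reduces the claim to showing $\ker\tilde q \subset \ker\pi$. Restricting $\pi$ to $M_0 \otimes B$ produces a representation $\pi_0$ whose restriction to $M_0 \otimes \C$ is still ultraweakly continuous and which annihilates $M_0 \otimes J$; as $M_0$ is weakly exact in $M$, a second application of Lemma~\ref{lem:exact kernel} gives $\ker q_0 \subset \ker\pi_0 \subset \ker\pi$.

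Next I would describe $\ker\tilde q$. Because $1 \notin M_0$, there is a well-defined character $\epsilon\colon \tilde M_0 \to \C$ with $\ker\epsilon = M_0$, and $\sigma := \epsilon \otimes \id_B\colon \tilde M_0 \otimes B \to B$ is a surjective $*$-homomorphism. Its kernel is exactly $M_0 \otimes B$: modulo $M_0 \odot B$ every element of $\tilde M_0 \odot B$ is congruent to $1 \otimes b$ for some $b \in B$, so the unital $*$-homomorphic splitting $\lambda \mapsto \lambda 1$ of $\epsilon$ identifies $(\tilde M_0 \otimes B)/(M_0\otimes B)$ with $B$; equivalently this is the exactness of $-\otimes B$ along the nuclear quotient $\C$. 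I then claim $\ker\tilde q = \ker q_0 + 1\otimes J$. Indeed, given $x \in \ker\tilde q$, commutativity of the square relating $\sigma$, $\tilde q$, and $q_B$ gives $q_B(\sigma(x)) = 0$, so $j := \sigma(x) \in J$; moreover $1 \otimes j \in \ker\tilde q$ since $q_B(j) = 0$, and because $M_0 \otimes (B/J)$ embeds injectively into $\tilde M_0 \otimes (B/J)$ we have $\ker\tilde q \cap (M_0 \otimes B) = \ker q_0$, whence $x - 1\otimes j \in \ker\sigma \cap \ker\tilde q = \ker q_0$.

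Finally I would assemble the containments. From weak exactness of $M_0$ we have $\ker q_0 \subset \ker\pi$, while $1 \otimes J \subset \tilde M_0 \otimes J \subset \ker\pi$ follows directly from the hypothesis since $1 \in \tilde M_0$. Therefore $\ker\tilde q = \ker q_0 + 1\otimes J \subset \ker\pi$, and Lemma~\ref{lem:exact kernel} concludes that $\tilde M_0$ is weakly exact in $M$. The only step that is not purely formal is the identification $\ker\sigma = M_0\otimes B$, i.e.\ that tensoring the extension $0 \to M_0 \to \tilde M_0 \to \C \to 0$ by $B$ remains exact; this is harmless here precisely because the quotient is $\C$ (equivalently, $\tilde M_0$ is the unitization of $M_0$), so no exactness assumption on $B$ is required. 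Everything else is a diagram chase.
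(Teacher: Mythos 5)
Your proof is correct and follows essentially the same route as the paper: both reduce the statement to the kernel criterion of Lemma~\ref{lem:exact kernel} and then split an element $x \in \ker\tilde q$ into a piece lying in $\tilde M_0 \otimes J$ (your $1 \otimes \sigma(x)$) plus a piece lying in $\ker q_{\mid M_0 \otimes B}$, which weak exactness of $M_0$ annihilates. Your explicit character-and-splitting argument is simply the diagram chase that the paper compresses into its appeal to the $3\times 3$ lemma on the same nine-term diagram.
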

\begin{proof}
Let $\pi:\tilde M_0\otimes B\to \B(\cK)$ and $J\lhd B$ be given as in the definition of weak exactness.
Denote by $q:\tilde M_0\otimes B\to \tilde M_0\otimes B/J$ the quotient map.
By Lemma~\ref{lem:exact kernel}, it suffices to show that $\ker q\subset \ker \pi$.
For $x\in \ker q$, consider the following diagram:
\[
\begin{tikzcd}
& 0 \arrow[d] & 0 \arrow[d] & 0 \arrow[d] & \\
0\arrow[r] & M_0\otimes J\arrow[r] \arrow[d] & M_0\otimes B \arrow[r] \arrow[d] & M_0\otimes B/J\arrow[r]\arrow[d] & 0\\
0\arrow[r] & \tilde M_0 \otimes J\arrow[r] \arrow[d] & \tilde M_0 \otimes B \arrow[r] \arrow[d] & \tilde M_0\otimes B/J\arrow[r] \arrow[d] & 0\\
0\arrow[r] & \C \otimes J\arrow[r] \arrow[d] & \C \otimes B \arrow[r] \arrow[d] & \C \otimes B/J\arrow[r] \arrow[d] & 0\\
& 0  & 0  & 0. &\\
\end{tikzcd}
\]
The $3\times 3$ lemma shows that we may find $y\in \tilde M_0\otimes J$
	such that $x-y\in \ker q_{\mid M_0\otimes B}$.
Since $M_0\subset \B(\cH)$ is weakly exact, we have $\ker q_{\mid M_0\otimes B}\subset \ker \pi_{\mid M_0\otimes B}$,
	and it follows that $x\in \ker \pi$.
\end{proof}

\begin{prop}\label{prop:approx and exact}
Let $M \subset \B(\mathcal H)$ be a von Neumann algebra and $M_0 \subset M$ an ultraweakly dense ${\rm C}^*$-subalgebra. 
Then, $M_0$ is weakly exact in $M$ if and only if
there exist nets of c.c.\ maps $\phi_i: M_0\to \M_{n(i)}(\C)$ and
	$\psi_i: \M_{n(i)}(\C)\to \B(\cH)$ such that $\psi_i\circ \phi_i(a)\to a$ 
	in the $(M_0\subset M)$-topology for any $a\in M_0$.
\end{prop}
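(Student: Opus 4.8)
The plan is to prove the two implications separately; the backward implication (the stated matrix approximation $\Rightarrow$ weak exactness) is a soft tensor–functoriality argument, while the forward implication is the substantial one and amounts to a relative version of Kirchberg's theorem. For the backward direction I would first reduce to the case $1\in M_0$ via Lemma~\ref{lem:unitization exact}. Given $B$, an ideal $J\lhd B$, and a representation $\pi:M_0\otimes B\to\B(\cK)$ with $M_0\otimes J\subset\ker\pi$ whose restriction to $M$ is ultraweakly continuous, let $\bar\pi:M\to\B(\cK)$ be the normal extension and $\rho=\pi_{|B}$; unitality together with $1\otimes J\subset\ker\pi$ forces $\rho(J)=0$. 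By Lemma~\ref{lem:exact kernel} it suffices to check $\ker q\subset\ker\pi$. For $x=\sum_k a_k\otimes b_k\in M_0\odot B$ with $q(x)=0$, nuclearity of $\M_{n(i)}(\C)$ makes $0\to\M_{n(i)}(\C)\otimes J\to\M_{n(i)}(\C)\otimes B\to\M_{n(i)}(\C)\otimes B/J\to 0$ exact, so $(\phi_i\otimes\id)(x)$ lies in $\M_{n(i)}(\C)\odot J$ (the first factor being finite dimensional). Fixing a normal dilation $\tilde\pi(T)=U^*(T\otimes 1)U$ of $\bar\pi$, a direct estimate of the seminorms $s_\omega$ against the vector states $\omega(m)=\langle(m\otimes 1)U\xi,U\xi\rangle$ shows that $\tilde\pi$ is continuous from the $(M_0\subset M)$-topology on $\B(\cH)$ into the weak operator topology. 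Since $\psi_i\circ\phi_i(a_k)\to a_k$ in the $(M_0\subset M)$-topology, we obtain $\pi(x)=\lim_i\sum_k\tilde\pi(\psi_i\phi_i(a_k))\rho(b_k)$ weakly; but writing $(\phi_i\otimes\id)(x)=\sum_\alpha e_\alpha\otimes c_\alpha$ with $c_\alpha\in J$ and using linearity, each of these sums equals $\sum_\alpha\tilde\pi(\psi_i(e_\alpha))\rho(c_\alpha)=0$ because $\rho(J)=0$. Hence $\pi(x)=0$.

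For the forward direction I would argue by contradiction. Suppose the inclusion $\iota:M_0\hookrightarrow\B(\cH)$ is not $(M_0\subset M)$-nuclear. By Proposition~\ref{prop:A-continuous and topology} and a convexity argument, $\iota$ then lies outside the point–weak-$(M_0\subset M)$-closure of the convex set of maps factoring as $\psi\circ\phi$ through matrix algebras, and Hahn--Banach separation produces $a_1,\dots,a_m\in M_0$, functionals $\omega_1,\dots,\omega_m\in\B(\cH)^{M_0\sharp M_0}$, and $\delta>0$ with $\mathrm{Re}\sum_k\omega_k(a_k)\geq\delta+\mathrm{Re}\sum_k\omega_k(\psi\circ\phi(a_k))$ for every c.c.p.\ factorization through a matrix algebra. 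The decisive feature is that each $\omega_k$ is $M$-normal, so by the Christensen--Paulsen--Sinclair representation used in the proof of Proposition~\ref{prop:A-continuous and topology} the $\omega_k$ are implemented by vectors in a representation of $\B(\cH)$ on which $M$ acts normally.

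The supremum over all matrix factorizations computes the minimal tensor norm of the element $u=\sum_k a_k\otimes\omega_k$ (matrices being precisely the objects on which the nuclear and minimal norms coincide), whereas $\mathrm{Re}\sum_k\omega_k(a_k)$ is the value of $u$ under the honest embedding $M_0\subset\B(\cH)$ paired with the normal data of the $\omega_k$; the inequality above is therefore a strict gap between the minimal norm of $u$ and a representation norm of $u$. To contradict weak exactness I would assemble from the implementing bimodules of the $\omega_k$ a unital C$^*$-algebra $B$ with a closed ideal $J$ and a representation $\pi:M_0\otimes B\to\B(\cK)$ such that $\pi_{|M}$ is ultraweakly continuous---this is exactly where the $M$-normality of the separating functionals is needed---while $M_0\otimes J\subset\ker\pi$ by placing the matrix asymptotics into $J$, and $\|\pi(u)\|$ realizes the representation norm and hence strictly exceeds $\|u\|_{M_0\otimes_{\min}B/J}$, so that $\pi$ is not min-continuous. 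By Lemma~\ref{lem:exact kernel} this contradicts weak exactness of $M_0$ in $M$, proving that $\iota$ is $(M_0\subset M)$-nuclear and yielding the required c.c.p.\ (in particular c.c.) maps. I expect this last construction to be the main obstacle: one must compress all matrix factorizations into a single quotient $B/J$ while keeping the companion representation of $M$ normal, turning the min/representation norm gap into genuine min-discontinuity. This is the relative analogue of Kirchberg's nuclear-embeddability theorem, and carrying out the separation so that it respects the normal structure on $M$ (in the spirit of \cite{Kir95B, EfOzRu01, Oza07}) is the technical heart of the argument.
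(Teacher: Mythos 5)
Your backward direction (maps $\Rightarrow$ weak exactness) contains a genuine gap: you only treat $x=\sum_{k}a_k\otimes b_k\in M_0\odot B$ with $q(x)=0$, and for such $x$ the conclusion is vacuous. Indeed, taking the $a_k$ linearly independent, $q(x)=\sum_k a_k\otimes(b_k+J)=0$ already forces every $b_k\in J$, so $x\in M_0\odot J\subset\ker\pi$ with no approximation argument needed. By Lemma~\ref{lem:exact kernel} what must be proved is $\ker q\subset\ker\pi$ for the \emph{closed} kernel inside $M_0\otimes B$ (equivalently, the norm estimate making $\tilde\pi$ min-continuous), and your weak-limit identity $\pi(x)=\lim_i\sum_k\tilde\pi(\psi_i\phi_i(a_k))\rho(b_k)$ does not survive the passage to the closure: the auxiliary maps $\M_{n(i)}\otimes B\ni m\otimes b\mapsto \tilde\pi(\psi_i(m))\rho(b)$ carry no uniform bound in $i$ (the two ranges need not commute, so these are merely linear maps on the algebraic tensor product), and you cannot interchange the limit over $i$ with a norm approximation of a general $x\in\ker q$ by algebraic tensors. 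This is exactly where the paper's proof inserts its analytic device: from $(M_0\subset M)$-convergence of $x_i=((\psi_i\circ\phi_i)\otimes\id)(x)$ to $x$ it extracts, via Lemma~\ref{lem:compressnorm}, positive contractions $z_i\in M_0$ with $\|(z_i\otimes 1)(x_i-x)(z_i\otimes 1)\|\to 0$ and $\langle\pi((1-z_i)\otimes 1)\xi,\xi\rangle\to 0$, and then uses an approximate unit $\{e_n\}\subset J$ to push $(1\otimes e_n)(z_j\otimes 1)x(z_j\otimes 1)$ into the closed subspace $M_0\otimes J\subset\ker\pi$. Your observation that $\tilde\pi$ is continuous from the $(M_0\subset M)$-topology to the weak operator topology is correct and is the same mechanism as the paper's vector-state estimate, but by itself it does not close the argument. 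A smaller point: Lemma~\ref{lem:unitization exact} runs the wrong way for your reduction --- it upgrades weak exactness from $M_0$ to $\tilde M_0=M_0+\C 1$, whereas you would need to transport the approximating maps from $M_0$ to $\tilde M_0$; the paper's proof of this proposition simply never assumes a unit.

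Your forward direction is also incomplete, though here the issue is acknowledged rather than hidden. The paper disposes of this implication in one step: weak exactness yields the maps by \cite[Theorem 3.1.3]{Iso13} together with Wittstock's extension theorem, the fact that the weak $(M_0\subset M)$-topology restricts on $M_0$ to the ultraweak topology inherited from $M$, and a standard convexity argument upgrading weak convergence to convergence in the $(M_0\subset M)$-topology. You instead propose to reprove the underlying relative Kirchberg theorem from scratch: Hahn--Banach separation produces $M$-normal functionals $\omega_k$, and one must then assemble from their implementing bimodules a single pair $J\lhd B$ and a representation $\pi:M_0\otimes B\to\B(\cK)$ that is ultraweakly continuous on $M$, annihilates $M_0\otimes J$, and fails min-continuity. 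You explicitly flag this construction as the main obstacle and do not carry it out; as written it is a statement of intent, not a proof, and the claim that the supremum over c.c.p.\ matrix factorizations computes $\|u\|_{\min}$ for $u=\sum_k a_k\otimes\omega_k$ itself requires a duality identification at the level of operator-space duals that you do not justify. Since this relative Kirchberg statement is precisely the content of the theorem of Isono that the paper quotes, the honest options are to cite it, as the paper does, or to supply the construction in full detail; the proposal does neither.
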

\begin{proof}
The backward direction is a direct consequence of \cite[Theorem 3.1.3, (1), (v)]{Iso13}, Wittstock's extension theorem, 
	and the fact that the restriction of the weak $(M_0\subset M)$-topology on $M_0$ coincides with 
	the ultraweak topology on $M_0$ endowed from $M$.
By a standard convexity argument we obtain that the convergence is in the 
	$(M_0\subset M)$-topology instead of the weak $(M_0\subset M)$-topology.

For the forward direction, we follow the idea of \cite[Proposition 3.7.8]{BO08}.
Let $\pi$ and $J\lhd B$ be given as in the definition.
Denote by $q: M_0\otimes B\to M_0\otimes B/J$ the quotient map, and it suffices to show $\ker q\subset \ker \pi$.

For any contraction $x\in \ker q$, consider the following diagram:
\[
\begin{tikzcd}
0\arrow[r] & M_0\otimes J\arrow[r] \arrow[d, "\phi_i\otimes \id"] & M_0\otimes B \arrow[r] \arrow[d, "\phi_i\otimes \id"] & M_0\otimes B/J\arrow[r]\arrow[d, "\phi_i\otimes \id"] & 0\\
0\arrow[r] & D_i \otimes J\arrow[r] \arrow[d, "\psi_i\otimes \id"] & D_i \otimes B \arrow[r] \arrow[d, "\psi_i\otimes \id"] & D_i\otimes B/J\arrow[r] \arrow[d, "\psi_i\otimes \id"] & 0\\
0\arrow[r] & \B(\HH) \otimes J\arrow[r] & \B(\HH) \otimes B \arrow[r] & \B(\HH) \otimes B/J\arrow[r] & 0,
\end{tikzcd}
\]
where $D_i=\M_{n(i)}(\C)$.
Since $D_i$ is exact, we have $x_i:=\big((\psi_i\circ\phi_i)\otimes\id\big)(x)\in \B(\cH)\otimes J$.

Let $\{e_n\}\in J$ be an approximate unit. 
Fix a unit vector $\xi\in \cH$ and $\varepsilon>0$.
By Lemma~\ref{lem:compressnorm}, we may find contractions $z_i\in (M_0)_+$
	such that $\| (z_i \otimes 1 )(x_i-x)(z_i \otimes 1)\|\to 0$ and $\langle\pi((1-z_i) \otimes 1)\xi, \xi\rangle \to 0$.
We may then choose $e_n$ and $z_j$ such that 
\[
\|(z_j \otimes 1)(x_j-x)(z_j \otimes 1)\|<\varepsilon/8, \ \ \ \ \ \langle \pi((1 - z_j) \otimes 1)\xi, \xi\rangle<\varepsilon/8
\] 
and
\[
\| (1 \otimes 1 -1\otimes e_n)((z_j \otimes 1) x_j (z_j \otimes 1))\|<\varepsilon/8.
\]

Note that $(1\otimes e_n)(z_j \otimes 1) x (z_j \otimes 1)\in M_0\otimes J$ and 
\[
\|(1\otimes e_n)(z_j \otimes 1) x (z_j \otimes 1)- (z_j \otimes 1) x (z_j \otimes 1)\|<\varepsilon/2.
\]
It follows that 
$$|\langle\pi(x) \xi, \xi\rangle| =|\langle \pi(( (1-z_j) \otimes 1)x+(z_j \otimes 1) x( (1- z_j) \otimes 1)+ (z_j \otimes 1) x (z_j \otimes 1))\xi, \xi\rangle|\leq \varepsilon.$$
Since $\xi$ and $\varepsilon$ are both arbitrary, by polarization we conclude $x\in \ker \pi$.
\end{proof}

We next argue that in the unital case, one may take $\phi_i$ and $\psi_i$ in the above proposition to be u.c.p.\ maps.

\begin{prop}\label{prop:wk exact unital}
Let $M \subset \B(\mathcal H)$ be a von Neumann algebra and $M_0 \subset M$ an ultraweakly dense ${\rm C}^*$-subalgebra with $1 \in M_0$. 
Then, $M_0$ is weakly exact in $M$ if and only if the inclusion $M_0\subset \B(\cH)$ is $(M_0\subset M)$-nuclear.
\end{prop}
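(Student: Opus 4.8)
The backward direction is immediate: if $M_0\subset\B(\cH)$ is $(M_0\subset M)$-nuclear, then the witnessing c.c.p.\ maps are in particular completely contractive, so $M_0$ is weakly exact in $M$ by Proposition~\ref{prop:approx and exact}. For the forward direction, the plan is to start from the completely contractive factorization supplied by Proposition~\ref{prop:approx and exact} and upgrade it to a factorization through matrix algebras by u.c.p.\ maps, with convergence measured throughout in the $(M_0\subset M)$-topology via the seminorms $s_\omega^\rho$. So assume $M_0$ is weakly exact in $M$ and fix c.c.\ maps $\phi_i\colon M_0\to\M_{n(i)}(\C)$ and $\psi_i\colon\M_{n(i)}(\C)\to\B(\cH)$ with $\psi_i\circ\phi_i(a)\to a$ in the $(M_0\subset M)$-topology for every $a\in M_0$. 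After replacing each map by its self-adjoint part we may assume $\phi_i$ and $\psi_i$ are self-adjoint.

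The key observation driving the upgrade is that once the inner maps $\phi_i$ are unital (and completely positive) we automatically get $\psi_i(1_{\M_{n(i)}})=\psi_i\circ\phi_i(1_{M_0})\to 1$ in the $(M_0\subset M)$-topology, which is precisely the hypothesis needed to feed $\psi_i$ into Lemma~\ref{lem:perturb}. Granting for the moment that the $\phi_i$ may be taken completely positive, I would normalize them as follows. Writing $h_i=\phi_i(1_{M_0})\in\M_{n(i)}(\C)_+$ and letting $p_i$ be its support projection, the Kadison--Schwarz inequality places the range of $\phi_i$ under $p_i$, so compressing to $p_i\M_{n(i)}(\C)p_i$ costs nothing; conjugating by $h_i^{-1/2}$ (the inverse taken in $p_i\M_{n(i)}(\C)p_i$) then replaces $\phi_i$ by the u.c.p.\ map $\phi_i'(a)=h_i^{-1/2}\phi_i(a)h_i^{-1/2}$, while transferring the defect into the outer map $\psi_i'(x):=\psi_i(h_i^{1/2}xh_i^{1/2})$. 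The latter is again completely contractive and satisfies $\psi_i'\circ\phi_i'=\psi_i\circ\phi_i$ as well as $\psi_i'(1)=\psi_i(h_i)\to 1$.

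With the unital defect moved to the $\B(\cH)$-side, I would apply Lemma~\ref{lem:perturb} to the net $\psi_i'$ to obtain u.c.p.\ maps $\psi_i''\colon p_i\M_{n(i)}(\C)p_i\to\B(\cH)$ with $\lim_i\sup_{\|x\|\le1}s_\omega^\rho\big((\psi_i''-\psi_i')(x)\big)=0$ for every $\omega,\rho$ in a prescribed finite set of states. Since $\|\phi_i'(a)\|\le\|a\|$, this yields $s_\omega^\rho(\psi_i''\circ\phi_i'(a)-\psi_i'\circ\phi_i'(a))\to 0$, and combined with $\psi_i'\circ\phi_i'(a)=\psi_i\circ\phi_i(a)\to a$ we conclude $\psi_i''\circ\phi_i'(a)\to a$ in the $(M_0\subset M)$-topology. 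As $\phi_i'$ and $\psi_i''$ are now u.c.p., this exhibits the inclusion $M_0\subset\B(\cH)$ as $(M_0\subset M)$-nuclear, completing the forward implication.

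The hard part is the one step I granted above: producing completely positive inner maps $\phi_i$ from the merely completely contractive ones of Proposition~\ref{prop:approx and exact}. In contrast to the c.c.-to-u.c.p.\ passage on the $\B(\cH)$-side, which is exactly what Lemma~\ref{lem:perturb} (together with Lemma~\ref{lem:compressnorm} and Lemma~\ref{lem:norm converge on unit}) is built to handle, there is no purely local perturbation converting a completely contractive map into a completely positive one, and the Paulsen-system device only relocates the problem into an off-diagonal corner, which is not a completely positive operation. The resolution I would pursue is to arrange the inner maps to be \emph{unital} completely contractive directly from the weak-exactness (tensor-product) characterization underlying Proposition~\ref{prop:approx and exact}, since a unital complete contraction into a matrix algebra is automatically completely positive; in that case $h_i=1_{\M_{n(i)}}$, the compression-and-conjugation normalization trivializes, and only the outer-map step via Lemma~\ref{lem:perturb} together with the routine bookkeeping of the $s_\omega^\rho$-errors remains.
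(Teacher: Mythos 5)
Your backward direction and your treatment of the outer maps are exactly right and match the paper: once the inner maps are unital, $\psi_i(1_{\M_{n(i)}(\C)})=\psi_i\circ\phi_i(1)\to 1$ in the $(M_0\subset M)$-topology, and Lemma~\ref{lem:perturb} plus the triangle inequality for $s_\omega^\rho$ and the net over finite sets of states in $M_0^\sharp$ finishes the argument. But the forward direction has a genuine gap, and it sits precisely at the step you concede: producing completely positive (or unital) inner maps $\phi_i$. Your proposed resolution --- arranging unital c.c.\ inner maps ``directly from the weak-exactness characterization'' --- is never carried out, and it is not routine: the maps supplied by Proposition~\ref{prop:approx and exact} arise from Isono's characterization via Wittstock extension of completely bounded maps and carry no positivity or unitality whatsoever. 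Moreover, the available convergence $\psi_i\circ\phi_i(1)\to 1$ controls only the \emph{composite}, not $\phi_i(1)$ itself (note that $\phi_i=0$ is perfectly completely contractive), so no local perturbation of $\phi_i$ alone can restore unitality; your $h_i^{-1/2}$-conjugation normalization is sound \emph{granting} that $\phi_i$ is c.p., but that hypothesis is exactly what is missing.

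The missing idea is the compression trick, which is already in the paper as Lemma~\ref{lem:compressionapprox}: any c.c.\ map from $F\subset\B(\cH)$ into $\M_k(\C)$ is a point-norm limit of maps of the form (c.c.\ map) $\circ\, \Ad(P_{\cK})$ for finite-dimensional subspaces $\cK\subset\cH$. One therefore replaces each $\phi_i$ by the compression $\Ad(P_{\cH_i})$ --- which is automatically u.c.p.\ --- and absorbs the original matrix-valued c.c.\ map into the outer map, which remains completely contractive; the pointwise convergence $\psi_i\circ\phi_i\to\id_{M_0}$ in the $(M_0\subset M)$-topology survives this replacement because the error is a point-norm perturbation. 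After this, the inner maps are unital and your outer-map argument via Lemma~\ref{lem:perturb} goes through verbatim, with no need for the support-projection and $h_i^{-1/2}$ normalization at all. So the architecture of your proof is correct, but without Lemma~\ref{lem:compressionapprox} (or an equivalent device) the forward implication is not established.
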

\begin{proof}
The forward direction follows from Proposition~\ref{prop:approx and exact}.

To see the converse, take c.c.\ maps $\psi_i$ and $\phi_i$ as in Proposition~\ref{prop:approx and exact}.
First notice that by Lemma~\ref{lem:compressionapprox} we may assume $\phi_i=\Ad(P_{\cH_i})$, 
	where $P_{\cH_i}$ is the orthogonal projection onto a finite-dimensional subspace $\cH_i\subset \cH$,
	by replacing $\{\psi_i\}$ with another net of complete contractions.
One then checks that $\psi_i\circ \phi_i\to \id_{M_0}$ in the point-$(M_0\subset M)$-topology after the replacement.

For any finite set of states $F\subset M_0^\sharp$,
	since $\psi_i\circ \phi_i(1)=\psi_i(1_{\M_{n(i)}(\C)})\to 1$ in the $(M_0\subset M)$-topology,
	by Lemma~\ref{lem:perturb} we obtain a net of u.c.p.\ maps $\psi_{i,F}: \M_{n(i)}(\C)\to \B(\cH)$ such that for any $a\in M_0$,
	$\lim_i s_\omega^\rho(\psi_{i,F}\circ \phi_i(a)-a)\leq \lim_i s_\omega^\rho \big((\psi_i-\psi_{i,F})(\phi_i(a))\big)
	+\lim_i s_\omega^\rho(\psi_i\circ \phi_i(a)-a)=0$, for any $\omega,\rho\in F$.
By ordering the collection of finite subsets of states in $M_0^\sharp$ by inclusion,  we may then
produce nets of u.c.p.\ maps that show the inclusion $M_0 \subset \B(\mathcal H)$ is $(M_0 \subset M)$-nuclear. 
\end{proof}

\begin{proof}[Proof of Theorem~\ref{thm:wk exact}]
Clearly (\ref{item:nuclear inclusion})$\implies$(\ref{item:cb-nuclear inclusion}), 
	and (\ref{item:cb-nuclear inclusion})$\implies$(\ref{item:wk-exact}) by Proposition~\ref{prop:approx and exact}.	
	
To see (\ref{item:wk-exact})$\implies$(\ref{item:nuclear inclusion}),
	note that by Lemma~\ref{lem:unitization exact} we have $\tilde M_0=M_0+\C 1$ is weakly exact in $M$ 
	and hence Proposition~\ref{prop:wk exact unital} shows the inclusion $\tilde M_0 \subset \B(\mathcal H)$ is $(\tilde M_0 \subset M)$-nuclear.

By Lemma~\ref{lem:approxunifconv} there then exist nets of u.c.p.\ maps $\phi_i: \tilde M_0 \to \mathbb M_{n(i)}(\mathbb C)$ and $\psi_i : \mathbb M_{n(i)}(\mathbb C) \to \B(\mathcal H)$ so that for each $x \in M_0$ there exist $T_i \in \B(\mathcal H)$ and $a_i \in \tilde M_0$ with $\psi_i \circ \phi_i(x) - x = T_i - a_i$ where $\| T_i \| \to 0$, and $\{ a_i \}_i$ is uniformly bounded with $a_i \to 0$ ultrastrongly. 

We fix an approximate unit $\{ e_n \}_n \subset M_0$ and notice that for a fixed $i$ and $x$ we have $\Ad( e_n) \circ \psi_i \circ \phi_i(x) - x$ is asymptotically close in the uniform norm to $e_nT_i e_n - e_n a_i e_n$. Since we have an ultrastrong limit $\lim_{n \to \infty} e_n a_i e_n = a_i$ and since $\Ad( e_n) \circ \psi_i: \mathbb M_{n(i)}(\mathbb C) \to M_0$ are complete contractions, this then shows that $M_0 \subset \B(\mathcal H)$ is $(M_0 \subset M)$-nuclear. 
\end{proof}

In particular, we obtain the following characterization of weakly exact von Neumann algebras.

\begin{cor}\label{cor:characterization weakly exact}
Let $M \subset \B(\mathcal H)$ be a von Neumann algebra.
Then $M$ is weakly exact if and only if the inclusion $M \subset \B(\mathcal H)$ is $M$-nuclear. 
\end{cor}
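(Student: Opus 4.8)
The plan is to obtain this as a direct specialization of Theorem~\ref{thm:wk exact} to the case $M_0 = M$. First I would observe that $M$ is trivially a unital, ultraweakly dense ${\rm C}^*$-subalgebra of itself, so the hypotheses of Theorem~\ref{thm:wk exact} are satisfied with $M_0 = M$ acting on the given $\mathcal H$. Since Theorem~\ref{thm:wk exact} is stated for an arbitrary von Neumann algebra $M \subset \B(\mathcal H)$ with no separability or other side conditions, this specialization is unrestricted.

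Next I would translate each side of the equivalence into the language of the corollary. For condition (\ref{item:wk-exact}), I would invoke the identification recalled at the start of Section~\ref{sec:wkexact}: when $M_0 = M$, the statement that ``$M_0$ is weakly exact in $M$'' coincides with the statement that $M$ is weakly exact in the sense of Kirchberg. Thus (\ref{item:wk-exact}) is precisely weak exactness of $M$. For condition (\ref{item:nuclear inclusion}), I would note that when $M_0 = M$, so that $M_0 = N_0$ and $M = N$, the $(M_0 \subset M)$-topology is by definition just the $M$-topology; consequently $(M_0 \subset M)$-nuclearity of the inclusion $M \subset \B(\mathcal H)$ is exactly $M$-nuclearity of that inclusion. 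The equivalence (\ref{item:wk-exact}) $\iff$ (\ref{item:nuclear inclusion}) of Theorem~\ref{thm:wk exact} then gives the claim.

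I do not expect any substantive obstacle here, as all of the analytic content is already packaged into Theorem~\ref{thm:wk exact}; the corollary is genuinely a matter of unwinding definitions. The only points requiring care are bookkeeping ones: confirming that the identification ``weakly exact in $M$'' $=$ ``weakly exact'' is the unconditional statement recorded in the preliminary discussion of Isono's notion, and confirming that the definition of the $(M_0 \subset M)$-topology reduces to the $M$-topology in the diagonal case $M_0 = M$, which is exactly how the $M$-topology was introduced. With these two identifications in place, the proof is immediate from Theorem~\ref{thm:wk exact}.
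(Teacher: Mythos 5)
Your proposal is correct and is precisely the paper's own route: the corollary is stated immediately after Theorem~\ref{thm:wk exact} as the specialization $M_0 = M$, using the identification (recorded at the start of Section~\ref{sec:wkexact}) that ``weakly exact in $M$'' with $M_0 = M$ means weakly exact in Kirchberg's sense, and that the $(M \subset M)$-topology is the $M$-topology. Your bookkeeping observations, including that unitality of $M_0 = M$ is automatic so Lemma~\ref{lem:unitization exact} is not needed, match the paper's implicit argument exactly.
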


\begin{cor}\label{cor:wknuclear}
Let $M$ be a von Neumann algebra and $M_0 \subset M$ a weakly dense ${\rm C}^*$-subalgebra such that $M_0$ is weakly exact in $M$. Suppose $E$ is a dual $M_0$-system. The inclusion $M_0 \subset E$ is $(M_0 \subset M)$-nuclear if and only if the inclusion $M_0 \subset E$ is weakly nuclear. 
\end{cor}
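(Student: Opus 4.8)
The two notions differ only in the topology in which the approximating net $\psi_i\circ\phi_i$ is required to converge to the inclusion $\iota\colon M_0\to E$: for $(M_0\subset M)$-nuclearity this is the $(M_0\subset M)$-topology, whose continuous functionals are exactly $E^{M_0\sharp M_0}$ by Proposition~\ref{prop:A-continuous and topology}, while for weak nuclearity it is the weak$^*$ topology $\sigma(E,E_*)$ of the dual $M_0$-system $E$. I would first record the easy direction. Realizing $E\subset\B(\mathcal H)$ with $M_0$ acting via a normal extension of a representation of $M$, every $\varphi\in E_*$ is $(M_0\subset M)$-normal: for fixed $x\in E$ the maps $a\mapsto ax$ and $b\mapsto xb$ are ultraweak-to-weak$^*$ continuous (multiplication being separately ultraweakly continuous in $\B(\mathcal H)$ and $E$ being ultraweakly closed), so $a\mapsto\varphi(ax)$ and $b\mapsto\varphi(xb)$ are normal on $M_0$. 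Hence $E_*\subseteq E^{M_0\sharp M_0}$, the $(M_0\subset M)$-topology refines the weak$^*$ topology, and $(M_0\subset M)$-nuclearity immediately implies weak nuclearity. This direction uses nothing about weak exactness.

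For the converse --- the substantive direction --- I would argue that weak exactness upgrades weak$^*$ convergence to convergence in the finer topology. Concretely, it suffices to show that the \emph{given} net witnessing weak nuclearity already converges in the weak $(M_0\subset M)$-topology, i.e.\ that $\omega(\psi_i\circ\phi_i(a))\to\omega(a)$ for every $a\in M_0$ and every state $\omega\in E^{M_0\sharp M_0}$ (the general functional following by polarization). Convergence against the normal states $\omega\in E_*$ is exactly the hypothesis, so the entire content is to handle the singular $(M_0\subset M)$-normal states. Equivalently, by Lemma~\ref{lem:weakly nuclear bidual} applied to $\iota$, I must promote weak nuclearity of $\iota\colon M_0\to E$ (convergence against $E_*$) to weak nuclearity of $i_E\circ\iota\colon M_0\to(E^{M_0\sharp M_0})^*$ (convergence against all of $E^{M_0\sharp M_0}$), the latter being precisely $(M_0\subset M)$-nuclearity.

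To effect this promotion I would run a Kirchberg-type argument using weak exactness of $M_0$ in $M$, in the spirit of the proof of Lemma~\ref{lem:weakly nuclear bidual} and of \cite{Kir95B}. Assemble the data into the $\ell^\infty$-product $B=\prod_i\M_{n(i)}(\C)$ with ideal $J=\bigoplus_i\M_{n(i)}(\C)$, set $\Phi=(\phi_i)_i\colon M_0\to B$, and let $\bar\Psi\colon B/J\to E$ be the c.c.p.\ map obtained as the point-weak$^*$ ultralimit of the $\psi_i$, so that $\iota=\bar\Psi\circ q\circ\Phi$ with $q\colon B\to B/J$ the quotient map. Given a state $\omega\in E^{M_0\sharp M_0}$, write $\omega=\langle\Pi(\cdot)\xi,\xi\rangle$ for an $M_0$-bimodular u.c.p.\ map $\Pi\colon E\to\B(\mathcal L)$ with $M_0\to\B(\mathcal L)$ normal, as furnished by the Stinespring representation underlying Proposition~\ref{prop:A-continuous and topology}. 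From this data one builds a $*$-representation of $M_0\otimes B$ that is normal on $M_0\otimes\C$ and annihilates $M_0\otimes J$; weak exactness of $M_0$ in $M$ (via Theorem~\ref{thm:wk exact}) then forces the induced representation of $M_0\odot B/J$ to be min-continuous, and pairing against $\xi$ yields $\lim_i\omega(\psi_i\circ\phi_i(a))=\omega(a)$, as needed.

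The hard part will be the construction of this representation: the product $B$ carries no $M_0$-action and the maps $\psi_i$ are not $M_0$-bimodular, so one must exploit the bimodularity and normality of $\Pi$ to manufacture, on a dilation of the family $\{\Pi\circ\psi_i\}$, a \emph{commuting normal} action of $M_0$ against which weak exactness can be applied. This is exactly the point at which weak exactness plays the role that local reflexivity plays in the ${\rm C}^*$-setting (compare Proposition~\ref{prop:densenuclear} and Corollary~\ref{cor:characterization weakly exact}); the converse genuinely fails without it, since for arbitrary $M$ the inclusion $M\subset\B(L^2M)$ is always weakly nuclear via the compressions $\Ad(P_{\mathcal H_i})$, yet is $M$-nuclear only when $M$ is weakly exact.
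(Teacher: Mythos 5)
Your easy direction is correct (note that it tacitly assumes the dual system $E$ can be realized so that $M_0$ acts by the restriction of a normal representation of $M$ --- the same convention the paper uses --- since for a general ultraweakly closed realization the inclusion $E_*\subseteq E^{M_0\sharp M_0}$ can fail), and your reduction of the converse via Lemma~\ref{lem:weakly nuclear bidual} is also sound. But in the substantive direction there is a genuine gap, and you name it yourself: the construction of a $*$-representation of $M_0\otimes B$ that is normal on $M_0\otimes\C$ and annihilates $M_0\otimes J$ is the entire content of your argument, and it is left as ``the hard part.'' The obstruction is real, not cosmetic: the natural u.c.p.\ map $B\ni (T_i)_i\mapsto \lim_{i\to\mathcal U}\Pi(\psi_i(T_i))\in\B(\mathcal L)$ is neither multiplicative nor $M_0$-commuting, and a Stinespring dilation of it produces a copy of $B$ on a larger space on which $M_0$ no longer acts at all, let alone normally; recovering a commuting \emph{normal} $M_0$-action on such a dilation is exactly the step that in the ${\rm C}^*$-setting is powered by local reflexivity, and you supply no substitute. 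As written, the proof does not close.

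The irony is that you already hold the key that makes this machinery unnecessary: Theorem~\ref{thm:wk exact} says weak exactness of $M_0$ in $M$ is \emph{equivalent} to $(M_0\subset M)$-nuclearity of the inclusion $M_0\subset\B(L^2M)$, so the ideal/quotient definition never needs to be revisited. The paper's proof instead upgrades the weak nuclearity data to a single bimodular map: extend each $\phi_i$ to a c.c.p.\ map $\tilde\phi_i\colon\B(L^2M)\to\M_{n(i)}(\C)$ by Arveson, and let $\Phi\colon\B(L^2M)\to E$ be a point-weak$^*$ cluster point of $\psi_i\circ\tilde\phi_i$ --- this is precisely where the hypothesis that $E$ is a \emph{dual} system is used. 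Since $\psi_i\circ\phi_i\to\id$ pointwise weak$^*$ on $M_0$, the map $\Phi$ restricts to the identity on $M_0$, hence is $M_0$-bimodular by the multiplicative domain trick, hence contracts the seminorms $s_\omega^\rho$ and is continuous for the $(M_0\subset M)$-topologies. Composing $\Phi$ with the nets witnessing $(M_0\subset M)$-nuclearity of $M_0\subset\B(L^2M)$ immediately gives $(M_0\subset M)$-nuclearity of $M_0\subset E$. If you wish to salvage your route rather than adopt this one, note that $\Phi$ is also exactly the device that transports a singular state $\omega\in E^{M_0\sharp M_0}$ back to $\B(L^2M)$, where weak exactness is already packaged in nuclearity form --- making the representation you were trying to build unnecessary.
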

\begin{proof}
Suppose $M_0 \subset E$ is weakly nuclear, then let $\phi_i: M_0 \to \mathbb M_{n(i)}(\mathbb C)$ and $\psi_i: \mathbb M_{n(i)}(\mathbb C) \to E$ denote the c.c.p.\ maps realizing weak nuclearity for the inclusion. We extend $\phi_i$ to c.c.p.\ maps $\tilde \phi_i: \B(L^2M) \to \mathbb M_{n(i)}(\mathbb C)$ and let $\Phi: \B(L^2M) \to E$ denote a point-ultraweak limit of the maps $\psi_i \circ \tilde \phi_i$. Then $\Phi$ is $M_0$-bimodular, and hence is continuous with respect to the $(M_0 \subset M)$-topologies. Since $M_0$ is weakly exact in $M$, we have that the inclusion $M_0 \subset \B(L^2M)$ is $(M_0 \subset M)$-nuclear, and composing the corresponding maps with $\Phi$ shows then that the inclusion $M_0 \subset E$ is $(M_0 \subset M)$-nuclear. 
\end{proof}

The following result answers Problem 10.4.3 from \cite{BO08}.

\begin{thm}\label{thm:brownozawa}
Let $M$ be a von Neumann algebra, then $M$ is weakly exact if and only if for every normal faithful representation $M \subset \B(\mathcal H)$, and for any intermediate von Neumann algebra $M \subset N \subset \B(\mathcal H)$ such that there exists an $M$-bimodular u.c.p.\ map $\phi: \B(\mathcal H) \to N$, then the inclusion $M \subset N$ is weakly nuclear. 
\end{thm}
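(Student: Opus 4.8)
The plan is to route everything through Corollary~\ref{cor:characterization weakly exact}, which identifies weak exactness of $M$ with $M$-nuclearity of the inclusion $M\subset\B(\mathcal H)$, and to pass between the $M$-topology and the ultraweak topology using the continuity of $M$-bimodular maps from Lemma~\ref{lem:weakcontinuity}. For the forward implication I would assume $M$ weakly exact and fix a faithful normal representation $M\subset\B(\mathcal H)$, an intermediate von Neumann algebra $N$, and an $M$-bimodular u.c.p.\ map $\phi\colon\B(\mathcal H)\to N$. Since $\phi$ is unital and $M$-bimodular it restricts to the identity on $M$, so $\phi|_M$ is normal. Corollary~\ref{cor:characterization weakly exact} supplies c.c.p.\ maps $\phi_i\colon M\to\mathbb M_{n(i)}(\mathbb C)$ and $\psi_i\colon\mathbb M_{n(i)}(\mathbb C)\to\B(\mathcal H)$ with $\psi_i\circ\phi_i\to\mathrm{id}$ in the $M$-topology, hence in the weak $M$-topology, and Lemma~\ref{lem:weakcontinuity} makes $\phi$ continuous from the weak $M$-topology on $\B(\mathcal H)$ to the weak $M$-topology on $N$. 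Thus the c.c.p.\ maps $\phi\circ\psi_i\colon\mathbb M_{n(i)}(\mathbb C)\to N$ satisfy $(\phi\circ\psi_i)\circ\phi_i\to\mathrm{id}$ in the weak $M$-topology on $N$; as every normal functional on $N$ is $M$-normal, this topology refines the ultraweak topology and the convergence persists ultraweakly, so $M\subset N$ is weakly nuclear.

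For the converse the key idea is to feed the hypothesis an intermediate algebra whose predual is exactly the space of $M$-normal functionals, so that the a priori weak conclusion becomes the strong statement needed for $M$-nuclearity. Fixing a faithful normal representation $M\subset\B(\mathcal H)$, I would set $\mathcal N:=(\B(\mathcal H)^{M\sharp M})^*$, the normal bidual of the $M$-system $\B(\mathcal H)$, which by the discussion of normal biduals is the corner $p_{\rm nor}\B(\mathcal H)^{**}p_{\rm nor}$ and contains $M$ as a von Neumann subalgebra, the inclusion $M\hookrightarrow\mathcal N$ being $i_{\B(\mathcal H)}$ restricted to $M$. Choosing a faithful normal representation $\mathcal N\subset\B(\mathcal K)$ gives a faithful normal representation $M\subset\B(\mathcal K)$ with $\mathcal N$ intermediate.

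Granting that $\mathcal N$ is an admissible choice of $N$ (see below), the hypothesis yields that $M\subset\mathcal N$ is weakly nuclear. Since the predual of $\mathcal N$ is $\B(\mathcal H)^{M\sharp M}$, weak nuclearity of the inclusion $M\hookrightarrow\mathcal N=(\B(\mathcal H)^{M\sharp M})^*$ is precisely condition (2) of Lemma~\ref{lem:weakly nuclear bidual} applied to $\theta$ the inclusion $M\subset\B(\mathcal H)$; that lemma then gives that $M\subset\B(\mathcal H)$ is $M$-nuclear, and Corollary~\ref{cor:characterization weakly exact} shows $M$ is weakly exact. To produce the required $M$-bimodular u.c.p.\ map $\phi\colon\B(\mathcal K)\to\mathcal N$ I would start from the central normal-part projection $\Pi\colon\B(\mathcal H)^{**}\to\B(\mathcal H)$, which is $M$-bimodular and yields an $M$-bimodular u.c.p.\ map $\mathcal N\xrightarrow{\Pi}\B(\mathcal H)\xrightarrow{i_{\B(\mathcal H)}}\mathcal N$, and then extend it along $\mathcal N\subset\B(\mathcal K)$ using injectivity of $\B(\mathcal H)$.

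The forward direction is essentially formal. I expect the hard part to be the admissibility step in the converse, namely producing the $M$-bimodular u.c.p.\ map $\phi\colon\B(\mathcal K)\to\mathcal N$: plain injectivity of $\B(\mathcal H)$ gives a u.c.p.\ extension of the normal-part projection, and the genuine difficulty is arranging this extension to be $M$-bimodular rather than merely u.c.p. The conceptual content behind the choice of $\mathcal N$ is that passing to the bidual is exactly what upgrades ultraweak convergence, tested only against normal functionals on $\B(\mathcal H)$, to convergence tested against all $M$-normal functionals, which is what $M$-nuclearity and hence weak exactness require; the naive choice $N=\B(\mathcal H)$ with $\phi=\mathrm{id}$ returns only the weaker ultraweak statement and does not by itself close the argument.
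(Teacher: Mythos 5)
Your proposal is correct and follows essentially the same route as the paper: the forward direction via Corollary~\ref{cor:characterization weakly exact} together with $M$-topology continuity of $M$-bimodular u.c.p.\ maps, and the converse by feeding the hypothesis the normal bidual $(\B(\mathcal H)^{M\sharp M})^*$ as the intermediate algebra and closing with Lemma~\ref{lem:weakly nuclear bidual}. The one step you flag as the ``genuine difficulty'' is in fact automatic: any u.c.p.\ extension of $\Pi|_{\mathcal N}$ to $\B(\mathcal K)$ restricts to the identity on $M$, so $M$ lies in its multiplicative domain and the extension is $M$-bimodular for free --- this is precisely what the paper's brief appeal to injectivity (there of $\B(\mathcal K)$, extending the inverse of the $M$-system embedding) is implicitly using.
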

\begin{proof}
Suppose $M$ is faithfully and normally represented on some Hilbert space $\mathcal K$.  We represent $(\B(\mathcal K))^{\sharp *}$ faithfully and normally on a Hilbert space $\mathcal H$, and notice that since $\B(\mathcal K)$ is injective (as an operator system) and we have an $M$-system embedding of $\B(\mathcal K)$ into $\B(\mathcal K)^{\sharp *}$, there then exists an $M$-bimodular u.c.p.\ map from $\B(\mathcal H)$ into $\B(\mathcal K) \subset \B(\mathcal K)^{\sharp *}$. From Corollary~\ref{cor:characterization weakly exact} and Lemma~\ref{lem:weakly nuclear bidual} we see that if $M$ is not weakly exact, then the inclusion of von Neumann algebras $M \subset \B(\mathcal K)^{\sharp *}$ is not weakly nuclear.

The converse is Exercise 14.1.4 in \cite{BO08}. Or, just notice that when $M$ is weakly exact we have that $M \subset \B(\mathcal H)$ is $M$-nuclear by Corollary~\ref{cor:characterization weakly exact}, and since $M$-bimodular u.c.p.\ maps are continuous in the $M$-topology, if we are given an $M$-bimodular u.c.p.\ map $\phi: \B(\mathcal H) \to N$, it follows that the inclusion $M \subset N$ is $M$-nuclear and hence is also weakly nuclear.
\end{proof}

\subsection{Weak exactness and free products}\label{sec:freeproductwkexact}

\begin{lem}\label{lem:statepreserve}
Let $M \subset \B(\mathcal H)$ be a von Neumann algebra and let $M_0 \subset M$ be an ultraweakly dense ${\rm C}^*$-subalgebra.  Suppose $E \subset \B(\mathcal H)$ is a normal $M_0$-system such that $\K(\mathcal H) \subset E$. If $M_0 \subset E$ is $(M_0 \subset M)$-nuclear, then for each vector state $\varphi(\cdot) = \langle \cdot \xi, \xi \rangle$ there exist nets of u.c.p.\ maps $\phi_i: M_0 \to \mathbb M_{n(i)}(\mathbb C)$ and $\psi_i: \mathbb M_{n(i)}(\mathbb C) \to E$, and pure states $\mu_i$ on $\mathbb M_{n(i)}(\mathbb C)$ such that $\mu_i \circ \phi_i = \varphi$ and $\varphi \circ \psi_i = \mu_i$, and for each $x \in M_0$ we have $\psi_i \circ \phi_i(x) - x = T_i + a_i$, where $\| T_i \| \to 0$, and $a_i \in M_0$ is uniformly bounded with $\varphi(a_i^*a_i) \to 0$.
\end{lem}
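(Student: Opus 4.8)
The plan is to build the state-preserving maps in two stages on top of ordinary nuclear approximants, and to read off the decomposition $T_i+a_i$ only at the very end. Throughout, write $P_\xi \in \K(\mathcal H) \subset E$ for the rank-one projection onto $\C\xi$, so that $\varphi = \langle\,\cdot\,\xi,\xi\rangle$; the hypothesis $\K(\mathcal H)\subset E$ enters precisely because corrections of the form $(\text{scalar})\cdot P_\xi$ then lie in $E$ and are compact.

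First I would fix, once and for all, nuclear approximants together with the decomposition. Since $M_0 \subset E$ is $(M_0 \subset M)$-nuclear and the inclusion is $M_0$-bimodular and u.c.p., Lemma~\ref{lem:approxunifconv} provides u.c.p.\ maps $\phi_i \colon M_0 \to \M_{n(i)}(\C)$ and $\psi_i \colon \M_{n(i)}(\C) \to E$ with $\psi_i\circ\phi_i \to \id$ in the $(M_0 \subset M)$-topology and, for each $x \in M_0$, a decomposition $\psi_i\circ\phi_i(x) - x = T_i - a_i$ with $\|T_i\| \to 0$, $a_i \in M_0$ uniformly bounded and $a_i \to 0$ ultrastrongly; in particular $\varphi(a_i^*a_i) = \|a_i\xi\|^2 \to 0$, which is exactly the control the statement demands. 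It therefore suffices to modify $\phi_i$ and $\psi_i$ so as to intertwine $\varphi$ with a pure state $\mu_i$ \emph{exactly}, while changing the composition $\psi_i\circ\phi_i$ only by operators negligible in norm, so that the decomposition above is preserved (up to relabelling $-a_i$ as $a_i$).

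Next I would make the push-forward pure and exact by a dilation. Choose a minimal Stinespring dilation $\psi_i(b) = W_i^*(b \otimes 1_{\mathcal L_i}) W_i$ with $W_i \colon \mathcal H \to \C^{n(i)} \otimes \mathcal L_i$ an isometry and $\mathcal L_i$ finite-dimensional (possible since the domain is a matrix algebra), and set $\Theta_i := W_i\xi$. Passing to the larger matrix algebra $\B(\C^{n(i)}\otimes\mathcal L_i) \cong \M_{N(i)}(\C)$, replace $\psi_i$ by the u.c.p.\ map $\hat\psi_i(Y) := W_i^* Y W_i$ and $\phi_i$ by $\hat\phi_i(x) := \phi_i(x)\otimes 1$. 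This leaves the composition unchanged, $\hat\psi_i\circ\hat\phi_i = \psi_i\circ\phi_i$, while $\mu_i := \varphi\circ\hat\psi_i = \langle\,\cdot\,\Theta_i,\Theta_i\rangle$ is now the vector state at $\Theta_i$, hence pure on $\M_{N(i)}(\C)$, and $\varphi\circ\hat\psi_i = \mu_i$ holds identically. Note also $W_i^*\,|\Theta_i\rangle\langle\Theta_i|\,W_i = P_\xi$, which couples the intermediate vector $\Theta_i$ to $\xi$.

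Finally, and this is where the real work lies, I would enforce the remaining condition $\mu_i\circ\hat\phi_i = \varphi$ exactly. At present $\mu_i\circ\hat\phi_i = \varphi\circ\psi_i\circ\phi_i$ differs from $\varphi$ by the Hermitian functional $\delta_i := \varphi - \varphi\circ\psi_i\circ\phi_i$, which tends to $0$ pointwise since $\varphi \in E^{M_0\sharp M_0}$ is $(M_0\subset M)$-continuous. Working in the splitting $\C^{n(i)}\otimes\mathcal L_i = \C\Theta_i \oplus \Theta_i^\perp$, I would replace $\hat\phi_i$ by a u.c.p.\ map whose $(\Theta_i,\Theta_i)$-matrix element is forced to be $\varphi(x)$, whose complementary block stays close to the compression of $\phi_i(x)\otimes 1$, and whose off-diagonal is dictated by operator-matrix positivity. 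The point is that for $x\geq 0$ one has $\|(1-P_\xi)x\xi\| \leq \|x\|^{1/2}\varphi(x)^{1/2}$, so the size of the off-diagonal correction needed to restore the $\xi$-action is governed by $\varphi(\cdot)^{1/2}$ in exactly the way the Schwarz constraints for complete positivity allow; this matching of scales is what makes an honestly completely positive correction possible, and after composing with $\hat\psi_i$ the correction is carried (via $W_i^*\Theta_i=\xi$) into a compact operator supported near $\xi$ that vanishes in the $(M_0\subset M)$-topology, so the decomposition from the first step survives. I expect the principal obstacle to be precisely the simultaneous verification of complete positivity of this localized perturbation and the exact intertwining $\mu_i\circ\hat\phi_i = \varphi$; the complete-positivity adjustments would be carried out in the spirit of Lemma~\ref{lem:perturb} and \cite[Corollary B.9]{BO08}, with $\K(\mathcal H)\subset E$ absorbing the vanishing discrepancy $\delta_i$.
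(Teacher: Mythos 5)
Your proposal has two genuine gaps, one structural and one at the heart of the matter. Structurally, the dilation step fails: for a u.c.p.\ map $\psi_i\colon \M_{n(i)}(\C)\to\B(\mathcal H)$ with $\mathcal H$ infinite-dimensional, the minimal Stinespring space contains the isometric image $W_i\mathcal H$, so $\C^{n(i)}\otimes\mathcal L_i$ is infinite-dimensional and $\mathcal L_i$ cannot be finite-dimensional, contrary to your parenthetical claim. Consequently $\hat\phi_i$ does not map into a matrix algebra and $\mu_i$ is a vector state on the bounded operators of an infinite-dimensional space, which violates the conclusion of the lemma; compressing to a finite-dimensional subspace to repair this destroys the exact identities $\hat\psi_i\circ\hat\phi_i=\psi_i\circ\phi_i$ and $\varphi\circ\hat\psi_i=\mu_i$ that were the whole point of dilating. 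In addition, $\hat\psi_i(Y)=W_i^*YW_i$ is only guaranteed to lie in $E$ when $Y=b\otimes 1$; for general $Y$ it is an essentially arbitrary operator in $\B(\mathcal H)$, so your modified maps no longer take values in $E$ (the paper is careful about exactly this point, noting that its perturbations remain in $E_1$ because $E_1$ is an operator $\K(\mathcal H_1)$-system).

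Second, your Stage 3 --- simultaneously enforcing $\mu_i\circ\hat\phi_i=\varphi$ while verifying complete positivity and preserving the decomposition $T_i+a_i$ --- is precisely where the content of the lemma lies, and you explicitly leave it unverified. Note also that a correction which vanishes only in the $(M_0\subset M)$-topology cannot be absorbed into the decomposition: it is neither norm-small (so not part of $T_i$) nor an element of $M_0$ (so not part of $a_i$), so your claim that ``the decomposition from the first step survives'' is unjustified as stated. The paper resolves both intertwinings by a different mechanism. It replaces $\mathcal H$ by $\mathcal H_1 = \mathcal H\ovt\ell^2\N$ and $E$ by $E_1 = E\otimes\C+\K(\mathcal H_1)$, so that $M_0\cap\K(\mathcal H_1)=\{0\}$, and then upgrades $(M_0\subset M)$-nuclearity from $M_0$ to $M_0+\K(\mathcal H_1)$ via the splitting of the normal bidual $(E_1^{M_0\sharp M_0})^*$ along the support projection of $\K(\mathcal H_1)$ --- this is the missing ingredient that yields convergence of $\psi_i\circ\phi_i$ on the rank-one projection $P_{\xi_1}$, which no part of your argument supplies. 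It then takes $\phi_i=\Ad(P_{\mathcal K_i})$ with $\xi_1\in\mathcal K_i$, so that $\mu_i\circ\phi_i=\varphi$ holds automatically for the vector state $\mu_i$ at $\xi_1$ (this is the correct, positivity-free realization of your block-matrix idea: the $(\xi,\xi)$-entry of a compression is $\varphi(x)$ by construction), and finally perturbs $\psi_i$ in norm so that $\psi_i(P_{\xi_1})=P_{\xi_1}$ exactly; since $P_{\xi_1}$ then lies in the multiplicative domain, $\varphi\circ\psi_i(T)=\psi_i(P_{\xi_1}TP_{\xi_1})=\mu_i(T)$, and compressing by ${\rm id}\otimes\Ad(P_{\delta_1})$ returns to $E$. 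Your outline solves each intertwining separately by mutually incompatible modifications and never achieves both at once.
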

\begin{proof}
Let $\mathcal H_1 = \mathcal H \ovt \ell^2 \mathbb N$ and let $E_1 = E \otimes \mathbb C + \K(\mathcal H_1) \subset \B(\mathcal H_1)$. Then, the inclusion map $M_0 = M_0 \otimes \mathbb C \subset E_1$ is also $(M_0 \subset M)$-nuclear and we also have $M_0 \cap \K(\mathcal H_1) = \{ 0 \}$.

Note that $E_1$ is a $\K(\mathcal H_1)$-system, and we have $(E_1^{M_0 \sharp M_0})^* = (\K(\mathcal H_1)^{M_0 \sharp M_0} )^* \oplus q_{\K(\mathcal H_1)}^\perp (E_1^{M_0 \sharp M_0})^*$ where $q_{\K(\mathcal H_1)} \in E_1^{**}$ is the support projection for $\K(\mathcal H_1)$. Since $M_0 \subset E_1$ is $(M_0 \subset M)$-nuclear, we have that the inclusion map from $M_0$ into $(E_1^{M_0 \sharp M_0})^*$ is weakly nuclear. Since $(\K(\mathcal H_1)^{M_0 \sharp M_0} )^* = \K(\mathcal H_1)^{**} \cong \B(\mathcal H_1)$ and since $M_0 \cap \K(\mathcal H_1) = \{ 0 \}$, it then follows that the inclusion map from $M_0 + \K(\mathcal H_1)$ into $(E_1^{M_0 \sharp M_0})^*$ is also weakly nuclear, and hence the inclusion $M_0 + \K(\mathcal H_1) \subset E_1$ is $(M_0 \subset M)$-nuclear. 

By Lemmas~\ref{lem:compressionapprox} and \ref{lem:approxunifconv}, there exist finite-dimensional subspaces $\mathcal K_i \subset \mathcal H_1$, and u.c.p.\ maps $\psi_i: \B(\mathcal K_i) \to E_1$ so that setting $\phi_i(x) = P_{\mathcal K_i} x P_{\mathcal K_i}$ we have that $\psi_i \circ \phi_i$ converges to the identity in the point-$(M_0 \subset M)$-topology on $M_0 + \K(\mathcal H_1)$, and such that for each $x \in M_0$ we have $\psi_i \circ \phi_i(x) - x = T_i + a_i$, where $\| T_i \| \to 0$, and $a_i \in M_0$ is uniformly bounded with $\varphi(a_i^*a_i) \to 0$. If we fix $\xi \in \mathcal H$, then we may assume that $\xi_1 := \xi \otimes \delta_1 \in \mathcal K_i$ and we define a pure state $\mu_i$ on $\B(\mathcal K_i)$ to be the vector state associated to $\xi_1$. Note that we have $\mu_i \circ \phi_i = \varphi$. 

We let $P_{\xi_1}$ denote the rank-one projection onto $\mathbb C \xi_1$, and note that we have $\psi_i(P_{\xi_1}) - P_{\xi_1} \to 0$ in the $(M_0 \subset M)$-topology, and hence as in the proof of Lemma~\ref{lem:compressnorm} there exists a net of positive contractions $z_i \in M_0$ so that $\| z_i (\psi_i(P_{\xi_1}) - P_{\xi_1} ) z_i \| \to 0$ and $z_i \to 1$ ultrastrongly. Since $z_i \to 1$ ultrastrongly, we have that $\| z_i P_{\xi_1} z_i - P_{\xi_1} \| \to 0$, and hence if we fix a state $\eta \in (E_1)^\sharp$ such that $\eta$ vanishes on $\K(\mathcal H_1)$ and define $\psi_i'(T) = z_i \psi_i( T ) z_i + (1 - z_i^2) \eta(T)$, then we have $\| \psi_i'(P_{\xi_1}) - P_{\xi_1} \| \to 0$, and $\psi_i' \circ \phi_i$ still satisfies the pointwise convergence described above. Since $P_{\xi_1}$ is a rank-one projection, a standard additional perturbation in norm gives u.c.p.\ maps $\psi_i'': \mathbb M_{n(i)}(\mathbb C) \to E_1$ so that $\psi_i''(P_{\xi_1}) = P_{\xi_1}$. (Note that $E_1$ is an operator $\K(\mathcal H_1)$-system, so such a perturbation still maps into $E_1$). 

Since $P_{\xi_1}$ is then in the multiplicative domain of $\psi_i''$, we have that $\varphi \circ \psi_i''(T) =  P_{\xi_1} \psi_i''(T) P_{\xi_1} = \psi_i''(P_{\xi_1} T P_{\xi_1}) = \mu_i(T)$ for all $T \in \mathbb M_{n(i)}(\mathbb C)$. We may then compose $\psi_i''$ with the compression map given by ${\rm id} \otimes {\rm Ad } (P_{\delta_1} ): E_1 \to E$ (which is $M$-bimodular) to finish the proof. 
\end{proof}

In order to set notation, we briefly recall the free product construction. We refer the reader to \cite{VoDyNi92} for a more detailed discussion. If $(\mathcal H_i, \xi_i)$ are Hilbert spaces with distinguished unit vectors, for $i = 1, 2$, then we set $\mathcal H_i^o = \mathcal H_i \ominus \mathbb C \xi_i$. The Hilbert space free product is $(\mathcal H, \xi)$ given by
\[
\mathcal H = \mathbb C \xi \oplus \bigoplus_{n \geq 1} \left( \bigoplus_{\iota_1 \not= \iota_2 \not= \cdots \not= \iota_n} \mathcal H_{\iota_1}^o \ovt \cdots \ovt \mathcal H_{\iota_n}^o \right). 
\]
We also consider
\[
\mathcal H(i) =  \mathbb C \xi \oplus \bigoplus_{n \geq 1} \left( \bigoplus_{\iota_1 \not= \iota_2 \not= \cdots \not= \iota_n  \atop \iota_1 \not= i} \mathcal H_{\iota_1}^o \ovt \cdots \ovt \mathcal H_{\iota_n}^o \right), 
\]
and note that we have a canonical isomorphism $\mathcal H \cong \mathcal H_i \ovt \mathcal H(i)$. Via this isomorphism we obtain a representation $\lambda: \B(\mathcal H_i) \to \B(\mathcal H)$. 
We may similarly consider the Hilbert space
\[
\mathcal H(r, i) =  \mathbb C \xi \oplus \bigoplus_{n \geq 1} \left( \bigoplus_{\iota_1 \not= \iota_2 \not= \cdots \not= \iota_n  \atop \iota_n \not= i} \mathcal H_{\iota_1}^o \ovt \cdots \ovt \mathcal H_{\iota_n}^o \right), 
\]
and note that we again have a canonical isomorphism $\mathcal H \cong  \mathcal H(r, i) \ovt \mathcal H_i$, so that we obtain another representation $\rho: \B(\mathcal H_i) \to \B(\mathcal H)$. 

We let $\omega_i$ denote the vector state on $\B(\mathcal H_i)$ given by $\xi_i$. If $A_i \subset \B(\mathcal H_i)$ are ${\rm C}^*$-algebras, then the reduced free product ${\rm C}^*$-algebra $(A_1, \omega_1) *_r (A_2, \omega_2)$ is the ${\rm C}^*$-subalgebra of $\B(\mathcal H)$ generated by $\lambda_1(A_1)$ and $\lambda_2(A_2)$. If $M_i \subset \B(\mathcal H_i)$ are von Neumann algebras, then the free product von Neumann algebra $(M_1, \omega_1) * (M_2, \omega_2)$ is the von Neumann algebra generate by $\lambda(M_1)$ and $\lambda(M_2)$. The representations $\lambda_i$ give rise to a representation of the unital algebraic free product $\B(\mathcal H_1) *_{\mathbb C} \B(\mathcal H_1)$ and if $E_i \subset \B(\mathcal H_i)$ are operator systems, then we denote by $(E_1, \omega_1) *_r (E_2, \omega_2)$ the operator subsystem of $(\B(\mathcal H_1), \omega_1) *_r (\B(\mathcal H_2), \omega_2)$ generated as the closed span of applying the representation $\lambda$ to all reduced words that appear in $E_1$ and $E_2$. 

If $A_i, B_i \subset \B(\mathcal H_i)$ are ${\rm C}^*$-subalgebras, and we have u.c.p.\ maps $\phi_i: A_i \to B_i$ that preserve the state $\omega_i$, then there is a unique u.c.p.\ map $(\phi_1 * \phi_2): A_1 *_r A_2 \to B_1 *_r B_2$ such that $(\phi_1 * \phi_2)(a_1 b_1 a_2 \cdots a_n b_n) = \phi_1(a_1) \phi_2(b_1) \cdots \phi_1(a_n) \phi_2(b_n)$, whenever $a_i \in A_1$ with $\omega_1(a_i) = 0$ for $i > 1$, and $b_i \in A_2$ with $\omega_2(b_i) = 0$ for $i < n$. 

The following is adapted from \cite[Lemma 2.4]{Oza06}.

\begin{thm}\label{thm:freeproductnuclear}
For each $i = 1, 2$, let $M_i \subset \B(\mathcal H_i)$ be a von Neumann algebra with cyclic vector $\xi_i \in \mathcal H_i$, and suppose $A_i \subset M_i$ is an ultraweakly dense ${\rm C}^*$-algebra. Suppose $E_i \subset \B(\mathcal H_i)$ is an operator $A_i$-system such that $\K(\mathcal H_i) \subset E_i$. 
If for each $i = 1, 2$, the inclusion $A_i \subset E_i$ is $(A_i \subset M_i)$-nuclear, then the inclusion $(A_1, \omega_1) *_r (A_2, \omega_2) \subset (E_1, \omega_1) *_r (E_2, \omega_2)$ is $( (A_1, \omega_1) *_r (A_2, \omega_2)  \subset (M_1, \omega_1) * (M_2, \omega_2))$-nuclear.
\end{thm}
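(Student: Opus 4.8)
The plan is to construct the approximating maps as free products of the matricial factorizations produced by Lemma~\ref{lem:statepreserve}, and to establish convergence by a telescoping argument on reduced words; the genuinely delicate point will be arranging that these factorizations pass through honest matrix algebras.

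First, for each $i=1,2$ I would apply Lemma~\ref{lem:statepreserve} to the $(A_i\subset M_i)$-nuclear inclusion $A_i\subset E_i$ and the vector state $\omega_i$, obtaining nets of u.c.p.\ maps $\phi^i_n\colon A_i\to\M_{k_i(n)}(\C)$ and $\psi^i_n\colon\M_{k_i(n)}(\C)\to E_i$, together with pure states $\mu^i_n$ on $\M_{k_i(n)}(\C)$, such that $\mu^i_n\circ\phi^i_n=\omega_i$, $\omega_i\circ\psi^i_n=\mu^i_n$, and such that for each $x\in A_i$ one has $\psi^i_n\circ\phi^i_n(x)-x=T^i_n(x)+a^i_n(x)$ with $\|T^i_n(x)\|\to0$ and with $a^i_n(x)\in A_i$ uniformly bounded and $\omega_i(a^i_n(x)^*a^i_n(x))\to0$. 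Because these maps preserve the relevant states, the free product of state-preserving u.c.p.\ maps recalled above yields $\Phi_n=\phi^1_n*\phi^2_n\colon A\to B_n$ and $\Psi_n=\psi^1_n*\psi^2_n\colon B_n\to E$, where $A=(A_1,\omega_1)*_r(A_2,\omega_2)$, $E=(E_1,\omega_1)*_r(E_2,\omega_2)$, and $B_n=(\M_{k_1(n)}(\C),\mu^1_n)*_r(\M_{k_2(n)}(\C),\mu^2_n)$. Setting $\theta^i_n=\psi^i_n\circ\phi^i_n$, the composition $\Theta_n=\Psi_n\circ\Phi_n=\theta^1_n*\theta^2_n$ is multiplicative on reduced words.

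Next I would show $\Theta_n\to\id$ in the point-$(A\subset M)$-topology; since reduced words span a dense subspace and all maps are contractive, it suffices to treat a reduced word $w=y_1\cdots y_m$ with $y_k\in A_{\iota_k}$, $\omega_{\iota_k}(y_k)=0$, and $\iota_1\neq\cdots\neq\iota_m$. Telescoping from the left gives
\[
\Theta_n(w)-w=\sum_{j=1}^m U_j\,\bigl(\theta^{\iota_j}_n(y_j)-y_j\bigr)\,R_j,
\]
where $U_j=\theta^{\iota_1}_n(y_1)\cdots\theta^{\iota_{j-1}}_n(y_{j-1})\in E$ is a reduced word of uniformly bounded norm and $R_j=y_{j+1}\cdots y_m\in A$ is fixed. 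Substituting $\theta^{\iota_j}_n(y_j)-y_j=T_{n,j}+a_{n,j}$, the contributions of the $T_{n,j}$ tend to $0$ in norm, hence in every seminorm $s^\rho_\omega$. For the terms $U_j a_{n,j}R_j$ the key observation is that $a_{n,j}$ is uniformly bounded and $\omega_{\iota_j}$-small in $L^2$; as $\omega_{\iota_j}$ is faithful, this forces $a_{n,j}\to0$ in the strong operator topology, so for any normal state $\omega$ on $M$ we obtain $\omega(R_j^*a_{n,j}^*a_{n,j}R_j)=\|a_{n,j}R_j\xi_\omega\|^2\to0$, the vector $R_j\xi_\omega$ being fixed. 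Loading the $A$-valued factor $a_{n,j}R_j$ onto the right leg of the seminorm, i.e.\ writing $U_j a_{n,j}R_j=1\cdot U_j\cdot(a_{n,j}R_j)$, then yields $s^\rho_\omega(U_j a_{n,j}R_j)\le\|U_j\|\,\omega(R_j^*a_{n,j}^*a_{n,j}R_j)^{1/2}\to0$. The point to get right here is to telescope on the side that keeps the not-yet-processed factor inside $A$, so that the $L^2$-smallness of the defect can be placed on a genuine $A$-leg.

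The remaining, and I expect principal, obstacle is that $B_n$, being a reduced free product of matrix algebras with respect to pure states, is exact but neither finite-dimensional nor nuclear, whereas $(A\subset M)$-nuclearity demands factorizations through genuine matrix algebras $\M_N(\C)$ landing in $E$. To handle this I would pass, via Lemma~\ref{lem:weakly nuclear bidual}, to the equivalent requirement that $i_E\circ\Theta_n\colon A\to(E^{A\sharp A})^*$ be weakly nuclear, and exploit exactness of $B_n$: by Theorem~\ref{thm:wk exact} together with Example~\ref{examp:exandwkex}, the inclusion of $B_n$ into the bounded operators on its GNS space is nuclear, so $\iota_{B_n}\circ\Phi_n$ admits matrix factorizations converging in norm. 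The difficulty is to keep the second legs of these factorizations valued in $E$ rather than in $\B(\mathcal H)$; here the hypothesis $\K(\mathcal H)\subset E$ enters, since it makes $\B(\mathcal H)$ appear as an injective summand of the normal bidual $(E^{A\sharp A})^*$, into which the relevant compression of $\bar\Psi_n$ can be Arveson-extended. Combining the resulting matrix factorizations of each $\Theta_n$ with the convergence $\Theta_n\to\id$ of the previous paragraph through a diagonal reindexing should then produce the required $(A\subset M)$-nuclear net. Making the bidual bookkeeping in this last step precise — so that the approximants genuinely land in $E$ — is where I expect the main work to lie.
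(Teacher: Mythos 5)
Your construction and your telescoping argument are exactly the paper's: same appeal to Lemma~\ref{lem:statepreserve}, same free products $\phi^1_n * \phi^2_n$ and $\psi^1_n * \psi^2_n$, same left-to-right telescoping with the norm-small part $T_{n,j}$ absorbed trivially and the $A$-valued defect $a_{n,j}$ loaded onto the right leg of $s^\rho_\omega$. But your self-identified ``principal obstacle'' rests on a false premise, and the workaround you sketch for it has a genuine gap. The reduced free product $B_n=(\M_{k_1(n)}(\C),\mu^1_n)*_r(\M_{k_2(n)}(\C),\mu^2_n)$ \emph{is} nuclear, because the states $\mu^i_n$ are \emph{pure}: this is precisely the result the paper cites (\cite{Oza01}), and it is the entire reason Lemma~\ref{lem:statepreserve} goes to the trouble of producing pure states --- a feature you carried through your argument but never used. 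With nuclearity of $B_n$ in hand, one inserts point-norm matrix factorizations of $\id_{B_n}$ between $\Phi_n$ and $\Psi_n$ and the proof ends; no bidual bookkeeping is needed. Your substitute route does not close: exactness of $B_n$ gives matrix factorizations of the inclusion $B_n\subset\B(\H_{B_n})$, but to compose these with $\Psi_n$ you must extend $\Psi_n$ beyond $B_n$, and the only injective piece of $(E^{A\sharp A})^*$ you can Arveson-extend into is the corner $q\,(E^{A\sharp A})^*q\cong\B(\H)$ coming from $\K(\H)\subset E$. The complementary compression $q^\perp\, i_E\circ\Psi_n(\cdot)\,q^\perp$ lands in an operator system with no injectivity available, you offer no way to approximate it through matrices, and you cannot discard it since $q^\perp i_E(x)q^\perp$ is nonzero in general. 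So as written the last step is a gap, not merely an unverified detail.

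A secondary flaw: to show $\omega(R_j^*a_{n,j}^*a_{n,j}R_j)\to 0$ for \emph{arbitrary} normal states $\omega$ on $M$, you argue that $\omega_{\iota_j}$ is faithful, forcing $a_{n,j}\to 0$ strongly. Faithfulness is not a hypothesis: $\xi_i$ is only assumed cyclic for $M_i$, which makes $\omega_i$ faithful on $M_i'$, not on $M_i$, so ``$\omega_{\iota_j}(a^*a)\to 0$ plus boundedness'' does not by itself give strong convergence. The correct substitute is already in the paper's toolkit: Lemma~\ref{lem:approxunifconv} (invoked inside the proof of Lemma~\ref{lem:statepreserve}) yields $a_{n,j}\to 0$ \emph{ultrastrongly} in $M_{\iota_j}$, hence, since the free-product embedding $\lambda$ of $M_{\iota_j}$ into $M$ is normal, $a_{n,j}R_j\to 0$ ultrastrongly in $M$, which gives $\omega((a_{n,j}R_j)^*(a_{n,j}R_j))\to 0$ for every normal $\omega$ with no faithfulness assumption. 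With that repair your convergence argument is sound and matches the paper's; the substantive missing idea remains the nuclearity of free products of matrix algebras with respect to pure states.
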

\begin{proof}
By Lemma~\ref{lem:statepreserve} for each $i=1,2$, there exist two nets of state preserving u.c.p.\ maps $\phi_k^i: (M_i, \omega_i)\to (\M_{n(k, i)}(\C), \mu_k^i)$,
	and $\psi_k^i: (\M_{n(k, i)}(\C), \mu_k^i)\to E_i$
	such that if we set $\theta_k^i = \psi_k^i\circ \phi_k^i$, then for any $x \in A_i$, $\theta_k^i(a)-a$ is of the form 
\begin{equation}\label{eq:convergenceform}
\theta_k^i(a)-a = T^i_k + a_k
\end{equation} 
with $a_k\in A_i$ satisfying $\omega_i(a_k^* a_k) \to 0$, and such that $T^i_k\in E_i$ is converging to $0$ in norm,
	where $\mu_k^i$ is a pure state on $\M_{n(k, i)}(\C)$.

We consider 
$$\phi_{(j, k)}: \phi_j^1\ast \phi_k^2: (A_1, \omega_1)\ast_r (A_2, \omega_2)\to (\M_{n(j, 1)}(\C), \mu_j^1)\ast_r (\M_{n(k, 2)}(\C), \mu_k^2)$$
and 
$$\psi_{(j, k)}:\psi_j^1\ast\psi_k^2: (\M_{n(k, 1)}(\C), \mu_j^1)\ast_r (\M_{n(k, 2)}(\C), \mu_k^2)\to (E_1, \omega_1)\ast_r (E_2,  \omega_2).$$
Notice that for $x=a_1 b_1 a_2 b_2\cdots a_n b_n\in A_1\ast_r A_2$, with $a_i\in A_1$ with $\omega_1(a_i) = 0$ for $i > 1$, and $b_i\in A_2$ with $\omega_2(b_i) = 0$ for $i < m$,
	we have $\psi_{(j, k)}\circ \phi_{(j, k)}(x)\to x$ in the point-weak-$(M_0 \subset M)$-topology, where $M_0=M_1\ast_r M_2$ and $M=M_1\ast M_2$.
Indeed, for any $\varphi\in (E_1 *_r E_2)^{_{M_0}\sharp _{M_0}}$, 
it follows from (\ref{eq:convergenceform}) that for each $1 \leq m \leq n$ we have
\[
|\varphi(\theta_j^1(a_1)\theta_k^2(b_1)\cdots \theta_j^1(a_m)(\theta^2_k(b_m)-b_m )a_{m+1}\cdots a_n b_n)| \to 0,
\]
and similarly we also have 
\[
|\varphi(\theta_j^1(a_1)\theta_k^2(b_1)\cdots \theta^2_{k}(b_{m-1}) ( \theta_j^1(a_m) - a_m) b_m\cdots a_n b_n)| \to 0.
\]

Taking closed spans then shows that $\theta_{(j, k)}$ converges pointwise in the weak-$(M_0 \subset M)$-topology. Since $(\M_{n(j, 1)}(\C), \mu_j^1)\ast_r (\M_{n(k, 2)}(\C),\mu_k^2)$ is nuclear \cite{Oza01}, the result then follows. 
\end{proof}

We note that the assumption that each $E_i$ contains the compact operators is necessary even in the case when each $M_i$ is finite-dimensional, since otherwise by taking $E_i = M_i$ the previous result would falsely claim that reduced free products of finite-dimensional ${\rm C}^*$-algebras are nuclear. 

We recall that a state on a ${\rm C}^*$-algebra is said to be nondegenerate if the corresponding GNS-representation is faithful . Some special cases of the following corollary were previously obtained by Isono \cite{Iso13}.

\begin{cor}\label{cor:free product weak exact}
For $i = 1, 2$, let $M_i$ be von Neumann algebras with nondegenerate normal states $\omega_i$. 
If each $M_i$ is weakly exact, 
then $M=(M_1, \omega_1) * (M_2, \omega_2)$ is weakly exact. 
\end{cor}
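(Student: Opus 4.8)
The plan is to chain together the free product nuclearity statement Theorem~\ref{thm:freeproductnuclear} with the intrinsic characterization of weak exactness in Corollary~\ref{cor:characterization weakly exact}. First I would represent each $M_i$ on $\mathcal H_i = L^2(M_i, \omega_i)$ via the GNS construction associated to $\omega_i$, with canonical cyclic vector $\xi_i$. Since $\omega_i$ is nondegenerate, this representation is faithful and normal, so $M_i \subset \B(\mathcal H_i)$ is a faithful normal representation to which Corollary~\ref{cor:characterization weakly exact} applies; weak exactness of $M_i$ then gives that the inclusion $M_i \subset \B(\mathcal H_i)$ is $M_i$-nuclear. I would then take $A_i = M_i$ (trivially ultraweakly dense in $M_i$) and $E_i = \B(\mathcal H_i)$, noting that $\K(\mathcal H_i) \subset \B(\mathcal H_i)$, so that all hypotheses of Theorem~\ref{thm:freeproductnuclear} are met and the inclusion $A_i \subset E_i$ is $(A_i \subset M_i)$-nuclear.

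Writing $M_0 = (M_1, \omega_1) *_r (M_2, \omega_2)$ and $M = (M_1,\omega_1)*(M_2,\omega_2)$, Theorem~\ref{thm:freeproductnuclear} yields that the inclusion $M_0 \subset (\B(\mathcal H_1), \omega_1) *_r (\B(\mathcal H_2), \omega_2)$ is $(M_0 \subset M)$-nuclear. Here $M_0$ is an ultraweakly dense ${\rm C}^*$-subalgebra of $M$, and the reduced free product operator system $(\B(\mathcal H_1), \omega_1) *_r (\B(\mathcal H_2), \omega_2)$ is an $M_0$-subsystem of $\B(\mathcal H)$, where $\mathcal H$ is the Hilbert space free product. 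Composing the implementing maps $\psi_i \circ \phi_i$ with the canonical inclusion of this subsystem into $\B(\mathcal H)$, which is $M_0$-bimodular and hence contractive for the seminorms $s_\omega^\rho$ (and the two $(M_0 \subset M)$-topologies agree on the subsystem by the restriction lemma of Section~\ref{sec:Mtopology}), shows that the inclusion $M_0 \subset \B(\mathcal H)$ is itself $(M_0 \subset M)$-nuclear.

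Since $\B(\mathcal H)$ is a normal $M$-system and $M_0 \subset M$ is ultraweakly dense, Corollary~\ref{cor:densenuclear} upgrades this to $M$-nuclearity of the inclusion $M \subset \B(\mathcal H)$. Finally, the free product von Neumann algebra $M$ is by construction faithfully and normally represented on $\mathcal H$, since each $\lambda(M_i)$ is a faithful normal copy of $M_i$ and $M$ is the von Neumann algebra they generate. Hence Corollary~\ref{cor:characterization weakly exact} applies in this representation and we conclude that $M$ is weakly exact.

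I expect the only genuinely delicate point to be justifying the passage from $(M_0 \subset M)$-nuclearity of $M_0$ into the free product operator system to $(M_0 \subset M)$-nuclearity of $M_0$ into all of $\B(\mathcal H)$, namely confirming that the inclusion is $M_0$-bimodular and that the ambient and intrinsic $(M_0 \subset M)$-topologies coincide on the subsystem, together with verifying that the free product representation on $\mathcal H$ is faithful and normal so that Corollary~\ref{cor:characterization weakly exact} may be invoked at both ends. Everything else is a direct composition of the cited results.
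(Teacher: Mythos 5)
Your proposal is correct and follows essentially the same route as the paper, whose one-line proof sets $E_i = \B(L^2(M_i,\omega_i))$ and chains Corollary~\ref{cor:characterization weakly exact}, Theorem~\ref{thm:freeproductnuclear}, and Corollary~\ref{cor:densenuclear} exactly as you do. The points you flag as delicate — that the free product operator system sits as an operator $M_0$-subsystem of $\B(\mathcal H)$ with matching $(M_0 \subset M)$-topologies (the restriction lemma of Section~\ref{sec:Mtopology}), and that $M$ is by construction faithfully and normally represented on the free product Hilbert space — are precisely the details the paper leaves implicit, and you handle them correctly.
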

\begin{proof}
Using Corollary~\ref{cor:characterization weakly exact}, we may set $E_i = \B(L^2(M_i, \omega_i))$ and then apply Theorem~\ref{thm:freeproductnuclear} and Corollary~\ref{cor:densenuclear}. 
\end{proof}

Note that an increasing union of weakly exact von Neumann algebras is again weakly exact. When the union is separable, this is \cite[Proposition 4.1.2]{Iso13}. The general case also follows from \cite[Proposition 4.1.2]{Iso13}, but one needs to then apply Corollary~\ref{cor:densenuclear} and use Corollary~\ref{cor:characterization weakly exact}. It therefore follows that the free product of an arbitrary family of weakly exact von Neumann algebras with nondegenerate states is again weakly exact.

\section{Biexact von Neumann algebras}

We recall from Section~\ref{subsec:smallboundary} that if $M$ is a von Neumann algebra and $\X \subset \B(L^2M)$ is a boundary piece, then the small-at-infinity boundary relative to $\X$ is the normal operator $M$-system
\[
\bS_\X(M)=\{T\in \B(L^2M)\mid [T,x]\in \K_\X^{\infty,1}(M), \forall x\in M'\}.
\]

\begin{defn}
Let $M$ be a von Neumann algebra with an $M$-boundary piece $\X\subset\B(L^2M)$. 
We say $M$ is {\it biexact relative to} $\X$ if the inclusion $M \subset \bS_{\X}(M)$ is $M$-nuclear.
\end{defn}

Given a discrete group $\Gamma$, a boundary piece $I$ is a $\Gamma\times \Gamma$ invariant closed ideal such that $c_0\Gamma\subset I\subset \ell^\infty\Gamma$ \cite{BIP21}.  
The small at infinity compactification of $\Gamma$ relative to $I$ is the spectrum of the ${\rm C}^*$-algebra $\bS_I(\Gamma)=\{f\in\ell^\infty\Gamma\mid f-R_tf\in I, {\rm\ for\ any\ } t\in\Gamma\}$.
Recall that $\Gamma$ is biexact relative to $X$ if $\Gamma \actson \bS_I(\Gamma)/I$ is topologically amenable 
	\cite{Oza04, BO08, BIP21}\footnote{Actually, we are abusing terminology somewhat here. In \cite[Definition 15.1.2]{BO08} biexactness is defined via an a priori more restrictive formulation, which is only shown to be equivalent to this formulation for countable groups.}.
We remark that this is equivalent to amenability of $\Gamma\actson\bS_I(\Gamma)$.
Indeed, we may embed $\ell^\infty\Gamma \hookrightarrow I^{**}$ in a $\Gamma$-equivariant way by taking $\{ e_i \}_i$ a $\Gamma$-asymptotically invariant approximate identity for $I$ and then letting $\phi: \ell^\infty\Gamma \to I^{**}$ be a point-weak$^*$ cluster point of the u.c.p.\ maps $\phi_i: \ell^\infty \Gamma \to I \subset I^{**}$ given by $\phi_i( f) = e_i f$. Since $\Gamma$ is exact, we then have   
	 $\Gamma\actson I^{**} \oplus (\bS_I(\Gamma)/I)^{**} = \bS_I(\Gamma)^{**}$ is amenable,
	and it follows that $\Gamma\actson \bS_I(\Gamma)$ is an amenable action \cite[Proposition 2.7]{BEW20}.

\begin{thm}\label{thm:iff}
	Let $\Gamma$ be a discrete group with a boundary piece $I$, and $\X$ an $L\Gamma$-boundary piece. 
	Let $I_\X\subset \ell^\infty\Gamma$ denote the closed ideal generated by $E(\X)$, where $E: \B(\ell^2\Gamma)\to \ell^\infty\Gamma$ is canonical conditional expectation,
	and let $\X_I$ denote the $L\Gamma$-boundary piece $\overline{I\B(\ell^2\Gamma)I}^{\|\cdot\|}$ generated by $I$.
	The following statements are true:
\begin{enumerate}
\item \label{item:iff 1} If $L\Gamma$ is biexact relative to $\X$, then $\Gamma$ is biexact relative to $I_\X$.
\item \label{item:iff 2} $\Gamma$ is biexact relative to $I$ if and only if $L\Gamma$ is biexact relative to $\X_I$. 
\end{enumerate}
\end{thm}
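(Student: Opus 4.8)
The plan is to prove the two statements together, treating $M=L\Gamma$ with its canonical trace $\tau$, so that $M'=R\Gamma$ is generated by the $\rho_t$, and to use part~(\ref{item:iff 1}) as the engine for the backward direction of part~(\ref{item:iff 2}). The technical heart of (\ref{item:iff 1}) is the single inclusion $E(\bS_\X(L\Gamma))\subseteq\bS_{I_\X}(\Gamma)$, which I would reduce to the claim $E(\K_\X^{\infty,1}(L\Gamma))\subseteq I_\X$: for $T\in\bS_\X(L\Gamma)$, the identity $E\circ\Ad(\rho_t)=R_t\circ E$ gives $E(T)-R_t(E(T))=E([T,\rho_t]\rho_t^*)$, and since $[T,\rho_t]\rho_t^*\in\K_\X^{\infty,1}(L\Gamma)$ the claim then places $E(T)$ in $\bS_{I_\X}(\Gamma)$. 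To prove the claim I would first observe that $E$ is contractive from $\|\cdot\|_{\infty,1}$ to the uniform norm, since $|E(T)_x|=|\langle T\widehat{\lambda_x},\widehat{\lambda_x}\rangle|\le\|T\|_{\infty,1}$ as each $\lambda_x$ is a unit vector in $M\subset L^2M$. Because $\K_\X^{\infty,1}(L\Gamma)$ is the $\|\cdot\|_{\infty,1}$-closure of the $C^*$-algebra $\K_\X(L\Gamma)$ and $I_\X$ is norm closed, it suffices to prove $E(\K_\X(L\Gamma))\subseteq I_\X$, and since $\K_\X(L\Gamma)$ is spanned by its positive cone I only need this on positives. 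For $T\in\K_\X(L\Gamma)_+=\K_\X^{\infty,1}(L\Gamma)_+$, Lemma~\ref{lem:compressnorm} supplies positive contractions $z_i\in M$ with $z_i\to 1$ ultrastrongly and $d(z_iTz_i,\X)\to 0$ in norm; then $E(z_iTz_i)$ lies within a norm-null quantity of $E(\X)\subseteq I_\X$, while $\|z_iTz_i-T\|_{\infty,1}\to 0$ forces $E(z_iTz_i)\to E(T)$ in uniform norm, so $E(T)\in I_\X$.

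With this inclusion, part~(\ref{item:iff 1}) follows the scheme from the introduction. Writing the $L\Gamma$-nuclearity of $L\Gamma\subset\bS_\X(L\Gamma)$ through u.c.p.\ maps $\phi_i\colon L\Gamma\to\M_{n(i)}(\C)$ and $\psi_i\colon\M_{n(i)}(\C)\to\bS_\X(L\Gamma)$, I would use Lemma~\ref{lem:compressionapprox} to arrange $\phi_i=\Ad(P_{\ell^2E_i})$ for finite $E_i\subset\Gamma$, and set $h_i(t)=E(\psi_i\circ\phi_i(\lambda_t)\lambda_t^*)\in\bS_{I_\X}(\Gamma)$. These are finitely supported, vanishing off $E_iE_i^{-1}$; they are of positive type because, with $D=\diag(\lambda_{t_k})$,
\[
[\,L_{t_k}h_i(t_k^{-1}t_l)\,]_{k,l}=(\id\otimes E)\big(D\,[\,\psi_i\circ\phi_i(\lambda_{t_k^{-1}t_l})\,]_{k,l}\,D^*\big)\ge 0
\]
by complete positivity of $\psi_i\circ\phi_i$ and positivity of $E$; and $\|h_i(t)-1\|_\infty\le s_\tau(\psi_i\circ\phi_i(\lambda_t)-\lambda_t)\to 0$ using $\|E(\cdot)\|_\infty\le s_\tau(\cdot)$ and that right multiplication by the unitary $\lambda_t^*$ does not increase $s_\tau$. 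Passing to the quotient $\bS_{I_\X}(\Gamma)/I_\X=C(K)$, the $h_i$ become finitely supported positive-type $C(K)$-valued functions converging to $1$ uniformly on finite subsets of $\Gamma$, so $\Gamma\actson K$ is topologically amenable by the characterization in \cite[Theorem 4.4.3]{BO08}, i.e.\ $\Gamma$ is biexact relative to $I_\X$.

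For part~(\ref{item:iff 2}), the backward implication is immediate from (\ref{item:iff 1}) once $I_{\X_I}=I$ is checked: by $\ell^\infty\Gamma$-bimodularity $E(fSg)=f\,E(S)\,g\in I$ for $f,g\in I$, so $E(\X_I)\subseteq I$, while $E(e_n\cdot 1\cdot f)=e_nf\to f$ for an approximate unit $e_n$ of $I$ gives $I\subseteq I_{\X_I}$. For the forward implication I would stay on the $C^*$-side and use the genuine inclusion $\bS_I(\Gamma)\rtimes_r\Gamma\subseteq\bS_{\X_I}(L\Gamma)$, which holds since $[f\lambda_t,\rho_s]=(f-R_sf)\lambda_t\rho_s$ with $f-R_sf\in I\subseteq\X_I$ shows $f\lambda_t\in\bS_{\X_I}(L\Gamma)$. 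If $\Gamma$ is biexact relative to $I$, then $\Gamma\actson\bS_I(\Gamma)$ is amenable, whence $C^*_\lambda\Gamma\subseteq\bS_I(\Gamma)\rtimes_r\Gamma$ is nuclear; composing with the above inclusion makes $C^*_\lambda\Gamma\subseteq\bS_{\X_I}(L\Gamma)$ nuclear, hence $(C^*_\lambda\Gamma\subseteq L\Gamma)$-nuclear, and Corollary~\ref{cor:densenuclear} upgrades this to $L\Gamma$-nuclearity of $L\Gamma\subseteq\bS_{\X_I}(L\Gamma)$, i.e.\ $L\Gamma$ is biexact relative to $\X_I$.

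The main obstacle I expect is the key inclusion $E(\K_\X^{\infty,1}(L\Gamma))\subseteq I_\X$: the delicate point is that both $\K_\X^{\infty,1}$ and $I_\X$ are defined by closures in topologies strictly weaker than the ultraweak one, so a naive ultraweak limit argument fails; the observation that $E$ is continuous from $\|\cdot\|_{\infty,1}$ to the uniform norm is precisely what closes this gap and lets the positive-cone/compression argument go through. A secondary bookkeeping point, which I would dispatch at the outset, is verifying that $I_\X$ is a genuine $\Gamma\times\Gamma$-invariant ideal (so that $\bS_{I_\X}(\Gamma)$ is a left-invariant $C^*$-algebra carrying the requisite $\Gamma$-action) and that $\X_I$ is an $L\Gamma$-boundary piece; the latter follows from the $\Gamma\times\Gamma$-invariance of $I$, which yields $\lambda_g\X_I,\rho_g\X_I\subseteq\X_I$ and hence $L\Gamma,R\Gamma\subseteq M(\X_I)$.
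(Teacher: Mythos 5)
Your proposal is correct and takes essentially the same route as the paper's proof: for part (\ref{item:iff 1}) the positive-type functions $h_n(t)=E(\theta_n(\lambda_t)\lambda_t^*)$ built from the $L\Gamma$-nuclearity maps (with $\phi_n$ taken to be finite compressions via Lemma~\ref{lem:compressionapprox}, the same matrix-positivity computation, and the same $\|E(\cdot)\|_\infty\le s_\tau(\cdot)$ continuity estimate), and for part (\ref{item:iff 2}) the embedding $\bS_I(\Gamma)\rtimes_r\Gamma\subseteq\bS_{\X_I}(L\Gamma)$, nuclearity of the crossed product, and the upgrade via Corollary~\ref{cor:densenuclear}, with the converse obtained from part (\ref{item:iff 1}) and $E(\X_I)=I$. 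The only differences are cosmetic: where the paper cites the argument of \cite[Theorem 6.4]{DKEP22} for $E(\bS_\X(L\Gamma))\subseteq\bS_{I_\X}(\Gamma)$, you give a correct self-contained proof (contractivity of $E$ from $\|\cdot\|_{\infty,1}$ to the uniform norm together with the positive-cone compression argument from Lemma~\ref{lem:compressnorm}), and you conclude amenability of the quotient action $\Gamma\actson\bS_{I_\X}(\Gamma)/I_\X$ rather than of $\Gamma\actson\bS_{I_\X}(\Gamma)$, which are equivalent by the remark preceding the theorem.
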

\begin{proof}
(\ref{item:iff 1})
Suppose $L\Gamma$ is biexact relative to $\X$.
By the argument in \cite[Theorem 6.4]{DKEP22}, we have $E: \bS_\X(L\Gamma)\to \bS_{I_\X}(\Gamma)$ where $E$ is the canonical conditional expectation from $\B(\ell^2 \Gamma)$ to the diagonal subalgebra $\ell^\infty \Gamma$.
Now for each $n\in\mathbb N$, consider $h_n:\Gamma\to \bS_{I_\X}(\Gamma)$ given by $h_n(\gamma)=E(\theta_n(\lambda_\gamma)\lambda_\gamma^*)$, where $\theta_n = \psi_n \circ \phi_n:L\Gamma\to\bS_\X(L\Gamma)$ are u.c.p.\ maps that factor through matrix algebras coming from the $M$-nuclear embedding.
Note that we may assume $\theta_n(\lambda_s)=0$ for all but finitely many $s\in\Gamma$, 
	since by Lemma~\ref{lem:compressionapprox} we may take the first u.c.p.\ map $\phi_n$ from $M$-nuclearity to be a compression down to $\B(\ell^2 E_n)$ for some finite subsets $E_n \subset \Gamma$. 	
	Viewing $h_n$ as an element in $C_c(\Gamma, \bS_{I_\X}(\Gamma))\subset \bS_{I_\X}(\Gamma)\rtimes_r\Gamma$, we now check that $h_n$ is positive.
For a finite subset $\{s_i\}_{i=1}^m\subset \Gamma$, observe that 
\begin{align}
[ \lambda_{s_i}^{-1} h_n(s_i s_j^{-1})]_{i,j}&=[E(\lambda_{s_i} \theta_n(\lambda_{s_i s_j^{-1}}) \lambda_{s_j^{-1}})]_{i,j} \nonumber \\
&=E( \diag(\lambda_{s_1}, \dots, \lambda_{s_m})^* \theta_n^{(m)}([\lambda_{s_i}\lambda_{s_j}^*]_{i,j})\diag(\lambda_{s_1}, \dots, \lambda_{s_m})) \geq 0. \nonumber
\end{align}
The same argument as in the proof of Theorem~\ref{thm:amenaction} shows that $E$ is continuous from the $L\Gamma$-topology into the norm topology, and so for any $s\in \Gamma$, we have $\|h_n(s)-1\|_{\bS_{I_\X}(\Gamma)}=\|E((\lambda_s-\theta_n(\lambda_s))\lambda_s^*)\| \to 0$. We therefore conclude that $\Gamma\actson \bS_{I_\X}(\Gamma)$ is amenable \cite[Theorem 4.4.3]{BO08}.

(\ref{item:iff 2})
As $\bS_I(\Gamma)\subset\bS_{\X_I}(L\Gamma)$, we have an embedding $\iota:\bS_I(\Gamma)\rtimes_r \Gamma \to \bS_{\X_I}(L\Gamma)$ given by $\iota(f)=M_f$ and $\iota(u_\gamma)=\lambda_\gamma$ for any $f\in \bS_I(\Gamma)$ and $\gamma\in\Gamma$ by \cite[Proposition 5.1.3]{BO08}.
Since $\bS_I(\Gamma)\rtimes_r \Gamma$ is nuclear, in particular, the inclusion 
	$$C^*_\lambda\Gamma\subset \bS_I(\Gamma)\rtimes_r \Gamma\subset \bS_{\X_I}(L\Gamma)$$
is $(C^*_\lambda\Gamma\subset L\Gamma)$-nuclear. 
Corollary~\ref{cor:densenuclear} then gives that $L\Gamma\subset \bS_{\X_I}(L\Gamma)$ is $L\Gamma$-nuclear.

The ``only if'' part follows from the first statement upon noticing that $E(\X_I)=I$.
\end{proof}

\begin{cor}\label{cor:w*-equivalence}
Let $\Gamma$ and $\Lambda$ be two discrete groups. If $\Gamma$ is biexact and $L\Gamma\cong L\Lambda$, then $\Lambda$ is also biexact.
\end{cor}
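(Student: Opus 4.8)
The plan is to reduce the statement to the dictionary established in Theorem~\ref{thm:iff}(\ref{item:iff 2}), which already relates biexactness of a group relative to a boundary piece $I$ to biexactness of its group von Neumann algebra relative to $\X_I$. The one point to pin down is that the \emph{unqualified} notions on the two sides match up: for a group, ``biexact'' means biexact relative to $I = c_0\Gamma$, while for a von Neumann algebra ``biexact'' means biexact relative to $\X = \K(L^2 M)$. So I first want to verify that these two canonical choices correspond under the theorem.

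First I would compute the $L\Gamma$-boundary piece generated by $I = c_0\Gamma$. Using the identification $L^2 L\Gamma \cong \ell^2 \Gamma$ and viewing $c_0\Gamma$ as diagonal multiplication operators, every element of $c_0\Gamma$ is a compact operator, so $\X_{c_0\Gamma} = \overline{c_0\Gamma\,\B(\ell^2\Gamma)\,c_0\Gamma}^{\|\cdot\|} \subseteq \K(\ell^2\Gamma)$; conversely each matrix unit factors as $e_{st} = p_s\, e_{st}\, p_t$ with $p_s, p_t \in c_0\Gamma$ the indicator projections, so the finite-rank operators, and hence all compacts, lie in $\X_{c_0\Gamma}$. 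Thus $\X_{c_0\Gamma} = \K(\ell^2\Gamma) = \K(L^2 L\Gamma)$, and biexactness of $L\Gamma$ relative to $\X_{c_0\Gamma}$ is precisely biexactness of $L\Gamma$. Hence Theorem~\ref{thm:iff}(\ref{item:iff 2}) with $I = c_0\Gamma$ reads: $\Gamma$ is biexact if and only if the von Neumann algebra $L\Gamma$ is biexact (and likewise for $\Lambda$, since $\X_{c_0\Lambda} = \K(L^2 L\Lambda)$ by the same argument).

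With this in hand the corollary is immediate. Since $\Gamma$ is biexact, $L\Gamma$ is a biexact von Neumann algebra. Biexactness for von Neumann algebras is an isomorphism invariant: the system $\bS(M) \subset \B(L^2 M)$ and the $M$-topology entering the definition are built canonically from the standard form $(M, L^2 M, J, \mathfrak B)$, which is unique up to isomorphism, so any $*$-isomorphism $L\Gamma \cong L\Lambda$ transports the $M$-nuclear embedding. Therefore $L\Lambda$ is biexact, and applying the equivalence of the previous paragraph to $\Lambda$ gives that $\Lambda$ is biexact.

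Because the substantive content is all contained in Theorem~\ref{thm:iff}(\ref{item:iff 2}), this corollary is essentially formal; the only genuine verifications are the identification $\X_{c_0\Gamma} = \K(L^2 L\Gamma)$ and the isomorphism invariance of von Neumann algebraic biexactness. The latter is the point requiring a little care, since it rests on the essential uniqueness of the standard form, which guarantees that $\bS(\cdot)$ together with its $M$-topology is carried faithfully along the isomorphism $L\Gamma \cong L\Lambda$.
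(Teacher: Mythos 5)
Your proof is correct and takes essentially the same route as the paper: the corollary is stated there as an immediate consequence of Theorem~\ref{thm:iff}(\ref{item:iff 2}) applied with $I = c_0\Gamma$, which is exactly your reduction, since $\X_{c_0\Gamma} = \K(\ell^2\Gamma)$ identifies group biexactness with biexactness of $L\Gamma$ relative to $\K(L^2 L\Gamma)$. The two points the paper leaves implicit --- the computation $\X_{c_0\Gamma} = \K(\ell^2\Gamma)$ via the rank-one projections, and the isomorphism invariance of von Neumann algebraic biexactness through the essential uniqueness of the standard form --- are verified correctly in your write-up.
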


\subsection{Properties of biexact von Neumann algebras}\label{sec:properties of biexact}
Next we collect some basic properties of biexact von Neumann algebras, generalizing some results from the group von Neumann algebra setting.

Given $M$ a von Neumann algebra with a normal faithful representation $\pi: M\to \B(\cH)$, 
	we may consider the following normal $M$-system, which is a natural variant of $\bS(M)$,
	\[
	\bS(M; \cH)=\{T\in \B(\cH)\mid [T,x]\in \K(M;\cH),\forall x\in M'\},
	\]
where $\K(M;\cH)$ is the $M$-$M$ and $M'$-$M'$ closure of $\K(\cH)$.
We say $M$ is biexact with respect to the normal faithful representation $\pi: M\to \B(\cH)$ if the inclusion $M\subset \bS(M;\cH)$ is $M$-nuclear.
Since every normal faithful representation of $M$ is a reduction of an amplification of the standard form of $M$,
	the next lemma shows that biexactness is independent of representations of $M$.

\begin{lem}\label{lem:representation independent}
Let $M\subset \B(\cH)$ be a von Neumann algebra.
\begin{enumerate}
\item If $M\subset \bS(M; \cH)$ is $M$-nuclear, then $M \subset \bS(M; \cH\otimes \ell^2S)$ is $M$-nuclear for any set $S$.\label{item:representation}
\item If $M\subset \bS(M; \cH)$ is $M$-nuclear, $p \in \mathcal P(M)$, and $q \in \mathcal P(M')$ with $pq \not= 0$, then $p M p q \subset  \bS(pMpq; pq\cH)$ is $pMpq$-nuclear.\label{item:cut down}
\end{enumerate}
\end{lem}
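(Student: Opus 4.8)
The plan is to realize both statements through a single intertwining normal u.c.p.\ map $\Theta$: for (\ref{item:representation}) the amplification $\iota\colon\B(\cH)\to\B(\cH\ovt\ell^2 S)$, $\iota(T)=T\otimes 1$, and for (\ref{item:cut down}) the compression $\Ad(pq)\colon\B(\cH)\to\B(pq\cH)$, $T\mapsto pqTpq$ (recall that $pMpq$ acts on $pq\cH$ with commutant $(pMpq)'=pqM'pq$). In each case I will verify that $\Theta$ carries the generalized compacts $\K(M;\cH)$ and the boundary $\bS(M;\cH)$ into the corresponding objects of the new representation and is continuous for the relevant topologies, and then transport the u.c.p.\ maps witnessing $M$-nuclearity through $\Theta$.

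For the containment $\Theta(\K(M;\cH))\subseteq\K(\cdot\,;\cdot)$, I first observe that $\Theta$ is bimodular over the new acting algebra and over its commutant, with the restrictions of $\Theta$ to $M$ and to $M'$ normal maps into these algebras; Lemma~\ref{lem:weakcontinuity} then makes $\Theta$ continuous from the weak $M$- and weak $M'$-topologies into the weak topologies of the new acting algebra and of its commutant. For $\Ad(pq)$ the commutant $pqM'pq$ is exactly the image of $M'$, so this is immediate; for $\iota$ the commutant enlarges from $M'$ to $M'\ovt\B(\ell^2 S)$, but the $(M\otimes1)'$-topology is coarser than the $(M'\otimes1)$-topology into which Lemma~\ref{lem:weakcontinuity} lands us, so continuity persists. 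Since $\Theta(\K(\cH))$ consists of generalized compacts of the new representation — for $\iota$ this uses the projection characterization \cite[Proposition 2.2]{Ma98} applied to $(k\otimes1)(1\otimes e_s)=k\otimes e_s$, with $\{e_s\}$ the rank-one diagonal projections of $\B(\ell^2 S)$, while for $\Ad(pq)$ one simply has $pq\K(\cH)pq\subseteq\K(pq\cH)$ — and since the target generalized compacts are by definition closed in the join topology, continuity gives $\Theta(\K(M;\cH))\subseteq\K(\cdot\,;\cdot)$. The boundary containment then follows by testing $[\Theta(T),\,\cdot\,]$ on a generating set of the new commutant, using that the elements passing the test form a von Neumann algebra: for $\iota$ the commutators with $1\otimes\B(\ell^2 S)$ vanish and $[T\otimes1,a'\otimes1]=[T,a']\otimes1=\iota([T,a'])$, while for $\Ad(pq)$ one has $[pqTpq,pqa'pq]=pq[T,a']pq=\Ad(pq)([T,a'])$.

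To assemble the approximation, fix witnesses $\phi_i\colon M\to\M_{n(i)}(\C)$ and $\psi_i\colon\M_{n(i)}(\C)\to\bS(M;\cH)$ of $M$-nuclearity. In (\ref{item:representation}) the underlying von Neumann algebra is unchanged, so $\phi_i$ together with $\iota\circ\psi_i$ work: a decomposition $T=a^*Yb$ with $a,b\in M$ gives $\iota(T)=(a\otimes1)^*(Y\otimes1)(b\otimes1)$, so $\iota$ does not increase the seminorms of the $M$-topology, whence $\iota\circ\psi_i\circ\phi_i(x)=\psi_i\circ\phi_i(x)\otimes1\to x\otimes1$ in the $M$-topology. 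In (\ref{item:cut down}) the algebra becomes $pMpq$, so I first construct a u.c.p.\ section $\beta\colon pMpq\to M$ of the surjective compression $\pi_0=\Ad(pq)|_M$. Writing $\pi_0$ as the reduction $M\to pMp$ followed by the induction $pMp\to pMpq$ along the projection $pq\in(pMp)'$, each factor admits a u.c.p.\ section — the unitized hereditary inclusion $pMp\hookrightarrow M$ for the reduction, and the unitized inverse of the $*$-isomorphism $(pMp)z\cong pMpq$ (with $z\in Z(pMp)$ the central support of $pq$) for the induction — and $\beta$ is their composite. Then $\tilde\phi_i=\phi_i\circ\beta$ and $\tilde\psi_i=\Ad(pq)\circ\psi_i$ satisfy $\tilde\psi_i\circ\tilde\phi_i(c)=\Ad(pq)(\psi_i\circ\phi_i(\beta(c)))\to\Ad(pq)(\beta(c))=\pi_0(\beta(c))=c$ in the weak $pMpq$-topology, which upgrades to the $pMpq$-topology by the usual convexity argument and yields $pMpq$-nuclearity.

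The step I expect to require the most care is the compact containment for $\iota$ in (\ref{item:representation}): because the commutant grows to $M'\ovt\B(\ell^2 S)$, one cannot naively close up $\K(\cH)\otimes 1$ (the approximants $k\otimes P_F$ over finite $F\subset S$ fail to converge in the $M$-topology), and the argument instead hinges on the comparison of the $(M\otimes1)'$- and $(M'\otimes1)$-topologies together with the projection characterization of the generalized compacts. The remaining ingredients — the bimodular continuity in (\ref{item:cut down}) and the construction of the section $\beta$ — are routine von Neumann algebra manipulations.
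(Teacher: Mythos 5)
Your proposal is correct and follows essentially the same route as the paper's proof: transport the nuclearity witnesses through the amplification $T\mapsto T\otimes 1$ and the compression $\Ad(pq)$, after checking that these maps carry the generalized compacts and the boundary into their counterparts and are continuous for the relevant topologies, and verify membership in $\bS$ by testing commutators against a generating set of the new commutant --- the paper obtains the same continuity by direct seminorm estimates with polar decompositions rather than via Lemma~\ref{lem:weakcontinuity}, uses the generators $x\otimes e_{i,j}$ of $(M\otimes 1)'$ in place of your $M'\otimes 1$ and $1\otimes\B(\ell^2 S)$, and leaves your u.c.p.\ section $\beta$ implicit in the phrase ``conjugation by $pq$ gives rise to a $pMpq$-nuclear embedding,'' so your explicit construction of $\beta$ via the central support $z$ is a welcome filled-in detail. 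One small slip to correct: in part (\ref{item:cut down}) the commutator identity should read $[pqTpq,\,pqa'pq]=pq[T,qa'q]pq$ (as in the paper), not $pq[T,a']pq$; this is harmless, since $qa'q\in M'$ as well, so the corrected commutator still lies in $\K(M;\cH)$ and compresses into $\K(pMpq;pq\cH)$, and the elements $pqa'pq=qa'qp$ still exhaust $(pMpq)'=pqM'pq$.
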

\begin{proof}
(\ref{item:representation}) 	It suffices to show $\bS(M; \cH)\otimes \id_{\ell^2S}\subset \bS(M;\cH\otimes\ell^2S)$.
For any $T\in \bS(M; \cH)$, $x\in M' \subset \B(\cH)$ and $i,j\in S$, we denote by $e_{i,j}=\langle \cdot, \delta_i\rangle \delta_j$
	and compute $[T\otimes \id_{\ell^2 S}, x\otimes e_{i,j}]=[T,x]\otimes e_{i,j}\in \K(M; \cH\otimes \ell^2S)$.
Since the span of $\{x\otimes e_{i,j}\mid x\in M', i,j\in S\}\subset M'\cap \B(\cH\otimes \ell^2S)$ is ultraweakly dense,
	we conclude that $\bS(M; \cH)\otimes \id_{\ell^2S}\subset \bS(M;\cH\otimes\ell^2S)$.

(\ref{item:cut down}) If $S \in \B(\cH)$ and $a, b \in M$, then for a non-zero projection $p \in \mathcal P(M)$ we have $pa^* S b p = (pa^* a p)^{1/2} v^* S w (p b^*b p)^{1/2}$, where $pa = v | p a |$ and $pb = w | p b |$ are the polar decompositions. In particular, if $\varphi$ is a normal state on $pMp$, and we denote by $\tilde \phi$ the extension to $M$ given by $\tilde \varphi (x) = \varphi(pxp)$, then we have $s_\varphi( p a^* S b p ) \leq \varphi(p a^* a p)^{1/2} \| S \| \varphi( p b^*b p)^{1/2} = \tilde \varphi(a^*a)^{1/2} \| S \| \tilde \varphi(b^*b)^{1/2}$. Taking infimums shows that $s_\varphi(p T p) \leq s_{\tilde \varphi}(T)$ for each $T \in \B(\cH)$, and so conjugation by $p$ is continuous from the $M$-topology on $\B(\cH)$ to the $pMp$-topology on $\B(p \cH)$. Note that conjugation by $p$ is also continuous from the $M'$-topology on $\B(\cH)$ to the $pMp'$-topology on $\B(p \cH)$, and we have similar continuity properties for conjugation by $q \in \mathcal P(M')$. 

Hence, we have that $pq \K(M; \cH) pq \subset \K(pMpq; pq \cH)$, and so for $T\in \bS(M;\cH)$ and $x\in M' \subset \B(\cH)$ we have
	$[pqTpq, qxqp]=pq [T,qxq]pq\in \K(pMpq; pq \cH)$.
Therefore if we consider the maps associated to the $M$-nuclear embedding $M\subset \bS(M;\cH)$, then conjugation by $pq$ gives rise to an $pMpq$-nuclear embedding of $pMpq$ in to $\bS(pMpq; pq \cH)$. 
\end{proof}

\begin{prop}[cf.\  {\cite[Proposition 2.10]{HoIs17}}]\label{prop:amplificiationstable}
Let $M$ be a biexact von Neumann algebra.
Then $M\ovt \B(\cK)$ and $pMp$ are also biexact, for any Hilbert space $\cK$ and nonzero projection $p\in \cP(M)$.
\end{prop}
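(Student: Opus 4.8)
The plan is to treat the two assertions separately, reducing each to the machinery of Lemma~\ref{lem:representation independent}. For the corner $pMp$, realize $M$ in standard form on $\cH = L^2M$ and apply Lemma~\ref{lem:representation independent}(\ref{item:cut down}) with the projection $q = 1 \in \cP(M')$: since $pq = p \neq 0$, this immediately gives that $pMp \subset \bS(pMp; p\cH)$ is $pMp$-nuclear. As $pMp$ acts faithfully and normally on $p\cH = pL^2M$, which is a reduction of the standard form, biexactness of $pMp$ follows from the representation independence of biexactness recorded just before Lemma~\ref{lem:representation independent}.

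For the amplification, set $N = M \ovt \B(\cK)$ and represent it faithfully and normally on $\cH \otimes \cK = L^2M \otimes \cK$, so that $N' = M' \otimes 1$. By representation independence it suffices to produce an $N$-nuclear embedding $N \subset \bS(N; \cH \otimes \cK)$. The idea is to tensor the $M$-nuclear embedding $M \subset \bS(M;\cH)$ coming from biexactness of $M$ with the semidiscreteness of the injective algebra $\B(\cK)$. Concretely, I would fix u.c.p.\ maps $\phi_i \colon M \to \M_{n(i)}(\C)$ and $\psi_i \colon \M_{n(i)}(\C) \to \bS(M;\cH)$ with $\psi_i \circ \phi_i \to \id_M$ in the $M$-topology, together with normal u.c.p.\ maps $\alpha_j \colon \B(\cK) \to \M_{m(j)}(\C)$ and $\beta_j \colon \M_{m(j)}(\C) \to \B(\cK)$ with $\beta_j \circ \alpha_j \to \id_{\B(\cK)}$ in the point-$\sigma$-strong topology (available since $\B(\cK)$ is semidiscrete). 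Setting $\Phi_{i,j} = \phi_i \otimes \alpha_j$ and $\Psi_{i,j} = \psi_i \otimes \beta_j$ (the former well defined on $N$ because $\alpha_j$ is normal), these are u.c.p.\ with $\Psi_{i,j} \circ \Phi_{i,j} = (\psi_i \circ \phi_i) \otimes (\beta_j \circ \alpha_j)$.

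Two points must be checked. First, the range of $\Psi_{i,j}$ must land in $\bS(N;\cH\otimes\cK)$, which reduces to the containment $\bS(M;\cH) \odot \B(\cK) \subseteq \bS(N;\cH\otimes\cK)$: for $T \in \bS(M;\cH)$, $S \in \B(\cK)$, and $y \in M' = N'$ one has $[T \otimes S, y \otimes 1] = [T,y] \otimes S$ with $[T,y] \in \K^{\infty,1}(M;\cH)$, so the issue is whether $[T,y] \otimes S \in \K(N;\cH\otimes\cK)$. This is the main obstacle, and the key observation is that $1 \otimes (1 - Q_n) \in N$ (for finite-rank $Q_n \uparrow 1$ in $\B(\cK)$) lets one write $k \otimes (S - Q_n S Q_n)$, for $k$ compact, as a product with a factor tending to $0$ $\sigma$-strongly; hence $k \otimes S$ already lies in the $N$-$N$-closure $\overline{\K(\cH\otimes\cK)}^{N-N}$, and so a fortiori in $\K(N;\cH\otimes\cK)$. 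The passage from compact $k$ to a general $[T,y] \in \K^{\infty,1}(M;\cH)$ is then carried out by approximating $[T,y]$ in the $M$-$M$- and $M'$-$M'$-topologies and invoking the continuity estimates $s^{N}_\omega(x \otimes S) \le \|S\|\, s^{M}_{\omega_M}(x)$ and $s^{N'}_\omega(x \otimes S) \le \|S\|\, s^{M'}_{\omega}(x)$, where $\omega_M(\cdot) = \omega(\cdot \otimes 1)$; these follow by tensoring any decomposition $x = a^* z b$ with $1$.

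Second, one checks point-$N$-topology convergence $\Psi_{i,j} \circ \Phi_{i,j} \to \id_N$. On an elementary tensor $a \otimes b$ I would write $(\psi_i\phi_i(a)) \otimes (\beta_j\alpha_j(b)) - a \otimes b = [\psi_i\phi_i(a) - a] \otimes \beta_j\alpha_j(b) + a \otimes [\beta_j\alpha_j(b) - b]$. The first summand tends to $0$ in the $N$-topology uniformly in $j$, by the estimate above combined with $M$-topology convergence of $\psi_i\phi_i$ and $\|\beta_j\alpha_j(b)\| \le \|b\|$; the second tends to $0$ because $\beta_j\alpha_j(b) \to b$ $\sigma$-strongly forces $s^{N}_\omega(a \otimes [\beta_j\alpha_j(b)-b]) \le \omega(a^*a \otimes [\beta_j\alpha_j(b)-b]^*[\beta_j\alpha_j(b)-b])^{1/2} \to 0$. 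This yields convergence on the ultraweakly dense $*$-subalgebra $M \odot \B(\cK)$, and since the maps are u.c.p.\ (hence uniformly controlled in the $M$-topology by the uniform norm) the convergence extends to the $C^*$-algebra $N_0 = \overline{M \odot \B(\cK)}^{\|\cdot\|}$. Thus $N_0 \subset \bS(N;\cH\otimes\cK)$ is $(N_0 \subset N)$-nuclear, and Corollary~\ref{cor:densenuclear} upgrades this to $N$-nuclearity of $N \subset \bS(N;\cH\otimes\cK)$. The only genuinely delicate part of the argument is the bookkeeping of the several relative topologies underlying the containment $\bS(M;\cH) \odot \B(\cK) \subseteq \bS(N;\cH\otimes\cK)$; the remaining steps are routine tensoring of approximations.
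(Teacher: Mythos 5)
Your proposal is correct and follows essentially the same route as the paper: the corner case is handled exactly as in the paper by Lemma~\ref{lem:representation independent}, and for $M \ovt \B(\cK)$ you establish the containment $\bS(M;\cH)\otimes\B(\cK)\subset \bS(N;\cH\otimes\cK)$ (your explicit finite-rank approximation $Q_n \uparrow 1$ is just an unpacking of the paper's observation that $\K(\cK)$ is ultrastrongly, hence $\B(\cK)$-topology, dense in $\B(\cK)$), tensor the nuclearity maps with the semidiscreteness of $\B(\cK)$, and upgrade via Corollary~\ref{cor:densenuclear}.
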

\begin{proof}
Suppose $M$ is represented on $\cH$.
To see that $N:=M\ovt \B(\cK)$ is bi-exact, first we note that $\K(M;\cH)\otimes \B(\cK)\subset \K(N; \cH\otimes \cK)$. 
Indeed, it is easy to see that we have $\K(M;\cH) \otimes \K(\cK) \subset \K(N; \cH \otimes \cK)$ and $\K(\cK)$ is dense in $\B(\cK)$ in the ultrastrong topology, and hence also in the $\B(\cK)$-topology. 
It then follows that $\bS(M;\cH)\otimes \B(\cK)\subset \bS(N;\cH\otimes \cK)$,
	since $[T\otimes S, a\otimes \id_{\cK}]=[T,a]\otimes S \in \K(N; \cH\otimes \cK)$ for any $a\in M' \cap \B(\cH)$, 
	$T\in \bS(M;\cH)$, and $S\in \B(\cK)$.

Since the identity map on $\B(\cK)$ is weakly nuclear, by tensoring with the maps coming from the $M$-nuclearity of the embedding $M \subset \bS(M; \cH)$ we may obtain u.c.p.\ maps $\phi_i: M\otimes \B(\cK)\to \M_{n(i)}(\C)$ 
	and $\psi_i: \M_{n(i)}(\C)\to \bS(M; \cH)\otimes \B(\cK)$ such that $\psi_i\circ \phi_i\to \id_{M\otimes \B(\cK)}$
	in the $( M \otimes \B(\cK) \subset M \ovt \B(\cK))$-topology. Corollary~\ref{cor:densenuclear} then gives that 	
	the embedding $N \subset \bS(N; \cH \ovt \cK)$ is $N$-nuclear. 
	
That $pMp$ is biexact when $M$ is already follows from Lemma~\ref{lem:representation independent}. 
\end{proof}

Let $M$ be a von Neumann algebra, and $\X\subset \B(L^2M)$ an $M$-boundary piece. A net of unitaries $\{u_i\}\subset \cU(M)$ is said to converge to $0$ over $\X$ if we have $\Ad(u_i)(T)\to 0$ ultraweakly for each $T\in \K_\X^{\infty, 1}(M)$. If $N \subset M$ is a von Neumann subalgebra, then we say that $N$ is weak mixing over $\X$ if there exists a net $\{ u_i \} \subset \cU(N)$ such that $\{ u_i \}$ converges to $0$ over $\X$. If $M$ is finite, then it is not difficult to see that $\{ u_i \}$ converges to $0$ over $\X$ if and only if $\Ad(u_i)(T)\to 0$ ultraweakly for each $T\in \X$ (see, e.g., \cite[Lemma 4.2]{DKEP22}). For instance, when $M$ is finite, a net $\{ u_i \}$ converges to $0$ over $\K(L^2M)$ if and only if the net $\{ u_i \}$ converges to $0$ ultraweakly. This does not hold for general von Neumann algebra, e.g., if $M = \B(\cH)$, then $\K_{\K(\cH)}(M) = \B(\cH)$, and so there is no net $\{ u_i \}$ that converges to $0$ over $\K(\cH)$ in this case. The following lemma gives a general condition for concluding that unitaries converge to $0$ over $\X$. 

\begin{lem}\label{lem:wkconvergenceoverX}
Let $M$ be a $\sigma$-finite von Neumann algebra with a normal faithful state $\omega$, and let $\X\subset \B(L^2(M, \omega))$ be an $M$-boundary piece. Suppose $\{ u_i \} \subset \cU(M)$ is such that the weak$^*$-limit $\lim_{i \to \infty} \omega \circ {\rm Ad}(u_i)$ exists and is in $M_* \subset M^*$. If $u_i T u_i^* \to 0$ ultraweakly for each $T \in \X$, then $\{ u_i \}$ converges to $0$ over $\X$.
\end{lem}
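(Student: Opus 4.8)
The plan is to upgrade the assumed ultraweak convergence on $\X$ to convergence on the whole space $\K_\X^{\infty,1}(M)$, which is the closure of $\K_\X(M)$ in the topology of the seminorms $r_\mu$, using the hypothesis on $\lim_i \omega\circ\Ad(u_i)$ precisely to supply the uniform‑in‑$i$ control needed to cross this closure. Write $\varphi=\lim_i\omega\circ\Ad(u_i)\in M_*$, a normal state, and let $\Omega(T)=\langle T\omega^{1/2},\omega^{1/2}\rangle$ be the associated normal vector state on $\B(L^2(M,\omega))$. First I would reduce the claim to showing that $\Omega(\Ad(u_i)(T))\to 0$ for every $T\in\K_\X^{\infty,1}(M)$. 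Indeed, since $u_i\in M$ commutes with $M'$, for $a',b'\in M'$ we have $\langle \Ad(u_i)(T)\,b'\omega^{1/2},a'\omega^{1/2}\rangle=\Omega(\Ad(u_i)(a'^*Tb'))$, and $a'^*Tb'$ again lies in $\K_\X^{\infty,1}(M)$, as this space is an $M'$‑$M'$‑bimodule (because $M'\subset M(\K_\X(M))$ and $M'$‑multiplication is continuous in both the $M$‑$M$‑ and $M'$‑$M'$‑topologies). Since $\{b'\omega^{1/2}\mid b'\in M'\}$ is dense in $L^2(M,\omega)$ and $\|\Ad(u_i)(T)\|=\|T\|$ is bounded, convergence of these matrix coefficients is equivalent to ultraweak convergence.

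Next I would establish the claim for $S\in\K_\X(M)$. By linearity it suffices to treat $P\in\K_\X(M)_+=\K_\X^{\infty,1}(M)_+$, and for such $P$ Lemma~\ref{lem:compressnorm} (exactly as it was used to prove $\K_\X^{\infty,1}(M)_+=\K_\X(M)_+$) yields positive contractions $z_\alpha\in M$ with $z_\alpha\to 1$ ultrastrongly and $d(z_\alpha P z_\alpha,\X)\to 0$. Writing $P=z_\alpha P z_\alpha+(1-z_\alpha)P+z_\alpha P(1-z_\alpha)$, the first summand is close in norm to an element of $\X$, on which $\Ad(u_i)\to 0$ ultraweakly by hypothesis, while each cross term is bounded using $\|(1-z_\alpha)u_i^*\omega^{1/2}\|^2=(\omega\circ\Ad(u_i))((1-z_\alpha)^2)$, whose $\limsup_i$ equals $\varphi((1-z_\alpha)^2)\le\varphi(1-z_\alpha)$ and tends to $0$ in $\alpha$ by normality of $\varphi$. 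Taking $\limsup_i$ and then $\alpha\to\infty$ gives $\Omega(\Ad(u_i)(P))\to 0$, hence $\Omega(\Ad(u_i)(S))\to 0$ for all $S\in\K_\X(M)$.

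The general case, and the main obstacle, is the passage to arbitrary $T\in\K_\X^{\infty,1}(M)$ across the $r$‑topology, where only the $M$‑directions of an $r$‑decomposition can be damped by $\Ad(u_i)$ while the $M'$‑directions are left fixed. The key computation is that for a decomposition $T-S=\left(\begin{smallmatrix} a\\ b\end{smallmatrix}\right)^*Z\left(\begin{smallmatrix} c\\ d\end{smallmatrix}\right)$ with $a,c\in M'$ and $b,d\in M$, conjugation by $u_i$ pushes $u_i$ past the $M'$‑entries and gives $\Ad(u_i)(T-S)=\left(\begin{smallmatrix} a\\ u_ibu_i^*\end{smallmatrix}\right)^*\tilde Z\left(\begin{smallmatrix} c\\ u_idu_i^*\end{smallmatrix}\right)$ with $\|\tilde Z\|=\|Z\|$, so that $r_\omega(\Ad(u_i)(T-S))\le(\omega(Ja^*aJ)+(\omega\circ\Ad(u_i))(b^*b))^{1/2}\|Z\|(\omega(Jc^*cJ)+(\omega\circ\Ad(u_i))(d^*d))^{1/2}$. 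Letting $i\to\infty$ replaces the $M$‑contributions $\omega(b^*b),\omega(d^*d)$ by $\varphi(b^*b),\varphi(d^*d)$, and taking the infimum over decompositions yields $\limsup_i r_\omega(\Ad(u_i)(T-S))\le r_{\omega+\varphi}(T-S)$, where $\omega+\varphi$ is a normal positive functional dominating both states.

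Finally I would combine the pieces. Given $\epsilon>0$, choose $S\in\K_\X(M)$ with $r_{\omega+\varphi}(T-S)<\epsilon$, which is possible because $T$ lies in the $r$‑closure of $\K_\X(M)$. Using the elementary estimate $|\Omega(\cdot)|\le r_\omega(\cdot)$ (which follows from Cauchy--Schwarz together with $\|a\omega^{1/2}\|^2=\omega(Ja^*aJ)$ for $a\in M'$), the computation above, and the second step, we obtain $\limsup_i|\Omega(\Ad(u_i)(T))|\le \limsup_i|\Omega(\Ad(u_i)(S))|+\limsup_i r_\omega(\Ad(u_i)(T-S))<\epsilon$; letting $\epsilon\to 0$ gives $\Omega(\Ad(u_i)(T))\to 0$, and by the reduction in the first paragraph this shows $\Ad(u_i)(T)\to 0$ ultraweakly for all $T\in\K_\X^{\infty,1}(M)$, i.e.\ $\{u_i\}$ converges to $0$ over $\X$. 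The crux of the whole argument is that the hypothesis forces the $M$‑side of every decomposition to be suppressed through the normal limit $\varphi$, while the inertness of $M'$ under $\Ad(u_i)$ is what makes the mixed seminorm bound possible.
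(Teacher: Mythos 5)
Your proof is correct, but it is organized quite differently from the paper's, which is only a few lines long. The paper never passes through $\K_\X(M)$ at all: it estimates the matrix coefficients $\langle u_i^* T u_i\, x\omega^{1/2}, y\omega^{1/2}\rangle$ for $x, y \in M'$ directly against the $M$-$M$ seminorm, obtaining $\limsup_i |\langle u_i^* T u_i\, x \omega^{1/2}, y\omega^{1/2}\rangle| \leq \|x\| \|y\|\, s_{\tilde\omega}(T)$ with $\tilde\omega$ the normal limit state, which carries the convergence from $\X$ to $\overline{\X}^{_{M-M}}$; it then invokes $M'$-bimodularity of $\Ad(u_i)$ to cross the subsequent $M'$-$M'$ closure, identifying $\overline{\overline{\X}^{_{M-M}}}^{_{M'-M'}}$ with $\K_\X^{\infty,1}(M)$. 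You instead reduce to the single vector state $\Omega$, anchor the argument on $\K_\X(M)$ via the positive-cone identity $\K_\X^{\infty,1}(M)_+ = \K_\X(M)_+$ and the compression Lemma~\ref{lem:compressnorm} (with the clean estimate $\limsup_i \|(1-z_\alpha)u_i^*\omega^{1/2}\|^2 = \varphi((1-z_\alpha)^2)$), and then cross the mixed closure in a single step with the conjugation identity for $r$-decompositions, yielding $\limsup_i r_\omega(\Ad(u_i)(\,\cdot\,)) \leq r_{\omega + \varphi}(\,\cdot\,)$. Both arguments ultimately rest on the same two mechanisms --- normality of the limit state suppresses the $M$-legs of any decomposition, while $\Ad(u_i)$ leaves the $M'$-legs inert --- but your route has the merit of invoking only facts the paper establishes explicitly (the positive-cone equality and the characterization of $\K_\X^{\infty,1}(M)$ as the $r_\mu$-closure of $\K_\X(M)$), whereas the paper's shorter proof implicitly uses that the iterated bimodule closures of $\X$ already exhaust $\K_\X^{\infty,1}(M)$, i.e.\ that $\K_\X(M) \subset \overline{\overline{\X}^{_{M-M}}}^{_{M'-M'}}$; your $r_{\omega+\varphi}$ estimate is also a nice reusable observation in its own right. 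The cost is length: your intermediate step on $\K_\X(M)$ and the separate treatment of the cross terms $(1-z_\alpha)P$ and $z_\alpha P(1-z_\alpha)$ do work that the paper's two-stage closure absorbs automatically.
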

\begin{proof}
If $a, b \in M$, $S \in \B(L^2(M, \omega))$, and $x, y \in M'$, then by Cauchy-Schwarz we have
\[
| \langle u_i^* a^* S b u_i x \omega^{1/2}, y \omega^{1/2} |
\leq \omega(u_i^* a^* a u_i )^{1/2} \| y^* \| \| S \| \| x \| \omega(u_i^* b^* b u_i)^{1/2}. 
\]
Taking infimums over all decompositions of the type $T = a^* S b$, we then have
\[
\limsup_{i \to \infty} | \langle u_i^* T u_i x \omega^{1/2}, y \omega^{1/2} \rangle |
\leq \| x \| \| y \| s_{\tilde \omega}(T),
\]
where $\tilde \omega$ is the weak$^*$-limit of $\omega \circ {\rm Ad}(u_i)$.

Thus, if $u_i^* T u_i \to 0$ ultraweakly for each $T \in \X$, then we also have $u_i^* T u_i \to 0$ ultraweakly for each $T \in \overline{\X}^{_{M-M}}$. Since conjugation by $u_i$ is $M'$-bimodular, we then also have that $u_i^* T u_i \to 0$ ultraweakly for each $T \in \overline{ \overline{\X}^{_{M-M}} }^{_{M'-M'}} = \K_\X^{\infty, 1}(M)$. 
\end{proof}

If $M$ is a $\sigma$-finite von Neumann algebra and $\mathcal U \in \beta N \setminus N$ is a nonprincipal ultrafilter, then we define
\[
\mathcal I_{\mathcal U} = \{ (x_n)_n \in \ell^\infty(M) \mid x_n \to 0 \ *-{\rm strongly \ as \ } n \to \mathcal U \};
\]
\[
\mathcal M^{\mathcal U}(M) = \{ (x_n)_n \in \ell^\infty(M) \mid (x_n)_n \mathcal I_{\mathcal U} \subset \mathcal I_{\mathcal U} {\rm \ and \ } \mathcal I_{\mathcal U} (x_n)_n \subset \mathcal I_{\mathcal U} \}.
\]
The multiplier algebra $\mathcal M^{\mathcal U}(M)$ is a ${\rm C}^*$-algebra containing $\mathcal I_{\mathcal U}$ as a norm closed two-sided ideal. The Ocneanu ultraproduct $M^{\mathcal U}$ is the quotient $\mathcal M^{\mathcal U}/ \mathcal I_{\mathcal U}$, which is a von Neumann algebra \cite{Oc85}. If $(x_n)_n \in \mathcal M^{\mathcal U}$, then we denote by $(x_n)^{\mathcal U}$ the image of $(x_n)_n$ in $M^{\mathcal U}$. We may view $M$ as a von Neumann subalgebra of $M^{\mathcal U}$ by considering the constant sequences. We have a canonical normal conditional expectation $E_M: M^{\mathcal U} \to M$ by considering the weak$^*$-limit $E_M( (x_n)^\omega ) = \lim_{n \to \mathcal U} x_n$.

If $Q \subset M$ is a von Neumann subalgebra with expectation, then given $(x_n)_n \in \ell^\infty(Q)$ we have that $(x_n)_n \in \mathcal M^{\mathcal U}(Q)$ if and only if $(x_n)_n \in \mathcal M^{\mathcal U}(M)$, and this gives rise to a canonical embedding $Q^{\mathcal U} \subset M^{\mathcal U}$. Moreover, the map $M^{\mathcal U} \ni (x_n)^\omega \mapsto (E(x_n) )^\omega \in Q^{\mathcal U}$ gives a normal faithful conditional expectation onto $Q^{\mathcal U}$ that satisfies
\[
E^{M^{\mathcal U}}_M E^{M^{\mathcal U}}_{Q^{\mathcal U}} = E^{M^{\mathcal U}}_{Q^{\mathcal U}} E^{M^{\mathcal U}}_M  =  E^M_Q E^{M^{\mathcal U}}_M = E^{Q^{\mathcal U}}_Q E^{M^{\mathcal U}}_{Q^{\mathcal U}}. 
\]
We refer to \cite{AnHa14} for more information on the Ocneanu ultraproduct. 

\begin{thm}\label{thm:biexact implies solid}
Let $M$ be a $\sigma$-finite biexact von Neumann algebra, and let $\mathcal U \in \beta \N\setminus \N$ be a nonprincipal ultrafilter. If $A \subset M$ is a von Neumann subalgebra such that there exists $u \in \mathcal U( A'\cap M^{\mathcal U})$ with $E_M(u) = 0$, then the inclusion $A \subset M$ is weakly nuclear. 
\end{thm}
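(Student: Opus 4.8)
The plan is to use the unitary $u$ to manufacture a u.c.p.\ map that collapses the small-at-infinity boundary $\bS(M)$ onto $M$ while fixing $A$, and then precompose it with the matricial factorization coming from biexactness. Write $u = (u_n)^{\mathcal U}$ with each $u_n \in \mathcal U(M)$, and define $\Theta \colon \B(L^2M) \to \B(L^2M)$ by the point-ultraweak ultralimit $\Theta(T) = \lim_{n \to \mathcal U} u_n^* T u_n$. This is a well-defined u.c.p.\ map. Since $u$ commutes with $A$ in $M^{\mathcal U}$ we have $u_n^* a u_n \to a$ $*$-strongly along $\mathcal U$ for each $a \in A$, so $\Theta$ is $A$-bimodular and satisfies $\Theta(a) = a$ for $a \in A$; and since each $u_n$ lies in $M$, the map $\Theta$ is $M'$-bimodular.

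The first key step is to show $\Theta$ maps $\bS(M)$ into $M$, and for this I would first argue that $\Theta$ vanishes on $\K^{\infty,1}(M)$. The hypothesis $E_M(u) = 0$ means exactly that $u_n \to 0$ ultraweakly along $\mathcal U$, which directly gives $u_n^* T u_n \to 0$ ultraweakly for every rank-one, hence every compact, operator $T$. To promote this from $\K(L^2M)$ to all of $\K^{\infty,1}(M)$ I would invoke Lemma~\ref{lem:wkconvergenceoverX} with $\X = \K(L^2M)$: its normality hypothesis holds because $x \mapsto \lim_{n \to \mathcal U}\omega(u_n^* x u_n) = \omega \circ E_M(u^* x u)$ is a composition of normal maps and hence lies in $M_*$. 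Thus $\Theta|_{\K^{\infty,1}(M)} = 0$. Now for $T \in \bS(M)$ and $a' \in M'$ we have $[T, a'] \in \K^{\infty,1}(M)$, so $\Theta(Ta') = \Theta(a'T)$, and combining this with the $M'$-bimodularity of $\Theta$ gives $[\Theta(T), a'] = 0$. As $a' \in M'$ was arbitrary, $\Theta(T) \in M'' = M$, so $\Theta$ restricts to a u.c.p.\ map $\bS(M) \to M$.

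The second step is continuity. Identifying $\Theta(x) = E_M(u^* x u)$ for $x \in M$ exhibits $\Theta|_M \colon M \to M$ as a composition of the normal maps $x \mapsto u^* x u$ (into $M^{\mathcal U}$) and $E_M$, hence $\Theta|_M$ is normal. Lemma~\ref{lem:weakcontinuity} then shows $\Theta \colon \bS(M) \to M$ is continuous from the weak $M$-topology to the ultraweak topology on $M$, the latter being exactly the weak $M$-topology on $M$ itself.

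Finally I would assemble the factorization. Biexactness provides u.c.p.\ maps $\phi_i \colon M \to \M_{n(i)}(\C)$ and $\psi_i \colon \M_{n(i)}(\C) \to \bS(M)$ with $\psi_i \circ \phi_i(x) \to x$ in the (weak) $M$-topology for all $x \in M$. Setting $\Phi_i = \phi_i|_A \colon A \to \M_{n(i)}(\C)$ and $\Psi_i = \Theta \circ \psi_i \colon \M_{n(i)}(\C) \to M$, both u.c.p., the continuity of $\Theta$ yields $\Psi_i \circ \Phi_i(a) = \Theta(\psi_i \circ \phi_i(a)) \to \Theta(a) = a$ ultraweakly for every $a \in A$, which is precisely a weakly nuclear factorization of the inclusion $A \subset M$. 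The main obstacle is the first step, namely proving $\Theta(\bS(M)) \subset M$, since it is where the boundary structure of $\bS(M)$, the vanishing $E_M(u) = 0$, and the normality subtlety feeding Lemma~\ref{lem:wkconvergenceoverX} must all be combined; the remainder is a routine diagram chase through the continuity properties of $M$-nuclear maps.
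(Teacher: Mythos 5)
Your proposal is correct and takes essentially the same route as the paper: both construct the u.c.p.\ map as the point-ultraweak ultralimit of $\mathrm{Ad}(u_n)$, use Lemma~\ref{lem:wkconvergenceoverX} to show it vanishes on $\K^{\infty,1}(M)$, combine this with $M'$-bimodularity to see that it carries $\bS(M)$ into $M'' = M$, verify normality of its restriction to $M$, and then compose with the matricial factorization from biexactness via Lemma~\ref{lem:weakcontinuity}. The only cosmetic differences are that you check normality of $\Theta|_M$ by the clean identification $\Theta|_M = E_M \circ \mathrm{Ad}(u^*)|_M$ where the paper uses Kadison's inequality, and that the lifting of $u$ to a sequence of unitaries in $\mathcal U(M)$, which you assert without justification, is precisely \cite[Lemma 2.1]{HoIs16} as cited in the paper.
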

\begin{proof}
Fix a normal faithful state $\varphi$ on $M$. Suppose $A\subset M$ is
	a von Neumann subalgebra and $u \in \mathcal U( A'\cap M^{\mathcal U})$ with $E_M(u) = 0$. We may then choose a sequence $\{u_n\}\subset \cU(M)$ such that $u = (u_n)_n \in \mathcal U(M^{\mathcal U})$ \cite[Lemma 2.1]{HoIs16}, i.e., we have $\lim_{n \to \mathcal U} u_n = 0$ ultraweakly, $\lim_{n \to \mathcal U} [u_n,a]= 0$ $*$-strongly for any $a\in A$, and $\tilde \varphi := \lim_{n \to \mathcal U} \varphi \circ {\rm Ad}(u_n) = \varphi \circ E_M \circ {\rm Ad}(u)_{|M}$ is contained in $M_*$. 
	
	We let $\theta: \B(L^2(M, \varphi)) \to \B(L^2(M, \varphi))$ be the point-ultraweak limit $\theta = \lim_{n \to \mathcal U} {\rm Ad}(u_n)$. By Lemma~\ref{lem:wkconvergenceoverX} we have $\theta_{| \K^{\infty, 1}(M)} = 0$. From the definition of $\bS(M)$, we therefore have $\theta(\bS(M))\subset M$.
	
Note that since $\lim_{n \to \mathcal U} [u_n, a] \to 0$ $*$-strongly for any $a \in A$, we also have $\theta_{|A} = {\rm id}$ and $\theta_{| M'} = {\rm id}$. Also, note that, by Kadison's Inequality, if $x \in M$ and $a \in M'$, we have $\| \theta(x) a \varphi^{1/2} \|^2 \leq \| a \|^2 \varphi \circ \theta(x^*x) = \| a \|^2 \tilde \varphi( x^*x )$, and from this it follows that $\theta_{|M}$ defines a normal map from $M$ to $M$. 
	
	Now consider
\[
A \subset M\subset \bS(M;\cH) \xrightarrow{\theta} M.
\]
Since $M\subset \bS(M)$ is $M$-nuclear, and since by Lemma~\ref{lem:weakcontinuity} $\theta$ is continuous from the weak-$M$-topology to the ultraweak topology on $M$, it follows that the inclusion $A \subset M$ is weakly nuclear. 
\end{proof}

We remark that it may be the case that a von Neumann subalgebra $A \subset M$ of a biexact von Neumann algebra has diffuse relative commutant even though $A$ is nonamenable. For example, we may consider $A = L^\infty(X, \mu) \ovt L \mathbb F_2 \subset \B(L^2(X, \mu)) \ovt L \mathbb F_2$ where $(X, \mu)$ is some diffuse probability space. This type of situation cannot occur, though, if the subalgebra is with expectation, i.e., if there exists a normal faithful conditional expectation $E: M \to A$.  

If $M$ is $\sigma$-finite and we have a nonprincipal ultrafilter $\mathcal U \in \beta \N\setminus \N$, then $M$ is $\mathcal U$-solid 
	if $A$ is amenable for any von Neumann subalgebra $A\subset M$ with expectation such that $A'\cap M^{\mathcal U}$ is diffuse.
	
\begin{lem}\label{lem:zeroexpectation}
Let $M$ be a diffuse $\sigma$-finite von Neumann algebra, then there exists a unitary $u \in \mathcal U( M^{\mathcal U} )$ such that $E_M(u) = 0$. 
\end{lem}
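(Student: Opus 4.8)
The plan is to reduce the statement to the case of a Haar unitary living inside the centralizer of a well-chosen state, and then take its powers along the ultrafilter. The first observation I would make is that both the Ocneanu ultrapower $M^{\mathcal U}$ and the expectation $E_M\colon M^{\mathcal U}\to M$ are intrinsic to $M$ and independent of the auxiliary faithful normal state: on bounded sets the $*$-strong topology used to define $\mathcal I_{\mathcal U}$ is just the $\sigma$-strong$^*$ topology, which is determined by the predual, and $E_M$ is the ultraweak limit. Hence I am free to choose the most convenient faithful normal state $\varphi$ when constructing the unitary, and I will choose one whose centralizer $M^\varphi$ is diffuse.

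The heart of the argument is the elementary computation once such a $\varphi$ is fixed. Since $M^\varphi$ is diffuse it contains a diffuse abelian subalgebra, hence a Haar unitary $w\in\mathcal U(M^\varphi)$, i.e. a unitary generating a copy of $L^\infty(\mathbb{T})$ on which $\varphi$ restricts to Haar measure. I would then set $u=(w^n)^{\mathcal U}$. Two points must be verified. First, $E_M(u)=\lim_{n\to\mathcal U}w^n=0$: every normal functional on $M$ restricts to a normal (hence $L^1$) functional on the abelian algebra generated by $w$, so the Riemann--Lebesgue lemma gives $w^n\to 0$ ultraweakly. Second, $(w^n)_n$ must lie in $\mathcal M^{\mathcal U}(M)$, and this is exactly where $w\in M^\varphi$ is used: because $w^n$ lies in the centralizer, $\mathrm{Ad}(w^{\pm n})$ preserves $\varphi$, so for any $(x_n)_n\in\mathcal I_{\mathcal U}$ one has $\varphi\big((w^n x_n)(w^n x_n)^*\big)=\varphi(x_n x_n^*)\to 0$ and $\varphi\big((x_n w^n)^*(x_n w^n)\big)=\varphi(x_n^* x_n)\to 0$, while the two remaining seminorms vanish automatically since left and right multiplication by a unitary is $\|\cdot\|_\varphi$-isometric. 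Thus $(w^n)_n\,\mathcal I_{\mathcal U}\subset\mathcal I_{\mathcal U}$ and $\mathcal I_{\mathcal U}\,(w^n)_n\subset\mathcal I_{\mathcal U}$, so $u$ is a well-defined unitary in $M^{\mathcal U}$ with $E_M(u)=0$.

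The main obstacle is therefore the existence of a faithful normal state with diffuse centralizer. I would handle this as follows. Start with any faithful normal state $\varphi_0$; its restriction to $N_0:=M^{\varphi_0}$ is a faithful normal trace, so $N_0$ is finite and admits a $\varphi_0$-preserving normal conditional expectation $E_0\colon M\to N_0$. If $N_0$ is diffuse we are done at once (and by composing $E_0$ with a trace-preserving expectation $N_0\to A$ onto a diffuse abelian $A\subseteq N_0$ one even obtains a diffuse abelian subalgebra of $M$ with expectation). The genuinely harder case is when every centralizer has atoms; compressing by a minimal projection of $N_0$ then exhibits a diffuse corner of $M$ carrying a state of trivial centralizer, which forces type III behavior. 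Here I would invoke the structure theory of type III algebras: via the discrete/continuous decomposition one produces a faithful normal state whose centralizer is of type II---a factor in the III$_\lambda$, $0<\lambda\le 1$ cases, and type II with nontrivial center in the III$_0$ case---and hence diffuse in every case. This structural input is the crux of the proof; granting it, the Haar-unitary computation above completes the argument.
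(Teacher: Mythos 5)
Your proposal is, at its core, the same argument as the paper's: both reduce the lemma to finding a faithful normal state $\psi$ on $M$ with diffuse centralizer $M^\psi$, and then extract the unitary from a diffuse abelian piece of that centralizer. The paper takes a masa $A \subset M^\psi$, uses finiteness of $M^\psi$ to get a $\psi$-preserving expectation onto $A$ and hence the canonical inclusion $A^{\mathcal U} \subset M^{\mathcal U}$, and finds $u \in \mathcal U(A^{\mathcal U})$ with $E_A(u) = 0$ --- concretely, exactly your ultrafilter limit of powers of a Haar unitary. Your hands-on verification that $(w^n)_n \in \mathcal M^{\mathcal U}(M)$ (via $\varphi$-invariance of ${\rm Ad}(w^{\pm n})$ and the fact that on bounded sets the $*$-strong topology is given by $\|\cdot\|_\varphi$), and that $E_M(u) = 0$ by Riemann--Lebesgue, is correct and merely replaces the paper's appeal to the general fact that subalgebras with expectation induce inclusions of ultrapowers; this difference is cosmetic.

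The one substantive deviation is your treatment of the crux, and there your sketch has a genuine soft spot. The paper obtains the state with diffuse centralizer by citing \cite[Lemma 2.1]{HoUe16a}, and that is the honest move, because the structure theory you invoke does not deliver it as routinely as you suggest. The discrete and continuous decompositions produce dominant (or lacunary) \emph{weights} whose centralizers are type II, not \emph{states}, and passing from such weights to a faithful normal state with diffuse centralizer is precisely the nontrivial content of the cited lemma. The III$_\lambda$ case with $0 < \lambda < 1$ is indeed fine (Connes' periodic states have type II$_1$ factor centralizers), but lumping III$_1$ into the same sentence is not justified: a III$_1$ factor may have trivial $T(M)$, so no periodic state exists, and producing even a state with diffuse centralizer there requires a separate argument (this is adjacent to, though much weaker than, the bicentralizer problem). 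In addition, your implicit reduction to factors via direct integral decomposition requires separable predual, whereas the lemma is stated for $\sigma$-finite $M$, and the non-factor case must be assembled by patching states across the type I/II/III summands. So: correct architecture and correct computations, but the structural input should be cited (or proved with real care in the III$_1$ and non-factor cases), not waved at.
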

\begin{proof}
By \cite[Lemma 2.1]{HoUe16a} there exists a normal faithful state $\psi$ on $M$ such that $M^\psi$ is diffuse. If we let $A \subset M^\psi$ denote a masa, then as $M^\psi$ is finite there exists a norma $\psi$-preserving conditional expectation from $M$ to $A$, and hence $A^{\mathcal U} \subset M^{\mathcal U}$. Since $A$ is abelian, it is easy to find $u \in \mathcal U(A^{\mathcal U})$ such that $E_A(u) = 0$, and we then have $E_M(u) = E_A(u) = 0$. 
\end{proof}

\begin{thm}\label{thm:solid}
Let $M$ be a $\sigma$-finite biexact von Neumann algebra, then $M$ is $\mathcal U$-solid.
\end{thm}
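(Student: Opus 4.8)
The plan is to reduce to Theorem~\ref{thm:biexact implies solid} by producing, for a von Neumann subalgebra $A \subset M$ with expectation whose relative commutant $N := A' \cap M^{\mathcal{U}}$ is diffuse, a unitary that commutes with $A$ and has vanishing conditional expectation onto $M$. First I would fix a normal faithful state $\varphi_A$ on $A$ and set $\varphi = \varphi_A \circ E$, where $E \colon M \to A$ is the given conditional expectation; then $A$ is globally invariant under $\sigma^{\varphi}$, and consequently $N$ is globally invariant under the modular group $\sigma^{\varphi^{\mathcal{U}}}$ of the canonical state $\varphi^{\mathcal{U}} = \varphi \circ E_M$ on $M^{\mathcal{U}}$. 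In particular $\varphi^{\mathcal{U}}$ restricts to a normal faithful state on $N$, so $N$ is $\sigma$-finite, and by Takesaki's theorem there is a normal faithful $\varphi^{\mathcal{U}}$-preserving conditional expectation $M^{\mathcal{U}} \to N$, i.e.\ $N \subset M^{\mathcal{U}}$ is with expectation.

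Since $N$ is diffuse and $\sigma$-finite, Lemma~\ref{lem:zeroexpectation} produces a unitary $w \in \mathcal{U}(N^{\mathcal{U}})$ with $E^{N^{\mathcal{U}}}_N(w) = 0$. The point now is to view $w$ inside a single ultrapower of $M$: identifying the iterated ultrapower $(M^{\mathcal{U}})^{\mathcal{U}}$ with $M^{\mathcal{V}}$ for a suitable nonprincipal ultrafilter $\mathcal{V}$ (a product of $\mathcal{U}$ with itself, transported to $\mathbb{N}$), the canonical expectation $E^{\mathcal{V}}_M \colon M^{\mathcal{V}} \to M$ corresponds to the composition $E^{M^{\mathcal{U}}}_M \circ E^{(M^{\mathcal{U}})^{\mathcal{U}}}_{M^{\mathcal{U}}}$. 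As $N \subset M^{\mathcal{U}}$ is with expectation, we have $N^{\mathcal{U}} \subset A' \cap M^{\mathcal{V}}$, and by the compatibility relations for iterated conditional expectations recorded in Section~\ref{sec:properties of biexact} the map $E^{(M^{\mathcal{U}})^{\mathcal{U}}}_{M^{\mathcal{U}}}$ restricts on $N^{\mathcal{U}}$ to $E^{N^{\mathcal{U}}}_N$. Hence $w \in \mathcal{U}(A' \cap M^{\mathcal{V}})$ and $E^{\mathcal{V}}_M(w) = E^{M^{\mathcal{U}}}_M\big(E^{N^{\mathcal{U}}}_N(w)\big) = 0$. I expect this step --- moving from mere diffuseness of $A' \cap M^{\mathcal{U}}$ to an actual zero-expectation unitary --- to be the main obstacle, since diffuseness of $N$ together with the expectation $E_M|_N \colon N \to A' \cap M$ does not by itself place such a unitary in the single ultrapower (a weighted direct sum $N = (A' \cap M) \oplus (A' \cap M)$ would admit no unitary of vanishing expectation), which is precisely why one passes to the further ultrapower, where Lemma~\ref{lem:zeroexpectation} can be applied to the diffuse algebra $N$ itself.

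Finally I would apply Theorem~\ref{thm:biexact implies solid} with the ultrafilter $\mathcal{V}$ to the unitary $w \in \mathcal{U}(A' \cap M^{\mathcal{V}})$ satisfying $E^{\mathcal{V}}_M(w) = 0$, concluding that the inclusion $A \subset M$ is weakly nuclear. Composing the associated u.c.p.\ factorizations $A \to \mathbb{M}_{n(i)}(\mathbb{C}) \to M$ with the conditional expectation $E \colon M \to A$, and using normality of $E$, exhibits $\mathrm{id}_A$ as a point-ultraweak limit of maps factoring through matrix algebras; thus $A$ is semidiscrete, hence amenable. Since $A$ was an arbitrary subalgebra with expectation having diffuse relative commutant in $M^{\mathcal{U}}$, this establishes that $M$ is $\mathcal{U}$-solid.
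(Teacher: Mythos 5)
Your proposal is correct, but it takes a genuinely different route from the paper's proof. The paper never leaves the given ultrapower: it decomposes $A' \cap M$ into its diffuse corner and its type I factor summands $N_i = (A' \cap M)p_i$, applies Lemma~\ref{lem:zeroexpectation} to the diffuse corner (via the embedding $p(A'\cap M)^{\mathcal U}p \subset pM^{\mathcal U}p$), and on each type I summand exploits the tensor decomposition $p_i(A' \cap M^{\mathcal U})p_i \cong P_i \ovt N_i$ with $P_i$ diffuse, on which $E_M$ restricts to a normal state, to pick $u_i \in \mathcal U(P_i)$ with $E_M(u_i) = 0$; summing the pieces produces a zero-expectation unitary in $A' \cap M^{\mathcal U}$ itself, so Theorem~\ref{thm:biexact implies solid} applies with the original $\mathcal U$. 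You instead apply Lemma~\ref{lem:zeroexpectation} directly to $N = A' \cap M^{\mathcal U}$ and pass to an iterated ultrapower; this is legitimate because Theorem~\ref{thm:biexact implies solid} is ultrafilter-agnostic while the hypothesis and conclusion of $\mathcal U$-solidity involve only $\mathcal U$ and amenability. Your supporting steps are sound: $N \subset M^{\mathcal U}$ is with expectation by Takesaki's theorem combined with the Ando--Haagerup formula $\sigma_t^{\varphi^{\mathcal U}} = (\sigma_t^{\varphi})^{\mathcal U}$; the compatibility relations recorded in the paper's preliminaries do give $E^{(M^{\mathcal U})^{\mathcal U}}_{M^{\mathcal U}}\big|_{N^{\mathcal U}} = E^{N^{\mathcal U}}_N$; and your unequally weighted direct sum example correctly identifies why diffuseness of $N$ alone does not place a zero-expectation unitary in the single ultrapower --- which is precisely the obstruction the paper's type decomposition is designed to circumvent. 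The one step you should not leave implicit is the identification $(M^{\mathcal U})^{\mathcal U} \cong M^{\mathcal V}$, with $\mathcal V$ the Fubini product of $\mathcal U$ with itself transported to $\mathbb N$, compatibly with the canonical inclusions and expectations: for Ocneanu ultraproducts of $\sigma$-finite algebras this is true but genuinely nontrivial (lifting doubly indexed families into the multiplier algebra is not formal, unlike the tracial case), and it is not recorded in this paper, so it requires a reference to the ultraproduct literature around \cite{AnHa14}. In exchange, your argument is shorter and more conceptual once those modular-theoretic black boxes are granted, whereas the paper's decomposition argument is more elementary and self-contained and keeps the witness unitary in $M^{\mathcal U}$ for the originally given ultrafilter.
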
	
\begin{proof}
Suppose $A \subset M$ is with expectation such that $A' \cap M^{\mathcal U}$ is diffuse. We let $p \in \mathcal Z(A' \cap M)$ denote the central projection such that $(A' \cap M)p$ is diffuse and $p^\perp = \sum_{i \in I} p_i$ with $N_i = (A' \cap M )p_i$ a type I factor.

Since $N_i$ is a type I factor, we have $p_i ( A'  \cap M^{\mathcal U} ) p_i \cong P_i \ovt N_i$ where $P_i$ is a diffuse von Neumann algebra and then the restriction of $E_M$ to $P_i$ maps into $\mathcal Z(N_i) = \mathbb C$, i.e., $E_M$ defines a normal state on $P_i$. Since $P_i$ is diffuse, we may choose some $u_i \in \mathcal U(P_i) \subset \mathcal U(   p_i ( A' \cap M^{\mathcal U} ) p_i )$ so that $E_M(u_i) = 0$. 

Since $A \subset M$ is with expectation, we also have that $p ( A'  \cap  M ) p$ is with expectation in $pMp$, and hence we have a canonical embedding $p ( A' \cap M )^{\mathcal U} p \subset p M^{\mathcal U} p$. By Lemma~\ref{lem:zeroexpectation} we may then choose $u_0 \in p ( A' \cap M )^{\mathcal U} p$ so that $E_{pMp}(u_0) = E_{p( A' \cap M )p}(u_0) = 0$. Setting $u = u_0 + \sum_{i \in I} u_i$ then gives a unitary in $A' \cap M^{\mathcal U}$ so that $E_M(u) = 0$. 

By Theorem~\ref{thm:biexact implies solid} we then have that $A \subset M$ is weakly nuclear, and since $A$ is with expectation it follows that $A$ is amenable.
\end{proof}

Suppose $M$ is a von Neumann algebra and $N\subset M$ is a von Neumann subalgebra with a normal faithful conditional expectation $E: M\to N$.
Denote by $e_N: L^2M\to L^2N$ the corresponding projection between the standard forms.
If $\X\subset \B(L^2M)$ is an $M$-boundary piece, 
	then we have that the norm closure of $e_N \K_\X(M) e_N$ is an $N$-boundary piece, which we denote by $\X^N$.

\begin{prop}\label{prop:subalgebra}
Let $M$ be a von Neumann algebra and $N\subset M$ a von Neumann subalgebra with a normal faithful conditional expectation $E: M \to N$.
Suppose $\X\subset \B(L^2M)$ is an $M$-boundary piece and $M$ is biexact relative to $\X$.
Then $N$ is biexact relative to $\X^N$, where $\X^N=\overline{e_N \K_\X(M) e_N}$ is the $N$-boundary piece associated with $\X$.
In particular, $N$ is biexact if $M$ is biexact.
\end{prop}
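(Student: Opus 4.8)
The plan is to push the $M$-nuclear witnesses for $M\subset\bS_\X(M)$ down to $N$ by a single u.c.p.\ map, namely $\Theta:=\Ad(e_N)\colon\B(L^2M)\to\B(L^2N)$ from Corollary~\ref{cor:condexpcont}. Three features of $\Theta$ make the argument run. First, $\Theta$ restricts on $M$ to the conditional expectation $E\colon M\to N$ (so in particular $\Theta|_N=\id_N$), and on $M'$ to $J_M a J_M\mapsto J_N E(a) J_N$; both restrictions are normal. Second, by Corollary~\ref{cor:condexpcont}, $\Theta$ is continuous from the weak $M$-topology to the weak $N$-topology and from the weak $M'$-topology to the weak $N'$-topology. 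Third, there is a commutator intertwining identity: using $e_N J_M = J_N e_N$ together with $e_N b = b e_N$ for $b\in N$ (valid since $N$ preserves $L^2N$), a direct computation gives, for every $T\in\B(L^2M)$ and $b\in N$,
\[
[\Theta(T),\, J_N b J_N] = \Theta\big([T,\, J_M b J_M]\big).
\]

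The heart of the proof is the inclusion $\Theta(\bS_\X(M))\subset\bS_{\X^N}(N)$. I would first show $\Theta(\K_\X^{\infty,1}(M))\subset\K_{\X^N}^{\infty,1}(N)$. By the very definition of $\X^N$ we have $\Theta(\K_\X(M))\subset\X^N\subset\K_{\X^N}(N)\subset\K_{\X^N}^{\infty,1}(N)$. Since $\K_\X^{\infty,1}(M)=\overline{\overline{\K_\X(M)}^{_{M-M}}}^{_{M'-M'}}$, I close up in two stages: the weak $M$-to-$N$ continuity of $\Theta$ carries the $M$-$M$ closure of $\K_\X(M)$ into the $N$-$N$ closure of $\X^N$, and then the weak $M'$-to-$N'$ continuity carries the subsequent $M'$-$M'$ closure into the $N'$-$N'$ closure, landing inside $\K_{\X^N}^{\infty,1}(N)$. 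Here one works on norm-bounded sets, where by \cite[Theorem 3.7]{Ma00} the weak and the Magajna closures coincide. Granting this, fix $T\in\bS_\X(M)$; since $N'=\{J_N b J_N\mid b\in N\}$, the intertwining identity gives, for every $y=J_N b J_N\in N'$,
\[
[\Theta(T),y]=\Theta\big([T,J_M b J_M]\big)\in\Theta(\K_\X^{\infty,1}(M))\subset\K_{\X^N}^{\infty,1}(N),
\]
because $J_M b J_M\in M'$ and $T\in\bS_\X(M)$. Hence $\Theta(T)\in\bS_{\X^N}(N)$.

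It remains to transport nuclearity. Let $\phi_i\colon M\to\M_{n(i)}(\C)$ and $\psi_i\colon\M_{n(i)}(\C)\to\bS_\X(M)$ be u.c.p.\ maps realizing $M$-nuclearity of $M\subset\bS_\X(M)$. Put $\tilde\phi_i=\phi_i|_N\colon N\to\M_{n(i)}(\C)$ and $\tilde\psi_i=\Theta\circ\psi_i\colon\M_{n(i)}(\C)\to\bS_{\X^N}(N)$; both are u.c.p., and $\tilde\psi_i$ indeed maps into $\bS_{\X^N}(N)$ by the previous paragraph. For $y\in N$ we have $\psi_i\circ\phi_i(y)\to y$ in the $M$-topology, hence in the weak $M$-topology, so by the weak continuity of $\Theta$ and $\Theta|_N=\id_N$,
\[
\tilde\psi_i\circ\tilde\phi_i(y)=\Theta\big(\psi_i\circ\phi_i(y)\big)\to\Theta(y)=y
\]
in the weak $N$-topology. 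As convergence in the weak $N$-topology suffices in the definition of $N$-nuclearity, $N\subset\bS_{\X^N}(N)$ is $N$-nuclear, i.e.\ $N$ is biexact relative to $\X^N$. For the final assertion, take $\X=\K(L^2M)$, so $\bS_\X(M)=\bS(M)$; since $\Theta$ sends compacts to compacts, $\Theta(\K(L^2M))\subset\K(L^2N)$, and the identical two-stage closure argument (now starting from genuine compacts) gives $\Theta(\K^{\infty,1}(M))\subset\K^{\infty,1}(N)$, hence $\Theta(\bS(M))\subset\bS(N)$. Composing as above shows $N\subset\bS(N)$ is $N$-nuclear, so $N$ is biexact.

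The step I expect to be the main obstacle is the inclusion $\Theta(\bS_\X(M))\subset\bS_{\X^N}(N)$. The commutator identity is a short computation once the relations among $e_N$, $J_M$, and $J_N$ are pinned down, but the closure step is delicate: it requires matching precisely the four topologies supplied by Corollary~\ref{cor:condexpcont} and keeping all approximating nets norm-bounded so that the weak and Magajna closures agree. Everything else is a formal composition of u.c.p.\ maps.
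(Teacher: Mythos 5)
Your proof is correct and follows essentially the same route as the paper's: compress by $\Ad(e_N)$, use Corollary~\ref{cor:condexpcont} to carry $\K_\X^{\infty,1}(M)$ into $\K_{\X^N}^{\infty,1}(N)$, use $[J_N N J_N, e_N]$-type relations (via $e_N J_M = J_N e_N$) to get $\Ad(e_N)(\bS_\X(M)) \subset \bS_{\X^N}(N)$, and compose the nuclearity witnesses with $\Ad(e_N)$. The paper states these three steps in compressed form; you have simply supplied the details (the commutator identity, the two-stage closure via convexity, and the compact case for the final assertion), all of which check out.
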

\begin{proof}
By Corollary~\ref{cor:condexpcont} we have $\Ad(e_N) (\K_\X^{\infty,1}(M))\subset \K_{\overline{e_N \K_\X(M) e_N}}^{\infty,1}(N)$, and since $[JNJ, e_N]=0$ it then follows that $\Ad(e_N)$ maps $\bS_\X(M)$ to $\bS_{\X^N}(N)$. Since $M \subset \bS_\X(M)$ is $M$-nuclear, by composing the corresponding maps with $\Ad(e_N)$ and using again Corollary~\ref{cor:condexpcont}, we see that $N \subset \bS_{\X^N}(N)$ is $N$-nuclear. 
\end{proof}

We recall from \cite{DKEP22} that a von Neumann algebra $M$ is properly proximal if for every non-zero central projection $p \in \mathcal Z(M)$ there does not exist a conditional expectation $E: \bS(pM)\to pM$. Note that if $M$ is biexact, then by composing the maps giving $M$-nuclearity with such an expectation we see that $pM$ is amenable. Hence, there then exists a unique projection $p \in \mathcal Z(M)$ so that $p M$ is amenable and $p^\perp M$ is properly proximal. The previous proposition, therefore, gives a generalization of Theorem 1.1 from \cite{DKEP22}. 

\begin{cor}
Let $M$ be a biexact von Neumann algebra, then every von Neumann subalgebra is either properly proximal, or else has a non-zero amenable summand. 
\end{cor}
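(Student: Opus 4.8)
The plan is to prove the non-trivial direction in contrapositive form: if a von Neumann subalgebra $N \subset M$ has no non-zero amenable direct summand, then $N$ is properly proximal. Suppose not, so by the definition recalled above there is a non-zero central projection $p \in \mathcal Z(N)$ and a conditional expectation $\Phi \colon \bS(pN) \to pN$. Since $p \in \mathcal P(M)$, Proposition~\ref{prop:amplificiationstable} shows that the corner $pMp$ is again biexact, while $pN = pNp$ is a unital von Neumann subalgebra of $pMp$. Replacing $(M, N, \Phi)$ by $(pMp, pN, \Phi)$, I reduce to the following assertion: if $N \subset M$ is a unital inclusion with $M$ biexact and there is a conditional expectation $\Phi \colon \bS(N) \to N$, then $N$ is amenable; this contradicts the hypothesis that $N$ has no amenable summand and finishes the proof.

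The clean half of this assertion is the mechanism already used in the remark preceding the corollary to treat $M$ itself. Once one knows that $N$ is biexact, i.e.\ that $N \subset \bS(N)$ is $N$-nuclear, take the defining nets of u.c.p.\ maps $\phi_i \colon N \to \mathbb M_{n(i)}(\mathbb C)$ and $\psi_i \colon \mathbb M_{n(i)}(\mathbb C) \to \bS(N)$ with $\psi_i \circ \phi_i \to \id_N$ in the $N$-topology, and compose with $\Phi$. As a conditional expectation onto $N$, the map $\Phi$ is $N$-bimodular, hence continuous for the $N$-topology, and restricts to the identity on $N$; therefore $\Phi \circ \psi_i \circ \phi_i \to \id_N$ in the $N$-topology, and since this topology is finer than the ultraweak one these maps witness $N$ as semidiscrete, so $N$ is amenable.

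The main obstacle is thus to establish that the reduced algebra $N \subset M$ is biexact, because Proposition~\ref{prop:subalgebra} only yields this for subalgebras that are with expectation, and the relevant inclusions need not be: for instance $L^\infty(X) \ovt L\mathbb F_2 \subset \B(L^2 X) \ovt L\mathbb F_2$ admits no normal conditional expectation, yet the corollary must certify it as properly proximal. When $\Phi$ comes with an ambient expectation $E \colon M \to N$ the Jones projection $e_N$ and Corollary~\ref{cor:condexpcont} transport the $M$-nuclear maps for $M \subset \bS(M)$ directly to $N$, exactly as in Proposition~\ref{prop:subalgebra}, and the argument closes. In general I would instead represent $N$ on $L^2 M$, invoke the representation independence of Lemma~\ref{lem:representation independent}, and compare the intrinsic boundary $\bS(N)$ with $\bS(M)$ through the inclusion of commutants $M' \subset N'$ and the resulting comparison of the $M$- and $N$-topologies on $\B(L^2 M)$; carrying the nuclear maps across this comparison --- where the commutants run in the opposite direction to the algebras --- is the delicate point on which the general case rests.
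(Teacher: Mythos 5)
Your reduction and your amenability mechanism are exactly the paper's: for a nonzero central $p \in \mathcal Z(N)$ with an expectation $\Phi: \bS(pN) \to pN$, compose $\Phi$ with the u.c.p.\ maps witnessing $pN$-nuclearity of the inclusion $pN \subset \bS(pN)$; since $\Phi$ restricts to the identity on $pN$ it is $pN$-bimodular, hence continuous for the (weak) $pN$-topology by Lemma~\ref{lem:weakcontinuity}, so $pN$ is semidiscrete and therefore amenable. This is precisely the remark preceding the corollary, and your corner reduction via Proposition~\ref{prop:amplificiationstable} is also how the paper handles central cut-downs. For subalgebras with expectation, your argument (via Proposition~\ref{prop:subalgebra}) coincides with the paper's two-line derivation, which combines that proposition with the amenable/properly-proximal dichotomy for biexact algebras.

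The genuine gap is your plan for the general case, and it is not merely "delicate" --- it is impossible as formulated. You propose to show that an arbitrary von Neumann subalgebra $N$ of a biexact $M$ is itself biexact, and you offer $N = L^\infty(X) \ovt L\mathbb F_2 \subset \B(L^2(X)) \ovt L\mathbb F_2$ as the motivating test case. But that very $N$ is \emph{not} biexact: if it were, Theorem~\ref{thm:solid} would make it $\mathcal U$-solid, whereas $N$ is nonamenable, is trivially with expectation in itself, and $N' \cap N^{\mathcal U}$ contains the diffuse algebra $L^\infty(X)^{\mathcal U}$ (equivalently, by Lemma~\ref{lem:zeroexpectation} one finds $u \in \mathcal U(\mathcal Z(N)^{\mathcal U})$ with $E_N(u) = 0$, and Theorem~\ref{thm:biexact implies solid} would force $N$ to be semidiscrete). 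So your own example refutes your strategy. This is consistent with the structural obstruction you half-noticed: since $N' \supset M'$, membership in $\bS(N; L^2M)$ demands commutation modulo compacts with strictly more operators than membership in $\bS(M)$ does, and the $M$-topology is coarser than the $N$-topology (every $N$-factorization is an $M$-factorization, so the $M$-seminorms are smaller), so the nuclearity data for $M \subset \bS(M)$ cannot simply be transported to $N \subset \bS(N)$. Consequently, any treatment of the no-expectation case must establish proper proximality of $N$ directly from the ambient structure, rather than via biexactness of $N$ --- note that Theorem 1.1 of \cite{DKEP22}, which this corollary generalizes, lives in the tracial setting where every subalgebra is automatically with expectation. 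As written, your proposal proves the statement only for subalgebras with expectation, which is exactly the content of the paper's own derivation from Proposition~\ref{prop:subalgebra}.
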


Suppose $\mathcal F = \{ (B_j, E_j) \}_{j \in J}$ is a family of von Neumann subalgebras $B_j\subset M$, together with normal faithful conditional expectations. For each $j \in J$, we let $e_{B_j}: L^2M \to L^2B_j$ denote the orthogonal projection corresponding to $E_j$. We let $\X_{ \mathcal F}$ denote the hereditary ${\rm C}^*$-subalgebra of $\B(L^2M)$ generated by $\{ x JyJ e_{B_j}\mid x, y \in M, j \in J \}$, and we say that $M$ is biexact relative to $\mathcal F$ if it is biexact relative to the boundary piece $\X_{\mathcal F}$. If the conditional expectations are understood from the context (e.g., if $M$ is a II$_1$ factor), then we may write simply $\X_{ \{ B_j \}_{j \in J} }$ instead. For a single von Neumann subalgebra $B \subset M$ with normal faithful conditional expectation $E: M \to B$, we will write even more simply $\X_{(B, E)}$ or $\X_B$ for this boundary piece, and we will say that $M$ is biexact relative to $B$ if it is biexact relative to $\X_B$.

\begin{lem}
Using the above notations, assume $M$ is $\sigma$-finite.
For a finite von Neumann subalgebra $N\subset M$ with expectation, either there exists a net $\{ u_i \}_i \subset \mathcal U(N)$ that converges to $0$ over $\X_{\mathcal F}$, or else we have $N\preceq_M B_j$ for some $j \in J$. 
\end{lem}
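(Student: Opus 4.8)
The plan is to prove the contrapositive: assuming $N \not\preceq_M B_j$ for every $j \in J$, I will construct a net $\{u_i\} \subset \mathcal U(N)$ that converges to $0$ over $\X_{\mathcal F}$. Since $N$ is finite and $\sigma$-finite, fix a normal faithful trace $\tau_N$ on $N$ and set $\omega = \tau_N \circ E_N$, a normal faithful state on $M$; because $E_N$ is $N$-bimodular and $\tau_N$ is a trace, $\omega \circ \Ad(u) = \omega$ for every $u \in \mathcal U(N)$, so the weak$^*$-limit hypothesis of Lemma~\ref{lem:wkconvergenceoverX} will be automatic. For each $j$ I also fix a compatible state $\varphi_j = \varphi_{B_j} \circ E_j$, so that in the standard form the Jones projection satisfies $e_{B_j}(a \varphi_j^{1/2}) = E_j(a)\varphi_j^{1/2}$ for $a \in M$.

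First I would produce the net via intertwining, the key point being the dichotomy: either there is a net $\{u_i\} \subset \mathcal U(N)$ with $\|E_{B_j}(a u_i b)\|_{2,\varphi_j} \to 0$ for \emph{all} $j \in J$ and all $a,b \in M$ simultaneously, or else $N \preceq_M B_j$ for some $j$. Rephrasing ``no such net exists'' in the usual $\varepsilon$--finite-set form (valid by the Popa/Houdayer--Isono characterization) shows that the failure is witnessed by a \emph{finite} subfamily $j_1,\dots,j_n$, a finite set $F \subset M$, and $\varepsilon_0 > 0$ with $\sum_k \sum_{a,b \in F} \|E_{B_{j_k}}(aub)\|_{2,\varphi_{j_k}}^2 \ge \varepsilon_0^2$ for all $u \in \mathcal U(N)$. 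Embedding $N$ diagonally into $\tilde M = \bigoplus_k M$ with $\tilde B = \bigoplus_k B_{j_k}$ (with expectation $\bigoplus_k E_{j_k}$), this is exactly Popa's lower bound, so by \cite[Theorem 2.1]{Po06B} in the finite-with-expectation form of \cite[Theorem 4.3]{HoIs17} we obtain $N \preceq_{\tilde M} \tilde B$. Decomposing the resulting partial isometry and $\ast$-homomorphism across the finitely many central summands then localizes the intertwining to one summand, giving $N \preceq_M B_{j_k}$ and contradicting the hypothesis; hence the net exists.

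Given the net, the heart of the matter is the estimate $\|e_{B_j} m u_i^* \zeta\| \to 0$ for every $m \in M$, $\zeta \in L^2M$, and $j$. By uniform boundedness of $e_{B_j}u_i^*$ it suffices to take $\zeta = a\varphi_j^{1/2}$, and then $\|e_{B_j} m u_i^* a\varphi_j^{1/2}\| = \|E_j(m u_i^* a)\|_{2,\varphi_j}$, which tends to $0$ by the intertwining vanishing applied to $u_i^*$ (note $E_{B_j}(m u_i^* a) = E_{B_j}(a^* u_i m^*)^*$). From this I deduce $\Ad(u_i)(T) \to 0$ ultraweakly for every $T \in \X_{\mathcal F}$. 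Writing $\X_{\mathcal F} = \overline{C^*(G)\,\B(L^2M)\,C^*(G)}$ with $G = \{x J y J e_{B_j}\}$, the set $\mathcal N = \{T : \|T u_i^* \xi\| \to 0 \ \forall \xi\}$ is a norm-closed left ideal; since $J y J \in M'$ commutes with $u_i^* \in M$, both $g = x J y J e_{B_j}$ and $g^* = e_{B_j} J y^* J x^*$ reduce (after moving $u_i^*$ to the right of the rightmost $e_{B_j}$) to the estimate above, so every word in the generators, and hence $C^*(G)$, lies in $\mathcal N$. Then for $T = cSd$ with $c,d \in C^*(G)$ one has $\langle \Ad(u_i)(T)\xi,\eta\rangle = \langle S d u_i^* \xi,\, c^* u_i^* \eta\rangle$, which vanishes since $d \in \mathcal N$; norm-density finishes the claim for all of $\X_{\mathcal F}$. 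Finally, Lemma~\ref{lem:wkconvergenceoverX} upgrades this to convergence to $0$ on all of $\K^{\infty,1}_{\X_{\mathcal F}}(M)$, i.e.\ over $\X_{\mathcal F}$.

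I expect the main obstacle to be the combining step: the individual non-intertwinings $N \not\preceq_M B_j$ must be promoted to a single net that is uniformly small for the whole family at once, and the localization of $N \preceq_{\tilde M} \tilde B$ back to a single $B_{j_k}$ must be carried out so that the restricted $\ast$-homomorphism still has image \emph{with expectation}. This is where the non-tracial setting is most delicate, and where I would lean on the finite-subfamily reduction together with a careful direct-summand decomposition of the intertwining data; the remaining computations (the key estimate and the passage through $C^*(G)$) are then routine given the earlier lemmas.
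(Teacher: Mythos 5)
Your proposal is correct and, in outline, is the paper's proof: fix $\omega = \tau_N \circ E_N$ so that $N$ lies in the centralizer of $\omega$ (making the hypothesis of Lemma~\ref{lem:wkconvergenceoverX} automatic), produce a single net $\{u_i\} \subset \mathcal U(N)$ with $E_j(a u_i b) \to 0$ strongly for \emph{all} $j$ simultaneously, verify the Jones-projection estimate, propagate to the hereditary algebra $\X_{\mathcal F}$, and close with Lemma~\ref{lem:wkconvergenceoverX}. Your left-ideal bookkeeping for step (iii) is a harmless reorganization of the paper's direct matrix-coefficient estimate $|\langle u_i^* a_1 e_{B_{j_1}} T e_{B_{j_2}} a_2 u_i x_1 \varphi^{1/2}, x_2 \varphi^{1/2}\rangle| \leq \|T\|\, \|E_{j_1}(a_2 u_i x_1)\|_\varphi \|E_{j_2}(a_1^* u_i x_2)\|_\varphi$. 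The one genuine difference is the simultaneity step: the paper disposes of it in one line by citing \cite[Definition 4.1]{HoIs17} together with \cite[Remark 1.2]{Ioa12}, whereas you re-prove it via the finite-subfamily reduction and the direct-sum trick $N \preceq_{\tilde M} \bigoplus_k B_{j_k}$ inside $\tilde M = \bigoplus_k M$ (note the diagonal copy of $N$ is indeed with expectation in $\tilde M$, via $(x_k)_k \mapsto \frac{1}{n}\sum_k E_N(x_k)$). The localization you flag as delicate does go through and is exactly what the citation covers: cutting the intertwining data by the central projections $z_k$ of $\tilde M$ yields a nonzero partial isometry $v z_k$ and a unital normal $*$-homomorphism $\phi(\cdot)z_k$ into $f z_k B_{j_k} f z_k$, and the image remains with expectation by the standard fact that if $P \subset Q$ is with expectation $E$ and $z \in P' \cap Q$ is a projection, then $Pz \subset zQz$ is with expectation --- here $E(z)$ is central in $P$, and $x \mapsto E(z)^{-1/2} E(x) E(z)^{-1/2}$ (interpreted on the support of $E(z)$, where $Pz$ lives) gives the required normal faithful conditional expectation. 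So your route buys self-containedness at the cost of one standard lemma you left implicit; the paper's buys brevity by citation. No gap beyond that unproven but true auxiliary fact.
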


\begin{proof}
Let $\varphi$ denote a normal faithful state on $M$ such that $\varphi_{|N}$ is a trace. 
Suppose $N\not\preceq_M B_j$ for any $j \in J$, then we may choose a net $\{u_i\}\subset \cU(N)$
	such that $E_{j}(a u_i b)\to 0$ strongly for any $a,b\in M$ and $j \in J$
	by \cite[Definition 4.1]{HoIs17} and \cite[Remark 1.2]{Ioa12}.
Notice that for any $T\in \B(L^2M)$, $x_n, a_n\in M$ and $j_1, j_2\in J$  
\[\begin{aligned}
&|\langle u_i^* a_1 e_{B_{j_1}} T e_{B_{j_2}} a_2 u_i x_1 1_{\varphi}, x_21_{\varphi}\rangle|\\
	\leq & \|T\| \|E_{{j_1}}(a_2u_ix_1)\|_{\varphi} \|E_{{j_2}}(a_1^* u_ix_2)\|_{\varphi}\to 0,
\end{aligned}
\]
and hence $\Ad(u_i)(a_1 Jb_1J e_{B_j}T e_{B_j} Jb_2J a_2)\to 0$ ultraweakly for any $a_1, a_2, b_1, b_2\in M$. It then follows that ${\rm Ad}(u_i)(S) \to 0$ ultraweakly for any $S \in \X_{\mathcal F}$, and since $u_i \in M$, which is in the centralizer of $\varphi$, it follows from Lemma~\ref{lem:wkconvergenceoverX} that $\{ u_i \}$ converges to $0$ over $\X_{\mathcal F}$. 
\end{proof}

Continuing in the above setting,
notice that the proof of Theorem~\ref{thm:biexact implies solid} shows that if there exists a net $\{u_i\}\subset \cU(N)$ such that $\{u_i \}$ converges to $0$ over $\X$, then there exists a u.c.p.\ map $\theta: \B(L^2M)\to \B(L^2M)$ such that $\theta$ vanishes on $\K_\X^{\infty, 1}(M)$ and $\theta_{| N' \cap M}$ is the identity. 
Therefore, the same proof as in Theorem~\ref{thm:biexact implies solid} yields the following.

\begin{prop}\label{prop:relative biexact intertwine}
Let $M$ be a $\sigma$-finite von Neumann algebra, and $\mathcal F = \{ (B_j, E_j) \}_{j \in J}$ a family of von Neumann subalgebras of $M$ with expectation.
Suppose $M$ is biexact relative to $\X_{\mathcal F}$ and $N\subset M$ is a finite von Neumann subalgebra with expectation.
Then either $N'\cap M$ is amenable, or else $N\preceq_M B_j$ for some $j \in J$.
\end{prop}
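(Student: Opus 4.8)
The plan is to run the standard dichotomy-plus-deformation argument, exactly parallel to the proof of Theorem~\ref{thm:biexact implies solid}, and to isolate at the end the one genuinely new point: the passage from weak nuclearity of $N'\cap M \subset M$ to amenability of $N'\cap M$. First I would invoke the preceding lemma, which gives the dichotomy: either $N \preceq_M B_j$ for some $j \in J$, in which case there is nothing to prove, or else there is a net $\{u_i\} \subset \cU(N)$ converging to $0$ over $\X_{\mathcal F}$. I assume the latter throughout. As observed in the remark preceding this proposition, the argument in the proof of Theorem~\ref{thm:biexact implies solid} then produces a u.c.p.\ map $\theta \colon \B(L^2 M) \to \B(L^2 M)$, obtained as a point-ultraweak limit of the maps $\Ad(u_i)$, which vanishes on $\K_\X^{\infty,1}(M)$ and satisfies $\theta_{|N'\cap M} = \id$. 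Since the $u_i$ lie in $M$, they commute with $M'$, so $\theta$ is $M'$-bimodular and fixes $M'$ pointwise; combined with the fact that $\theta$ kills $\K_\X^{\infty,1}(M)$, this forces $[\theta(T),x] = \theta([T,x]) = 0$ for $T \in \bS_\X(M)$ and $x \in M'$, i.e.\ $\theta(\bS_\X(M)) \subset M$. As in Theorem~\ref{thm:biexact implies solid}, choosing the state $\varphi$ with $u_i$ in its centralizer shows $\theta_{|M}$ is normal.

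Next I would combine $\theta$ with the defining factorization of biexactness relative to $\X_{\mathcal F}$. Let $\phi_i \colon M \to \M_{n(i)}(\C)$ and $\psi_i \colon \M_{n(i)}(\C) \to \bS_{\X_{\mathcal F}}(M)$ be u.c.p.\ maps witnessing $M$-nuclearity of $M \subset \bS_{\X_{\mathcal F}}(M)$, so $\psi_i \circ \phi_i \to \id_M$ in the (weak) $M$-topology. Because $\theta_{|M}$ is normal, Lemma~\ref{lem:weakcontinuity} tells me $\theta$ is continuous from the weak $M$-topology on $\bS_{\X_{\mathcal F}}(M)$ to the ultraweak topology on $M$, whence $\theta \circ \psi_i \circ \phi_i(x) \to \theta(x)$ ultraweakly for all $x \in M$. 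Restricting to $Q := N'\cap M$ and using $\theta_{|Q} = \id$ gives $\theta \circ \psi_i \circ \phi_i(x) \to x$ ultraweakly for $x \in Q$. Since $\phi_i{}_{|Q} \colon Q \to \M_{n(i)}(\C)$ and $\theta \circ \psi_i \colon \M_{n(i)}(\C) \to M$ are u.c.p., this exhibits the inclusion $Q \subset M$ as a point-ultraweak limit of maps factoring through matrix algebras, i.e.\ $Q \subset M$ is weakly nuclear.

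The main obstacle is the final upgrade: weak nuclearity of $Q \subset M$ only lands in $M$, so to conclude semidiscreteness (hence amenability) of $Q$ I must compose with a normal conditional expectation $M \to Q$, and it is not a priori clear that $N'\cap M$ is with expectation. Here I would use the finiteness of $N$: pick a faithful normal trace $\tau$ on $N$ and set $\varphi = \tau \circ E$, a normal faithful state on $M$. Since $E$ is $\varphi$-preserving and $\tau$ is a trace, $\sigma^\varphi$ fixes $N$ pointwise, i.e.\ $N \subset M^\varphi$; consequently, for $x \in N'\cap M$ one has $\sigma^\varphi_t(x) n = n \sigma^\varphi_t(x)$ for all $n \in N$, so $N'\cap M$ is globally invariant under $\sigma^\varphi$. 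By Takesaki's theorem there is a normal faithful conditional expectation $E_Q \colon M \to N'\cap M$. Composing the weakly nuclear factorization of $Q \subset M$ with $E_Q$ then yields u.c.p.\ maps $Q \to \M_{n(i)}(\C) \to Q$ converging point-ultraweakly to $\id_Q$, so $Q = N'\cap M$ is semidiscrete and therefore amenable, completing the proof.
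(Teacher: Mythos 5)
Your proof is correct and follows essentially the same route as the paper, which simply invokes the preceding lemma together with the argument of Theorem~\ref{thm:biexact implies solid} (via the remark that a net $\{u_i\}\subset \cU(N)$ converging to $0$ over $\X_{\mathcal F}$ yields the u.c.p.\ map $\theta$ vanishing on $\K_{\X_{\mathcal F}}^{\infty,1}(M)$ and fixing $N'\cap M$). Your final step --- obtaining the normal faithful conditional expectation onto $N'\cap M$ from $\varphi = \tau\circ E$, the inclusion $N\subset M^\varphi$, and Takesaki's theorem --- correctly supplies the one detail the paper leaves implicit in passing from weak nuclearity of $N'\cap M\subset M$ to amenability of $N'\cap M$.
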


\subsection{Permanence properties}

\begin{prop}[cf.\ {\cite[Lemma 15.3.3]{BO08}}]\label{prop:biexacttensor}
For $i = 1, 2$, let $M_i$ be a von Neumann algebra and $\X_i \subset \B(L^2 M_i)$ be an $M_i$-boundary piece. If $M_i$ is biexact relative to $\X_i$, then $M=M_1\ovt M_2$ is biexact relative to the hereditary ${\rm C}^*$-algebra generated by $\X_1 \otimes \B(L^2M_1)$ and $\B(L^2M_1) \otimes \X_2$. 

In particular, if each $M_i$ is biexact, then $M$ is biexact relative to $\X_{\{M_1, M_2\}}$.
\end{prop}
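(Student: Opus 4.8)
The plan is to tensor the nuclear embeddings coming from the biexactness of each $M_i$ and to check that their composition still converges in the ambient $M$-topology and lands inside $\bS_\X(M)$. I represent $M = M_1\ovt M_2$ on $L^2M_1\ovt L^2M_2$, write $J = J_1\otimes J_2$, and set $M_0 = M_1\otimes M_2$ (the minimal ${\rm C}^*$-tensor product), an ultraweakly dense ${\rm C}^*$-subalgebra of $M$. By Corollary~\ref{cor:densenuclear} it then suffices to produce an $(M_0\subset M)$-nuclear embedding $M_0\subset\bS_\X(M)$. Taking u.c.p.\ maps $\phi^i_k\colon M_i\to\M_{n(i,k)}(\C)$ and $\psi^i_k\colon\M_{n(i,k)}(\C)\to\bS_{\X_i}(M_i)$ witnessing biexactness of $M_i$, I form the minimal tensor products $\phi_{k,l} = \phi^1_k\otimes\phi^2_l$ on $M_0$ and $\psi_{k,l} = \psi^1_k\otimes\psi^2_l$; these are u.c.p., and the range of $\psi_{k,l}$ lies in $\bS_{\X_1}(M_1)\otimes\bS_{\X_2}(M_2)$.

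The two things to verify are (a) $\bS_{\X_1}(M_1)\otimes\bS_{\X_2}(M_2)\subset\bS_\X(M)$ and (b) $\psi_{k,l}\circ\phi_{k,l} = \theta^1_k\otimes\theta^2_l\to\id$ in the $(M_0\subset M)$-topology, where $\theta^i_k = \psi^i_k\circ\phi^i_k$. For (b) it is enough to treat elementary tensors $x_1\otimes x_2$ and extend by the uniform bound $s^\rho_\omega\le\|\cdot\|$ and density in $M_0$; telescoping
\[
\theta^1_k(x_1)\otimes\theta^2_l(x_2) - x_1\otimes x_2 = (\theta^1_k(x_1)-x_1)\otimes\theta^2_l(x_2) + x_1\otimes(\theta^2_l(x_2)-x_2),
\]
and tensoring an $M_1$-decomposition of $\theta^1_k(x_1)-x_1$ with $\theta^2_l(x_2)$ yields $s^\rho_\omega\big((\theta^1_k(x_1)-x_1)\otimes\theta^2_l(x_2)\big)\le\|x_2\|\,s^{\rho|_{M_1}}_{\omega|_{M_1}}(\theta^1_k(x_1)-x_1)\to0$ by biexactness of $M_1$, the other summand being symmetric. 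For (a), since $\{x\in M'\mid[T,x]\in\K^{\infty,1}_\X(M)\}$ is a von Neumann subalgebra of $M' = M_1'\ovt M_2'$ and is generated by $M_1'\otimes 1$ and $1\otimes M_2'$, it suffices to check $[T_1\otimes T_2, a\otimes1] = [T_1,a]\otimes T_2\in\K^{\infty,1}_\X(M)$ for $a\in M_1'$, together with its mirror. Thus (a) reduces to the containment
\[
\K^{\infty,1}_{\X_1}(M_1)\otimes\B(L^2M_2)\subset\K^{\infty,1}_\X(M)
\]
and its mirror image, using that $[T_1,a]\in\K^{\infty,1}_{\X_1}(M_1)$ when $T_1\in\bS_{\X_1}(M_1)$.

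This containment is the technical heart, and I would prove it in three steps. First, for positive $S_1\in\K_{\X_1}(M_1)$ and $T_2\in\B(L^2M_2)_+$, the square root $S_1^{1/2}$ lies in the closed left ideal $\K_{\X_1}^L(M_1)$, so there are orthogonal families $\{f_i\},\{\tilde f_j\}\subset\cP(M_1)$ with $S_1^{1/2}J_1\tilde f_jJ_1 f_i\in\X_1$; tensoring the families with $1$ gives $(S_1^{1/2}\otimes T_2^{1/2})\,J(\tilde f_j\otimes1)J\,(f_i\otimes1) = (S_1^{1/2}J_1\tilde f_jJ_1 f_i)\otimes T_2^{1/2}\in\X_1\otimes\B(L^2M_2)\subset\X$, so $S_1^{1/2}\otimes T_2^{1/2}\in\K_\X^L(M)$ and hence $S_1\otimes T_2\in\K_\X(M)$; spanning gives $\K_{\X_1}(M_1)\odot\B(L^2M_2)\subset\K_\X(M)$. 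Second, for \emph{every} normal state $\mu$ on $M$, tensoring a decomposition realizing $r_{\mu|_{M_1}}$ with a fixed $T_2$ yields the estimate $r_\mu(C\otimes T_2)\le\|T_2\|\,r_{\mu|_{M_1}}(C)$ for all $C\in\B(L^2M_1)$; the delicate point here, and what makes the argument work for non-product states, is that the decomposition only sees the restriction $\mu|_{M_1}$ and the identity $J = J_1\otimes J_2$. Third, since $\K^{\infty,1}_{\X_1}(M_1)$ is the $r$-closure of $\K_{\X_1}(M_1)$, any $C_1$ there is approximable in each $r_{\mu|_{M_1}}$ by elements of $\K_{\X_1}(M_1)$; the second step promotes this to $r_\mu$-approximation of $C_1\otimes T_2$ by elements of $\K_{\X_1}(M_1)\odot\B(L^2M_2)\subset\K_\X(M)$, and since $\K^{\infty,1}_\X(M)$ is $r$-closed we conclude $C_1\otimes T_2\in\K^{\infty,1}_\X(M)$. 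Combining (a) and (b) with Corollary~\ref{cor:densenuclear} shows $M$ is biexact relative to $\X$.

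For the ``in particular'' statement I take $\X_i = \K(L^2M_i)$ and identify $\X$ with $\X_{\{M_1,M_2\}}$ at the level of generalized compacts. The Jones projections are $e_{M_1} = 1\otimes p_2$ and $e_{M_2} = p_1\otimes1$, with $p_i$ the rank-one projection onto the cyclic vector of $L^2M_i$, so each generator $xJyJe_{M_1}$ lies in $\B(L^2M_1)\otimes\K(L^2M_2)$ and each $xJyJe_{M_2}$ in $\K(L^2M_1)\otimes\B(L^2M_2)$; hence $\X_{\{M_1,M_2\}}\subset\X$ and $\K^{\infty,1}_{\X_{\{M_1,M_2\}}}(M)\subset\K^{\infty,1}_\X(M)$. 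For the reverse I use that $\X_{\{M_1,M_2\}}$ is hereditary and contains each $e_{M_j}$, hence contains the corners $e_{M_2}\B(L^2M)e_{M_2} = \C p_1\otimes\B(L^2M_2)$ and $e_{M_1}\B(L^2M)e_{M_1} = \B(L^2M_1)\otimes\C p_2$; left multiplication by $\B(L^2M)$ and right multiplication by $M$ (which only moves the bra vector and converges in norm for rank-one operators) then place $\K(L^2M_1)\otimes\B(L^2M_2)$ and $\B(L^2M_1)\otimes\K(L^2M_2)$ in $\K_{\X_{\{M_1,M_2\}}}^L(M)$, so $\X\subset\K_{\X_{\{M_1,M_2\}}}(M)$ and $\K^{\infty,1}_\X(M)\subset\K^{\infty,1}_{\X_{\{M_1,M_2\}}}(M)$. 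Therefore $\bS_\X(M) = \bS_{\X_{\{M_1,M_2\}}}(M)$, and biexactness relative to $\X$ is biexactness relative to $\X_{\{M_1,M_2\}}$.
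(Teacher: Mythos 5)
Your main argument is the paper's argument: tensor the u.c.p.\ maps witnessing biexactness of each factor, verify $\bS_{\X_1}(M_1)\otimes\bS_{\X_2}(M_2)\subset\bS_\X(M)$ through the commutator identities $[T_1\otimes T_2, J(a\otimes 1)J]=[T_1,J_1aJ_1]\otimes T_2$ combined with the fact that $\{x\in M'\mid [T,x]\in\K_\X^{\infty,1}(M)\}$ is a von Neumann subalgebra of $M'=M_1'\ovt M_2'$, and upgrade from $(M_1\otimes M_2\subset M)$-nuclearity via Corollary~\ref{cor:densenuclear}. One genuine addition on your part: the paper simply asserts the containment $\K^{\infty,1}_{\X_1}(M_1)\otimes\B(L^2M_2)\subset\K^{\infty,1}_\X(M)$ in a display, whereas your three-step argument proves it, and your second step is stated exactly right --- the estimate $r_\mu(C\otimes T_2)\leq \|T_2\|\, r_{\mu_1}(C)$, with $\mu_1=\mu(\,\cdot\,\otimes 1)$, works for an \emph{arbitrary} normal state $\mu$ on $M$ because the decomposition realizing $r_{\mu_1}(C)$ only interacts with $\mu$ through its restriction to $M_1\otimes 1$ and $J=J_1\otimes J_2$. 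Your telescoping verification of the convergence is likewise the implicit content of the paper's proof.

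There is, however, one false assertion in your treatment of the ``in particular'' statement: it is not true that $xJyJe_{M_1}\in\B(L^2M_1)\otimes\K(L^2M_2)$ for general $x,y\in M$. This holds when $x,y$ lie in (the norm closure of) $M_1\odot M_2$, but a general element of the von Neumann tensor product spreads across the factors. For instance, take $M_1=M_2=L^\infty[0,1]$, $y=1$, and $x$ multiplication by $g(s,t)=\sum_n 1_{A_n}(s)e^{2\pi int}$ for a partition $\{A_n\}$ of $[0,1]$: then $xe_{M_1}$ sends the unit vectors $|A_n|^{-1/2}1_{A_n}\otimes 1$ to $|A_n|^{-1/2}1_{A_n}\otimes e_n$ with $\{e_n\}$ orthonormal, so $\|(1\otimes q^\perp)\,x e_{M_1}\|$ does not tend to $0$ along finite-rank projections $q$, violating the standard criterion for membership in $\B\otimes\K$. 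Fortunately this does not damage your conclusion, for two reasons. First, only the direction you prove correctly is logically needed: your corner-and-multiplier argument gives $\X_1\otimes\B(L^2M_2)+\B(L^2M_1)\otimes\X_2\subset\K_{\X_{\{M_1,M_2\}}}(M)$, hence $\X\subset\K_{\X_{\{M_1,M_2\}}}(M)$ since the latter is hereditary, hence $\K^{\infty,1}_\X(M)\subset\K^{\infty,1}_{\X_{\{M_1,M_2\}}}(M)$ and $\bS_\X(M)\subset\bS_{\X_{\{M_1,M_2\}}}(M)$, and $M$-nuclearity transfers through this inclusion --- the claimed \emph{equality} of the two small-at-infinity boundaries is not required. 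Second, the containment you aimed for is nevertheless true and easily repaired without the false claim: for $T=xJyJe_{M_1}$ one has $T^*T=e_{M_1}Jy^*Jx^*xJyJe_{M_1}=R\otimes p_2\in\B(L^2M_1)\otimes\K(L^2M_2)\subset\X$ for some $R\in\B(L^2M_1)$, so $|T|\in\K_\X(M)$ and $T\in\K^L_\X(M)$; while $T^*=e_{M_1}(Jy^*J)x^*\in \X M'M\subset\K^L_\X(M)$, using $e_{M_1}=1\otimes p_2\in\X$ and the fact that $M$ and $M'$ are right multipliers of $\K_\X^L(M)$. Hence $T\in\K_\X(M)$, and since $\K_\X(M)$ is hereditary, $\X_{\{M_1,M_2\}}\subset\K_\X(M)$ after all.
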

\begin{proof}
Let $\X \subset \B(L^2M)$ denote the hereditary ${\rm C}^*$-algebra generated by $\X_1 \otimes \B(L^2M_1)$ and $\B(L^2M_1) \otimes \X_2$. 

First we show that $\bS_{\X_1}(M_1)\otimes \bS_{\X_2}(M_2)\subset \bS_\X (M)$.
Indeed, note that for any $T\in \bS_{\X_1}(M_1)$, $S\in \bS_{\X_2}$, $a\in M_1$, $b \in M_2$ we have 
\[
[T\otimes S, J(a\otimes 1)J]=[T, JaJ]\otimes S\in \K^{\infty,1}_{\X_1}(M_1)\otimes \B(L^2 M_2)\subset \K_\X^{\infty,1}(M)
\]
and 
\[
[T\otimes S, J(1\otimes b)J]=T\otimes [S, JbJ] \in \B(L^2 M_1) \otimes \K^{\infty,1}_{\X_2}(M_2) \subset \K_\X^{\infty,1}(M).
\]
It then follows from \cite[Lemma 6.1]{DKEP22} that $T\otimes S\in \bS_\X(M)$.

For each $i=1,2$, denote by $\phi_j^i: M_i\to \M_{n(i,j)}(\C)$ and $\psi_j^i: \M_{n(i,j)}(\C)\to \bS_{\X_i}(M_i)$ 
	those u.c.p.\ maps given by relative biexactness of $M_i$.
Then, by considering the u.c.p.\ maps $\phi_j^1\otimes \phi_j^2: M_1\otimes M_2\to \M_{n(1, j)}(\C)\otimes \M_{n(2, j)}(\C)$ and $\psi_j^1\otimes \psi_j^2:\M_{n(1, j)}(\C)\otimes \M_{n(2, j)}(\C)\to \bS_{\X_1}(M_1)\otimes \bS_{\X_2}(M_2)\subset \bS_\X (M)$, we see that $M_1\otimes M_2\subset \bS_\X (M)$ is $(M_1\otimes M_2 \subset M_1 \ovt M_2)$-nuclear.
It follows from Corollary~\ref{cor:densenuclear} that $M\subset \bS_\X (M)$ is $M$-nuclear.
\end{proof}

In \cite{HoIs17}, Houdayer and Isono generalize Ozawa and Popa's unique prime decomposition results from \cite{OzPo04} to the type III setting. The techniques developed in \cite{HoIs17} also apply to general biexact factors. 

\begin{cor}\label{cor:primedecomposition}
Let $m,n\geq 1$ be integers. 
For each $1\leq i\leq n$, let $M_i$ be a nonamenable biexact factor.
For each $1\leq j\leq n$, let $N_j$ be any non-type $I$ factor and suppose 
	$M=\overline{\otimes}_{i=1}^{\ m} M_i=\overline{\otimes}_{j=1}^{\ n} N_j$.
There then exists a surjection $\sigma: \{1,\dots, m\}\to \{1, \dots, n\}$, type $I$ factors $F_1,\dots, F_n$, 
	and a unitary $u\in ( \overline{\otimes}_{j=1}^{\ n}  F_j ) \ovt M $
	such that $u(F_j \ovt N_j)u^*=(\overline{\otimes}_{i\in \sigma^{-1}(j)}  M_i) \ovt F_j $.
\end{cor}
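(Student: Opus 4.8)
The plan is to recognize that $M$, equipped with the decomposition $\overline{\otimes}_{i=1}^m M_i$, supplies exactly the structural input required by the unique prime factorization machinery of Houdayer and Isono \cite{HoIs17}, and then to run that machinery. Throughout write $\widehat{M}_i = \overline{\otimes}_{k \neq i} M_k$ for the complementary tensor factor, equipped with the canonical state-preserving conditional expectation, and likewise $\widehat{N}_j = \overline{\otimes}_{k \neq j} N_k$, noting that $\widehat{N}_j' \cap M = N_j$ since each $N_k$ is a factor.

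First I would establish the relevant relative biexactness, namely that $M$ is biexact relative to the family $\mathcal F = \{\widehat{M}_i\}_{i=1}^m$. This is proved by induction on $m$ using Proposition~\ref{prop:biexacttensor}: writing $M = (\overline{\otimes}_{i<m} M_i) \ovt M_m$, with the first factor biexact relative to $\X_{\{\widehat{M}_i\}_{i<m}}$ by induction and $M_m$ biexact relative to $\K(L^2 M_m)$, Proposition~\ref{prop:biexacttensor} gives biexactness relative to the hereditary ${\rm C}^*$-algebra generated by $\X_{\{\widehat{M}_i\}_{i<m}} \otimes \B(L^2 M_m)$ and $\B(L^2(\overline{\otimes}_{i<m} M_i)) \otimes \K(L^2 M_m)$. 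The only point to check is that this coincides with $\X_{\mathcal F}$: the first summand is the compacts in the $i$-th leg of the full product for each $i < m$, and the second is the compacts in the $m$-th leg, so together they generate precisely the boundary piece of compact operators in a single tensor leg. Here one uses that the compacts in the $i$-th leg form the hereditary ${\rm C}^*$-algebra generated by $\{x J y J e_{\widehat{M}_i} \mid x, y \in M\}$, since $e_{\widehat{M}_i}$ is a minimal projection in that leg, which is the defining generating set of $\X_{\mathcal F}$.

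With this in hand, Proposition~\ref{prop:relative biexact intertwine} supplies the dichotomy that drives the whole argument: for every finite von Neumann subalgebra $P \subset M$ with expectation, either $P' \cap M$ is amenable, or $P \preceq_M \widehat{M}_i$ for some $i$. This is exactly the form of relative solidity that \cite{HoIs17} takes as the hypothesis of their unique prime factorization theorem, so their proof applies with $\{\widehat{M}_i\}$ in place of the single-variable target. Concretely, one tests suitable finite subalgebras of $\widehat{N}_j$ (or of $N_j$) against the targets $\widehat{M}_i$; since $M$ is nonamenable the relevant relative commutants are nonamenable, so the amenable alternative never occurs, and one invokes the type ${\rm III}$ intertwining theorem \cite[Theorem 4.3]{HoIs17} to pass from a finite subalgebra to the full tensor factor, producing intertwinings such as $\widehat{N}_j \preceq_M \widehat{M}_{\tau(j)}$. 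Taking relative commutants then yields $M_{\tau(j)} \preceq_M N_j$, so in particular each $N_j$ is nonamenable and $\tau$ is a section of the surjection $\sigma$ to be constructed.

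I expect the main obstacle to be the conjugacy upgrade carried out in \cite{HoIs17} (refining Ozawa and Popa \cite{OzPo04}): converting these intertwinings into the explicit statement with type ${\rm I}$ correction factors $F_1, \dots, F_n$, the surjection $\sigma \colon \{1,\dots,m\} \to \{1,\dots,n\}$, and the global unitary $u \in (\overline{\otimes}_{j=1}^n F_j) \ovt M$ implementing $u(F_j \ovt N_j) u^* = (\overline{\otimes}_{i \in \sigma^{-1}(j)} M_i) \ovt F_j$. This is where the type ${\rm III}$ bookkeeping is genuinely needed: one must control the partition of $\{1,\dots,m\}$ into the fibers $\sigma^{-1}(j)$, verify that each $N_j$ is, up to a type ${\rm I}$ amplification and unitary conjugacy, the tensor product of the $M_i$ with $i \in \sigma^{-1}(j)$, and assemble $u$ globally. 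By contrast, the first two paragraphs are the only genuinely new ingredients and follow formally from Propositions~\ref{prop:biexacttensor} and \ref{prop:relative biexact intertwine}; once they are in place, the corollary reduces to the verbatim application of \cite{HoIs17}.
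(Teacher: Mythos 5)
Your overall strategy coincides with the paper's: reduce to the Houdayer--Isono machinery of \cite{HoIs17} by showing that biexact factors can play the role of the class $\cC_{(AO)}$, with the key new input being Propositions~\ref{prop:biexacttensor} and \ref{prop:relative biexact intertwine}. Your first two paragraphs are correct and in fact more detailed than the paper's own proof: the induction via Proposition~\ref{prop:biexacttensor}, the identification of $\X_{\{\widehat{M}_i\}}$ with the boundary piece generated by the leg-wise compacts $\K(L^2M_i) \otimes \B(L^2 \widehat{M}_i)$ (which does hold, since $e_{\widehat{M}_i}$ is rank one in the $i$-th leg and $1 \in M$ gives the full $\B(L^2\widehat{M}_i)$ in the complementary legs), and the appeal to Proposition~\ref{prop:relative biexact intertwine} together recover exactly the conclusion of \cite[Theorem 5.1]{HoIs17}, which is what the paper cites at this point.

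There is, however, a genuine gap in the hand-off: the dichotomy is not the only class-level property that the argument of \cite{HoIs17} consumes, and your claim that the corollary then follows by ``verbatim application'' is too strong. The paper explicitly verifies two further properties: stability of biexactness under amplifications and tensoring by type I factors (Proposition~\ref{prop:amplificiationstable}), which is indispensable because the conclusion involves the type I correction factors $F_j$ and the Houdayer--Isono proof repeatedly cuts by projections and passes to amplifications within the class; and $\mathcal{U}$-solidity of biexact factors (Theorem~\ref{thm:solid}), which is what controls the amenable branch of the dichotomy; the paper moreover invokes \cite[Application 4]{AnHaHoMa20} to run the remainder of the argument in the type III setting. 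Relatedly, your assertion that ``since $M$ is nonamenable the relevant relative commutants are nonamenable, so the amenable alternative never occurs'' is a non sequitur: the $N_j$ are only assumed to be non-type I factors, and nonamenability of $M$ alone does not preclude an amenable tensor factor --- a priori $M$ could decompose as $R \ovt N$ with $R$ amenable and diffuse. Ruling this out is precisely where fullness/$\mathcal{U}$-solidity of the class members enters (note that $M$ itself is not biexact when $m \geq 2$, so one cannot apply solidity to $M$ directly). Once Proposition~\ref{prop:amplificiationstable} and Theorem~\ref{thm:solid} are added to your list of inputs, your argument matches the paper's proof.
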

\begin{proof}
We simply replace the class $\cC_{(AO)}$ from \cite{HoIs17} by the class of  biexact von Neumann algebras. The class of biexact von Neumann algebras is stable under amplification and tensoring by type I factors by Proposition~\ref{prop:amplificiationstable}, biexact von Neumann algebras are $\mathcal U$-solid by Theorem~\ref{thm:solid}, and by Propositions~\ref{prop:biexacttensor} and \ref{prop:relative biexact intertwine} we have that biexact von Neumann algebras satisfy the conclusion of \cite[Theorem 5.1]{HoIs17}. The rest of the argument form \cite{HoIs17} may then be applied while using \cite[Application 4]{AnHaHoMa20}.
\end{proof}

We now consider biexactness for free products, and we will continue to use the notation for free products as in Section~\ref{sec:freeproductwkexact} above.

If $(M_1, \omega_1)$ and $(M_2, \omega_1)$ are von Neumann algebras with normal faithful states, then we let $e_i \in \B( L^2( (M_1, \omega_1) * (M_2, \omega_2) ) )$ denote the orthogonal projection onto $L^2(M_i, \omega_i)$. We also let $p_i \in \B(L^2(M_i, \omega_i))$ denote the rank-one projection onto $\mathbb C \omega_i^{1/2}$. 

If $\X_i \subset \B(L^2(M_i, \omega_i))$ is an $M_i$-boundary piece for each $i = 1, 2$, then we denote by $\X_1 \vee \X_2 \subset \B( L^2( (M_1, \omega_1) * (M_2, \omega_2) ) )$ the boundary piece generated by $e_1 \X_1 e_1$ and $e_2 \X_2 e_2$. Note that when $\X_i = \B(L^2(M_i, \omega_i))$ for $i = 1,2$, then $\X_1 \vee \X_2$ is the boundary piece $\X_{ \{ M_1, M_2 \} }$. Also, if $\X_i = \K(L^2(M, \omega_i))$ for $i = 1,2$, then $\X_1 \vee \X_2 = \K( L^2( (M_1, \omega_1) * (M_2, \omega_2) ) )$. The following proposition generalizes Proposition 15.3.13 from \cite{BO08} in the case of group von Neumann algebras. 

\begin{prop}\label{prop:free product biexact}
Let $(M_1, \omega_1)$ and $(M_2, \omega_1)$ be von Neumann algebras with normal faithful states. Suppose for $i = 1,2$, $\X_i \subset \B(L^2(M_i, \omega_i))$ is an $M_i$ boundary piece such that each $M_i$ is biexact relative to $\X_i$.  
Then, $M=M_1\ast M_2$ is biexact relative to $\X_1 \vee \X_2$. In particular, if each $M_i$ is weakly exact, then $(M_1, \omega_1) * (M_2, \omega_2)$ is biexact relative to $\X_{ \{M_1, M_2 \} }$, and if each $M_i$ is biexact, then $(M_1, \omega_1) * (M_2, \omega_2)$ is also biexact.
\end{prop}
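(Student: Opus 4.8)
The plan is to obtain the required $M$-nuclear embedding $M\subset\bS_{\X_1\vee\X_2}(M)$ by feeding the relative biexactness of each factor into the free-product nuclearity result Theorem~\ref{thm:freeproductnuclear}, and then identifying the free product of the factor boundaries inside $\bS_{\X_1\vee\X_2}(M)$. First I would set up the inputs. Represent each $M_i$ in standard form on $\cH_i=L^2(M_i,\omega_i)$ with cyclic and separating vector $\xi_i=\omega_i^{1/2}$, so that $\omega_i$ is the corresponding vector state, and realize $M=(M_1,\omega_1)*(M_2,\omega_2)$ on the free product Hilbert space $\cH=L^2(M,\omega)$. I take the (trivially) ultraweakly dense ${\rm C}^*$-subalgebra $A_i=M_i\subset M_i$ itself and set $E_i=\bS_{\X_i}(M_i)$. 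Recalling that every $M_i$-boundary piece contains the compacts (the analogue of $c_0\Gamma\subset I$ in the group setting), we have $\K(\cH_i)\subset\K_{\X_i}(M_i)$, and since commutators of compacts with bounded operators are compact, $\K(\cH_i)\subset\bS_{\X_i}(M_i)=E_i$, as Theorem~\ref{thm:freeproductnuclear} requires. By hypothesis $M_i\subset\bS_{\X_i}(M_i)$ is $M_i$-nuclear, which with $A_i=M_i$ is exactly the $(A_i\subset M_i)$-nuclearity of $A_i\subset E_i$. Thus, writing $M_0=(M_1,\omega_1)*_r(M_2,\omega_2)$ for the reduced free product ${\rm C}^*$-algebra (ultraweakly dense in $M$) and $E'=(\bS_{\X_1}(M_1),\omega_1)*_r(\bS_{\X_2}(M_2),\omega_2)$, Theorem~\ref{thm:freeproductnuclear} yields that $M_0\subset E'$ is $(M_0\subset M)$-nuclear.

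The key remaining step, which I expect to be the main obstacle, is to show that as operator subsystems of $\B(\cH)$ one has $E'\subset\bS_{\X_1\vee\X_2}(M)$. Since $\{x\in M'\mid [T,x]\in\K^{\infty,1}_{\X_1\vee\X_2}(M)\}$ is always a von Neumann subalgebra of $M'$, it suffices to check, for each reduced word $T$ spanning $E'$, that $[T,x]\in\K^{\infty,1}_{\X_1\vee\X_2}(M)$ for $x$ ranging over the generating set $\{J\lambda(y)J\mid y\in M_i,\ i=1,2\}$ of $M'=JMJ$. Writing $T=\lambda(a_1)\lambda(b_1)\cdots$ as an alternating product of centered elements from $\bS_{\X_1}(M_1)$ and $\bS_{\X_2}(M_2)$ and expanding $[T,x]$ by the Leibniz rule, every term carries a single commutator $[\lambda(a_k),x]$, which is controlled by the defining property $[a_k,\cdot]\in\K^{\infty,1}_{\X_i}(M_i)$ of the factor boundary, flanked by the remaining reduced-word factors. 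The delicate point is that $\K^{\infty,1}_{\X_1\vee\X_2}(M)$ is only an $M$-$M$ and $M'$-$M'$ bimodule, so one cannot naively absorb the flanking factors; instead I would use the free product structure, namely the orthogonality of reduced words together with the projections $e_i$ onto $L^2(M_i,\omega_i)$ and $p_i$ onto $\C\omega_i^{1/2}$, to see that each such term is supported on a corner of the type generated by $e_i\X_i e_i$, hence lands in $\K^{\infty,1}_{\X_1\vee\X_2}(M)$. This is the free-product/boundary-piece analogue of the computations in \cite[\S 15.3]{BO08} and \cite[Lemma 2.4]{Oza06}, and verifying these commutator estimates is the technically hardest part.

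Granting $E'\subset\bS_{\X_1\vee\X_2}(M)$, the rest is routine. Because $E'$ is an $M_0$-subbimodule of the normal operator $M$-system $\bS_{\X_1\vee\X_2}(M)$, the opening lemma of Section~\ref{sec:Mtopology} shows that the $(M_0\subset M)$-topology on $E'$ coincides with the restriction of the ambient one, so the $(M_0\subset M)$-nuclear inclusion $M_0\subset E'$ simultaneously witnesses that $M_0\subset\bS_{\X_1\vee\X_2}(M)$ is $(M_0\subset M)$-nuclear. Since $M_0$ is ultraweakly dense in $M$ and $\bS_{\X_1\vee\X_2}(M)$ is a normal $M$-system, Corollary~\ref{cor:densenuclear} upgrades this to $M$-nuclearity of $M\subset\bS_{\X_1\vee\X_2}(M)$; that is, $M$ is biexact relative to $\X_1\vee\X_2$.

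Finally, the two ``in particular'' assertions follow by specialization. If each $M_i$ is weakly exact, I take $\X_i=\B(L^2(M_i,\omega_i))$; then $\K^{\infty,1}_{\X_i}(M_i)=\B(L^2(M_i,\omega_i))$ and hence $\bS_{\X_i}(M_i)=\B(L^2(M_i,\omega_i))$, so by Corollary~\ref{cor:characterization weakly exact} weak exactness of $M_i$ is precisely biexactness relative to $\X_i$, and since $\X_1\vee\X_2=\X_{\{M_1,M_2\}}$ in this case the main statement gives that $M$ is biexact relative to $\X_{\{M_1,M_2\}}$. If each $M_i$ is biexact, I take $\X_i=\K(L^2(M_i,\omega_i))$; then $\X_1\vee\X_2=\K(L^2(M,\omega))$, so the main statement gives that $M$ is biexact relative to the compacts, i.e.\ biexact.
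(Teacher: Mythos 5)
Your proposal is correct and follows essentially the same route as the paper: the paper likewise feeds the $M_i$-nuclear inclusions $M_i\subset\bS_{\X_i}(M_i)$ into Theorem~\ref{thm:freeproductnuclear} (with the compacts hypothesis discharged just as you note) and then establishes exactly the inclusion $(\bS_{\X_1}(M_1),\omega_1)*_r(\bS_{\X_2}(M_2),\omega_2)\subset\bS_{\X_1\vee\X_2}(M)$ that you flag as the main obstacle, via Ozawa's identity $[\lambda(T),\rho(a)]=e_1\lambda([T,a])e_1$ together with norm-approximation of the flanking reduced-word factors by words with entries in $M_1,M_2$ (using the rank-one projections $p_i$ and cyclicity of $\omega_i^{1/2}$) so that they can be absorbed by the $M$-$M$ and $M'$-$M'$ bimodularity of $\K^{\infty,1}_{\X_1\vee\X_2}(M)$ --- precisely the mechanism you sketch --- before concluding with Corollary~\ref{cor:densenuclear}. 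Your specializations in the final paragraph also match the paper's own observations that $\X_1\vee\X_2=\X_{\{M_1,M_2\}}$ when $\X_i=\B(L^2(M_i,\omega_i))$ and $\X_1\vee\X_2=\K(L^2M)$ when $\X_i=\K(L^2(M_i,\omega_i))$.
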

\begin{proof}
We set $M_0 = (M_1, \omega_1) *_r (M_2, \omega_2)$ and $M = (M_1, \omega_1) * (M_2, \omega_2)$.  By Theorem~\ref{thm:freeproductnuclear}, we have that the inclusion $M_0 \subset (\bS_{\X_1}(M_1), \omega_1) *_r (\bS_{\X_2}(M_2), \omega_2)$ is $( M_0 \subset M )$-nuclear, and hence by Corollary~\ref{cor:densenuclear} it suffices to check that we have an inclusion 
\[
(\bS_{\X_1}(M_1), \omega_1) *_r (\bS_{\X_2}(M_2), \omega_2) \subset \bS_{\X_1 \vee \X_2}((M_1, \omega_1) * (M_2, \omega_2)).
\] 

The same computation as in \cite[Proposition 3.2]{Oza06} shows that for $T\in \bS_{\X_1}(M_1)$,
	$a\in M_1' \subset \B(L^2(M_1, \omega_1))$ and $b\in M_2' \subset \B(L^2(M_2, \omega_2))$,
we have $[\lambda(T), \rho(b)]=0$ and 
\begin{equation}\label{eq:commutator1}
[\lambda(T), \rho(a)] = e_1 \lambda( [ T, a]  ) e_1 \in e_1 \K_{\X_1}^{\infty, 1}(M_1) e_1 \subset  \K_{\X_1 \vee \X_2}^{\infty, 1}(M).
\end{equation} 
Similarly, if $S \in \bS_{\X_2}(M_2)$, we have 
\begin{equation}\label{eq:commutator2}
[\lambda(S), \rho(b)] = e_2 \lambda( [ S, b ]) e_2 \in e_2 \K_{\X_2}^{\infty, 1}(M_2) e_2 \subset \K_{\X_1 \vee \X_2}^{\infty, 1}(M)
\end{equation} 
and $[\lambda(S), \rho(a)] = 0$. 

Note that for each $T \in \B(L^2(M_1, \omega_1) )$ with $\omega_1(T) = 0$, we have $\lambda(T) e_2 = \lambda(T p_1) e_2$, and since $\omega_1^{1/2}$ is cyclic for $M_1$ we may find a sequence $x_n \in M_1$ with $\omega_1(x_n) = 0$ so that $\| \lambda(Tp_1) - \lambda(x_n p_1) \| \to 0$. More generally, we see that, for any operator of the form $\lambda(T_1) \lambda(S_1) \cdots \lambda(S_{n-1}) \lambda(T_n) e_2$ with $T_i \in \B(L^2(M_1, \omega_1) )$ satisfying $\omega_1(T_i) = 0$, and $S_i \in \B(L^2(M_2, \omega_2))$ satisfying $\omega_2(S_i) = 0$, we have
\[
\lambda(T_1) \lambda(S_1) \cdots \lambda(S_{n-1}) \lambda(T_n) e_2
= \lambda(T_1p_1) \lambda(S_1p_2) \cdots \lambda(S_{n-1}p_{2}) \lambda(T_np_1) e_2,
\]
and hence may be approximated in uniform norm by operators of the form $\lambda(x_1) \lambda(y_1) \cdots \lambda(y_{n-1}) \lambda(x_n) e_2$ where $x_i \in M_1$ with $\omega_1(x_i) = 0$, and $y_i \in M_2$ with $\omega_2(y_i) = 0$. This similarly follows for operators of any of the following forms 
\[
\lambda(S_1) \cdots \lambda(S_{n-1}) \lambda(T_n) e_2,
\]
\[
\lambda(S_1) \lambda(T_1) \cdots \lambda(T_{n-1}) \lambda(S_n) e_1,
\]
\[
\lambda(T_1) \cdots \lambda(T_{n-1}) \lambda(S_n) e_1.
\]

As a consequence, if $T_1, \ldots, T_n \in \bS_{\X_1}(M_1)$ with $\omega_1(T_i) = 0$, and if $S_1, \ldots, S_n \in \bS_{\X_2}(M_2)$ with $\omega_2(S_i) = 0$, then from (\ref{eq:commutator1}) and (\ref{eq:commutator2}) it follows that for $a \in M_1' \subset \B(L^2(M_1, \omega_2))$ or $a \in M_2' \subset \B(L^2(M_2, \omega_2))$ we have
\[
\lambda( T_1 S_1\cdots S_{k-1} )[\lambda(T_k), \rho(a)] \lambda(S_k \cdots T_nS_n ) \in \K_{\X_1 \vee \X_2}^{\infty, 1}(M)
\]
and
\[
\lambda( T_1 S_1\cdots S_{k-1}T_k ) [\lambda(S_k), \rho(a)] \lambda(T_{k + 1} \cdots T_nS_n ) \in \K_{\X_1 \vee \X_2}^{\infty, 1}(M).
\]
By summing these terms we then have
\[
[ \lambda(T_1 S_1 \cdots T_n S_n), \rho(a)] \in  \K_{\X_1 \vee \X_2}^{\infty, 1}(M).
\]
This similarly holds for words starting with operators in $\bS_{\X_2}(M_2)$ or ending with operators in $\bS_{\X_1}(M_1)$. Since $M'$ is generated by $\rho(M_1') \cup \rho(M_2')$ \cite[Theorem 1.6.5]{VoDyNi92}, it then follows from \cite[Lemma 6.1]{DKEP22} that 
\[
(\bS_{\X_1}(M_1), \omega_1) *_r (\bS_{\X_2}(M_2), \omega_2) \subset \bS_{\X_1 \vee \X_2}((M_1, \omega_1) * (M_2, \omega_2)).
\] 
\end{proof}

The previous theorem can be used to derive Kurosh type theorems for general free products of weakly exact von Neumann algebras similar to the results in \cite{Oza06}. However, more  general Kurosh type theorems already exist in \cite{HoUe16}, and so we will not pursue this direction further. 

\subsection{Biexactness relative to the amenable boundary piece}

In \cite[Section 6.1]{DKEP22}, 
	distinguished canonical amenable boundary pieces $I_{\rm amen} \subset \ell^\infty \Gamma$ and $\X_{\rm amen} \subset \B(L^2M)$ were associated to groups and von Neumann algebras, respectively, which we will briefly describe.

Given a discrete group $\Gamma$, $I_{\rm amen}\subset \ell^\infty\Gamma$ consists of all functions $f$ that satisfy $\lim_{i\to \infty} f(t_i)=0$ whenever $\{t_i\}\subset \Gamma$ is net such that $\lambda_{t_i}\to 0$ in the weak topology in $C^*_\lambda\Gamma$.

If $(M,\tau)$ is a tracial von Neumann algebra and $\cK$ is a Hilbert $M$-bimodule, 
	we denote by $\cK^0\subset \cK$ the subspace of left and right bounded vectors, i.e., those vectors $\xi \in \cK$ such that the operators $L_\xi, R_\xi: M \to \cK$ given by $L_\xi x = \xi x$ and $R_\xi x = x \xi$ extend to bounded operators from $L^2M$ to $\cK$.
	We let 
	\[
	B_0 = {\rm sp} \{ T_\xi \mid \xi \in \cH^0 {\rm \ for \ some \ Hilbert \ bimodule \ } \cH \prec L^2M \ovt L^2M \}.
	\]
	Note that $B_0$ forms a $*$-subalgebra of $\B(L^2M)$,
	since for $\xi \in \cH^0$ and $\eta \in \cK^0$ we have
	$T_\xi^* T_\eta=T_{\bar{\xi}\otimes \eta}$ where $\bar \xi\otimes \eta\in \bar\cH\otimes_M \cK$, and we have $\bar\cH\otimes_M \cK \prec L^2M\otimes L^2M$ if $\cH \prec L^2M\otimes L^2M$, or if $\cK\prec L^2M\otimes L^2M$.
The amenable $M$-boundary piece $\X_{\rm amen}$ is defined to be $B \B(L^2M) B$, where $B \subset \B(L^2M)$ is the ${\rm C}^*$-algebra generated by $B_0$, i.e., $\X_{\rm amen}$ is the hereditary ${\rm C}^*$-subalgebra of $\B(L^2M)$ generated by $B_0$.
It is clear that $\X_{\rm amen}$ is an $M$-boundary piece as $M$ and $JMJ$ are contained in its multiplier algebra of $B$.

It was shown in \cite[Theorem 6.14]{DKEP22} that if $E: \B(\ell^2 \Gamma) \to \ell^\infty \Gamma$ is the canonical conditional expectation, then $E(\X_{\rm amen}) \subset \ell^\infty \Gamma$ generates $I_{\rm amen}$, and $I_{\rm amen} \subset \X_{\rm amen} \subset \B(L^2( L\Gamma))$. As a consequence, we also obtain the following corollary from Theorem~\ref{thm:iff}.

\begin{cor}
Let $\Gamma$ be a discrete group, then $\Gamma$ is biexact relative to its amenable boundary piece if and only if $L\Gamma$ is biexact relative to its amenable boundary piece. 
\end{cor}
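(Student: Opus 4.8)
The plan is to obtain the corollary as a direct consequence of Theorem~\ref{thm:iff}, fed by the two facts just recorded from \cite[Theorem 6.14]{DKEP22}. The first of these is that the closed ideal generated by $E(\X_{\rm amen})$ equals $I_{\rm amen}$; in the notation of Theorem~\ref{thm:iff} this says precisely $I_{\X_{\rm amen}} = I_{\rm amen}$. The second is the inclusion $I_{\rm amen} \subset \X_{\rm amen}$ inside $\B(L^2(L\Gamma))$. With these in hand, the two implications fall out of the two parts of Theorem~\ref{thm:iff}.

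For the direction that $L\Gamma$ biexact relative to $\X_{\rm amen}$ implies $\Gamma$ biexact relative to $I_{\rm amen}$, I would apply part~(\ref{item:iff 1}) of Theorem~\ref{thm:iff} with $\X = \X_{\rm amen}$. This yields that $\Gamma$ is biexact relative to $I_{\X_{\rm amen}}$, and the identification $I_{\X_{\rm amen}} = I_{\rm amen}$ finishes this direction immediately.

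For the reverse direction, I would first invoke part~(\ref{item:iff 2}) of Theorem~\ref{thm:iff} with $I = I_{\rm amen}$: biexactness of $\Gamma$ relative to $I_{\rm amen}$ is equivalent to biexactness of $L\Gamma$ relative to $\X_{I_{\rm amen}} = \overline{I_{\rm amen}\B(\ell^2\Gamma)I_{\rm amen}}^{\|\cdot\|}$. The remaining task is then to pass from biexactness relative to $\X_{I_{\rm amen}}$ to biexactness relative to the a priori larger boundary piece $\X_{\rm amen}$. Here I would use that $\X_{\rm amen}$ is a hereditary ${\rm C}^*$-subalgebra of $\B(L^2(L\Gamma))$ containing $I_{\rm amen}$, so that $\X_{I_{\rm amen}} \subset \X_{\rm amen}$; this gives $\K_{\X_{I_{\rm amen}}}^{\infty,1}(L\Gamma) \subset \K_{\X_{\rm amen}}^{\infty,1}(L\Gamma)$ and hence $\bS_{\X_{I_{\rm amen}}}(L\Gamma) \subset \bS_{\X_{\rm amen}}(L\Gamma)$ as operator $L\Gamma$-subsystems of $\B(L^2(L\Gamma))$. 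Composing the u.c.p.\ maps witnessing $L\Gamma$-nuclearity of $L\Gamma \subset \bS_{\X_{I_{\rm amen}}}(L\Gamma)$ with this inclusion, and recalling that the $L\Gamma$-topology on a subbimodule is the restriction of the ambient one, then shows $L\Gamma \subset \bS_{\X_{\rm amen}}(L\Gamma)$ is $L\Gamma$-nuclear.

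Since all the genuine work is already contained in Theorem~\ref{thm:iff} and \cite[Theorem 6.14]{DKEP22}, this argument is essentially bookkeeping. The only points needing verification are the elementary monotonicity of relative biexactness in the boundary piece and the inclusion $\X_{I_{\rm amen}} \subset \X_{\rm amen}$; I do not expect either to present a genuine obstacle.
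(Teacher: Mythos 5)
Your proposal is correct and is exactly the argument the paper intends: the corollary is stated as a direct consequence of Theorem~\ref{thm:iff} together with the two facts from \cite[Theorem 6.14]{DKEP22}, namely $I_{\X_{\rm amen}} = I_{\rm amen}$ for part~(\ref{item:iff 1}) and $I_{\rm amen} \subset \X_{\rm amen}$ (whence $\X_{I_{\rm amen}} \subset \X_{\rm amen}$ by heredity) for part~(\ref{item:iff 2}). Your added verification of monotonicity of relative biexactness in the boundary piece, via $\K_{\X_{I_{\rm amen}}}^{\infty,1}(L\Gamma) \subset \K_{\X_{\rm amen}}^{\infty,1}(L\Gamma)$ and the compatibility of the $L\Gamma$-topology with subbimodules, is the correct bookkeeping the paper leaves implicit.
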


If $M$ is a tracial von Neumann algebra and $N \subset M$ is a von Neumann subalgebra, then by \cite[Lemma 6.13]{DKEP22} we have $e_N \in \X_{\rm amen}$ if and only if $N$ is amenable. In particular, it follows that if $M$ is biexact relative to $N$ and $N$ is amenable, then $M$ is biexact relative to its amenable boundary piece.

Note that if $\Gamma$ is a countable group with a normal amenable subgroup $\Sigma \lhd \Gamma$ such that $\Gamma/\Sigma$ is biexact, 
	then $\Gamma$ is biexact relative to its amenable boundary piece.
In fact, it suffices to construct a left $\Gamma$-equivariant embedding from $\bS(\Gamma/\Sigma)$ to $\bS_{I_{\rm amen}}(\Gamma)$.
Consider the embedding $\iota:\ell^\infty(\Gamma/\Sigma)\to \ell^\infty\Gamma$ given by $\iota (f)(t)=f(t\Sigma)$.
Note that $\iota(c_0(\Gamma/\Sigma))\subset I_{\rm amen}$ since $1_\Sigma\in I_{\rm amen}$ by \cite[Lemma 6.12]{DKEP22}
	and $R_t\iota(f)(s)=\iota(R_{t\Sigma}f)(s)$ for $t\in \Gamma$.
It then follows that $\iota (\bS(\Gamma/\Sigma))\subset \bS_{I_{\rm amen}}(\Gamma)$.

Similarly, if $M$ is biexact and $R$ is amenable, then $M \ovt R$ is biexact relative to its amenable boundary piece. 
This follows easily from the proof of Proposition~\ref{prop:biexacttensor} and the above lemma. We also note that, in Proposition~\ref{prop:actiononamenable} below, we show that if $R$ is amenable, $\Gamma$ is biexact, and $\Gamma \actson R$ is a trace-preserving action, then $R \rtimes \Gamma$ is biexact relative to $R$, and hence is biexact relative to its amenable boundary piece.

We remark that the amenable boundary piece is still small enough to obtain structural indecomposability results.
The following is a direct consequence of \cite[Lemma 6.13]{DKEP22} and the proof of Proposition~\ref{prop:relative biexact intertwine}.

\begin{prop}[cf.\ Remark 4.8 in \cite{Oza06}]\label{prop:solid rel amenable boundary}
Let $M$ be a tracial von Neumann algebra that is biexact relative to its amenable boundary piece.
If a von Neumann subalgebra $N\subset M$ has no amenable direct summand, then $N'\cap M$ is amenable. In particular, any subfactor $N\subset M$ is either McDuff or prime.
\end{prop}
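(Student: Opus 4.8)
The plan is to reduce everything, exactly as in the proof of Proposition~\ref{prop:relative biexact intertwine}, to producing a single net of unitaries in $N$ that converges to $0$ over the amenable boundary piece $\X_{\rm amen}$, and then to extract amenability of $N' \cap M$ from the biexactness hypothesis. First I would record the reduction: suppose we have a net $\{u_i\} \subset \mathcal{U}(N)$ converging to $0$ over $\X_{\rm amen}$. As observed in the remark preceding Proposition~\ref{prop:relative biexact intertwine}, the proof of Theorem~\ref{thm:biexact implies solid} then produces a u.c.p.\ map $\theta \colon \B(L^2M) \to \B(L^2M)$ vanishing on $\K_{\X_{\rm amen}}^{\infty,1}(M)$, so that $\theta(\bS_{\X_{\rm amen}}(M)) \subset M$; moreover $\theta_{|N'\cap M} = \id$ automatically, since each $u_i$ lies in $N$ and therefore commutes with $N'\cap M$. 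Composing the $M$-nuclear embedding $M \subset \bS_{\X_{\rm amen}}(M)$ with $\theta$ and using that $\theta$ is continuous from the weak $M$-topology to the ultraweak topology (Lemma~\ref{lem:weakcontinuity}), the inclusion $N'\cap M \subset M$ becomes weakly nuclear; since $M$ is tracial, $N'\cap M$ is with expectation, and hence $N'\cap M$ is amenable.

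The core of the argument is therefore to build the net, and this is where the no-amenable-summand hypothesis and \cite{DKEP22}*{Lemma 6.13} enter. Since $M$ is finite, by Lemma~\ref{lem:wkconvergenceoverX} and the finite-case criterion preceding it, it suffices to arrange that $\Ad(u_i)(T) \to 0$ ultraweakly for $T$ in a generating set of $\X_{\rm amen}$; as $\X_{\rm amen}$ is the hereditary ${\rm C}^*$-algebra generated by the operators $T_\xi$ with $\xi \in \cH^0$ for $\cH \prec L^2M \ovt L^2M$, which form a $*$-algebra (the product relation $T_\xi^* T_\eta = T_{\bar\xi \otimes \eta}$ stays inside a coarse-contained bimodule), the Cauchy--Schwarz propagation in the proof of Lemma~\ref{lem:wkconvergenceoverX} reduces the problem to treating these generators. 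For them $\Ad(u_i)(T_\xi) = T_{u_i \xi u_i^*}$, so the task becomes finding a net in $\mathcal{U}(N)$ with $u_i \xi u_i^* \to 0$ weakly, simultaneously for finitely many such $\xi$; restricting $\cH$ to an $N$-$N$ bimodule keeps it weakly contained in the coarse $N$-$N$ bimodule.

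The hard part will be producing honest unitaries, rather than convex combinations, that work simultaneously. Here I would invoke Connes' characterization of amenability: because $N$ has no amenable direct summand, for every nonzero central projection $z \in \mathcal{Z}(N)$ the algebra $Nz$ is non-amenable, so by \cite{DKEP22}*{Lemma 6.13} we have $e_{Nz} \notin \X_{\rm amen}$, and the conjugation representation of $\mathcal{U}(N)$ on any bimodule weakly contained in the coarse bimodule admits no nonzero almost-invariant vectors (a spectral gap, after splitting off the central part). A Namioka-type convexity argument---exactly as in the derivation of Popa's intertwining theorem, with Connes' theorem playing the role of the basic construction---upgrades this spectral gap to a net of genuine unitaries $u_i \in \mathcal{U}(N)$ with $u_i \xi u_i^* \to 0$ weakly for the finitely many prescribed vectors; indexing over finite sets of vectors then yields the desired single net converging to $0$ over $\X_{\rm amen}$. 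Verifying that the convexity step really returns unitaries and that the absence of almost-invariant vectors is uniform over all coarse-contained generators is the main obstacle, and is precisely where Lemma~\ref{lem:wkconvergenceoverX} and \cite{DKEP22}*{Lemma 6.13} must be combined with care.

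Finally, for the ``in particular'' statement I would argue by contraposition on primeness. Let $N \subset M$ be a subfactor that is not prime, so $N = N_1 \ovt N_2$ with both $N_1, N_2$ diffuse (type ${\rm II}_1$) factors. If some $N_j$ is amenable it is isomorphic to the hyperfinite factor $R$, whence $N \cong R \ovt N_{3-j}$ is McDuff. Otherwise both factors are non-amenable; applying the main statement to the subfactor $N_1 \subset M$ (which, being a non-amenable factor, has no amenable direct summand and is with expectation) gives that $N_1' \cap M$ is amenable. But $N_2 = N_1' \cap N \subset N_1' \cap M$, forcing the non-amenable factor $N_2$ to be amenable, a contradiction. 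Hence a non-prime subfactor is always McDuff, i.e.\ every subfactor of $M$ is either McDuff or prime.
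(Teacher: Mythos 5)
Your reduction step is exactly the paper's: the paper proves this proposition by citing \cite[Lemma 6.13]{DKEP22} together with the proof of Proposition~\ref{prop:relative biexact intertwine}, i.e., produce a net $\{u_i\}\subset\mathcal U(N)$ converging to $0$ over $\X_{\rm amen}$, obtain the u.c.p.\ map $\theta$ vanishing on $\K_{\X_{\rm amen}}^{\infty,1}(M)$ with $\theta_{|N'\cap M}=\id$ and $\theta(\bS_{\X_{\rm amen}}(M))\subset M$, and compose with the $M$-nuclear embedding to get weak nuclearity of $N'\cap M\subset M$, hence amenability since $N'\cap M$ is with (trace-preserving) expectation. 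Your ``in particular'' argument (McDuff or prime, via the unique tracial state on a ${\rm II}_1$ factor forcing nonamenability of both tensor factors) is also correct and is the standard deduction.

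The genuine gap is in the construction of the net, which you assert but do not prove, and which is precisely the content the paper delegates to \cite[Lemma 6.13]{DKEP22}. Two specific problems with your sketch. First, the claim that the conjugation action of $\mathcal U(N)$ on coarse-contained bimodules ``admits no nonzero almost-invariant vectors'' is both unjustified and stronger than what you can use: almost-invariant vectors only yield, in the limit, a tracial state on $N$ that need not be normal, and when $\mathcal Z(N)$ is diffuse such singular tracial states exist (e.g., a singular state on $\mathcal Z(N)$ composed with the center-valued trace), so ``splitting off the central part'' does not dispose of the issue. What the convexity argument actually requires is the absence of nonzero \emph{invariant} vectors, and that statement is clean: the minimal-norm element $\zeta$ of $\overline{\rm co}^{w}\{u\xi u^* \mid u\in\mathcal U(N)\}$ is $N$-central, and a nonzero central vector $\zeta$ in a normal bimodule $\cH\prec L^2N\ovt L^2N$ implements the \emph{normal} (no limits involved) tracial functional $x\mapsto\langle x\zeta,\zeta\rangle$, whose support projection $z\in\mathcal Z(N)$ gives ${}_{Nz}L^2(Nz)_{Nz}\prec\cH\prec$ coarse, i.e., an amenable summand; hence $\zeta=0$ and $0$ lies in the norm-closed convex hull. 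Second, the passage from norm-null convex combinations $\sum_j\lambda_j\, u_j\xi u_j^*$ to \emph{single} unitaries with small matrix coefficients fails as you state it: a small average of the coefficients $\langle u_j\xi u_j^*,\eta\rangle$ gives no control on any individual term because of sign cancellation. The extraction only works after reducing (via the hereditary structure of $\X_{\rm amen}$, $T_\xi^*T_\eta=T_{\bar\xi\otimes\eta}$, and Cauchy--Schwarz as in Lemma~\ref{lem:wkconvergenceoverX}) to the \emph{nonnegative} quantities $\omega\bigl(\Ad(u)(b^*b)\bigr)$, $b\in{\rm C}^*(B_0)$, $\omega$ a positive normal functional: there a weighted average below $\varepsilon$ forces some single $u_j$ to be below $\varepsilon$ simultaneously for finitely many pairs $(b,\omega)$, and indexing over finite sets yields the net. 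Note also that Lemma 6.13 of \cite{DKEP22}, in the form you quote ($e_{Nz}\notin\X_{\rm amen}$), is never actually used in your argument; with the corrections above the Connes-type input replaces it, but as written your proposal leaves the heart of the proof --- which you yourself flag as ``the main obstacle'' --- unestablished.
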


\begin{lem}\label{lem:amenable boundary subalg}
Let $M$ be a tracial von Neumann algebra and $N\subset M$ a von Neumann subalgebra.
Denote by $\X_{\rm amen}$ the amenable $M$-boundary piece and $\X_{\rm amen}^N$ the amenable $N$-boundary piece.
Then $e_N \X_{\rm amen} e_N\subset \X_{\rm amen}^N$ and $\overline{e_N \K_{\X_{\rm amen}}(M) e_N} \subset\K_{\X_{\rm amen}^N}(N)$.
\end{lem}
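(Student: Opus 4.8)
The plan is to derive both inclusions from a single computation describing how the generating operators $T_\xi$ of the amenable boundary piece behave under compression by the Jones projection $e_N$, and then to bootstrap from these generators to the hereditary ${\rm C}^*$-algebras and to the $\K$-spaces. The conceptual heart is a bimodule restriction statement; everything else is bookkeeping with the closures.

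The key computation is at the level of generators. For a bounded vector $\xi \in \cH^0$ in an $M$-bimodule $\cH \prec L^2M \ovt L^2M$, the operator $T_\xi \in \B(L^2M)$ is built from the bounded operators $L_\xi, R_\xi \colon L^2M \to \cH$ extending $x \mapsto \xi x$ and $x \mapsto x\xi$. Since $e_N$ is the orthogonal projection onto $L^2N \subset L^2M$, the restrictions $L_\xi e_N, R_\xi e_N \colon L^2N \to \cH$ are exactly the left- and right-bounded operators attached to $\xi$ once $\cH$ is regarded, by restriction, as an $N$-bimodule; hence $e_N T_\xi e_N = T_\xi^N$, the corresponding generator in the $N$-picture. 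The decisive input is that weak containment survives restriction of the acting algebra: I would show that $L^2M$, as a one-sided $N$-module (on either side), embeds into $\ell^2(I) \ovt L^2N$ — either via a Pimsner--Popa basis, or simply because the left and right $N$-actions on $L^2M$ are faithful normal representations of the finite algebra $N$ and are therefore dominated by its standard module. Tensoring the left and right embeddings yields an $N$-bimodular embedding $L^2M \ovt L^2M \hookrightarrow \ell^2(I \times J) \ovt (L^2N \ovt L^2N)$, so $L^2M \ovt L^2M \prec L^2N \ovt L^2N$ as $N$-bimodules. Since weak containment of $M$-bimodules restricts to weak containment of the underlying $N$-bimodules, every $\cH \prec L^2M \ovt L^2M$ satisfies $\cH \prec L^2N \ovt L^2N$ over $N$, so $T_\xi^N \in B_0^N$. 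This gives $e_N B_0 e_N \subset B_0^N$, and as $B_0$ is a $*$-algebra and $\Ad(e_N)$ is contractive, passing to norm limits gives $e_N B e_N \subset B^N$ for $B = C^*(B_0)$ and $B^N = C^*(B_0^N)$.

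To prove $e_N \X_{\rm amen} e_N \subset \X_{\rm amen}^N$ for the full hereditary algebra $\X_{\rm amen} = \overline{B\,\B(L^2M)\,B}$, I would compress a hereditary generator $b_1 S b_2$ with $b_i \in B$, $S \in \B(L^2M)$. Reducing $b_1, b_2$ to single words via the $*$-algebra relations and writing $b_1 = R_\xi^* L_\xi$, $b_2 = R_\eta^* L_\eta$, one obtains $e_N b_1 S b_2 e_N = (R_\xi e_N)^* W (L_\eta e_N)$ with $W = L_\xi S R_\eta^* \in \B(\cH)$. The claim is that every operator of this shape lies in $\X_{\rm amen}^N$: when $W$ is an $N$-bimodule operator it reduces to a generator $T_\zeta^N \in B_0^N$; when $W$ is compact it is a norm limit of rank-one operators $|(R_\xi e_N)^*\zeta\rangle\langle (L_\eta e_N)^*\zeta'| \in \K(L^2N) \subset \X_{\rm amen}^N$; and a general $W$ is then handled by approximation inside the hereditary ${\rm C}^*$-algebra generated by $B_0^N$. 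Since $\X_{\rm amen}$ is spanned by its positive elements, which in turn are approximated by such words, and $\Ad(e_N)$ is continuous, this yields the first inclusion.

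For $\overline{e_N \K_{\X_{\rm amen}}(M) e_N} \subset \K_{\X_{\rm amen}^N}(N)$ I would use the positive-cone identity $\K^{\infty,1}_{\X_{\rm amen}}(M)_+ = \K_{\X_{\rm amen}}(M)_+$ together with Corollary~\ref{cor:condexpcont}. It suffices to treat $S \in \K_{\X_{\rm amen}}(M)_+$; by Lemma~\ref{lem:compressnorm} there are positive contractions $z_i \in M$ with $z_i \to 1$ ultrastrongly and $d(z_i S z_i, \X_{\rm amen}) \to 0$ in norm. Applying the previous paragraph to $z_i S z_i$, and using that $\Ad(e_N)$ carries the $M$- and $M'$-topologies into the $N$- and $N'$-topologies, the compressions $e_N z_i S z_i e_N$ lie, up to vanishing norm error, in $\X_{\rm amen}^N$, and their limit is an element of $\K^{\infty,1}_{\X_{\rm amen}^N}(N)_+ = \K_{\X_{\rm amen}^N}(N)_+$. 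Finally, the factorization $e_N \K^L_{\X_{\rm amen}}(M) \subset \K^L_{\X_{\rm amen}^N}(N, L^2M)$, combined with $\K_{\X_{\rm amen}^N}(N) = (\K^L)^* \K^L$, upgrades this to the stated inclusion on all of $\K_{\X_{\rm amen}}(M)$.

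The hard part is the closure bookkeeping in the last two paragraphs rather than the bimodule restriction, which is conceptually decisive but routine. Specifically, controlling the middle factor $W = L_\xi S R_\eta^*$ for arbitrary $S \in \B(L^2M)$ and verifying that $(R_\xi e_N)^* W (L_\eta e_N)$ genuinely lands in the intrinsic amenable $N$-boundary piece, and reconciling the approximating contractions $z_i \in M$ (which need not commute with $e_N$) with the four simultaneous closures ($\C$-$N$, $\C$-$N'$, $N$-$N$, and $N'$-$N'$) that define the relevant $\K$-spaces, is where the care is required.
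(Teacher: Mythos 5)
Your overall skeleton matches the paper's: a generator-level computation showing the compressed generators live over an $N$-bimodule weakly contained in the coarse $N$-bimodule, followed by closure bookkeeping via $\Ad(e_N)$-continuity (Corollary~\ref{cor:condexpcont}), Lemma~\ref{lem:compressnorm}, and the positive-cone identity $\K^{\infty,1}_{\X}(\cdot)_+ = \K_{\X}(\cdot)_+$. Your generator step is a legitimate variant: the paper compresses $T_\xi^*T_\xi$ and uses Connes fusion, computing $e_N T_\xi^* T_\xi e_N = T_{\bar\xi\otimes\xi}$ with $\bar\xi\otimes\xi \in {}_N\bar\cH\otimes_M\cH_N \prec L^2N \ovt L^2N$, whereas you compress $T_\xi$ directly and restrict $\cH$ itself, using that ${}_NL^2M$ and $(L^2M)_N$ embed in amplifications of $L^2N$; both are correct, and your last paragraph's handling of the $\K$-spaces is sound (indeed the superfluous final sentence about $\K^L_{\X_{\rm amen}}(M)$ can be dropped, since treating positives and taking spans already suffices).

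The genuine gap is in your middle paragraph: the claim that \emph{every} operator of the shape $(R_\xi e_N)^* W (L_\eta e_N)$ lies in $\X_{\rm amen}^N$, with ``a general $W$ handled by approximation inside the hereditary ${\rm C}^*$-algebra generated by $B_0^N$,'' is false at that generality. Take $\cH = \cK = L^2M \ovt L^2M$, $\xi = \eta = \hat{1}\otimes\hat{1}$, and $W = (V \otimes 1)\Sigma$ with $\Sigma$ the tensor flip and $V \in \B(L^2M)$ arbitrary: then $R_\xi^* W L_\eta = V$, so $(R_\xi e_N)^* W (L_\eta e_N) = e_N V e_N$ exhausts all of $\B(L^2N)$, which is strictly larger than $\X_{\rm amen}^N$ whenever $N$ is nonamenable. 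So the ``bimodular plus compact plus approximation'' case analysis cannot work; you must exploit the constrained form of $W = L_\xi S R_\eta^*$ through positivity and hereditariness, which is precisely why the paper compresses only $T_\xi^*T_\xi$ and asserts only $e_N (\X_{\rm amen})_+ e_N \subset \X_{\rm amen}^N$. Concretely, for $x = e_N b_1 S b_2 e_N$ with $b_1, b_2 \in B$, one has $x x^* \leq \| S b_2 e_N b_2^* S^* \| \, e_N b_1 b_1^* e_N$ and $x^* x \leq \| S^* b_1^* e_N b_1 S \| \, e_N b_2^* b_2 e_N$, and since $e_N b b^* e_N \in B^N \subset \X_{\rm amen}^N$ by your own generator computation, hereditariness of $\X_{\rm amen}^N$ gives $xx^*, x^*x \in \X_{\rm amen}^N$; hence $x$ lies in the associated closed left ideal and in its adjoint, i.e., $x \in \X_{\rm amen}^N$. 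With this replacement (or by working only with positive elements of $\X_{\rm amen}$, which is all your final paragraph needs), your argument goes through and recovers the paper's proof.
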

\begin{proof}
Let $\cH$ be a universal Hilbert $M$-bimodule that is weakly contained in the coarse bimodule and denote by $\cH^0$ the set of left and right bounded vectors in $\cH$.
For $\xi\in \cH^0$, note that $e_N T_\xi^*T_\xi e_N= T_{\bar \xi\otimes\xi}$, where $\bar\xi\otimes \xi$ is viewed  in ${}_N \bar\cH\otimes _M\cH_N\prec L^2N\otimes L^2N$.
It follows that $e_N (\X_{\rm amen})_+e_N \subset \X_{\rm amen}^N$.
By Corollary~\ref{cor:condexpcont} $\Ad(e_N):\B(L^2M)\to \B(L^2N)$ is continuous from the weak $M$-topology (resp.\ weak $M'$-topology) to the weak $N$-topology (resp.\ weak $N'$-topology), and hence we then have $\overline{e_N \K_{\X_{\rm amen}}(M) e_N} \subset\K_{\X_{\rm amen}^N}(N)$.
\end{proof}

As a consequence of the Lemma~\ref{lem:amenable boundary subalg}, Proposition~\ref{prop:solid rel amenable boundary}
	and Proposition~\ref{prop:subalgebra}, we obtain the following proposition.

\begin{prop}
Let $M$ be a tracial von Neumann algebra that is biexact relative to its amenable boundary piece.
Then, any von Neumann subalgebra $N\subset M$ is also biexact relative to its amenable $N$-boundary piece.
\end{prop}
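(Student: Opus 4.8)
The plan is to feed the amenable $M$-boundary piece into Proposition~\ref{prop:subalgebra} and then to enlarge the resulting boundary piece up to the intrinsic amenable $N$-boundary piece using Lemma~\ref{lem:amenable boundary subalg}, so the whole argument is a combination of those two results with an elementary monotonicity observation about the boundary constructions.

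First I would record the standing structure coming from traciality: the faithful normal trace on $M$ supplies a normal faithful trace-preserving conditional expectation $E \colon M \to N$ onto any von Neumann subalgebra $N$, so $N \subset M$ is automatically with expectation and the Jones projection $e_N \in \B(L^2M)$ is available. Since by hypothesis $M$ is biexact relative to its amenable boundary piece $\X_{\rm amen}$, Proposition~\ref{prop:subalgebra} applies with $\X = \X_{\rm amen}$ and gives that $N$ is biexact relative to the $N$-boundary piece $\X^N := \overline{e_N \K_{\X_{\rm amen}}(M) e_N}$. This is not yet the conclusion, because a priori $\X^N$ need not coincide with the intrinsic amenable $N$-boundary piece $\X_{\rm amen}^N$; the remaining task is to compare the two, and this is exactly what Lemma~\ref{lem:amenable boundary subalg} is for.

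Lemma~\ref{lem:amenable boundary subalg} supplies the containment $\X^N = \overline{e_N \K_{\X_{\rm amen}}(M) e_N} \subset \K_{\X_{\rm amen}^N}(N)$. I would then establish the monotonicity principle that enlarging a boundary piece enlarges the associated small-at-infinity boundary, and hence preserves biexactness. Concretely, using that $\K_{\X_{\rm amen}^N}^L(N)$ is a left ideal that is closed in the $\C$-$N$ and $\C$-$N'$ topologies, the containment $\X^N \subset \K_{\X_{\rm amen}^N}(N) \subset \K_{\X_{\rm amen}^N}^L(N)$ yields $\B(L^2N)\,\X^N \subset \K_{\X_{\rm amen}^N}^L(N)$, and taking the defining closures gives $\K_{\X^N}^L(N) \subset \K_{\X_{\rm amen}^N}^L(N)$. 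Passing to the associated hereditary ${\rm C}^*$-algebras and then to their $N$-$N$/$N'$-$N'$ closures produces $\K_{\X^N}^{\infty,1}(N) \subset \K_{\X_{\rm amen}^N}^{\infty,1}(N)$, and therefore $\bS_{\X^N}(N) \subset \bS_{\X_{\rm amen}^N}(N)$ as operator $N$-systems.

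Finally I would conclude: since $N \subset \bS_{\X^N}(N)$ is $N$-nuclear and $\bS_{\X^N}(N)$ sits inside $\bS_{\X_{\rm amen}^N}(N)$ as an operator $N$-subsystem on which the ambient $N$-topology restricts to its own (by the restriction lemma for the $M$-topology at the beginning of Section~\ref{sec:Mtopology}), the very same approximating nets of u.c.p.\ maps witness that $N \subset \bS_{\X_{\rm amen}^N}(N)$ is $N$-nuclear, i.e.\ that $N$ is biexact relative to its amenable $N$-boundary piece. The only non-formal input is the containment $\X^N \subset \K_{\X_{\rm amen}^N}(N)$ from Lemma~\ref{lem:amenable boundary subalg}; the main (and quite modest) obstacle is simply carrying out the monotonicity bookkeeping for the $\K^L$, $\K$, $\K^{\infty,1}$, and $\bS$ constructions cleanly, and checking that the ambient $N$-topology restricts correctly so that nuclearity transfers along the inclusion.
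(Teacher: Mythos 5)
Your proof is correct and takes essentially the same route as the paper, which obtains the proposition precisely as a consequence of Proposition~\ref{prop:subalgebra} (applied with $\X = \X_{\rm amen}$) and Lemma~\ref{lem:amenable boundary subalg}. The monotonicity bookkeeping you spell out --- that $\X^N \subset \K_{\X_{\rm amen}^N}(N)$ forces $\K^{\infty,1}_{\X^N}(N) \subset \K^{\infty,1}_{\X_{\rm amen}^N}(N)$ and hence $\bS_{\X^N}(N) \subset \bS_{\X_{\rm amen}^N}(N)$, so the approximating nets transfer via the restriction lemma --- is exactly the step the paper leaves implicit.
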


\section{Additional formulations and examples of biexact von Neumann algebras}

We now give some equivalent characterizations of biexactness for von Neumann algebras that will be useful for providing additional examples.

\begin{lem}\label{lem:bimodularembedding}
Let $M$ be a von Neumann algebra and $\X \subset \B(L^2M)$ be an $M$-boundary piece. There then exists a u.c.p.\ $M$-bimodular map $\phi: \B(L^2M) \to \K_\X(M)^{\sharp *}$. 
\end{lem}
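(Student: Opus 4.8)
The plan is to compress $\B(L^2M)$ by an $M$-quasicentral approximate unit of $\K_\X(M)$ and to pass to a point-weak$^*$ cluster point of the resulting compressions. Recall that $M \subset M(\K_\X(M))$, so I would first fix an approximate unit $\{e_i\}$ of the ${\rm C}^*$-algebra $\K_\X(M)$ that is quasicentral relative to $M(\K_\X(M))$; in particular $\|[e_i,a]\| \to 0$ for every $a \in M$, and hence $\|[e_i^{1/2},a]\| \to 0$ as well by continuous functional calculus. Since $\K_\X(M)$ is a hereditary ${\rm C}^*$-subalgebra of $\B(L^2M)$, we have $\K_\X(M)\,\B(L^2M)\,\K_\X(M) \subseteq \K_\X(M)$, so for each $T \in \B(L^2M)$ the element $e_i^{1/2} T e_i^{1/2}$ lies in $\K_\X(M)$ and we obtain c.c.p.\ maps $\phi_i : \B(L^2M) \to \K_\X(M)$, $\phi_i(T) = e_i^{1/2} T e_i^{1/2}$. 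Composing with the canonical complete order embedding $\K_\X(M) \hookrightarrow \K_\X(M)^{\sharp *} = p_{\rm nor} \K_\X(M)^{**} p_{\rm nor}$ and using weak$^*$-compactness of the unit ball of $\K_\X(M)^{\sharp *}$, I would let $\phi : \B(L^2M) \to \K_\X(M)^{\sharp *}$ be a point-weak$^*$ cluster point of this net; it is automatically completely positive.

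It then remains to verify the two defining properties. For unitality, note $\phi_i(1) = e_i$, whose image under the embedding is $p_{\rm nor} e_i p_{\rm nor}$; since $\{e_i\}$ is an approximate unit it converges weak$^*$ to $1_{\K_\X(M)^{**}}$ in $\K_\X(M)^{**}$, so $p_{\rm nor} e_i p_{\rm nor} \to p_{\rm nor} = 1_{\K_\X(M)^{\sharp *}}$ and thus $\phi(1)=1$, making $\phi$ u.c.p. For $M$-bimodularity, fix $a \in M$ and $T \in \B(L^2M)$ and compute
\[
\phi_i(aT) - a\,\phi_i(T) = [e_i^{1/2}, a]\, T\, e_i^{1/2},
\]
whose norm is at most $\|[e_i^{1/2}, a]\|\,\|T\| \to 0$. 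Because left multiplication by $a \in M \subset \K_\X(M)^{\sharp *}$ is weak$^*$-continuous (as $M$ is a von Neumann subalgebra of $\K_\X(M)^{\sharp *}$ and $a$ commutes with $p_{\rm nor}$, a fact recorded in the construction of $\K_\X(M)^{\sharp *}$), passing to the cluster point gives $\phi(aT) = a\,\phi(T)$; the identity $\phi(Ta) = \phi(T)a$ follows by the symmetric computation.

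The main technical input — and the step to handle carefully — is the existence of an approximate unit of $\K_\X(M)$ that is quasicentral relative to $M$, since $M$ is typically not norm-separable. For this I would appeal to the general net form of the quasicentral approximate unit theorem, which holds for an arbitrary ${\rm C}^*$-algebra and any ${\rm C}^*$-subalgebra of its multiplier algebra with no separability hypothesis. Apart from this, the only points needing attention are the heredity inclusion $\K_\X(M)\,\B(L^2M)\,\K_\X(M)\subseteq \K_\X(M)$ used to land $\phi_i$ inside $\K_\X(M)$, and the normality of the $M$-action on $\K_\X(M)^{\sharp *}$ used to commute the limit past multiplication by $a$.
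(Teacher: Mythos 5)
Your proposal is correct and follows essentially the same route as the paper, which likewise takes an approximate unit $\{e_i\}$ of $\K_\X(M)$ quasicentral with respect to $M$, forms the c.c.p.\ compressions $T \mapsto e_i^{1/2} T e_i^{1/2}$ into $\K_\X(M)$, and takes a point-ultraweak cluster point before compressing to $\K_\X(M)^{\sharp *}$. The additional details you supply --- the net form of the quasicentral approximate unit theorem in the nonseparable setting, the heredity inclusion, the commutator estimate $[e_i^{1/2},a]T e_i^{1/2}$, and the weak$^*$-continuity of multiplication by $a \in M$ in $\K_\X(M)^{\sharp *}$ --- are exactly the points the paper's one-line proof leaves implicit.
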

\begin{proof}
Let $\{ e_i \}_i \subset \K_\X(M)$ be an approximate unit that is quasi-central with respect to $M$, and let $\phi_i: \B(L^2M) \to \K_\X(M) \subset \K_\X(M)^{**}$ be the c.c.p.\ map given by $\phi_i(T) = e_i^{1/2} T e_i^{1/2}$. Then, any point-ultraweak cluster point of $\{ \phi_i \}_i$ gives a u.c.p.\ $M$-bimodular map $\phi: \B(L^2M) \to \K_\X(M)^{**}$, and we may then take the compression to $\K_\X(M)^{\sharp *}$. 
\end{proof}

\begin{lem}\label{lem:cptinclusion}
Let $M$ be a von Neumann algebra with an $M$-boundary piece $\X \subset \B(L^2M)$. Suppose $A \subset \B(L^2M)$ is a ${\rm C}^*$-subalgebra containing the identity operator, and $\phi: A \to \B(L^2M)$ is a u.c.p.\ map such that $\phi(a) - a \in \K_\X^{\infty, 1}(M)$ for each $a \in A$, then $\phi(a) - a \in \K_\X(M)$ for each $a \in A$. 
\end{lem}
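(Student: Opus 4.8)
The plan is to reduce the statement to a single positivity assertion and then invoke the hereditary structure of $\K_\X(M)$. Write $k = \phi(a) - a$. Since $\K_\X(M) = (\K_\X^L(M))^* \cap \K_\X^L(M)$, since an operator $T \in \B(L^2M,\mathcal H)$ lies in $\K_\X^L(M,\mathcal H)$ exactly when $|T| \in \K_\X^{\infty,1}(M)$, and since the positive cones satisfy $\K_\X^{\infty,1}(M)_+ = \K_\X(M)_+$, the membership $k \in \K_\X(M)$ is equivalent to the two conditions $k^*k \in \K_\X^{\infty,1}(M)$ and $kk^* \in \K_\X^{\infty,1}(M)$. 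After rescaling I may assume $\|a\| \le 1$, and by replacing $a$ with $a^*$ it suffices to treat $k^*k$.

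First I would bring in complete positivity through a minimal Stinespring dilation $\phi = V^* \pi(\cdot) V$, with $V \colon L^2M \to \mathcal K$ an isometry. Then $k = \phi(a) - a = V^*(\pi(a)V - Va) =: V^* C_a$, so by Kadison--Schwarz
\[
0 \le k^*k = C_a^* V V^* C_a \le C_a^* C_a = \phi(a^*a) - \phi(a)^* a - a^* \phi(a) + a^*a,
\]
and the right-hand side is a \emph{positive} operator. Since $\K_\X(M)$ is a hereditary ${\rm C}^*$-subalgebra of $\B(L^2M)$ and $\K_\X^{\infty,1}(M)_+ = \K_\X(M)_+$, the sandwich $0 \le k^*k \le C_a^*C_a$ would force $k^*k \in \K_\X(M) \subseteq \K_\X^{\infty,1}(M)$ \emph{as soon as} I know $C_a^*C_a \in \K_\X^{\infty,1}(M)$.

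Everything therefore reduces to the claim $C_a^*C_a \in \K_\X^{\infty,1}(M)$. Rewriting $C_a^*C_a = (\phi(a^*a) - a^*a) - (a^* k + k^* a)$, the first summand lies in $\K_\X^{\infty,1}(M)$ by hypothesis applied to $a^*a \in A$, so the claim is equivalent to the self-adjoint cross term $a^*k + k^*a$ lying in $\K_\X^{\infty,1}(M)$. This is the main obstacle, and it cannot be resolved by algebra alone: every identity relating $a^*k + k^*a$, the defect $\delta(a) = \phi(a^*a) - \phi(a)^*\phi(a)$, and $k^*k$ collapses back to the Kadison--Schwarz inequality $\phi(a)^*\phi(a) \le \phi(a^*a)$, while left multiplication by $a$ is \emph{not} continuous for the seminorms defining $\K_\X^{\infty,1}(M)$ (only multiplication by $M$ and $M'$ is continuous, cf.\ Corollary~\ref{cor:productcont}). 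The root of the difficulty is that $\K_\X^{\infty,1}(M)$ is an operator system rather than an algebra, so the usual ``Calkin'' principle --- that a u.c.p.\ map which is the identity modulo an ideal is multiplicative modulo that ideal --- is not directly available.

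To overcome this I would pass to the normal bidual $\B(L^2M)_J^{\sharp *}$, with its canonical complete order embedding $i$, where the boundary piece is realized as a corner $\K_\X(M)_J^{\sharp *} = q_\X\, \B(L^2M)_J^{\sharp *}\, q_\X$. The key point to establish is a corner characterization: an operator $T \in \B(L^2M)$ lies in $\K_\X^{\infty,1}(M)$ precisely when $i(T) = q_\X\, i(T)\, q_\X$; in particular $i(k)$ and $i(\phi(a^*a)-a^*a)$ are supported under $q_\X$. The restrictions of $i$ to $M$ and to $JMJ$ are normal $*$-homomorphisms, and Lemma~\ref{lem:bimodularembedding} supplies a u.c.p.\ $M$-bimodular map into $q_\X\, \B(L^2M)_J^{\sharp *}\, q_\X$ agreeing with $i$ on $\K_\X(M)$; applying Kadison--Schwarz to $i \circ \phi$ inside this von Neumann algebra and compressing by $q_\X^\perp$ should force the positive element $i(C_a^*C_a)$ under $q_\X$, that is $C_a^*C_a \in \K_\X^{\infty,1}(M)$. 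The hereditary argument of the second paragraph then yields $k^*k \in \K_\X(M)$, and the symmetric computation with $a^*$ gives $kk^* \in \K_\X(M)$, completing the proof. The step requiring the most care is exactly this last one: establishing the corner characterization of $\K_\X^{\infty,1}(M)$ in the bidual and verifying that the $q_\X^\perp$-compression of the Kadison--Schwarz inequality genuinely kills the contribution coming from the non-multiplicativity of the embedding $i$ on the subalgebra $A$.
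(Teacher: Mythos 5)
Your proposal follows the paper's own route for as long as it is on solid ground: the paper also takes a Stinespring dilation $\phi = V^*\pi(\cdot)V$, computes $|\pi(a)V - Va|^2 = \phi(a^*a) - \phi(a^*)a - a^*\phi(a) + a^*a$, and then concludes exactly as you do, via the positive-cone equality $\K_\X^{\infty,1}(M)_+ = \K_\X(M)_+$, the criterion $T \in \K_\X^L(M,\mathcal H) \Leftrightarrow |T| \in \K_\X^{\infty,1}(M)$, and $\K_\X(M) = \K_\X^L(M)^* \cap \K_\X^L(M)$ after replacing $a$ by $a^*$ (the paper writes $\phi(a)-a = V^*(\pi(a)V - Va) \in \K_\X^L(M)$ rather than passing through $k^*k$ and $kk^*$; that difference is cosmetic). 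The one substantive step is the membership $C_a^*C_a \in \K_\X^{\infty,1}(M)$, and there your proposal has a genuine gap: the bidual detour you sketch is circular at precisely the point you flag as delicate. The corner description itself is not the problem --- $\iota^{-1}(\K_\X(M)^{\sharp *}_J) = \K_\X^{\infty,1}(M)$ is available from \cite{DKEP22} and is invoked in the proof of Theorem~\ref{thm:biexact implies AO} --- but with $\Theta := q_\X^\perp\, \iota(\cdot)\, q_\X^\perp$ you cannot distribute $\Theta$ over the products $\phi(a^*)a$ and $a^*\phi(a)$, since $\iota$ is not multiplicative on $A$. What must be shown is $\Theta(k^*a + a^*k) = 0$ for $k = \phi(a)-a$, and the only general tool, the $2\times 2$-matrix Schwarz positivity for the u.c.p.\ map $\Theta$, derives this from $\Theta(k^*k) = 0$ --- which, by the same corner description applied to the positive element $k^*k$, is equivalent to $k^*k \in \K_\X^{\infty,1}(M)$, i.e., to the statement being proved. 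Approximating $k$ by elements of $\K_\X(M)$ does not rescue this either: by Proposition~\ref{prop:leftequality} one has $r_\mu(T^*T)^{1/2} = r_\mu^r(T)$, a one-sided seminorm that is in general genuinely stronger than $r_\mu$ (this is the $\|\cdot\|_{\infty,1}$ versus $\|\cdot\|_{\infty,2}$ gap), so $r_\mu$-approximation of $k$ gives no control of $k^*k$. Thus ``compressing by $q_\X^\perp$ should force\dots'' is not an argument, and the central membership is never established in your write-up.

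What actually closes the step --- and what underlies the paper's unannotated assertion --- is far more elementary than your detour: write
\[
C_a^*C_a = \bigl(\phi(a^*a) - a^*a\bigr) - \bigl(\phi(a^*) - a^*\bigr)a - a^*\bigl(\phi(a) - a\bigr),
\]
and observe that $\K_\X^{\infty,1}(M)$ is by construction an $M$-$M$ and $M'$-$M'$ bimodule (cf.\ Corollary~\ref{cor:productcont}), so all three summands lie in $\K_\X^{\infty,1}(M)$ as soon as $a$ multiplies this space into itself. This is automatic when $A \subset M$ (or, more generally, $A \subset {\rm C}^*(M \cup M')$), which covers every application of the lemma in the paper: $A = M_0 \subset M$ in Theorem~\ref{thm:biexact}, and the algebra generated by $D(\overline\delta) \cap M$ in the closable-derivation theorem. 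You correctly sensed that for a completely arbitrary ${\rm C}^*$-subalgebra $A \subset \B(L^2M)$ the cross terms $k^*a$ and $a^*k$ are not controlled --- your diagnosis that only multiplication by $M$ and $M'$ is continuous for these seminorms is exactly right, and it is fair to note that the paper's one-line claim is silently using this bimodularity --- but your proposal neither restricts to the setting where bimodularity applies nor supplies a substitute, so as written it does not prove the lemma.
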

\begin{proof}
Take the Stinespring representation $\phi(a) = V^* \pi(a) V$ where $\mathcal H$ is a Hilbert space, $V: L^2M \to \mathcal H$ is an isometry, and $\pi: A \to \B(\mathcal H)$ is a $*$-representation. If $a \in A$, then we have $| \pi(a) V - V a |^2 = \phi(a^* a) - \phi(a^*) a - a^* \phi(a) + a^* a \in \K_\X^{\infty, 1}(M)$, and hence $\pi(a) V - V a \in \K_\X^L(M, \mathcal H)$. Thus, $\phi(a) - a = V^* (\pi(a) V - V a) \in \K_\X^L(M)$. Replacing $a$ by $a^*$ shows that we also have $\phi(a^*) - a^* \in \K_\X^L(M)$, and hence $\phi(a) - a  \in \K_\X^L(M) \cap \K_\X^L(M)^* = \K_\X(M)$. 
\end{proof}

If $M$ is a von Neumann algebra with a normal faithful state $\mu$, then we continue to denote by $\mu$ the vector state on $\B(L^2(M, \mu))$ given by $\B(L^2(M, \mu)) \ni T \mapsto \langle T \mu^{1/2}, \mu^{1/2} \rangle$, where $\mu^{1/2}$ is the cyclic vector given by the GNS-construction of $M$ with respect to $\mu$. We recall that $r_\mu$ denotes the seminorm on $\B(L^2(M, \mu))$ given by 
\[
r_\mu(T) = \inf \{ ( \mu(a^*a) + \mu(b^*b) )^{1/2} \| Z \| (\mu(c^*c) + \mu(d^*d) )^{1/2} \}
\] 
where the infimum is taken over all decompositions $T = \left( \begin{smallmatrix}
 a  \\
b
\end{smallmatrix}
\right)^*  Z \left( \begin{smallmatrix}
 c  \\ 
d
\end{smallmatrix} \right)$ where $a, c \in M'$, $b, d \in M$ and $Z \in \mathbb M_2(\B(L^2(M, \mu)))$.

\begin{thm}\label{thm:biexact}
Let $M$ be a separable von Neumann algebra with a normal faithful state $\mu$. Let $\X \subset \B(L^2 M)$ be an $M$-boundary piece. The following conditions are equivalent: 
\begin{enumerate}
\item\label{item:biexact1} $M$ is biexact relative to $\X$.
\item\label{item:biexactapprox} $M$ is weakly exact, and for every $\varepsilon > 0$ and finite-dimensional operator systems $E \subset F \subset \B(L^2M)$ with $E \subset M$ there exists a u.c.p.\ map $\phi: \B(L^2M) \to \B(L^2M)$ such that $d_{r_\mu}(x - \phi(x), \K_{\X}^{\infty, 1}(M)) < \varepsilon$, and $d_{r_\mu}([JxJ, \phi(T)], \K_{\X}^{\infty, 1}(M)) < \varepsilon$ for all $x \in E$ and $T \in F$. 
\item\label{item:biexactapprox2} There exists a separable ultraweakly dense ${\rm C}^*$-subalgebra $M_0 \subset M$ and a separable ${\rm C}^*$-subalgebra $\B \subset \B(L^2M)$ containing $M_0$ and the identity operator such that the inclusion $M_0 \subset \B$ is $(M_0 \subset M)$-nuclear and such that for every $\varepsilon > 0$ and finite-dimensional operator systems $E \subset F \subset \B$ with $E \subset M_0$ there exists a u.c.p.\ map $\phi: \B(L^2M) \to \B(L^2M)$ such that $d_{r_\mu}(x - \phi(x), \K_{\X}^{\infty, 1}(M)) < \varepsilon$, and $d_{r_\mu}([JxJ, \phi(T)], \K_{\X}^{\infty, 1}(M)) < \varepsilon$ for all $x \in E$ and $T \in F$. 
\item\label{item:biexact3} $M$ is weakly exact, and for every separable ${\rm C}^*$-subalgebra $M_0 \subset M$ and separable ${\rm C}^*$-subalgebra $\B \subset \B(L^2M)$ containing $M_0$ and the identity operator, there exists a u.c.p.\ map $\phi: \B \to \bS_\X(M)$ satisfying $\phi(x) - x \in \K_{\X}(M)$ for all $x \in M_0$. 
\item\label{item:biexact2} There exists a separable ultraweakly dense ${\rm C}^*$-subalgebra $M_0 \subset M$ and a separable ${\rm C}^*$-subalgebra $\B \subset \B(L^2M)$ containing $M_0$ and the identity operator such that the inclusion $M_0 \subset \B$ is $(M_0 \subset M)$-nuclear, and such that there exists a normal u.c.p.\ map $\phi: \B \to \bS_\X(M)$ satisfying $\phi(x) - x \in \K_{\X}(M)$ for all $x \in M_0$. 
\end{enumerate}
\end{thm}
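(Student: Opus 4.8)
The plan is to prove all five conditions equivalent through a cycle $(\ref{item:biexact1})\Rightarrow(\ref{item:biexactapprox})\Rightarrow(\ref{item:biexactapprox2})\Rightarrow(\ref{item:biexact3})\Rightarrow(\ref{item:biexact2})\Rightarrow(\ref{item:biexact1})$, with the recurring tools being Corollary~\ref{cor:densenuclear} (to pass between $(M_0\subset M)$-nuclearity on a dense subalgebra and $M$-nuclearity of $M$), Proposition~\ref{prop:sepnuc} (to manufacture separable subalgebras), and Corollary~\ref{cor:characterization weakly exact}, which identifies weak exactness of $M$ with $M$-nuclearity of $M\subset\B(L^2M)$. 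A first observation that threads through the argument is that, since $\bS_\X(M)=\{T\mid[T,a]\in\K_\X^{\infty,1}(M)\ \forall a\in M'\}$ and $M'=JMJ$, any u.c.p.\ map whose range already lies in $\bS_\X(M)$ automatically satisfies the commutator halves of (\ref{item:biexactapprox}) and (\ref{item:biexactapprox2}). A second observation is that each of (\ref{item:biexact1}), (\ref{item:biexactapprox2}), (\ref{item:biexact2}) forces weak exactness: in every case one produces an $(M_0\subset M)$-nuclear copy of $M_0$ inside $\B(L^2M)$ (composing with $\bS_\X(M)\subset\B(L^2M)$, or with the given $\B\subset\B(L^2M)$), so that $M_0$ is weakly exact in $M$ by Theorem~\ref{thm:wk exact}.

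For the forward implications $(\ref{item:biexact1})\Rightarrow(\ref{item:biexactapprox})\Rightarrow(\ref{item:biexactapprox2})$ I would take the $M$-nuclear factorization $\phi_i\colon M\to\M_{n(i)}(\C)$, $\psi_i\colon\M_{n(i)}(\C)\to\bS_\X(M)$ witnessing biexactness, extend $\phi_i$ to $\B(L^2M)$ by Arveson's theorem, and set $\phi=\psi_i\circ\tilde\phi_i\colon\B(L^2M)\to\bS_\X(M)\subset\B(L^2M)$ for $i$ large relative to $E,F,\varepsilon$. The commutator estimate is then identically $0$ because the range sits in $\bS_\X(M)$, while $d_{r_\mu}(x-\phi(x),\K_\X^{\infty,1}(M))\le r_\mu(x-\psi_i\phi_i(x))\le s_\mu(x-\psi_i\phi_i(x))\to0$, using that $r_\mu\le s_\mu$ so the defining $M$-topology convergence dominates $r_\mu$ on the finite-dimensional $E$. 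Condition (\ref{item:biexactapprox2}) is the separable localization of (\ref{item:biexactapprox}): choose a separable ultraweakly dense $M_0$, apply Proposition~\ref{prop:sepnuc} to obtain separable $M_0\subset M_1\subset M$ with $M_1\subset\B(L^2M)$ being $(M_1\subset M)$-nuclear, and let $\B$ be a separable ${\rm C}^*$-algebra containing $M_1$ together with the countably many images of the factorization maps.

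The passages $(\ref{item:biexactapprox2})\Rightarrow(\ref{item:biexact3})\Rightarrow(\ref{item:biexact2})$ manufacture a single boundary map. Exhausting the finite-dimensional systems $E\subset F\subset\B$ and letting $\varepsilon\to0$, I would take a point-ultraweak cluster point of the maps $\phi_\varepsilon$ in the normal bidual $(\bS_\X(M)^{M_0\sharp M_0})^*$ to get a u.c.p.\ map $\phi\colon\B\to\B(L^2M)$; lower semicontinuity of $r_\mu$ promotes the approximate conditions to $\phi(x)-x\in\K_\X^{\infty,1}(M)$ and $[JxJ,\phi(T)]\in\K_\X^{\infty,1}(M)$, so $\phi$ maps into $\bS_\X(M)$, and Lemma~\ref{lem:cptinclusion} upgrades $\phi(x)-x\in\K_\X^{\infty,1}(M)$ to $\phi(x)-x\in\K_\X(M)$, giving (\ref{item:biexact3}). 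Normality in (\ref{item:biexact2}) is secured by taking the cluster point in the appropriate normal bidual and compressing, using Lemma~\ref{lem:bimodularembedding} to keep the range inside the boundary, while the $(M_0\subset M)$-nuclearity of $M_0\subset\B$ is inherited from (\ref{item:biexactapprox2}) (for an arbitrary pair one first invokes weak exactness and Proposition~\ref{prop:sepnuc} to arrange a nuclear inclusion).

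The substantive direction is $(\ref{item:biexact2})\Rightarrow(\ref{item:biexact1})$; by Corollary~\ref{cor:densenuclear} it suffices to show $M_0\subset\bS_\X(M)$ is $(M_0\subset M)$-nuclear. Composing the $(M_0\subset M)$-nuclear inclusion $M_0\subset\B$ with the normal u.c.p.\ map $\phi$, and using Lemma~\ref{lem:weakcontinuity} to see that $\phi$ is continuous from the weak $(M_0\subset M)$-topology into the weak $M$-topology, one obtains that $\phi|_{M_0}\colon M_0\to\bS_\X(M)$ is $(M_0\subset M)$-nuclear. \textbf{The main obstacle} is that the genuine inclusion $\iota\colon x\mapsto x$ differs from $\phi|_{M_0}$ by the correction $k\colon x\mapsto\phi(x)-x$, whose range lies in $\K_\X(M)$ but which does \emph{not} vanish in the $M$-topology (e.g.\ already for $\X=\K(L^2M)$ a rank-one vector functional detects it). To remove it I would show that the normal, completely bounded map $k\colon M_0\to\bS_\X(M)$ with range in $\K_\X(M)$ is itself $(M_0\subset M)$-nuclear: elements of $\K_\X^{\infty,1}(M)$ are $M$-$M$ and $M'$-$M'$ limits of operators from the hereditary algebra $\X$, which factor matricially through coarse-type bimodules, so that normal maps landing in $\K_\X(M)$ are point-$M$-topology limits of matrix-factorable maps. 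Since the matrix-factorable c.b.\ maps form a linear space, $\iota=\phi|_{M_0}-k$ is then $(M_0\subset M)$-nuclear, and a final u.c.p.\ perturbation (Lemma~\ref{lem:perturb}) together with Corollary~\ref{cor:densenuclear} yields that $M$ is biexact relative to $\X$. Establishing this nuclearity of $\K_\X(M)$-valued maps, cleanly and in the full (non-tracial, general boundary piece) generality, is the step I expect to require the most care.
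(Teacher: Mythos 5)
Your outer scaffolding is fine --- (\ref{item:biexact1})$\Rightarrow$(\ref{item:biexactapprox}) with the observation that a map into $\bS_\X(M)$ makes the commutator condition automatic, the separable localization (\ref{item:biexactapprox})$\Rightarrow$(\ref{item:biexactapprox2}) via Proposition~\ref{prop:sepnuc}, and the use of Lemma~\ref{lem:cptinclusion} to upgrade $\K_\X^{\infty,1}(M)$-membership to $\K_\X(M)$-membership all match the paper. But the two implications you compress into single sentences are exactly the two substantive steps, and both of your arguments for them have genuine gaps. For (\ref{item:biexactapprox2})$\Rightarrow$(\ref{item:biexact3}) you take a point-ultraweak cluster point of the approximating maps $\phi_\varepsilon$ and invoke ``lower semicontinuity of $r_\mu$.'' This fails: the set $\{T : d_{r_\mu}(T,\K_\X^{\infty,1}(M))\le\varepsilon\}$ is convex and $r_\mu$-closed but not ultraweakly closed ($\K_\X^{\infty,1}(M)$ contains the compacts, so it is ultraweakly dense in many cases, and the $r_\mu$-balls admit decompositions whose columns are small in $\mu$-norm but unbounded in operator norm, so they do not survive ultraweak limits). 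In the finer weak topology generated by functionals normal on both $M$ and $M'$ --- where these convex sets \emph{are} closed --- bounded subsets of $\B(L^2M)$ are no longer compact, so cluster points exist only in the bidual $\B(L^2M)_J^{\sharp*}$, which cannot produce the honest map $\phi:\B\to\bS_\X(M)$ that condition (\ref{item:biexact3}) demands. The paper instead runs the Exercise 15.1.1-of-\cite{BO08} gluing: it chooses the $\phi_n$ along dense sequences with summable $2^{-n}$ errors, extracts explicit $r_\mu$-decompositions $x_i-\phi_n(x_i)-A_{i,0,n}=(\text{column})^*Z(\text{column})$, takes a quasi-central approximate unit $\{e_n\}$ of the separable ideal $\tilde\B\cap\K_\X(M)$, and sets $\phi(T)=e_1Te_1+\sum_n(e_{n+1}-e_n)^{1/2}\phi_n(T)(e_{n+1}-e_n)^{1/2}$; summability is what turns the approximate conditions into exact membership. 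This construction is the technical heart of the theorem and is absent from your proposal.

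For (\ref{item:biexact2})$\Rightarrow$(\ref{item:biexact1}) you correctly identify the correction $k(x)=\phi(x)-x\in\K_\X(M)$ as the obstacle, but your plan to subtract it by showing $k$ is itself $(M_0\subset M)$-nuclear is broken on two counts. First, maps with range in $\K_\X(M)$ need \emph{not} be approximable by matrix-factorable maps for a general boundary piece: e.g.\ for $\X=\X_B$ with $B\subset M$ nonamenable, $\K_\X(M)$ contains the corner $e_B\B(L^2M)e_B\cong\B(L^2B\ovt\ell^2\N)$, and a u.c.p.\ map landing there need not be weakly nuclear at all --- your heuristic that elements of $\X$ ``factor matricially through coarse-type bimodules'' is simply false outside $\X=\K(L^2M)$. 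Second, even where such approximations exist, the difference $\phi|_{M_0}-k$ of two c.p.-factorable maps only factors \emph{completely boundedly} through finite-dimensional algebras, whereas $(M_0\subset M)$-nuclearity requires c.c.p.\ factorizations; Lemma~\ref{lem:perturb} repairs non-unitality, not non-positivity, and the paper has no c.b.-to-c.c.p.\ upgrading for targets like $\bS_\X(M)$. The paper avoids subtraction entirely: it passes to $\bS_\X(M)^{\sharp*}$, lets $p$ be the support projection of $\K_\X(M)^{\sharp*}$ (which commutes with $M$, so $p^\perp k(x)p^\perp=0$), takes the $M$-bimodular map $\phi_0:\B(L^2M)\to\K_\X(M)^{\sharp*}$ of Lemma~\ref{lem:bimodularembedding}, and sets $\tilde\phi(T)=\phi_0(T)+p^\perp\phi(T)p^\perp$; then $\tilde\phi$ restricts to the canonical embedding on $M_0$, so composing with the nuclear inclusion $M_0\subset\B$ and applying Corollary~\ref{cor:densenuclear} and Lemma~\ref{lem:weakly nuclear bidual} finishes. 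Ironically you cite Lemma~\ref{lem:bimodularembedding} in passing for normality bookkeeping, but the corner trick it enables is precisely the mechanism that kills the correction term you were trying to subtract.
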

\begin{proof}
First note that (\ref{item:biexact1}) $\implies$ (\ref{item:biexactapprox}) is trivial. We now show that (\ref{item:biexactapprox}) $\implies$ (\ref{item:biexact3}) and (\ref{item:biexactapprox2}) $\implies$ (\ref{item:biexact2}) using an idea inspired by Exercise 15.1.1 in \cite{BO08}. 

Let $M_0 \subset M$ be a separable ${\rm C}^*$-subalgebra, and let $\B \subset \B(L^2M)$ be a separable ${\rm C}^*$-algebra containing $M_0$ and the identity operator. Enumerate dense subsets $\{ x_k \}_k \subset M_0$ and $\{ T_k \}_k \subset \B$. By hypothesis, for each $n \geq 1$ there exists a u.c.p.\ map $\phi_n: \B(L^2M) \to \B(L^2M)$ so that the following conditions hold:
\begin{enumerate}
\item For each $i \leq n$, there exists $A_{i, 0, n} \in \K_\X(M)$ such that $r_{\mu}( x_i - \phi_n(x_i) - A_{i, 0, n} ) < 2^{-n}$.
\item For each $1 \leq i, j \leq n$, there exists $A_{i, j, n} \in \K_\X(M)$ such that $r_{\mu}( [ J x_i J, T_j ]  - A_{i, j, n} ) < 2^{-n}$. 
\end{enumerate}
Using the definition of the norm $r_\mu$ there then exist, for each $n \geq 1$, $1 \leq i \leq n$, and  $0 \leq j \leq n$, elements $a_{i, j, n}, c_{i, j, n} \in M'$, $b_{i, j, n}, d_{i, j, n} \in M$ and $Z_{i, j, n} \in \B(L^2M)$ with $\| Z_{i, j, n} \| = 1$ and 
\[
( \mu( a_{i, j, n}^* a_{i, j, n} ) + \mu( b_{i, j, n}^* b_{i, j, n} ))^{1/2} = ( \mu (c_{i, j, n}^* c_{i, j, n} ) + \mu(d_{i, j, n}^* d_{i, j, n} ) )^{1/2} < 2^{-n}
\] 
so that
\[
x_i - \phi_n(x_i) - A_{i, 0, n} = \left( \begin{smallmatrix}
 a_{i, 0, n}  \\
b_{i, 0, n}
\end{smallmatrix}
\right)^* 
Z_{i, 0, n} \left( \begin{smallmatrix}
 c_{i, 0, n}  \\
d_{i, 0, n}
\end{smallmatrix}
\right),  
\ \ \ \ \ [ J x_i J, T_j ]  - A_{i, j, n} = 
\left( \begin{smallmatrix}
 a_{i, j, n}  \\
b_{i, j, n}
\end{smallmatrix}
\right)^* 
Z_{i, j, n} \left( \begin{smallmatrix}
 c_{i, j, n}  \\
d_{i, j, n}
\end{smallmatrix}
\right).
\]

We let $\tilde \B \subset \B(L^2M)$ denote the separable ${\rm C}^*$-algebra generated by $\B$, together with the elements $A_{i, j, n}, Z_{i, j, n}, a_{i, j, n}, c_{i, j, n}$ for all $n \geq 1$, $1 \leq i \leq n$, and  $0 \leq j \leq n$. We let $I = \tilde \B \cap \K_{\X}(M)$ and note that $I$ is a separable ideal in $\tilde \B$. We may, therefore, choose an approximate unit $\{ e_n \}_n \subset I_+$, so that whenever $n \geq 1$, $1 \leq i \leq n$, and  $0 \leq j \leq n$ we have
\[
\| (e_{n + 1} - e_n)^{1/2} A_{i, j, n} (e_{n + 1} - e_n)^{1/2} \| < 2^{-n}, \ \ \ \ \ 
\| [e_n, x_i] \| < 2^{-n},
\]
\[
\| [(e_{n + 1} - e_n)^{1/2}, a_{i, j, n} ] \| < 2^{-n} \delta_{i, j, n}, \ \ \ \ \ 
 \| [(e_{n + 1} - e_n)^{1/2}, b_{i, j, n} ] \| < 2^{-n} \delta_{i, j, n},
\]
\[
\| [(e_{n + 1} - e_n)^{1/2}, c_{i, j, n} ] \| < 2^{-n} \delta_{i, j, n}, \ \ \ {\rm \ and \ } \ \ \
 \| [(e_{n + 1} - e_n)^{1/2}, d_{i, j, n} ] \| < 2^{-n} \delta_{i, j, n},
\]
where $\delta_{i, j, n} = (1 + \| a_{i, j, n} \|)^{-1} (1 + \| b_{i, j, n} \| )^{-1}(1 + \| c_{i, j, n} \| )^{-1}(1 + \| d_{i, j, n} \| )^{-1}$.

We define the u.c.p.\ map $\phi: \B \to \B(L^2M)$ by 
\[
\phi(T) = e_1 T e_1 +  \sum_{n = 1}^\infty (e_{n + 1} - e_{n})^{1/2} \phi_n(T) (e_{n + 1} - e_{n})^{1/2}.
\] 
If we denote by $\equiv$ equality modulo $\K_{\X}(M)$, it is then easy to see that for each $i \geq 1$ and $k \geq 1$ we have 
\begin{align}
\phi(x_i) - x_i 
& \equiv \sum_{n = 1}^\infty (e_{n + 1} - e_n)^{1/2} ( \phi_n(x_i) - x_i ) (e_{n + 1} - e_n)^{1/2} \nonumber \\
& \equiv \sum_{n = k}^\infty (e_{n + 1} - e_n)^{1/2} ( \phi_n(x_i) - x_i ) (e_{n + 1} - e_n)^{1/2} \nonumber \\
& \equiv \sum_{n = k}^\infty (e_{n + 1} - e_n)^{1/2} ( \phi_n(x_i) - x_i - A_{i, 0, n}) (e_{n + 1} - e_n)^{1/2} \nonumber \\
& \equiv \sum_{n = k}^\infty \left( \begin{smallmatrix}
 a_{i, 0, n}  \\
b_{i, 0, n}
\end{smallmatrix}
\right)^* 
\left( \begin{smallmatrix}
 (e_{n + 1} - e_n)^{1/2}  & 0 \\
0 & (e_{n + 1} - e_n)^{1/2}
\end{smallmatrix}
\right)
 Z_{i, 0, n} \left( \begin{smallmatrix}
 (e_{n + 1} - e_n)^{1/2}  & 0 \\
0 & (e_{n + 1} - e_n)^{1/2}
\end{smallmatrix}
\right) 
\left( \begin{smallmatrix}
 c_{i, 0, n}  \\
d_{i, 0, n}
\end{smallmatrix}
\right)
\nonumber
\end{align}

Since $\left\| \left( \begin{smallmatrix}
 (e_{n + 1} - e_n)^{1/2}  & 0 \\
0 & (e_{n + 1} - e_n)^{1/2}
\end{smallmatrix}
\right)
 Z_{i, 0, n} \left( \begin{smallmatrix}
 (e_{n + 1} - e_n)^{1/2}  & 0 \\
0 & (e_{n + 1} - e_n)^{1/2}
\end{smallmatrix}
\right)   \right\| \leq 1$, it then follows that 
\begin{align}
&d_{r_\mu}\left( \phi(x_i) - x_i, \K_\X^{\infty, 1}(M) \right) \nonumber \\
&\leq \limsup_{k \to \infty} \left( \sum_{n = k}^\infty  \mu(a_{i, 0, n}^* a_{i, 0, n})  + \mu(b_{i, 0, n}^* b_{i, 0, n}) \right)^{1/2}  \left( \sum_{n = k}^\infty  \mu(c_{i, 0, n}^* c_{i, 0, n} ) + \mu(d_{i, 0, n}^* d_{i, 0, n}) \right)^{1/2}
= 0. \nonumber
\end{align}
Since $k \geq 1$ was arbitrary, we then have $\phi(x_i) - x_i \in \K_\X^{\infty, 1}(M)$ for each $i \geq 1$. Since $\{ x_i \}_{i \geq 1}$ is dense in $M_0$, it follows that $\phi(x) - x \in \K_\X^{\infty, 1}(M)$ for each $x \in M$, and hence $\phi(x) - x \in \K_\X(M)$ for each $x \in M_0$ by Lemma~\ref{lem:cptinclusion}. A similar computation as above shows that $[J x J, \phi(T)] \in \K_\X^{\infty, 1}(M)$ for each $T \in \B$ and $x \in M_0$, and hence $\phi(\B) \subset \bS_\X(M)$.

To see the implications (\ref{item:biexactapprox}) $\implies$ (\ref{item:biexactapprox2}) and (\ref{item:biexact3}) $\implies$ (\ref{item:biexact2}), just note that, from Proposition~\ref{prop:sepnuc} and Corollary 3.1.6 in \cite{Iso13}, there exists a separable ultraweakly dense ${\rm C}^*$-subalgebra $M_0 \subset M$ so that the inclusion $M_0 \subset \B(L^2M)$ is $(M_0 \subset M)$-exact. If we take sequences of u.c.p.\ maps $\phi_n: M_0 \to \mathbb M_{k(n)}(\mathbb C)$ and $\psi_n: \mathbb M_{k(n)}(\mathbb C) \to \B(L^2M)$ that realize $(M_0 \subset M)$-exactness, then letting $\B$ be the (separable) ${\rm C}^*$-algebra generated by $M_0$ and $\cup_{n \geq 1} \psi_n(  \mathbb M_{k(n)}(\mathbb C) )$ we have that the inclusion $M_0 \subset \B$ is $(M_0 \subset M)$-nuclear. 

Suppose now that (\ref{item:biexact2}) holds and let $M_0 \subset M$, $\B \subset \B(L^2M)$, and $\phi: \B \to \bS_\X(M)$ satisfy the condition in (\ref{item:biexact2}). 

We let $p \in \K_\X(M)^{\sharp *} \subset \B(L^2M)^{\sharp *}$ denote the support projection of $\K_\X(M)^{\sharp *}$ so that, as an operator subsystem of the von Neumann algebra $\B(L^2M)^{\sharp *} = \begin{pmatrix}
{\K_\X(M)^{\sharp *}} & {p \B(L^2M)^{\sharp *} p^\perp} \\
{p^\perp \B(L^2M)^{\sharp *} p} & {p^\perp \B(L^2M)^{\sharp *} p^\perp} 
\end{pmatrix}$, we have $\bS_\X(M)^{\sharp *} =  \begin{pmatrix}
{\K_\X(M)^{\sharp *}} & {p \bS_X(M)^{\sharp *} p^\perp} \\
{p^\perp \bS_X(M)^{\sharp *} p} & {p^\perp \bS_X(M)^{\sharp *} p^\perp} 
\end{pmatrix}.$

We let $\phi_0: \B(L^2M) \to \K_\X(M)^{\sharp *}$ be a u.c.p.\ $M$-bimodular map coming from Lemma~\ref{lem:bimodularembedding}. Since $\phi(x) - x \in \K_\X(M)$ for all $x \in M_0$, it follows that $\tilde \phi: \B \to \bS_\X(M)^{\sharp *}$ defined by $\tilde \phi(T) = \phi_0(T) + p^\perp \phi(T) p^\perp$ is a u.c.p.\ $M_0$-bimodular map. Since the inclusion $M_0 \subset \B$ is $(M_0 \subset M)$-nuclear, it then follows that the inclusion $M_0 \subset \bS_\X(M)^{\sharp *}$ is also $(M_0 \subset M)$-nuclear, and hence the inclusion $M \subset \bS_\X(M)^{\sharp *}$ is $M$-nuclear by Corollary~\ref{cor:densenuclear}. Lemma~\ref{lem:weakly nuclear bidual} then shows that $M$ is biexact, thereby proving the implication (\ref{item:biexact2}) $\implies$ (\ref{item:biexact1}).
\end{proof}

\begin{lem}\label{lem:boundaryintersection}
Let $M$ be a separable von Neumann and let $\X_1, \X_2, \Y \subset \B(L^2M)$ be boundary pieces such that each $\X_k$ is generated as a hereditary ${\rm C}^*$-subalgebra of $\B(L^2M)$ by a ${\rm C}^*$-subalgebra $A_k \subset \X_k$. Suppose also that $M, M' \subset M(A_1) \cap M(A_2)$ and $A_1 A_2 A_1 \subset \Y$. If $M$ is biexact relative to both $\X_1$ and $\X_2$, then $M$ is biexact relative to $\Y$. 
\end{lem}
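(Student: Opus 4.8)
The plan is to verify biexactness of $M$ relative to $\Y$ through the approximate characterization in Theorem~\ref{thm:biexact}, after first isolating the key algebraic input. Note that $M$ is automatically weakly exact: since $M$ is biexact relative to $\X_1$, the inclusion $M \subset \bS_{\X_1}(M)$ is $M$-nuclear, so composing with the $M$-bimodular inclusion $\bS_{\X_1}(M) \subset \B(L^2M)$ shows $M \subset \B(L^2M)$ is $M$-nuclear, i.e.\ $M$ is weakly exact by Corollary~\ref{cor:characterization weakly exact}. The heart of the matter is the inclusion
\[
\K_{\X_1}(M)\,\K^{\infty,1}_{\X_2}(M)\,\K_{\X_1}(M) \subset \K^{\infty,1}_\Y(M),
\]
which is where the hypothesis $A_1 A_2 A_1 \subset \Y$ enters. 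Granting this, I would fix a normal faithful state $\mu$ and produce, for each pair of finite-dimensional operator systems $E \subset F \subset \B$ with $E \subset M_0$ and each $\varepsilon > 0$, a u.c.p.\ map $\phi \colon \B(L^2M) \to \B(L^2M)$ satisfying the two estimates in condition (\ref{item:biexactapprox2}) relative to $\Y$.

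For the construction I would use biexactness relative to $\X_2$ in the form of condition (\ref{item:biexact3}), together with Proposition~\ref{prop:sepnuc}, to obtain a separable ultraweakly dense $M_0 \subset M$, a separable $\B \subset \B(L^2M)$ containing $M_0$ and $1$ with $M_0 \subset \B$ being $(M_0 \subset M)$-nuclear, and a u.c.p.\ map $\phi_2 \colon \B \to \bS_{\X_2}(M)$ with $\phi_2(x) - x \in \K_{\X_2}(M)$ for all $x \in M_0$; I then extend $\phi_2$ to a u.c.p.\ map on $\B(L^2M)$ by Arveson's theorem. The combination with $\X_1$ is carried out by compression: I would choose a single positive contraction $e \in \K_{\X_1}(M)$ that is approximately quasi-central for both $M$ and $M' = JMJ$ (using that $M, M' \subset M(\K_{\X_1}(M))$) and satisfies $\mu(1-e) < \varepsilon$ (such $e$ exist since $\K_{\X_1}(M)$ acts nondegenerately on $L^2M$, so an approximate unit tends strongly to $1$), and set
\[
\phi(T) = e^{1/2} \phi_2(T) e^{1/2} + \mu\big(\phi_2(T)\big)(1 - e),
\]
where $\mu$ also denotes the vector state given by $\mu^{1/2}$. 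This $\phi$ is u.c.p.

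To verify the estimates, for $x \in E$ I would write
\[
\phi(x) - x = e^{1/2}(\phi_2(x) - x)e^{1/2} + (e^{1/2} x e^{1/2} - x) + \big(\mu(x) + \mu(\phi_2(x) - x)\big)(1 - e).
\]
Since $e^{1/2} \in \K_{\X_1}(M)$ and $\phi_2(x) - x \in \K_{\X_2}(M)$, the first term lies in $\K^{\infty,1}_\Y(M)$ by the key inclusion, while the remaining terms are controlled in $r_\mu$ using approximate quasi-centrality of $e$ together with $\mu(1-e) < \varepsilon$. For $a = JxJ \in M'$ and $T \in F$ I would expand $[a, \phi(T)]$; every term involving a commutator $[a, e^{1/2}]$ or $[a,e]$ is small in norm by quasi-centrality, and the one remaining term $e^{1/2}[a, \phi_2(T)]e^{1/2}$ again lies in $\K^{\infty,1}_\Y(M)$ by the key inclusion, because $\phi_2(T) \in \bS_{\X_2}(M)$ forces $[a, \phi_2(T)] \in \K^{\infty,1}_{\X_2}(M)$. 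This establishes condition (\ref{item:biexactapprox2}) for $\Y$, and Theorem~\ref{thm:biexact} then yields that $M$ is biexact relative to $\Y$.

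The main obstacle is the key inclusion. The underlying idea is that an element of $\K_{\X_1}(M)\,\K^{\infty,1}_{\X_2}(M)\,\K_{\X_1}(M)$ is assembled from the hereditary algebras generated by $A_1$ on the outside and $A_2$ in the middle, so after factoring it contains a central block of the shape $A_1 A_2 (\cdots) A_2 A_1$; inserting an approximate unit of $A_1$ on either side of the central factor rewrites this as an element of $(A_1 A_2 A_1)\,\B(L^2M)\,(A_1 A_2 A_1) \subset \K_\Y(M)$. Reducing to positive elements, via $\K^{\infty,1}_{\X_2}(M)_+ = \K_{\X_2}(M)_+$ and the equivalence $T \in \K^L_\X(M) \Leftrightarrow |T| \in \K^{\infty,1}_\X(M)$, streamlines the bookkeeping, but the genuine work is to propagate the generator-level identity through the successive $M$- and $M'$-topology closures defining $\K^L_\X$, $\K_\X$, and $\K^{\infty,1}_\X$; I expect to handle this using the left-ideal property of $\K^L_\Y(M)$ together with the continuity estimates of Corollary~\ref{cor:productcont}.
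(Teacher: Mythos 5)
There is a genuine gap, and it sits exactly where your construction diverges from the hypotheses: your map $\phi(T) = e^{1/2}\phi_2(T)e^{1/2} + \mu(\phi_2(T))(1-e)$ never uses the assumption that $M$ is biexact relative to $\X_1$, and the approximation estimate for it fails. Modulo the quasi-centrality errors and the term you place in $\K^{\infty,1}_\Y(M)$, the residual in $\phi(x)-x$ is $(1-e)\bigl(\mu(\phi_2(x))-x\bigr)$, and $\mu(1-e)<\varepsilon$ gives no control on $r_\mu\bigl((1-e)y\bigr)$: the seminorm $r_\mu$ involves suprema over coefficients in the unit balls of $M$ and $M'$ (in the tracial case it is equivalent to $\|\cdot\|_{\infty,1}$), and for $e$ ranging over an approximate unit of a proper boundary piece, $r_\mu(1-e)$ stays bounded below no matter how close $\mu(e)$ is to $1$ --- pair $(1-e)u$ against unitaries $v_n\to 0$ weakly. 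Nor is $(1-e)x$ close to $\K^{\infty,1}_\Y(M)$, since $\Y$ need not contain $\X_1$. A concrete reductio: take $A_2=\X_2=\B(L^2M)$ (so biexactness relative to $\X_2$ is just weak exactness and one may take $\phi_2=\mathrm{id}$), $A_1=\X_1=\Y=\K(L^2M)$. Then $A_1A_2A_1\subset \Y$, $M,M'\subset M(A_1)\cap M(A_2)$, and your ``key inclusion'' holds trivially by heredity, $\K(M)\,\B(L^2M)\,\K(M)\subset \K(M)$. Every step you perform is available in this setting, so your argument would show that every separable weakly exact von Neumann algebra is biexact --- false, e.g.\ for $L(\F_2\times\F_2)$, which is weakly exact but not solid, hence not biexact by Theorem~\ref{thm:solid}. (The lemma itself is not contradicted, because its hypothesis of biexactness relative to $\X_1=\K(L^2M)$ fails here; your proof just never invokes it.)

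The key inclusion $\K_{\X_1}(M)\,\K^{\infty,1}_{\X_2}(M)\,\K_{\X_1}(M)\subset \K^{\infty,1}_\Y(M)$ is also left unproved, and your insertion heuristic does not go through: inserting an approximate unit $u_\lambda\in A_1$ between an $A_2$-factor and a $\B(L^2M)$-factor requires $a_2u_\lambda\to a_2$ in norm, i.e.\ $A_2\subset \overline{\B(L^2M)A_1}$, which does not follow from $A_1A_2A_1\subset\Y$ and the multiplier hypotheses. The paper's proof avoids any inclusion of this kind. It takes quasi-central approximate units $e_i^2\in A_1$ and $f_j^2\in A_2$, u.c.p.\ maps $\phi_k$ into $\bS_{\X_k}(M)$ for \emph{both} $k=1,2$, and the three-term partition $\psi(T)= e_i^\perp\phi_1(T)e_i^\perp + e_if_j^\perp\phi_2(T)f_j^\perp e_i + e_if_jTf_je_i$, with $e_i^\perp=(1-e_i^2)^{1/2}$, $f_j^\perp=(1-f_j^2)^{1/2}$. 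The complement of $e$ that you filled with a state term is precisely the slot where $\phi_1$ must act: $e_i^\perp$-compression kills $\X_1$-errors such as $e_i^\perp(\phi_1(x)-x)e_i^\perp$ and $e_i^\perp[\phi_1(T),JxJ]e_i^\perp$; the $\X_2$-errors are made $r_\mu$-small by choosing $j$ first and then $i$, using $\limsup_i r_\mu(e_iSe_i)\le r_\mu(S)$ --- so the term $e_if_j^\perp[\phi_2(T),JxJ]f_j^\perp e_i$, which in your scheme would require the key inclusion, is instead simply small. The hypothesis $A_1A_2A_1\subset\Y$ is used exactly once, at generator level: $e_if_j^2e_i\in\Y$ gives $f_je_i\in\K^L_\Y(M)$, hence the identity block satisfies $e_if_j\,\B(L^2M)\,f_je_i\subset\K_\Y(M)$. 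To repair your write-up you would need to reinstate $\phi_1$ on the $(1-e)$-corner and run the two-parameter approximate-unit argument; the single-contraction, single-map shortcut cannot work.
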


\begin{proof}
We fix a normal faithful state $\mu$ on $M$. First note that we may take quasi-central approximate units $\{ e_i^2 \}_i$ for $A_1$ and $\{ f_j^2 \}_j$ for $A_2$ that are also approximate units for $\X_1$ and $\X_2$, respectively.  For notational simplicity we will write $e_i^\perp = (1 - e_i^2)^{1/2}$ and $f_j^\perp = (1 - f_j^2)^{1/2}$.  Since $M, M' \subset M(A_1) \cap M(A_2)$ we have 
\[
\limsup_{i \to \infty} r_\mu(e_i S e_i) \leq r_\mu(S), \ \ \ \ \ \limsup_{i \to \infty} r_\mu( e_i^\perp S e_i^\perp ) \leq r_\mu(S),
\]
for $S \in \B(L^2M)$, and similarly we have $\limsup_{j \to \infty} r_\mu( f_j T f_j) \leq r_\mu(T)$ and $\limsup_{j \to \infty} r_\mu( f_j^\perp T f_j^\perp )$ for $T \in \B(L^2M)$.  Also, note that if $T \in \K_{\X_1}^{\infty, 1}(M)$, then for every $\varepsilon > 0$ there exists $S \in \X_1$ so that $r_{\mu}(T-S) < \varepsilon$. Since $\{ e_i^2 \}_i$ is an approximate unit for $\X_1$ we then have $\| e_i^\perp S e_i^\perp \| \to 0$, and hence $\limsup_{i \to \infty} r_\mu( e_i^\perp T e_i^\perp ) \leq \varepsilon$. Since $\varepsilon > 0$ was arbitrary, we have $\lim_{i \to \infty} r_\mu( e_i^\perp T e_i^\perp ) = 0$, and we similarly have $\lim_{j \to \infty} r_\mu( f_j^\perp T f_j^\perp ) = 0$ for all $T \in \K_{\X_2}^{\infty, 1}(M)$. 

We fix $\varepsilon > 0$ and $E \subset F \subset \B(L^2(M, \mu))$ finite subsets such that $E \subset M$. Since $M$ is biexact relative to both $\X_1$ and $\X_2$, there exist u.c.p.\ maps $\phi_k: \B(L^2(M, \mu)) \to \bS_{\X_k}(M)$ such that $r_\mu( \phi_k(x) - x ) < \varepsilon$ for each $x \in E$ and $k \in \{ 1, 2 \}$. From the discussion above, there then exists some $j$ so that we have 
\[
r_\mu( f_j^\perp (x - \phi_2(x) ) f_j^\perp ) < \varepsilon,   \ \ \ \ \
r_\mu( f_j^\perp [ \phi_2(T), JxJ] f_j^\perp ) < \varepsilon,
\]
\[ 
\| [f_j, x] \|, \| [f_j^\perp, x] \| < \varepsilon,
\] 
for all $x \in E$ and $T \in F$, and we may then choose some $i$ so that 
\[
r_\mu( e_i^\perp (x - \phi_1(x) ) e_i^\perp ) < \varepsilon,  \ \ \ \ \ \ \ 
r_\mu( e_i^\perp [\phi_1(T), JxJ] e_i^\perp ) < \varepsilon, 
\]
\[ 
r_\mu( e_i f_j^\perp [\phi_2(T), JxJ] f_j^\perp e_i ) < \varepsilon,  \ \ \ \ \ \ \ 
\| [e_i, x ] \|, \| [e_i^\perp, x] \| < \varepsilon,
\] 
for all $x \in E$ and $T \in F$. 

We define $\psi: \B(L^2M) \to \B(L^2M)$ by 
\[
\psi(T) 
= e_i^\perp \phi_1(T) e_i^\perp + e_i f_j^\perp \phi_2(T) f_j^\perp e_i + e_i f_j T f_j e_i. \nonumber \\
\]

For $x \in E$ we have
\begin{align}
r_\mu( x - \psi(x) )
&\leq r_\mu( e_i^\perp \phi_1(x) e_i^\perp - (e_i^\perp)^2 x ) 
+ r_\mu( e_i f_i^\perp \phi_2(x) f_i^\perp e_i -  e_i (f_i^\perp )^2 e_i  x )
+ r_\mu( e_i f_j x f_i e_j - e_i f_j^2 e_i x ) \nonumber \\
&< r_\mu( e_i^\perp ( \phi_1(x) - x ) e_i^\perp) + r_\mu( e_i f_j^\perp (\phi_2(x) - x) f_j^\perp e_i ) + 5 \varepsilon \| x \|  
< 2 \varepsilon + 5 \varepsilon \| x \|. \nonumber
\end{align}

Since $e_i f_j e_i \in A_1 A_2 A_1 \subset \Y$, we have $f_j e_i \in \B(L^2M) \Y \subset K_\Y^L(M)$ and so $e_i f_j \B(L^2M) f_j e_i \subset \K_{\Y}(M)$. Hence, for $T \in F$ and $x \in E$, we have
\begin{align}
d_{r_\mu}( [ \psi(T), JxJ ], \K_\Y^{\infty, 1}(M) ) 
& < r_\mu( [ e_i^\perp \phi_1(T) e_i^\perp, JxJ ] ) + r_\mu( [ e_i f_i^\perp \phi_2(T) f_i^\perp e_i, JxJ ] ) \nonumber \\
&\leq r_\mu( e_i^\perp [\phi_1(T), JxJ] e_i^\perp ) + r_\mu( e_i f_i^\perp [\phi_2(T), JxJ] f_i^\perp e_i ) + 6\| T \| \varepsilon \nonumber \\
&\leq 2 \varepsilon + 6 \| T \| \varepsilon. \nonumber
\end{align}

Part (\ref{item:biexactapprox}) of Theorem~\ref{thm:biexact} then gives that $M$ is biexact relative to $\Y$. 
\end{proof}

\begin{lem}\label{lem:expectationcontinuity}
Let $M$ be a von Neumann algebra and, for each $1 \leq i \leq n$, let $B_i \subset M$ be a von Neumann subalgebra with a normal conditional expectation $E_{B_i}: M \to B_i$. Let $e_{B_i}$ denote the corresponding Jones projection. Consider 
\[
x_1 Jy_1 J e_{B_{i_1}} x_2 J x_2 J e_{B_{i_2}} \cdots x_n Jy_nJ e_{B_{i_n}} \in \B(L^2M)
\] 
where $x_1, \ldots, x_n, y_1, \ldots, y_n \in M$. Then for each $1 \leq k \leq n$, this is continuous as a function of $x_k$ or of $y_k$ from the unit ball of $M$ with the ultrastrong-$^*$ topology to $\B(L^2M)$ with either of the locally convex topologies generated by $\{ r_\omega^\ell \}_{\omega \in (M_*)_+}$ or $\{ r_\omega^r \}_{\omega \in (M_*)_+}$, where $r_\omega^\ell$ and $r_\omega^r$ are defined as in Section~\ref{subsec:smallboundary}.
\end{lem}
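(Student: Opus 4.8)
The plan is to reduce, by linearity in the distinguished slot, to the statement that the word tends to $0$ whenever the varying entry tends to $0$ in the ultrastrong-$^*$ topology inside the unit ball, and then to estimate the relevant seminorm by peeling the word one factor at a time. Writing $W = x_1 Jy_1J e_{B_{i_1}}\cdots x_n Jy_nJ e_{B_{i_n}}$, I would first invoke Proposition~\ref{prop:leftequality}, which gives $r_\omega^r(W)^2 = r_\omega(W^*W)$ and $r_\omega^\ell(W)^2 = r_\omega(WW^*)$, so that in either case it suffices to control $r_\omega$ of a positive operator. Moreover, taking adjoints turns $W$ into a product of the same three types of factors (elements of $M$, elements of $JMJ$, and Jones projections $e_{B_j}$), so it is enough to treat $r_\omega^r$, i.e.\ $r_\omega(W^*W)$.

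For the core reduction, suppose the varying entry is $x_k$ and factor $W = P\,x_k\,Q$, where $P,Q$ are the fixed bounded parts of the word to the left and right of $x_k$. By Corollary~\ref{cor:productcont}, $r_\omega(W^*W) = r_\omega\big((x_kQ)^*(P^*P)(x_kQ)\big)\le \|P\|^2\, r_\omega\big((x_kQ)^*(x_kQ)\big)$, so everything comes down to an inductive estimate for the word $V := x_kQ$ (and, for the $y_k$-slot, for $V := (Jy_kJ)Q''$, with $Q''$ the part of $W$ to the right of $Jy_kJ$).

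The key step is the following inductive lemma. For a finite product $V = c_1c_2\cdots c_m$ with each $c_j$ an element of $M$, an element of $JMJ$, or a Jones projection $e_{B}$, and for $h$ positive in $M$ or in $JMJ$, I would show $r_\omega(V^*hV)\le C_V\,\omega(\Phi_V(h))$ (with $\omega(J\Phi_V(h)J)$ in the $JMJ$ case), where $C_V\ge 0$ is a constant and $\Phi_V$ is a finite composition of the normal maps $h\mapsto c^*hc$ (for the $M$- and $JMJ$-factors) and $h\mapsto E_{B}(h)$ (for the Jones-projection factors). This is proved by peeling the innermost factor $c_1$ of $V^*hV = c_m^*\cdots c_1^* h c_1\cdots c_m$: using that $M$ and $JMJ$ commute, Corollary~\ref{cor:productcont} to discard the resulting bounded factor, and the Jones relations $e_Bm e_B = E_B(m)e_B$ for $m\in M$ and $e_B(JmJ)e_B = (JE_B(m)J)e_B$, one replaces $h$ by a new positive element $h'$ (equal to $c_1^*hc_1$ or to $E_{B}(h)$, respectively $JE_B(m)J$) lying again in $M$ or $JMJ$, and reduces to the shorter word $V_1 = c_2\cdots c_m$; the factor $e_B$ is then dropped at no cost, since $h'$ commutes with it. The base case $m=0$ is $r_\omega(h)\le\omega(h)$. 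As $\Phi_V$ is a composition of normal maps it is normal, so $\omega\circ\Phi_V$ is a normal functional.

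Applying the lemma to $V = x_kQ$ with $h = 1$, the first peeling (of the variable $x_k$) replaces $h=1$ by $h_0 = x_k^*x_k$; since $x_k\to 0$ in the ultrastrong-$^*$ topology within the unit ball we have $x_k^*x_k\to 0$ ultraweakly with $\|x_k^*x_k\|\le 1$, whence $\omega(\Phi_Q(x_k^*x_k))\to 0$ and therefore $r_\omega(W^*W)\to 0$. For the $y_k$-slot the distinguished factor is $Jy_kJ$, and its first peeling produces the ultraweakly null positive element $Jy_k^*y_kJ\in (JMJ)_+$, after which the same induction runs. The main obstacle is that for non-tracial $\omega$ multiplication is not $\|\cdot\|_{2,\omega}$-contractive, so one cannot simply transport the smallness of the varying entry onto a test vector; this is precisely why I work with the two-sided, $J$-symmetric seminorm $r_\omega$ together with Corollary~\ref{cor:productcont} and Proposition~\ref{prop:leftequality} (which legitimize the peeling of bounded factors) and track smallness through the correct invariant, namely ultraweak convergence to $0$ of the accumulated positive operator $h$ — a property preserved by the normal conditional expectations $E_{B}$ and by conjugation by the fixed bounded factors.
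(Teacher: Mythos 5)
Your proposal is correct, and it runs on the same engine as the paper's proof --- Proposition~\ref{prop:leftequality} and Corollary~\ref{cor:productcont} to manipulate the seminorms, squaring the word so that the varying entry reappears inside a positive element $x_k^*x_k$ (resp.\ $Jy_k^*y_kJ$), the Jones relations $e_B m e_B = E_B(m)e_B$ and $e_B JmJ e_B = JE_B(m)J e_B$ to absorb that element, and normality of the $E_{B_i}$ to convert ultrastrong-$^*$ smallness of the varying entry into smallness against a fixed normal functional --- but the induction is organized differently. The paper splits $W = xy$ with $x$ the \emph{prefix} ending in the varying entry $x_k$, bounds $r_\mu^\ell(W) \leq r_\mu^\ell(xx^*)^{1/2}\|y\|$ (so the \emph{suffix} is discarded in norm), and observes that $xx^*$ is again a word of the allowed shape whose varying entry $x_{k-1}E_{B_{i_{k-1}}}(x_kx_k^*)$ sits at slot $k-1$; it then inducts on the slot index $k$, re-applying the lemma itself to the longer palindromic word $xx^*$. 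You instead square once, discard the \emph{prefix} $P$ at cost $\|P\|^2$ via Corollary~\ref{cor:productcont}, and prove an auxiliary peeling lemma by induction on word length, transporting an accumulated positive element $h$ through the suffix by a composed normal positive map $\Phi_V$. These are mirror images in substance: yours is slightly longer to set up but buys a cleaner mechanism (a single positive normal functional $\omega\circ\Phi_Q$ witnesses the continuity, the accumulated element never leaves $M_+$ resp.\ $(JMJ)_+$ once created, and the $y_k$-slot and the passage between $r_\omega^\ell$ and $r_\omega^r$ are handled explicitly by adjoints rather than ``similarly''), while the paper's buys brevity because the induction hypothesis is the lemma statement itself. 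One cosmetic point: since $J$ is antilinear, the functional you write as $x\mapsto\omega(J\Phi_V(x)J)$ should be read as $x\mapsto\omega(J\Phi_V(x)^*J)$ to be linear --- harmless here, as you only ever evaluate it on positive elements.
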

\begin{proof}
We will show continuity in the variable $x_k$ with respect to the topology generated by $\{ r_\omega^\ell \}_{\omega \in (M^*)_+}$ as the cases for the other variable $y_k$, and the other topology follow similarly. We will prove the result by induction on $k$ (with $n \geq k$ arbitrary). First note that if $k = 1$, then the result if obvious. In general, if we set 
$x = x_1 Jy_1 J e_{B_{i_1}} x_2 J y_2 J e_{B_{i_2}} \cdots  e_{B_{i_k}} x_k$ and $y = J y_k J e_{B_{i_{k + 1}}} \cdots x_n Jy_nJ e_{B_{i_n}}$, then by Proposition~\ref{prop:leftequality} and Corollary~\ref{cor:productcont} for any positive normal linear functional $\mu$ on $M$ we have
\[
r_\mu^\ell( x_1 Jy_1 J e_{B_1} x_2 J y_2 J e_{B_2} \cdots x_n Jy_nJ e_{B_n} ) 
\leq r_\mu( x x^* )^{1/2} \| y \| 
\leq r_\mu^\ell(x x^*)^{1/2} \| y \|.
\]
But 
\[
xx^* = x_1 J y_1 J e_{B_{i_1}} \cdots e_{B_{i_{k-2}} } x_{k - 1}  E_{B_{i_{k-1}}}(x_k x_k^*)  J y_{k-1} J e_{B_{i_{k-1}}} J y_{k-1}^* J x_{k-1}^* e_{B_{i_{k -2}}} \cdots e_{B_{i_1}} J y_1^* J x_1^*, 
\]
and so, by the induction hypothesis, if $x_k$ is in the unit ball, then as $x_k$ approaches $0$ in the ultrastrong-$^*$ operator topology $s_\mu^\ell(x x^*)$ also approaches zero, proving the induction step.  
\end{proof}

The following theorem gives an analog of \cite[Proposition 15.2.7]{BO08} in the setting of von Neumann algebras. 

\begin{thm}\label{thm:biexactbypieces}
Let $M$ be a separable von Neumann algebra, let $\X, \Y \subset \B(L^2 M)$ be boundary pieces and let $\mathcal F = \{ (B_i, E_i) \}_{i \in I}$ be a family of von Neumann subalgebras $B_i \subset M$, with normal faithful conditional expectations $E_i: M \to B_i$. Suppose that there is an ultraweakly dense ${\rm C}^*$-subalgebra $M_0 \subset M$ such that $\B(L^2M) \X A \subset \K_\Y^L(M)$, where $A \subset \B(L^2M)$ is the ${\rm C}^*$-algebra generated by $\{ x J y J e_{B_i} \mid x, y \in M_0, i \in I \}$.  

If $M$ is biexact relative to both $\X$ and $\X_{\mathcal F}$, then $M$ is biexact relative to $\Y$. 
\end{thm}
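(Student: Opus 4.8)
The plan is to verify condition~(\ref{item:biexactapprox}) of Theorem~\ref{thm:biexact} for the boundary piece $\Y$, after fixing a normal faithful state $\mu$ on $M$. Weak exactness of $M$ is immediate: biexactness relative to $\X$ makes $M\subset\bS_\X(M)\subset\B(L^2M)$ an $M$-nuclear inclusion, so $M\subset\B(L^2M)$ is $M$-nuclear and Corollary~\ref{cor:characterization weakly exact} applies. For the approximation clause I fix $\varepsilon>0$ and finite-dimensional operator systems $E\subset F\subset\B(L^2M)$ with $E\subset M$. Applying condition~(\ref{item:biexactapprox}) of Theorem~\ref{thm:biexact} to each of the two hypotheses, I obtain u.c.p.\ maps $\phi_\X,\phi_{\mathcal F}\colon\B(L^2M)\to\B(L^2M)$ with $r_\mu(x-\phi_\X(x))<\varepsilon$ and $r_\mu(x-\phi_{\mathcal F}(x))<\varepsilon$ for $x\in E$, and with $[\phi_\X(T),JxJ]\in\K_\X^{\infty,1}(M)$ and $[\phi_{\mathcal F}(T),JxJ]\in\K_{\X_{\mathcal F}}^{\infty,1}(M)$ for $T\in F$, $x\in E$. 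I also fix quasi-central approximate units $\{e^2\}$ for $\X$ and $\{h^2\}$ for $\X_{\mathcal F}$ with respect to the weakly dense multiplier subalgebras, exactly as in Lemma~\ref{lem:boundaryintersection}, writing $e^\perp=(1-e^2)^{1/2}$ and $h^\perp=(1-h^2)^{1/2}$; these satisfy $\limsup r_\mu(c\,S\,c)\leq r_\mu(S)$ for the relevant compressions $c$, and $r_\mu(c^\perp T c^\perp)\to 0$ for $T$ in the respective $\K^{\infty,1}$.

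Following the template of Lemma~\ref{lem:boundaryintersection} I then form the u.c.p.\ map
\[
\psi(T)=h^\perp\phi_{\mathcal F}(T)h^\perp+h e^\perp\phi_\X(T)e^\perp h+h e T e h,
\]
which is unital since $(h^\perp)^2+h\bigl((e^\perp)^2+e^2\bigr)h=(h^\perp)^2+h^2=1$. Because $\{e^2\}$ exhausts $\X$ and $\{h^2\}$ exhausts $\X_{\mathcal F}$, the first two summands contribute nothing in the limit: after using quasi-centrality to move $e,h$ past $JxJ\in M'$ with small norm error, the middle commutators lie in $\K_\X^{\infty,1}(M)$ and $\K_{\X_{\mathcal F}}^{\infty,1}(M)$ respectively, which $e^\perp(\cdot)e^\perp$ and $h^\perp(\cdot)h^\perp$ send to $0$ in $r_\mu$; the same computation gives $\psi(x)\approx x$ in $r_\mu$ on $E$. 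The entire content is therefore the main term $h e T e h$, where the decisive feature—unlike Lemma~\ref{lem:boundaryintersection}—is the \emph{ordering}, with the $\X_{\mathcal F}$-factors $h$ on the outside and the $\X$-factor $e$ on the inside. Reading the hypothesis as $\X A\subset\K_\Y^L(M)$ and taking adjoints gives $A\X\subset(\K_\Y^L(M))^*$; hence for $a,a'\in A$ one has $a'e\in(\K_\Y^L(M))^*$ and $ea\in\K_\Y^L(M)$, so any expression $a'e\,S\,ea$ lies in $(\K_\Y^L(M))^*\,\B(L^2M)\,\K_\Y^L(M)\subset\K_\Y(M)$, using that $\K_\Y^L(M)$ is a left ideal. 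Since $\K_\Y^{\infty,1}(M)$ is an $M'$-bimodule, once $h e T e h$ is shown to be $r_\mu$-close to $\K_\Y(M)$ its commutator with $JxJ$ is $r_\mu$-close to $\K_\Y^{\infty,1}(M)$, and condition~(\ref{item:biexactapprox}) of Theorem~\ref{thm:biexact} yields biexactness relative to $\Y$.

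The hard part is precisely reconciling this with the fact that the hypothesis is stated for the subalgebra $A$ built from $M_0$-entries, whereas the approximate unit $h$ lives in $\X_{\mathcal F}$, whose generators $xJyJe_{B_i}$ carry entries from all of $M$. The remedy is to approximate $h e T e h$ in $r_\mu$ by $h'e T e h'$ with $h'$ a finite sum of Jones-projection words having $M_0$-entries, hence in $A$: by Corollary~\ref{cor:productcont} and Proposition~\ref{prop:leftequality} one has $r_\mu\bigl(w\,(eTe)\,w'\bigr)\leq r_\mu^\ell(w)\,\|T\|\,r_\mu^r(w')$, and Lemma~\ref{lem:expectationcontinuity} shows each word $x_1Jy_1Je_{B_{i_1}}\cdots x_nJy_nJe_{B_{i_n}}$ is continuous in its entries from the unit ball of $M$ (ultrastrong-$^*$) into $\B(L^2M)$ equipped with $r_\mu^\ell$ and $r_\mu^r$, so that replacing $M$-entries by $M_0$-entries (dense by Kaplansky) costs only a small $r_\mu$-error while moving the outer factors into $A$. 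The genuinely delicate point is to arrange the approximate unit $h$ so that it is simultaneously quasi-central for the boundary piece $\X_{\mathcal F}$ (needed for the first two summands) and $r_\mu^\ell$-, $r_\mu^r$-approximable by elements of $A$ \emph{without leaving a stray $\B(L^2M)$-factor outside the $\K_\Y^L(M)$-elements}; it is exactly the one-sided ordering in the definition of $\psi$ that makes this possible, and verifying this compatibility—rather than the telescoping bookkeeping, which is routine given Lemma~\ref{lem:boundaryintersection}—is where I expect the main effort to lie.
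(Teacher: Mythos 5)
Your overall strategy is the paper's own: the paper also combines the two biexactness hypotheses through the sandwich map of Lemma~\ref{lem:boundaryintersection}, with the $\X_{\mathcal F}$-side approximate unit on the outside and the $\X$-side unit on the inside (you correctly identified that the one-sided hypothesis forces this ordering), and it also uses Lemma~\ref{lem:expectationcontinuity} to bridge the $M_0$/$M$ discrepancy. The only organizational difference is where the continuity lemma is applied: the paper first upgrades the hypothesis to $\B(L^2M) A_1 A_2 \subset \K_\Y^L(M)$, where $A_1$ is the ${\rm C}^*$-algebra generated by $M JMJ \X$ and $A_2$ the one generated by $\{ MJMJ e_{B_i} \}$ (entries from all of $M$), concludes $A_2 A_1 A_2 \subset \K_\Y(M)$, and then invokes Lemma~\ref{lem:boundaryintersection} as a black box; you instead inline the lemma and apply the continuity to the outer approximate unit $h$. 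These are essentially equivalent, with one caveat: for your approximation step $h$ must be taken inside the word ${\rm C}^*$-algebra $A_2$ (whose approximate unit is automatically one for the hereditary algebra $\X_{\mathcal F}$), not as a generic element of $\X_{\mathcal F}$; a generic element has the form $a S a'$ with a stray middle factor $S \in \B(L^2M)$ that your membership computation $h'e\,T\,e h' \in (\K_\Y^L(M))^* \K_\Y^L(M) = \K_\Y(M)$ cannot absorb, since $\K_\Y(M)$ is only hereditary, not an ideal.

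There is one genuine gap: you take the inner approximate unit $\{e^2\}$ inside $\X$ itself, quasi-central only ``with respect to the weakly dense multiplier subalgebras.'' That is not enough. The estimates you rely on --- $\limsup r_\mu(e^\perp S e^\perp) \leq r_\mu(S)$ for $S = \phi_\X(x) - x$ and $S = [\phi_\X(T), JxJ]$, the vanishing $r_\mu(e^\perp T e^\perp) \to 0$ for $T \in \K_\X^{\infty,1}(M)$, and norm-smallness of $[e^\perp, JxJ]$ for the given $x \in E$ --- all require commuting $e$ past \emph{arbitrary} elements of $M \cup M'$: the elements $JxJ$ with $x \in E \subset M$ arbitrary, and the legs $a, c \in M'$, $b, d \in M$ of near-optimal $r_\mu$-decompositions of $S$, over which you have no control. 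For a general boundary piece only weakly dense subalgebras of $M$ and $M'$ lie in $M(\X)$, and weak density does not produce norm-small commutators with the needed elements. This is exactly why Lemma~\ref{lem:boundaryintersection} hypothesizes $M, M' \subset M(A_1) \cap M(A_2)$ and takes the approximate units inside the generating algebras (so that $A_k$ sits as an ideal in $C^*(A_k, M, M')$ and Arveson quasi-centrality applies). The correct instantiation, which is the paper's first move, is to enlarge $\X$ to the boundary piece generated by $A_1 = C^*(MJMJ\X)$ --- biexactness relative to $\X$ passes to the larger piece since $\bS_\X(M)$ only grows --- and take $e \in A_1$. The cost is that your membership computation then needs $\B(L^2M) A_1 A \subset \K_\Y^L(M)$ rather than the stated $\B(L^2M) \X A \subset \K_\Y^L(M)$; this does follow from the hypothesis, because $\B(L^2M)\, MJMJ\, \X = \B(L^2M)\X$ and words in $A_1$ ending in $JMJ\,M$ push their tails into $A$ (as $MJMJ \cdot A \subset A$). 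With this repair your argument closes and coincides with the paper's proof.
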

\begin{proof}
If $S \in \B(L^2M)$, $T \in \X$, $x_1, x_2, \ldots, x_n, y_1, \ldots, y_n \in M_0$, and $B_{i_1}, \ldots, B_{i_n} \in \mathcal F$, then by hypothesis we have 
\[
ST x_1 Jy_1 J e_{B_{i_1}} x_2 J x_2 J e_{B_{i_2}} \cdots x_n Jy_nJ e_{B_{i_n}} \in \K_\Y^L(M).
\]
By Lemma~\ref{lem:expectationcontinuity}, it then follows that if $A_1$ denotes the ${\rm C}^*$-algebra generated by $M JMJ \X$, and $A_2$ denotes the ${\rm C}^*$-algebra generated by $\{ M JMJ e_{B_i} \mid i \in I \}$, then we still have $\B(L^2M) A_1 A_2 \subset \K_\Y^L(M)$, and hence $A_2 A_1 A_2 \subset \K_\Y(M)$ so that the result then follows from Lemma~\ref{lem:boundaryintersection}.
\end{proof}

\begin{cor}
Suppose $M$ is a separable von Neumann algebra and $P, Q \subset M$ are von Neumann subalgebras with expectation such that for an ultraweakly dense ${\rm C}^*$-subalgebra $M_0 \subset M$ we have $e_P x J y J e_Q \in \K(L^2M)$ for all $x, y \in M_0$. If $M$ is biexact relative to both $P$ and $Q$, then $M$ is biexact.
\end{cor}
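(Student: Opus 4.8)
The plan is to deduce this as a direct application of Theorem~\ref{thm:biexactbypieces}. I would set $\X = \X_P$, take the family $\mathcal F = \{(Q, E_Q)\}$ so that $\X_{\mathcal F} = \X_Q$, and set $\Y = \K(L^2M)$. With these choices, ``biexact relative to $\X$'' and ``biexact relative to $\X_{\mathcal F}$'' are exactly the two standing hypotheses (biexactness relative to $P$ and to $Q$), while ``biexact relative to $\Y$'' is biexactness of $M$. After replacing $M_0$ by $M_0 + \C 1$ if necessary, I may assume $1 \in M_0$. It then remains only to verify the containment hypothesis of Theorem~\ref{thm:biexactbypieces}, namely $\B(L^2M)\, \X_P\, A \subseteq \K^L(M)$, where $A$ is the ${\rm C}^*$-algebra generated by $\{ x J y J e_Q \mid x, y \in M_0 \}$ and $\K^L(M) = \K^L_{\K(L^2M)}(M)$.

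Since $\K^L(M)$ is a closed left ideal and $\X_P$ is the hereditary ${\rm C}^*$-algebra generated by the ${\rm C}^*$-algebra $B_P = C^*(\{ m J n J e_P \mid m, n \in M \})$, the left-ideal property reduces this containment to showing $g\alpha \in \K^L(M)$ for each generator $g \in \{ m J n J e_P, \ e_P J n^* J m^* \mid m, n \in M \}$ of $B_P$ and each word $\alpha$ in the generators $\{ x J y J e_Q,\ e_Q J y^* J x^* \mid x, y \in M_0 \}$ of $A$. For $g = m J n J e_P$ I would write $g\alpha = m J n J (e_P \alpha)$ and multiply $e_P$ into the leftmost atom of $\alpha$: the hypotheses $e_P x J y J e_Q \in \K$ and $e_P e_Q \in \K$ give $e_P \alpha \in \K$, so that $g\alpha \in \B(L^2M)\,\K \subseteq \K^L(M)$.

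The remaining, and main, case is $g = e_P J n^* J m^*$. When $m, n \in M_0$, multiplying $g$ into the leftmost atom of $\alpha$ produces a junction $e_P \mu J \nu J e_Q$ with $\mu, \nu \in M_0$ (using that $M_0$ is an algebra), which lies in $\K$ by hypothesis; since $\K$ is a two-sided ideal, the trailing word is absorbed and $g\alpha \in \K$. For general $m, n \in M$ I would bootstrap via Lemma~\ref{lem:expectationcontinuity}: choosing by Kaplansky density nets $m_\lambda, n_\lambda \in M_0$, bounded by $\|m\|, \|n\|$, converging ultrastrongly-$*$ to $m, n$, the lemma gives $g_\lambda \alpha \to g\alpha$ in the topology generated by the seminorms $r_\omega^r$. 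As each $g_\lambda \alpha \in \K \subseteq \K^L(M)$, it suffices to know that $\K^L(M)$ is closed under bounded $r^r$-limits. This follows from Proposition~\ref{prop:leftequality} and Corollary~\ref{cor:productcont}, which show that $r_\omega^r(T - T_\lambda) \to 0$ forces $T_\lambda^* T_\lambda \to T^* T$ in the $r_\mu$-topology, combined with the facts that $\K^{\infty, 1}(M)$ is $r_\mu$-closed, that $\K^{\infty, 1}(M)_+ = \K(M)_+$, and that $T \in \K^L(M)$ if and only if $|T| \in \K^{\infty, 1}(M)$. Hence $g\alpha \in \K^L(M)$ in every case, the hypothesis of Theorem~\ref{thm:biexactbypieces} holds, and $M$ is biexact.

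The step I expect to be the main obstacle is this last one: transferring the compactness of the junction $e_P \mu J \nu J e_Q$ from the ultraweakly dense algebra $M_0$ to all of $M$. Norm density fails, and compactness is not preserved under ultraweak limits, so a naive closure argument breaks down. The essential point is that one must pass to the weaker target $\K^L(M)$ rather than $\K(L^2M)$ and exploit the continuity of boundary words in the $r^r$-topology provided by Lemma~\ref{lem:expectationcontinuity}, under which $\K^L(M)$ --- unlike the compacts --- is closed.
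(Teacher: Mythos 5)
Your overall strategy is exactly the intended one: the corollary is meant as a direct instantiation of Theorem~\ref{thm:biexactbypieces} with $\X = \X_P$, $\mathcal F = \{(Q, E_Q)\}$, and $\Y = \K(L^2M)$, so that the only content is the containment $\B(L^2M)\, \X_P\, A \subseteq \K^L(M)$, which you correctly reduce to words with a single $B_P$-generator in front and attack via Lemma~\ref{lem:expectationcontinuity}. Your justification that $\K^L(M)$ is closed under bounded limits in the $r^r_\omega$-seminorms --- via Proposition~\ref{prop:leftequality}, Corollary~\ref{cor:productcont}, the $r_\mu$-closedness of $\K^{\infty,1}(M)$, the identity $\K^{\infty,1}(M)_+ = \K(M)_+$, and the criterion that $T \in \K^L(M)$ precisely when $|T| \in \K^{\infty,1}(M)$ --- is correct, and it is precisely the closedness that the proof of Theorem~\ref{thm:biexactbypieces} uses implicitly.

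There is, however, one step that fails as written: the opening reduction ``replacing $M_0$ by $M_0 + \C 1$, I may assume $1 \in M_0$.'' The hypothesis $e_P x JyJ e_Q \in \K(L^2M)$ is assumed only for $x, y \in M_0$, and it does not pass to $M_0 + \C 1$: a bounded net in $M_0$ converging ultrastrongly-$^*$ to $1$ only yields strong convergence $e_P x_\lambda J y_\lambda J e_Q \to e_P e_Q$, and $\K(L^2M)$ is not closed under such limits --- this is the very obstruction you diagnose in your final paragraph. Consequently, in your first case ($g = m JnJ e_P$ with $\alpha$ beginning with an adjoint atom $e_Q J y^* J x^*$), the claim ``$e_P e_Q \in \K$'' is unjustified; and knowing merely $e_P e_Q \in \K^L(M)$ does not suffice either, because the trailing portion of $\alpha$ contains further copies of $e_Q$, which are not right multipliers of the left ideal $\K^L(M)$. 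The repair is the mechanism you already deploy in your main case, applied to the full word rather than to the junction alone: write $g\alpha = m JnJ\, e_P\, u J v J\, e_Q\, Jy^*Jx^* R$ with $u = v = 1$, approximate $u$ and $v$ by elements of the unit ball of $M_0$ (Kaplansky density); each approximant lies in $\K(L^2M)$, since the compact junction $e_P u_\lambda J v_\lambda J e_Q$ is flanked by bounded operators and $\K$ is a two-sided ideal, and one then passes to the bounded $r^r$-limit --- one variable at a time, as Lemma~\ref{lem:expectationcontinuity} gives only separate continuity (the same remark applies to your simultaneous approximation of $m$ and $n$ in the main case), and noting that the proof of that lemma, though not its statement, tolerates an arbitrary bounded tail after the last Jones projection. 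With these corrections the argument is complete; of course, if one reads the corollary with $1_M \in M_0$, your proof goes through as written.
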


\subsection{Biexactness and malleable deformations}

We now refine some of the techniques introduced in Section 9 of \cite{DKEP22} to show how biexactness can be deduced from the existence of Popa's malleable deformations.

\begin{thm}\label{thm:deformationbiexact}
Let $M$ be a separable weakly exact von Neumann algebra with a normal faithful state $\varphi$, and let $\X \subset \B(L^2M)$ be an $M$-boundary piece. Suppose $M \subset \tilde M$ is an extension with a normal faithful conditional expectation $E_M: \tilde M \to M$, and such that as a Hilbert $M$-bimodule $L^2(\tilde M, \varphi) \ominus L^2(M, \varphi)$ is weakly contained in coarse bimodule, where by abuse of notation we also let $\varphi$ denote the state $\varphi \circ E_M$ on $\tilde M$.  Suppose that we have a sequence of state-preserving automorphisms $\alpha_n \in {\rm Aut}(\tilde M, \psi)$ so that $\alpha_n(x) \to x$ ultraweakly for each $x \in M$. Let $e_M$ denote the orthogonal projection onto $L^2(M, \varphi)$, and let $V_n: L^2(\tilde M, \psi) \to L^2(\tilde M, \psi)$ denote the unitary given by $V_n(x \psi^{1/2}) = \alpha_n^{-1}(x) \psi^{1/2}$. 

Suppose also that for each $n \geq 1$, we have $e_M V_n e_M \in \K_\X^L(M, L^2(\tilde M, \psi))$, then $M$ is biexact relative to $\X$. 
\end{thm}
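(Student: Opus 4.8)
The plan is to verify condition~(\ref{item:biexactapprox}) of Theorem~\ref{thm:biexact}, taking $\mu=\varphi$ there. Weak exactness of $M$ is assumed, so it remains to produce, for each $\varepsilon>0$ and each pair of finite-dimensional operator systems $E\subset F\subset\B(L^2M)$ with $E\subset M$, a u.c.p.\ map $\phi\colon\B(L^2M)\to\B(L^2M)$ with $d_{r_\varphi}(x-\phi(x),\K_\X^{\infty,1}(M))<\varepsilon$ for $x\in E$ and $d_{r_\varphi}([JxJ,\phi(T)],\K_\X^{\infty,1}(M))<\varepsilon$ for $x\in E$, $T\in F$. I will take $\phi=\phi_n$ for $n$ large, where $\phi_n(T)=e_MV_n^*\iota(T)V_ne_M$ and $\iota\colon\B(L^2M)\to\B(L^2(\tilde M,\psi))$ is a fixed Arveson extension of the normal unital $*$-homomorphism $\lambda\colon M\to\B(L^2(\tilde M,\psi))$ implementing left multiplication. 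Since $V_n$ is unitary and $\phi_n(1)=e_M$, each $\phi_n$ is u.c.p.\ as a map into $\B(L^2M)=e_M\B(L^2(\tilde M,\psi))e_M$; moreover $\lambda(M)$ lies in the multiplicative domain of $\iota$, so $\iota$ is left $M$-modular, and I will use throughout that $[JxJ,e_M]=0$ for $x\in M$ and that $J_{\tilde M}V_n=V_nJ_{\tilde M}$.

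For the approximate identity, the Jones projection relation $e_M^*ye_M=E_M(y)$ together with $V_n^*\lambda(x)V_n=\lambda(\alpha_n(x))$ gives $\phi_n(x)=E_M(\alpha_n(x))$, viewed as a left multiplication operator, so $\phi_n(x)-x$ is left multiplication by $m_n:=E_M(\alpha_n(x))-x\in M$. A one-line estimate from the definition of $r_\varphi$ yields $r_\varphi(\phi_n(x)-x)\le\varphi(m_n^*m_n)^{1/2}$. Because $\alpha_n$ preserves $\psi$ and $\alpha_n(x)\to x$ ultraweakly, the vectors $\alpha_n(x)\psi^{1/2}$ converge weakly to $x\psi^{1/2}$ while keeping constant norm, hence converge in norm; applying $e_M$ shows $\varphi(m_n^*m_n)\to0$. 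Thus the first family of estimates holds for $n$ large, with no appeal to $\K_\X^{\infty,1}(M)$ needed.

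For the commutator, a direct manipulation using $[e_M,JxJ]=0$, $J_{\tilde M}V_n=V_nJ_{\tilde M}$ and $V_nyV_n^*=\alpha_n^{-1}(y)$ reduces the expression to
\[
[\phi_n(T),JxJ]=e_MV_n^*\,[\iota(T),\,J_{\tilde M}\alpha_n^{-1}(x)J_{\tilde M}]\,V_ne_M .
\]
Writing $V_ne_M=K_n+S_n$ with $K_n:=e_MV_ne_M$ and $S_n:=(1-e_M)V_ne_M$, and expanding, every term carrying a factor $K_n$ or $K_n^*$ lies in $\K_\X^{\infty,1}(M)$: this is exactly the hypothesis $K_n\in\K_\X^L(M,L^2(\tilde M,\psi))$ combined with the fact that $\K_\X^L(M)$ is a left ideal having $M$ and $M'$ among its right multipliers. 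The surviving term is $S_n^*(1-e_M)[\iota(T),J_{\tilde M}\alpha_n^{-1}(x)J_{\tilde M}](1-e_M)S_n$, which is assembled from the operator $S_n\colon L^2M\to\mathcal{K}$, where $\mathcal{K}:=L^2(\tilde M,\psi)\ominus L^2(M,\varphi)$, and a bounded operator on $\mathcal{K}$.

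The heart of the proof, and the step I expect to be hardest, is to show that this surviving term lies within $\varepsilon$ (in $r_\varphi$) of $\K_\X^{\infty,1}(M)$ for $n$ large. The mechanism is that $\mathcal{K}$ is weakly contained in the coarse bimodule, so operators manufactured from left- or right-bounded vectors of $\mathcal{K}$ belong to $\K^{\infty,1}(M)\subseteq\K_\X^{\infty,1}(M)$, as in the analysis of the amenable boundary piece. A short computation shows that $S_n$ is left $M$-modular up to a term carrying a $K_n$ factor plus a defect controlled by $\lambda(\alpha_n^{-1}(\cdot)-\cdot)$, which tends to $0$; feeding this approximate modularity into the coarse bimodule structure forces $S_n^*YS_n$ into $\K_\X^{\infty,1}(M)$ up to an error vanishing as $n\to\infty$. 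Making this precise requires carefully chaining the seminorm inequalities of Proposition~\ref{prop:leftequality} and Corollary~\ref{cor:productcont} so that the coarse compactness, the compact $K_n$-terms, and the $\alpha_n$-defect are all controlled simultaneously. Once both displayed families of estimates are below $\varepsilon$ for $n$ sufficiently large, condition~(\ref{item:biexactapprox}) of Theorem~\ref{thm:biexact} applies and $M$ is biexact relative to $\X$.
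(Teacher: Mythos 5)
Your setup and your first half are sound: the identity $\phi_n(x)=E_M(\alpha_n(x))$ together with the Radon--Riesz argument handles the estimates $r_\varphi(\phi_n(x)-x)\to 0$ cleanly (the paper gets the same thing from an approximate bimodularity estimate $r_\varphi(x\phi_n(S)y-\phi_n(xSy))\to 0$ for $x,y\in M\cup JMJ$, proved via $|[V_n,x]e_M|^2=E_M(x^*x-\alpha_n(x^*)x-x^*\alpha_n(x)+\alpha_n(x^*x))\to 0$ and a polar decomposition trick), and your reduction of the commutator to $e_MV_n^*[\iota(T),J_{\tilde M}\alpha_n^{-1}(x)J_{\tilde M}]V_ne_M$ and disposal of all terms carrying $K_n=e_MV_ne_M\in\K_\X^L(M,L^2(\tilde M,\psi))$ are correct. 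The gap is exactly where you flagged it: the surviving term $S_n^*[\iota(T),J_{\tilde M}\alpha_n^{-1}(x)J_{\tilde M}]S_n$ cannot be controlled for an \emph{arbitrary} Arveson extension $\iota$. Two concrete problems. First, your proposed mechanism rests on a false premise: operators manufactured from bounded vectors of a bimodule weakly contained in the coarse bimodule do \emph{not} lie in $\K^{\infty,1}(M)$; they generate the amenable boundary piece $\X_{\rm amen}$, which is in general strictly larger than $\K(L^2M)$ (for amenable $M$ the identity itself arises this way) and need not be contained in the given boundary piece $\X$ at all. Second, approximate right-modularity of $S_n$ cannot by itself force $S_n^*YS_n$ near $\K_\X^{\infty,1}(M)$: taking $Y=S_nS_n^*$ gives $S_n^*YS_n=(1-K_n^*K_n)^2\equiv 1$ modulo $\K_\X^{\infty,1}(M)$, and since you have no control whatsoever over $e_M^\perp\iota(T)e_M^\perp$ (the extension is unstructured off $M$), the commutator you face on the corner $\mathcal K=L^2(\tilde M,\psi)\ominus L^2(M,\varphi)$ is that of an essentially arbitrary operator with the right action, which has no smallness.

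What closes the gap --- and is how the paper actually proceeds --- is to use the weak containment $\mathcal K\prec L^2M\ovt L^2M$ not to produce compactness, but to \emph{choose the extension}: it yields an $M$-bimodular u.c.p.\ map $\psi_0:\B(L^2M)\to(M^{\rm op})'\cap\B(\mathcal K)$, and one replaces your $\iota$ by $\tilde\psi(T)=e_MTe_M+\psi_0(T)$. Since $\psi_0(T)$ commutes \emph{exactly} with the right $M$-action on $\mathcal K$ and $e_M$ commutes with $J_{\tilde M}MJ_{\tilde M}$, one has $[\tilde\psi(T),J_{\tilde M}xJ_{\tilde M}]=e_M[T,JxJ]e_M$ on the nose, so your surviving term vanishes identically --- there is nothing left on the $e_M^\perp$-corner to estimate --- and the hypothesis does all the remaining work: $\phi_n([\tilde\psi(T),JxJ])=(e_MV_ne_M)^*[T,JxJ](e_MV_ne_M)\in\K_\X(M)$. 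The $\alpha_n^{-1}$-twist you isolated is then absorbed into the approximate bimodularity estimate (which you have in essence already proved in the $M$-variable; the $JMJ$-variable follows the same way using $[V_n,J_{\tilde M}]=0$), giving $r_\varphi([\phi_n(\tilde\psi(T)),JxJ]-\phi_n([\tilde\psi(T),JxJ]))\to 0$, and condition~(\ref{item:biexactapprox}) of Theorem~\ref{thm:biexact} applies as you intended. So your skeleton is salvageable, but only after replacing the unstructured Arveson extension by the bimodular extension $\tilde\psi$; as written, the central step of the proposal fails.
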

\begin{proof}
We define the u.c.p.\ map $\phi_n: \B(L^2 \tilde M) \to \B(L^2 M)$ by $\phi_n(T) = e_MV_n^* T V_n e_M$. Since ${\alpha_n}_{|M}$ converges to the identity pointwise in the ultraweak topology, it follows that for $x \in M$ we have that
\[
| [V_n, x] e_M |^2 = E_M(x^*x - \alpha_n(x^*) x - x^* \alpha_n(x) + \alpha_n(x^* x))
\] 
converges to $0$ in the ultraweak topology, so that $| [V_n, x] e_M |$ converges in the strong operator topology in $M$. Considering the polar decomposition $[V_n, x] e_M = W_n | [V_n, x] e_M |$, we then see that $[V_n, x] e_M$ converges in the $\mathbb C$-$M$-topology.  Hence, for any $T \in \B(L^2 \tilde M)$, we have $\phi_n(T)x - \phi_n(T x) = e_M V_n^* T [V_n, x] e_M$ converges to $0$ in the $\mathbb C$-$M$-topology, and, in particular, we have $r_\varphi( \phi_n(T) x - \phi_n(Tx) ) \to 0$. By similar arguments we then conclude that $r_\varphi( x\phi_n(T)y - \phi_n(xTy) ) \to 0$ for any $T \in \B(L^2 \tilde M)$ and $x, y \in M \cup JMJ$. 

Since $L^2(\tilde M, \varphi) \ominus L^2(M, \varphi)$ is weakly contained in the coarse $M$-bimodule, there exists an $M$-bimodular u.c.p.\ map $\psi: \B(L^2M) \to {M^{\rm op}}' \cap \B(L^2(\tilde M, \varphi) \ominus L^2(M, \varphi) )$. We may then define a u.c.p.\ map $\tilde \psi: \B(L^2M) \to \B(L^2 \tilde M)$ by $\tilde \psi(T) = e_MTe_M + \psi(T)$, where we identify $\B(L^2(\tilde M, \varphi) \ominus L^2(M, \varphi) )$ with $e_M^\perp \B(L^2(\tilde M, \varphi) ) e_M^\perp$.

Since $e_M V_n e_M \in \K_\X^L(M, L^2(\tilde M, \varphi))$, it then follows that for $x \in M$ we have 
\[
d_{r_\varphi}(\phi_n \circ \tilde \psi(x) - x, \K_\X^{\infty, 1}(M) ) \leq r_\varphi( \phi_n(x) - x ) \to 0.
\] 
Similarly, if $T \in \B(L^2M)$ and $x \in JMJ$, we have 
\[
d_{r_\varphi}( [\phi_n \circ \tilde \psi(T), x], \K_\X^{\infty, 1}(M) ) \leq r_\varphi([ \phi_n \circ \tilde \psi(T), x] - \phi_n( [x, \tilde \psi(T)] ) ) \to 0.
\] 
By Theorem~\ref{thm:biexact}, we then have that $M$ is biexact relative to $\X$.
\end{proof}

\begin{rem}
The assumptions that $L^2(\tilde M, \varphi) \ominus L^2(M, \varphi)$ be weakly contained in coarse bimodule and that $e_M V_n e_M \in \K_\X^L(M, L^2(\tilde M, \varphi))$ are only used to produce the existence of a c.c.p.\ map $\psi: \B(L^2M) \to \B(L^2(\tilde M, \varphi)) \cap JMJ'$ such that $(1 - \psi(x) )^{1/2} V_n e_M \in \K_\X^L(M, L^2 (\tilde M, \varphi))$ for each $n$. 
\end{rem}

\subsection{Biexactness and closable derivations}

In Section 9.2 of \cite{DKEP22}, a connection was established between proper proximality and the existence of certain closable real derivations into Hilbert bimodules. In this section, we refine the techniques from \cite{DKEP22} to show that under natural conditions on the derivation and bimodule one can deduce biexactness. This, then, formally shows how under the assumption of weak exactness one can obtain the results from \cite{Pet09} via the techniques from \cite{Oza04}. This should also be compared with Theorem 5.13 in \cite{CaIsWa21} where property (AO)$^+$ (and consequently biexactness) is obtained under additional assumptions on the growth of the eigenvalues of the associated Laplacian $\Delta = \delta^* \overline \delta$.

Let $M$ be a tracial von Neumann algebra and $\mathcal H$ an $M$-$M$ correspondence that has a real structure, i.e., there exists an antilinear isometric involution $\mathcal J: \mathcal H \to \mathcal H$ such that $\mathcal J( x \xi y ) = y^* \mathcal J(\xi) x^*$ for all $x, y \in M$, $\xi \in \mathcal H$. A closable real derivation is an unbounded closable linear map $\delta: L^2 M \to \mathcal H$, such that the domain $D(\delta)$ is an ultraweakly dense unital $*$-subalgebra of $M \subset L^2M$, and such that $\delta$ preserves the real structure ($\delta(x^*) = \mathcal J( \delta(x))$ for $x \in D(\delta)$) and satisfies Leibniz's formula
\[
\delta(xy) = x \delta(y) + \delta(x)y \ \ \ \ \ x, y \in D(\delta).
\]

A result of Davies and Lindsay in \cite{DaLi92} shows that $D(\overline{\delta}) \cap M$ is then again a $*$-subalgebra and $\overline \delta_{| D(\overline \delta) \cap M}$ again gives a closable real derivation. We recycle the following notation from \cite{Pet09, OzPo10} 
\[
\Delta = \delta^* \overline{ \delta},  \ \ \ \ \ \rho_\alpha = \frac{\alpha}{\alpha + \Delta}, \ \ \ \ \ \zeta_\alpha = \rho_\alpha^{1/2}, \ \ \ \ \ \tilde \delta_\alpha = \frac{1}{\sqrt{\alpha}} \overline \delta \circ \zeta_\alpha, \ \ \ \ \ \tilde \Delta_\alpha = \frac{1}{\sqrt{\alpha}} \Delta^{1/2} \circ \zeta_\alpha = ( 1 - \rho_\alpha )^{1/2}.
\] 
We note that from \cite{Sa90, Sa99} we have that $\zeta_\alpha$ is a $\tau$-symmetric u.c.p.\ map for $\alpha > 0$.

Building on ideas from \cite{Pet09} and \cite{OzPo10}, the following approximate bimodularity was established in Lemma 9.3 of \cite{DKEP22} for $x \in M \cap D(\overline \delta)$, $a \in M$, and $\alpha > 1$.
\begin{equation}\label{eq:A}
\| x \tilde \delta_\alpha(a) - \tilde \delta_\alpha(xa) \| \leq \alpha^{-1/4} (2 \| \delta(x) \|^{1/2} + 6\| x \|^{1/2} ) \| \delta(x) \|^{1/2} \| a \|. 
\end{equation}
\begin{equation}\label{eq:B}
\| \tilde \delta_\alpha(a)x - \tilde \delta_\alpha(ax) \| \leq \alpha^{-1/4} (2 \| \delta(x) \|^{1/2} + 6\| x \|^{1/2} ) \| \delta(x) \|^{1/2} \| a \|.  
\end{equation}

The following lemma is evident from \cite{Oza04}. 

\begin{lem}\label{lem:extend}
Let $M$ be a von Neumann algebra and $M_0 \subset M$ an ultraweakly dense ${\rm C}^*$-subalgebra containing the unit of $M$. Suppose that, for each finite-dimensional operator system $F \subset M$, there exists a net of u.c.p.\ maps $\theta_i: F \to M_0 \subset M$ converging pointwise ultraweakly to the identity operator. If $E$ is a dual normal $M$-system, then there exists a $M_0$-bimodular u.c.p.\ map $\Psi: \B(L^2M) \to E$ if and only if there exists an $M$-bimodular u.c.p.\ map $\Phi: \B(L^2M) \to E$. 
\end{lem}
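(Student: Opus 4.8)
The plan is to prove the two implications separately, with the forward direction being immediate and essentially all the content lying in the reverse direction. If $\Phi\colon\B(L^2M)\to E$ is $M$-bimodular and u.c.p., then since $M_0\subset M$ it is in particular $M_0$-bimodular, so one may simply take $\Psi=\Phi$. For the converse, suppose we are given a u.c.p.\ $M_0$-bimodular map $\Psi\colon\B(L^2M)\to E$. The first observation is that unitality together with $M_0$-bimodularity forces $\Psi(a)=a\Psi(1)=a$ for $a\in M_0$, so that $\Psi_{|M_0}=\mathrm{id}$; this restriction is the only property of $\Psi$ I would actually use. The strategy is to ``normalize'' $\Psi$ on all of $M$ using the approximating maps $\theta_i$, while using $\Psi$ itself purely as a device to land inside the (possibly non-injective) target $E$.

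Concretely, for each finite-dimensional operator system $F\subset M$ with $1\in F$ and each index $i$, I would use Arveson's extension theorem to extend $\theta_i^F\colon F\to M_0\subset\B(L^2M)$ to a u.c.p.\ map $\Theta_{F,i}\colon\B(L^2M)\to\B(L^2M)$, and then set $\Lambda_{F,i}:=\Psi\circ\Theta_{F,i}\colon\B(L^2M)\to E$, which is again u.c.p. For $x\in F$ we have $\Theta_{F,i}(x)=\theta_i^F(x)\in M_0$, and hence $\Lambda_{F,i}(x)=\Psi(\theta_i^F(x))=\theta_i^F(x)$ by the previous paragraph. By hypothesis $\theta_i^F(x)\to x$ ultraweakly as $i\to\infty$, and since the embedding $M\hookrightarrow E$ is normal this is weak$^*$ convergence in $E$. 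The essential role of $\Psi$ is precisely to produce a u.c.p.\ map into $E$ restricting to the inclusion on $M_0$: one cannot extend $\theta_i^F$ directly into $E$ by Arveson since $E$ need not be injective, which is why the extension is performed into $\B(L^2M)$ first and $\Psi$ is applied afterward.

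Next I would pass to cluster points. For each $F$, take a point-weak$^*$ cluster point $\Phi_F$ of $(\Lambda_{F,i})_i$, and then take a point-weak$^*$ cluster point $\Phi$ of $(\Phi_F)_F$ along the directed set of finite-dimensional operator systems ordered by inclusion. These cluster points exist and remain u.c.p.\ maps into $E$ because $E$ is a dual operator $M$-system, so its unit ball is weak$^*$-compact and its matricial positive cones are weak$^*$-closed. From the computation above, for each $x\in M$, once $F\ni x$ we have $\Lambda_{F,i}(x)\to x$ weak$^*$, whence $\Phi_F(x)=x$ for all such $F$ and therefore $\Phi_{|M}=\mathrm{id}_M$. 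Consequently, for $x\in M$ we have $\Phi(x^*x)=x^*x=\Phi(x)^*\Phi(x)$ and $\Phi(xx^*)=xx^*=\Phi(x)\Phi(x)^*$, so $M$ lies in the multiplicative domain of $\Phi$. The multiplicative-domain theorem then gives $\Phi(xTy)=x\Phi(T)y$ for all $x,y\in M$ and $T\in\B(L^2M)$, i.e.\ $\Phi$ is $M$-bimodular, as desired.

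The main obstacle is conceptual rather than computational: $\Psi$ is not assumed normal, so one cannot directly upgrade $\Psi_{|M_0}=\mathrm{id}$ to $\Psi_{|M}=\mathrm{id}$, and this is exactly what prevents $\Psi$ from being $M$-bimodular outright. Circumventing it is the whole point of inserting the approximations $\theta_i$ and passing to a cluster point, which manufactures a map that is genuinely the identity on all of $M$ in the limit. The remaining verifications—that cluster points stay inside $E$ and remain u.c.p., and that ultraweak convergence in $M$ transfers to weak$^*$ convergence in $E$—are routine consequences of $E$ being a dual normal $M$-system, and the final bimodularity is a direct application of the multiplicative-domain theorem.
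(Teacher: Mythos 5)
Your proof is correct and is precisely the argument the paper has in mind: the paper gives no proof of this lemma, citing only \cite{Oza04}, and your route---Arveson-extending $\theta_i^F$ into the injective $\B(L^2M)$, composing with $\Psi$ to land in $E$, taking point-weak$^*$ cluster points first over $i$ and then over the finite-dimensional operator systems $F$, and invoking the multiplicative domain theorem to get $M$-bimodularity---is exactly Ozawa's original upgrading trick. The only point worth making explicit is that the multiplicative-domain computation takes place in the ambient $\B(\cH)$ in which $E$ is concretely and normally realized (since $E$ need not be a ${\rm C}^*$-algebra), after which $x\Phi(T)y$ lies back in $E$ because $E$ is an operator $M$-bimodule.
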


\begin{thm}
Let $M$ be a weakly exact tracial von Neumann algebra and $\mathcal H$ an $M$-$M$ correspondence with a real structure so that $\mathcal H$ is weakly contained in the coarse correspondence $L^2M \ovt L^2M$, and suppose $\delta: L^2M \to \mathcal H$ is a closable real derivation. If $\X \subset \B(L^2M)$ is a boundary piece such that $\rho_\alpha \in \K_\X(M)$ for each $\alpha > 0$, then $M$ is biexact relative to $\X$.
\end{thm}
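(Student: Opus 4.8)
The plan is to verify condition (\ref{item:biexactapprox}) of Theorem~\ref{thm:biexact}; since $M$ is assumed weakly exact, it suffices to produce, for each $\varepsilon>0$ and finite-dimensional operator systems $E\subset F\subset\B(L^2M)$ with $E\subset M$, a u.c.p.\ map $\phi\colon\B(L^2M)\to\B(L^2M)$ controlling $x-\phi(x)$ and $[JxJ,\phi(T)]$ in the $r_\tau$-seminorm modulo $\K_\X^{\infty,1}(M)$ (here $\tau$ is the trace, so we may take $\varphi=\tau$). The main device is the isometry $W_\alpha\colon L^2M\to L^2M\oplus\mathcal H$ given by $W_\alpha\xi=(\zeta_\alpha\xi,\tilde\delta_\alpha\xi)$, which is an isometry because $\zeta_\alpha^*\zeta_\alpha+\tilde\delta_\alpha^*\tilde\delta_\alpha=\rho_\alpha+(1-\rho_\alpha)=1$. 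Writing $\lambda$ and $\rho$ for the left and right actions of $M$ and $JMJ$ on $\mathcal H$, and using that $\mathcal H$ is weakly contained in the coarse bimodule exactly as in the proof of Theorem~\ref{thm:deformationbiexact} (and Lemma~\ref{lem:extend} to pass from an $M_0$-bimodular to an honest $M$-bimodular map), I fix a u.c.p.\ map $\tilde\lambda\colon\B(L^2M)\to\rho(JMJ)'\cap\B(\mathcal H)$ that is $M$-bimodular for the left action, so that $\tilde\lambda(x)=\lambda(x)$ for $x\in M$. I then set $\Theta(T)=T\oplus\tilde\lambda(T)$ and $\phi_\alpha(T)=W_\alpha^*\Theta(T)W_\alpha$, a u.c.p.\ map $\B(L^2M)\to\B(L^2M)$.

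For the first estimate I use the hypothesis $\rho_\alpha\in\K_\X(M)$: since $\K_\X(M)$ is a hereditary ${\rm C}^*$-subalgebra with $M$ and $JMJ$ in its multiplier algebra, also $\zeta_\alpha=\rho_\alpha^{1/2}\in\K_\X(M)$, and hence $\zeta_\alpha x\zeta_\alpha\in\K_\X(M)$ for $x\in M$. A direct computation gives $\phi_\alpha(x)=\zeta_\alpha x\zeta_\alpha+\tilde\delta_\alpha^*\lambda(x)\tilde\delta_\alpha$, and, denoting also by $x$ the left-multiplication operator, the approximate bimodularity \eqref{eq:A} reads $\lambda(x)\tilde\delta_\alpha=\tilde\delta_\alpha\,x+R_\alpha$ on $M$ with $\|R_\alpha\|_{\infty,2}\le C_x\alpha^{-1/4}$. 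Combined with $\tilde\delta_\alpha^*\tilde\delta_\alpha=1-\rho_\alpha$ this yields $\phi_\alpha(x)-x=\zeta_\alpha x\zeta_\alpha-\rho_\alpha x+\tilde\delta_\alpha^*R_\alpha$, so that modulo $\K_\X(M)$ we have $\phi_\alpha(x)-x\equiv\tilde\delta_\alpha^*R_\alpha$. Since the $r_\tau$-seminorm is dominated by $\|\cdot\|_{\infty,2}$, this forces $d_{r_\tau}(\phi_\alpha(x)-x,\K_\X^{\infty,1}(M))\to0$ as $\alpha\to\infty$.

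For the commutator estimate I compare $W_\alpha JxJ$ with $\Theta_0(JxJ)W_\alpha$, where $\Theta_0(JxJ)=JxJ\oplus\rho(JxJ)$: on the first coordinate the difference is $[\zeta_\alpha,JxJ]\in\K_\X(M)$, while on the second it is $\tilde\delta_\alpha\,JxJ-\rho(JxJ)\tilde\delta_\alpha$, which is $\|\cdot\|_{\infty,2}$-small by \eqref{eq:B}. Substituting into $[JxJ,\phi_\alpha(T)]$ and using Corollary~\ref{cor:productcont} to control the resulting error products in $r_\tau$, one reduces, modulo a quantity that is $r_\tau$-small and lies in $\K_\X^{\infty,1}(M)$, to the term $W_\alpha^*[\Theta_0(JxJ),\Theta(T)]W_\alpha$. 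Here $[\Theta_0(JxJ),\Theta(T)]=[JxJ,T]\oplus[\rho(JxJ),\tilde\lambda(T)]$, and the second summand vanishes because $\tilde\lambda(T)\in\rho(JMJ)'$; the first summand contributes $\zeta_\alpha[JxJ,T]\zeta_\alpha$, which lies in $\K_\X(M)$ by heredity since $\zeta_\alpha\in\K_\X(M)$. Thus $d_{r_\tau}([JxJ,\phi_\alpha(T)],\K_\X^{\infty,1}(M))\to0$.

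Finally, the estimates \eqref{eq:A} and \eqref{eq:B} require $x\in M\cap D(\overline\delta)$, so before applying them I would replace each element of $E$ by $\zeta_\beta(x)\in M\cap D(\overline\delta)$, which converges to $x$ ultrastrongly (hence in $r_\tau$) as $\beta\to\infty$, and then let $\alpha\to\infty$ with $\alpha\gg\beta$; this two-parameter reduction is standard. I expect the main obstacle to be the construction of $\tilde\lambda$ meeting the two simultaneous requirements — extending the left action on $M$ while landing in the right-commutant $\rho(JMJ)'$ — which is precisely where weak containment of $\mathcal H$ in the coarse bimodule (together with Lemma~\ref{lem:extend}) is essential, and the attendant bookkeeping to check that every error term is small in the $r_\tau$-seminorm rather than merely in operator norm, using the equivalence of $r_\tau^r$ with $\|\cdot\|_{\infty,2}$ and Corollary~\ref{cor:productcont}. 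Feeding the maps $\phi_\alpha$ into condition (\ref{item:biexactapprox}) of Theorem~\ref{thm:biexact} then shows that $M$ is biexact relative to $\X$.
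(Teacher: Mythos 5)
Your construction of $\phi_\alpha(T)=W_\alpha^*\Theta(T)W_\alpha$ and the estimates for $x\in D(\overline\delta)\cap M$ are sound, and they run parallel to the paper's proof, which works with the polar isometry $V$ of $\overline\delta$ (so that $\tilde\delta_\alpha=V(1-\rho_\alpha)^{1/2}$) and the same ingredients: the approximate bimodularity \eqref{eq:A}, \eqref{eq:B}, the hypothesis $\rho_\alpha\in\K_\X(M)$ to absorb $1-(1-\rho_\alpha)^{1/2}$ into $\K_\X(M)$, and weak containment in the coarse bimodule to produce a u.c.p.\ map into $\mathcal JM\mathcal J'\cap\B(\mathcal H)$. (One small misattribution: that map comes directly from weak containment via Arveson extension and multiplicative domains, not from Lemma~\ref{lem:extend}; the paper uses Lemma~\ref{lem:extend} for a different purpose, described below.)

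The genuine gap is your closing ``two-parameter reduction,'' which is not standard and in fact fails. Condition (\ref{item:biexactapprox}) of Theorem~\ref{thm:biexact} requires the estimate $d_{r_\tau}(x-\phi(x),\K_\X^{\infty,1}(M))<\varepsilon$ for \emph{arbitrary} $x\in E\subset M$, with a single map $\phi$. Writing $x-\phi_\alpha(x)=(x-\zeta_\beta(x))+(\zeta_\beta(x)-\phi_\alpha(\zeta_\beta(x)))+\phi_\alpha(\zeta_\beta(x)-x)$, the first two terms behave as you say ($r_\tau(y)\le\|y\|_2$ for $y\in M$, and your estimates apply to $\zeta_\beta(x)\in D(\overline\delta)\cap M$). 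But the third term contains $\tilde\delta_\alpha^*\lambda\bigl(\zeta_\beta(x)-x\bigr)\tilde\delta_\alpha$, and here $\lambda(y_\beta)$ acts on the arbitrary vectors $\tilde\delta_\alpha\hat a\in\mathcal H$, so it is controlled only by the uniform norm $\|y_\beta\|$, which does \emph{not} tend to $0$ (one cannot trade it for $\K_\X^{\infty,1}$-membership either: a bound like $r_\tau(\tilde\delta_\alpha^*\lambda(y)\tilde\delta_\alpha)\le\|y\|\,r_\tau(1-\rho_\alpha)$ is useless since $\sup_{\|a\|\le1}\|(1-\rho_\alpha)\hat a\|_2\not\to0$ when $\delta$ is unbounded, and applying \eqref{eq:A} to $y_\beta$ is illegal since $y_\beta\notin D(\overline\delta)$). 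The left action of $M$ on a bimodule weakly contained in the coarse one is simply not continuous from the (bounded) strong topology into anything that makes this term $r_\tau$-small. Note that your commutator estimate \emph{does} survive the replacement $x\mapsto\zeta_\beta(x)$, since $Jy_\beta J$-factors pull out of $r_\tau$ with a $\|y_\beta\|_2$-bound; the obstruction is specific to the $x-\phi(x)$ estimate. The paper sidesteps exactly this point by never making quantitative estimates for general $x\in M$: it shows $\Psi(T)=V^*TV$ maps $\mathcal JM\mathcal J'\cap\B(\mathcal H)$ into $\bS_\X(M)$ with $\Psi(x)-x\in\K_\X(M)$ for $x\in D(\overline\delta)\cap M$, corrects on the boundary piece via $\Psi'(T)=p^\perp\Psi(T)+pT$ in $\bS_\X(M)^{\sharp*}$ to get a map that is bimodular over $M_0=C^*(D(\overline\delta)\cap M)$ only, and then invokes Lemma~\ref{lem:extend} --- whose hypothesis needs merely that $\zeta_\beta(x)\to x$ \emph{ultraweakly} --- to upgrade to an $M$-bimodular u.c.p.\ map $\B(L^2M)\to\bS_\X(M)^{\sharp*}$, concluding via Corollary~\ref{cor:characterization weakly exact} and Lemma~\ref{lem:weakly nuclear bidual}. (This route also avoids the separability hypothesis of Theorem~\ref{thm:biexact}, which your argument would need but the statement does not assume.) To repair your proof, replace the final appeal to Theorem~\ref{thm:biexact}(\ref{item:biexactapprox}) by this bidual/bimodularity mechanism.
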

\begin{proof}
We consider the polar decomposition $\overline \delta = V \Delta^{1/2}$, and note that $V \tilde \Delta_\alpha = \tilde \delta_\alpha$ for $\alpha > 0$. 

From (\ref{eq:A}) and (\ref{eq:B}) we see that if $x \in D(\overline \delta) \cap M$, then we have 
\[
\lim_{\alpha \to \infty} \| x V \tilde \Delta_\alpha -V \tilde \Delta_\alpha x \|_{\B(M, \mathcal H)} = 0, \ \ \ \ \ \lim_{\alpha \to \infty} \| \mathcal J x \mathcal J V \tilde \Delta_\alpha -V \tilde \Delta_\alpha JxJ \|_{\B(M, \mathcal H)} = 0.
\]
Since $\rho_\alpha \in \K_\X(M)$, we then have $1 - \tilde \Delta_\alpha = 1 - ( 1 - \rho_\alpha )^{1/2} \in \K_\X(M)$. Hence, $V(1 - \tilde \Delta_\alpha) \in \K_\X(M, \mathcal H)$, and it follows that $xV - Vx, \mathcal J x \mathcal J V - V J x J \in \K_\X(M, \mathcal H)$ since this space is closed in $\B(M, \mathcal H)$.

The u.c.p.\ map $\Psi: \mathcal J M \mathcal J' \cap \B(\mathcal H) \to \B(L^2M)$ given by $\Psi(T) = V^*TV$ then satisfies $[\Psi(T), JxJ] \in \K_\X^{\infty, 1}(M)$ for $x \in D(\overline \delta) \cap M$, and hence $\Psi:\mathcal J M \mathcal J' \cap \B(\mathcal H) \to \bS_\X(M)$. We also have $\Psi(x) - x \in \K_\X(M)$ for $x \in D(\overline \delta) \cap M$. Hence, just as in the proof of Theorem~\ref{thm:condition AO+}, we consider the map $\Psi': \mathcal J M \mathcal J' \cap \B(\mathcal H) \to \bS_\X(M)^{\sharp *}$ given by $\Psi'(T) = p^\perp \Psi(T) + pT$ where $p$ is the support projection of $\X^{\sharp *} \subset \bS_\X(M)^{\sharp *}$, and note that this map is bimodular with respect to the ${\rm C}^*$-algebra $M_0 \subset M$ generated by $D(\overline \delta) \cap M$. 

Since $\mathcal H$ is weakly contained in the coarse correspondence, there exists a $M$-bimodular u.c.p.\ map $\theta: \B(L^2M) \to \mathcal J M \mathcal J' \cap \B(\mathcal H)$ so that $\Psi' \circ \theta$ gives an $M_0$-bimodular u.c.p.\ map from $\B(L^2M)$ into $\bS_\X(M)^{\sharp *}$. Since for each $\alpha > 0$ we have $\zeta_\alpha: M \to M_0$ and $\zeta_\alpha(x) \to x$ ultraweakly for each $x \in M$, it then follows from Lemma~\ref{lem:extend} that there exists an $M$-bimodular u.c.p.\ map from $\B(L^2M)$ to $\bS_\X(M)^{\sharp *}$. Since $M$ is weakly exact, Corollary~\ref{cor:characterization weakly exact} and Lemma~\ref{lem:weakly nuclear bidual} give that $M$ is biexact relative to $\X$. 
\end{proof}

\begin{rem}
Even without the weak exactness assumption, the previous theorem actually gives the following weak (AO) type property: There exist ultraweakly dense ${\rm C}^*$-subalgebras $B \subset M$ and $C \subset M'$ such that for each finite-dimensional operator system $E \subset M$ there exists a net of u.c.p.\ maps $\theta_i: E \to B \subset M$ converging pointwise ultraweakly to the identity operator, and such that there exists a u.c.p.\ map $\mu: B \otimes C \to \B(L^2M)$ satisfying $\mu(b \otimes c) - bc \in \K_\X(M)$ for each $b \in B$ and $c \in C$. Using Lemma~\ref{lem:extend} as in \cite{Oza04}, this can then give the solidity results from \cite{Pet09}.
\end{rem}

The proof of the previous theorem gives a more direct and more general result than in Proposition 9.4 from \cite{DKEP22}. This also gives the following analog in the von Neumann algebra setting of Example 4.8 from \cite{BIP21}. We leave the details of the proof to the reader. 

\begin{prop}If $M$ is a tracial von Neumann algebra, $\mathcal H$ is an $M$-$M$ correspondence with a real structure so that $(M^{\rm op})' \cap \B(\mathcal H)$ does not have an $M$-central state $\psi$ such that $\psi_M$ is normal, and $\delta: L^2M \to \mathcal H$ is a closable real derivation with $\rho_\alpha$ contained in the boundary piece $\X$ for each $\alpha > 0$, then $M$ is properly proximal relative to $\X$. 
\end{prop}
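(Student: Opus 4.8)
The plan is to argue by contrapositive, producing the forbidden $M$-central state on $(M^{\rm op})' \cap \B(\mathcal H)$ out of a failure of proper proximality. First I would recall (from \cite{DKEP22}) that $M$ fails to be properly proximal relative to $\X$ precisely when there is an $M$-central state $\varphi$ on $\bS_\X(M)$ whose restriction $\varphi_{|M}$ is normal. Because $\varphi$ is $M$-central and $\varphi_{|M}$ is normal, a Cauchy--Schwarz estimate of the form $|\varphi(T(x-x'))| \le \varphi(TT^*)^{1/2}\varphi((x-x')^*(x-x'))^{1/2}$ shows that for each $T$ the maps $x \mapsto \varphi(xT)$ and $x \mapsto \varphi(Tx)$ are normal, i.e.\ $\varphi \in \bS_\X(M)^{M\sharp M}$; hence $\varphi$ extends to a normal $M$-central state $\hat\varphi$ on the von Neumann algebra $\bS_\X(M)^{\sharp *}$. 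The goal is then to transport $\hat\varphi$ across a suitable bimodular map and contradict the hypothesis.

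For the transport map I would reuse the construction from the proof of the preceding theorem, emphasizing that it uses neither weak exactness nor weak containment of $\mathcal H$ in the coarse correspondence; those hypotheses enter only later, to extend to all of $\B(L^2M)$ and deduce biexactness, and here we stop earlier. Writing the polar decomposition $\overline\delta = V\Delta^{1/2}$ and combining the approximate bimodularity estimates (\ref{eq:A}) and (\ref{eq:B}) with $\rho_\alpha \in \K_\X(M)$, one gets $xV - Vx,\ \mathcal J x \mathcal J V - V JxJ \in \K_\X(M,\mathcal H)$ for $x \in D(\overline\delta)\cap M$. Consequently the u.c.p.\ map $\Psi(T) = V^*TV$ sends $(M^{\rm op})'\cap\B(\mathcal H)$ into $\bS_\X(M)$ and satisfies $\Psi(x) - x \in \K_\X(M)$ for such $x$. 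As in the theorem, replacing the $\K_\X(M)^{\sharp *}$-corner so as to restore exact bimodularity produces an honestly $M_0$-bimodular u.c.p.\ map $\Psi' : (M^{\rm op})'\cap\B(\mathcal H) \to \bS_\X(M)^{\sharp *}$, where $M_0$ is the ${\rm C}^*$-algebra generated by $D(\overline\delta)\cap M$; moreover, since $x \mapsto V^*\pi_\ell(x)V$ is ultraweakly continuous and the correcting term is $M$-bimodular, the restriction $\Psi'_{|M}$ is normal into $\bS_\X(M)^{\sharp *}$.

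I would then set $\psi = \hat\varphi \circ \Psi'$, a state on $(M^{\rm op})'\cap\B(\mathcal H)$. Normality of $\psi_{|M}$ is immediate, being a composition of the normal map $\Psi'_{|M}$ with the normal functional $\hat\varphi$. For centrality, $M_0$-bimodularity of $\Psi'$ together with $M$-centrality of $\hat\varphi$ gives $\psi(xT) = \hat\varphi(x\Psi'(T)) = \hat\varphi(\Psi'(T)x) = \psi(Tx)$ for $x \in M_0$; and since $\psi_{|M}$ is normal, a Kaplansky density argument (take a bounded net $x_i \in M_0$ converging $\ast$-strongly to $x \in M$ and use $|\psi((x-x_i)T)| \le \psi((x-x_i)(x-x_i)^*)^{1/2}\|T\|$) upgrades $M_0$-centrality to $M$-centrality. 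Thus $\psi$ is an $M$-central state on $(M^{\rm op})'\cap\B(\mathcal H)$ with $\psi_{|M}$ normal, contradicting the hypothesis and completing the proof; this is the exact analog of Example 4.8 in \cite{BIP21}.

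The main obstacle I anticipate is guaranteeing that the transported state is \emph{genuinely} normal on $M$, rather than merely agreeing with a normal state on the ultraweakly dense subalgebra $M_0$ (which would be automatic but insufficient). This is precisely why one must pass to the normal bidual $\bS_\X(M)^{\sharp *}$, extend $\varphi$ to the normal functional $\hat\varphi$, and verify that the corrected map $\Psi'_{|M}$ is normal there, instead of working with $\Psi$ and $\varphi$ on $\bS_\X(M)$ directly. A secondary point of care is the upgrade from $M_0$-centrality to $M$-centrality, which is where the normality of $\psi_{|M}$ is invoked a second time.
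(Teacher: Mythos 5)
Your overall architecture is the right one, and indeed the one the paper intends (the paper omits the proof, saying only that it follows from the proof of the preceding theorem): you correctly observe that the construction of $\Psi = \operatorname{Ad}(V^*): (M^{\rm op})' \cap \B(\mathcal H) \to \bS_\X(M)$ with $\Psi(x) - x \in \K_\X(M)$ for $x \in D(\overline\delta)\cap M$ uses neither weak exactness nor weak containment of $\mathcal H$ in the coarse bimodule; the identification of failure of proper proximality relative to $\X$ with an $M$-central state $\varphi$ on $\bS_\X(M)$ whose restriction to $M$ is normal is the definition from \cite{DKEP22}; the Cauchy--Schwarz argument placing $\varphi$ in $\bS_\X(M)^{M\sharp M}$ and hence extending it to a normal $M$-central state $\hat\varphi$ on $\bS_\X(M)^{\sharp *}$ is correct; and the Kaplansky upgrade from $M_0$-centrality to $M$-centrality is fine \emph{provided} $\psi_{|M}$ is normal. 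You even flag the dense-agreement trap explicitly.

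The genuine gap is precisely the step you lean on to escape that trap: the assertion that $\Psi'_{|M}$ is normal into $\bS_\X(M)^{\sharp*}$. Your justification (``$x \mapsto V^*\pi_\ell(x)V$ is ultraweakly continuous and the correcting term is $M$-bimodular'') does not establish this, because the canonical embedding $\iota: \bS_\X(M) \to \bS_\X(M)^{\sharp*}$ is continuous only from the weak $M$-topology to the weak$^*$ topology, not normal, and $V^* x V$ does not lie in $M$ (where $\iota$ \emph{is} normal). What one can actually verify is that $p^\perp \iota(\cdot) p^\perp$ annihilates $\K_\X^L(M) + \K_\X^L(M)^*$ (by the Schwarz inequality, $\iota(k)p^\perp = 0$ for $k \in \K_\X^L(M)$), whence $\Psi'(x) = x$ holds for $x \in M_0$ and $M_0$ lies in the multiplicative domain --- but this is dense-subalgebra information only, and the same leap you correctly forbid for the state reappears one level up for the map. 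The repair the paper has in mind, and uses in the very theorem you are adapting, is Lemma~\ref{lem:extend}: since $\zeta_\alpha: M \to M_0$ are u.c.p.\ and converge pointwise ultraweakly to the identity, the $M_0$-bimodular u.c.p.\ map $\Psi': (M^{\rm op})'\cap\B(\mathcal H) \to \bS_\X(M)^{\sharp*}$ can be upgraded to an honestly $M$-bimodular u.c.p.\ map $\Phi$ into the dual normal $M$-system $\bS_\X(M)^{\sharp*}$ (the proof of Lemma~\ref{lem:extend} applies with $(M^{\rm op})'\cap\B(\mathcal H)$ in place of $\B(L^2M)$, which is exactly where the theorem's coarse-containment hypothesis was used and is here bypassed). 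With $\Phi$ in hand, $\psi = \hat\varphi\circ\Phi$ is $M$-central outright, $\Phi(x) = x$ for all $x \in M$ by bimodularity and unitality, so $\psi_{|M} = \varphi_{|M}$ is normal, and neither your normality claim nor the Kaplansky step is needed. As written, your proof has a hole at exactly the point you identified as the main obstacle; routed through Lemma~\ref{lem:extend}, it closes.
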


\subsection{Biexactness and Akemann-Ostrand type properties}

Recall from \cite{Oza04} that a von Neumann algebra $M \subset \B(\mathcal H)$ has property (AO) if there exists ultraweakly dense unital ${\rm C}^*$-subalgebras $B \subset M$ and $C \subset M'$ such that 
\begin{enumerate}
\item $B$ is locally reflexive;
\item the multiplication map $\nu: B \odot C\ni b\otimes c\mapsto bc+\K(\cH)\in \B(\cH)/\K(\cH)$ extends continuously to $B \otimes C$.
\end{enumerate}

The principal example of a property (AO) von Neumann algebra is the group von Neumann algebra $L\Gamma$ associated to a biexact group $\Gamma$, where one may take $B = C^*_\lambda \Gamma$ and $C = C^*_\rho \Gamma$. In fact, biexact groups can be characterized as exact groups such that the multiplication map $\nu: C^*_\lambda \Gamma \odot C^*_\rho \Gamma \to \B(\ell^2 \Gamma)/\K(\ell^2 \Gamma)$ extends continuously to $C^*_\lambda \Gamma \otimes C^*_\rho \Gamma$ and has a u.c.p.\ lift into $\B(\ell^2 \Gamma)$ \cite[Lemma 15.1.4]{BO08}.

Several variations on this definition have since appeared in the literature in connection with (strong) solidity type properties \cite{Iso15, HoIs17, Oza10, Cas22}. In this section, we will investigate the connection between these properties and biexactness. As there is extensive literature devoted toward von Neumann algebras with (AO)-type properties \cite{Oza04, Shl04, Oza06, VaVe06, GaJu07, Oza09, Av11, CS13, Iso15, HoRa15, HoIs17, Is19, Dep20, Ca21, CaIsWa21}, this will provide us with a rich source of examples of biexact von Neumann algebras. We begin by recalling some of these definitions. 

\begin{defn}[Definition 3.1.1 in \cite{Iso15}]\label{defn:AOplus}
A von Neumann algebra $M$ has property (AO)$^+$ if there exist unital weak$^*$ dense ${\rm C}^*$-algebras $B\subset M$, $C\subset JMJ$ such that
\begin{enumerate}
	\item $B$ is locally reflexive.
	\item The multiplication map $\nu: B \odot C\ni \sum_{i = 1}^n b_i \otimes c_i \mapsto \sum_{i = 1}^n b_i c_i + \K(M) \in \B(\cH)/\K(\cH)$ extends continuously to $B \otimes C$ and has a u.c.p.\ lifting.
\end{enumerate}
\end{defn}

Clearly, property (AO)$^+$ implies property (AO), but property (AO)$^+$ has better stability properties, and in certain situations the u.c.p.\ lifting can be exploited \cite{PV14}. Von Neumann algebras with property (AO)$^+$ include all von Neumann algebras associated to discrete quantum groups that are biexact in the sense of \cite[Definition 3.1]{Is15}.

\begin{defn}[Definition 2.6 in \cite{HoIs17}]\label{defn:strongAO}
A von Neumann algebra $M$ satisfies strong condition (AO) if there exists a unital weak$^*$ dense ${\rm C}^*$-algebra $A\subset M$, and if there exists a nuclear ${\rm C}^*$-algebra $\mathcal C \subset \B(L^2M)$ containing $A$ such that the commutators $\{ [c, JaJ] \mid a \in A, c \in \mathcal C \}$ are contained in $\K(L^2M)$. 
\end{defn}

The strong (AO) property implies property (AO) in general, and if the nuclear ${\rm C}^*$-algebra $\mathcal C$ above can be taken to be separable and satisfying $[\mathcal C, J \mathcal C J] \subset \K(L^2M)$, then this implies (AO)$^+$ \cite[Remarks 2.7]{HoIs17}.  Many von Neumann algebras are known to satisfy the strong (AO) property including separable amenable von Neumann algebras, any group von Neumann algebra $L\Gamma$ associated to a biexact group, the von Neumann algebra associated to any discrete quantum group in class $\mathcal C$ from \cite{Is17}, and any free Araki-Woods factor \cite[Theorem C.2]{HoIs17}.

Note that the ${\rm C}^*$-algebra $A$ in the definition of the strong (AO) condition is exact, being a ${\rm C}^*$-subalgebra of a nuclear ${\rm C}^*$-algebra, and consequently the von Neumann algebra $M$ is weakly exact since it contains an ultraweakly dense exact ${\rm C}^*$-subalgebra. 

\begin{defn}[Definition 2.1 in \cite{Cas22}]
A von Neumann algebra $M$ has (W$^*$AO) if the map 
\[
M \odot JMJ \ni \sum_{i = 1}^n a_i \otimes x_i \mapsto \sum_{i = 1}^n a_i x_i + \K(M) \in ( C^*(M, JMJ) + \K(M) )/\K(M)
\]
is continuous with respect to the minimal tensor norm.
\end{defn}

Ozawa showed in \cite{Oza10} that if $\Gamma$ is a biexact group, then $L\Gamma$ has (W$^*$AO). Whereas Caspers showed in \cite{Cas22} that the $q$-Gaussian von Neumann algebra $M_q( \mathcal H_{\mathbb R} )$ associated to an infinite-dimensional real Hilbert space $\mathcal H_{\mathbb R}$ does not have (W$^*$AO) when $-1 < q < 1$ with $q \not= 0$.

\begin{thm}\label{thm:condition AO+}
	Let $M$ be a weakly exact von Neumann algebra, then both conditions (AO)$^+$ and strong (AO) imply that $M$ is biexact.
\end{thm}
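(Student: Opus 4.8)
The plan is to verify the hypotheses of Theorem~\ref{thm:biexact} (specifically part~(\ref{item:biexactapprox})) by producing, from either of the two (AO)-type conditions, a single u.c.p.\ map whose multiplicative defect and whose failure of commuting with $JMJ$ both land in the compacts $\K(L^2M) = \K_\X(M)$ (here $\X = \K(L^2M)$, so biexactness is biexactness relative to the trivial boundary piece, and $\K_\X^{\infty,1}(M) = \K^{\infty,1}(M)$). First I would observe that weak exactness of $M$ is already assumed in the statement, so the first clause of Theorem~\ref{thm:biexact}(\ref{item:biexactapprox}) is satisfied for free; the entire burden is to construct the approximating u.c.p.\ maps $\phi: \B(L^2M) \to \B(L^2M)$ with $x - \phi(x)$ and $[JxJ, \phi(T)]$ compact (up to $r_\mu$-small error) for $x$ in a finite-dimensional $E \subset M$ and $T$ in a finite-dimensional $F \subset \B(L^2M)$.

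In the strong (AO) case I would argue as follows. By Definition~\ref{defn:strongAO} there is a weakly dense unital $A \subset M$ and a nuclear $\mathcal C \subset \B(L^2M)$ containing $A$ with $[\mathcal C, JAJ] \subset \K(L^2M)$. Nuclearity of $\mathcal C$ gives nets of u.c.p.\ maps $\alpha_i: \mathcal C \to \M_{n(i)}(\mathbb C)$ and $\beta_i: \M_{n(i)}(\mathbb C) \to \B(L^2M)$ with $\beta_i \circ \alpha_i \to \mathrm{id}_{\mathcal C}$ in the point-norm topology. Set $\B = \mathcal C$ and $M_0 = A$ (both separable after passing to suitable separable subalgebras, using separability of $M$). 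Composing an Arveson extension of $\alpha_i$ with $\beta_i$ produces $\phi_i = \beta_i \circ \alpha_i: \B(L^2M) \to \B(L^2M)$ with $\phi_i(a) \to a$ in norm for $a \in A$, so certainly in $r_\mu$. The key point is that $\phi_i$ factors through a matrix algebra, and a matrix algebra commutes (after the factorization) with nothing a priori — but since $\beta_i \circ \alpha_i \to \mathrm{id}$ on $\mathcal C$ in norm, for $T \in \mathcal C$ we get $[JxJ, \phi_i(T)] \to [JxJ, T] \in \K(L^2M)$ by the strong (AO) commutator hypothesis, which handles the second estimate. This verifies condition~(\ref{item:biexactapprox2}) of Theorem~\ref{thm:biexact}, since $A \subset \mathcal C$ is $(A \subset M)$-nuclear by nuclearity of $\mathcal C$ (indeed $\mathcal C$ is nuclear so $A \subset \mathcal C$ is even nuclear as a $\mathrm{C}^*$-inclusion).

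In the (AO)$^+$ case the construction is closer in spirit to Ozawa's original argument. By Definition~\ref{defn:AOplus} the multiplication map $\nu: B \otimes C \to \B(L^2M)/\K(L^2M)$ is min-continuous with a u.c.p.\ lift $\Theta: B \otimes C \to \B(L^2M)$, so $\Theta(b \otimes c) - bc \in \K(L^2M)$. Local reflexivity of $B$, together with weak exactness of $M$ (hence $(M_0 \subset M)$-nuclearity of $M_0 \subset \B(L^2M)$ via Theorem~\ref{thm:wk exact} with $M_0$ a separable weakly dense subalgebra of $B$), gives approximating u.c.p.\ maps $B \to \M_n(\mathbb C) \to \B(L^2M)$. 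The idea is to define $\phi(T) = \Theta( \cdot \otimes 1)$-type maps, or more precisely to use $\Theta$ restricted to $B \otimes \mathbb C$ to get a map whose defect against $M_0$ is compact and which commutes with $JMJ = C''$ modulo compacts because $\Theta(b \otimes c) \equiv bc$ makes $[\Theta(b \otimes 1), c] \equiv [b,c] = 0$ modulo $\K$. The main obstacle — and the step I expect to require the most care — is arranging the finite-dimensional approximation so that the $(M_0 \subset M)$-nuclearity of the inclusion $M_0 \subset \B$ and the near-multiplicativity/near-commutation estimates hold simultaneously on the prescribed finite sets $E \subset F$; this is exactly where the u.c.p.\ lifting in (AO)$^+$ (as opposed to mere min-continuity) and the local reflexivity of $B$ are indispensable, and where one must invoke Theorem~\ref{thm:wk exact} and Corollary~\ref{cor:densenuclear} to upgrade $\mathrm{C}^*$-level nuclearity of $M_0 \subset \B$ to the $M$-nuclearity of $M \subset \bS(M)$ demanded by the definition of biexactness. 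Once condition~(\ref{item:biexactapprox}) (or (\ref{item:biexactapprox2})) of Theorem~\ref{thm:biexact} is established, that theorem delivers biexactness directly, completing both cases.
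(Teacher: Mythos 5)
Your strong (AO) argument is correct in substance, but it routes through heavier machinery than necessary and, as written, proves less than the theorem asserts: Theorem~\ref{thm:biexact} is stated only for separable $M$ with a normal faithful state, whereas Theorem~\ref{thm:condition AO+} carries no separability hypothesis. The paper's proof of this half is a one-liner and fully general: since $[\mathcal C, JAJ] \subset \K(L^2M)$ and $JAJ$ generates $M'$, one has $\mathcal C \subset \bS(M)$ outright (commutators with a generating set of $M'$ suffice), so nuclearity of $\mathcal C$ makes the inclusion $A \subset \bS(M)$ nuclear, hence $(A \subset M)$-nuclear, and Corollary~\ref{cor:densenuclear} upgrades this directly to $M$-nuclearity of $M \subset \bS(M)$, with no finite-set bookkeeping and no separable reduction.

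The (AO)$^+$ half has a genuine gap. Any route through Theorem~\ref{thm:biexact} (or through the definition of biexactness) requires a u.c.p.\ map defined on a ${\rm C}^*$-algebra much larger than $B$ --- all of $\B(L^2M)$, or a $\B$ witnessing $(M_0 \subset M)$-nuclearity --- whose image essentially commutes with $JMJ$, i.e., a map carrying operators \emph{outside} $B$ into $\bS(M)$ modulo compacts. Restricting $\Theta$ to $B \otimes \C 1$ cannot produce this: $\Theta(b \otimes 1) \equiv b$ modulo $\K(L^2M)$, so your map is, modulo compacts, just the identity on $B$, and the computation $[\Theta(b \otimes 1), c] \equiv [b, c] = 0$ merely recovers the trivial inclusion $B \subset M \subset \bS(M)$. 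The missing idea --- the actual content of the paper's proof --- is to extend $\Theta$ in the \emph{first} variable by Arveson's extension theorem to $\bar\theta \colon \B(L^2M) \otimes C \to \B(L^2M)$, set $\Psi(T) = \bar\theta(T \otimes 1)$, and observe that $1 \otimes C$ lies in the multiplicative domain of $\pi \circ \bar\theta$ (with $\pi$ the Calkin quotient), since $\pi \circ \bar\theta$ restricts to a $*$-homomorphism there; this gives $\pi([\bar\theta(T \otimes 1), c]) = [\pi\circ\bar\theta(T\otimes 1), \pi(c)] = 0$, i.e., $[\Psi(T), c] \in \K(L^2M)$ for \emph{every} $T \in \B(L^2M)$, so $\Psi(\B(L^2M)) \subset \bS(M)$. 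This is exactly where the u.c.p.\ lifting, as opposed to mere min-continuity, is used, and nothing in your sketch substitutes for it.

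Even granting $\Psi$, the steps you defer as ``arranging the finite-dimensional approximation'' are not routine: $\Psi$ is only the identity on $B$ modulo compacts, so the paper corrects it inside $\bS(M)^{\sharp *}$ by setting $\Psi'(T) = p^\perp \Psi(T) + pT$, with $p$ the support projection of $\K(L^2M)^{\sharp *} \cong \B(L^2M)$, so that $\Psi'|_B = \id_B$ exactly; local reflexivity of $B$ then enters precisely through the hypothesis of Lemma~\ref{lem:extend} (finite-dimensional operator systems in $M$ are point-ultraweakly approximable by u.c.p.\ maps into $B$, since $M$ is a corner of $B^{**}$), upgrading the $B$-bimodular $\Psi'$ to an $M$-bimodular u.c.p.\ map $\B(L^2M) \to \bS(M)^{\sharp *}$; finally, weak exactness via Corollary~\ref{cor:characterization weakly exact}, composed with this bimodular map and fed into Lemma~\ref{lem:weakly nuclear bidual}, yields $M$-nuclearity of $M \subset \bS(M)$. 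Note that this route bypasses Theorem~\ref{thm:biexact} entirely (again avoiding the separability restriction). You correctly identify local reflexivity, the lifting, and the weak exactness machinery as indispensable, but the ``main obstacle'' you flag is the entire proof in this case, and the one concrete construction you do propose does not work.
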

\begin{proof}
	We first suppose that $M$ is a weakly exact von Neumann algebra satisfying condition (AO)$^{+}$. Using the notation from Definition~\ref{defn:AOplus}, we denote by $\theta: B\otimes C\to \B(L^2M)$ the u.c.p.\ lifting of the multiplication map $\nu$ and extend $\theta$ to a u.c.p.\ map $\bar\theta:\B(L^2M)\otimes C\to \B(L^2M)$.
	Set $\bar\nu:=\pi\circ\bar\theta$, where $\pi: \B(L^2M)\to \B(L^2M)/\K(L^2M)$ is the canonical quotient map.
	
	Define $\Psi: \B(L^2M)\to \B(L^2M)$ by $\Psi(T)=\bar\theta(T\otimes 1)$ and note that for any $c\in C \subset JMJ$, 
	\[
	\pi([\bar\theta(T\otimes 1),c])=[\bar\nu(T\otimes 1), \bar\nu(1\otimes c)]=0,
	\]
	as $C$ is in the multiplicative domain of $\bar \nu$. Thus, the range of $\Psi$ is contained in $\bS(M)$. Consider $\Psi' :\B(L^2M)\to \bS(M)^{\sharp *}$ given by $\Psi'(T)=p^\perp \pi\circ\Psi(T) +p T$, where $p$ is the support projection of $\B(L^2M) \cong \K(L^2M)^{\sharp *}\subset \bS(M)^{\sharp *}$, and it is easy to check that $\Psi'\mid _B=\id_B$.
	
We may then apply Lemma~\ref{lem:extend} to obtain an $M$-bimodular u.c.p.\ map from $\B(L^2M)$ into $\bS(M)^{\sharp *}$, and, since $M$ is weakly exact, it then follows from Corollary~\ref{cor:characterization weakly exact} and Lemma~\ref{lem:weakly nuclear bidual} that $M$ is biexact.

We now suppose that $M$ satisfies strong condition (AO). Using the notation from Definition~\ref{defn:strongAO}, we note that as $[c, JaJ] \in \K(L^2M)$ for each $a \in A$, it follows that $\mathcal C \subset \bS(M)$. Since $\mathcal C$ is nuclear, we then have that the inclusion $A \subset \bS(M)$ is nuclear, and so $M$ is biexact by Corollary~\ref{cor:densenuclear}.
\end{proof}

\begin{cor}
For an exact group $\Gamma$, the notions of (AO)$^+$, strong (AO), and biexactness for $L\Gamma$ are equivalent, and coincide with biexactness for $\Gamma$.
\end{cor}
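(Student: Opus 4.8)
The plan is to establish a cycle of implications that links all four properties, using Theorem~\ref{thm:condition AO+} and Theorem~\ref{thm:iff} as the main inputs and supplying only the one direction not already covered by earlier results, namely that biexactness of the group forces the two Akemann--Ostrand conditions on $L\Gamma$. First I would record the standing observation that, since $\Gamma$ is exact, the reduced C*-algebra $C^*_\lambda\Gamma$ is an exact (hence locally reflexive) ultraweakly dense C*-subalgebra of $L\Gamma$, so $L\Gamma$ is weakly exact. This places us in the hypotheses of Theorem~\ref{thm:condition AO+}, which gives at once that each of condition (AO)$^+$ and strong condition (AO) for $L\Gamma$ implies that $L\Gamma$ is biexact.

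Next I would connect biexactness of $L\Gamma$ with biexactness of $\Gamma$ via Theorem~\ref{thm:iff}(\ref{item:iff 2}). The key point is the identification of boundary pieces for the trivial ideal $I = c_0\Gamma$: since $c_0\Gamma$ is the diagonal subalgebra, the compressions $e_{ss}\,\B(\ell^2\Gamma)\,e_{tt}$ exhaust the rank-one operators, whence $\X_{c_0\Gamma} = \overline{c_0\Gamma\,\B(\ell^2\Gamma)\,c_0\Gamma}^{\|\cdot\|} = \K(\ell^2\Gamma) = \K(L^2 L\Gamma)$. Thus biexactness of $L\Gamma$ relative to $\X_{c_0\Gamma}$ is precisely biexactness of $L\Gamma$ in the default sense, and biexactness of $\Gamma$ relative to $c_0\Gamma$ is precisely ordinary biexactness of $\Gamma$, so Theorem~\ref{thm:iff}(\ref{item:iff 2}) yields directly that $\Gamma$ is biexact if and only if $L\Gamma$ is biexact.

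It then remains to close the cycle by showing that biexactness of $\Gamma$ produces both (AO)$^+$ and strong (AO) for $L\Gamma$; this is the only step requiring genuine construction, and I expect it to be the main content beyond bookkeeping. For (AO)$^+$ I would invoke \cite[Lemma 15.1.4]{BO08}: for the exact group $\Gamma$, biexactness is equivalent to the multiplication map $C^*_\lambda\Gamma \odot C^*_\rho\Gamma \to \B(\ell^2\Gamma)/\K(\ell^2\Gamma)$ extending min-continuously and admitting a u.c.p.\ lift, which is exactly condition (AO)$^+$ with $B = C^*_\lambda\Gamma$ and $C = C^*_\rho\Gamma = JC^*_\lambda\Gamma J$ (local reflexivity of $B$ again coming from exactness of $\Gamma$). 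For strong (AO) I would take $A = C^*_\lambda\Gamma$ and $\mathcal{C} = C(\overline\Gamma)\rtimes_r\Gamma \subset \B(\ell^2\Gamma)$, where $C(\overline\Gamma) = S(\Gamma)$ acts by multiplication operators $M_f$; biexactness of $\Gamma$ means the action $\Gamma \actson \overline\Gamma$ is topologically amenable, so $\mathcal{C}$ is nuclear, and the computation $[M_f,\rho_s] = M_{f - R_s f}\rho_s$ with $f - R_s f \in c_0\Gamma$ for $f \in S(\Gamma)$, together with $[\lambda_t,\rho_s] = 0$, shows $[\mathcal{C}, JAJ] \subset \K(\ell^2\Gamma)$, giving strong condition (AO).

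Assembling the implications finishes the proof: (AO)$^+ \Rightarrow$ biexact $L\Gamma \iff$ biexact $\Gamma \Rightarrow$ (AO)$^+$, and likewise strong (AO) $\Rightarrow$ biexact $L\Gamma \iff$ biexact $\Gamma \Rightarrow$ strong (AO), so all four notions coincide. The only real obstacle is this last construction step, but it is the classical Ozawa argument, and the commutator computation is routine once the boundary-piece identification $\X_{c_0\Gamma} = \K(\ell^2\Gamma)$ is in place.
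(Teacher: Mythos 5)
Your proof is correct and follows exactly the route the paper intends for this (unproved, immediate) corollary: weak exactness of $L\Gamma$ plus Theorem~\ref{thm:condition AO+} for (AO)$^+$/strong (AO) $\Rightarrow$ biexactness of $L\Gamma$, Theorem~\ref{thm:iff}(\ref{item:iff 2}) with $I = c_0\Gamma$ and $\X_{c_0\Gamma} = \K(\ell^2\Gamma)$ for the equivalence with biexactness of $\Gamma$, and the classical facts (\cite[Lemma 15.1.4]{BO08} for (AO)$^+$, and $\mathcal C = C(\overline\Gamma)\rtimes_r\Gamma$ with the commutator computation $[M_f,\rho_s] = M_{f-R_sf}\rho_s$ for strong (AO)) to close the cycle. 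All the individual verifications, including the identification of the boundary pieces and the nuclearity of the crossed product from topological amenability, are accurate.
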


Concerning the connection between biexactness and (W$^*$AO), we have the following.

\begin{thm}\label{thm:biexact implies AO}
Let $M$ be a von Neumann algebra and $\X \subset \B(L^2M)$ an $M$-boundary piece.
If $M$ is biexact relative to $\X$, then the multiplication map 
\[
v: M\odot M^{\rm op}\ni \sum_{i = 1}^n a_i \otimes x_i \mapsto \sum_{i = 1}^n a_i x_i + \K_\X(M)\in {\rm C}^*(M, M^{\rm op}, \K_\X(M))/\K_\X(M)
\] 
is min-continuous.
In particular, biexact von Neumann algebras satisfy condition (W$^*$AO).
\end{thm}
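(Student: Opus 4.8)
The plan is to reduce min-continuity to the construction of a single u.c.p.\ map into a von Neumann algebra that, on the algebraic tensor product, agrees with a \emph{faithful} representation of the quotient $D := {\rm C}^*(M, M^{\rm op}, \K_\X(M))/\K_\X(M)$. Write $Q = {\rm C}^*(M, M^{\rm op}, \K_\X(M)) \subset \B(L^2M)$, where $M^{\rm op} = JMJ$. Since $M$ and $JMJ$ lie in the multiplier algebra of $\K_\X(M)$, the subalgebra $\K_\X(M)$ is a two-sided ideal of $Q$; let $\pi : Q \to D$ be the quotient map and $m : M \odot M^{\rm op} \ni \sum a_k \otimes x_k \mapsto \sum a_k x_k \in Q$ the operator multiplication. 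Because $M$ and $JMJ$ commute in $\B(L^2M)$, the map $m$ is a $*$-homomorphism and $v = \pi \circ m$. It therefore suffices to find a von Neumann algebra $\mathcal N$, a faithful $*$-homomorphism $\overline\tau : D \to \mathcal N$, and a u.c.p.\ map $\Theta : M \otimes M^{\rm op} \to \mathcal N$ with $\Theta|_{M \odot M^{\rm op}} = \overline\tau \circ v$, since then $\|v(z)\|_D = \|\overline\tau(v(z))\| = \|\Theta(z)\| \le \|z\|_{\min}$ for all $z \in M \odot M^{\rm op}$.

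For $\mathcal N$ I would take the corner $\mathcal N = q_\X^\perp \B(L^2M)_J^{\sharp *} q_\X^\perp$, where $q_\X$ is the support projection of $\K_\X(M)_J^{\sharp *}$ and $i$ denotes the canonical complete order embedding of $\B(L^2M)$ into $\B(L^2M)_J^{\sharp *}$. Since $M, JMJ \subset M(\K_\X(M))$, the projection $q_\X$ commutes with $M$ and $JMJ$, and as $i(\K_\X(M)) \subset \K_\X(M)_J^{\sharp *} = q_\X \B(L^2M)_J^{\sharp *} q_\X$, one checks that $q_\X^\perp$ commutes with $i(Q)$; hence $\tau : Q \to \mathcal N$, $\tau(y) = i(y) q_\X^\perp$, is a $*$-homomorphism vanishing on $\K_\X(M)$, descending to $\overline\tau : D \to \mathcal N$. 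The crucial structural point is that the defining commutators of $\bS_\X(M)$ become \emph{honest} zeroes after this compression: for $T \in \bS_\X(M)$ and $x \in M^{\rm op}$ we have $[T,x] \in \K_\X^{\infty,1}(M)$, and the identity of positive cones $\K_\X^{\infty,1}(M)_+ = \K_\X(M)_+$ yields $i(\K_\X^{\infty,1}(M)) \subset \K_\X(M)_J^{\sharp *}$, so that $q_\X^\perp i([T,x]) q_\X^\perp = 0$. Thus $q_\X^\perp i(\bS_\X(M)) q_\X^\perp$ commutes with $M^{\rm op} q_\X^\perp$ inside $\mathcal N$.

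To build $\Theta$ I would feed the biexactness data into this corner. Let $\phi_i : M \to \mathbb M_{n(i)}(\mathbb C)$ and $\psi_i : \mathbb M_{n(i)}(\mathbb C) \to \bS_\X(M)$ be the u.c.p.\ maps with $\psi_i \circ \phi_i \to \id$ in the $M$-topology. Set $\psi_i^\sharp(\cdot) = q_\X^\perp i(\psi_i(\cdot)) q_\X^\perp : \mathbb M_{n(i)}(\mathbb C) \to \mathcal N$ and $\beta(x) = x q_\X^\perp$. By the previous paragraph the ranges of $\psi_i^\sharp$ and $\beta$ commute, so, as $\mathbb M_{n(i)}(\mathbb C)$ is nuclear and hence $\mathbb M_{n(i)}(\mathbb C) \otimes M^{\rm op} = \mathbb M_{n(i)}(\mathbb C) \otimes_{\max} M^{\rm op}$, the pair defines a u.c.p.\ map $\delta_i : \mathbb M_{n(i)}(\mathbb C) \otimes M^{\rm op} \to \mathcal N$, $\delta_i(T \otimes x) = \psi_i^\sharp(T)\beta(x)$. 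Composing with $\phi_i \otimes \id : M \otimes M^{\rm op} \to \mathbb M_{n(i)}(\mathbb C) \otimes M^{\rm op}$ yields u.c.p.\ maps $\Theta_i$, and since the weak $M$-topology on $\bS_\X(M)$ coincides with the $\sigma$-weak topology of its image in the normal bidual, the convergence $\psi_i\phi_i(a) \to a$ gives $\Theta_i(a \otimes x) = q_\X^\perp i(\psi_i\phi_i(a)) x q_\X^\perp \to q_\X^\perp a x q_\X^\perp = \tau(m(a \otimes x))$ $\sigma$-weakly. Any point-$\sigma$-weak cluster point $\Theta$ of $\{\Theta_i\}$ is then u.c.p.\ and satisfies $\Theta|_{M \odot M^{\rm op}} = \tau \circ m = \overline\tau \circ v$.

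The main obstacle, and the step requiring the most care, is the \emph{faithfulness} of $\overline\tau$, i.e.\ that $\ker\tau = \K_\X(M)$. Since $i$ is injective and $q_\X$ commutes with $Q_0 := {\rm C}^*(M, M^{\rm op})$, this reduces to the claim that $Q_0 \cap \K_\X(M)_J^{\sharp *} = Q_0 \cap \K_\X(M)$, i.e.\ that an element of ${\rm C}^*(M, M^{\rm op})$ whose image lies in the $q_\X$-corner is already norm-contained in $\K_\X(M)$. I would approach this with a quasi-central (with respect to $M \cup M^{\rm op}$) approximate unit $\{e_\lambda\}$ of $\K_\X(M)$, which increases $\sigma$-weakly to $q_\X$: for positive $y \in Q_0$ supported under $q_\X$ one has $e_\lambda y e_\lambda \in \K_\X(M)$, and the delicate content is to upgrade the $\sigma$-weak convergence $e_\lambda y e_\lambda \to y$ to norm convergence, exploiting the hereditary structure of $\K_\X(M)$ together with the product form of $y$. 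Granting faithfulness, min-continuity of $v$ follows as in the first paragraph; finally, taking $\X = \K(L^2M)$ and observing that the resulting ideal $\K(M)$ is exactly the one appearing in the definition of (W$^*$AO) shows that every biexact von Neumann algebra satisfies condition (W$^*$AO).
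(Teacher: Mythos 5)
Your construction reproduces the paper's own proof in every structural respect: the paper likewise passes to the normal bidual $\B(L^2M)^{\sharp *}_J$, compresses by $q_\X^\perp$ (the complement of the support of $\K_\X(M)^{\sharp *}_J$), observes that the commutators $[T,x]$ for $T\in\bS_\X(M)$, $x\in JMJ$ die under the compression, combines the compressed $\psi_i$ with $JMJ$ via nuclearity of $\mathbb M_{n(i)}(\C)$, and takes a point-weak$^*$ cluster point landing on $q_\X^\perp \iota(a)\iota(x)$ for elementary tensors. All of that is sound as you state it, and your reduction of min-continuity to faithfulness of $\overline\tau$ (equivalently, to the claim that an element of ${\rm C}^*(M,JMJ)$ killed by the compression already lies in $\K_\X(M)$) is exactly the reduction the paper makes.

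The one point where you diverge is the one genuine gap: you flag faithfulness as "the main obstacle," sketch a quasi-central approximate unit argument, and then write "granting faithfulness." As stated, that sketch does not close. The preimage under $\iota$ of the corner $\K_\X(M)^{\sharp *}_J$ is the strictly larger space $\K_\X^{\infty,1}(M)$ — the hereditary characterization $B = pA^{**}p\cap A$ lives in the \emph{full} bidual, not the normal one — so membership of $i(y)$ in the corner only gives $y\in\K_\X^{\infty,1}(M)$, and $\sigma$-weak convergence $e_\lambda y e_\lambda\to y$ cannot be upgraded to norm convergence by quasi-centrality alone; the upgrade \emph{is} the substantive fact. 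But that fact is precisely the positive-cone identity $\K_\X^{\infty,1}(M)_+ = \K_\X(M)_+$, which you already quoted in your second paragraph and which the paper establishes in Section 3 (via Lemma~\ref{lem:compressnorm}: compressions $z_i T z_i$ by positive contractions $z_i\in M$, then a polar-decomposition trick — note the mechanism is compression by elements of $M$, not an approximate unit of $\K_\X(M)$). With it, the kernel computation is three lines, as in the paper: $\ker\tau\cap{\rm C}^*(M,JMJ) = {\rm C}^*(M,JMJ)\cap\K_\X^{\infty,1}(M)$ is a closed two-sided ideal, so $x\in\ker\tau$ gives $x^*x\in\ker\tau$, whence $x^*x\in\K_\X^{\infty,1}(M)_+ = \K_\X(M)_+$ and $x\in\K_\X^L(M)$; applying the same to $x^*$ yields $x\in\K_\X^L(M)^*\cap\K_\X^L(M) = \K_\X(M)$. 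So your proposal is the paper's argument with the crux left open, but the fix is short and uses a tool you already had in hand; replace the approximate-unit sketch with the cone identity and the proof is complete.
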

\begin{proof}
Denote by $\iota: \B(L^2M)\to \B(L^2M)^{\sharp *}_J$ the canonical embedding.
Let $q_\X \in \B(L^2M)^{\sharp *}_J$ denote the support projection of $\K_\X(M)^{\sharp *}_J$ and set 
\[
\pi:= \Ad(q_\X^\perp ): \B(L^2M)^{\sharp *}_J \to q_\X^\perp \B(L^2M)^{\sharp *}_J q_\X^\perp.
\]
Notice that for any $T\in \bS_\X(M)$, we have $\pi \circ \iota (T)$ commutes with $\pi\circ \iota(JMJ)$.

Let $\phi_n$, $\psi_n$ be u.c.p.\ maps coming from the $M$-nuclear inclusion $M\subset \bS_\X(M)$. 
We may consider the following diagram
\[\small\begin{tikzcd}
M\otimes JMJ \arrow[dr, swap, "\phi_n\otimes \rm id"]& & q_\X^\perp \B(L^2M)^{\sharp *}_J q_\X^\perp,\\
&\mathbb M_{k(n)}(\C)\otimes JMJ \arrow[ur, swap, "{(\pi\circ \iota \circ \psi_n)\times (\pi\circ \iota)}"]&
\end{tikzcd}\]	
where 
\[ 
(\pi\circ \iota \circ \psi_n)\times (\pi\circ \iota): \mathbb M_{k(n)}(\C)\otimes JMJ \to  q_\X^\perp \B(L^2M)^{\sharp *}_J q_\X^\perp
\]
is justified since $[(\pi\circ \iota \circ \psi_n)(\M_{k(n)}(\C)), \pi\circ \iota(JMJ)]=0$ and $\M_{k(n)}(\C)$ is nuclear.
Denote by $\tilde v$ the u.c.p.\ map that is a point-weak$^*$ limit point of these maps, 
	and it is clear that $\tilde v(a\otimes x)= q_\X^\perp \iota(a)\iota(x)$
	for any $a\in M$ and $x\in JMJ$.

Moreover, notice that $\tilde v(M\otimes JMJ)\subset q_\X^\perp {\rm C}^*(\iota(M), \iota(JMJ))$ and 
	we claim that 
	$$q_\X^\perp {\rm C}^*(\iota(M), \iota(JMJ))\cong {\rm C}^*(M, JMJ)/I$$ 
	as ${\rm C}^*$-algebras, where $I=\K_\X(M)\cap  {\rm C}^*(M, JMJ)$.

To see this, first observe that $\iota_{\mid {\rm C}^*(M, JMJ)}$ implements a $*$-isomorphism between 
$$\B(L^2M)\supset {\rm C}^*(M, JMJ)\cong {\rm C}^*(\iota(M), \iota(JMJ))\subset \B(L^2M)^{\sharp *}_J.$$
Indeed, this follows from the fact that $\iota$ is a complete isometry and $\iota_{\mid \alg(M, JMJ)}$ is a $*$-homomorphism.
Next, note that $\theta :=\pi\circ \iota: {\rm C}^*(M, JMJ) \to q_\X^\perp {\rm C}^*(\iota(M), \iota(JMJ))$ is a $*$-homomorphism with 
	$$\ker \theta=\iota^{-1}(\ker (\pi) )\cap {\rm C}^*(M, JMJ).$$
Since $\ker (\pi) = \K_\X(M)^{\sharp *}_J$ and $\iota^{-1} (\K_\X(M)^{\sharp *}_J)=\K^{\infty,1}_\X (M)$, we have $\ker \theta= \K^{\infty,1}_\X (M)\cap  {\rm C}^*(M, JMJ)$.
Finally, if $x \in \ker \theta$, then $x^*x \in \ker \theta  \subset \K_\X^{\infty, 1}(M)_+ = \K_\X(M)_+$, and hence $x \in \K_\X^L(M)$. 
We similarly have $x^* \in \K_\X^L(M)$, and hence $x \in \K_\X(M)$,
	i.e., $\ker \theta = C^*(M, JMJ) \cap \K_\X(M) = I$.
If we denote by 
\[
\theta':q_\X^\perp {\rm C}^*(\iota(M), \iota(JMJ))\to {\rm C}^*(M, JMJ)/ I
\] the $*$-isomorphism,
	then the u.c.p.\ map
\[	
	v:= \theta' \circ \tilde v: M\otimes JMJ\to  {\rm C}^*(M, JMJ)/ I
\]	 
	satisfies $v(a\otimes x)=ax+ \ker \theta$ for $a \in M$ and $x \in JMJ$.
	The result then follows, since the inclusion map of ${\rm C}^*(M, JMJ)$ into ${\rm C}^*(M, M^{\rm op}, \K_\X(M))$ induces a ${\rm C}^*$-isomorphism from ${\rm C}^*(M, JMJ)/ I$ onto its image in ${\rm C}^*(M, M^{\rm op}, \K_\X(M))/\K_\X(M)$.
\end{proof}

By the previous theorem biexact von Neumann algebras will have the following consequence of condition (W$^*$AO).

\begin{thm}\label{thm:weakcontainultrapower}
Let $M$ be a $\sigma$-finite von Neumann algebra and consider the following conditions:
\begin{enumerate}
\item\label{item:ultraA} $M$ satisfies condition (W$^*$AO).
\item\label{item:ultraB} $M$ does not have property $(\Gamma)$, and if $\mathcal H$ is any normal Hilbert $M$-bimodule such that $\mathcal H \prec L^2M$, and such that $\mathcal H$ is disjoint from $L^2M$, then we have $\mathcal H \prec L^2 M \ovt L^2M$. 
\item\label{item:ultraC}  If $\mathcal U$ is a nonprincipal ultrafilter on $\mathbb N$, then we have the following weak containment of Hilbert $M$-bimodule 
\[
L^2(M^\mathcal U) \ominus L^2M \prec L^2 M \ovt L^2M.
\]
\end{enumerate}
Then, we have the implications (\ref{item:ultraA}) $\implies$ (\ref{item:ultraB}) $\implies$ (\ref{item:ultraC}). Moreover, if $M$ is finite, then all three conditions are equivalent.
\end{thm}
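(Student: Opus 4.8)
The plan is to run the three implications through the standard dictionary between normal Hilbert $M$-bimodules and representations of $M \odot M^{\rm op}$, organized around the Calkin-type quotient $\mathcal Q := (C^*(M, JMJ) + \K(L^2M))/\K(L^2M)$, on which the multiplication map $\nu_{\mathcal Q} \colon M \odot M^{\rm op} \to \mathcal Q$, $a \otimes b^{\rm op} \mapsto aJb^*J + \K(L^2M)$, is exactly the object that (W$^*$AO) asserts to be min-continuous. Throughout I would use the standard fact that a normal $M$-bimodule $\mathcal H$ satisfies $\mathcal H \prec L^2M \ovt L^2M$ if and only if the associated representation $\pi_{\mathcal H}$ of $M \odot M^{\rm op}$ is continuous for the minimal tensor norm.

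For $(\ref{item:ultraA}) \implies (\ref{item:ultraB})$ I would first treat the bimodule statement. If $\mathcal H \prec L^2M$ is disjoint from $L^2M$, then $\pi_{\mathcal H}$ is weakly contained in the multiplication representation on $L^2M$ and hence factors through $C^*(M, JMJ)$; since every irreducible representation of the ideal $C^*(M,JMJ) \cap \K(L^2M)$ is a subrepresentation of $L^2M$, disjointness forces $\pi_{\mathcal H}$ to annihilate this ideal and thus to descend to a representation of $\mathcal Q$. Composing with the min-continuity of $\nu_{\mathcal Q}$ supplied by (W$^*$AO) then shows that $\pi_{\mathcal H}$ is min-continuous, i.e.\ $\mathcal H \prec L^2M \ovt L^2M$. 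For the property $(\Gamma)$ clause I would use the dichotomy that (W$^*$AO) forces $M$ to be either amenable or non-$(\Gamma)$: a nontrivial central sequence $(u_n)$ with $\tau(u_n) = 0$ (representing $u \in \mathcal U(M'\cap M^{\mathcal U})$ with $E_M(u)=0$, and chosen weakly null) gives vector states $\langle \, \cdot\, \widehat{u_n}, \widehat{u_n}\rangle$ that vanish on $\K(L^2M)$ and satisfy $\langle aJb^*J\, \widehat{u_n}, \widehat{u_n}\rangle = \tau(u_n^* a u_n b) \to \tau(ab)$; min-continuity of $\nu_{\mathcal Q}$ then renders the trivial-bimodule functional $a \otimes b^{\rm op} \mapsto \tau(ab)$ min-continuous, which is precisely weak containment of $L^2M$ in the coarse bimodule, i.e.\ amenability of $M$. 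As an amenable $M$ satisfies $(\ref{item:ultraB})$ and $(\ref{item:ultraC})$ vacuously, the remaining content is for nonamenable $M$, where the above yields non-$(\Gamma)$.

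For $(\ref{item:ultraB}) \implies (\ref{item:ultraC})$, set $\mathcal H = L^2(M^{\mathcal U}) \ominus L^2M$. Writing a vector of $L^2(M^{\mathcal U})$ as $(x_n)^{\mathcal U}$ with $x_n \in M$ bounded, its bimodule coefficient is $\langle a(x_n)^{\mathcal U}b, (x_n)^{\mathcal U}\rangle = \lim_{n \to \mathcal U} \tau(x_n^* a x_n b)$, an ultralimit of coefficients of the $L^2M$-bimodule, so $\mathcal H \prec L^2M$. A copy of the trivial bimodule inside $\mathcal H$ would be generated by an $M$-central tracial vector, i.e.\ by an element of $(M' \cap M^{\mathcal U}) \ominus \mathcal Z(M)$, which vanishes precisely because $M$ has no property $(\Gamma)$; since $L^2M$ is irreducible (in the factor case) its only sub-bimodules are $0$ and $L^2M$, so $\mathcal H$ is disjoint from $L^2M$. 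The bimodule implication of $(\ref{item:ultraB})$ then gives $\mathcal H \prec L^2M \ovt L^2M$.

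For the converse $(\ref{item:ultraC}) \implies (\ref{item:ultraA})$ in the finite case, I would represent $C^*(M, JMJ)$ on $\mathcal H = L^2(M^{\mathcal U}) \ominus L^2M$ and note that this representation annihilates $\K(L^2M)$ (its coefficients are ultralimits along weakly null bounded sequences $\widehat{x_n}$), hence descends to $\pi_{\mathcal U} \colon \mathcal Q \to \B(\mathcal H)$. The crucial point is that $\pi_{\mathcal U}$ is isometric on $\mathcal Q$: for $S \in C^*(M,JMJ)$ the essential norm $\|S\|_{\mathcal Q} = \inf_{K \in \K(L^2M)} \|S + K\|$ is a supremum of $\limsup_n \|S\widehat{x_n}\|$ over weakly null unit sequences, and, using finiteness of $M$, one approximates such sequences by bounded sequences in $M$ and assembles them (with the test vectors $S\widehat{x_n}$) into a single vector of $\mathcal H$ realizing $\|S\|_{\mathcal Q}$. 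Granting this, $\|\nu_{\mathcal Q}(T)\|_{\mathcal Q} = \|\pi_{\mathcal U}(\nu_{\mathcal Q}(T))\|$ equals the operator norm of $T$ acting on $\mathcal H$, which by $(\ref{item:ultraC})$ is weakly contained in the coarse bimodule and hence bounded by $\|T\|_{\min}$; this is exactly min-continuity of $\nu_{\mathcal Q}$, i.e.\ (W$^*$AO). I expect this faithfulness/isometry of $\pi_{\mathcal U}$ to be the main obstacle: one must control the full operator norm on $\mathcal Q$, not merely diagonal states, through bounded sequences in the tracial ultrapower, which is where finiteness of $M$ is essential and where the argument genuinely leans on \cite{Oza10}.
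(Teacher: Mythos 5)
There is a genuine gap, and it originates in your identification of the target of the (W$^*$AO) map. In this paper (following Ozawa and Caspers) the quotient in the definition of (W$^*$AO) is taken by $\K(M)$ --- the hereditary ${\rm C}^*$-subalgebra obtained by closing $\K(L^2M)$ in the $M$- and $M'$-topologies --- and not by the honest compacts $\K(L^2M)$. Since $\K(L^2M) \subsetneq \K(M)$ in general, min-continuity into $(C^*(M,JMJ)+\K(M))/\K(M)$ is strictly weaker than min-continuity into your Calkin-type quotient $\mathcal Q$, so (W$^*$AO) does \emph{not} supply the min-continuity of $\nu_{\mathcal Q}$ that your (\ref{item:ultraA}) $\implies$ (\ref{item:ultraB}) argument composes with. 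To run the argument you must show that $\pi_{\mathcal H}$ annihilates the larger ideal $C^*(M,M^{\rm op}) \cap \K(M)$, not merely $C^*(M,M^{\rm op}) \cap \K(L^2M)$. This is precisely the extra step in the paper's proof: since $\mathcal H$ is a normal bimodule, every coefficient $\langle \pi_{\mathcal H}(\cdot)\xi,\xi\rangle$ is a state that is normal on $M$ and on $M^{\rm op}$, hence continuous for the seminorms $r_\omega$, hence vanishes on $\K^{\infty,1}(M)$ (and so on $\K(M)$) as soon as it vanishes on the compacts. Your setup contains the ingredients for this, but the step is absent, and as written the implication does not follow from the paper's hypothesis. (A secondary issue: your coefficient computations are written with a trace $\tau$, while the implications (\ref{item:ultraA}) $\implies$ (\ref{item:ultraB}) $\implies$ (\ref{item:ultraC}) are asserted for $\sigma$-finite $M$.)

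The same misidentification makes your (\ref{item:ultraC}) $\implies$ (\ref{item:ultraA}) step incorrect as proposed, and you correctly sensed the trouble: the claimed isometry of $\pi_{\mathcal U}$ on $\mathcal Q$ is false in general. The essential norm modulo $\K(L^2M)$ is a supremum over \emph{all} weakly null unit vectors of $L^2M$, and such vectors cannot be approximated by uniformly bounded sequences in $M$ (think of normalized ``spikes''); the gap between the norm modulo $\K(L^2M)$ and the norm modulo $\K(M)$ is exactly why Ozawa's $\K(M)$ appears in the definition in the first place. The paper's route dissolves the obstacle rather than confronting it: by \cite{Oza10}, for finite $M$ one has $T \in \K^L(M)$ if and only if $\| T \widehat{x_n} \|_2 \to 0$ along every uniformly bounded weakly null sequence $\{x_n\} \subset M$, whence $\ker \pi = C^*(M,JMJ) \cap \K^L(M) = C^*(M,JMJ) \cap \K(M)$ for the representation $\pi$ on $L^2(M^{\mathcal U}) \ominus L^2M$. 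The induced representation of $C^*(M,JMJ)/(C^*(M,JMJ) \cap \K(M))$ is then an \emph{injective} $*$-homomorphism of ${\rm C}^*$-algebras, hence automatically isometric --- no approximation argument is needed --- and min-continuity of the action on $L^2(M^{\mathcal U}) \ominus L^2M$ supplied by (\ref{item:ultraC}) is then literally condition (W$^*$AO). In short: compute the kernel (via \cite{Oza10}) rather than the norm modulo honest compacts, and target the quotient by $\K(M)$ throughout.
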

\begin{proof}
To see (\ref{item:ultraA}) $\implies$ (\ref{item:ultraB}), note that since $\cH\prec L^2M$, we have a $*$-homomorphism $\pi: {\rm C}^*(M, JMJ)\to \B(\cH)$
	satisfying $\pi(ab)= ab$ for $a\in M$ and $b \in JMJ$.
As $\cH$ is disjoint from $L^2M$, we have $\K(L^2M)\cap  {\rm C}^*(M, M^{\rm op})\subset \ker \pi$.
Note that for any $\xi \in \cH$, $\langle \pi(\cdot) \xi, \xi\rangle$ is a state that is normal when restricted to $M$ and $M^{\rm op}$,
	therefore we further have $\K(M)\cap  {\rm C}^*(M, M^{\rm op})\subset \ker \pi$.
The condition (W$^*$AO) then implies $\cH\prec L^2M\otimes L^2M$. Essentially the same argument is used to prove the implication (\ref{item:ultraA}) $\implies$ (\ref{item:ultraC}), and we note that  (\ref{item:ultraC}) easily implies that $M$ does not have property ($\Gamma$).

The implication (\ref{item:ultraB}) $\implies$ (\ref{item:ultraC}) is trivial.  

If $M$ is finite, then it is shown in \cite{Oza10} that $T \in \K^L(M)$ if and only if the $\| T \widehat {x_n} \|_2 \to 0$ for any uniformly bounded sequence $\{ x_n \}_n \subset M$ with $\widehat{x_n}$ converging weakly to $0$. It then follows that $\ker \pi = C^*(M, JMJ) \cap \K^L(M) = C^*(M, JMJ) \cap \K(M)$. If $\pi$ factors through the minimal tensor product, we then see that $M$ satisfies condition (W$^*$AO).
\end{proof}

\begin{rem}
In an initial version of this article, we presented only the implication (\ref{item:ultraA}) $\implies$ (\ref{item:ultraC}) in the previous theorem. We would like to thank Amine Marrakchi for explaining to us \cite[Corollary 3.5]{Mar23} where the implication  (\ref{item:ultraA}) $\implies$ (\ref{item:ultraB}) is shown for finite von Neumann algebras, and for suggesting to us that the implication (\ref{item:ultraA}) $\implies$ (\ref{item:ultraB}) might hold for general von Neumann algebras. 

We also remark that when $M$ has separable predual the requirement in condition (\ref{item:ultraB}) that $M$ does not have property ($\Gamma$) is superfluous (see Remark 3.3 in \cite{BaMaOz19}).
\end{rem}

Note that Theorems~\ref{thm:biexact implies AO} and \ref{thm:weakcontainultrapower} give a generalization of Theorem~\ref{thm:biexact implies solid}. Indeed, if $M$ is biexact, $A \subset M$ is a von Neumann subalgebra and $u \in A' \cap M^{\mathcal U}$ is a unitary with $E_M(u) = 0$, then fixing a normal faithful state $\varphi_M$ on $M$ and setting $\varphi = \varphi_M \circ E_M$ on $M^{\mathcal U}$, it then follows from Theorems~\ref{thm:biexact implies AO} and \ref{thm:weakcontainultrapower} that for $a_1, \ldots, a_m \in A$ and $b_1, \ldots, b_m \in M$ we have
\[
\| \sum_{i = 1}^m a_i J b_i J \varphi_M^{1/2} \|
= \| u \sum_{i = 1}^m a_i J b_i J \varphi^{1/2} \| 
= \| \sum_{i = 1}^m a_i J b_i^* J u \varphi^{1/2} \| 
\leq \| \sum_{i = 1}^m a_i \otimes b_i^{\rm op} \|_{A \otimes M^{\rm op}}.
\]
The inclusion $A \subset M$ is then weakly nuclear by \cite[Theorem 3.8.5]{BO08}.

\section{Solid factors that are not biexact}\label{sec:solidnotbiexact}

In this section we study biexactness for crossed-products of von Neumann algebras, and we give examples of solid factors that are not biexact. Suppose $\Gamma$ is a discrete group and we have an action $\Gamma \actson^\sigma M$ on a von Neumann algebra. We let $\sigma^0: \Gamma \to \mathcal U(L^2 M)$ denote the Koopman representation, and note that conjugation by the Koopman representation gives us an extension of the action to an action on the standard representation $\Gamma \actson^{{\rm Ad}(\sigma^0)} \B(L^2M)$. If $\X \subset \B(L^2M)$ is an $M$-boundary piece, then we say that $\X$ is $\Gamma$-invariant if it is invariant under this conjugation action. In this case the conjugation action also preserves the small-at-infinity boundary $\bS_\X(M)$. 

By a normal operator $(\Gamma \actson^\sigma M)$-system, we mean an operator system $E \subset \B(\mathcal H)$, together with a normal faithful representation of $M$ in $\B(\mathcal H)$, and a unitary representation $\sigma^0: \Gamma \to \mathcal U(\mathcal H)$ so that $E$ gives a normal operator $M$-system that is also invariant under conjugation by $\sigma^0(\Gamma)$, and such that the representations of $\Gamma$ and $M$ are covariant in the sense that $\sigma_t(x) = \sigma_t^0 x (\sigma_t^0)^*$ for each $x \in M$ and $t \in \Gamma$. So, for example, $\bS_\X(M) \subset \B(L^2M)$ gives a normal $(\Gamma \actson^\sigma M)$-system whenever $\X$ is $\Gamma$-invariant. If $E \subset \B(\mathcal H)$ is ultraweakly closed, then we say that this is a dual normal operator $(\Gamma \actson^\sigma M)$-system.

If $E$ is a normal operator $(\Gamma \actson^\sigma M)$-system and $\Gamma \actson A$ is an action on a unital ${\rm C}^*$-algebra, then, by considering a faithful unital covariant representation of $A$ into $\B(\mathcal K)$, we see that the diagonal action on $E \otimes A$ turns $E \otimes A$ into a normal operator $(\Gamma \actson^\sigma M)$-system. Also, if $E \subset \B(\mathcal H)$ is a normal operator $(\Gamma \actson^\sigma M)$-system, we then let $E \rtimes_r \Gamma$ denote the closed subspace of $\B(\mathcal H) \rtimes_r \Gamma \subset \B(\mathcal H \ovt \ell^2 \Gamma)$ spanned by elements of the form $(a \otimes 1) (\sigma^0_t \otimes \lambda_t)$ for $a \in E$, $t \in \Gamma$. Note that $E \rtimes_r \Gamma$ is, then, an operator $M \rtimes_r \Gamma$-system that is $( M \rtimes_r \Gamma \subset M \rtimes \Gamma )$-normal.

\begin{lem}\label{lem:amenaction}
Let $E$ denote a normal operator $(\Gamma \actson^\sigma M)$-system. If $\Gamma \actson^\alpha K$ is an amenable action on a compact Hausdorff space $K$, and if the inclusion $M \subset E$ is $M$-nuclear, then the inclusion $M \rtimes_r \Gamma \subset (E \otimes C(K)) \rtimes_r \Gamma$ is $(M \rtimes_r \Gamma \subset M \rtimes \Gamma)$-nuclear.
\end{lem}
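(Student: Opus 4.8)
The plan is to combine the $M$-nuclear factorization of $M \subset E$ with the amenability of $\Gamma \actson K$, the latter supplying exactly the equivariance that the former lacks. Write $N_0 = M \rtimes_r \Gamma$ and $N = M \rtimes \Gamma$, and recall that the target $(E \otimes C(K)) \rtimes_r \Gamma$ is an operator $N_0$-system that is $(N_0 \subset N)$-normal. By $M$-nuclearity there are nets of u.c.p.\ maps $\phi_i \colon M \to \mathbb M_{n(i)}(\mathbb C)$ and $\psi_i \colon \mathbb M_{n(i)}(\mathbb C) \to E$ with $\psi_i \circ \phi_i \to \id_M$ in the $M$-topology, and by Lemma~\ref{lem:compressionapprox} the maps $\phi_i$ may be taken to be compressions onto finite-dimensional subspaces of $L^2M$. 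Amenability of $\Gamma \actson K$ supplies a net of finitely supported, continuous, asymptotically equivariant unit vector fields $\xi_j \colon K \to \ell^2\Gamma$, with $\operatorname{supp} \xi_j(x) \subset F_j$ for finite $F_j \subset \Gamma$ and $\sup_{x \in K} \| \lambda_t \xi_j(x) - \xi_j(tx) \|_2 \to 0$ for each $t \in \Gamma$; equivalently, via Theorem~\ref{thm:amenaction}, it provides an $L\Gamma$-nuclear factorization of $L\Gamma \subset \mathbb M_\Gamma(C(K))^\Gamma$.

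First I would assemble from these two data sets a net of u.c.p.\ maps $\Phi_{i,j} \colon N_0 \to \mathbb M_{n(i)}(\mathbb C) \otimes \mathbb M_{F_j}(\mathbb C)$ and $\Psi_{i,j} \colon \mathbb M_{n(i)}(\mathbb C) \otimes \mathbb M_{F_j}(\mathbb C) \to (E \otimes C(K)) \rtimes_r \Gamma$. On a monomial $x u_t$ with $x \in M$, the map $\Phi_{i,j}$ applies $\phi_i$ to the coefficient $x$ and compresses the group direction against $\xi_j$, while $\Psi_{i,j}$ reconstitutes an element of the crossed product using $\psi_i$ on the matrix coefficient and the $C(K)$-valued amenability vectors $\xi_j$ to carry the group shift. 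This is the usual mechanism by which an amenable action promotes a non-equivariant coefficient factorization to one of the crossed product; the amenability vectors play the role of a Fell-absorption isometry, so that the composition satisfies $\Psi_{i,j} \circ \Phi_{i,j}(x u_t) \approx \psi_i(\phi_i(x)) u_t$ up to an error governed by the asymptotic invariance of $\xi_j$.

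I would then verify convergence $\Psi_{i,j} \circ \Phi_{i,j}(y) \to y$ for $y \in N_0$ in the $(N_0 \subset N)$-topology along a suitable diagonal net. It suffices to check this on monomials $x u_t$, and by Proposition~\ref{prop:A-continuous and topology} together with a convexity argument it suffices to obtain convergence in the weak $(N_0 \subset N)$-topology. The error splits into a coefficient part $\psi_i(\phi_i(x)) - x$ and a group-twisting part controlled by $\sup_x \| \lambda_t \xi_j(x) - \xi_j(tx)\|_2$; the latter tends to $0$ by the choice of $\xi_j$, while the former is transported from the $M$-topology convergence $\psi_i \circ \phi_i \to \id_M$ into the relative crossed-product topology using that the Fourier-coefficient maps and the canonical conditional expectation $E_M \colon N \to M$ are continuous in the relevant topologies, as in Corollary~\ref{cor:condexpcont}.

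The main obstacle I anticipate is not the bookkeeping of the two error terms but the verification that $\Phi_{i,j}$ and $\Psi_{i,j}$ are genuinely completely positive maps into the reduced crossed product: since $\phi_i$ is not $\Gamma$-equivariant, one must check that its failure of equivariance is absorbed exactly by the amenability data, so that $\Psi_{i,j}$ lands in the operator system $(E \otimes C(K)) \rtimes_r \Gamma$ and the composition is u.c.p.\ rather than a merely formal expression. Concretely, this amounts to realizing the combined map as a compression of an honest covariant representation built from the compressions of Lemma~\ref{lem:compressionapprox} and the isometries determined by $\xi_j$; once this is in place, the transfer of the $M$-topology convergence into the relative $(N_0 \subset N)$-topology (the second genuinely technical point) follows from the continuity statements already recorded for conditional expectations and the relative seminorms.
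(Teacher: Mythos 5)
Your proposal is correct and is essentially the paper's argument in unwound form: the paper assembles your two ingredients modularly, citing the proof of Lemma 4.3.3 in \cite{BO08} to produce u.c.p.\ maps $\phi_i: (E \otimes C(K)) \rtimes_r \Gamma \to \mathbb M_{F_i}(E \otimes C(K))$ and $\psi_i: \mathbb M_{F_i}(E \otimes C(K)) \to (E \otimes C(K)) \rtimes_r \Gamma$ with $\psi_i \circ \phi_i \to \operatorname{id}$ in point-norm, observing that both are $M$-bimodular for the twisted diagonal embedding $M \ni a \mapsto \oplus_{t \in F_i} (\sigma_{t^{-1}} \otimes \operatorname{id})(a)$ and hence $M$-topology continuous, and then reducing to the $M$-nuclearity of $M \subset \mathbb M_{F}(E \otimes C(K))$, which follows from that of $M \subset E$. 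This twisted diagonal picture is exactly the ``honest covariant representation'' you anticipate needing, with the one correction that the non-equivariance of the coefficient maps is absorbed by the twisting $\sigma_{t^{-1}}$ rather than by the vectors $\xi_j$ alone (which only handle the group direction), so your composition produces coefficient errors of the form $\psi_i \circ \phi_i(\sigma_{t^{-1}}(x)) - \sigma_{t^{-1}}(x)$ for the finitely many $t \in F_j$, and these are handled by pointwise convergence on $M$ together with the diagonal net you already invoke.
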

\begin{proof}
The proof of Lemma 4.3.3 in \cite{BO08} shows that there exists a net of u.c.p.\ maps $\phi_i: ( E \otimes C(K) ) \rtimes_r \Gamma \to \mathbb M_{F_i}( E \otimes C(K) )$ and $\psi_i: \mathbb M_{F_i}( E \otimes C(K) ) \to ( E \otimes C(K) ) \rtimes_r \Gamma$, where $F_i \subset \Gamma$ are finite, such that $\psi_i \circ \phi_i$ converges to the identity in the point-norm topology.  Moreover, if we consider $\mathbb M_{F_i}(  E \otimes C(K) )$ to be an $M$-system via the twisted diagonal embedding $M \ni a \mapsto \oplus_{t \in F_i}  ( \sigma_{t^{-1}} \otimes {\rm id} ) (a) \in \mathbb M_{F_i}(  E \otimes C(K) )$, then the maps $\phi_i$ and $\psi_i$ above are $M$-bimodular, and hence continuous in the $M$-topology. 

Thus, the lemma follows from the fact that, for each finite set $F \subset \Gamma$, the inclusion $M \subset \mathbb M_{F}( E \otimes C(K) )$ is $M$-nuclear.
\end{proof}

The following gives an equivariant version of Proposition~\ref{prop:biexacttensor}. 

\begin{thm}
Suppose $\Gamma \actson M$ and $\X \subset \B(L^2 M)$ is a $\Gamma$-equivariant $M$-boundary piece. Suppose also that $I \subset \ell^\infty \Gamma$ is a boundary piece, that $\Gamma$ is biexact relative to $I$, and that $M$ is biexact relative to $\X$. Then, $M \rtimes \Gamma$ is biexact relative to the hereditary ${\rm C}^*$-algebra generated by $\ell^\infty(\Gamma; \X)$ and $\B(L^2M) \otimes I$. 
\end{thm}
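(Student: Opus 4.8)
Write $N = M\rtimes\Gamma$, represented on $L^2N = L^2M\ovt\ell^2\Gamma$, and let $\Y\subset\B(L^2N)$ be the hereditary ${\rm C}^*$-algebra generated by $\ell^\infty(\Gamma;\X)$ and $\B(L^2M)\otimes I$. The plan is to run the analogue of Proposition~\ref{prop:biexacttensor}, with the second tensor factor replaced by the group and its small-at-infinity compactification, using Lemma~\ref{lem:amenaction} as the main engine. Since $\X$ is $\Gamma$-invariant, $E := \bS_\X(M)$ is a normal operator $(\Gamma\actson^\sigma M)$-system, and biexactness of $M$ relative to $\X$ means precisely that $M\subset E$ is $M$-nuclear. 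Since $\Gamma$ is biexact relative to $I$, the action $\Gamma\actson\bS_I(\Gamma)$ is topologically amenable; taking $K$ to be the Gelfand spectrum of the unital commutative ${\rm C}^*$-algebra $\bS_I(\Gamma)$, so that $C(K) = \bS_I(\Gamma)$, this is an amenable action on a compact Hausdorff space. Lemma~\ref{lem:amenaction} then yields that the inclusion
\[
M\rtimes_r\Gamma \subset \bigl(\bS_\X(M)\otimes\bS_I(\Gamma)\bigr)\rtimes_r\Gamma
\]
is $(M\rtimes_r\Gamma\subset M\rtimes\Gamma)$-nuclear.

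Next I would realize $\mathcal E := \bigl(\bS_\X(M)\otimes\bS_I(\Gamma)\bigr)\rtimes_r\Gamma$ as a concrete operator subsystem of $\B(L^2N)$. Because the translation action of $\Gamma$ on $\bS_I(\Gamma)\subset\ell^\infty\Gamma$ is already spatially implemented by the left regular representation $\lambda$, Fell's absorption principle (as in \cite[Proposition 5.1.3]{BO08}) shows that the covariant pair $(\id,\ \sigma^0\otimes\lambda)$ integrates to a completely isometric representation of $\mathcal E$ on $L^2M\ovt\ell^2\Gamma = L^2N$, under which a generic spanning element takes the form
\[
(T\otimes M_f)\,u_t, \qquad T\in\bS_\X(M),\ f\in\bS_I(\Gamma),\ u_t = \sigma^0_t\otimes\lambda_t.
\]
The identity $\Ad(u_t)(T\otimes M_f) = \Ad(\sigma^0_t)(T)\otimes M_{L_tf}$, together with the $\Gamma$-invariance of $\bS_\X(M)$ and the left-invariance of $\bS_I(\Gamma)$, shows that products of such elements reduce again to this form, so $\mathcal E$ is the closed operator-system span of the elements $(T\otimes M_f)u_t$. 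Once we know $\mathcal E\subset\bS_\Y(N)$, composing the nuclear maps above with this inclusion shows $M\rtimes_r\Gamma\subset\bS_\Y(N)$ is $(M\rtimes_r\Gamma\subset N)$-nuclear, and Corollary~\ref{cor:densenuclear} upgrades this to $N$-nuclearity of $N\subset\bS_\Y(N)$, i.e.\ biexactness of $N$ relative to $\Y$.

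The heart of the proof is therefore the containment $\mathcal E\subset\bS_\Y(N)$. By the robustness of the small-at-infinity boundary it suffices to check, for each spanning $S = (T\otimes M_f)u_t$, that $[S,a]\in\K_\Y^{\infty,1}(N)$ for $a$ in a generating set of $N'$. By (\ref{eq:modulargp}), $N' = J_N N J_N$ is generated by $\{1\otimes\rho_s\}_{s\in\Gamma}$ and $\{J_N x J_N\}_{x\in M}$, and these two families correspond exactly to the two generators of $\Y$. Since $u_t\in N$ commutes with $N'$, a direct computation for the first family gives
\[
[S,\,1\otimes\rho_s] = \bigl(T\otimes M_{f - R_sf}\,\rho_s\bigr)u_t,
\]
and $f - R_sf\in I$ puts $T\otimes M_{f-R_sf} = (T\otimes1)(1\otimes M_{f-R_sf})$ into $\B(L^2N)\cdot(\B(L^2M)\otimes I)\subset\K_\Y^L(N)$; right multiplication by $\rho_s\in N'$ and $u_t\in N$ preserves $\K_\Y^L(N)$, and since $\mathcal E$ is $\ast$-closed the adjoint lies there as well, so $[S,1\otimes\rho_s]\in\K_\Y(N)\subset\K_\Y^{\infty,1}(N)$. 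For the second family, $u_t$ again commutes out, and writing everything block-diagonally with respect to $L^2N = \bigoplus_{r\in\Gamma}L^2M$ and using $J_M\sigma_r(x)J_M = \Ad(\sigma^0_r)(J_MxJ_M)$ one obtains
\[
[S,\,J_N x J_N] = \Bigl(\bigoplus_{r\in\Gamma} f(r)\,[T,\,\Ad(\sigma^0_r)(J_MxJ_M)]\Bigr)u_t,
\]
whose $r$-th block lies in $\K_\X^{\infty,1}(M)$ because $T\in\bS_\X(M)$.

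The main obstacle is this last step: proving that such a uniformly bounded block-diagonal operator, with each block in $\K_\X^{\infty,1}(M)$, lies in $\K_{\ell^\infty(\Gamma;\X)}^{\infty,1}(N)\subset\K_\Y^{\infty,1}(N)$. Equivalently, one must establish the twisted amplification statement that $T\mapsto T\otimes 1$ carries $\bS_\X(M)$ into $\bS_{\ell^\infty(\Gamma;\X)}(N)$, which is the $\Gamma$-equivariant analogue of Lemma~\ref{lem:representation independent}(\ref{item:representation}); unlike that lemma, here the relevant elements of $N'$ are spread across the whole orbit rather than being finite-rank in the $\ell^2\Gamma$ direction, which is exactly why the boundary piece $\ell^\infty(\Gamma;\X)$ is block-diagonal. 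I would isolate this as a separate lemma and prove it by an equivariant compression argument: using the $\Gamma$-invariance of $\X$ to transport a quasi-central approximate unit of $\X$ around the orbit (via the Koopman unitaries $\sigma^0_r$) so as to produce an approximate unit in $\ell^\infty(\Gamma;\X)$, and then adapting the argument of Lemma~\ref{lem:compressnorm} to approximate the block-diagonal operator uniformly in $r$ in the $r_\mu$-seminorms on $\B(L^2N)$. The uniformity of this approximation over $\Gamma$ is the delicate point, and it is precisely where the hypothesis that $\X$ be $\Gamma$-invariant is essential.
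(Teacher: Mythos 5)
Your proposal tracks the paper's proof essentially step for step: the same use of Lemma~\ref{lem:amenaction} (with $K$ the spectrum of $\bS_I(\Gamma)$, amenability of $\Gamma \actson \bS_I(\Gamma)$ coming from biexactness of $\Gamma$ relative to $I$, as in the discussion before Theorem~\ref{thm:iff}), the same reduction via Corollary~\ref{cor:densenuclear} to the single containment $(\bS_\X(M) \otimes \bS_I(\Gamma)) \rtimes_r \Gamma \subset \bS_\Y(M \rtimes \Gamma)$, and the same two commutator computations against the generators $J u_s J = 1 \otimes \rho_s$ and $J x J = \oplus_t J_M \sigma_t(x) J_M$ of $N'$ furnished by (\ref{eq:modulargp}), concluded by the robustness lemma \cite[Lemma 6.1]{DKEP22}. (The paper reduces to checking $T \otimes f \in \bS_\Y(M \rtimes \Gamma)$ by noting $\bS_\Y$ is an $L\Gamma$-bimodule, rather than carrying $u_t$ along as you do; that difference is cosmetic.) The computation for $1 \otimes \rho_s$, landing in $\B(L^2M) \otimes I$ via $f - R_sf \in I$, is complete and matches the paper exactly.

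The gap is the one you yourself flag, and your proposal does not close it: it ends with a plan (``I would isolate this as a separate lemma and prove it by an equivariant compression argument'') rather than a proof that the block-diagonal operator $\oplus_t f(t)[T, J_M\sigma_t(x)J_M]$, whose blocks a priori lie only in $\K_\X^{\infty,1}(M)$, belongs to $\K_\Y^{\infty,1}(N)$. Moreover, the sketched strategy does not obviously work as stated: transporting a quasi-central approximate unit of $\X$ around the orbit via the Koopman unitaries does produce approximate units in $\ell^\infty(\Gamma;\X)$, but after the equivariant identifications the quantity to be compressed in the $t$-th block involves $\Ad(\sigma^0_{t^{-1}})(T)$ and translated test states, and no choice of approximate unit controls the $r_\omega$-approximations of $[\Ad(\sigma^0_{t^{-1}})(T), J_M x J_M]$ by elements of $\K_\X(M)$ uniformly over the orbit --- the obstruction is the motion of $T$ and the states, not of the approximate unit. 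You should be aware that you have located precisely the tersest point of the published argument: the paper's display asserts $[T \otimes f, JxJ] \in \ell^\infty(\Gamma; \X)$, which read literally presupposes each block lies in $\X$ itself, whereas $T \in \bS_\X(M)$ only yields blocks in $\K_\X^{\infty,1}(M)$; this contrasts with the plain tensor case of Proposition~\ref{prop:biexacttensor}, where $M_1' \otimes 1 \subset (M_1 \ovt M_2)'$ makes the analogous membership immediate, while here only the twisted diagonals $\oplus_t J_M\sigma_t(y)J_M$ are available in $N'$. A workable route for fixed $x$ and a fixed normal state $\mu$ on $N$ is to treat finitely many blocks (where the multipliers $(1\otimes\rho_t^*) J_N a' J_N (1\otimes\rho_t) \in N'$ give access to all of $M'$ in the $t$-th block, at a cost measured by the translated states $\mu \circ \Ad(1\otimes\rho_t^*)$, which Lemma~\ref{lem:compressnorm} handles for finite families) and then control the tail; the tail estimate is where the genuine uniformity question lives, and your proposal, like the paper's one-line assertion, leaves it unresolved.
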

\begin{proof}
We let $\Y \subset \B(L^2 M \ovt \ell^2 \Gamma)$ denote the hereditary ${\rm C}^*$-algebra generated by $\ell^\infty(\Gamma; \X)$ and $\B(L^2M) \otimes I$. By Lemma~\ref{lem:amenaction}, we have that the inclusion $M \rtimes_r \Gamma \subset ( \mathbb S_{\X}(M) \otimes \mathbb S_I(\Gamma) ) \rtimes_r \Gamma$ is $(M \rtimes_r \Gamma \subset M \rtimes \Gamma)$-nuclear, and so by Corollary~\ref{cor:densenuclear} it suffices to show that we have an inclusion
\begin{equation}\label{eq:inclusiongpaction}
( \mathbb S_{\X}(M) \otimes \mathbb S_I(\Gamma) ) \rtimes_r \Gamma
\subset \mathbb S_\Y(M \rtimes \Gamma).
\end{equation}
Since the latter space is an $L\Gamma$-bimodule, it then suffices to show that, for all $T \in \bS_\X(M)$ and $f \in \bS_I(\Gamma)$, we have $T \otimes f \in \bS_\Y(M \rtimes \Gamma)$. Letting $J$ denote the modular conjugation operator on $M \rtimes \Gamma$, and $J_M$ denote the modular conjugation operator on $M$, for $x \in M$ and $t \in \Gamma$ we may use (\ref{eq:modulargp}) to compute
\[
J u_t J (T \otimes f) J u_t^* J - (T \otimes f)
= T \otimes ( \rho_t f \rho_t^* - f ) \in \B(L^2 M) \otimes I,
\]
and
\[
[ T \otimes f, J x J ]
= \oplus_{t \in \Gamma} f(t) [T, J_M \sigma_t(x) J_M] \in \ell^\infty( \Gamma, \X).
\]
By \cite[Lemma 6.1]{DKEP22}, we then see that the inclusion (\ref{eq:inclusiongpaction}) holds. 
\end{proof}

We note that when $M$ or $\Gamma$ is amenable, then in the proof above we can replace $\bS_\X(M)$ with $M$, or $\bS_I(\Gamma)$ with $\mathbb C$, respectively. In the former case, we then need only to show the inclusion above for $T \otimes f$ with $T \in M$, while in the latter case we may assume $f \in \mathbb C$. This, then, gives the following two results.

\begin{prop}[Cf.\ Lemma 15.3.5 in \cite{BO08}]\label{prop:actiononamenable}
Suppose $\Gamma \actson M$, with $M$ amenable, and $I \subset \ell^\infty \Gamma$ is a boundary piece so that $\Gamma$ is biexact relative to $I$, then $M \rtimes \Gamma$ is biexact relative to the boundary piece generated by $\B(L^2 M) \otimes I$. In particular, if $M$ is amenable and $\Gamma$ is biexact, then $M \rtimes \Gamma$ is biexact relative to $M$. 
\end{prop}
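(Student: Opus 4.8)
The plan is to run the proof of the preceding (equivariant tensor) theorem, exploiting that amenability of $M$ lets us replace $\bS_\X(M)$ by $M$ itself. Let $\Y \subset \B(L^2M \ovt \ell^2\Gamma)$ denote the boundary piece generated by $\B(L^2M) \otimes I$. Since $M$ is amenable it is semidiscrete, so the identity inclusion $M \subset M$ is $M$-nuclear: there are normal u.c.p.\ maps $\phi_i \colon M \to \M_{n(i)}(\C)$ and $\psi_i \colon \M_{n(i)}(\C) \to M$ with $\psi_i \circ \phi_i \to \id$ point-ultraweakly, and the weak $M$-topology on $M$ coincides with the ultraweak topology (an $M$-normal functional on $M$ is just a normal one), so this is exactly $M$-topology convergence. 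Viewing $M$ as a $\Gamma$-equivariant normal operator $M$-system, the role of ``$M$ biexact relative to $\X$'' is thus played trivially by $E = M$. Writing $K$ for the spectrum of $\bS_I(\Gamma)$, the action $\Gamma \actson K$ is amenable because $\Gamma$ is biexact relative to $I$, so Lemma~\ref{lem:amenaction} applied with $E = M$ gives that $M \rtimes_r \Gamma \subset (M \otimes \bS_I(\Gamma)) \rtimes_r \Gamma$ is $(M \rtimes_r \Gamma \subset M \rtimes \Gamma)$-nuclear. By Corollary~\ref{cor:densenuclear} it then suffices to establish the inclusion $(M \otimes \bS_I(\Gamma)) \rtimes_r \Gamma \subset \bS_\Y(M \rtimes \Gamma)$.

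Since the minimal tensor product $M \otimes \bS_I(\Gamma)$ is the closed span of the simple tensors $T \otimes f$, and since $\bS_\Y(M \rtimes \Gamma)$ is a norm-closed $(M \rtimes \Gamma)$-bimodule containing each $u_t$, I would reduce this to showing $T \otimes f \in \bS_\Y(M \rtimes \Gamma)$ for $T \in M$ and $f \in \bS_I(\Gamma)$. Using (\ref{eq:modulargp}) I would compute the commutators of $T \otimes f$ against the generators $JxJ$ and $J u_t J = 1 \otimes \rho_t$ of $(M \rtimes \Gamma)'$. For the first, identifying $L^2 M \ovt \ell^2\Gamma \cong \oplus_t L^2M$ we get $[T \otimes f, JxJ] = \oplus_{t} f(t)\,[T, J_M \sigma_t(x) J_M] = 0$, because $T \in M$ commutes with $JMJ$; this vanishing is precisely what lets the $\ell^\infty(\Gamma; \X)$ generator of the boundary piece in the preceding theorem be dropped here, leaving only $\B(L^2M)\otimes I$. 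For the second, $J u_t J (T \otimes f) J u_t^* J - (T \otimes f) = T \otimes (R_t f - f) \in \B(L^2M) \otimes I \subset \K_\Y^{\infty,1}(M \rtimes \Gamma)$ since $f - R_t f \in I$, and as $\K_\Y^{\infty,1}(M \rtimes \Gamma)$ is stable under right multiplication by the unitary $J u_t J \in (M \rtimes \Gamma)'$ it follows that $[T \otimes f, J u_t J] \in \K_\Y^{\infty,1}(M \rtimes \Gamma)$. By the generator criterion of \cite[Lemma 6.1]{DKEP22}, this gives $T \otimes f \in \bS_\Y(M \rtimes \Gamma)$, completing the inclusion and hence the first assertion.

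For the ``in particular'' I would specialize to $I = c_0\Gamma$ and identify the boundary piece $\Y$ generated by $\B(L^2M) \otimes c_0\Gamma$ with the boundary piece $\X_M$ associated to $M \subset M \rtimes \Gamma$. The relevant Jones projection is $e_M = 1 \otimes P_{\delta_e}$, which lies in $\B(L^2M) \otimes c_0\Gamma$ because $\mathbf 1_{\{e\}} \in c_0\Gamma$; conversely every element of $c_0\Gamma$ is a diagonal compact operator on $\ell^2\Gamma$, so $\B(L^2M) \otimes c_0\Gamma \subset \B(L^2M) \ovt \K(\ell^2\Gamma)$. Since $u_s e_M u_t^*$ is a $\B(L^2M)$-coefficient times the matrix unit $e_{s,t}$, the translates recover all of $\K(\ell^2\Gamma)$, so both $\X_M$ and $\Y$ generate the same hereditary ${\rm C}^*$-algebra $\B(L^2M) \ovt \K(\ell^2\Gamma)$; biexactness relative to $\Y$ is therefore biexactness relative to $M$.

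I expect the only delicate points to be bookkeeping ones: confirming that the vanishing of $[T \otimes f, JxJ]$ genuinely removes the need for the $\ell^\infty(\Gamma; \X)$ piece (so the smaller boundary piece $\B(L^2M) \otimes I$ suffices), and verifying the boundary-piece identification in the last paragraph. The analytic content is inherited entirely from Lemma~\ref{lem:amenaction} and the preceding theorem.
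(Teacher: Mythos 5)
Your proposal is correct and follows essentially the same route as the paper, which proves this proposition by running the proof of the preceding equivariant theorem with $\bS_\X(M)$ replaced by $M$ (using semidiscreteness of the amenable algebra $M$), so that the commutators $[T \otimes f, JxJ]$ vanish for $T \in M$ and only the $\B(L^2M) \otimes I$ piece of the boundary is needed. Your explicit verification in the last part that the hereditary ${\rm C}^*$-algebra generated by $\B(L^2M) \otimes c_0\Gamma$ coincides with $\X_M$ (both being $\B(L^2M) \otimes \K(\ell^2\Gamma)$, via $e_M = 1 \otimes P_{\delta_e}$ and its translates $u_s e_M u_t^*$) is a correct filling-in of a step the paper leaves implicit.
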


\begin{prop}
Suppose $\Gamma \actson M$, with $\Gamma$ amenable, and $\X \subset \B(L^2M)$ is a $\Gamma$-invariant boundary piece so that $M$ is biexact relative to $\X$, then $M \rtimes \Gamma$ is biexact relative to the boundary piece generated by $\ell^\infty(\Gamma, \X)$. 
\end{prop}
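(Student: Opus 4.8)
The plan is to run the proof of the preceding theorem, specialized so that amenability of $\Gamma$ lets us discard the boundary piece on the group entirely. Since $\Gamma$ is amenable, its action on a one-point space $K$ is topologically amenable, and $C(K) = \C$. Because $\X$ is $\Gamma$-invariant, $\bS_\X(M)$ is a normal operator $(\Gamma \actson^\sigma M)$-system, so I would first apply Lemma~\ref{lem:amenaction} with this $K$ and with $E = \bS_\X(M)$, using the hypothesis that $M \subset \bS_\X(M)$ is $M$-nuclear, to obtain that the inclusion
\[
M \rtimes_r \Gamma \subset \bS_\X(M) \rtimes_r \Gamma
\]
is $(M \rtimes_r \Gamma \subset M \rtimes \Gamma)$-nuclear.

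Let $\Y \subset \B(L^2M \ovt \ell^2 \Gamma)$ be the boundary piece generated by $\ell^\infty(\Gamma, \X)$. By Corollary~\ref{cor:densenuclear}, to conclude that $M \rtimes \Gamma$ is biexact relative to $\Y$ it then suffices to establish the operator-system inclusion
\[
\bS_\X(M) \rtimes_r \Gamma \subset \bS_\Y(M \rtimes \Gamma).
\]
Since $\bS_\Y(M \rtimes \Gamma)$ is an operator $(M \rtimes \Gamma)$-system and each $u_t = \sigma_t^0 \otimes \lambda_t$ lies in $M \rtimes \Gamma$, multiplication by the $u_t$ preserves the right-hand side, so I would reduce the inclusion to showing $T \otimes 1 \in \bS_\Y(M \rtimes \Gamma)$ for every $T \in \bS_\X(M)$; this is exactly the specialization of the theorem's inclusion to constant functions $f = 1$ that the remark alludes to.

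To verify $T \otimes 1 \in \bS_\Y(M \rtimes \Gamma)$, I would invoke the robustness criterion recalled in Section~\ref{subsec:smallboundary} (the general form of \cite[Lemma 6.1]{DKEP22}): it is enough to check that $[T \otimes 1, a] \in \K_\Y^{\infty, 1}(M \rtimes \Gamma)$ as $a$ runs over a generating set for $(M \rtimes \Gamma)'$. By (\ref{eq:modulargp}) this commutant is generated by $\{ J u_t J = 1 \otimes \rho_t \}_{t \in \Gamma}$ together with $\{ J x J \}_{x \in M}$. For the first family, $[T \otimes 1, 1 \otimes \rho_t] = 0$ since the constant function is $\rho$-invariant. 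For the second, the computation from the theorem gives $[T \otimes 1, J x J] = \oplus_{t \in \Gamma} [T, J_M \sigma_t(x) J_M]$, and since $T \in \bS_\X(M)$ and $J_M \sigma_t(x) J_M \in M'$, each summand lies in $\K_\X^{\infty, 1}(M)$ with norm at most $2 \| T \| \| x \|$.

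The one step requiring genuine care, and thus the main obstacle, is checking that this uniformly bounded diagonal operator $\oplus_{t} [T, J_M \sigma_t(x) J_M]$ actually lands in $\K_\Y^{\infty, 1}(M \rtimes \Gamma)$, i.e.\ relating the fibered space $\ell^\infty(\Gamma, \K_\X^{\infty, 1}(M))$ to the double closure defining $\K_\Y^{\infty, 1}$ relative to the boundary piece generated by $\ell^\infty(\Gamma, \X)$. This identification is precisely the one carried out in the proof of the preceding theorem, so I expect no new difficulty; the remaining assembly (the reduction over the generators of the commutant and the $(M\rtimes\Gamma)$-bimodularity used to pass from $T \otimes 1$ to arbitrary reduced words) is routine. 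An even shorter route would be to cite the preceding theorem's argument verbatim and observe that once $\bS_I(\Gamma)$ is replaced by $\C$ the $I$-dependent generator $\B(L^2M) \otimes I$ of the target boundary piece never appears, leaving exactly $\Y$.
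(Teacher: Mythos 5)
Your proof is correct and coincides with the paper's own: the paper disposes of this proposition with exactly the remark you describe, specializing the preceding theorem by replacing $\bS_I(\Gamma)$ with $\C$ (i.e., taking $K$ a point in Lemma~\ref{lem:amenaction}) and verifying the inclusion $\bS_\X(M) \rtimes_r \Gamma \subset \bS_\Y(M \rtimes \Gamma)$ only on elements $T \otimes 1$, with the commutators against the generators $1 \otimes \rho_t$ and $JxJ$ of $(M \rtimes \Gamma)'$ computed via (\ref{eq:modulargp}) just as you do. The diagonal-commutator membership $\oplus_t [T, J_M \sigma_t(x) J_M] \in \K_\Y^{\infty,1}(M \rtimes \Gamma)$ that you flag as the one delicate step is treated at the same level of detail in the paper's proof of the preceding theorem (the displayed computation followed by an appeal to \cite[Lemma 6.1]{DKEP22}), so your deferral to that argument is faithful to the paper.
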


We now briefly recall the $q$-Gaussian construction. Let $\mathcal H$ be a real Hilbert space, which we will always assume has dimension greater than 1. Let $\mathcal H_{\mathbb C} = \mathcal H \, \otimes_{\mathbb R} \, \mathbb C$ be its complexification and let $\mathcal F(\mathcal H_{\mathbb C}) = \mathbb C \Omega \oplus \bigoplus_{n \geq 1} \mathcal H_{\mathbb C}^{\otimes n}$ be the algebraic Fock space over $\mathcal H_{\mathbb C}$. Fix $-1 \leq q \leq 1$. Following \cite{BoSp91}, we consider the sequilinear form on $\mathcal F(\mathcal H_{\mathbb C})$ satisfying
\[
\langle \xi_1 \otimes \cdots \otimes \xi_n, \eta_1 \otimes \cdots \eta_m \rangle_q
= \delta_{n, m} \sum_{\sigma \in S_n} q^{i(\sigma)} \prod_{j} \langle \xi_j, \eta_{\sigma(j)} \rangle
\]
where $S_n$ is the symmetric group on $n$ characters, and $\iota(\sigma)$ denotes the number of inversions of $\sigma \in S_n$. This form is nonnegative definite, and we let $\mathcal F_q(\mathcal H_{\mathbb C})$ denote the Hilbert space obtained after separation and completion. We will abuse notation, and for $\xi \in \mathcal F(\mathcal H_{\mathbb C})$ we will continue to write $\xi$ for its image in $\mathcal F_q(\mathcal H_{\mathbb C})$. 

For $\xi \in \mathcal H$, we let $l_q(\xi)$ denote the left creation operator on $\mathcal F_q(\mathcal H_{\mathbb C})$, satisfying 
\[
l_q(\xi) \xi_1 \otimes \cdots \otimes \xi_n = \xi \otimes \xi_1 \otimes \cdots \otimes \xi_n.
\]
This operator is bounded if $-1 \leq q < 1$, and is a closed densely defined operator if $q = 1$. We let $s_q(\xi) = l_q(\xi) + l_q(\xi)^*$. 

If $-1 \leq q < 1$, the $q$-Gaussian ${\rm C}^*$-algebra is defined as the unital ${\rm C}^*$-subalgebra $A_q(\mathcal H)$ of $\B(\mathcal F_q(\mathcal H_{\mathbb C}))$ generated by $s(\xi)$ for $\xi \in \mathcal H$. The $q$-Gaussian von Neumann algebra $M_q(\mathcal H)$ is the von Neumann completion of $A_q(\mathcal H)$ in $\B(\mathcal F_q(\mathcal H_{\mathbb C}))$. When $q = 1$, the $q$-Gaussian von Neumann algebra $M_1(\mathcal H)$ is generated by the spectral projections of the self adjoint unbounded operators $s_1(\xi)$. 

The vacuum vector $\Omega$ is cyclic and separating for $M_q(\mathcal H)$ so that $\mathcal F_q( \mathcal H_{\mathbb C})$ may be identified with $L^2( M_q( \mathcal H) )$. Note that for $\xi \in \mathcal H$, we have $s(\xi) \Omega = \xi$, and if $q \not= 1$, it is then easy to see by induction on the maximal length of simple tensors that for a general vector $\xi \in \mathcal F (\mathcal H) \subset \mathcal F(\mathcal H_{\mathbb C})$ there still exists a bounded operator $s(\xi) \in M_q(\mathcal H)$ so that $s(\xi) \Omega = \xi$. The operator $s(\xi)$ is called the Wick operator associated to $\xi$. In the case when $q = 1$, the Wick operator $s(\xi)$ is a densely defined closed operator and contains the image of $\mathcal F(\mathcal H_{\mathbb C})$ in $\mathcal F_1(\mathcal H_{\mathbb C})$ in its domain. 

We define a linear idempotent $\xi \mapsto \xi^*$ on $\mathcal F(\mathcal H)$ by letting $(\xi_1 \otimes \cdots \otimes \xi_n)^* = \xi_n \otimes \cdots \otimes \xi_1$. Note that the Wick operators then satisfy $s(\xi)^* = s(\xi^*)$ for $\xi \in \mathcal F(\mathcal H)$. We also have 
\begin{equation}\label{eq:bimodular}
s( s(\xi_1) \xi_2) \Omega = s(\xi_1) s(\xi_2) \Omega \ \ \ \ \ s( J s(\eta_2) J \eta_1 ) \Omega = J s(\eta_2) J s(\eta_1) \Omega 
\end{equation}
for $\xi_1, \xi_2, \eta_1, \eta_2 \in \mathcal F(\mathcal H)$. The modular conjugation operator $J$ for $M_q(\mathcal H)$ satisfies $J (\xi ) = \xi^*$ for all $\xi \in \mathcal F(\mathcal H)$, or rather for $\xi$ in the image of $\mathcal F(\mathcal H)$ in $\mathcal F_q(\mathcal H_{\mathbb C})$. 

The $q$-Gaussian von Neumann algebra is abelian when $q = 1$, amenable when $q = -1$, a free group factor when $q = 0$, and a nonamenable II$_1$ factor when $-1 < q < 1$. Moreover, when $\dim(\mathcal H) < \infty$, it is known that $M_q(\mathcal H) \cong M_0(\mathcal H)$ for $|q|$ small enough (depending on $\dim(\mathcal H)$) \cite{GuSh14}.

We note that when $\dim(\mathcal H) < \infty$, then $M_q(\mathcal H)$ is biexact. Indeed, Shlyakhtenko noted in \cite[Section 4]{Shl04} that for $\dim(\mathcal H) < \infty$, if the ${\rm C}^*$-algebra generated by $l(\xi)$ for $\xi \in \mathcal H$ is nuclear, then $M_q(\mathcal H)$ has strong property (AO) (see also \cite{CaIsWa21}). Generalizing a result of Dykema and Nica \cite{DyNi93}, Kuzmin has recently shown in \cite{Ku22} that this ${\rm C}^*$-algebra is always nuclear for $-1 \leq q < 1$, and hence it then follows from Shlyakhtenko's result and Theorem~\ref{thm:condition AO+} that $M_q(\mathcal H)$ is biexact. 

If $\dim(\mathcal H ) = \infty$ and $q \not\in \{ -1, 0, 1 \}$, then from Theorem~\ref{thm:biexact implies AO} and Caspers' result \cite{Cas22} we see that $M_q(\mathcal H)$ is not biexact in this case. Indeed, using \cite[Theorem 2]{Nou04}, it is shown in \cite[Theorem 3.3]{BCKW22} that if $d \geq 1$ is such that $d > q^2 d > 1$, and $\mathcal H_0 \subset \mathcal H$ with $d = \dim( \mathcal H_0 ) < \dim(\mathcal H)$, then the Hilbert $M_q(\mathcal H_0)$-bimodule $L^2 (M_q(\mathcal H) ) \ominus L^2( M_q(\mathcal H_0) )$ is not weakly contained in the coarse bimodule. If $\mathcal U$ is a nonprincipal ultrafilter on $\mathbb N$, then by considering the natural embedding 
\[
L^2( M_q( \mathcal H^{\mathcal U} ) ) \ominus L^2 (M_q( \mathcal H ) ) 
\subset L^2 ( (M_q( \mathcal H) )^{\mathcal U} ) \ominus L^2( M_q( \mathcal H))
\]
we then see that $M_q(\mathcal H)$ fails to satisfy the conclusion of Theorem~\ref{thm:weakcontainultrapower} whenever $\dim(\mathcal H) = \infty$ and $q \not\in \{ -1, 0, 1 \}$.

If we have an isometry $V \in \B(\mathcal H, \mathcal K)$, then we obtain an isometry $V^{\mathcal F} \in \B(\mathcal F(\mathcal H_{\mathbb C}), \mathcal F(\mathcal K_{\mathbb C}))$, and conjugation by this isometry then gives us a normal embedding $M_q(\mathcal H) 	\hookrightarrow M_q(\mathcal K)$. The trace-preserving conditional expectation is then given by conjugation by the coisometry $(V^*)^{\mathcal F}$. 

In particular, if we have an orthogonal transformation $V \in \mathcal O(\mathcal H)$, then conjugation by $V^{\mathcal F}$ gives rise to a trace-preserving automorphism $\sigma_V \in {\rm Aut}(M_q(\mathcal H))$ that satisfies $\sigma_V( s(\xi) ) = s(V\xi)$ for $\xi \in \mathcal H$. If we have an orthogonal representation $\pi: \Gamma \to \mathcal O(\mathcal H)$, then the resulting action $\Gamma \actson^{\sigma_\pi} {\rm Aut}(M_q(\mathcal H))$ is called the $q$-Gaussian action associated to $\pi$, or simply the Gaussian action when $q = 1$.

If $\theta \in (0, \pi/2)$, then we let $V_\theta \in \mathcal O(\mathcal H \oplus \mathcal H)$ denote the isometry $V_\theta = \begin{pmatrix}
\cos \theta & - \sin \theta  \\
\sin \theta  & \cos \theta
\end{pmatrix}$, and we let $\alpha_\theta \in {\rm Aut}(M_q(\mathcal H \oplus \mathcal H))$ denote the corresponding automorphism. We will identify $M_q(\mathcal H)$ with $M_q(\mathcal H \oplus 0) \subset M_q(\mathcal H \oplus \mathcal H)$. Note that since $V_\theta^{\mathcal F}$ preserves each direct summand in the decomposition $\mathcal F(\mathcal H_{\mathbb C}) =  \mathbb C \Omega \oplus \bigoplus_{n \geq 1} \mathcal H_{\mathbb C}^{\otimes n}$, we may then explicitly compute
\begin{equation}\label{eq:deformationnorm}
e_{M_q(\mathcal H)} V_\theta^{\mathcal F} e_{M_q(\mathcal H)} 
= \sum_{n \geq 0} \cos^n \theta P_n
\end{equation}
where $P_n$ denotes the orthogonal projection onto $\mathcal H_{\mathbb C}^{\otimes n} \subset \mathcal F(\mathcal H_{\mathbb C})$. In particular, $e_{M_q(\mathcal H)} V_\theta^{\mathcal F} e_{M_q(\mathcal H)}$ is a positive operator on $\mathcal F(\mathcal H_{\mathbb C})$.

The equivalence between (\ref{item:convergeatom4}) and (\ref{item:convergeatom1})  in the following lemma is claimed in \cite{Av11}. It is easy to check in the case when $\dim(\mathcal H) < \infty$, which is the main case of interest in \cite{Av11}. Our proof for the general case is adapted from \cite[Lemma 2.4]{Io07}.

\begin{lem}\label{lem:convergeatomic}
Let $-1 \leq q \leq 1$, let $p \in \mathcal P(M_q(\mathcal H))$ be a nonzero projection, and suppose $B \subset p M_q(\mathcal H) p$ is a von Neumann subalgebra. The following conditions are equivalent:
\begin{enumerate}
\item \label{item:convergeatom4} $B$ is completely atomic. 
\item \label{item:convergeatom1} We have uniform convergence $\alpha_\theta \to {\rm id}$ in $\| \cdot \|_2$ on $\mathcal U(B)$ as $\theta \to 0$. 
\item \label{item:convergeatom2} For each nonzero projection $r \in \mathcal P(B' \cap p M_q(\mathcal H) p)$ and $\theta \in (0, \pi/2)$ we have 
\[
\inf_{u \in \mathcal U( B r) } \| E_{M_q(\mathcal H)} \circ \alpha_\theta( u) \|_2 > 0.
\] 
\item \label{item:convergeatom3} For each nonzero projection $r \in \mathcal P(B' \cap p M_q(\mathcal H) p)$ there exists $\theta \in (0, \pi/2)$, an orthogonal transformation $\gamma \in \mathcal O(\mathcal H)$, and a nonzero partial isometry $v \in \alpha_\theta(r) M_q(\mathcal H \oplus \mathcal H) \sigma_\gamma(r)$ such that $\alpha_\theta(b) v = v \sigma_\gamma(b)$ for all $b \in B$. 
\end{enumerate}
\end{lem}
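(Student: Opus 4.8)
The plan is to transfer all four conditions to estimates on the Fock space $\mathcal F_q(\mathcal H_{\mathbb C}) = L^2(M_q(\mathcal H))$, using that $\alpha_\theta(x)\Omega = V_\theta^{\mathcal F}(x\Omega)$ for $x\in M_q(\mathcal H)$ together with the explicit formula (\ref{eq:deformationnorm}), $e_{M_q(\mathcal H)}V_\theta^{\mathcal F}e_{M_q(\mathcal H)} = \sum_{n\ge0}\cos^n\theta\,P_n$. For a partial isometry $u$ with $u^*u=uu^*=r$ this yields the identities
\begin{align*}
\|E_{M_q(\mathcal H)}\alpha_\theta(u)\|_2^2 = \sum_n\cos^{2n}\theta\,\|P_n\widehat u\|_2^2, \qquad \|\alpha_\theta(u)-u\|_2^2 = 2\sum_n(1-\cos^n\theta)\,\|P_n\widehat u\|_2^2,
\end{align*}
and, writing $\|\alpha_\theta(u)-E_{M_q(\mathcal H)}\alpha_\theta(u)\|_2^2 = \sum_n(1-\cos^{2n}\theta)\|P_n\widehat u\|_2^2$ by the Pythagorean theorem, the elementary comparison $\tfrac12\|\alpha_\theta(u)-u\|_2^2\le\|\alpha_\theta(u)-E_{M_q(\mathcal H)}\alpha_\theta(u)\|_2^2\le\|\alpha_\theta(u)-u\|_2^2$. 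Thus everything is governed by the positive Fourier multipliers $D_\theta=\sum_n(1-\cos^n\theta)P_n$ and $C_\theta=\sum_n\cos^{2n}\theta\,P_n$ acting on the unitary orbits $\{\widehat u:u\in\mathcal U(Br)\}$.

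For the forward implications $(1)\Rightarrow(2)$ and $(1)\Rightarrow(3)$ I will use that $M_q(\mathcal H)$ is tracial: a completely atomic $B$, and hence each corner $Br$ (still completely atomic, as $r\in B'\cap pM_q(\mathcal H)p$), is an $\ell^\infty$-sum of finite matrix algebras with summable traces, so the orbit $\{\widehat u:u\in\mathcal U(Br)\}$ is norm-precompact in $L^2$ and bounded away from $0$ (each vector has norm $\tau(r)^{1/2}$). Since $D_\theta$ is uniformly bounded and tends to $0$ strongly, it tends to $0$ uniformly on the precompact orbit, which is (2); since $C_\theta$ is strictly positive, its infimum over a precompact set avoiding $0$ is positive, which is (3).

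For $(3)\Rightarrow(4)$ I will read $\|E_{M_q(\mathcal H)}\alpha_\theta(u)\|_2$ through Popa's intertwining theorem inside the tracial algebra $\widetilde M = M_q(\mathcal H\oplus\mathcal H)$. Fixing $r$ and choosing any $\theta$, condition (3) asserts that no net of unitaries in $\alpha_\theta(Br)$ has $E_{M_q(\mathcal H)}$-expectations going to $0$ in $\|\cdot\|_2$; by \cite[Theorem 2.1]{Po06B} this is equivalent to $\alpha_\theta(Br)\preceq_{\widetilde M}M_q(\mathcal H)$, producing a nonzero partial isometry intertwining $\alpha_\theta(B)$ into the first copy $M_q(\mathcal H)$. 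Using the homogeneity of the $q$-Gaussian construction under $\mathcal O(\mathcal H)$, I will then normalize the image copy as $\sigma_\gamma(B)$ for a suitable $\gamma\in\mathcal O(\mathcal H)$, obtaining $v\in\alpha_\theta(r)\widetilde M\sigma_\gamma(r)$ with $\alpha_\theta(b)v = v\sigma_\gamma(b)$ for all $b\in B$, which is (4).

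All three reverse implications $(2)\Rightarrow(1)$, $(3)\Rightarrow(1)$, $(4)\Rightarrow(1)$ will follow by contraposition from a single statement: if $B$ is not completely atomic, pick a central projection $z\in\mathcal Z(B)\subset B'\cap pM_q(\mathcal H)p$ with $Bz$ diffuse and unitaries $u_k\in\mathcal U(Bz)$ with $u_k\to0$ weakly; then for every $\theta$ one has $\|E_{M_q(\mathcal H)}\alpha_\theta(u_k)\|_2^2=\langle C_\theta\widehat u_k,\widehat u_k\rangle\to0$. Indeed, this at once gives $\inf_{\mathcal U(Bz)}\|E_{M_q(\mathcal H)}\alpha_\theta(\cdot)\|_2=0$ (so $\neg(3)$), gives $\alpha_\theta(Bz)\not\preceq_{\widetilde M}M_q(\mathcal H)$ for every $\theta$ and hence no intertwiner as in (4) (so $\neg(4)$), and via the comparison above yields $\liminf_k\|\alpha_\theta(u_k)-u_k\|_2^2\ge\tau(z)$ at every $\theta$, so that extending $u_k$ to $u_k+(p-z)\in\mathcal U(B)$ defeats uniform convergence (so $\neg(2)$). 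The main obstacle, and the only place where $\dim\mathcal H=\infty$ genuinely enters, is proving $\langle C_\theta\widehat u_k,\widehat u_k\rangle\to0$: the level projections $P_n$ are not compact in infinite dimensions, so weak nullity of $u_k$ does not by itself control $\|P_n\widehat u_k\|_2$. Adapting \cite[Lemma 2.4]{Io07}, I will compress $C_\theta$ to $L^2(Bz)$ and show the resulting operator is compact, exploiting the filtration $M_q(\mathcal H)=\overline{\bigcup_F M_q(\mathcal H_F)}$ (on which each $P_n$ is finite rank) together with the summability $\sum_n\cos^{2n}\theta=(1-\cos^2\theta)^{-1}<\infty$; compactness of the compression forces $\langle C_\theta\widehat u_k,\widehat u_k\rangle\to0$ on the weakly null orbit. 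This closes the equivalences $(1)\Leftrightarrow(2)$ and $(1)\Leftrightarrow(3)\Leftrightarrow(4)$.
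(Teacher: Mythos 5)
Your forward implications are fine: precompactness in $\|\cdot\|_2$ of the unitary orbit $\{\hat u : u \in \mathcal U(Br)\}$ when $B$ is completely atomic, combined with the multiplier formulas coming from (\ref{eq:deformationnorm}), does give (1)$\Rightarrow$(2) and (1)$\Rightarrow$(3) (the paper treats these as immediate). The two genuine problems are elsewhere.

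The fatal gap is the ``single statement'' from which you derive all three reverse implications. It is false that every weakly null sequence of unitaries $u_k$ in a diffuse subalgebra satisfies $\langle C_\theta \hat u_k, \hat u_k\rangle \to 0$, and the proposed mechanism---compactness of the compression of $C_\theta = \sum_n \cos^{2n}\theta\, P_n$ to $L^2(Bz)$---fails: if $Bz$ contains $M_q(\mathcal H_0)$ for an infinite-dimensional $\mathcal H_0 \subset \mathcal H$, the compression dominates $\cos^{2n}\theta$ times the infinite-rank level projections of $\mathcal F_q((\mathcal H_0)_{\mathbb C})$, and summability of $\sum_n \cos^{2n}\theta$ does not help because compactness would require each $P_n$ to act as a finite-rank (or compact) operator on $L^2(Bz)$, which it does not. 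Concretely, take $\xi_k \in \mathcal H$ orthonormal and a fixed Borel function $f$ carrying the spectral distribution of $s(\xi_1)$ to Haar measure on the circle; then $u_k = f(s(\xi_k))$ are weakly null unitaries in the diffuse algebra $W^*(s(\xi_k) : k \geq 1)$, but $\| P_n \hat u_k \|_2$ equals the modulus of the $n$-th orthogonal-polynomial coefficient of $f$, a constant independent of $k$, so $\langle C_\theta \hat u_k, \hat u_k\rangle \not\to 0$. This is exactly why the paper's proof of (4)$\Rightarrow$(1) must genuinely \emph{use} the intertwiner $v$ rather than diffuseness alone: it approximates $v$ by $v' \in M_q(\mathcal H_0 \oplus \mathcal H_0)$ with $\mathcal H_0$ \emph{finite}-dimensional, compares with the expectation onto $Q = M_q(\mathcal K)$ for $\mathcal K = V_\theta((\mathcal H \ominus \mathcal H_0)\oplus 0) \oplus (\mathcal H_0 \oplus \mathcal H_0)$, and deduces via Popa's criterion that $\sigma_\gamma(rB) \preceq_{M_q(\mathcal H)} M_q(\mathcal H_0)$; only after this localization is a compact operator available (namely $E_{M_q(\mathcal H_1)}\circ \alpha_\theta$ with $\mathcal H_1$ finite-dimensional), and its compactness is what contradicts diffuseness. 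A further soft spot in the same part: even if your claim held, $\| E_{M_q(\mathcal H)}(\alpha_\theta(u_k))\|_2 \to 0$ alone does not yield $\alpha_\theta(Bz) \npreceq_{\tilde M} M_q(\mathcal H)$, since Popa's theorem \cite{Po06B} requires $E_{M_q(\mathcal H)}(a\,\alpha_\theta(u_k)\,b) \to 0$ for all $a,b$, not just $a=b=1$.

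Your (3)$\Rightarrow$(4) is also gapped: Popa's theorem outputs an abstract normal $*$-homomorphism $\phi: eBre \to f M_q(\mathcal H)f$, and ``homogeneity under $\mathcal O(\mathcal H)$'' cannot normalize an arbitrary such embedding to $\sigma_\gamma|_B$ for an orthogonal $\gamma$---a general embedding is not spatially implemented by an orthogonal transformation of $\mathcal H$, and the intertwining only concerns a corner $eBre$ rather than all of $B$. The repair is the paper's route, which your own formulas make immediate: since $e_{M_q(\mathcal H)} V_\theta^{\mathcal F} e_{M_q(\mathcal H)} = \sum_n \cos^n\theta\, P_n$ is positive and $\cos^n\theta \geq \cos^{2n}\theta$, condition (3) gives $\inf_{u \in \mathcal U(Br)} \tau(\alpha_\theta(u)u^*) \geq \inf_{u} \| E_{M_q(\mathcal H)}(\alpha_\theta(u))\|_2^2 > 0$, and the element of minimal $\|\cdot\|_2$ in the closed convex hull of $\{\alpha_\theta(u)u^* : u \in \mathcal U(Br)\}$ yields, after polar decomposition, a nonzero partial isometry $v$ with $\alpha_\theta(b)v = vb$ for all $b \in B$, i.e.\ (4) with $\gamma = \mathrm{id}$.
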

\begin{proof}
The implications (\ref{item:convergeatom4}) $\implies$ (\ref{item:convergeatom1}) and  (\ref{item:convergeatom1}) $\implies$ (\ref{item:convergeatom2}) are trivial. If (\ref{item:convergeatom2}) holds, and if $r \in \mathcal P(B' \cap p M_q(\mathcal H) p)$ and $\theta \in (0, \pi/2)$ are such that 
\[
\inf_{u \in \mathcal U(B r) } \| E_{M_q(\mathcal H)} \circ \alpha_\theta( u) \|_2 > 0,
\] 
then by (\ref{eq:deformationnorm}) we have
\[
\inf_{u \in \mathcal U(Br)} \tau( \alpha_\theta(u) u^* ) > 0.
\]

We may then apply a ubiquitous convexity argument to get a nonzero partial isometry $v \in \alpha_\theta(r) M_q(\mathcal H \oplus \mathcal H) r$ so that $\alpha_\theta(b)v = v b$ for all $b \in B$. Indeed, if $d > 0$ is such that  $\tau( \alpha_\theta(u) u^* ) \geq d$ for all $u \in \mathcal U(B r)$, then the unique element $x$ of minimal $\| \cdot \|_2$ in the strongly closed convex hull of $\{  \alpha_\theta(u) u^* \mid u \in \mathcal U(Br) \} \subset \alpha_\theta(r) M_q(\mathcal H \oplus \mathcal H) r$ is nonzero (since $ \tau( x )  \geq d$) and satisfies $\alpha_\theta(u) x u^* = x$ for $u \in \mathcal U(B)$.  If $x$ has polar decomposition $x = v | x |$, then we have $v \in \alpha_\theta(r) M_q(\mathcal H \oplus \mathcal H) r$ is a nonzero partial isometry that satisfies $\alpha_\theta(b) v = v b$ for $b \in B$. This then shows that (\ref{item:convergeatom2}) $\implies$ (\ref{item:convergeatom3}), with $\gamma$ being the identity automorphism. 

To show (\ref{item:convergeatom3}) $\implies$ (\ref{item:convergeatom4}), we argue by way of contradiction and assume that (\ref{item:convergeatom3}) holds and that $B$ is not completely atomic. We may then choose a projection $z \in \mathcal Z(B)$ so that $zB$ is diffuse, and replacing $B$ with $zB$ we may assume that $z = p$. 

We let $\theta \in (0, \pi/2)$, $\gamma \in \mathcal O(\mathcal H)$, and $v \in \alpha_\theta(p) M_q(\mathcal H) \sigma_\gamma(p)$ be given as in (\ref{item:convergeatom3}). We now take $\varepsilon > 0$ to be chosen later and let $\mathcal H_0 \subset \mathcal H$ be a finite-dimensional subspace so that $v' = E_{M_q(\mathcal H_0 \oplus \mathcal H_0)}( v)$ satisfies $\| v' - v \|_2 < \varepsilon$. Set $r = \sigma_{\gamma^{-1}}(v^*v) \in B' \cap p M_q(\mathcal H)p$. Then, for $b \in (r B)_1$ we have
\[
\| \alpha_\theta(b) v' - v' \sigma_\gamma(b)\|_2 \leq 2 \varepsilon.
\]

Note that $\alpha_\theta(b) v' \in M_q(\mathcal K)=Q$, where $\mathcal K \subset \mathcal H \oplus \mathcal H$ is the closed subspace spanned by $V_\theta( (\mathcal H \ominus \mathcal H_0) \oplus 0 )$ and $\mathcal H_0 \oplus \mathcal H_0$.

Since $v' \in Q$, we have $E_Q(v' \sigma_\gamma( b) ) = v' E_Q( \sigma_\gamma(b) )$ for $b \in r B$, and hence for all $b \in ( r B)_1$ we have
\[
\| v' E_{Q} ( \sigma_\gamma( b ) ) - v' \sigma_\gamma( b ) \|_2 \leq 4 \varepsilon.
\]
Since $v' \sigma_\gamma( b ) \in M_q(\mathcal H \oplus \mathcal H_0)$, by a simple computation we also have 
\begin{align}
\| v' E_Q ( \sigma_\gamma( b ) ) \|_2^2 
&\leq \cos \theta \| v' \sigma_\gamma( b) - E_{M_q(\mathcal H_0 \oplus \mathcal H_0)}(v' \sigma_\gamma( b ) ) \|_2^2 + \| E_{M_q(\mathcal H_0 \oplus \mathcal H_0)}(v' \sigma_\gamma( b ) ) \|_2^2 \nonumber \\
&= \cos \theta \| v' \sigma_\gamma( b ) \|_2^2 + (1 - \cos \theta) \| E_{M_q(\mathcal H_0 \oplus \mathcal H_0)}(v' \sigma_\gamma( b ) ) \|_2^2. \nonumber
\end{align}
Hence
\begin{align}
(1 - \cos \theta) \| v' E_{M_q(\mathcal H_0 \oplus \mathcal H_0)}( \sigma_\gamma( b ) ) \|_2^2
& \geq \| v' E_Q ( \sigma_\gamma( b ) ) \|_2^2 - \cos \theta \| v' \sigma_\gamma (b ) \|_2^2 \nonumber \\
&\geq (1 - \cos \theta) \| v' \sigma_\gamma (b ) \|_2^2 - (4 \varepsilon)^2. \nonumber
\end{align}

Choosing $\varepsilon > 0$ sufficiently small, we may then find $c > 0$ and a finite-dimensional subspace $\mathcal H_0 \subset \mathcal H$ so that 
\[
\| E_{M_q( \mathcal H_0)}(  \sigma_\gamma( u )  ) \|_2 = \| E_{M_q(\mathcal H_0 \oplus \mathcal H_0)}(\sigma_\gamma( u )) \|_2 \geq \| v E_{M_q(\mathcal H_0 \oplus \mathcal H_0)}( \sigma_\gamma( u ) ) \|_2 \geq c
\]
for all $u \in \mathcal U(r B)$. 

By Popa's Intertwining Theorem \cite[Theorem 2.1]{Po06B}, we then have $\sigma_\gamma(r B)  \preceq_{M_q(\mathcal H)} M_q( \mathcal H_0)$, i.e., there exist projections $e \in r B$, $f \in M_q( \mathcal H_0)$, a nonzero partial isometry $w \in e M_q(\mathcal H) f$ and a unital normal $*$-homomorphism $\phi: \sigma_\gamma( e B  e ) \to f M_q(\mathcal H_0) f$ such that $ b w = w \phi(  b  )$ for all $b \in  \sigma_\gamma(eBe)$. 

We then have that $u = \alpha_\theta( \sigma_{\gamma^{-1}}(w^*) ) v w$ is a nonzero partial isometry with $u^* u \leq f$ and satisfying $\alpha_\theta( x ) u = u \sigma_\gamma(x)$ for $x \in \tilde B := \sigma_{\gamma^{-1}}( \phi( \sigma_\gamma( eBe ) ) ) \subset M_q(\gamma^{-1} \mathcal H_0)$. If $\delta > 0$, then we may take $\mathcal H_1 \subset \mathcal H$ finite-dimensional with $\gamma^{-1}\mathcal H_0 \subset \mathcal H_1$ so that $\| u - E_{M_q(\mathcal H_1)}(u) \|_2 < \delta$, and it then follows as above that
\[
\| E_{M_q(\mathcal H_1)} \circ \alpha_\theta (x) u - u \sigma_\gamma( x )  \|_2  = 
\| E_{M_q(\mathcal H_1)} \circ \alpha_\theta (x) u - u E_{M_q(\mathcal H_1)}( \sigma_\gamma(x) ) \|_2 < 4 \delta
\]
for all $x \in \tilde B$. Since $\mathcal H_1$ is finite-dimensional we have that $E_{M_q(\mathcal H_1)} \circ \alpha_\theta$ is compact as an operator on $\mathcal F( \mathcal H_{\mathbb C})$, and hence, if $\{ x_k \}_k \in \tilde B$ is any uniformly bounded sequence that converges weakly to $0$, we have $\limsup_{k \to \infty} \| u x_k \|_2 < 4 \delta$. Since $\delta > 0$ was arbitrary, we then have that $\tilde B$ is not diffuse, and hence, neither is $B$, since it has a corner that is isomorphic to $\tilde B$.
\end{proof}

We continue to let $V_\theta = \begin{pmatrix}
\cos \theta & - \sin \theta  \\
\sin \theta  & \cos \theta
\end{pmatrix}$, and let $\alpha_\theta \in {\rm Aut}(M_q(\mathcal H \oplus \mathcal H))$ be the associated automorphism as defined above. Note that if we have an orthogonal representation $\pi: \Gamma \to \mathcal O(\mathcal H)$, then $\alpha_\theta$ is $\Gamma$-equivariant with respect to the $q$-Gaussian action associated to $\pi \oplus \pi$ and so we may extend it to an automorphism (again denoted by $\alpha_\theta$) of $\tilde M = M_q(\mathcal H \oplus \mathcal H) \rtimes^{\sigma_{\pi \oplus \pi}} \Gamma$ so that $\alpha_\theta$ is the identity map on $L\Gamma \subset \tilde M$. 

We may bootstrap the previous lemma to obtain the following version adapted to the setting of crossed-products.

\begin{lem}\label{lem:convegeatomic2}
Let $-1 \leq q \leq 1$, and $\pi: \Gamma \to \mathcal O(\mathcal H)$ be an orthogonal representation. Set $M = M_q(\mathcal H) \rtimes^{\sigma_\pi} \Gamma$, and let $p \in \mathcal P(M)$ be a nonzero projection. Suppose $B \subset p M p$ is a von Neumann subalgebra such that $B r \preceq_M M_q(\mathcal H)$ for each nonzero projection $r \in B' \cap p M p$. The following conditions are equivalent:
\begin{enumerate}
\item \label{item:convergeatomB4} $B$ is completely atomic. 
\item \label{item:convergeatomB1} We have uniform convergence $\alpha_\theta \to {\rm id}$ in $\| \cdot \|_2$ on $\mathcal U(B)$ as $\theta \to 0$. 
\item \label{item:convergeatomB2} For each nonzero projection $r \in \mathcal P(B' \cap p M p)$ and $\theta \in (0, \pi/2)$, we have 
\[
\inf_{u \in \mathcal U( B r) } \| E_{M} \circ \alpha_\theta( u) \|_2 > 0.
\] 
\item \label{item:convergeatomB3} For each nonzero projection $r \in \mathcal P(B' \cap p M p)$, there exists $\theta \in (0, \pi/2)$, and a nonzero partial isometry $v \in \alpha_\theta(r) \tilde M r$ such that $\alpha_\theta(b) v = v b$ for all $b \in B$. 
\end{enumerate}
\end{lem}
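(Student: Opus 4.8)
The plan is to follow the scheme of Lemma~\ref{lem:convergeatomic}, using the hypothesis $Br \preceq_M M_q(\mathcal H)$ to transport the whole argument into the $q$-Gaussian algebra, where that lemma can be applied. The implications (\ref{item:convergeatomB4}) $\implies$ (\ref{item:convergeatomB1}) $\implies$ (\ref{item:convergeatomB2}) go exactly as before: when $B$ is completely atomic its unit ball is $\|\cdot\|_2$-precompact (the blocks are finite-dimensional and $\tau(p)<\infty$), so the isometries $\alpha_\theta$ converge uniformly to the identity on $\mathcal U(B)$; and uniform convergence as $\theta\to 0$ forces the vectors $\{\hat u\mid u\in\mathcal U(Br)\}$ to concentrate uniformly on the low tensor powers, which by (\ref{eq:deformationnorm}) yields the lower bound in (\ref{item:convergeatomB2}) at every fixed $\theta$. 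For (\ref{item:convergeatomB2}) $\implies$ (\ref{item:convergeatomB3}) I would use the positivity of $e_{M_q(\mathcal H)}V_\theta^{\mathcal F}e_{M_q(\mathcal H)}=\sum_n\cos^n\theta\,P_n$ together with $\cos^{2n}\theta\le\cos^n\theta$ to get $\tau(\alpha_\theta(u)u^*)\ge\|E_M\circ\alpha_\theta(u)\|_2^2$, so that (\ref{item:convergeatomB2}) gives a uniform lower bound on $\tau(\alpha_\theta(u)u^*)$ over $u\in\mathcal U(Br)$; a standard convexity argument in $\alpha_\theta(r)\tilde M r$ then produces, after polar decomposition, a nonzero partial isometry $v\in\alpha_\theta(r)\tilde M r$ with $\alpha_\theta(b)v=vb$ for all $b\in B$, which is (\ref{item:convergeatomB3}).

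The substance is (\ref{item:convergeatomB3}) $\implies$ (\ref{item:convergeatomB4}), proved by contradiction. Assuming $B$ is not completely atomic, I cut by a central projection of $B$ and reduce to the case that $B$ is diffuse with $z=p$. Applying the hypothesis with $r=p$ gives $B\preceq_M M_q(\mathcal H)$, and Popa's intertwining theorem \cite{Po06B} then furnishes projections $e\in\mathcal P(B)$, $f\in\mathcal P(M_q(\mathcal H))$, a nonzero partial isometry $w\in eMf$, and a normal unital $*$-homomorphism $\psi:eBe\to fM_q(\mathcal H)f$ with $bw=w\psi(b)$ for $b\in eBe$. Here $C:=\psi(eBe)\subset fM_q(\mathcal H)f$ is diffuse, and a short computation using $bw=w\psi(b)$ shows that $e_0:=ww^*\in B'\cap pMp$ is a nonzero projection.

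The key manoeuvre — and the point where the robustness of the hypothesis over \emph{all} $r$ is essential — is to now apply condition (\ref{item:convergeatomB3}) to the specific projection $r=e_0$, obtaining $\theta\in(0,\pi/2)$ and a nonzero partial isometry $v\in\alpha_\theta(e_0)\tilde M e_0$ with $\alpha_\theta(b)v=vb$ for $b\in B$. Since $\alpha_\theta(e_0)v=v$ and $ve_0=v$, the element $u:=\alpha_\theta(w^*)vw\in\tilde M$ satisfies
\[
\tau(u^*u)=\tau\!\left(w^*v^*\alpha_\theta(e_0)vw\right)=\tau(v^*v\,ww^*)=\tau(v^*v)=\|v\|_2^2>0,
\]
so $u\neq 0$; and using $\psi(b)w^*=w^*b$ one checks $\alpha_\theta(c)u=uc$ for all $c\in C$. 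I would then expand $u=\sum_{g\in\Gamma}\hat u_g\,u_g$ with $\hat u_g=E_{M_q(\mathcal H\oplus\mathcal H)}(uu_g^*)\in M_q(\mathcal H\oplus\mathcal H)$; comparing Fourier coefficients in $\alpha_\theta(c)u=uc$ gives $\alpha_\theta(c)\,\hat u_g=\hat u_g\,\sigma_{\pi(g)}(c)$ for every $g$ and every $c\in C$, and since $u\neq0$ some $\hat u_g$ is nonzero. After polar decomposition this is precisely a nonzero intertwiner of the form in condition (\ref{item:convergeatom3}) of Lemma~\ref{lem:convergeatomic} with orthogonal transformation $\gamma=\pi(g)$, contradicting the diffuseness of $C$ via that lemma.

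The main obstacle I anticipate is the non-vanishing of the composed intertwiner together with keeping it inside the Gaussian algebra $M_q(\mathcal H\oplus\mathcal H)$. Both difficulties are resolved by the two devices above: choosing $r=e_0$ forces the supports of $v$ to be governed by $\alpha_\theta(e_0)$ and $e_0$, which makes the identity $\tau(u^*u)=\|v\|_2^2$ immediate, while passing to the Fourier coefficients $\hat u_g$ both lands us in $M_q(\mathcal H\oplus\mathcal H)$ and explains the role of the auxiliary orthogonal transformation $\gamma$ in Lemma~\ref{lem:convergeatomic}, since the group elements twist $C$ by $\sigma_{\pi(g)}$.
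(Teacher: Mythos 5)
Your implications (\ref{item:convergeatomB4}) $\implies$ (\ref{item:convergeatomB1}) $\implies$ (\ref{item:convergeatomB2}) $\implies$ (\ref{item:convergeatomB3}) match the paper's proof (which treats the first two as trivial and proves the third by the same convexity argument, via $\tau(\alpha_\theta(u)u^*)=\sum_n \cos^n\theta\,\|P_n\hat u\|^2 \geq \|E_M\circ\alpha_\theta(u)\|_2^2$). The genuine gap is in (\ref{item:convergeatomB3}) $\implies$ (\ref{item:convergeatomB4}): your claim that $e_0=ww^*$ lies in $B'\cap pMp$ is false in general. The intertwining relation $bw=w\psi(b)$ holds only for $b\in eBe$, so the ``short computation'' yields only $ww^*\in (eBe)'\cap eMe$; since $ww^*\leq e$ and $e$ is typically a non-central projection of $B$, nothing forces $ww^*$ to commute with elements of $B$ outside the corner. (In fact $(eBe)'\cap eMe = e\,(B'\cap pMp)\,e$, so $ww^*$ is merely a compression of an element of the relative commutant, not an element of it.) Consequently condition (\ref{item:convergeatomB3}), whose quantifier runs only over $r\in\mathcal P(B'\cap pMp)$, cannot be invoked at $r=e_0$, and with that invocation collapses your nonvanishing certificate $\tau(u^*u)=\|v\|_2^2$, which depended entirely on the support alignment $v^*v\leq ww^*$.

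The paper performs your two main moves in the opposite order, and that ordering is precisely what keeps every projection legitimate: it first applies (\ref{item:convergeatomB3}) to the diffuse $B$ to obtain $\theta$ and $v$ with $\alpha_\theta(b)v=vb$ for all $b\in B$; this relation by itself forces $v^*v\in B'\cap pMp$ (since $v^*vb=v^*\alpha_\theta(b)v=bv^*v$), so the standing hypothesis may then be applied at $r=v^*v$ to get $Bv^*v\preceq_M M_q(\mathcal H)$, producing $e\in\mathcal P(Bv^*v)$ with $e\leq v^*v$, together with $w\in eMf$ and $\phi$. With this alignment one has $(vw)^*(vw)=w^*ew=w^*w\neq 0$, and $u=\alpha_\theta(w^*)vw$ intertwines $\phi(eBe)$ with $\alpha_\theta(\phi(eBe))$ exactly as in your endgame; the Fourier-coefficient step you describe ($\alpha_\theta(x)a_t=a_t\sigma_{\pi(t)}(x)$, feeding condition (\ref{item:convergeatom3}) of Lemma~\ref{lem:convergeatomic} with $\gamma=\pi(t)$) is verbatim the paper's conclusion, so that part of your proposal is correct. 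The fix, then, is not a patch on $e_0$ but a reordering: obtain $v$ before intertwining and run the hypothesis at $r=v^*v$ rather than manufacturing a projection from $w$. (Note the paper asserts $u\neq 0$ rather tersely at this point; your trace identity would have been a cleaner certificate, but it is exactly what the failure of $e_0\in B'\cap pMp$ takes away.)
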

\begin{proof}
The proofs of the implications (\ref{item:convergeatomB4}) $\implies$ (\ref{item:convergeatomB1}) and (\ref{item:convergeatomB1}) $\implies$ (\ref{item:convergeatomB2}) are trivial. Moreover, the same proof as in Lemma~\ref{lem:convergeatomic} shows (\ref{item:convergeatomB2}) $\implies$ (\ref{item:convergeatomB3}).

We now suppose that (\ref{item:convergeatomB3}) holds, and, by way of contradiction, we may restrict to the diffuse corner of $B$ and assume that $B$ itself is diffuse. Let $r \in \mathcal P( B' \cap pMp)$, $\theta \in (0, \pi/2)$, and $v \in \alpha_\theta(r) \tilde M r$ be as above. Since $B v^*v \preceq_M M_q(\mathcal H)$, we may find projections $e \in Bv^* v$, $f \in M_q(\mathcal H)$, a nonzero partial isometry $w \in e M f$, and a unital normal $*$-homomorphism $\phi: e B e \to f M_q(\mathcal H) f$ so that $b w = w \phi(b)$ for all $b \in eBe$. 

Then $u = \alpha_\theta(w^*) v w$ is a nonzero partial isometry with $u^*u \leq f$ so that $\alpha_\theta( x ) u = u \phi(x)$ for all $x \in  \phi(eBe) \subset M_q(\mathcal H)$. If we take the Fourier representation $u = \sum_{t \in \Gamma} a_t u_t$ with $a_t \in M_q(\mathcal H \oplus \mathcal H)$, then by uniqueness of the Fourier representation we have $\alpha_\theta(x) a_t = a_t \sigma_{\pi(t)}( x )$ for all $x \in \phi(e B e)$ and $t \in \Gamma$. It then follows from Lemma~\ref{lem:convergeatomic} that $\phi(eBe)$ is not diffuse, and hence, we conclude that $B$ is also not diffuse. 
\end{proof}

The previous lemmas are of interest even in the case of the classical Gaussian actions when $q = 1$, where they can be used to give the following relatively simple proof of Boutonnet's result \cite{Bo12} showing solid ergodicity for Gaussian actions associated to representations $\pi$ having the property that $\pi^{\otimes k} \prec \lambda$ for some $k \geq 1$. Moreover, like in \cite[Section 9]{DKEP22}, this approach avoids any hypothesis of mixingness for the representation.

\begin{thm}\label{thm:solidergodic}
Let $\pi: \Gamma \to \mathcal O(\mathcal H)$ be an orthogonal representation such that $\pi^{\otimes k} \prec \lambda$ for some $k \geq 1$. Set $A = M_1(\mathcal H)$ and $M = A \rtimes^{\sigma_\pi} \Gamma$. If $p \in \mathcal P(M)$ is a nonzero projection and $B \subset pMp$ is a diffuse von Neumann subalgebra such that $B r \preceq_M A$ for any nonzero projection $r \in B' \cap pMp$, then $B' \cap pMp$ is amenable.
\end{thm}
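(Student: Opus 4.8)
The plan is to combine the atomicity criterion from Lemma~\ref{lem:convegeatomic2} with a deformation/rigidity argument, using the malleable deformation $(\alpha_\theta)$ on $\tilde M = M_1(\mathcal H \oplus \mathcal H) \rtimes^{\sigma_{\pi \oplus \pi}} \Gamma$, and then feed the result into the biexactness machinery via the hypothesis $\pi^{\otimes k} \prec \lambda$. The overall strategy mirrors the proof that biexactness implies solidity, but relative to the subalgebra $A = M_1(\mathcal H)$, so I will aim to show that $M$ is biexact relative to $A$ and then invoke Proposition~\ref{prop:relative biexact intertwine}.

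First I would set $Q = B' \cap pMp$ and argue by contradiction, assuming $Q$ is nonamenable; after cutting by a central projection of $Q$ I may assume $Q$ has no amenable direct summand. The key structural input is that $M$ should be biexact relative to $A$. To establish this, the representation-theoretic hypothesis $\pi^{\otimes k} \prec \lambda$ is crucial: it forces $\mathcal H^{\otimes k}$ (and hence a cofinite piece of the Fock space) to be weakly contained in the coarse/regular bimodule at the level of the crossed product. Concretely, because $\pi^{\otimes k} \prec \lambda$, the Hilbert $M$-bimodule $L^2 M \ominus L^2 A$ built from the Fock space decomposition $\mathcal F_1(\mathcal H_{\mathbb C}) = \bigoplus_n \mathcal H_{\mathbb C}^{\otimes n}$ becomes, past level $k$, weakly contained in $L^2(A \rtimes \Gamma) \ovt L^2(A \rtimes \Gamma)$ relative to the boundary piece $\X_A$ coming from $A$. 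Combined with the deformation estimates of the type in \eqref{eq:deformationnorm}, this is exactly the setup of Theorem~\ref{thm:deformationbiexact}: the operators $e_A V_\theta^{\mathcal F} e_A = \sum_n \cos^n \theta \, P_n$ compress the deformation into $\K_{\X_A}^L$, so I would verify the hypotheses of that theorem (weak containment in the coarse bimodule modulo $A$, and $e_M V_n e_M \in \K_{\X_A}^L$) to conclude $M$ is biexact relative to $\X_A$.

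With $M$ biexact relative to $A$ in hand, Proposition~\ref{prop:relative biexact intertwine} applied to the finite von Neumann subalgebra $B \subset pMp$ yields the dichotomy: either $B' \cap pMp = Q$ is amenable, or $B \preceq_M A$. The hypothesis of the theorem is precisely that $Br \preceq_M A$ for every nonzero $r \in B' \cap pMp$, so a priori the second alternative cannot be ruled out directly. Here the atomicity lemma enters: the intertwining $B \preceq_M A$, together with the deformation $\alpha_\theta$ and the assumption $Br \preceq_M A$ for all corners $r$, lets me run the contrapositive of Lemma~\ref{lem:convegeatomic2}. Since $B$ is diffuse, it is not completely atomic, so condition (\ref{item:convergeatomB3}) of that lemma must fail; but the intertwining hypothesis is exactly what produces the partial isometry $v$ with $\alpha_\theta(b) v = v b$, giving a contradiction unless $Q$ is amenable.

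The main obstacle, I expect, will be step two: establishing biexactness of $M$ relative to $A$ purely from $\pi^{\otimes k} \prec \lambda$. The difficulty is that weak containment of $\pi^{\otimes k}$ in the regular representation controls only the $k$-fold tensor, whereas the Fock space $\mathcal F_1(\mathcal H_{\mathbb C})$ involves all symmetric powers; I must show that the finitely many low-level summands $\bigoplus_{n < k} \mathcal H_{\mathbb C}^{\otimes n}$ are absorbed into the $A$-boundary piece $\X_A$ (they contribute compact-type corrections relative to $A$), while the tail $\bigoplus_{n \geq k}$ is handled by the weak containment. Verifying that $e_M V_n e_M \in \K_{\X_A}^L(M, L^2(\tilde M))$ requires carefully tracking how the deformation interacts with the crossed-product structure and the Jones projection $e_A$, and checking that $L^2(\tilde M, \varphi) \ominus L^2(M, \varphi)$ is weakly contained in the appropriate bimodule. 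Once these two weak-containment computations are pinned down, the rest follows formally from the cited results.
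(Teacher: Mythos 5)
Your proposal takes the wrong road, and the obstruction is structural, not merely technical. Theorem~\ref{thm:solidergodic} carries no exactness hypothesis on $\Gamma$, whereas biexactness of $M$ relative to any boundary piece forces $M \subset \bS_{\X}(M) \subset \B(L^2M)$ to be $M$-nuclear, hence $M$ weakly exact (Corollary~\ref{cor:characterization weakly exact}) and so $\Gamma$ exact; taking $\pi$ to be the (real) regular representation of a non-exact group satisfies $\pi \prec \lambda$ with $k=1$, so the theorem holds in cases where your step two is false. Indeed, in this paper relative biexactness of $A \rtimes \Gamma$ over $A$ comes from biexactness of $\Gamma$ together with amenability of $A$ (Proposition~\ref{prop:actiononamenable}) --- the hypothesis $\pi^{\otimes k} \prec \lambda$ plays no role in it, and it is invoked only in Corollary~\ref{cor:solidgaussian}, where $\Gamma$ is additionally assumed biexact, to handle the corner where $B \not\preceq_M A$. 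Your plan to verify relative biexactness via Theorem~\ref{thm:deformationbiexact} also fails at its hypotheses: when $\pi \not\prec \lambda$ (the interesting case, cf.\ the non-biexact examples closing Section~\ref{sec:solidnotbiexact}), no Fock level $n \geq 1$ of $L^2\tilde M \ominus L^2 M$ is weakly contained in the coarse bimodule, nor in the $A$-relative coarse bimodule $L^2M \ovt_A L^2M$; only the $k$-fold Connes tensor power $(L^2\tilde M \ominus L^2 M)^{\otimes_M k}$ of the \emph{whole} complement is coarse-contained, and that is a tensor power of the bimodule, not a decomposition into low levels (absorbed into $\X_A$) plus a tail.

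Even granting relative biexactness, your endgame is vacuous: Proposition~\ref{prop:relative biexact intertwine} gives ``$B' \cap pMp$ amenable or $B \preceq_M A$,'' and the second alternative is precisely the standing hypothesis, so the dichotomy yields nothing; and your salvage via Lemma~\ref{lem:convegeatomic2} is inverted. Since $B$ is diffuse, the partial-isometry condition of that lemma must \emph{fail} for some corner --- there is no $v$ with $\alpha_\theta(b)v = vb$ --- so the intertwining hypothesis produces no such $v$ and no contradiction, and nothing in this route outputs amenability of $B' \cap pMp$. The paper's actual proof uses the failure of the other equivalent conditions: it extracts $\theta_n \to 0$ and $u_n \in \mathcal U(B)$ with $\| E_M \circ \alpha_{\theta_n}(u_n) \|_2 \to 0$, sets $\xi_n = \alpha_{\theta_n}(u_n) - E_M \circ \alpha_{\theta_n}(u_n) \in L^2\tilde M \ominus L^2M$, which are asymptotically left and right tracial and asymptotically $B' \cap pMp$-central, and then takes the $k$-fold Connes tensor powers $\xi_n \otimes_M \cdots \otimes_M \xi_n \in (L^2 \tilde M \ominus L^2M)^{\otimes_M k}$; by Boutonnet's lemma this bimodule is weakly contained in $L^2M \ovt L^2M$ precisely because $\pi^{\otimes k} \prec \lambda$, and almost-central tracial vectors in a coarse-contained bimodule give amenability of $B' \cap pMp$. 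So the representation hypothesis enters only in that final weak-containment step, and no biexactness is used in the theorem at all; to repair your writeup you should discard the relative-biexactness scaffolding and run this central-vector argument directly.
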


\begin{proof}
Since $B$ is diffuse and $B r \preceq_M A$ for any nonzero projection $r \in B' \cap pMp$, it follows from Lemma~\ref{lem:convegeatomic2} that there exist sequences $\{ \theta_n \}_n \subset (0, \pi/2)$ with $\theta_n \to 0$ and $\{ u_n \}_n \subset \mathcal U(B)$ such that $\| E_M \circ \alpha_{\theta_n}( u_n) \|_2 \to 0$. Setting $\xi_n = \alpha_{\theta_n}(u_n) - E_M \circ \alpha_{\theta_n}( u_n)$, we then have that $\{ \xi_n \}_n \subset L^2\tilde M \ominus L^2M$ defines a sequence of asymptotically left and right tracial vectors that are also asymptotically $B'\cap pMp$-central. Taking the $k$-fold tensor product then gives a sequence $\{ \xi_n \otimes_M \cdots \otimes_M \xi_n \}_n \subset ( L^2 \tilde M \ominus L^2 M)^{\otimes_M k}$, which is also asymptotically left and right tracial and asymptotically $B$-central. 

Since $\pi^{\otimes k} \prec \lambda$, the Hilbert $M$-bimodule $( L^2 \tilde M \ominus L^2 M)^{\otimes_M k}$ is weakly contained in the coarse bimodule \cite[Lemma 3.3]{Bo12}, and it therefore follows that $B' \cap pMp$ is amenable. 
 \end{proof}

\begin{cor}\label{cor:solidgaussian}
Let $\Gamma$ be biexact and let $\pi: \Gamma \to \mathcal O(\mathcal H)$ be an orthogonal representation such that $\pi^{\otimes k} \prec \lambda$ for some $k \geq 1$, then $M_1(\mathcal H) \rtimes^{\sigma_\pi} \Gamma$ is solid.
\end{cor}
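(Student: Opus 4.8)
The plan is to combine the relative biexactness of $M := M_1(\mathcal H)\rtimes^{\sigma_\pi}\Gamma$ over its Gaussian subalgebra with the solid ergodicity of Theorem~\ref{thm:solidergodic}. Write $A_0 = M_1(\mathcal H)$; since $q=1$ this is abelian, hence amenable, and $M$ is finite (tracial). First I would record that $M$ is biexact relative to $A_0$: as $\Gamma$ is biexact and the base algebra $A_0$ is amenable, Proposition~\ref{prop:actiononamenable} gives that $M = A_0\rtimes\Gamma$ is biexact relative to $A_0$, i.e.\ relative to the boundary piece $\X_{A_0}$. This is the only place that biexactness of $\Gamma$ is used, while $\pi^{\otimes k}\prec\lambda$ enters only through Theorem~\ref{thm:solidergodic}.

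Now fix a diffuse von Neumann subalgebra $A\subset M$; the goal is to show that $A'\cap M$ is amenable. I would run a maximality argument on the amenable part of the relative commutant. Let $z\in\mathcal Z(A'\cap M)$ be the maximal central projection with $(A'\cap M)z$ amenable (this supremum exists, as amenability of summands is stable under the relevant suprema). It suffices to prove $z^\perp = 0$, so I suppose $z^\perp\neq 0$ and seek a nonzero amenable summand of $(A'\cap M)z^\perp$, contradicting maximality. Here $B := Az^\perp\subset z^\perp M z^\perp$ is diffuse and $B'\cap z^\perp M z^\perp = (A'\cap M)z^\perp$.

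I would split into two cases via Popa's intertwining dichotomy \cite[Theorem 2.1]{Po06B}. If $Ar\preceq_M A_0$ for every nonzero projection $r\in (A'\cap M)z^\perp$, then Theorem~\ref{thm:solidergodic} applied with $p = z^\perp$ and $B = Az^\perp$ shows that $(A'\cap M)z^\perp$ is amenable, a contradiction. Otherwise there is a nonzero projection $r\in (A'\cap M)z^\perp$ with $Ar\not\preceq_M A_0$. Viewing $Ar\subset rMr$ as a corner subalgebra, Popa's theorem together with Lemma~\ref{lem:wkconvergenceoverX} produces a net $\{u_i\}\subset\mathcal U(Ar)$ converging to $0$ over $\X_{A_0}$. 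Following the proof of Theorem~\ref{thm:biexact implies solid}, a point-ultraweak limit $\theta = \lim_i \Ad(u_i)$ gives a u.c.p.\ map that vanishes on $\K^{\infty,1}_{\X_{A_0}}(M)$, is $M'$-bimodular (hence carries $\bS_{\X_{A_0}}(M)$ into $M$), and restricts to the identity on $r(A'\cap M)r$. Since $M\subset\bS_{\X_{A_0}}(M)$ is $M$-nuclear and $\theta$ is continuous from the weak $M$-topology to the ultraweak topology, the inclusion $r(A'\cap M)r\subset M$ is weakly nuclear, so $r(A'\cap M)r$ is amenable; passing to the central support $c(r)\le z^\perp$ (a full corner preserves amenability) yields a nonzero amenable summand $(A'\cap M)c(r)$ with $c(r)\le z^\perp$, again contradicting maximality of $z$.

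Either branch is impossible, so $z^\perp = 0$ and $A'\cap M$ is amenable; as $A$ was an arbitrary diffuse subalgebra, $M$ is solid. The main obstacle is the bookkeeping in the partial-embedding case: Popa's dichotomy only yields a partial embedding $Ar\preceq_M A_0$, whereas Theorem~\ref{thm:solidergodic} requires embedding of \emph{every} corner, and the maximality reduction is precisely what lets the two tools meet. The delicate point to verify is that the deformation argument of Theorem~\ref{thm:biexact implies solid} goes through verbatim for the non-unital corner subalgebra $Ar\subset rMr$ — in particular that the net of corner unitaries (partial isometries of $M$ with support $r$) still converges to $0$ over $\X_{A_0}$ computed inside $M$, that the limiting $\tilde\varphi = \tau(\,\cdot\, r)$ is normal so Lemma~\ref{lem:wkconvergenceoverX} applies, and that $\theta$ restricts to the identity on $r(A'\cap M)r$.
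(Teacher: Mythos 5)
Your proof is correct, and it uses the same two engines as the paper's: Proposition~\ref{prop:actiononamenable} to get that $M$ is biexact relative to $A_0 = M_1(\mathcal H)$, Theorem~\ref{thm:solidergodic} on the part of the relative commutant where all corners of $A$ intertwine into $A_0$, and the relative-biexactness mechanism of Proposition~\ref{prop:relative biexact intertwine} on the complementary part. The difference is in the patching. The paper's proof is a one-liner: it invokes \cite[Proposition 1.1]{ChPe13} to extract the maximal projection $p \in B' \cap M$ with $Bpr \preceq_M A_0$ for all nonzero $r \in (Bp)' \cap pMp$, applies Theorem~\ref{thm:solidergodic} on $p$, and then cites Proposition~\ref{prop:relative biexact intertwine} directly on $p^\perp$. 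You instead run a maximality argument on the amenable central summand $z$ of $A' \cap M$ and, in the non-embedding branch, re-derive the content of Proposition~\ref{prop:relative biexact intertwine} by hand for the corner subalgebra $Ar \subset rMr$. This is more work, but it is honest work: the paper's citation of Proposition~\ref{prop:relative biexact intertwine} for the corner $Bp^\perp$ tacitly requires exactly the adaptations you flag, and they do go through. In particular, since $M$ is tracial, for $u_i \in \mathcal U(Ar)$ (partial isometries of $M$ with left and right support $r$) one has $\tau \circ \Ad(u_i) = \tau(\,\cdot\, r)$ \emph{exactly}, which is normal, and the Cauchy--Schwarz estimate in the proof of Lemma~\ref{lem:wkconvergenceoverX} uses only contractivity of the $u_i$, so it applies verbatim to these partial isometries; the limit map $\theta = \lim_i \Ad(u_i^*)$ is then $M'$-bimodular with $\theta(1) = r$, kills $\K^{\infty,1}_{\X_{A_0}}(M)$, maps $\bS_{\X_{A_0}}(M)$ into $rMr$, fixes $r(A'\cap M)r$ (which commutes with $Ar$ because $r \in A'\cap M$), and has normal restriction to $M$ by the Kadison inequality argument, so the weak nuclearity of $r(A'\cap M)r \subset M$ and hence its amenability (it is with expectation, $M$ being finite) follow as in Theorem~\ref{thm:biexact implies solid}; passing to the central support $c(r) \le z^\perp$ in $A'\cap M$ then contradicts maximality of $z$. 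So your route buys a self-contained verification of the corner case at the cost of length, while the paper's buys brevity by outsourcing the exhaustion to \cite{ChPe13}.
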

\begin{proof}
We let $B \subset M$ be a diffuse von Neumann subalgebra. Set $M = M_1(\mathcal H) \rtimes^{\sigma_\pi} \Gamma$ and $A = M_1(\mathcal H)$. We let $p \in B' \cap M$ denote the maximal projection so that $Bpr \preceq_M A$ for all nonzero projections $r \in (Bp)' \cap p M p$ \cite[Proposition 1.1]{ChPe13}. By Theorem~\ref{thm:solidergodic}, we have that $(B p)' \cap M$ is amenable. By Proposition~\ref{prop:actiononamenable}, $M$ is biexact relative to $A$, and so it follows from Proposition~\ref{prop:relative biexact intertwine} that $(Bp^\perp)' \cap M$ is also amenable. Hence, $B' \cap M$ is amenable. 
\end{proof}

The previous corollary should be contrasted with the following result, which together show that there are solid von Neumann algebras that are not biexact.

\begin{thm}
Let $\pi: \Gamma \to \mathcal O(\mathcal H)$ be an orthogonal representation such that $\pi \not\prec \lambda$, then $M = M_1(\mathcal H \ovt \ell^2 \mathbb N) \rtimes^{\sigma_{\pi \otimes 1}} \Gamma$ is not biexact relative to $L\Gamma$. 
\end{thm}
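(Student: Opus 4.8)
The plan is to argue by contradiction. Assuming $M$ is biexact relative to $\X_{L\Gamma}$, I would apply Theorem~\ref{thm:biexact implies AO} with $\X = \X_{L\Gamma}$ to conclude that the multiplication map
\[
v \colon M \odot M^{\rm op} \ni \sum_{i} a_i \otimes x_i \mapsto \sum_{i} a_i x_i + \K_{\X_{L\Gamma}}(M) \in {\rm C}^*(M, M^{\rm op}, \K_{\X_{L\Gamma}}(M))/\K_{\X_{L\Gamma}}(M)
\]
is continuous for the minimal tensor norm. The whole point is then to exhibit a single normal Hilbert $M$-$M$ bimodule $\mathcal H_\pi$, coming from the first Gaussian chaos of the deformation, whose associated $*$-representation $\rho$ of ${\rm C}^*(M,JMJ)$ factors through $v$ yet fails to be min-continuous, which is the desired contradiction. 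Here $\mathcal H_\pi$ is the $M$-$M$ bimodule attached to the first Fock level $(\mathcal H \ovt \ell^2\mathbb N)_{\mathbb C}$, on which the Koopman representation acts by $\pi_{\mathbb C} \otimes 1$. The infinite multiplicity built in through the factor $\ell^2\mathbb N$ (absent in Corollary~\ref{cor:solidgaussian}) is exactly what I expect to make this bimodule sit in the right position relative to both $L^2M$ and its $L\Gamma$-amplification.

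\textbf{The three properties of $\mathcal H_\pi$.} I would isolate the following three facts, which together close the argument. First, $\mathcal H_\pi \prec L^2M$; this is needed merely so that the bimodule actions of $M$ and $JMJ$ define an honest representation $\rho$ of the concrete ${\rm C}^*$-algebra ${\rm C}^*(M,JMJ)$. Second, $\mathcal H_\pi$ is disjoint from the basic-construction bimodule $L^2M \otimes_{L\Gamma} L^2M$; mirroring the normality argument in the proof of Theorem~\ref{thm:weakcontainultrapower}, this disjointness forces $\K_{\X_{L\Gamma}}(M) \cap {\rm C}^*(M,JMJ) \subset \ker \rho$, so that $\rho$ descends through $v$. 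Third, $\mathcal H_\pi \not\prec L^2M \ovt L^2M$, i.e.\ $\rho$ is not min-continuous. Granting these, if $M$ were biexact relative to $L\Gamma$ then min-continuity of $v$ together with $\rho = (\text{factorization}) \circ v$ would give $\mathcal H_\pi \prec L^2M \ovt L^2M$, contradicting the third property.

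\textbf{Carrying out the properties.} For the third property I would restrict everything to $L\Gamma$-$L\Gamma$ bimodules: there $\mathcal H_\pi$ is precisely the bimodule associated to the representation $\pi_{\mathbb C} \otimes 1$, while the restriction of the coarse $M$-bimodule $L^2M \ovt L^2M$ has central representation weakly contained in $\lambda$ (by Fell absorption). Hence $\mathcal H_\pi \prec L^2M \ovt L^2M$ would entail $\pi_{\mathbb C}\otimes 1 \prec \lambda$, that is $\pi \prec \lambda$, which is excluded by hypothesis; this is the only place $\pi \not\prec \lambda$ is used. For the first and second properties I would work with the malleable deformation $\alpha_\theta$ on $\tilde M = M_1((\mathcal H\ovt\ell^2\mathbb N)\oplus(\mathcal H\ovt\ell^2\mathbb N)) \rtimes \Gamma$, using the explicit formula $e_{M}V_\theta^{\mathcal F} e_{M} = \sum_{n\geq 0}\cos^n\theta\, P_n$ from \eqref{eq:deformationnorm}: the lowest piece of $L^2\tilde M \ominus L^2M$ realizes $\mathcal H_\pi$ weakly inside $L^2M$ (first property), and the fact that the Gaussian directions are orthogonal to $\ell^2\Gamma = L^2(L\Gamma)$, amplified by the $\ell^2\mathbb N$ multiplicity, is what I expect to yield disjointness from $L^2M \otimes_{L\Gamma} L^2M$ (second property), analogous to \cite[Lemma 3.3]{Bo12}.

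\textbf{Main obstacle.} The representation-theoretic input (the third property) is clean and is the heart of \emph{why} the statement is true, but I expect the genuine technical work to be the first and second properties, since they are statements about $M$-$M$ bimodules rather than the much easier $L\Gamma$-$L\Gamma$ bimodules. The first Gaussian chaos is not a literal $M$-subbimodule of $L^2M$ (left multiplication by $A = M_1(\mathcal H\ovt\ell^2\mathbb N)$ raises the chaos degree), so $\mathcal H_\pi$ must be handled abstractly through the deformation, and verifying both $\mathcal H_\pi \prec L^2 M$ and the disjointness of $\mathcal H_\pi$ from $L^2M \otimes_{L\Gamma} L^2M$ is where the infinite multiplicity is indispensable and where the argument is most delicate. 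An alternative, possibly more robust, route to these two properties is to pass to the Ocneanu ultrapower and use the embedding $L^2(M^{\mathcal U}) \ominus L^2 M$ as in the treatment of $M_q(\mathcal H)$ preceding Theorem~\ref{thm:weakcontainultrapower}, which automatically supplies the weak containment in $L^2 M$; I would fall back on this if the direct deformation computation becomes unwieldy.
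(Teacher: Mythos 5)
You have correctly identified the heart of the matter, and it is the same as the paper's: argue by contradiction through Theorem~\ref{thm:biexact implies AO}, and detect the failure of min-continuity on the first chaos, where $u_t J u_t J$ acts by $\pi(t) \otimes 1$ while Fell absorption gives $\| \sum_t \alpha_t u_t \otimes J u_t J \|_{\min} = \| \sum_t \alpha_t \lambda_t \|$; your third property, and its proof via restriction to $L\Gamma$-$L\Gamma$ bimodules, is exactly the paper's key inequality. However, the two steps you defer contain genuine gaps. The more serious one is the bridge in your second property: the implication ``$\mathcal H_\pi$ disjoint from $L^2M \otimes_{L\Gamma} L^2M$ $\implies$ $\K_{\X_{L\Gamma}}(M) \cap {\rm C}^*(M, JMJ) \subset \ker \rho$'' does not follow by mirroring Theorem~\ref{thm:weakcontainultrapower}. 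In that theorem, passing from disjointness to killing the ideal uses the rigidity of the compacts: every representation of $\K(L^2M)$ is a multiple of the identity, so a representation not killing $\K(L^2M) \cap {\rm C}^*(M, M^{\rm op})$ contains a copy of $L^2M$. The relative algebra $\K_{\X_{L\Gamma}}(M)$ has no such rigidity, a nonzero representation of the ideal $I = \K_{\X_{L\Gamma}}(M) \cap {\rm C}^*(M,JMJ)$ need not be quasi-contained in its identity representation on the basic construction, and since $\X_{L\Gamma} \not\subset {\rm C}^*(M,JMJ)$ you cannot extend $\rho$ to see $e_{L\Gamma}$ directly. What actually works is a state-level argument: a state normal on $M$ and $JMJ$ that vanishes on $\X_{L\Gamma}$ vanishes on $\K^{\infty,1}_{\X_{L\Gamma}}(M)$ (by the $r_\omega$-closure and binormality), hence on $\K_{\X_{L\Gamma}}(M)$, using $\K_{\X}^{\infty,1}(M)_+ = \K_\X(M)_+$.

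The second gap is that $\mathcal H_\pi$ is never constructed, and your two candidate routes both mislocate the difficulty. No malleable deformation is needed: the paper simply takes the vector states $T \mapsto \langle T (\eta \otimes \delta_n \otimes \delta_e), \eta \otimes \delta_n \otimes \delta_e \rangle$ at first-chaos vectors and passes to a limit $\varphi$ along a nonprincipal ultrafilter in the multiplicity direction $n$. Since $\delta_n \to 0$ weakly, $\varphi$ vanishes on $\K(\mathcal F(\mathcal H \ovt \ell^2\mathbb N)) \otimes \B(\ell^2 \Gamma) \supset \X_{L\Gamma}$, while $\varphi_{|M}$ and $\varphi_{|JMJ}$ are normal and $\varphi(x^*x) = \| \pi(a) \eta \|^2 > \| \lambda(a) \|^2$ for $x = \sum_t \alpha_t u_t J u_t J$, which contradicts Theorem~\ref{thm:biexact implies AO} with no bimodule machinery at all. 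Note in particular that at any fixed level $n$ the vector state does \emph{not} vanish on the boundary piece (compacts in the Fock leg pair nontrivially with $\eta \otimes \delta_n$), so the ``disjointness'' you hope for cannot be seen on the literal first chaos; the infinite multiplicity $\ell^2 \mathbb N$ is used precisely to push the vectors weakly to zero in the Fock direction while the $\Gamma$-content $\| \pi(a)\eta \|$ stays fixed. Your ultrapower fallback is essentially this same device in different clothing (the Wick words $s(\eta \otimes \delta_n)$ give $u \in M^{\mathcal U}$ with $E_M(u) = 0$), but it too would still require the explicit vanishing computation on $\X_{L\Gamma}$, not a disjointness assertion. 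I recommend replacing the bimodule scaffolding by the direct state argument, which discharges your first and second properties simultaneously and removes any role for $\alpha_\theta$ or \eqref{eq:deformationnorm}.
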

\begin{proof}
The proof is similar to the proof of Theorem~\ref{thm:weakcontainultrapower}. 
Note that, for $t \in \Gamma$, we have $u_t J u_t J (\xi \otimes \delta_n \otimes \delta_e ) = \pi(t)\xi \otimes \delta_n \otimes \delta_e$, and since $\pi \not\prec \lambda$ there then exists a unit vector $\eta \in \mathcal H$ and a finitely supported function $a = \sum_{t \in \Gamma} \alpha_t t \in \mathbb C \Gamma$ so that 
\[
\| ( \sum_{t \in \Gamma} \alpha_t u_t J u_t J ) ( \eta \otimes \delta_n \otimes \delta_e  ) \| 
> \| \sum_{t \in \Gamma} \alpha_t \lambda_t \| 
= \| \sum_{t \in \Gamma} \alpha_t u_t \otimes J u_t J \|.
\]

We fix a nonprincipal ultrafilter $\mathcal U$ on $\mathbb N$ and define a state $\varphi$ on $\B(L^2 M)$ by 
\[
\varphi(T) = \lim_{n \to \mathcal U} \langle T (\eta \otimes \delta_n \otimes \delta_e), (\eta \otimes \delta_n \otimes \delta_e) \rangle.
\] 
It is then easy to see that $\varphi_{|M}$ and $\varphi_{|JMJ}$ are both normal, and $\varphi(T) = 0$ for any $T \in \K( \mathcal F(\mathcal H \ovt \ell^2 \mathbb N ) ) \otimes \B(\ell^2 \Gamma)$. Hence, we have $\varphi_{|  \K_{\X_{L\Gamma}}^{\infty, 1}(M)} = 0$, and so $\varphi$ defines a state on $C^*(M, JMJ, \K_{\X_{L\Gamma}}(M))/ \K_{\X_{L\Gamma}}(M)$. If $x = \sum_{t \in \Gamma} \alpha_t u_t J u_t J$, then from above we have 
\[
\| x \|^2 \geq \varphi( x^* x ) > \| \sum_{t \in \Gamma} \alpha_t u_t \otimes J u_t J \|^2,
\]
so that the map 
\[
C^*_\lambda \Gamma \odot J C^*_\lambda J \ni \sum_{i = 1}^m b_i \otimes c_i \mapsto \sum_{i = 1}^m b_i c_i + \K_{\X_{L\Gamma}}(M) \in C^*(M, JMJ, \K_{\X_{L\Gamma}}(M))/ \K_{\X_{L\Gamma}}(M)
\] 
is not min-continuous and hence $M$ is not biexact relative to $L\Gamma$ by Theorem~\ref{thm:biexact implies AO}.
\end{proof}

\subsection{Strong Solidity of $q$-Gaussian von Neumann algebras for infinite variables}

If $M$ and $N$ are tracial von Neumann algebras and $\mathcal H$ is a normal Hilbert $M$-$N$ bimodule, then a vector $\xi \in \mathcal H$ is left-bounded if the map $L_\xi: N \to \mathcal H$ defined by $L_\xi(x) = \xi x$ is bounded when $N$ is endowed with the norm $\| \cdot \|_2$. We may then view $L_\xi$ as an operator in $\B(L^2N, \mathcal H)$. We let $\mathcal H^0$ denote the space of left-bounded vectors. Given two left-bounded vectors $\xi, \eta \in \mathcal H$ we may check that $L_\eta^* L_\xi \in JNJ' \cap \B(L^2N) = N$. If $Q$ is another tracial von Neumann algebra and $\mathcal K$ is a normal Hilbert $N$-$Q$ bimodule, then we have a non-negative definite sequilinear form on $\mathcal H^0 \otimes_{\rm alg} \mathcal K$, satisfying
\[
\langle \xi \otimes \eta, \xi' \otimes \eta' \rangle
= \langle ( L_{\xi'}^* L_\xi  ) \eta, \eta' \rangle
\]
for $\xi, \xi \in \mathcal H^0$ and $\eta, \eta' \in \mathcal K$. The Connes fusion on $\mathcal H$ and $\mathcal K$ over $N$ is the separation and completion of $\mathcal H^0 \otimes_{\rm alg} \mathcal K$ with respect to this sesquilinear form, and is denoted by $\mathcal H \oovt{N} \mathcal K$ (see \cite{Po86}). We denote by $\xi \otimes_N \eta$ the image of $\xi \otimes \eta$ in $\mathcal H \oovt{N} \mathcal K$. This is, then, a normal Hilbert $M$-$Q$ bimodule satisfying 
\[
a (\xi \otimes_N \eta) b = (a \xi) \otimes_N (\eta b) 
\]
for $a \in M$, $b \in Q$, $\xi \in \mathcal H^0$ and $\eta \in \mathcal K$. 

Note that if $\mathcal H$ is a real Hilbert space and $-1 \leq q < 1$, then each vector $\xi \in \mathcal F(\mathcal H)$ defines a left-bounded vector in the trivial $M_q(\mathcal H)$ bimodule $L^2( M_q(\mathcal H) ) \cong \mathcal F_q( \mathcal H_{\mathbb C})$, and in this case we have $L_\xi \hat x = J x^* J \xi  = s(\xi) \hat{x}$ so that $L_\xi = s(\xi)$. If $B \subset M_q(\mathcal H)$ is a von Neumann subalgebra and we view $\mathcal F(\mathcal H)$ as a left Hilbert $B$ module, then we have $L_\eta^* L_\xi = E_B( s(\eta)^*s(\xi))$ for all $\xi, \eta \in \mathcal F(\mathcal H)$.

\begin{prop}\label{prop:grading}
Let $-1 \leq q < 1$ and let $\mathcal H_1, \mathcal H_2$, and $\mathcal K$ be real Hilbert spaces, then we have an $M_q(\mathcal H_1 \oplus \mathcal K)$-$M_q(\mathcal K \oplus \mathcal H_2)$ bimodular isometry 
\[
\Xi:  L^2( M_q( \mathcal H_1 \oplus \mathcal K)) \oovt{M_q(\mathcal K)} L^2( M_q(\mathcal K \oplus \mathcal H_2) ) \to L^2 (M_q( \mathcal H_1 \oplus \mathcal K \oplus \mathcal H_2) )
\] 
satisfying 
\[
\Xi( \xi \otimes_{M_q(\mathcal K)} \eta) = s(\xi \oplus 0) s(0 \oplus \eta) \Omega
\]
for all $\xi \in \mathcal F(\mathcal H_1 \oplus \mathcal K)$ and $\eta \in \mathcal F(\mathcal K \oplus \mathcal H_2)$. 
\end{prop}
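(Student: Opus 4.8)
The plan is to establish the three required properties of $\Xi$---well-definedness, isometry, and bimodularity---by reducing every inner product and module computation to the conditional expectation $E_N$, using the observation preceding the proposition that $L_\eta^* L_\xi = E_N(s(\eta)^* s(\xi))$. Write $N = M_q(\mathcal K)$, $M = M_q(\mathcal H_1 \oplus \mathcal K)$, $P = M_q(\mathcal K \oplus \mathcal H_2)$, and $Q = M_q(\mathcal H_1 \oplus \mathcal K \oplus \mathcal H_2)$, with the trace-preserving inclusions $M,P \subset Q$ and $N \subset M$, $N \subset P$ induced by the isometric inclusions of the underlying real Hilbert spaces. Denote by $\widehat{\,\cdot\,}$ and $\check{\,\cdot\,}$ the induced isometric inclusions of $\mathcal F(\mathcal H_1 \oplus \mathcal K)$ and of $L^2 P$ into $L^2 Q = \mathcal F_q((\mathcal H_1 \oplus \mathcal K \oplus \mathcal H_2)_{\mathbb C})$. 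Since for $q < 1$ every $\xi \in \mathcal F(\mathcal H_1 \oplus \mathcal K)$ gives a bounded Wick operator $s(\xi) \in M$, such $\xi$ are left-bounded vectors in the $M$-$N$ bimodule $L^2 M$ and have dense linear span there; so the spanning vectors $\xi \otimes_N \eta$ with $\xi \in \mathcal F(\mathcal H_1 \oplus \mathcal K)$ and $\eta \in L^2 P$ are dense in the Connes fusion. It therefore suffices to set $\Xi(\xi \otimes_N \eta) = s(\widehat\xi)\,\check\eta = s(\widehat\xi)s(\check\eta)\Omega$ on this dense subspace, verify the inner-product and module identities, and extend by continuity and normality.

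The essential input, which I will isolate as a lemma, is the commuting-square identity $E_P|_M = E_N$, equivalently $e_P|_{L^2 M} = e_N$ for the Jones projections. I would prove this directly on Fock space: given a simple tensor $\zeta = \xi_1 \otimes \cdots \otimes \xi_n \in L^2 M$ with $\xi_j \in (\mathcal H_1 \oplus \mathcal K)_{\mathbb C}$, write $\xi_j = a_j + k_j$ with $a_j \in (\mathcal H_1)_{\mathbb C}$ and $k_j \in \mathcal K_{\mathbb C}$. For any tensor $\omega$ with entries in $(\mathcal K \oplus \mathcal H_2)_{\mathbb C}$, each factor $\langle \xi_j, \omega_{\sigma(j)}\rangle$ appearing in the $q$-inner product $\langle \zeta, \omega\rangle_q$ equals $\langle k_j, \omega_{\sigma(j)}\rangle$, because $\mathcal H_1 \perp \mathcal K \oplus \mathcal H_2$; hence $\langle \zeta, \omega\rangle_q = \langle k_1 \otimes \cdots \otimes k_n, \omega\rangle_q$. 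As $k_1 \otimes \cdots \otimes k_n$ already lies in both $\mathcal F_q(\mathcal K_{\mathbb C})$ and $\mathcal F_q((\mathcal K \oplus \mathcal H_2)_{\mathbb C})$, this forces $e_P \zeta = k_1 \otimes \cdots \otimes k_n = e_N \zeta$. Applying this to $\zeta = m\Omega$ for $m \in M$ and using that $\Omega$ is separating for $Q$ yields $E_P(m) = E_N(m)$.

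With the lemma available the isometry is immediate. For $\xi, \xi' \in \mathcal F(\mathcal H_1 \oplus \mathcal K)$ and $\eta, \eta' \in L^2 P$, and since $s(\widehat{\xi'})^* s(\widehat\xi) \in M \subset Q$, the defining property of the Jones projection $e_P$ gives
\[
\langle \Xi(\xi \otimes_N \eta), \Xi(\xi' \otimes_N \eta')\rangle_Q = \langle s(\widehat{\xi'})^* s(\widehat\xi)\,\check\eta, \check{\eta'}\rangle_Q = \langle E_P\big(s(\widehat{\xi'})^* s(\widehat\xi)\big)\,\eta, \eta'\rangle_{L^2 P}.
\]
Because $s(\widehat{\xi'})^* s(\widehat\xi)$ is the image in $Q$ of $s(\xi')^* s(\xi) \in M$, the lemma rewrites the right-hand side as $\langle E_N(s(\xi')^* s(\xi))\,\eta, \eta'\rangle_{L^2 P}$, and the observation preceding the proposition identifies $E_N(s(\xi')^* s(\xi)) = L_{\xi'}^* L_\xi$. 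Thus the expression equals $\langle L_{\xi'}^* L_\xi\,\eta, \eta'\rangle = \langle \xi \otimes_N \eta, \xi' \otimes_N \eta'\rangle$, the Connes fusion inner product. Hence $\Xi$ preserves inner products on its dense domain, so it kills the null space and extends to a well-defined isometry.

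Finally, for bimodularity I would verify the module identities on generators $m = s(\zeta)$ with $\zeta \in \mathcal F(\mathcal H_1 \oplus \mathcal K)$ and $p = s(\theta)$ with $\theta \in \mathcal F(\mathcal K \oplus \mathcal H_2)$, then extend by normality and density. Applying (\ref{eq:bimodular}) in $Q$ gives $s(s(\widehat\zeta)\widehat\xi)\Omega = s(\widehat\zeta)s(\widehat\xi)\Omega$, and since $\Omega$ is separating for $Q$ this upgrades to the operator identity $s(s(\widehat\zeta)\widehat\xi) = s(\widehat\zeta)s(\widehat\xi)$; as $s(\zeta)\xi$ embeds to the vector $s(\widehat\zeta)\widehat\xi$, this yields $\Xi\big((s(\zeta)\xi) \otimes_N \eta\big) = s(\widehat\zeta)s(\widehat\xi)\,\check\eta = m\,\Xi(\xi \otimes_N \eta)$. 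The right action is handled symmetrically: the right-hand identity in (\ref{eq:bimodular}) together with $\Omega$ separating gives $s(\check\eta \cdot \check p) = J\check p^* J\,s(\check\eta)$, and since $J\check p^* J$ commutes with the left Wick operator $s(\widehat\xi)$ one obtains $\Xi(\xi \otimes_N (\eta \cdot p)) = \Xi(\xi \otimes_N \eta)\cdot p$. The main obstacle is genuinely the commuting-square lemma $E_P|_M = E_N$, as it is the only step where the $q$-deformation could obstruct the argument; it is resolved cleanly by the orthogonality $\mathcal H_1 \perp \mathcal K \oplus \mathcal H_2$, after which the remaining steps are bookkeeping with the embeddings $\widehat{\,\cdot\,}$, $\check{\,\cdot\,}$ and the fact that $\Omega$ is separating.
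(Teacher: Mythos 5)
Your proof is correct and follows essentially the same route as the paper's: define $\Xi$ on elementary tensors by $\xi \otimes_{M_q(\mathcal K)} \eta \mapsto s(\xi \oplus 0)s(0 \oplus \eta)\Omega$, compute the inner product by inserting a conditional expectation onto $N = M_q(\mathcal K)$, identify $E_N(s(\xi')^*s(\xi)) = L_{\xi'}^*L_\xi$ with the Connes fusion inner product, and read bimodularity off (\ref{eq:bimodular}). The step the paper leaves implicit in its second displayed equality --- that pairing an element of $M = M_q(\mathcal H_1 \oplus \mathcal K)$ against vectors of $L^2(M_q(\mathcal K \oplus \mathcal H_2))$ only sees its expectation onto $N$ --- is precisely your commuting-square lemma $E_P|_M = E_N$, and your Fock-space proof of it (each factor $\langle \xi_j, \omega_{\sigma(j)}\rangle$ in the $q$-inner product kills the $(\mathcal H_1)_{\mathbb C}$-components by orthogonality) is correct; isolating it explicitly is a worthwhile clarification of the paper's two-line display.

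There is one local slip, in the right-bimodularity step: the asserted \emph{operator} identity $s(\check\eta\,\check p) = J\check p^*J\,s(\check\eta)$ is false in general, and it cannot be obtained from $\Omega$ being separating. The left-hand side lies in $Q = M_q(\mathcal H_1 \oplus \mathcal K \oplus \mathcal H_2)$, while the right-hand side does not (it is a product of an element of $JQJ = Q'$ with an element of $Q$), and the separating property of $\Omega$ only lets you compare two elements of $Q$. Concretely, taking $\eta = \Omega$ and $p = s(\theta)$ for a unit vector $\theta \in \mathcal K \oplus \mathcal H_2$, the left side is $s(\check\theta) \in Q$ while the right side is $Js(\check\theta)J \in Q'$; the latter commutes with all of $Q$ whereas the former is not central, so they differ for every $-1 \leq q < 1$. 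This contrasts with your left-bimodularity step, where both $s(s(\widehat\zeta)\widehat\xi)$ and $s(\widehat\zeta)s(\widehat\xi)$ do lie in $Q$, so the separating argument is legitimate there. Fortunately, your computation never actually uses the operator identity: all you need is the vector-level statement that the image of $\eta \cdot p$ in $L^2Q$ equals $J\check p^*J\check\eta$, i.e., that the inclusion $L^2P \subset L^2Q$ is a right $P$-module map, which follows from $e_P J_Q = J_P e_P$ (or directly from (\ref{eq:bimodular}) applied in $Q$). With the lemma restated at the level of vectors, your chain $\Xi(\xi \otimes_N (\eta p)) = s(\widehat\xi)\,J\check p^*J\check\eta = J\check p^*J\,s(\widehat\xi)\check\eta = \Xi(\xi \otimes_N \eta)\,p$ goes through verbatim, and the proof stands.
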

\begin{proof}
We define the real-linear map 
\[
\Xi_0: \mathcal F(\mathcal H_1 \oplus \mathcal K) \otimes_{\rm alg} \mathcal F(\mathcal K \oplus \mathcal H_2) \to L^2 (M_q( \mathcal H_1 \oplus \mathcal K \oplus \mathcal H_2) )
\] 
by setting $\Xi_0( \xi \otimes \eta ) = s(\xi \oplus 0 ) s(0 \oplus \eta)  \Omega$ for $\xi \in \mathcal F(\mathcal H_1 \oplus \mathcal K)$ and $\eta \in \mathcal F(\mathcal K \oplus \mathcal H_2)$. Using that $s(\eta) \Omega = J s(\eta^*) J \Omega$, we then check that 
\begin{align}
\langle \Xi_0( \xi_1 \otimes \eta_1 ), \Xi_0( \xi_2 \otimes \eta_2) \rangle
&= \langle s(\xi_2^* \oplus 0) s(\xi_1 \oplus 0) s(0 \oplus \eta_1)\Omega, s(0 \oplus \eta_2) \Omega \rangle \nonumber \\
&= \langle E_{M_q(0 \oplus \mathcal K \oplus 0)}( s(\xi_2^* \oplus 0) s(\xi_1 \oplus 0) ) s(0 \oplus \eta_1)\Omega, s(0 \oplus \eta_2) \Omega \rangle \nonumber \\
&= \langle \xi_1 \otimes \eta_1, \xi_2 \otimes \eta_2 \rangle, \nonumber
\end{align}
where the last inner-product is taken in ${L^2( M_q( \mathcal H_1 \oplus \mathcal K)) \oovt{M_q(\mathcal K)} L^2( M_q(\mathcal K \oplus \mathcal H_2) ) }$.
It then follows that $\Xi$ is a well-defined isometry, and it is easy to see that $\Xi$ is $M_q(\mathcal H_1 \oplus \mathcal K)$-$M_q(\mathcal K \oplus \mathcal H_2)$ bimodular from (\ref{eq:bimodular}). 
\end{proof}

We remark that, when $q = 1$, the above proposition still holds and is easy to check, although in this case $\xi \in \mathcal F(\mathcal H_1 \oplus \mathcal K)$ does not define a left-bounded vector, and so one has to properly interpret the vector $\xi \otimes_{M_1(\mathcal K)} \eta$. 

\begin{thm}\label{thm:everysubalgebraproperlyp}
Let $\mathcal H$ be a real Hilbert space and $-1 \leq q \leq 1$. If $p \in \mathcal P(M_q(\mathcal H))$ is a nonzero projection and $P \subset p M_q(\mathcal H) p$ is a von Neumann subalgebra with no amenable direct summand, then $P$ is properly proximal. 
\end{thm}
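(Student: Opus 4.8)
The plan is to reduce to the finite-dimensional case—where $M_q(\mathcal H_0)$ is already biexact—using the malleable deformation $\alpha_\theta$ on $\tilde M = M_q(\mathcal H \oplus \mathcal H)$ together with an intertwining dichotomy. The boundary cases are immediate: when $q = 1$ the algebra is abelian and when $q = -1$ it is hyperfinite, so in either case a subalgebra with no amenable summand is trivial; and if $\dim \mathcal H < \infty$ then $M_q(\mathcal H)$ is biexact and the corollary following Proposition~\ref{prop:subalgebra} already applies. So I may assume $-1 < q < 1$ and $\dim \mathcal H = \infty$. Write $M = M_q(\mathcal H)$ and let $\mathcal F$ be the family of finite-dimensional $q$-Gaussian subalgebras $M_q(\mathcal H_0)$, $\dim \mathcal H_0 < \infty$, each with its trace-preserving expectation.

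The central step is to show that $M$ is biexact relative to $\X_{\mathcal F}$. The deformation $\alpha_\theta$ has the number operator as generator, so $e_M V_\theta^{\mathcal F} e_M = \sum_{n \geq 0} \cos^n\theta\, P_n$ is the associated resolvent, as computed in (\ref{eq:deformationnorm}). Although the off-diagonal bimodule $L^2(\tilde M) \ominus L^2 M$ is not weakly contained in the coarse bimodule—this failure is exactly what prevents $M$ from being biexact—I would use the Connes fusion identification of Proposition~\ref{prop:grading} to show that it is weakly contained in $\bigoplus_{\dim \mathcal H_0 < \infty} L^2 M \oovt{M_q(\mathcal H_0)} L^2 M$, i.e. that it is coarse relative to the finite-dimensional subalgebras, and correspondingly that $e_M V_\theta^{\mathcal F} e_M \in \K_{\X_{\mathcal F}}^L(M, L^2 \tilde M)$ for every $\theta$. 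Feeding this into Theorem~\ref{thm:deformationbiexact}—in the relative form indicated in the Remark following it, with $\X_{\mathcal F}$ in place of $\K(L^2 M)$—then yields that $M$ is biexact relative to $\X_{\mathcal F}$.

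With this in hand, I would run the intertwining dichotomy. Given $P \subset pMp$ with no amenable summand, a maximal-projection argument (as in the proof of Corollary~\ref{cor:solidgaussian}, via \cite[Proposition 1.1]{ChPe13}) produces a projection $z \in \mathcal P(P' \cap pMp)$ so that $Pz$ intertwines into some $M_q(\mathcal H_0) \in \mathcal F$ on every corner, while $Pz^\perp r \not\preceq_M M_q(\mathcal H_0)$ for every finite-dimensional $\mathcal H_0$ and every nonzero corner $r$. On the $z^\perp$ piece, the relative proper proximality that biexactness relative to $\X_{\mathcal F}$ entails descends to non-intertwining subalgebras—the proper-proximality analog of Proposition~\ref{prop:relative biexact intertwine}, proved by the same boundary-convergence argument using Lemma~\ref{lem:wkconvergenceoverX}—so $Pz^\perp$ is properly proximal. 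On the $z$ piece, the intertwining homomorphism carries a full corner of $Pz$ isomorphically onto a subalgebra of the biexact algebra $M_q(\mathcal H_0)$; this image has no amenable summand (else $P$ would), so it is properly proximal by the corollary following Proposition~\ref{prop:subalgebra}, and transporting proper proximality back across the partial isometry and along amplifications shows $Pz$ is properly proximal. As proper proximality is determined on central summands, $P$ is properly proximal.

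The main obstacle is the central step: establishing the relative coarseness $L^2(\tilde M) \ominus L^2 M \prec \bigoplus_{\dim \mathcal H_0 < \infty} L^2 M \oovt{M_q(\mathcal H_0)} L^2 M$ together with the membership $e_M V_\theta^{\mathcal F} e_M \in \K_{\X_{\mathcal F}}^L(M, L^2 \tilde M)$. This is precisely the point where the non-biexactness of $M$ must be confined to the finite-dimensional subalgebras; it requires a careful analysis of the $q$-deformed Fock space via the grading of Proposition~\ref{prop:grading}, controlling how the number operator resolvent decomposes over the Connes fusions $L^2 M \oovt{M_q(\mathcal H_0)} L^2 M$. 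A secondary technical point is formulating the relative proper-proximality descent in exactly the form needed and verifying that proper proximality transfers across the intertwining bimodule, but these should follow the established patterns in \cite{DKEP22, BIP21}.
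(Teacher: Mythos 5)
Your argument has a genuine gap at exactly the step you flag as central, and it is not merely an unverified claim: the key membership is false. Write $R_\theta = e_M V_\theta^{\mathcal F} e_M = \sum_{n \geq 0} \cos^n\theta\, P_n$ as in (\ref{eq:deformationnorm}), where $P_n$ is the projection onto $\mathcal H_{\mathbb C}^{\otimes n}$. Since $R_\theta \geq 0$, the hypothesis $e_M V_\theta^{\mathcal F} e_M \in \K^L_{\X_{\mathcal F}}(M, L^2\tilde M)$ of Theorem~\ref{thm:deformationbiexact} is equivalent to $R_\theta \in \K^{\infty,1}_{\X_{\mathcal F}}(M)$, and by the positive-cone identity $\K^{\infty,1}_{\X}(M)_+ = \K_{\X}(M)_+$ together with hereditarity of $\K_{\X_{\mathcal F}}(M)$ and $0 \leq \cos\theta\, P_1 \leq R_\theta$, it would force $P_1 \in \K^{\infty,1}_{\X_{\mathcal F}}(M)$. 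This fails when $\dim \mathcal H = \infty$: take an orthonormal sequence $\{\xi_n\} \subset \mathcal H$, let $\mu_q$ denote the (common) distribution of $s(\xi_n)$, choose a unimodular $g$ with $\int g \, d\mu_q = 0$ and $c := \int \overline{g(t)}\, t \, d\mu_q(t) \neq 0$ (e.g.\ $g = \operatorname{sgn}(t - m)$ with $m$ the median, so $c = \int |t - m| \, d\mu_q > 0$), and set $u_n = g(s(\xi_n)) \in \mathcal U(M)$. For each fixed finite-dimensional $\mathcal H_0$ and $a, b \in M$, a Wick-word expansion (after a polynomial approximation of $g$ in $L^2(\mu_q)$, uniform in $n$) gives $E_{M_q(\mathcal H_0)}(a u_n b) \to 0$ in $\| \cdot \|_2$, since every term either pairs a leg $\xi_n$ against a fixed finite set of vectors, or is the degree-zero term carrying the factor $\tau(u_n) = 0$. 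Hence $\Ad(u_n)(T) \to 0$ ultraweakly for each $T \in \X_{\mathcal F}$, and as $\tau \circ \Ad(u_n) = \tau$, Lemma~\ref{lem:wkconvergenceoverX} shows $\{u_n\}$ converges to $0$ over $\X_{\mathcal F}$, so $\Ad(u_n)(S) \to 0$ ultraweakly for every $S \in \K^{\infty,1}_{\X_{\mathcal F}}(M)$. Yet $\langle u_n P_1 u_n^* \Omega, \Omega \rangle = \| P_1 \widehat{u_n^*} \|_2^2 \geq |c|^2 > 0$ for all $n$, since the one-particle component of $\widehat{u_n^*}$ along $\xi_n$ is $\int \overline{g(t)}\, t \, d\mu_q(t)$. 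So $P_1 \notin \K^{\infty,1}_{\X_{\mathcal F}}(M)$ and the route through Theorem~\ref{thm:deformationbiexact} is closed, for every $\theta$. (A sanity check that the deformation is the wrong tool at infinite dimensions: already for $q = 0$ one has $P_1 \notin \K^{\infty,1}(M)$ by Ozawa's criterion from \cite{Oza10}, even though $M_0(\mathcal H)$ \emph{is} biexact.) Your companion claim of relative coarseness, $L^2 \tilde M \ominus L^2 M \prec \bigoplus_{\mathcal H_0} L^2M \oovt{M_q(\mathcal H_0)} L^2M$, is likewise unsupported by the available input: \cite[Proposition 4.1]{Av11} only makes the high-degree part $\oplus_{m \geq 2k} \mathcal L_m$ of the grading coarse as an $M_q(\mathcal H_0)$-bimodule, with threshold $k$ depending on $\dim \mathcal H_0$, so there is no uniform statement over the whole family $\mathcal F$; whether $M_q(\mathcal H)$ is biexact relative to $\X_{\mathcal F}$ is established nowhere in the paper, which conspicuously avoids claiming it.

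The paper's actual proof dispenses with relative biexactness and with your intertwining dichotomy altogether, and the two devices it uses are instructive precisely because they sidestep your gap. First, it takes $r \in \mathcal Z(P)$ maximal with $rP$ properly proximal and shows the complement is \emph{amenable} directly. To do so it perturbs the rotation by compacts: with $A_n \in \K(\mathcal H)$ an approximate unit, the modified orthogonal transformations $V_n$ make $e_M V_n^{\mathcal F} e_M$ an honestly compact operator on $L^2M$ --- exactly the property that fails for $V_\theta$ itself --- so the argument of \cite[Proposition 9.1]{DKEP22} applies to a summand with no properly proximal part and yields a $P$-central state on $(M^{\rm op})' \cap \B(\oplus_{m \geq 1} \mathcal L_m)$ restricting to $\tau$ on $M$, whence (Day--Namioka) almost $P$-central bi-tracial vectors in the off-diagonal bimodule. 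Second, the quantifiers are ordered oppositely to yours: one first fixes $u_1, \ldots, u_n \in \mathcal U(P)$ and $\varepsilon > 0$, picks a finite-dimensional $\mathcal H_0$ with $\| E_{M_q(\mathcal H_0)}(u_j) - u_j \|_2$ small, \emph{then} chooses $k = k(\mathcal H_0)$ by \cite[Proposition 4.1]{Av11}, and finally pushes the almost-central vectors into degree $\geq 2k$ via the Connes-fusion powers $\zeta_{i,k} = \xi_i^{\otimes_M k}$ that Proposition~\ref{prop:grading} provides; Haagerup's criterion \cite{Ha85} then gives amenability. Since the degree threshold is chosen after the data, no uniform relative coarseness is ever needed. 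Finally, even if relative biexactness were granted, the ``proper-proximality analog of Proposition~\ref{prop:relative biexact intertwine}'' you invoke on the non-intertwining summand is not proved in this paper (Proposition~\ref{prop:relative biexact intertwine} yields amenability of relative commutants, not proper proximality of the subalgebra), so the second half of your argument would require an additional lemma in any case.
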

\begin{proof}
We use the same strategy as in \cite[Propositions 9.1 and 9.2]{DKEP22}. We set $M = M_q(\mathcal H)$ and $\tilde M = M_q(\mathcal H)$. First, note that we have a grading of Hilbert $M$-bimodules $L^2 (\tilde M) = \oplus_{k = 0}^\infty \mathcal L_k$, where $\mathcal L_k$ denotes the span of all simple tensors of the form $\xi_1 \otimes \cdots \otimes \xi_n \in \mathcal F( \mathcal H_{\mathbb C} \oplus \mathcal H_{\mathbb C} )$ such that exactly $k$ of the terms $\xi_i$ are contained in $0 \oplus \mathcal H_{\mathbb C}$ and all other terms are contained in $\mathcal H_{\mathbb C} \oplus 0$. Note also that $\mathcal L_k \cong \overline{\mathcal L_k}$ and by Proposition~\ref{prop:grading} it follows that for $j, k \geq 0$ we have embedding of Hilbert $M$-bimodules $\mathcal L_j \oovt{M} \mathcal L_k \hookrightarrow \mathcal L_{j + k}$.

Suppose $p \in \mathcal P(M)$ and $P \subset p M p$ is a von Neumann subalgebra. We may take $r \leq p$ to be the maximal subprojection in $\mathcal Z(P)$ such that $r P$ is properly proximal.  Our goal is, then, to show that $(p - r) P$ is amenable. By replacing $P$ with $(p-r)P + (p - r)^\perp \mathbb C$, we may assume that $1 \in P$. 

Let $A_n \in \K(\mathcal H)$ be an approximate unit, and take $\theta_n \in (0, \pi/2)$ with $\theta_n \to 0$. We let $V_n \in \mathcal O(\mathcal H \oplus \mathcal H)$ be given by the matrix
\[
V_n = \begin{pmatrix}
\cos \theta & - \sin \theta  \\
\sin \theta  & \cos \theta
\end{pmatrix} \begin{pmatrix}
A_n & - \sqrt{1 - A_n^2}  \\
\sqrt{1 - A_n^2}  & A_n
\end{pmatrix}
\]
and we then have that $e_M V_n^{\mathcal F} e_M$ defines a compact operator on $L^2(M) \cong \mathcal F_q(\mathcal H_{\mathbb C})$ and the corresponding automophisms $\alpha_n \in {\rm Aut}(\tilde M)$ satisfy $\alpha_n \to {\rm id}$ in the point-ultraweak topology as $n \to \infty$. 

Since $P$ has no properly proximal direct summand, and since $e_P V_n^{\mathcal F} e_P$ is compact, it then follows from the argument in \cite[Proposition 9.1]{DKEP22} that there exists a $P$-central state $\varphi$ on $(M^{\rm op})' \cap \B(\oplus_{m\geq 1} \mathcal L_m )$ such that $\varphi_{|M} = \tau$. 
By Connes's versions of Day's and Namioka's tricks (see, e.g., Section 10.3 in \cite{ADPo21}), there then exists a net of unit vectors $\xi_i \in (\oplus_{m\geq 1} \mathcal L_m) \oovt{M} \overline{(\oplus_{m\geq 1} \mathcal L_m)} \cong L^2( (M^{\rm op})' \cap \B(\oplus_{m\geq 1} \mathcal L_m)$ such that $\langle x \xi_i, \xi_i \rangle = \langle \xi_i x, \xi_i \rangle = \tau(x)$ for all $x \in M$ and $\| a \xi_i - \xi_i a \| \to 0$ for all $a \in P$. We may then take tensor powers $\zeta_{i, k} = \xi_i^{\otimes_M^k}$ and view $\zeta_{i, k}$ as vectors in $\oplus_{m\geq 2k}\mathcal L_m$ that are left and right $M$-tracial, and are asymptotically $P$-central. 

We now show that the existence of the vectors $\zeta_{i,k}$ implies that $P$ is amenable. We fix $u_1, \ldots, u_n \in \mathcal U(P)$ and $\varepsilon > 0$. There then exists a finite-dimensional subspace $\mathcal H_0 \subset \mathcal H$ so that $\| E_{M_q(\mathcal H_0)}(u_j) - u_j \|_2 < \varepsilon/2n$ for all $1 \leq j \leq n$. Since $\mathcal H_0$ is finite-dimensional, Proposition 4.1 in \cite{Av11} shows that for some $k \geq 1$ we have that $\oplus_{m\geq 2k}\mathcal L_{m}$ is weakly contained in the coarse correspondence as a Hilbert $M_q(\mathcal H_0)$-bimodule. Thus, we have
\begin{align}
\| \sum_{j = 1}^n u_j \otimes u_j^{\rm op} \|_{M \otimes M^{\rm op}}
&\geq \| \sum_{j = 1}^n E_{M_q(\mathcal H_0)}(u_j) \otimes E_{M_q(\mathcal H_0)}(u_j)^{\rm op} \|  \nonumber \\
&\geq \lim_{i \to \infty} \| \sum_{j = 1}^n E_{M_q(\mathcal H_0)}(u_j) \zeta_{i, k} E_{M_q(\mathcal H_0)}(u_j)^* \| \nonumber \\
&=  \lim_{i \to \infty} \| \sum_{j = 1}^n E_{M_q(\mathcal H_0)}(u_j)  E_{M_q(\mathcal H_0)}(u_j)^* \zeta_{i, k} \| \nonumber \\
&= \| \sum_{j = 1}^n E_{M_q(\mathcal H_0)}(u_j)  E_{M_q(\mathcal H_0)}(u_j)^* \|_2 \nonumber \\
&\geq n - \varepsilon. \nonumber
\end{align}
 Since $\varepsilon > 0$ was arbitrary, it then follows from \cite{Ha85} that $P$ is amenable. 
\end{proof}

\begin{cor}
Let $\mathcal H$ be a real Hilbert space and $-1 \leq q \leq 1$, then $M_q(\mathcal H)$ is strongly solid.
\end{cor}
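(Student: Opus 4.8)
The cases $q = \pm 1$ are trivial: $M_{-1}(\mathcal H)$ is hyperfinite and $M_1(\mathcal H)$ is abelian, so both are amenable and hence strongly solid. We may therefore assume $-1 < q < 1$ and write $M = M_q(\mathcal H)$. The plan is to deduce strong solidity directly from Theorem~\ref{thm:everysubalgebraproperlyp} by a standard normalizer reduction. Let $A \subset M$ be a diffuse amenable von Neumann subalgebra (which we take to be unital in $M$) and set $P = \mathcal{N}_M(A)''$; the goal is to show that $P$ is amenable.

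First I would let $z \in \mathcal{Z}(P)$ be the largest central projection for which $Pz$ is amenable (this exists since the relevant set of central projections is closed under suprema), and argue by contradiction that $q_0 := 1 - z$ is nonzero. Then $Pq_0 \subset q_0 M q_0$ has no amenable direct summand, so Theorem~\ref{thm:everysubalgebraproperlyp} applies and shows that $Pq_0$ is properly proximal. The next step is to transfer the regularity of $A$ to this corner: since $z \in \mathcal{Z}(P) \subset A' \cap M$, the projection $q_0$ commutes with $A$ and with every normalizing unitary $u \in \mathcal{N}_M(A) \subset P$. Hence $Aq_0$ is a diffuse amenable subalgebra of $Pq_0$ (a compression of a diffuse algebra by a projection in its commutant remains diffuse), and each $uq_0$ is a unitary of $q_0 M q_0$ normalizing $Aq_0$, so that $\mathcal{N}_{q_0 M q_0}(Aq_0)'' \supseteq Pq_0$. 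Thus $Aq_0$ is a regular diffuse amenable subalgebra of the properly proximal algebra $Pq_0$.

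The final step invokes weak amenability. By Avsec \cite{Av11} one has $\Lambda_{cb}(M_q(\mathcal H)) = 1$ for all $\mathcal H$, and since $Pq_0 \subset q_0 M q_0$ is a finite subalgebra with expectation we get $\Lambda_{cb}(Pq_0) = 1$ as well. By the Ozawa--Popa weak compactness machinery \cite{OzPo10a}, the regular amenable inclusion $Aq_0 \subset Pq_0$ is then weakly compact, which, as in \cite{DKEP22}, produces a $Pq_0$-central state on $\B(L^2(Pq_0))$ whose restriction to $Pq_0$ is normal, and hence a conditional expectation $\bS(Pq_0) \to Pq_0$. This contradicts the proper proximality of $Pq_0$. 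Therefore $q_0 = 0$, so $P = Pz$ is amenable, which is exactly strong solidity of $M_q(\mathcal H)$.

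The main obstacle is this last step: converting weak compactness of the regular amenable inclusion into the conditional expectation $\bS(Pq_0) \to Pq_0$ that proper proximality forbids. This is where the weak amenability of $M_q(\mathcal H)$ is indispensable and where one must quote the proper-proximality obstruction of \cite{DKEP22}; by contrast, verifying that $\Lambda_{cb} = 1$ passes to the corner $Pq_0$, and checking the diffuseness and regularity of the compressed subalgebra $Aq_0$, are the routine points. It is worth emphasizing that the genuinely new input beyond Avsec's finite-dimensional theorem is Theorem~\ref{thm:everysubalgebraproperlyp}, whose proof already carries out the deformation/rigidity reduction to the finite-dimensional setting; the present corollary then only repackages that proper proximality statement into strong solidity.
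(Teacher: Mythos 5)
Your proof is correct and follows essentially the same route as the paper: Avsec's result that $M_q(\mathcal H)$ has the (weak$^*$) complete metric approximation property, the Ozawa--Popa weak compactness theorem \cite[Theorem 3.5]{OzPo10a}, and the proper proximality obstruction from \cite{DKEP22} combined with Theorem~\ref{thm:everysubalgebraproperlyp}. The only difference is bookkeeping: you explicitly cut by the maximal amenable central projection and rerun the weak compactness argument in the corner $Pq_0$, whereas the paper absorbs exactly this reduction into its citation of \cite[Theorem 6.11]{DKEP22}.
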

\begin{proof}
By \cite[Theorem A]{Av11} the von Neumann algebra $M = M_q(\mathcal H)$ has the complete metric approximation property (see \cite{Wa20} for a simple proof), and hence, by \cite[Theorem 3.5]{OzPo10a} for any embedding of a diffuse amenable von Neumann algebra $P \subset M_q(\mathcal H)$, we have that $\mathcal N_M(P) \actson P$ is weakly compact.  Theorem~\ref{thm:everysubalgebraproperlyp}, together with \cite[Theorem 6.11]{DKEP22} then shows that $\mathcal N_M(P)''$ is amenable. 
\end{proof}

\bibliographystyle{amsalpha}
\bibliography{ref}

\providecommand{\bysame}{\leavevmode\hbox to3em{\hrulefill}\thinspace}
\providecommand{\MR}{\relax\ifhmode\unskip\space\fi MR }
\providecommand{\MRhref}[2]{%
  \href{http://www.ams.org/mathscinet-getitem?mr=#1}{#2}
}
\providecommand{\href}[2]{#2}
\begin{thebibliography}{BCKW23}

\bibitem[AD87]{AD87}
Claire Anantharaman-Delaroche, \emph{Syst\`emes dynamiques non commutatifs et
  moyennabilit\'{e}}, Math. Ann. \textbf{279} (1987), no.~2, 297--315.

\bibitem[AH14]{AnHa14}
Hiroshi Ando and Uffe Haagerup, \emph{Ultraproducts of von {N}eumann algebras},
  J. Funct. Anal. \textbf{266} (2014), no.~12, 6842--6913.

\bibitem[AHHM20]{AnHaHoMa20}
Hiroshi Ando, Uffe Haagerup, Cyril Houdayer, and Amine Marrakchi,
  \emph{Structure of bicentralizer algebras and inclusions of type {III}
  factors}, Math. Ann. \textbf{376} (2020), no.~3-4, 1145--1194.

\bibitem[AP21]{ADPo21}
Claire Anantharaman and Sorin Popa, \emph{An introduction to {II$_1$} factors},
  2021, \url{https://www.math.ucla.edu/~popa/Books/IIunV15.pdf}.

\bibitem[Avs11]{Av11}
Stephen Avsec, \emph{Strong solidity of the {$q$}-{G}aussian algebras for all
  {$-1 < q < 1$}}, Preprint, arXiv:1110.4918, 2011.

\bibitem[BC15]{BoCa15}
R\'{e}mi Boutonnet and Alessandro Carderi, \emph{Maximal amenable von {N}eumann
  subalgebras arising from maximal amenable subgroups}, Geom. Funct. Anal.
  \textbf{25} (2015), no.~6, 1688--1705.

\bibitem[BCKW23]{BCKW22}
Matthijs Borst, Martijn Caspers, Mario Klisse, and Mateusz Wasilewski, \emph{On
  the isomorphism class of {$q$}-{G}aussian {$C^*$}-algebras for infinite
  variables}, Proc. Amer. Math. Soc. \textbf{151} (2023), no.~2, 737--744.

\bibitem[BEW20]{BEW20}
Alcides Buss, Siegfried Echterhoff, and Rufus Willett, \emph{Injectivity,
  crossed products, and amenable group actions}, {$K$}-theory in algebra,
  analysis and topology, Contemp. Math., vol. 749, Amer. Math. Soc.,
  [Providence], RI, [2020] \copyright 2020, pp.~105--137.

\bibitem[BIP21]{BIP21}
R\'{e}mi Boutonnet, Adrian Ioana, and Jesse Peterson, \emph{Properly proximal
  groups and their von {N}eumann algebras}, Ann. Sci. \'{E}c. Norm. Sup\'{e}r.
  (4) \textbf{54} (2021), no.~2, 445--482.

\bibitem[BMO20]{BaMaOz19}
Jon Bannon, Amine Marrakchi, and Narutaka Ozawa, \emph{Full factors and
  co-amenable inclusions}, Comm. Math. Phys. \textbf{378} (2020), no.~2,
  1107--1121.

\bibitem[BO08]{BO08}
Nathanial~P. Brown and Narutaka Ozawa, \emph{{$C^*$}-algebras and
  finite-dimensional approximations}, Graduate Studies in Mathematics, vol.~88,
  American Mathematical Society, Providence, RI, 2008.

\bibitem[Bou12]{Bo12}
R\'{e}mi Boutonnet, \emph{On solid ergodicity for {G}aussian actions}, J.
  Funct. Anal. \textbf{263} (2012), no.~4, 1040--1063.

\bibitem[Bou13]{Bo13}
\bysame, \emph{{${\rm W}^*$}-superrigidity of mixing {G}aussian actions of
  rigid groups}, Adv. Math. \textbf{244} (2013), 69--90.

\bibitem[BS91]{BoSp91}
Marek Bo\.{z}ejko and Roland Speicher, \emph{An example of a generalized
  {B}rownian motion}, Comm. Math. Phys. \textbf{137} (1991), no.~3, 519--531.

\bibitem[Cas21]{Ca21}
Martijn Caspers, \emph{Gradient forms and strong solidity of free quantum
  groups}, Math. Ann. \textbf{379} (2021), no.~1-2, 271--324.

\bibitem[Cas22]{Cas22}
Martijn Caspers, \emph{On the isomorphism class of {$q$}-{G}aussian
  {W}{$^\ast$}-algebras for infinite variables}, Preprint, arXiv:2210.11128,
  2022, To appear in {C}omptes {R}endus de l'{A}cad{\'e}mie des {S}ciences.

\bibitem[CdSS16]{CdSS16}
Ionut Chifan, Rolando de~Santiago, and Thomas Sinclair, \emph{{$W^*$}-rigidity
  for the von {N}eumann algebras of products of hyperbolic groups}, Geom.
  Funct. Anal. \textbf{26} (2016), no.~1, 136--159.

\bibitem[CI18]{ChIo18}
Ionu\c{t} Chifan and Adrian Ioana, \emph{Amalgamated free product rigidity for
  group von {N}eumann algebras}, Adv. Math. \textbf{329} (2018), 819--850.

\bibitem[CIW21]{CaIsWa21}
Martijn Caspers, Yusuke Isono, and Mateusz Wasilewski,
  \emph{{$L_2$}-cohomology, derivations, and quantum {M}arkov semi-groups on
  {$q$}-{G}aussian algebras}, Int. Math. Res. Not. IMRN (2021), no.~9,
  6405--6441.

\bibitem[CP13]{ChPe13}
Ionut Chifan and Jesse Peterson, \emph{Some unique group-measure space
  decomposition results}, Duke Math. J. \textbf{162} (2013), no.~11,
  1923--1966.

\bibitem[CS13]{CS13}
Ionut Chifan and Thomas Sinclair, \emph{On the structural theory of {${\rm
  II}_1$} factors of negatively curved groups}, Ann. Sci. \'{E}c. Norm.
  Sup\'{e}r. (4) \textbf{46} (2013), no.~1, 1--33 (2013).

\bibitem[Dep20]{Dep20}
Tobe Deprez, \emph{Ozawa's class {$\mathcal S$} for locally compact groups and
  unique prime factorization of group von {N}eumann algebras}, Proc. Roy. Soc.
  Edinburgh Sect. A \textbf{150} (2020), no.~5, 2656--2681.

\bibitem[DKEP22]{DKEP22}
Changying Ding, Srivatsav Kunnawalkam~Elayavalli, and Jesse Peterson,
  \emph{Properly proximal von {N}eumann algebras}, preprint, arXiv:2204.00517,
  2022, To appear in Duke Math.\ J.

\bibitem[DL92]{DaLi92}
E.~Brian Davies and J.~Martin Lindsay, \emph{Noncommutative symmetric {M}arkov
  semigroups}, Math. Z. \textbf{210} (1992), no.~3, 379--411.

\bibitem[DN93]{DyNi93}
Ken Dykema and Alexandru Nica, \emph{On the {F}ock representation of the
  {$q$}-commutation relations}, J. Reine Angew. Math. \textbf{440} (1993),
  201--212.

\bibitem[Dri23]{Dr22}
Daniel Drimbe, \emph{Measure equivalence rigidity via s-malleable
  deformations}, Compos. Math. \textbf{159} (2023), no.~10, 2023--2050.

\bibitem[EOR01]{EfOzRu01}
Edward~G. Effros, Narutaka Ozawa, and Zhong-Jin Ruan, \emph{On injectivity and
  nuclearity for operator spaces}, Duke Math. J. \textbf{110} (2001), no.~3,
  489--521.

\bibitem[GJ07]{GaJu07}
Mingchu Gao and Marius Junge, \emph{Examples of prime von {N}eumann algebras},
  Int. Math. Res. Not. IMRN (2007), no.~15, Art. ID rnm042, 34.

\bibitem[GS14]{GuSh14}
A.~Guionnet and D.~Shlyakhtenko, \emph{Free monotone transport}, Invent. Math.
  \textbf{197} (2014), no.~3, 613--661.

\bibitem[Haa75]{Ha75}
Uffe Haagerup, \emph{The standard form of von {N}eumann algebras}, Math. Scand.
  \textbf{37} (1975), no.~2, 271--283.

\bibitem[Haa85]{Ha85}
\bysame, \emph{Injectivity and decomposition of completely bounded maps},
  Operator algebras and their connections with topology and ergodic theory
  ({B}u\c{s}teni, 1983), Lecture Notes in Math., vol. 1132, Springer, Berlin,
  1985, pp.~170--222.

\bibitem[HI16]{HoIs16}
Cyril Houdayer and Yusuke Isono, \emph{Bi-exact groups, strongly ergodic
  actions and group measure space type {III} factors with no central sequence},
  Comm. Math. Phys. \textbf{348} (2016), no.~3, 991--1015.

\bibitem[HI17]{HoIs17}
\bysame, \emph{Unique prime factorization and bicentralizer problem for a class
  of type {III} factors}, Adv. Math. \textbf{305} (2017), 402--455.

\bibitem[HIK22]{HaIsKa22}
Kei Hasegawa, Yusuke Isono, and Tomohiro Kanda, \emph{Note on bi-exactness for
  creation operators on {F}ock spaces}, J. Math. Soc. Japan \textbf{74} (2022),
  no.~3, 903--944.

\bibitem[Hou07]{Ho07}
Cyril Houdayer, \emph{Sur la classification de certaines alg{\`{e}}bres de von
  {N}eumann}, 2007, Ph{D} thesis, {U}niversit{\'{e}} de {P}aris {VII}.

\bibitem[HP11]{HaPa11}
Kyung~Hoon Han and Vern~I. Paulsen, \emph{An approximation theorem for nuclear
  operator systems}, J. Funct. Anal. \textbf{261} (2011), no.~4, 999--1009.

\bibitem[HPP23]{HiPePo23}
Patrick Hiatt, Jesse Peterson, and Sorin Popa, \emph{Some classes of smooth
  bimodules over {II$_1$} factors and their associated {1}-cohomology spaces},
  preprint, arXiv:2304.06242, 2023.

\bibitem[HR15]{HoRa15}
Cyril Houdayer and Sven Raum, \emph{Asymptotic structure of free
  {A}raki-{W}oods factors}, Math. Ann. \textbf{363} (2015), no.~1-2, 237--267.

\bibitem[HU16a]{HoUe16a}
Cyril Houdayer and Yoshimichi Ueda, \emph{Asymptotic structure of free product
  von {N}eumann algebras}, Math. Proc. Cambridge Philos. Soc. \textbf{161}
  (2016), no.~3, 489--516.

\bibitem[HU16b]{HoUe16}
\bysame, \emph{Rigidity of free product von {N}eumann algebras}, Compos. Math.
  \textbf{152} (2016), no.~12, 2461--2492.

\bibitem[Ioa07]{Io07}
Adrian Ioana, \emph{Rigidity results for wreath product {${\rm II}_1$}
  factors}, J. Funct. Anal. \textbf{252} (2007), no.~2, 763--791.

\bibitem[Ioa12]{Ioa12}
\bysame, \emph{Uniqueness of the group measure space decomposition for {P}opa's
  {$\mathcal{HT}$} factors}, Geom. Funct. Anal. \textbf{22} (2012), no.~3,
  699--732.

\bibitem[Iso13]{Iso13}
Yusuke Isono, \emph{Weak exactness for {$C^\ast$}-algebras and application to
  condition ({AO})}, J. Funct. Anal. \textbf{264} (2013), no.~4, 964--998.

\bibitem[Iso15a]{Iso15}
\bysame, \emph{Examples of factors which have no {C}artan subalgebras}, Trans.
  Amer. Math. Soc. \textbf{367} (2015), no.~11, 7917--7937.

\bibitem[Iso15b]{Is15}
\bysame, \emph{On bi-exactness of discrete quantum groups}, Int. Math. Res.
  Not. IMRN (2015), no.~11, 3619--3650.

\bibitem[Iso17]{Is17}
\bysame, \emph{Some prime factorization results for free quantum group
  factors}, J. Reine Angew. Math. \textbf{722} (2017), 215--250.

\bibitem[Iso19]{Is19}
\bysame, \emph{Cartan subalgebras of tensor products of free quantum group
  factors with arbitrary factors}, Anal. PDE \textbf{12} (2019), no.~5,
  1295--1324.

\bibitem[Kir95a]{Kir95}
Eberhard Kirchberg, \emph{Exact {${\rm C}^*$}-algebras, tensor products, and
  the classification of purely infinite algebras}, Proceedings of the
  {I}nternational {C}ongress of {M}athematicians, {V}ol. 1, 2 ({Z}\"{u}rich,
  1994), Birkh\"{a}user, Basel, 1995, pp.~943--954.

\bibitem[Kir95b]{Kir95B}
\bysame, \emph{On subalgebras of the {CAR}-algebra}, J. Funct. Anal.
  \textbf{129} (1995), no.~1, 35--63.

\bibitem[Kuz23]{Ku22}
Alexey Kuzmin, \emph{C{CR} and {CAR} algebras are connected via a path of
  {C}untz-{T}oeplitz algebras}, Comm. Math. Phys. \textbf{399} (2023), no.~3,
  1623--1645.

\bibitem[Mag98]{Ma98}
Bojan Magajna, \emph{A topology for operator modules over {$W^*$}-algebras}, J.
  Funct. Anal. \textbf{154} (1998), no.~1, 17--41.

\bibitem[Mag00]{Ma00}
B.~Magajna, \emph{{$C^*$}-convex sets and completely bounded bimodule
  homomorphisms}, Proc. Roy. Soc. Edinburgh Sect. A \textbf{130} (2000), no.~2,
  375--387.

\bibitem[Mar17]{Ma17}
Amine Marrakchi, \emph{Solidity of type {III} {B}ernoulli crossed products},
  Comm. Math. Phys. \textbf{350} (2017), no.~3, 897--916.

\bibitem[Mar23]{Mar23}
Amine Marrakchi, \emph{Kadison's problem for type {III} subfactors and the
  bicentralizer conjecture}, preprint, arXiv:2308.15163, 2023.

\bibitem[Mor15]{Mo15}
Dave~Witte Morris, \emph{Introduction to arithmetic groups}, Deductive Press,
  [place of publication not identified], 2015. \MR{3307755}

\bibitem[Nou04]{Nou04}
Alexandre Nou, \emph{Non injectivity of the {$q$}-deformed von {N}eumann
  algebra}, Math. Ann. \textbf{330} (2004), no.~1, 17--38.

\bibitem[Ocn85]{Oc85}
Adrian Ocneanu, \emph{Actions of discrete amenable groups on von {N}eumann
  algebras}, Lecture Notes in Mathematics, vol. 1138, Springer-Verlag, Berlin,
  1985.

\bibitem[OP04]{OzPo04}
Narutaka Ozawa and Sorin Popa, \emph{Some prime factorization results for type
  {${\rm II}_1$} factors}, Invent. Math. \textbf{156} (2004), no.~2, 223--234.

\bibitem[OP10a]{OzPo10}
Narutaka Ozawa and Sorin Popa, \emph{On a class of {II}{$_1$} factors with at
  most one cartan subalgebra, {II}}, Amer. J. Math. \textbf{132} (2010), no.~3,
  841--866.

\bibitem[OP10b]{OzPo10a}
Narutaka Ozawa and Sorin Popa, \emph{On a class of {${\rm II}_1$} factors with
  at most one {C}artan subalgebra}, Ann. of Math. (2) \textbf{172} (2010),
  no.~1, 713--749.

\bibitem[Oza02]{Oza01}
Narutaka Ozawa, \emph{Nuclearity of reduced amalgamated free product
  {$C^*$}-algebras}, no. 1250, 2002, Theory of operator algebras and its
  applications (Japanese) (Kyoto, 2001), pp.~49--55.

\bibitem[Oza04]{Oza04}
\bysame, \emph{Solid von {N}eumann algebras}, Acta Math. \textbf{192} (2004),
  no.~1, 111--117.

\bibitem[Oza06a]{Oza06b}
\bysame, \emph{Amenable actions and applications}, International {C}ongress of
  {M}athematicians. {V}ol. {II}, Eur. Math. Soc., Z\"{u}rich, 2006,
  pp.~1563--1580.

\bibitem[Oza06b]{Oza06}
\bysame, \emph{A {K}urosh-type theorem for type {$\rm II_1$} factors}, Int.
  Math. Res. Not. (2006), Art. ID 97560, 21.

\bibitem[Oza07]{Oza07}
\bysame, \emph{Weakly exact von {N}eumann algebras}, J. Math. Soc. Japan
  \textbf{59} (2007), no.~4, 985--991.

\bibitem[Oza09]{Oza09}
\bysame, \emph{An example of a solid von {N}eumann algebra}, Hokkaido Math. J.
  \textbf{38} (2009), no.~3, 557--561.

\bibitem[Oza10]{Oza10}
\bysame, \emph{A comment on free group factors}, Noncommutative harmonic
  analysis with applications to probability {II}, Banach Center Publ., vol.~89,
  Polish Acad. Sci. Inst. Math., Warsaw, 2010, pp.~241--245.

\bibitem[Pet09]{Pet09}
Jesse Peterson, \emph{{$L^2$}-rigidity in von {N}eumann algebras}, Invent.
  Math. \textbf{175} (2009), no.~2, 417--433.

\bibitem[Pop86]{Po86}
Sorin Popa, \emph{Correspondences}, 1986, {INCREST} {P}reprint, 56/1986,
  \url{https://www.math.ucla.edu/~popa/popa-correspondences.pdf}.

\bibitem[Pop06]{Po06B}
Sorin Popa, \emph{Strong rigidity of {$\rm II_1$} factors arising from
  malleable actions of {$w$}-rigid groups. {I}}, Invent. Math. \textbf{165}
  (2006), no.~2, 369--408.

\bibitem[Pop07]{po07b}
\bysame, \emph{On {O}zawa's property for free group factors}, Int. Math. Res.
  Not. IMRN (2007), no.~11, Art. ID rnm036, 10.

\bibitem[PV14a]{PoVa14b}
Sorin Popa and Stefaan Vaes, \emph{Unique {C}artan decomposition for {$\rm
  II_{1}$} factors arising from arbitrary actions of hyperbolic groups}, J.
  Reine Angew. Math. \textbf{694} (2014), 215--239.

\bibitem[PV14b]{PV14}
\bysame, \emph{Unique {C}artan decomposition for {$\rm II_{1}$} factors arising
  from arbitrary actions of hyperbolic groups}, J. Reine Angew. Math.
  \textbf{694} (2014), 215--239.

\bibitem[Sak09]{Sa09}
Hiroki Sako, \emph{The class {$S$} as an {ME} invariant}, Int. Math. Res. Not.
  IMRN (2009), no.~15, 2749--2759.

\bibitem[Sau90]{Sa90}
Jean-Luc Sauvageot, \emph{Quantum {D}irichlet forms, differential calculus and
  semigroups}, Quantum probability and applications, {V} ({H}eidelberg, 1988),
  Lecture Notes in Math., vol. 1442, Springer, Berlin, 1990, pp.~334--346.

\bibitem[Sau99]{Sa99}
\bysame, \emph{Strong {F}eller semigroups on {$C^\ast$}-algebras}, J. Operator
  Theory \textbf{42} (1999), no.~1, 83--102.

\bibitem[Shl04]{Shl04}
Dimitri Shlyakhtenko, \emph{Some estimates for non-microstates free entropy
  dimension with applications to {$q$}-semicircular families}, Int. Math. Res.
  Not. (2004), no.~51, 2757--2772.

\bibitem[SS95]{SinSmi95}
Allan~M. Sinclair and Roger~R. Smith, \emph{Hochschild cohomology of von
  {N}eumann algebras}, London Mathematical Society Lecture Note Series, vol.
  203, Cambridge University Press, Cambridge, 1995.

\bibitem[VDN92]{VoDyNi92}
D.~V. Voiculescu, K.~J. Dykema, and A.~Nica, \emph{Free random variables}, CRM
  Monograph Series, vol.~1, American Mathematical Society, Providence, RI,
  1992, A noncommutative probability approach to free products with
  applications to random matrices, operator algebras and harmonic analysis on
  free groups.

\bibitem[VV07]{VaVe06}
Stefaan Vaes and Roland Vergnioux, \emph{The boundary of universal discrete
  quantum groups, exactness, and factoriality}, Duke Math. J. \textbf{140}
  (2007), no.~1, 35--84.

\bibitem[Was21]{Wa20}
Mateusz Wasilewski, \emph{A simple proof of the complete metric approximation
  property for {$q$}-{G}aussian algebras}, Colloq. Math. \textbf{163} (2021),
  no.~1, 1--14.

\end{thebibliography}

\end{document}